\documentclass[11pt,amscd,combelow,anysize,amsxtra]{amsart} %reqno places equation numbers on the right
\usepackage[margin=1in]{geometry}

\usepackage{amsmath}
\usepackage{hyperref}
\usepackage[capitalize]{cleveref}
\usepackage{graphicx}
\usepackage{refcount}

\usepackage{calligra}
\usepackage{amsthm,amssymb,mathrsfs,mathtools,bm,eucal,tensor} % math related
\usepackage{microtype} % latex technical issues
\usepackage[scaled]{beramono,berasans}
\usepackage{enumerate,comment,braket,xspace,tikz-cd} %utilities
\usetikzlibrary{arrows}
\usepackage[all,cmtip]{xy} % utilities
\usepackage[utf8]{inputenc} % input encoding
\usepackage[T1]{fontenc} % font encoding
\usepackage{lmodern}
\definecolor{linkcolor}{HTML}{005050}
\usepackage{xparse}
\usepackage{url}
\usepackage[toc,page]{appendix}
\usepackage{stmaryrd}
\usepackage{lscape}

\usepackage{subcaption}

\usepackage{pgfplots} %utilities
\pgfplotsset{compat=newest}

\theoremstyle{plain}
\newtheorem{thm-intro}{Theorem}
\newtheorem{thm}{Theorem}[section]

\newtheorem*{thm*}{Theorem}
\newtheorem{lem}[thm]{Lemma}
\newtheorem{lemma}[thm]{Lemma}
\newtheorem*{lem*}{Lemma}
\newtheorem{prop}[thm]{Proposition}

\newtheorem{Convention}[thm]{Convention}
\newtheorem{cor}[thm]{Corollary}

\newtheorem{cor-intro}[thm-intro]{Corollary}

\theoremstyle{definition}
\newtheorem{definition}[thm]{Definition}

\newtheorem{defin-intro}[thm-intro]{Definition}

\theoremstyle{remark}

\newtheorem{remark}[thm]{Remark}

\newtheorem{ex}[thm]{Example}

\newtheorem{eg-intro}[thm-intro]{Example}
\newtheorem{rem-intro}[thm-intro]{Remark}
\numberwithin{equation}{section}

\newcommand{\fa}{\mathfrak{a}}
\newcommand{\GL}{\mathrm{GL}}

\newcommand{\Sp}{\mathrm{Sp}}
\newcommand{\SO}{\mathrm{SO}}

\newcommand{\ol}{\overline}
\newcommand{\bG}{\mathbb{G}}
\newcommand{\Jac}{\mathrm{Jac}}
\newcommand{\bH}{\mathbb{H}}
\newcommand{\fc}{\mathfrak{c}}
\newcommand{\fh}{\mathfrak{h}}
\newcommand{\reg}{\mathrm{reg}}
\newcommand{\wt}{\widetilde}

\newcommand{\canT}{\mathbf{T}}
\newcommand{\canA}{\mathbf{A}}

\newcommand{\G}{\mathbb{G}}
\newcommand{\rG}{\mathrm{G}}

\newcommand{\ani}{\mathrm{ani}}
\renewcommand{\H}{\mathbf{H}}

\newcommand{\Gr}{\mathrm{Gr}}
\newcommand{\cl}{\mathrm{cl}}
\newcommand{\Orb}{\mathrm{Orb}}
\newcommand{\fgl}{\mathfrak{gl}}
\newcommand{\Herm}{\mathrm{Herm}}
\newcommand{\bfh}{\mathfrak{h}}
\newcommand{\bfg}{\mathfrak{g}}
\renewcommand{\t}{\, ^t}
\renewcommand{\bH}{\mathbf{H}}
\renewcommand{\bG}{\mathbf{G}}

\newcommand{\bF}{\mathbb{F}}
\newcommand{\e}{\epsilon}
\newcommand{\st}{\mathrm{st}}
\newcommand{\bR}{\mathrm{R}}
\newcommand{\fe}{\mathfrak{e}}
\newcommand{\cent}{\mathrm{cent}}
\newcommand{\br}{\mathrm{br}}
\newcommand{\unitary}{\mathbf{U}}
\newcommand{\Siegel}{\mathbf{S}}
\newcommand{\siegel}{\mathbf{S}}
\newcommand{\Prym}{\mathrm{Prym}}
\newcommand{\canC}{\mathbf{C}}
\newcommand{\Ob}{\mathrm{Ob}}

\newcommand{\fR}{\mathfrak{R}}

\newcommand{\Out}{\mathrm{Out}}
\newcommand{\vol}{\mathrm{vol}}
\newcommand{\Irr}{\mathrm{Irr}}
\newcommand{\good}{\mathrm{good}}
\newcommand{\bad}{\mathrm{bad}}

\newcommand{\cameral}{\underline}
\newcommand{\U}{\mathrm{U}}
\newcommand{\SOrb}{\mathrm{SO}}
\newcommand{\rs}{\mathrm{rs}}
\newcommand{\iso}{\overset{\sim}{\longrightarrow}}
\renewcommand{\top}{{2d}}

\usepackage{bbold}

\newcommand{\quash}[1]{}

\newcommand{\myfatslash}{\mathbin{\mkern-6mu\fatslash}}

\newcommand{\C}{\mathbb C}

\newcommand{\Q}{\mathbb Q}

\newcommand{\Z}{\mathbb Z}

\newcommand\git{{/\!\!/}}

\newcommand{\rH}{\mathrm H}

\newcommand{\fD}{\mathfrak D}

\newcommand{\fS}{\mathfrak S}

\newcommand{\fX}{\mathfrak X}

\newcommand{\fm}{\mathfrak m}

\newcommand{\fs}{\mathfrak s}
\newcommand{\ft}{\mathfrak t}

\newcommand{\fg}{\mathfrak{g}}

\newcommand{\cA}{\mathcal A}
\newcommand{\A}{\mathbb A}

\newcommand{\cB}{\mathcal B}

\newcommand{\cE}{\mathcal E}
\newcommand{\cF}{\mathcal F}

\newcommand{\cG}{\mathcal G}

\newcommand{\cL}{\mathcal L}
\newcommand{\cM}{\mathcal M}
\newcommand{\cN}{\mathcal N}
\newcommand{\cO}{\mathcal O}
\newcommand{\cP}{\mathcal P}

\newcommand{\cT}{\mathcal T}
\newcommand{\cU}{\mathcal U}

\DeclareFontFamily{U}{BOONDOX-calo}{\skewchar\font=45 }
\DeclareFontShape{U}{BOONDOX-calo}{m}{n}{<-> s*[1.05] BOONDOX-r-calo}{}
\DeclareFontShape{U}{BOONDOX-calo}{b}{n}{<-> s*[1.05] BOONDOX-b-calo}{}
\DeclareMathAlphabet{\mathcalboondox}{U}{BOONDOX-calo}{m}{n}
\newcommand{\bbD}{\mathbb D}

\newcommand{\bA}{\mathbf A}

\newcommand{\bP}{\mathbf P}

\newcommand{\bX}{\mathbf X}

\newcommand{\sX}{\mathscr X}

\newcommand{\wh}{\widehat}

\newcommand{\ul}{\underline}

\makeatletter
\let\save@mathaccent\mathaccent
\newcommand*\if@single[3]{%
	\setbox0\hbox{${\mathaccent"0362{#1}}^H$}%
	\setbox2\hbox{${\mathaccent"0362{\kern0pt#1}}^H$}%
	\ifdim\ht0=\ht2 #3\else #2\fi
}
\newcommand*\rel@kern[1]{\kern#1\dimexpr\macc@kerna}
\newcommand*\widebar[1]{\@ifnextchar^{{\wide@bar{#1}{0}}}{\wide@bar{#1}{1}}}
\newcommand*\wide@bar[2]{\if@single{#1}{\wide@bar@{#1}{#2}{1}}{\wide@bar@{#1}{#2}{2}}}
\newcommand*\wide@bar@[3]{%
	\begingroup
	\def\mathaccent##1##2{%
		\let\mathaccent\save@mathaccent
		\if#32 \let\macc@nucleus\first@char \fi
		\setbox\z@\hbox{$\macc@style{\macc@nucleus}_{}$}%
		\setbox\tw@\hbox{$\macc@style{\macc@nucleus}{}_{}$}%
		\dimen@\wd\tw@
		\advance\dimen@-\wd\z@
		\divide\dimen@ 3
		\@tempdima\wd\tw@
		\advance\@tempdima-\scriptspace
		\divide\@tempdima 10
		\advance\dimen@-\@tempdima
		\ifdim\dimen@>\z@ \dimen@0pt\fi
		\rel@kern{0.6}\kern-\dimen@
		\if#31
		\overline{\rel@kern{-0.6}\kern\dimen@\macc@nucleus\rel@kern{0.4}\kern\dimen@}%
		\advance\dimen@0.4\dimexpr\macc@kerna
		\let\final@kern#2%
		\ifdim\dimen@<\z@ \let\final@kern1\fi
		\if\final@kern1 \kern-\dimen@\fi
		\else
		\overline{\rel@kern{-0.6}\kern\dimen@#1}%
		\fi
	}%
	\macc@depth\@ne
	\let\math@bgroup\@empty \let\math@egroup\macc@set@skewchar
	\mathsurround\z@ \frozen@everymath{\mathgroup\macc@group\relax}%
	\macc@set@skewchar\relax
	\let\mathaccentV\macc@nested@a
	\if#31
	\macc@nested@a\relax111{#1}%
	\else
	\def\gobble@till@marker##1\endmarker{}%
	\futurelet\first@char\gobble@till@marker#1\endmarker
	\ifcat\noexpand\first@char A\else
	\def\first@char{}%
	\fi
	\macc@nested@a\relax111{\first@char}%
	\fi
	\endgroup
}
\makeatother

\newcommand{\ev}{\mathrm{ev}}
\usetikzlibrary{decorations.markings} %arrows for open immersions and closed immersions
\tikzset{
  closed/.style = {decoration = {markings, mark = at position 0.5 with { \node[transform shape, xscale = .8, yscale=.4] {/}; } }, postaction = {decorate} },
  open/.style = {decoration = {markings, mark = at position 0.5 with { \node[transform shape, scale = .7] {$\circ$}; } }, postaction = {decorate} }
}

\DeclareMathOperator{\Aut}{Aut}

\DeclareMathOperator{\codim}{codim}

\DeclareMathOperator{\End}{End}

\DeclareMathOperator{\Gal}{Gal}
\DeclareMathOperator{\Hom}{Hom}

\DeclareMathOperator{\Pic}{Pic}

\DeclareMathOperator{\Res}{Res}

\DeclareMathOperator{\Spec}{Spec}

\DeclareMathOperator{\Sym}{Sym}

\DeclareMathOperator{\tr}{tr}
\DeclareMathOperator{\val}{val}

\title{Relative Hitchin fibrations and endoscopy for Unitary Friedberg--Jacquet varieties}
\author{Thomas Hameister and Spencer Leslie}

\begin{document}

\begin{abstract}
    We prove a relative fundamental lemma for the unitary symmetric variety $\GL_n(k')\backslash \U_{2n}$, proving a conjecture of \cite{sl_endoscopy,LXZfund}. Our proof reduces to the Lie algebra and argues over function fields in positive characteristic, using methods from \cite{ngo-laumon,ngo}. In particular, we reinterpret orbital integrals in terms of the cohomology of relative affine Springer fibers  and globalize this using unitary versions of Hitchin moduli spaces for symmetric pairs via a product formula. As the main global input, we prove a matching of perverse sheaves between relative Hitchin moduli spaces and their corresponding endoscopic variants.
\end{abstract}

\maketitle

\setcounter{tocdepth}{1}
\tableofcontents

\section{Introduction}

Fix an unramified quadratic extension of non-archimedean local fields $F'/F$ of positive characteristic. In the body of this paper, we prove an endoscopic fundamental lemma for the Lie algebra of the symmetric space $\GL_{n,F'}\backslash\U_{2n}$ (Theorem \ref{thm: main_intro} below). Applying motivic integration techniques \cite{CHL} (see also Julia Gordon's appendix to \cite{YunJR}) and descent arguments similar to  \cite{Lesliedescent} sketched in the appendix, our statement implies the endoscopic fundamental lemma for $\GL_{n,F'}\backslash\U_{2n}$ in characteristic zero. This confirms a conjecture in \cite{LXZfund} and fits into a program towards relative endoscopy initiated by the second author \cite{sl_endoscopy}. Its resolution is an essential step towards the stabilization of the relative trace formula associated to twisted unitary Friedberg--Jacquet periods of automorphic forms on $\U_{2n}$ considered in \cite[Conjecture 1.1]{LXZfund}.

This work constitutes the first example of an endoscopic relative fundamental lemma to be proved utilizing geometric techniques pioneered by Ng\^o and Ng\^o--Laumon \cite{ngo-laumon, ngo_endoscopie, ngo}. The moduli spaces arising naturally in the proof are expected to be important objects of independent interest in the relative Langlands program \cite{bzsv}, see for example \cite{HML}. We note that the methods used in this paper can be used with minor modification to re-prove the fundamental lemma associated to the Lie algebra of $\U_n\times \U_n\backslash \U_{2n}$, which was proved in \cite{spencer_annals} using methods from $p$-adic harmonic analysis. 

\subsection{Background and motivation}

The study of relative fundamental lemmas arises from the method of comparison of relative trace formulas, an important tool used to study questions of distinction for automorphic representations. More precisely, let $H\subset G$ be reductive groups over a number field $F$ with $X=H\backslash G$ spherical, and let $\mathbb{A}_F$ denote the ring of adeles of $F$. A cuspidal automorphic representation $\pi$ of $G$ is \emph{$H$-distinguished} if the period integral 
\[
\int_{[H]}\varphi(h)\,dh
\]
is nonzero for some $\varphi\in\pi$, where $[H] = H(F)\backslash H(\mathbb{A}_F)$. A general philosophy arising from the relative Langlands program \cite{sv,bzsv} is that properties of period integrals can be understood through $L$-functions determined by $X$ (or more precisely, the $G$-Hamiltonian variety ${M}=T^\ast X$). For example, for linear periods of $\mathrm{GL}_n\times\mathrm{GL}_n\subset\mathrm{GL}_{2n}$, Friedberg and Jacquet show that $H$-distinction of $\pi$ is given by a condition that $\pi$ be of ``symplectic type'' and that $L(\frac{1}{2},\pi,\mathrm{std})\neq 0$ \cite{FJ}; from the point of view of relative Langlands duality, this result reflects that the dual Hamiltonian to $T^\ast X$ is the $\check{G}= \GL_{2n}(\mathbb{C})$-variety $\check{M} = T^\ast(\GL_{2n}(\mathbb{C})\times^{\Sp_{2n}(\mathbb{C})}\mathbb{C}^{2n})$. The results of this paper concern the unitary version of Friedberg and Jacquet's linear periods given by 
\begin{equation}\label{cases of interest}
  \Res_{F'/F}\GL_{n}\subset \U_{2n}\qquad \text{and}\qquad \U_n\times \U_n\subset \U_{2n}.  
\end{equation}
This was one of several cases considered in recent work of Leslie, Xiao, and Zhang, which in particular proposes conjectures detecting distinction in these case in terms of base change $L$-functions \cite{LXZfund,LXZrtf}. As explained in \cite{sl_endoscopy}, the precise base change $L$-functions may be explained via equipping $\check{M}$ with the correct action of the Galois group of $F$.

To establish theorems relating period integrals to $L$-values, an essential tool is the relative trace formula (RTF). The RTF admits a spectral expansion, written in terms of period integrals, and a geometric expansion, in terms of orbital integrals. A powerful technique is to analyze the geometric expansion, comparing it to the geometric expansion of a second RTF (possibly from another group) the spectral expansion of which encodes the $L$-values of interest. This approach has been very successful in several cases, and a family of such comparisons was conjectured and partially realized in \cite{LXZrtf} in the cases \eqref{cases of interest}. Importantly, however, the RTFs for  unitary Friedberg--Jacquet periods are not \emph{stable}: the stable or geometric orbits of the $H$-action on $X$ do not agree with the rational orbits of $H(F)$ acting on $X(F)$. The RTF encoding the $L$-values is stable, so that the comparison requires that the unitary RTF be \emph{stabilized}. The results of \cite{LXZrtf} effect the first steps of the procedure by expressing the geometric side of the RTF as a sum
  \begin{equation*}
\mathrm{RTF} = \underbrace{\mathrm{SRTF}}_{\text{Stable part of RTF}} + \quad{\sum_{a\in (X\sslash H)(F)}\sum_\kappa \mathrm{O}_a^\kappa},
  \end{equation*}
where the stable part is amenable for comparisons to the linear RTF, and $\mathrm{O}_a^\kappa$ is a so-called $\kappa$-orbital integral (here, $\kappa$ runs over characters of a certain Galois cohomology group). The essential problem of {endoscopy and stabilization} is giving a meaningful interpretation for these error terms. Such a mechanism was identified in \cite{sl_endoscopy} when $X$ is a symmetric variety such as the cases considered here, so that to a given character $\kappa$, we will consider an \emph{endoscopic variety}. This problem was largely avoided in \cite{LXZfund} by applying simple trace formula techniques to ensure that (most of) the $\kappa$-terms to do not appear, but at the cost of weaker global results. 
\begin{remark}
It turns out that the theory of endoscopy for the \emph{Siegel Friedberg--Jacquet variety } $X^{\Siegel}_n=\Res_{F'/F}\GL_{n}\backslash \U_{2n}$ is more complicated than that of the unitary case $X^U_n=\U_n\times \U_n\backslash \U_{2n}$ considered in \cite{Lesliedescent,spencer_annals}. In fact, we require an enhancement of the theory of \cite{sl_endoscopy} which to an elliptic endoscopic datum $\fe$ for $X_n^\siegel$ produces for us the elliptic endoscopic varieties 
\[
X_a^\siegel\times X_b^\Siegel \qquad\qquad (a+b=n)
\]
along with an additional datum of an automorphism of the dual Hamiltonian $\check{M}_\fe$. Appropriately taking this automorphism into account produces the correct endoscopic varieties
\[
X_a^\siegel\times X_b^\unitary \qquad\qquad (a+b=n);
\]
in particular, when $a=0$ and $b=n$, we obtain $X_n^\unitary=\U_n\times \U_n\backslash \U_{2n}$ as an endoscopic variety for $X_n^\Siegel$. In this case the fundamental lemma was established in characteristic zero in \cite{LXZfund}. A fuller account of the extension to \cite{sl_endoscopy} which explains this is forthcoming.
\end{remark}

The relative fundamental lemma established in this paper relates the $\kappa$-orbital integrals to stable orbital integrals for the corresponding endoscopic variety.  Combined with the main result in \cite{Lesliedescent,spencer_annals}, which handle the result for the $X^\unitary = \U_n\times \U_n\backslash\U_{2n}$ case, our main result is the key local harmonic-analytic statement required for the stabilization of the RTF associated to the general (biquadratic) twisted Friedberg--Jacquet periods discussed in \cite{LXZfund}.

It is a stunning fact that all known proofs of the classical fundamental lemma of Langlands and Shelstad for a quasi-split group come from a reduction to positive characteristic, where powerful geometric techniques in either perverse sheaves or $p$-adic integration can be applied \cite{ngo,GWZ}.  Subsequent work has established variants of these results for the Jacquet--Rallis fundamental lemma \cite{YunJR,griffin2} and the classical fundamental lemma for the spherical Hecke algebra \cite{griffin} using generalizations of affine Springer fibers and moduli spaces of Higgs bundles. The proof in \cite{spencer_annals} proceeds via local and global harmonic analysis in characteristic zero, but it is not clear if such an approach works more generally. To handle the Siegel case, we build on work of the first author with Morrissey \cite{HM}, introducing relative moduli spaces and studying the relative Hitchin fibration to prove the corresponding endoscopic fundamental lemma.

\subsection{Main Theorem}

Let $F$ be a non-archimedean local field of characteristic $p>n$ with ring of integers $\cO$ and with residue field $k = \bF_q$. Let $F'$ be the unique non-split, unramified extension of $F$ with residue field $k'\simeq \mathbb{F}_{q^2}$. Let $\sigma\in \Gal(k'/k)$ be the order 2 generator, and for $x\in \mathrm{Mat}_n(F')$ we denote $x^* = {}^t\sigma(x)$, where $\sigma(x)$ indicates entry-wise conjugation. Let
\[
\U_n = \{g\in \GL_n(F')\mid g\cdot g^* = 1\}
\]
denote the rank $n$ unitary group over $F$ and 
\[
\Herm_n = \{x\in \mathfrak{gl}_n\mid x = x^*\}
\]
the space of $n\times n$ Hermitian matrices. Consider the following two representations:  
\begin{itemize}
    \item The ``Siegel'' case: $H^\siegel = \mathrm{Res}_{F'/F}\; \GL_n$ acting on $V^\siegel = \Herm_n\times \Herm_n$ by
\[
h\cdot(x,y) = (h\cdot x\cdot h^*, (h^*)^{-1}x h^{-1}).
\]
    \item The ``unitary'' case:  $H^\unitary = \U_n\times \U_n$ acting on $V^\unitary = \mathfrak{gl}_n(F')$ by 
    \[
(h_1,h_2)\cdot x = h_1xh_2^*.
    \]
\end{itemize}
We have an identification $V^\siegel\git H^\siegel\simeq V^\unitary\git H^\unitary$ of GIT quotients, and we denote this common quotient by $\fc$. Let $\fc^\mathrm{rs}$ denote the image of regular, semisimple matrices. For any $a\in \fc(\cO)\cap \fc^{\mathrm{rs}}(F)$ we have a Kostant-Rallis section $[\e](a)\in V^\siegel(F)$, and we consider the orbital integral
\[
\Orb_{a}^{\siegel,\kappa} = \sum_{\gamma\sim [\e](a)}\langle\kappa,\mathrm{inv}(\gamma,\gamma_0)\rangle\int_{I_\gamma\backslash H^\siegel(F)} \mathbb{1}_{V^\siegel(\cO)}(h^{-1}\cdot \gamma)\, \frac{dh}{dt}
\]
where the sum is over all $H^\Siegel(F)$-orbits of $\gamma$ in the same orbit as $[\e](a)$ over $\breve F = F\otimes_k\ol k$; $I_\gamma$ is the centralizer of $\gamma$ in $H^\siegel(F)$; and $dh$ and $dt$ are Haar measures on $H(F)$ and $I_\gamma$, respectively, normalized so that $\vol(H^\siegel(\cO),dh)=1$. Here $\mathrm{inv}(\gamma,\gamma_0)\in H^1(F,I_{\gamma_0})$ is the cohomological invariant encoding the relative position of the $H^\Siegel(F)$-orbits of $\gamma$ and $\gamma_0$, and the parameter $\kappa\in H^1(F,I_{\gamma_0})^D$ is a character arising from an endoscopic datum $\fe$ for $X^\Siegel$ (see Section \ref{sec: endoscopic spaces}). We may similarly define the stable orbital integral $\SOrb_a^\unitary$ by summing over all lifts $\gamma\in V^\unitary(F)$ of $a$ (corresponding to $\kappa=1$). 

As explained in Section \ref{sec: endoscopic spaces}, an elliptic endoscopic datum $\fe$ for $X^\Siegel$ determines a character $\kappa$ as well as an endoscopic variety of the form $X^\fe = X^\siegel_r\times X^\unitary_s$ with $r+s= n$. For such a variety, the stable orbital integral is the product $\SOrb_{(a^\siegel,a^\unitary)}^\fe=\SOrb_{a^\siegel}^\siegel\times\SO^\unitary_{a^\unitary}$. Let $\fc_\fe$ denote the GIT quotient of $X^\fe$ by $H^\fe= \Res_{F'/F}\GL_r\times [\U_s\times \U_s]$.

\begin{thm}
\label{thm: main_intro}
    (= Theorem \ref{thm: main}) For every elliptic endoscopic datum $\fe = (\kappa,\rho)$ over $F$ (see \S\ref{sec: endoscopic spaces}), there is a canonical transfer map
    \[
    \nu_\fe\colon \fc_\fe\to \fc
    \]
    such that for any $a_\fe\in \fc_\fe(\cO)$ with image $a = \nu_\fe(a_\fe)\in \fc(\cO)\cap \fc(F)^{\mathrm{rs}}$, we have
    \[
\Orb_a^{\siegel,\kappa} = q^{r_\fe(a)}\SOrb_{a_\fe}^\fe
    \]
    where $r_\fe(a)$ is a nonnegative integer, defined in \S\ref{sec: cases and statements}. 
\end{thm}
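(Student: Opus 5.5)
We follow the Ngô--Laumon strategy, adapted to the symmetric pair $(\U_{2n}, \Res_{F'/F}\GL_n)$, in the order indicated in the abstract.

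\emph{Step 1: local geometric interpretation.} We first interpret both sides of the identity in Theorem \ref{thm: main_intro} as traces of Frobenius on cohomology of relative affine Springer fibers. For $a\in \fc(\cO)\cap\fc^{\rs}(F)$, let $\cM_a$ be the ind-scheme over $k$ of lattices $hH^\siegel(\cO)$ with $h^{-1}\cdot[\e](a)\in V^\siegel(\cO)$. It carries an action of the local Picard group $\cP_a$, the loop group of the regular centralizer $J_a$. A Lang-isomorphism argument identifies $\Orb^{\siegel,\kappa}_a$ with $\tr(\mathrm{Frob},H^*(\cM_a)_\kappa)$, up to the volume normalisation of $I_\gamma$. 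Here the subscript $\kappa$ denotes the $\kappa$-isotypic part for the action of $\pi_0(\cP_a)$, which is identified with $H^1(F,I_{\gamma_0})^D$ via Tate--Nakayama. The same is done for $X^\fe=X^\siegel_r\times X^\unitary_s$, using the stable part. We then construct the transfer $\nu_\fe$ on invariant polynomials; it is essentially the map on characteristic polynomials induced by $\check M_\fe\to\check M$ together with its twisting automorphism. We check that $J_{a}$ and $J_{a_\fe}$ become isomorphic after the twist defined by $\rho$, which gives a homomorphism $\cP_{a_\fe}\to\cP_a$. The integer $r_\fe(a)$ then appears as the difference of the dimensions $\delta_a-\delta_{a_\fe}$, computed from discriminant valuations.

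\emph{Step 2: globalization.} Choose a smooth projective curve $C/k$, a place $v$ with $F=F_v$, an étale double cover $C'\to C$ realising $F'/F$ at $v$, and a line bundle $D$ of large degree. We define the relative Hitchin space $\cM$ classifying $\Res_{C'/C}\GL_n$-torsors $E$ with a section of $(V^\siegel\otimes D)\times^{H}E$, and its Hitchin map $f\colon\cM\to\cA=H^0(C,\fc_D)$. We similarly define $\cM_\fe\to\cA_\fe$ and the map $\nu\colon\cA_\fe\to\cA$. On the elliptic locus $\cA^{\mathrm{ell}}$ we prove a product formula: the fiber $\cM_a$ is, up to a $\cP_a$-equivariant homeomorphism, $\prod_x\cM_{a,x}\times^{\prod\cP_{a,x}}\cP_a$. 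Using approximation, we choose a global $a_\fe$ whose local behaviour at $v$ is the given one and which is as simple as possible elsewhere. The global identity then reduces to the local one at $v$.

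\emph{Step 3: global matching, the main step.} We must show that, over the elliptic locus and after restriction along $\nu$, the $\kappa$-part ${}^pH^*(f_*\bar{\mathbb Q}_\ell)_\kappa$ agrees with $\nu_*{}^pH^*(f_{\fe,*}\bar{\mathbb Q}_\ell)_{\mathrm{st}}$, up to a shift and Tate twist by $r_\fe$. The plan has three parts.
\begin{enumerate}
\item Prove $\delta$-regularity inequalities, $\codim\cA_\delta\ge\delta$, for the relative Picard stack, by comparing it with a Prym or norm-torus Picard stack of a spectral curve.
\item Deduce Ngô's support theorem: every perverse summand of the $\kappa$-part has support meeting the image of $\cA_\fe$, for the appropriate $\fe$ determined by $\kappa$.
\item Since both sides are then determined by their restriction to a dense open set, compare them over the locus where $\cM_a$ and $\cM_{a_\fe}$ are smooth (or torsors under Picard stacks). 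There the comparison is a direct computation of $\pi_0$ and of the component groups.
\end{enumerate}
The main obstacle is the support theorem in this relative setting. Our fibers are not the usual Hitchin fibers, so smoothness of $\cM^{\mathrm{ell}}$ (via a deformation argument, which needs $\deg D\gg0$) and the $\delta$-codimension estimate have to be proved directly. An additional difficulty is that the endoscopic variety involves the unitary factor $X_s^\unitary$ rather than $X_s^\siegel$. This requires identifying the twisted Picard stacks across the two families and checking that the automorphism of $\check M_\fe$ matches the extra $\pi_0$ twist. Given the matching, the Grothendieck--Lefschetz trace formula at a rational point of $\cA_\fe$ over $v$, combined with the product formula and induction on $n$ to cancel the contributions away from $v$, yields Theorem \ref{thm: main_intro}.
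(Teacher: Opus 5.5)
Your outline has the same architecture as the paper: point counts on RASFs, relative Hitchin spaces, the product formula, Ng\^o's support theorem, and interpolation. However, it misses a phenomenon specific to this case, and that phenomenon breaks the interpolation-and-cancellation step as you state it. The unitary factor $X^\unitary_s$ of $X^\fe$ carries an obstruction class. For $b\in\fc_s^\heartsuit(\cO_u)$ at a place $u$ inert in $\Sigma'$, the form $h^\unitary$ on $R_b(F_u')$ has determinant $b_{2s}$. Hence $\cM^\unitary_u(b)$ has a $k_u$-point if and only if $\val_u(b_{2s})$ is even; otherwise the local stable count, and with it $\SOrb^\fe_{a_\fe}$, vanishes.

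This has two consequences for your plan. First, choosing the global $a_\fe$ ``as simple as possible elsewhere'' (transverse to the discriminant) kills the endoscopic local factor at every inert place where $(a^\unitary_\fe)_{2s}$ has a simple zero, and then nothing can be cancelled. The paper instead works in the norm locus $Q$, where $(a^\unitary_\fe)_{2s}=t\,\sigma(t)$ with $t\in H^0(\Sigma',\vartheta^*D^{\otimes s})$. It then proves separately that $Q$ meets $\cA_\fe^\ani$ (a codimension estimate inside $Q$) and that transversality away from $v$ can still be imposed inside $Q$. Second, if $\val_v((a^\unitary_\fe)_{2s})$ is odd, no suitable global point exists at all. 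The zero divisor of $(a^\unitary_\fe)_{2s}$ is linearly equivalent to $2sD$, which is a norm class. By class field theory for the unramified cover $\Sigma'\to\Sigma$, the number of inert places with odd valuation is therefore even. So the auxiliary factors at some other inert place vanish on both sides, and the global identity degenerates to $0=0$.

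This obstructed case needs a purely local argument, which your plan lacks. The paper builds a duality $\bbD$ on $\cM^\Siegel_v(a)$, compatible with inversion on $\cP_v(J_a)$ and with Frobenius up to the class $[\cT_a]=x^2\in H^1(F_v,J_a)$. It shows $\kappa([\cT_a])=(-1)^{\val(a^\unitary_\fe)_{2s}}$, which gives the functional equation $\Orb^{\Siegel,\kappa}_a=-\Orb^{\Siegel,\kappa}_a$. The same vanishing is also needed in the global comparison itself, because the open comparison locus contains points meeting $\fD^\br$ transversely at inert places.

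The second gap is in Step 3(2)--(3). The support theorem only says that supports are $\delta$-critical and appear in $R^{2d}f_*$. To conclude that every summand of the $\kappa$-part has support all of $\nu_\fe(\cA_\fe)$, so that a dense open suffices, you must control $R^{2d}f_*$ on a dense open of every $\delta$-stratum. In Ng\^o's setting this comes from density of the regular locus in all fibers, which fails here: $\cM_v(a)/\Gamma_a$ can be disconnected and the regular locus need not be dense. This failure is why the paper proves the dimension formula $\dim\cM_v(a)=\dim\cP_v(J_a)$ and equidimensionality by a global argument; these are needed for flatness of $f^{\ani}$, i.e.\ for a weak abelian fibration. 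It is also why the paper works over $\cA^\dagger$ (transverse to $\fD^\br$). There the regular locus is dense and $R^{2d}f_*$ is the pushforward of a local system along the \'etale map $\varphi^\star\colon\cA^{\star,\reg}\to\cA$ from the regular-quotient base. The paper then proves that $\cA^\dagger$ is dense in each $\delta$-stratum. Smoothness of $\cM^\ani$ and $\delta$-regularity alone do not give any of this.

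Two smaller corrections. The map $J_a\to J_{\fe,a_\fe}$ is an isomorphism only generically; it induces $\cP_v(J_a)\to\cP_v(J_{\fe,a_\fe})$, not the reverse, and both sides are counted modulo $\cP_v(J_a)$. Also, for $rs>0$ the Siegel fibers over generic points of $\nu_\fe(\cA_\fe)$ are not torsors. At each point of $a_\fe^{-1}(\fR_\fe)$ the local factor is Ng\^o's chain of $\mathbb{P}^1$'s, and its count is what supplies the factor $q$.
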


\subsubsection{Transfer to characteristic zero}\label{Sec: on transfer to characteristic zero}
Below we discuss the proof of Theorem \ref{thm: main_intro} for $F$ of characteristic $p>n$. We first outline the argument to transfer this result to characteristic zero. The strategy is to apply the methods of motivic integration and the transfer principle as discussed in \cite{CluckersHalesLoeser} for the case of the classical endoscopic fundamental lemma and in the appendix by Julia Gordon to \cite{YunJR} in the case of the Jacquet--Rallis fundamental lemma.

Due to the standard nature of this argument, we omit the complete argument, noting that it is a routine check to verify that the terms defining the orbital integrals are definable in the Denef--Pas language. Indeed, many of the terms (the field extension $F'/F$, the Lie algebras, the orbital integrals, et cetera) are shown to be definable with appropriate changes to the statements in the Jacquet--Rallis setting \cite[Appendix]{YunJR}. The case-at-hand is more involved due to the presence of the $\kappa$-orbital integral and the transfer factor. Both may be handled by appropriately adapting the arguments of \cite[Section 8]{CluckersHalesLoeser} for the classical fundamental lemma to our setting. More precisely, Cluckers--Hales--Loeser show how to define the character $\kappa$ (Section 8.1 of \emph{loc. cit.}), the normalization of the transfer factor in terms of the Kostant section (Section 8.2), and the pairing of stable orbits (Section 8.3). An inspection of the definition of the transfer factor for the Siegel case in Appendix \ref{Appendix A} and the unitary case in \cite{Lesliedescent} shows that precisely these same ingredients are required to handle the transfer factors in our setting.  

With this in mind, we obtain the following corollary of the transfer principle and Theorem \ref{thm: main_intro}.

\begin{cor}\label{Cor: transfer}
    There exists an algorithmically computable constant $M>0$ such that for any characteristic-zero local field $F$ of residue characteristic larger than $M$, the result stated in Theorem \ref{thm: main_intro} holds for $F$ and its unique unramified extension $F'/F$.
\end{cor}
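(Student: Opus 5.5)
We deduce Corollary \ref{Cor: transfer} from Theorem \ref{thm: main_intro} by the Cluckers--Hales--Loeser transfer principle \cite{CluckersHalesLoeser}. That principle says: if a family of equalities between definable (motivic) functions holds for all local fields of large positive characteristic, then it holds for all characteristic-zero local fields of large residue characteristic, and conversely. The plan is to recast Theorem \ref{thm: main_intro}, for all $F$ and all $(a_\fe, \fe)$ at once, as the vanishing of one definable constructible motivic function in the Denef--Pas language with coefficients in $\Z$.

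First, fix $n$ and the finitely many combinatorial types of elliptic endoscopic data $\fe=(\kappa,\rho)$, indexed by $r+s=n$ and the relevant data of $\rho$. For each type we express the following uniformly in the field as definable objects. The unramified quadratic extension $F'/F$ is given by adjoining a square root of a residue-field non-square lifted to a unit, as in Gordon's appendix to \cite{YunJR}. The spaces $V^\siegel$, $V^\unitary$, the groups $H^\siegel$, $H^\fe$, and the quotients $\fc,\fc_\fe$ come from their explicit matrix descriptions. The invariant map and the transfer map $\nu_\fe$ are polynomial maps on coefficients of characteristic polynomials. The Kostant--Rallis section $[\e](a)$ is given by explicit matrices. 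The exponent $r_\fe(a)$ is a valuation of a discriminant-type polynomial, hence a definable $\Z$-valued function.

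Second, we handle the orbital integrals. The stable integrals $\SOrb^\fe_{a_\fe}$ are integrals over definable sets of orbits, normalized by definable Haar measures. Following \cite[Appendix]{YunJR}, we write them as sums over rational orbits of volumes of definable families $\{h: h^{-1}\gamma\in V(\cO)\}$ modulo centralizers. The quotient measure $dh/dt$ is made definable by choosing definable measures on the torus centralizers via their splitting data.

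The $\kappa$-orbital integral needs more care. Following \cite[\S8]{CluckersHalesLoeser}, we parametrize $H^1(F,I_{\gamma_0})$ by a definable set. Here $I_{\gamma_0}$ is a product of restrictions of scalars of norm-one tori over étale algebras, so $H^1$ is a product of groups $\Z/2$ detected by norm residues, i.e.\ by valuations and residue square classes. The character $\kappa$ is then a definable $\{\pm1\}$-valued function, and $\langle\kappa,\mathrm{inv}(\gamma,\gamma_0)\rangle$ becomes a definable sign attached to each orbit $\gamma$. The transfer factor, as described in Appendix \ref{Appendix A} for the Siegel case and in \cite{Lesliedescent} for the unitary case, is normalized against the Kostant section and built from the same ingredients, so it is also definable. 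We then form the difference $\Orb^{\siegel,\kappa}_{a} - q^{r_\fe(a)}\SOrb^\fe_{a_\fe}$ as a constructible motivic function on the definable parameter set of $a_\fe$ with $\nu_\fe(a_\fe)$ regular semisimple.

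Finally, Theorem \ref{thm: main_intro} says that the specialization of this function vanishes for every local field of characteristic $p>n$. The transfer principle then gives an $M$, computable from the formulas, such that it vanishes for all characteristic-zero $F$ with residue characteristic exceeding $M$. The main obstacle is the second step, specifically making the pairing with $\kappa$ definable uniformly. One must verify that the identification of $H^1(F,I_\gamma)$ and the choice of base point $\gamma_0=[\e](a)$ vary definably in the family. The plan is to mimic \cite[\S\S8.1--8.3]{CluckersHalesLoeser}, replacing their Kostant section and stable conjugacy classes with the Kostant--Rallis section and $H^\siegel$-orbits.
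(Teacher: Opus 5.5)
Your proposal is correct and follows essentially the same route as the paper's sketch: you deduce the corollary from Theorem \ref{thm: main_intro} via the Cluckers--Hales--Loeser transfer principle, you check Denef--Pas definability of the basic objects as in Gordon's appendix to \cite{YunJR}, and you handle $\kappa$, the Kostant-section normalization, and the pairing on stable orbits by adapting \cite[\S8]{CluckersHalesLoeser}. You also correctly single out the uniform definability of $\langle\kappa,\mathrm{inv}(\gamma,\gamma_0)\rangle$ as the only nonroutine point, which the paper likewise leaves to the reader.
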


\subsection{Reformulation as Lattice Counts}

We reformulate Theorem \ref{thm: main_intro} as a point count of a moduli space of lattices which we call the \emph{relative affine Springer fiber (RASF)}. In particular, to an $n$-tuple $a = (a_2,\dots, a_{2n})\in\cO^n$, we may form the $F'$-algebra
\[
R_a(F')= F'[x]/(P_a(x)),\quad \text{for }P_a(x):=x^{2n}+a_2x^{2n-2}+\cdots+a_{2n-2}x^2+a_{2n}
\]
with the obvious sub-$\cO'$-algebra $R_a(\cO')$. Then $R_a(F')$ carries natural involutions $\iota(x) = -x$ and $\sigma$ from the Galois action of $F'/F$.

We say $\Lambda\subset R_a(F')$ is an $R_a(\cO')$-lattice if it is stable under the $R_a(\cO')$-action and it is free of rank $2n$ as an $\cO'$-module. There are natural non-degenerate Hermitian forms $h^\siegel$ and $h^\unitary$ on $R_a(F')$ constructed via the trace pairing by
\[
h^\siegel(f,g) = \mathrm{Tr}_{R_a(F')/F'}\left( \frac{f\cdot \sigma(g)}{P_a'} \right),\quad h^\unitary(f,g) = h^\siegel(xf,g).
\]
For $\star \in \{\siegel,\unitary\}$, we define $\cM^\star(a)$ as the moduli of $R_a(\cO')$-lattices $\Lambda$ such that $\iota(\Lambda) = \Lambda$ and $\Lambda$ is self dual with respect to $h^\star$. We call $\cM^\star(a)$ the relative affine Springer fiber (RASF) at $a$. It is a analogue of affine Springer fibers, which are subvarieties of the affine Grassmannian introduced by Kazhdan and Lusztig \cite{kl} of central importance to geometric representation theory. In fact, $\cM^\star(a)$ can be considered as the fixed points of the involution $\iota$ acting on the unitary affine Springer fiber studied in \cite{ngo-laumon}. We remark on generalizations to other spherical varieties in \S\ref{sec: general def of rasf}.

The RASF $\cM^\star(a)$ is representable by a locally-finite-type, projective scheme. It admits an action of a group of symmetries $\cP(J_a)$, which is independent of $\star\in \{\siegel,\unitary\}$. Together, we show that $\cM^\star(a)$ and $\cP(J_a)$ satisfy the point-counting properties of \cite[Hypotheses 8.1.9]{ngo}; namely, the stabilizers of $\cP(J_a)$ acting on $\cM^\star(a)$ are of finite type, and there is a canonical lattice $\Gamma_a\subset \cP(J_a)$ which acts freely on $\cM^\star(a)$ such that $\cM^\star(a)/\Gamma_a$ and $\cP(J_a)/\Gamma_a$ are finite type. In stark contrast to the case of affine Springer fibers, however, the quotient $\cM^\star(a)/\Gamma_a$ need not be connected, even for an optimally chosen lattice. Instead, $\Gamma_a$ only acts transitively on irreducible components within the connected components of $\cM^\star(a)$. This phenomena obstructs standard approaches to proving dimension formulas for RASFs, and so establishing our dimension formula (Proposition \ref{prop: dim of RASF}) and the equidimensionality of $\cM^\star(a)$ (Proposition \ref{prop: equidimensionality}) is one of the more technical parts of the paper and can be viewed as one of our main results. Indeed, inspired by work of Jingren Chi \cite{jingren}, we use global methods to approach these questions, exploiting properness of the relative Hitchin map over a carefully constructed family.

For an endoscopic symmetric space $X^\fe = X^\siegel_r\times X^\unitary_s$, the RASF at $a_\fe=(a^\siegel,a^\unitary)$ is a product $\cM^\fe(a^\siegel,a^\unitary) = \cM^\siegel(a^\siegel)\times \cM^\unitary(a^\unitary)$. Moreover, if $\nu_\fe(a_\fe)=a$, there is a canonical morphism of centralizers $J_a\to J_{\fe,a_\fe}$ (Proposition \ref{prop: centralizers under transfer}) which induces a surjection between symmetry groups $\cP(J_a)\to \cP(J_{\fe,a})$. In particular, $\cP(J_a)$ acts on $\cM^\fe(a_\fe)$, and there are finitely many $\cP(J_a)$-orbits on both $\cM^\siegel(a)$ and $\cM^\fe(a_\fe)$. Therefore, the sums
\[
\#[\cM^\siegel(a)/\cP(J_a)](k)_\kappa = \sum_{O}\frac{\langle \kappa,\mathrm{cl}(O)\rangle}{\#\Aut_{\cP(J_a)}(O)(k)},\quad \#[\cM^\fe(a_\fe)/\cP(J_a)](k) = \sum_{O}\frac{1}{\#\Aut_{\cP(J_a)}(O)(k)},
\]
 over points $O$ in the groupoid $[\cM^\siegel(a)/\cP(J_a)](k)$, respectively $[\cM^\fe(a_\fe)/\cP(J_a)](k)$, are well-defined. Here, $\langle \kappa,\mathrm{cl}(O)\rangle$ denotes a certain sign defined in \S\ref{sec: point counting for RASFs} and determined by the endoscopic datum. Proposition \ref{prop: point counting for P(J')} shows how the orbital integrals appearing in Theorem \ref{thm: main_intro} can be written in terms of these weighted point counts.

\begin{thm}
    \label{thm: local main_intro}
    (= Theorem \ref{thm: local main}) Let $\fe$ be an endoscopic datum, and suppose that $a\in \fc^\heartsuit(\cO_v)$ and $a_\fe\in \fc_\fe^
    \heartsuit(\cO)$ are such that $\nu_\fe(a_\fe) = a$. Then, for all finite extensions $K/k$, we have
    \[
    \#[\cM^\Siegel(a)/\cP(J_a)](K)_\kappa = q^{r_\fe(a)}\;\#[\cM^\fe(a_\fe)/\cP(J_a)](K)
    \]
\end{thm}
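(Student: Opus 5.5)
We follow the Ngô--Laumon strategy: globalize the local point counts, then compare global cohomology via a support theorem. First, fix $a$ and $a_\fe$ as in the statement and choose a smooth projective curve $X$ over $k$ with a point $v$ whose completed local ring is $\cO$, together with a sufficiently ample line bundle $D$. We would then build a global section $\mathbf a\in \cA^\heartsuit(k)$ of the relative Hitchin base for the Siegel pair (Hermitian pairs of Higgs fields valued in $D$ over the étale double cover $X'\to X$) and a global endoscopic section $\mathbf a_\fe$ with $\nu_\fe(\mathbf a_\fe)=\mathbf a$. These sections should agree with $a$, $a_\fe$ to high order at $v$ and be regular semisimple, with transversal discriminant, at all other places; existence follows by approximation once $\deg D$ is large.

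By the product formula, the fibre $\cM^\siegel_{\mathbf a}$ of the relative Hitchin fibration is, up to a homeomorphism after quotienting by $\cP_{\mathbf a}$, the product over the finitely many bad places of local RASFs modulo $\cP(J)$. The same holds for $\cM^\fe_{\mathbf a_\fe}$. Away from $v$ the local factors are trivial or match by the transversal case, which we check directly: there the RASFs are points or $\mathbb P^1$-like and the $\kappa$-sign is explicit. Combined with Proposition \ref{prop: point counting for P(J')}, this reduces Theorem \ref{thm: local main} to a global identity
\[
\#[\cM^\siegel_{\mathbf a}/\cP_{\mathbf a}](K)_\kappa = q^{r_\fe(\mathbf a)}\,\#[\cM^\fe_{\mathbf a_\fe}/\cP_{\mathbf a}](K),
\]
where the local exponent $r_\fe(a)$ is extracted from the global discrepancy of dimensions. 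Varying $K$ is handled because all constructions are defined over $k$.

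To prove the global identity, we apply Grothendieck--Lefschetz. The left side is the trace of Frobenius on the $\kappa$-isotypic part $(R f_*\overline{\mathbb Q}_\ell)_\kappa$ for the $\pi_0(\cP)$-action. The right side is the trace on the stable part for $\cM^\fe$, shifted by $q^{r}$ and transported along $\nu_\fe$. The main input, already described as the global theorem of the paper, is an isomorphism of perverse cohomology sheaves
\[
\bigoplus_n {}^pH^n(f_*\overline{\mathbb Q}_\ell)_\kappa|_{\cA_\fe^\heartsuit}\cong \bigoplus_n {}^pH^{n-2r}(f_{\fe,*}\overline{\mathbb Q}_\ell)_{\mathrm{st}}(-r).
\]
We would establish this on a dense open set where everything is computed by comparing $\cP$-torsors (on the transversal locus both sides are explicit). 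The isomorphism then extends by the support theorem: every simple constituent of ${}^pH^n(f_*)_\kappa$ has support meeting that open set. This is proved with Ngô's $\delta$-inequality $\operatorname{codim}\ge\delta$ for the family, which needs the dimension formula (Proposition \ref{prop: dim of RASF}), equidimensionality (Proposition \ref{prop: equidimensionality}), and properness of the relative Hitchin map. The intermediate extension argument then gives matching at $\mathbf a$.

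The main obstacle will be the support theorem. Because $\Gamma_a$ is not transitive on connected components of the RASF, the top-cohomology/freeness argument of Ngô must be replaced by one tracking components. We would first try to show that the $\kappa$-part of the top cohomology of each fibre is still computed by $\pi_0(\cP)$ acting on irreducible components, together with a separate $\iota$-induced grading of connected components. We would also need to control $\kappa$ on that grading, which is where the twist from $X_b^\siegel$ to $X_b^\unitary$ enters.
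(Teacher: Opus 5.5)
\textbf{Gap: the globalization ignores the obstruction class, so the cancellation at auxiliary places can be vacuous.} The overall architecture matches the paper: product formula, global $\kappa$-stabilization via the support theorem, then cancelling the auxiliary places. The failure is in the choice of global point.

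Write $X^\fe=X^\siegel_r\times X^\unitary_s$. At a place $u$ inert in $\Sigma'/\Sigma$, the unitary local factor has rational points only if $\val_u\big((a^\unitary_\fe)_{2s}\big)$ is even (Lemma \ref{lem: obstruction for unitary}, Proposition \ref{prop: obstruction measures existence of k points}). When it is odd, the endoscopic local count is $0$, and the Siegel $\kappa$-count is also $0$ (Proposition \ref{prop: obstructed case}).

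If you take $\mathbf a_\fe$ transversal to the discriminant away from $v$, then $(a^\unitary_\fe)_{2s}$ has simple zeros, and generically some of them lie at inert places. At those places both local factors vanish, the global identity collapses to $0=0$, and you cannot divide out the places $u\neq v$ to isolate $v$.

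A smaller point: $\mathbf a=\nu_\fe(\mathbf a_\fe)$ is never transversal to $\fD^\cent$ where $\mathbf a_\fe$ meets $\fR_\fe$, since $\nu_\fe^*\fD^\cent=\fD^\cent_\fe+2\fR_\fe$. The genericity conditions must therefore be imposed on $\mathbf a_\fe$.

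The paper's fix is Proposition \ref{prop: existence of interpolation point}:
\begin{enumerate}
\item Restrict to the norm locus, where $(a^\unitary_\fe)_{2s}=t\,\sigma(t)$ with $t\in H^0(\Sigma',\vartheta^*D^{\otimes s})$. Then $\Ob_u=0$ at every place.
\item Drop transversality to $\fD^\br$. Require only that $\mathbf a_\fe^{-1}(\fD^\cent)$ and $\mathbf a_\fe^{-1}(\fD^\br)$ have disjoint supports away from $v$, and that $d^\cent_{\fe,u}\le 1$ for $u\neq v$.
\item Re-prove that this thinner family still meets the anisotropic locus (Lemma \ref{lem: complement of ani locus}).
\end{enumerate}
Such points are produced by Lang--Weil only over $K$ with $[K:k]$ large. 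So varying $K$ is not automatic: you obtain the identity for $[K:k]\gg0$ and must then descend it to small $K$.

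\textbf{Second gap: the case $\Ob_v(a_\fe)\neq0$ cannot be reached by any globalization.} The étale cover $\Sigma'\to\Sigma$ gives an unramified quadratic character $\eta$ of $\Pic(\Sigma)$ with $\eta(\cO(u))=-1$ exactly for inert $u$. For any section $b$ of $D^{\otimes 2s}$ one has $\eta(\operatorname{div} b)=\eta(D)^{2s}=1$. Hence the inert places where $(a^\unitary_\fe)_{2s}$ has odd valuation come in even number. An odd valuation at $v$ therefore forces a second place with a vanishing local factor, and the cancellation fails again.

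The paper disposes of this case purely locally, before globalizing:
\begin{enumerate}
\item It builds a duality $\bbD$ on $\cM^\siegel_v(a)$ with $\bbD(p\cdot m)=p^{-1}\cdot\bbD(m)$.
\item $\bbD$ sends the class $\cl(x)$ to $[\cT_a]\cl(x)^{-1}$, where $[\cT_a]=x^2\in H^1(F_v,J_a)$.
\item It computes $\kappa([\cT_a])=(-1)^{\val (a^\unitary_\fe)_{2s}}$.
\end{enumerate}
This gives $\Orb^{\siegel,\kappa}_a=-\Orb^{\siegel,\kappa}_a$, hence $0$.

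\textbf{On the step you flag as the main obstacle.} The paper does not track connected components directly. Over $\cA^\dagger$ the regular locus is fibrewise dense (Lemma \ref{lem: density of regular locus}). The map $\cM^{\reg}\to\cA$ factors as $\cM^{\reg}\to\cA^{\reg}\to\cA$ with the second map finite étale. Together these make the top-degree $\kappa$-part a local system on $\cA^\dagger$. This is combined with density of $\cA^\dagger$ in every $\delta$-stratum (Proposition \ref{prop: density of dagger locus in delta strata}). Finally, $\delta$-regularity holds only below a threshold that grows with $\deg D$; it does not follow from the dimension formula.
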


Note that Theorem \ref{thm: local main_intro} combined with Grothendieck--Lefschetz reduces Theorem \ref{thm: main_intro} to a computation of cohomology of RASFs.

\subsection{Global moduli spaces}

To study point counts on RASFs, we construct global moduli spaces called \emph{relative Hitchin spaces}, and establish a product formula (Proposition \ref{prop: product formula}) to relate these to the RASFs from the previous section. These global moduli spaces are unitary analogues of Hitchin systems for symmetric varieties, which have been studied extensively by Schaposnik \cite{schaposnik2013spectral,schaposnik2014introduction,schaposnik, schaposnik2018geometric}, Pe\'on-Nieto and Garc\'ia-Prada \cite{gppn}, and the first author with Morrissey \cite{HM}.

Fix a smooth, projective, geometrically-connected curve $\Sigma$ of genus at least 2 and a divisor $D$ of degree at least $2g$. Fix an \'etale double cover $\vartheta\colon \Sigma'\to \Sigma$, with $\Sigma'$ smooth, projective, and geometrically connected as well. From this, we may form  moduli spaces $\cM^\star = \mathrm{Sect}_\Sigma([V^\star_D/H^\star])$, where the subscript $D$ denotes the induced bundle on $\Sigma$, and $H^\star$ and $V^\star$ are taken over $k$ instead of $F$. Here, $\mathrm{Sect}_\Sigma$ denotes the space of maps $\Sigma\to [V^\star_D/H^\star]$ whose composition with the structure map for $[V^\star_D/H^\star]\to \Sigma$ is the identity.

We may also describe the moduli spaces $\cM^\star$ more explicitly in terms of fixed point stacks of involutions on moduli of unitary Higgs bundles. Namely, we let $\cN$ describe the moduli of unitary Higgs bundles of Laumon and Ng\^o, whose $S$-points are given by
\[
    \cN(S) = \left\{ (\cE',h,\phi')\left| \begin{array}{c} \cE'\text{ is a rank $2n$ vector bundle on }\Sigma'\times S, \\ h\colon \cE'\to \sigma^*(\cE')^\vee\text{ is a Hermitian form on $\cE'$, i.e. $h = \sigma^*h^\vee$,}  \\ \phi'\colon \cE'\to \cE'\otimes D,\; \text{satisfying $\sigma^*(\phi')^\vee\circ h +h\circ \phi' = 0$}\end{array}\right. \right\}.
 \]
We have involutions $\iota^\star$ on $\cN$ given by
\[
\iota^\star(\cE',h,\phi') = \begin{cases}
(\cE',-h,-\phi') & \text{if }\star = \siegel \\
(\cE',h,-\phi') & \text{if }\star = \unitary,
\end{cases}
\]
and show in Proposition \ref{prop: fixed points of involution on N} that $\cM^\star$ is a distinguished component of the fixed point stack $\cN^{\iota^\star}$, and is equal to $\cN^{\iota^\star}$ when we restrict to a suitable anisotropic locus. From this we obtain concrete realizations of $\cM^\siegel$ and $\cM^\unitary$; for example,
\[
\cM^\Siegel(S) = \left\{ (\cE',h,\phi_{12}',\phi_{21}')\left| \begin{array}{c}\cE'\text{ is a rank $n$ vector bundle on $\Sigma'\times S$, }\phi_{12}'\colon \cE'\to \sigma^*(\cE')^\vee\otimes D\\ \text{and }\phi_{21}'\colon \sigma^*(\cE')^\vee\to \cE'\otimes D\text{ such that }\sigma^*\phi_{ij}'^\vee = \phi_{ij}' \\
\text{and }h\colon \cE'\to \sigma^*(\cE')^\vee\text{ is a Hermitian form on $\cE'$}\end{array} \right.\right\}.
\]

For $\star\in \{\siegel,\unitary\}$, there is a Hitchin-type map 
\[
f^\star\colon \cM^\star\to \cA
\]
where $\cA = \bigoplus_{i=1}^n H^0(\Sigma,D^{\otimes 2i})$ is an affine space. These maps essentially compute the $H^\star$-invariants of the relative Higgs field $\phi'$. As in the case of the classical Hitchin moduli space, this morphism reveals a remarkable amount of structure in $\cM^\star$. Indeed, we consider a family of curves $\wt{\Sigma}_a'$ over $\cA$ equipped with involutions $\iota$ (induced by the involution of $G$ determining $X^\star$) and $\sigma$ (induced by the Galois action), so that $\wt{\Sigma}_a'$ gives a globalization of the local ring $R_a(F')$. Whenever $\wt\Sigma_a'$ is reduced, we show that the fiber $f^{\star,-1}(a)$ admits a spectral description as the moduli of $\iota$-fixed elements in a unitary version of the compactified Prym variety of $\wt\Sigma_a'$ (Proposition \ref{prop: spectral description for unitary higgs}). In particular, there is a commutative group stack $\cP$, the relative Prym variety of $\wt\Sigma_a'/\iota\to \wt\Sigma_a'/\langle\sigma,\iota\rangle$, which acts on the fibers of $f^\star$ with finitely many orbits whenever $\wt\Sigma_a'$ is reduced. When $\wt\Sigma_a'$ is smooth (a generic condition on $\cA$), the fiber $f^{\star,-1}(a)$ is a finite unions of $\cP_a$-torsors.

There is a Beauville--Laszlo-style gluing map from RASFs to relative Higgs bundles. This produces a product formula (Proposition \ref{prop: product formula})
\begin{equation}
\label{eqn: product_intro}
    \#[\cM^\star_a/\cP_a](k)_\kappa = \prod_{v\in |\Sigma|}\#[\cM^\star_v(a)/\cP(J_{a,v})](k_v)_\kappa
\end{equation}
for $a$ in an elliptic locus $\cA^\ani\subset \cA$ (\S\ref{sec: open subsets}), where only finitely many terms on the right-hand side are nontrivial.

Over $\cA^\ani$, the symmetry group $\cP_a$ acts on the complex $\bR\cameral{f}_*^{\siegel,\ani}\ol\Q_\ell$ through its sheaf of connected components $\pi_0(\cP_a)$. In fact, after base change by an \'etale map $\cameral\cA^\ani\to \cA^\ani$ (\S\ref{sec: component groups of P}), there is a surjection $\mathbb{X}_*(\wh\canC)\times \cameral\cA^\ani\to \pi_0(\cP_a)$ over $\cameral\cA^\ani$ which allows us to decompose the cohomology into $\kappa\in \wh\canC[2]$ constituent pieces
\[
\bR\cameral{f}_*^{\siegel,\ani}\ol\Q_\ell = \bigoplus_\kappa (\bR\cameral{f}_*^{\siegel,\ani}\ol\Q_\ell)_\kappa,\quad\quad \bR\cameral{f}_*^{\fe,\ani}\ol\Q_\ell = \bigoplus_\kappa (\bR\cameral{f}_*^{\fe,\ani}\ol\Q_\ell)_\kappa
\]
where $(\bR\cameral{f}_*^{\star,\ani}\ol\Q_\ell)_\kappa\subset \bR\cameral{f}_*^{\star,\ani}\ol\Q_\ell$ is the subsheaf on which $\mathbb{X}_*(\wh\canC)$ acts through evaluation at $\kappa$. We prove the following global result.

\begin{thm}
    \label{thm: global main_intro}
    (= Theorems \ref{thm: global main} and \ref{thm: global main over ani}) For any elliptic endoscopic datum $\fe$, there is a canonical transfer map on Hitchin bases
    \[
    \nu_\fe\colon \cameral\cA_\fe\to \cameral\cA,
    \]
    which is a closed embedding. For any $\kappa\in \wh{\canC}[2]$, we have an isomorphism
    \[
    \nu_\fe^*\big(\bR \cameral{f}^{\Siegel,\ani}_*\ol{\Q}_\ell)_{\kappa}[2r_\fe(D)](r_\fe(D)) =  (\bR\cameral{f}^{\fe,\ani}_*\ol{\Q}_\ell)_{\st}
    \]
    over $\ol{k}$ restricted to the locus $\cameral \cA_\fe^\ani$ which induces an isomorphism of semisimplifications over $k$.
\end{thm}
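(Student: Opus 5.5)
Following Ngô's strategy for the classical fundamental lemma, adapted to the relative Hitchin fibration, the plan has four steps.

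\textbf{Transfer map.} First I construct $\nu_\fe$ on the level of spectral data. A point of $\cameral\cA_\fe=\cameral\cA^\siegel_r\times\cameral\cA^\unitary_s$ gives polynomials $P_{a^\siegel}(x)$ of degree $2r$ and $P_{a^\unitary}(x)$ of degree $2s$ in $x^2$ with coefficients in $H^0(\Sigma,D^{\otimes 2i})$. I set $\nu_\fe(a^\siegel,a^\unitary)$ to be the characteristic polynomial of the product, possibly after the twist $x\mapsto \varpi x$ by the character $\rho$ of $\Sigma'/\Sigma$ that encodes the unitary factor. This is a map of affine spaces that is linear in suitable coordinates (factorization of polynomials), and on the cameral cover it is a closed embedding. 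Away from the discriminant it identifies $\wt\Sigma'_{\nu_\fe(a_\fe)}$ with the union of the two spectral curves. This yields the homomorphism $\cP_{\nu_\fe(a_\fe)}\to\cP_{\fe,a_\fe}$ globalizing Proposition \ref{prop: centralizers under transfer}.

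\textbf{Support theorem.} Over $\cA^\ani$, I use the known properness of $f^{\siegel,\ani}$ and the smoothness of $\cM^{\siegel,\ani}$ (the deformation theory of $\iota$-fixed unitary Higgs bundles with $\deg D\ge 2g$), together with the $\delta$-regularity of the $\cP$-action. Here I need the dimension estimate $\mathrm{codim}\,\cA_\delta\ge\delta$, which I would derive from the local dimension formula for RASFs (Proposition \ref{prop: dim of RASF}) via the product formula. Ngô's support inequality then applies to the cameral base change. Combined with Deligne's theorem and the $\pi_0(\cP)$-decomposition, it shows that every simple perverse constituent of ${}^p\mathcal H^i(\bR\cameral f^{\siegel,\ani}_*\ol\Q_\ell)_\kappa$ has support equal to an irreducible component of the locus $\cameral\cA_\kappa$ where $\kappa$ factors through $\pi_0(\cP_a)$. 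I would identify that locus with $\nu_\fe(\cameral\cA_\fe^\ani)$ by a Galois-cohomology computation with $\wh\canC$. I expect this step to be the main obstacle. Because $\cM^\star_a$ can be disconnected modulo $\Gamma_a$, the freeness and abelian-part arguments must be applied componentwise, and I would first try restricting to a distinguished component as in Proposition \ref{prop: fixed points of involution on N}.

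\textbf{Generic comparison.} With supports controlled, it suffices to prove the isomorphism on a dense open subset of $\cameral\cA^\ani_\fe$: the locus where $\wt\Sigma'_{a_\fe}$ is smooth and $\nu_\fe(a_\fe)$ has spectral curve with only transverse nodal intersections between the two factors. There both sides are unions of torsors under the abelian varieties attached to $\cP_a$ and $\cP_{\fe,a_\fe}$. The $\kappa$-part of the cohomology of the $\cP_a$-torsors matches the stable part of the endoscopic side, shifted by the codimension $r_\fe(D)$ of $\nu_\fe$. I would prove this by a direct local computation at the nodes with the product formula (\ref{eqn: product_intro}), reducing to the rank-one RASF where both counts are explicit. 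Alternatively, I would compute it via the decomposition of the Tate module of the Prym into $\kappa$-eigenspaces.

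\textbf{Extension.} Both sides are pure complexes, by properness and smoothness together with $\delta$-regularity. By the support theorem, each is the intermediate extension of its restriction to the dense open locus. The generic isomorphism therefore extends, giving an isomorphism over $\ol k$. Frobenius compatibility holds only up to semisimplification, since the decomposition theorem is over $\ol k$. The local Theorem \ref{thm: local main_intro} then follows by Grothendieck--Lefschetz and the product formula after globalizing $a$.
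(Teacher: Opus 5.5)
Your overall architecture (transfer map, support theorem, generic comparison through the product formula and explicit local counts, then extension) is that of the paper's proof of Theorem~\ref{thm: global main}. However, two steps would not go through as you describe them, and a third is incomplete.

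\emph{First, $\delta$-regularity.} You propose to obtain $\codim_{\cA^\ani}(\cA_\delta)\ge\delta$ from Theorem~\ref{prop: dim of RASF} and the product formula. Those results compute fiber dimensions, namely $\dim\cM^\star_a=\dim\cP_a$; this is what gives flatness in Corollary~\ref{cor: ani is flat}, i.e.\ condition (1) of a weak abelian fibration. They say nothing about how large the locus $\{a:\delta(a)=\delta\}$ is in the base.

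The paper does not prove $\delta$-regularity on all of $\cameral\cA^\ani$. It only has the estimate for $\delta$ bounded in terms of $\deg D$ (Corollary~\ref{cor: delta regularity in the limit}, via $\delta(a)=\delta_{n,a}$ and Ng\^o's $\GL_n$ results). The density input discussed below also needs $\deg D\ge g+\tfrac32\delta$. So the sheaf-theoretic argument only yields the isomorphism over $\cameral\cA_{\fe,<\delta}$, and the full statement is reached in the opposite order from yours:
\begin{enumerate}
\item One first proves the local Theorem~\ref{thm: local main} by interpolation (Proposition~\ref{prop: existence of interpolation point}). This needs a global $a_\fe$ with prescribed germ at $v$, transverse elsewhere, and anisotropic (Lemma~\ref{lem: complement of ani locus}). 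It must also have $a^\unitary_{2s}$ a norm, so that every auxiliary local factor is unobstructed, nonzero and can be cancelled. Finally, $\delta_\fe(a_\fe)$ must be small enough to lie in the good locus.
\item Only then is Theorem~\ref{thm: global main over ani} recovered on all of $\cameral\cA^\ani_\fe$, from the local result, the product formula and Chebotarev.
\end{enumerate}
Your final step presupposes the full-locus isomorphism, which your route does not reach.

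\emph{Second, identifying supports.} Ng\^o's theorem only says that a support $Z$ is $\delta$-critical and that $\bR^{2d}$ has a direct factor supported on $Z$ over some open set meeting $Z$. To force $Z=\cameral\nu_\fe(\cameral\cA_\fe)$, you must know $(\bR^{2d}\cameral f_*\ol\Q_\ell)_\kappa$ on a dense open subset of every $\delta$-stratum. Ng\^o gets this from density of the regular locus in the fibers, and that fails here. When $a$ meets $\fD^\br$ with multiplicity $\ge 2$, the RASF can have more $\Gamma_a$-orbits of irreducible components than there are regular $\cP_v(J_a)$-orbits, of which there are at most two (compare Proposition~\ref{prop: dim formula split case}). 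This, rather than disconnectedness as such, is the obstacle. Restricting to the component $\cM^\unitary_{n,n}$ of Proposition~\ref{prop: fixed points of involution on N} does not address it.

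The missing idea is the locus $\cA^\dagger$ of points transverse to $\fD^\br$. There the regular locus is fiberwise dense (Lemma~\ref{lem: density of regular locus}), and $\cA^{\star,\reg}\to\cA$ is finite \'etale (Proposition~\ref{prop: A reg over dagger locus}). Hence $(\bR^{2d}\cameral f_*\ol\Q_\ell)_\kappa$ is a local system on $\cameral\cA^\dagger_\kappa$. Combined with density of $\cA^\dagger$ in each $\delta$-stratum (Proposition~\ref{prop: density of dagger locus in delta strata}), this excludes smaller supports.

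\emph{Third, the generic comparison.} On a dense open subset of $\cameral\cA_\fe$ the coordinate $a^\unitary_{2s}$ has simple zeros. At those zeros that are inert in $\Sigma'$, the local unitary RASF has no $k_v$-points (Proposition~\ref{prop: obstruction measures existence of k points}). You must show that the Siegel $\kappa$-count vanishes there as well; the paper does this with the duality and functional equation of Proposition~\ref{prop: obstructed case}. Reducing only to the nodes and the rank-one $\bP^1$-chain computation misses these places.

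Minor points: $\nu_\fe$ is multiplication of polynomials, not a linear map, and no twist by $\rho$ occurs. Purity needs only properness and smoothness, not $\delta$-regularity.
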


We note that we do not prove Theorem \ref{thm: global main_intro} directly, but rather first prove it over an open subset $\cameral\cA_\fe^\good\subset \cameral\cA_\fe^\ani$, defined so that the fibrations $f^{\siegel,\ani}$ and $f^{\fe,\ani}$ satisfy the conditions of Ng\^o's support theorem on $\cameral\cA_\fe^\good$ (see Definition \ref{def: good locus}). The codimension of the complement of $\cameral\cA_\fe^\good$ in $\cameral\cA_\fe^\ani$ tends towards infinity as $\deg D$ goes to infinity, so together with an appropriate interpolation result, this is enough to deduce Theorem \ref{thm: local main_intro} (and hence Theorem \ref{thm: main_intro}). To obtain Theorem \ref{thm: global main_intro} over the full anisotropic locus, we reverse the flow of information and use Theorem \ref{thm: local main_intro} and the product formula to conclude the global result.

We prove Theorem \ref{thm: global main_intro} over $\cA_\fe^\good$ roughly as follows:  For each $\kappa$, the decomposition theorem of Beilinson, Bernstein, Deligne, and Gabber \cite{BBD} implies that  $(\bR\cameral{f}_*^\ani\ol\Q_\ell)_\kappa$ is a semisimple perverse sheaf over $\ol k$. By Ng\^o's support theorem \cite{ngo}, to determine the supports of the simple summands of $(\bR\cameral{f}_*^\ani\ol\Q_\ell)_\kappa$, it is enough to study the top degree ordinary cohomology $(\bR^{\top} \cameral{f}_*\ol\Q_\ell)_\kappa$ along $\delta$-critical subsets.

The key concept that allows us to compute $(\bR^{\top} \cameral{f}_*\ol\Q_\ell)_\kappa$ is the theory of the \emph{regular quotient}, developed for symmetric varieties by the first author and Morrissey in \cite{HM}. In particular, in \emph{loc. cit.}, we study the substack of regular $X^\star$-Higgs bundles $\cM^{\star,\reg}\subset \cM^\star$ (\S\ref{sec: geometry over regular}), showing that the relative Hitchin map restricted to $\cM^{\star,\reg}$ factors as
\begin{equation}
\label{eqn: factorization over reg}
\cM^{\star,\reg}\to \cA^\reg\xrightarrow{\varphi} \cA,
\end{equation}
where the first map is a torsor over the smooth Picard stack $\varphi^*\cP$, and the second $\varphi$ is an explicit \'etale map.

The geometry of $\varphi$ reflects the invariant theory of regular, or maximal-dimensional, $H^\star$-orbits on $V^\star$. In \cite{HM}, it is shown that there is a scheme $V^{\star,\reg}\myfatslash H^\star$ which naturally parametrizes all regular $H^\star$-orbits. There is also a natural comparison map 
\[
V^{\star,\reg}\myfatslash H^\star\to \fc,
\]
which sends any regular $H^\star$-orbit to the unique closed orbit in its closure. Moreover, $V^{\star,\reg}\myfatslash H^\star$ carries a natural $\G_m$-action, and twisting with respect to this action yields the map
\[
\varphi\colon \cA^\reg:=\mathrm{Sect}_\Sigma(V^{\star,\reg}_D\myfatslash H^\star)\to \cA:= \mathrm{Sect}_\Sigma(\fc_D).
\]
We show in \emph{loc. cit.} that $V^{\star,\reg}_D\myfatslash H^\star$ is the non-separated space obtained by gluing copies of the GIT quotient $\fc$ indexed by components of the nilpotent cone of the symmetric variety (in the sense of \cite{KR}; see also \cite{sekiguchi,levy}) away from a \emph{branching divisor} $\fD^\br$. This divisor plays an important role in the classical limit of the global conjectures of \cite{bzsv}; in joint work of the first author with Morrissey and Luo \cite{HML}, it is conjectured that for tempered spherical varieties $X$ satisfying mild conditions, $\fD^\br$ encodes the weights of the dual symplectic representation, and hence the data of the full dual Hamiltonian, associated to $M=T^*X$.

In this paper, we show that the factorization \eqref{eqn: factorization over reg} carries over to the unitary settings of this paper, with $\star\in \{\unitary,\siegel\}$ distinguished by the Galois action on the nilpotent cone. We introduce an open set $\cA^\dagger\subset \cA^\ani$ (Definition \ref{def: dagger locus}) using the branching divisor $\fD^\br$, and show that the regular locus $\cM^{\star,\reg}\subset \cM^\star$ is dense in every fiber (Lemma \ref{lem: density of regular locus}) over $\cA^\dagger$. A significant part of the technical work in proving Theorem \ref{thm: global main_intro} is in showing that $\cA^\dagger$ is large enough; namely, for $\deg(D)$ large enough, we show that $\cA^\dagger$ has nonempty intersection with all $\delta$-critical loci (See Proposition \ref{prop: density of dagger locus in delta strata}).

Deducing Theorem \ref{thm: main_intro} from Theorem \ref{thm: global main_intro} still requires an interpolation argument. Namely, for a general pair $a_\fe, a$ as in Theorem \ref{thm: main_intro}, we construct global points in $\cA^\ani$ so that all terms in the right-hand side of \eqref{eqn: product_intro} are nonzero and simple enough to compute by hand (see \S\ref{sec: cases and statements}). We note that the nonzero condition is not vacuous; $\kappa$-orbital integrals for $X^\siegel$ are frequently zero. To characterize this, we study a cohomological obstruction for this non-vanishing in \S\ref{sec: obstruction} and use this description to construct suitable globalizations for arbitrary pairs in Theorem \ref{thm: main_intro} (see Proposition \ref{prop: existence of interpolation point}).

\subsection{Structure of the article}

In \S\ref{sec: invariant theory (whole section)}, we discuss invariant theory that underlies both the local and global formulations of our main results. Most results in this section are not new, the exception being the $k$-rational results in \S\ref{sec: rational structures}. In \S\ref{sec: unitary Higgs}, we review some basic results of Ng\^o and Laumon on unitary Higgs bundles, which we will use to deduce some basic properties of our global moduli spaces. In \S\ref{sec: relative Hitchin spaces}, we introduce and study the relative Hitchin fibration. In \S\ref{sec: RASF}, we introduce and study relative affine Springer fibers. The crucial technical result in this section is the dimension formula and equidimensionality (Propositions \ref{prop: dim of RASF} and \ref{prop: equidimensionality}), the proofs of which use global methods. In \S\ref{sec: point counting (whole)}, we recall the formalism of point counting for groupoids and show that our orbital integrals can be interpreted as point counts on RASFs. We also deduce a few cases of Theorem \ref{thm: main_intro}, including the case where both sides of the equality are zero. In \S\ref{sec: support theorem (whole)}, we outline how to apply Ng\^o's support theorem to our case, including the computation of top degree cohomology over $\cameral\cA^\dagger$. Finally, in \S\ref{sec: proof of main} we finish the proofs of the main theorems. The key new technical piece is the interpolation step:  For any local $a$, we must extend $a$ to a point in the Hitchin base with nonzero point counts at every place of $\Sigma$. Appendix \ref{Appendix A} develops the descent of Theorem \ref{Thm: fundamental lemma intro} and Corollary \ref{Cor: transfer} from the infinitesimal symmetric variety $V^\Siegel$ to the fundamental lemma for the variety $X^\Siegel$. In particular, we formulate the notion of transfer factors for these varieties.

\subsection{Acknowledgments}

T. Hameister thanks Ng\^o Bao Ch\^au for his role in introducing him to related moduli spaces for his PhD thesis. He also thanks Siddharth Mahendraker, Xinchun Ma, Zeyu Wang and Griffin Wang for useful discussions. S. Leslie also thanks Jayce Getz for several conversations regarding endoscopy and tools needed to consider relative variants of Ng\^o's arguments, and Tasho Kaletha for conversations about transfer factors in the relative setting. 

S. Leslie was partially supported by NSF grants DMS 2200852 and 2440741, as well as the Sloan Foundation. He wishes to thank the Institute for Advanced Study, where this document was completed.

Prior to the editing process, AI was not used in any significant way in this work, neither in obtaining results nor writing. During editing, ChatGPT 5.6 Astra was prompted to identify errors in a draft, which led to corrections in the proof of Proposition \ref{prop: obstructed case} and minor modifications to the interpolation argument in Section 9.2 along with numerous typos throughout the paper.

\section{Invariant theory for symmetric varieties}
\label{sec: invariant theory (whole section)} 

\subsection{The representation $\bH$ acting on $\fh^\perp$}
\label{sec: invariant theory}

 For this section (and this section only), we assume throughout that the base field $k$ is algebraically closed such that $\mathrm{char}(k)\neq2$. The results of this section are discussed in greater detail in \cite{HM}, with most of the technical steps in the proofs in characteristic $p$ contained in Levy's thesis \cite{levy}. While we will keep this section focused on our example of interest, all constructions can be made for general symmetric varieties with minimal modifications (cf. \cite{HM}), and much remains true for general spherical varieties (see \cite{HML} for a summary).

Let $\bG = \GL_{2n}$. While results in this section do not depend on it, we will assume that the characteristic of $k$ is greater than $n$ for Corollary \ref{cor: delta regularity in the limit}. We have an algebraic involution $\theta\colon \bG\to \bG$ given by
\[
\theta(g) = I_{n,n}gI_{n,n}, \quad\text{for }I_{n,n} = \begin{pmatrix}
	I_n & \\ & -I_n 
	\end{pmatrix}.
\]
The fixed points under this action are given by the Levi subgroup $\bH = \GL_n\times \GL_n$ inside $\bG$. Moreover, the action of $\theta$ induces a decomposition on the level of Lie algebras
\[
\fg = \fh\oplus \fh^\perp
\]
where $\fh = \mathrm{Lie}(\bH)$ is the fixed points of the action of $\theta$ and $\fh^\perp$, the orthogonal complement of $\fh$ under a nondegenerate $G$ invariant form on $\fg$, is the $(-1)$-eigenspace of the $\theta$ action. The group $\bH$ acts on $\fh^\perp$ by restricting the adjoint action. 

We also define the following analogue of Cartan subalgebras adapted for the symmetric setting.

\begin{definition}
	A $\theta$-Cartan subalgebra $\fa$ is a maximal (with respect to inclusion) abelian Lie algebra consisting of semi-simple elements contained in $\fh^\perp$.
\end{definition}

All $\theta$-Cartan subalgebras in $\fh^\perp$ are conjugate under $\bH$ (\cite[Thm 2.11]{levy}); we will usually fix the choice
\begin{equation}
	\label{eqn: theta Cartan}
\fa = \left\{ \left.\begin{pmatrix} & \delta \\ \delta &  \end{pmatrix}\right| \delta \text{ is a diagonal $n\times n$ matrix}  \right\}.
\end{equation}
The theory of spherical varieties endows the $\theta$-Cartan $\fa$ with a root system of type $B_n$, whose roots on the choice of Cartan \eqref{eqn: theta Cartan} are described by
\[
\Phi_{\bX,\fa} = \{\pm\delta_i^* \pm \delta_j^* \mid 1\leq i< j\leq n\}\cup\{\pm\delta_i^*\mid 1\leq i\leq n\},
\]
where $\delta_i^*$ is the coordinate function for the $i$-th entry of the $n\times n$ diagonal matrix $\delta$. We may see this as a renormalization of the restricted root system of $\bX$.

We have a Weyl group $W_\fa$ for the root system $\Phi_{\bX,\fa}$, which is presented as $W_\fa = N_\bG(\fa)/C_\bG(\fa) = N_{\bH}(\fa)/C_{\bH}(\fa)$. For general symmetric varieties $W_\fa$ is a subquotient of the usual Weyl group $W$, but in our case, $W_\fa$ is the subgroup of $W$ given by  $W_\fa = \mathfrak{S}_n\rtimes \{\pm 1\}^n$. The action of $W_\fa$ on \eqref{eqn: theta Cartan} is by letting $\mathfrak{S}_n$ permute the diagonal entries of $\delta$ and letting $\{\pm1\}^n$ change the signs of the entries $\delta_i$ of $\delta$.

There is a construction also of the canonical torus of the symmetric variety, which arises as follows. Let $\canT$ denote the canonical torus for $\bG$. Then, for any choice of Borel $B$ such that $B\cap \theta(B)$ is a torus, $\canT = B/[B,B]$ acts on $\bH\backslash\bG/[B,B]$ through the quotient $\canA$, which we call the \emph{canonical torus} for $\bX = \bH\backslash \bG$. The canonical torus $\canA$ comes equipped with a root system $\Phi_{\bX}$ and an action of a Weyl group $W_\bX$. The map on Lie algebras $\fa\to \mathrm{Lie}(\canA)$ induced by any choice of Borel $B$ as above is an isomorphism which identifies the two respective root systems, and so we do not distinguish between the two Lie algebras.

We also have a notion of regularity and semi-simplicity for elements of $\fh^\perp$.

\begin{definition}
	We say $x\in \fh^\perp$ is \emph{semisimple} if it is semisimple as an element of $\fg$.
	
	Let $I\subset \fh^\perp\times \bH$ denote the group scheme of centralizers, i.e.
	\[
I_x = \{h\in \bH\mid h\cdot x = x\}	.
	\]
	Then, we say that $x\in \fh^\perp$ is \emph{regular} if $\dim I_x$ is minimal. We denote the regular locus of $\fh^\perp$ by $(\fh^\perp)^{\reg}$. We denote the regular semi-simple locus by $(\fh^\perp)^{\mathrm{rs}}$.
\end{definition}

While the notion of regularity for $x\in \fh^\perp$ under the action of $\bH$ is generally distinct from regularity of $x\in\fg$ under the action of $\bG$, the two notions agree for $\bX = \GL_{n}\times \GL_n\backslash\GL_{2n}$.

The following result traces back to Kostant and Rallis in characteristic zero \cite{KR} and to Paul Levy in characteristic $p\neq 2$ \cite{levy}.

\begin{thm}
	\label{thm: levy's results}
	 Let $\fc:=\fh^\perp\git \bH:= \Spec(k[\fh^\perp]^\bH)$ denote the GIT quotient of $\fh^\perp$ by $\bH$, and let $\chi\colon \fh^\perp\to \fc$ denote the quotient map.
	\begin{enumerate}
		\item (\cite[Thm 4.9]{levy}) There is an isomorphism $\fc\simeq \fa\git W_X$. 
		\item (\cite[Lem 6.30]{levy}) The map $\chi$ admits a section $\epsilon$, which is valued in $(\fh^\perp)^\reg$. Moreover, this section can be made $\G_m$-equivariant after base change by $t\mapsto t^2$.
	\end{enumerate}
\end{thm}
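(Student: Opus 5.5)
The plan is to prove both parts directly for $\bG=\GL_{2n}$ and $\bH=\GL_n\times\GL_n$, without invoking Levy's general machinery. Write $x\in\fh^\perp$ as $x=\begin{pmatrix} & X\\ Y & \end{pmatrix}$ with $X,Y\in\fgl_n$. Then $(g_1,g_2)\in\bH$ acts by $X\mapsto g_1Xg_2^{-1}$ and $Y\mapsto g_2Yg_1^{-1}$. The product $XY$ transforms by conjugation under $g_1$. Hence the coefficients $c_1,\dots,c_n$ of $\det(t-XY)$ are $\bH$-invariant, and we get a morphism $(c_i)\colon\fh^\perp\to\A^n$. Equivalently, $\det(t-x)=\det(t^2-XY)$, which matches the even polynomial $P_a$ from the introduction.

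\textbf{Part (1).} The first step is to show that the $c_i$ generate $k[\fh^\perp]^\bH$.
\begin{enumerate}
\item Let $f$ be an invariant. On the open set $\{\det X\neq 0\}$, every orbit meets the slice $\{X=I\}$, and the stabilizer of this slice is the diagonal $\GL_n$, which acts on $Y$ by conjugation.
\item So $f|_{X=I}$ is a conjugation invariant of $Y$. By Chevalley restriction for $\fgl_n$, valid in all characteristics, it is a polynomial in the coefficients of $\det(t-Y)$, that is, $f=Q(c_1,\dots,c_n)$ on this open set.
\item Since $\{\det X\neq 0\}$ is dense, $f=Q(c)$ everywhere.
\item The $c_i$ are algebraically independent, because their restriction to $\fa$ gives $XY=\delta^2$, so $c_i|_\fa=\pm e_i(\delta_1^2,\dots,\delta_n^2)$.
\end{enumerate}
The second step is to compare with $k[\fa]^{W_\fa}$ for $W_\fa=\fS_n\rtimes\{\pm1\}^n$. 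Because $\mathrm{char}\,k\neq 2$, this ring is the polynomial ring in the $e_i(\delta^2)$. Therefore restriction $k[\fh^\perp]^\bH\to k[\fa]^{W_\fa}$ sends one set of free generators to another and is an isomorphism, giving $\fc\simeq\A^n\simeq\fa\git W_\fa$.

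\textbf{Part (2).} For $a=(a_1,\dots,a_n)$, let $C_a$ be the companion matrix of $t^n+a_1t^{n-1}+\cdots+a_n$, and set
\[
\epsilon(a)=\begin{pmatrix} & I\\ C_a & \end{pmatrix}.
\]
Then $XY=C_a$, so $\chi\circ\epsilon=\mathrm{id}$ up to the sign convention in the $c_i$.

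Regularity is checked as follows.
\begin{enumerate}
\item The stabilizer of $\epsilon(a)$ consists of the pairs $(g,g)$ with $gC_a=C_ag$. Since $C_a$ is regular in $\fgl_n$, this group has dimension $n$.
\item Upper semicontinuity gives $\dim I_x\geq n$ for all $x$. Indeed, $n$ is attained generically: on $\fa^\rs$ the stabilizer is the torus $\{(t,t)\}$.
\item Hence $n$ is the minimal stabilizer dimension, and $\epsilon(a)\in(\fh^\perp)^\reg$. As a cross-check with the remark above, $\epsilon(a)$ is also regular in $\fgl_{2n}$, since $e_1$ is a cyclic vector for it.
\end{enumerate}

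For equivariance, $s\in\G_m$ acts on $\fh^\perp$ by scaling and on $\fc$ by $a_i\mapsto s^{2i}a_i$. One has $C_{s^2\cdot a}=s^2D_sC_aD_s^{-1}$ for a suitable diagonal matrix $D_s$ whose entries are powers of $s^2$. So $\epsilon(s^2\cdot a)$ and $s\cdot\epsilon(a)=\begin{pmatrix} & sI\\ sC_a&\end{pmatrix}$ differ by an explicit element of $\bH$, built from $D_s$ and the scalars $s^{\pm1}$.

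The key step, and the main obstacle, is to absorb this element by modifying $\epsilon$. I would redefine the section as $\epsilon'(a)=h(a)\cdot\epsilon(a)$, with $h$ a fixed cocharacter-twisted diagonal element, chosen so that the defect becomes a coboundary. This is possible only after pulling back along $t\mapsto t^2$, because the weights $2i$ on $\fc$ must be matched by weights on $\fh^\perp$ that are half-integral for the original action. This is exactly where the base change in the statement enters. The remaining bookkeeping is to choose the diagonal weights on $X$ and $Y$ separately so that both blocks become homogeneous.
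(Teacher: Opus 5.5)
The gap is in your final paragraph. As you describe it, you would replace $\epsilon$ by $\epsilon'(a)=h(a)\cdot\epsilon(a)$ so that the defect ``becomes a coboundary.'' That step cannot succeed. A coboundary defect means $\epsilon'$ is strictly equivariant for plain scaling. Then $\epsilon'(0)$ is fixed by all nonzero scalars, with or without pulling back along $t\mapsto t^2$. This forces $\epsilon'(0)=0\notin(\fh^\perp)^\reg$.

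The right notion is equivariance for a twisted action $s\star x=\mu(s)\cdot(sx)$, where $\mu\colon\G_m\to\bH$ is a cocharacter; equivalently, $\G_m$-equivariance of the induced map $\fc\to[\fh^\perp/\bH]$. Your own computation already gives this, with no modification at all. For $(s\cdot a)_i=s^{2i}a_i$ you showed $\epsilon(s\cdot a)=\mu(s)\cdot\bigl(s\,\epsilon(a)\bigr)$ with $\mu(s)=(D_s,sD_s)$. Moreover $s\mapsto(D_s,sD_s)$ is a homomorphism, because $D_s$ is diagonal with entries fixed integral powers of $s^2$. So over $\ol k$ the section is equivariant even before base change, and a fortiori after it.

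Your ``half-integral weights'' diagnosis is therefore wrong. $\bH$ has the non-central cocharacter $s\mapsto(1,s)$, which moves the $X$- and $Y$-blocks by $s^{-1}$ and $s$ and so turns scaling into $(X,Y)\mapsto(X,s^2Y)$. The square root matters only for rational forms such as $H^\siegel$. Already for $n=1$, the $k$-cocharacters $s\mapsto s^m$ of $\Res_{k'/k}\G_m$ act on $\Herm_1$ by $s^{2m}$ and cannot cancel the weight-one scaling. This is why the paper needs $D'$ with $(D')^2=D$ for the global section.

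Everything else is correct, and it is a genuinely more elementary route than the paper's. The paper simply cites Levy's Chevalley restriction theorem for symmetric pairs and his slice $e+\mathfrak v$. Your slice $\{X=I\}$ reduces part (1) to Chevalley restriction for $\fgl_n$, and your companion-matrix section is fully explicit. What Levy's shape of section buys is the descent to $k$ in Theorem~\ref{thm: section for rational forms}, via a rational $e$, a rational $\lambda$ and a Frobenius-stable $\mathfrak v$. Your $(I,C_a)$ does not lie in $\Herm_n\times\Herm_n$, so it does not give that.

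Two small repairs are needed:
\begin{enumerate}
\item For ``$\dim I_x\ge n$ for all $x$'', note that the locus where $\dim I_x$ is minimal is open and dense. Hence it meets $\{\det X\neq0\}$, where $x\sim(I,XY)$ and $I_x\cong C_{\GL_n}(XY)$ has dimension $\ge n$.
\item In your cross-check, the cyclic vector is $e_{n+1}$, not $e_1$; the vector $e_1$ fails when $a_n=0$.
\end{enumerate}
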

We refer to such a section $\epsilon: \fc\to (\fh^\perp)^\reg$ as a Kostant--Rallis section. 

Returning to our centralizer group scheme $I\subset \fh^\perp\times \bH$, we call the restriction of $I$ to $(\fh^\perp)^\reg$ the regular centralizer group scheme and denote it by $I^\reg$. We have the following package describing $I^\reg$.

\begin{prop}
	\label{prop: regular centralizer package}
	(\cite[Prop 4.1]{HM}) Recall that $\chi\colon \fh^\perp\to \fc$ denotes the quotient map; we also denote $\chi^\reg = \chi|_{(\fh^\perp)^\reg}$.
	\begin{enumerate}
		\item The regular centralizer group scheme $I^\reg$ is a smooth commutative group scheme over $(\fh^\perp)^\reg$ which descends to a smooth commutative group scheme $J$ over $\fc$.
		\item There is a canonical map $\chi^*J\to I$ extending the isomorphism $(\chi^\reg)^*J\simeq I^\reg$.
	\end{enumerate}
\end{prop}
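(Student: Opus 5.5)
Two things need proof: (1) $I^\reg$ is smooth and commutative and descends along $\chi^\reg$ to a group scheme $J$ over $\fc$; (2) the descended $\chi^*J$ maps canonically to $I$ on all of $\fh^\perp$. I will follow the standard strategy for regular centralizers of Lie algebras (Ngô, Donagi--Gaitsgory), adapted to $\bH$ acting on $\fh^\perp$, using the Kostant--Rallis section $\epsilon$ of Theorem \ref{thm: levy's results}.

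\textbf{Smoothness and commutativity.} First I show that $\chi^\reg\colon(\fh^\perp)^\reg\to\fc$ is smooth and surjective, with fibers single $\bH$-orbits. Surjectivity comes from $\epsilon$. For the fibers I use Levy's results: every fiber of $\chi$ contains a unique regular orbit, open and dense in the fiber. Since regular orbits have constant dimension $\dim\bH-\dim\fa$, the map $\chi^\reg$ is flat with smooth fibers, hence smooth. Consequently $I^\reg$ has constant fiber dimension $\dim\fa$ over the smooth base $(\fh^\perp)^\reg$. Smoothness of $I^\reg$ then follows from the fact that $\mathrm{Lie}(I_x)=\fz_\fh(x)$ has dimension exactly $\dim I_x$. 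This is a computation that can be checked on the regular semisimple locus and on the nilpotent regular elements, where for $\GL_{2n}$ the centralizer is explicit: $x$ regular in $\fh^\perp$ is regular in $\fg$, so $Z_\bG(x)=k[x]^\times$. Commutativity also follows from this description. Since $Z_\bG(x)=k[x]^\times$ is commutative and $I_x=Z_\bG(x)^\theta$, the group $I_x$ is a subgroup of a commutative group, hence commutative.

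\textbf{Descent.} I set $J:=\epsilon^*I^\reg$. To identify $(\chi^\reg)^*J\simeq I^\reg$ canonically, I use that for two regular points $x,y$ in the same fiber there is $h\in\bH$ with $hx=y$. Conjugation by $h$ gives an isomorphism $I_x\simeq I_y$, and this isomorphism is independent of $h$ because $I_x$ is commutative. Hence the descent datum on $I^\reg$ along the smooth surjection $\chi^\reg$ satisfies the cocycle condition. Formally, the two pullbacks of $I^\reg$ to $(\fh^\perp)^\reg\times_\fc(\fh^\perp)^\reg$ are identified via the action map $\bH\times(\fh^\perp)^\reg\to(\fh^\perp)^\reg\times_\fc(\fh^\perp)^\reg$, which is smooth and surjective. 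Independence of $h$ makes the identification descend, and fppf descent produces $J$, with $J\simeq\epsilon^*I^\reg$.

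\textbf{Extension to $\fh^\perp$.} The isomorphism $(\chi^\reg)^*J\simeq I^\reg\subset I$ is defined on the open set $(\fh^\perp)^\reg$. Its complement has codimension at least $2$; this follows from Levy/Kostant--Rallis, since the irregular locus is a proper closed union of orbit closures in each normal fiber. Both $\chi^*J$ and $\bH\times\fh^\perp$ are affine over $\fh^\perp$, which is normal. So by Hartogs, the map $\chi^*J\to\bH\times\fh^\perp$ extends, and its image lands in the closed subscheme $I$ because it does so on a dense open. The extension is a group homomorphism by density.

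\textbf{Main obstacle.} The main risk is in positive characteristic: establishing smoothness of $I^\reg$ and the codimension-$2$ bound. For both I will rely on Levy's characteristic-$p$ results; the $\GL_{2n}$ realization $I_x=(k[x]^\times)^\theta$ gives a direct check if needed.
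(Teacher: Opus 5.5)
\textbf{There is a genuine gap in the descent step (part (1)).} It rests on a false input: for this pair, a fiber of $\chi$ need not contain a unique regular $\bH$-orbit.

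Over the branching divisor $\fD^\br=(a_{2n})$, each fiber contains exactly two regular orbits (Theorem \ref{thm: regular quotient}). This is visible already for $n=1$: there $\chi(X,Y)=XY$ up to sign on $\fh^\perp\cong\mathbb{A}^2$, and the fiber $XY=0$ contains the two regular orbits $\{X\neq 0=Y\}$ and $\{X=0\neq Y\}$. Correspondingly there are two non-conjugate Kostant--Rallis sections.

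The consequences for your argument are these.
\begin{enumerate}
\item The action map $\bH\times(\fh^\perp)^\reg\to(\fh^\perp)^\reg\times_\fc(\fh^\perp)^\reg$ is not surjective. It misses the pairs over $\fD^\br$ lying in different regular orbits, and these form a divisor in the smooth fiber product.
\item Your conjugation-defined descent datum is therefore defined only off a divisor. Nothing in your argument extends it across that divisor; Hartogs needs codimension two.
\item Similarly, the identification of $(\chi^\reg)^*\epsilon^*I^\reg$ with $I^\reg$ by conjugation holds only away from the other regular orbit over $\fD^\br$, which is again a codimension-one locus.
\end{enumerate}
What your argument actually produces is a group scheme on the non-separated regular quotient $(\fh^\perp)^\reg\myfatslash\bH=\fc\sqcup_{\fc\setminus\fD^\br}\fc$. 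Concretely, this is $\epsilon_1^*I$ and $\epsilon_2^*I$ glued over $\fc\setminus\fD^\br$. Showing that this gluing extends over $\fD^\br$ is exactly the nontrivial content of (1). Your ``main obstacle'' paragraph therefore misplaces the difficulty: it is the branching, not characteristic $p$.

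\textbf{The missing idea is an orbit-independent description of $I_x$.} You already have it in hand but use it only for commutativity.
\begin{enumerate}
\item Regularity in $\fh^\perp$ agrees with regularity in $\fgl_{2n}$ here. So $t\mapsto x$ gives an isomorphism $k[t]/(P_a(t))\iso k[x]$ with $a=\chi(x)$, uniformly over $(\fh^\perp)^\reg$, and $Z_\bG(x)=k[x]^\times$.
\item $\theta=\mathrm{Ad}(I_{n,n})$ sends $x$ to $-x$, so it acts on $k[x]$ through $\iota\colon t\mapsto -t$.
\item Hence $I_x=(k[x]^\times)^\theta\simeq\big((k[t]/P_a(t))^{\iota}\big)^\times$. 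This depends only on $a$ and is compatible with the conjugation maps $I_x\to I_{hx}$.
\end{enumerate}
This yields $I^\reg\simeq(\chi^\reg)^*\mathrm{Res}^{\fs/\iota}_\fc(\G_m)$ (compare Proposition \ref{prop: universal spectral}), which gives $J$ together with its smoothness and commutativity.

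It also gives (2) without Hartogs. By Cayley--Hamilton, $p(t)\mapsto p(x)$ defines a map $\chi^*J\to I$ on all of $\fh^\perp$: an $\iota$-invariant unit goes to a $\theta$-fixed unit commuting with $x$.

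If you keep the Hartogs route for (2), fix its justification.
\begin{itemize}
\item The fibers over $\fD^\br$ are unions of two orbit closures that meet, so they are not normal.
\item The codimension-two bound comes instead from two facts: the irregular locus lies over the discriminant divisor, and it is a proper closed subset of each fiber there.
\item Hartogs should be applied on $\chi^*J$, which is normal because $J$ is smooth.
\end{itemize}
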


The group scheme $J$ admits a useful description in terms of spectral covers. We begin by recalling some facts and definitions from invariant theory (cf. \cite{HM}). Recall that $\fc_{2n} = \fgl_{2n}\git \GL_{2n}\simeq \ft_{\GL_{2n}}\git \mathfrak{S}_{2n}$ admits a spectral cover $\fs_{2n} = \ft_{\GL_{2n}}\git \mathfrak{S}_{2n-1}$. Explicitly, if
\[
A_{2n}:=k[\fc_{2n}] = k[a_1,\dots, a_{2n}]
\]
for $a_i(X)= \mathrm{tr}\bigwedge^iX$ the $i$-th elementary symmetric function of the eigenvalues of $X$, then
\[
k[\fs_{2n}] = A_{2n}[x]/(x^{2n}+a_1x^{2n-1}+\cdots +a_{2n})
\]
The natural morphism $\fs_{2n}\to \fc_{2n}$ is finite and flat of degree $2n$.

\begin{lem}\cite[Lemma 5.1]{LXZfund}\label{Lem: cat quot split}
    	The GIT quotient $\fc=\fh^\perp\git \bH$  is an affine space $\fc= \A^n$. More explicitly, for $x=\begin{pmatrix}
    	    &X\\Y
    	\end{pmatrix}\in\fh^\perp$,
        \[
        \chi(x) = (b_1(x), \cdots, b_n(x)),
        \]
        where $b_i(x) = (-1)^i\tr \bigwedge^i(XY)$. In particular, there is a commutative diagram
        \[
        \begin{tikzcd}
            \fh^\perp \ar[r]\ar[d,"\chi"]&\fgl_{2n}\ar[d,"\chi_{2n}"]\\
            \fc\ar[r] &\fc_{2n},
        \end{tikzcd}
        \] with the natural map $\fc\to \fc_{2n}$ given by setting $b_i=a_{2i}$ and $a_i = 0$ for all odd $i$. In particular $\fc\simeq \A^n$ is naturally equipped with a $\G_m$-action by the exponents $2,4,\cdots,2n$.
\end{lem}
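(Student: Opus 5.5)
The statement to prove is Lemma \ref{Lem: cat quot split}. I would deduce it from Theorem \ref{thm: levy's results}(1), $\fc\simeq \fa\git W_\fa$, together with an explicit computation on the $\theta$-Cartan $\fa$ of \eqref{eqn: theta Cartan}.

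First I check that the functions $b_i$ are $\bH$-invariant. For $h=(h_1,h_2)\in \bH=\GL_n\times\GL_n$ we have $h\cdot x = \begin{pmatrix} & h_1Xh_2^{-1}\\ h_2Yh_1^{-1} & \end{pmatrix}$. Hence $XY\mapsto h_1(XY)h_1^{-1}$, so the coefficients of the characteristic polynomial of $XY$, namely $(-1)^i\tr\bigwedge^i(XY)$, are invariant. This gives a morphism $b=(b_1,\dots,b_n)\colon \fc\to\A^n$.

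Next I restrict to $\fa$. For $x=\begin{pmatrix} &\delta\\ \delta&\end{pmatrix}$ we get $XY=\delta^2$, so $b_i$ restricts, up to sign, to the $i$-th elementary symmetric function in $\delta_1^2,\dots,\delta_n^2$. Since $\mathrm{char}\,k\neq 2$, the invariants of $W_\fa=\fS_n\rtimes\{\pm1\}^n$ acting on $k[\delta_1,\dots,\delta_n]$ are $k[\delta_1^2,\dots,\delta_n^2]^{\fS_n}$. That ring is the polynomial ring in the elementary symmetric functions of the $\delta_j^2$; this holds in any characteristic. So the composite $k[\A^n]\to k[\fc]\simeq k[\fa]^{W_\fa}$ is an isomorphism. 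The isomorphism of Theorem \ref{thm: levy's results}(1) is restriction $k[\fh^\perp]^\bH\to k[\fa]^{W_\fa}$, which is the Chevalley restriction map. Therefore $b^*$ is an isomorphism and $\fc=\A^n$ with coordinates $b_i$.

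For the diagram, I compute the characteristic polynomial of $x$ in $\fgl_{2n}$. By block determinant,
\[
\det(t-x)=\det\begin{pmatrix} t & -X\\ -Y & t\end{pmatrix}=\det(t^2-XY).
\]
This is $t^{2n}+\sum_i b_i t^{2n-2i}$. Hence $a_{2i}(x)=b_i(x)$ and $a_{\text{odd}}(x)=0$, with signs matching the normalization $x^{2n}+a_1x^{2n-1}+\cdots$. So the square commutes with $\fc\to\fc_{2n}$ as stated. The $\G_m$-action scales $x$ by $t$, so it scales $XY$ by $t^2$ and $b_i$ by $t^{2i}$.

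The main obstacle is confirming that Levy's isomorphism in (1) really is induced by restriction, so that no finer integrality issue arises in characteristic $p$. If needed, I would instead argue directly. Every semisimple element is $\bH$-conjugate into $\fa$, and closed orbits are semisimple, so $b$ separates closed orbits. Then $k[\fh^\perp]^\bH$ is a normal domain that is finite and birational over $k[b_1,\dots,b_n]$, and normality of $\A^n$ gives equality.
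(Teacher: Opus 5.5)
Your proposal is correct. The paper gives no proof of this lemma; it only cites \cite[Lemma 5.1]{LXZfund}. Your argument is therefore a self-contained proof in the paper's own setting ($k=\overline{k}$, $\mathrm{char}\,k\neq 2$), which is worth having since the cited source works over $p$-adic fields. Each step checks out: the $\bH$-invariance of the $b_i$, the restriction $b_i|_{\fa}=(-1)^i e_i(\delta_1^2,\dots,\delta_n^2)$, the computation $k[\fa]^{\fS_n\ltimes\{\pm1\}^n}=k[e_1(\delta^2),\dots,e_n(\delta^2)]$, the identity $\det(t-x)=\det(t^2-XY)$, and the weights $2,4,\dots,2n$.

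You can drop your worry about the form of Levy's theorem, because you only need restriction $k[\fh^\perp]^{\bH}\to k[\fa]$ to be injective. Injectivity follows from density of $\bH\cdot\fa$, which is explicit here. Take $x$ with $XY$ having $n$ distinct nonzero eigenvalues; this is a dense open condition. Then $(h_1,h_2)=(1,X)$ sends $x$ to $\begin{pmatrix} & I\\ XY & \end{pmatrix}$. Next, $(g,g)$ makes $XY=\delta^2$ with $\delta$ diagonal and invertible. Finally $(\delta,1)$ produces $\begin{pmatrix} & \delta\\ \delta & \end{pmatrix}\in\fa$.

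Permutation pairs $(w,w)$ and sign pairs $(\epsilon,1)$ normalize $\fa$. Hence the restriction of any invariant lies in $k[\fa]^{\fS_n\ltimes\{\pm1\}^n}=k[b_1|_{\fa},\dots,b_n|_{\fa}]$. Injectivity of restriction then forces $k[\fh^\perp]^{\bH}=k[b_1,\dots,b_n]$. This route is more elementary than invoking Theorem~\ref{thm: levy's results}(1) and uses nothing beyond $\mathrm{char}\,k\neq 2$.

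Your fallback argument has a weak step. Separating closed orbits together with finiteness (the latter from the positive $\mathbb{G}_m$-grading) only gives a finite bijective morphism $\fc\to\A^n$. In characteristic $p$ that does not imply birationality, because of purely inseparable extensions. So the claim ``finite and birational'' needs exactly the restriction embedding above, and with that embedding the normality argument becomes unnecessary.
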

Writing $A = k[\fc] = k[a_2,a_4,\dots, a_{2n}]$, the base change $\fs = \fs_{2n}\times_{\fc_{2n}}\fc$ is thus a finite, flat, degree-$2n$ cover of $\fc$ with ring of functions
\[
k[\fs] = A[x]/(x^{2n}+a_2x^{2n-2}+\cdots +a_{2n})
\]
The spectral cover $\fs$ therefore carries an involution $\iota$, defined over $\fc$, given by $\iota(x) = -x$, and the quotient $\fs/\iota$ remains a finite, flat cover of degree $n$. We have the following description of centralizers.

\begin{prop}
	\label{prop: universal spectral}
	(\cite[Prop. 6.2]{HM}) We have an isomorphism 
	\[
J\simeq \mathrm{Res}^{\fs/\iota}_\fc(\G_m)
	\]
	of group schemes over $\fc$.
\end{prop}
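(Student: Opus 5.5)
I plan to construct the isomorphism first over the regular semisimple locus and then extend it by a codimension-two argument. Since $J$ and $\mathrm{Res}^{\fs/\iota}_\fc(\G_m)$ are both smooth commutative group schemes over $\fc$, a map $\mathrm{Res}^{\fs/\iota}_\fc(\G_m)\to J$ defined over all of $\fc$ that restricts to an isomorphism over a big open set (complement of codimension $\ge 2$) and is an isomorphism fiberwise in codimension one will be an isomorphism. So I will instead build the map directly at the level of regular centralizers, and descend it using Proposition \ref{prop: regular centralizer package}.

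\textbf{Construction of the map.} Let $x=\begin{pmatrix} & X\\ Y & \end{pmatrix}\in(\fh^\perp)^\reg$. Since regularity in $\fh^\perp$ agrees with regularity in $\fg$, the centralizer $Z_{\GL_{2n}}(x)$ equals $k[x]^\times$. Moreover, $k[x]\simeq k[\fs]_{\chi(x)}$, because the characteristic polynomial of $x$ is $x^{2n}+a_2x^{2n-2}+\cdots+a_{2n}$ by Lemma \ref{Lem: cat quot split}. Conjugation by $I_{n,n}$ sends $x$ to $-x$, so $\theta$ acts on $k[x]^\times$ through $\iota$. Hence
\[
I_x=Z_{\GL_{2n}}(x)^\theta = (k[x]^{\iota})^\times=\big(k[\fs/\iota]_{\chi(x)}\big)^\times .
\]
In families over $(\fh^\perp)^\reg$, the assignment $p(x)\mapsto p(x)$ gives a homomorphism $(\chi^{\reg})^*\mathrm{Res}^{\fs/\iota}_\fc(\G_m)\to I^\reg$. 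The identification $k[x]\cong k[\fs]_{\chi(x)}$ is defined over $\fc$, so this map is $\bH$-equivariant: conjugating $x$ by $h$ carries $p(x)$ to $p(hxh^{-1})$. Therefore the map descends along $\chi^\reg$, which is smooth and surjective with a section by Theorem \ref{thm: levy's results}, to a map $\mathrm{Res}^{\fs/\iota}_\fc(\G_m)\to J$. Concretely, I would pull back along the Kostant--Rallis section $\epsilon$ and use $J=\epsilon^*I^\reg$.

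\textbf{Isomorphism.} It remains to check that the map is an isomorphism pointwise. The inclusion $Z_{\GL_{2n}}(x)^\theta\supseteq (k[x]^\iota)^\times$ is clear. For the reverse inclusion, an element of $k[x]^\times$ fixed by $\theta$ is a polynomial $p(x)$ with $p(-x)=p(x)$ in $k[x]$. Since $\{1,x,\dots,x^{2n-1}\}$ is a basis of $k[x]$ and $\iota$ acts diagonally on this basis with signs $\pm1$, the $\iota$-invariants are exactly the even polynomials, that is, $k[x^2]=k[\fs/\iota]_{\chi(x)}$. Here we use $\mathrm{char}\,k\ne2$. Both sides are smooth group schemes and the map is a bijection on points, so I will conclude that it is an isomorphism. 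To ensure this rather than merely a bijective homomorphism, I would compare Lie algebras: $\mathrm{Lie}(I_x)=\fh\cap\mathfrak{z}_\fg(x)=k[x]^{\iota}$, so the map is also an isomorphism on tangent spaces. Flatness of both sides over $\fc$ together with the fiberwise isomorphism then gives an isomorphism over $\fc$.

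\textbf{Main obstacle.} The delicate step is the identification $Z_{\GL_{2n}}(x)=k[x]^\times$ at non-semisimple regular points in characteristic $p$, together with the identification of regularity in $\fh^\perp$ with regularity in $\fg$ there. I would handle this by exhibiting $\epsilon(a)$ explicitly as a companion-type matrix with cyclic vector, so that it is regular in $\fgl_{2n}$ for every $a$. Cyclicity then gives the centralizer statement in any characteristic.
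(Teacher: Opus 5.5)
Your argument is correct. The paper gives no proof of its own here; it imports the statement from \cite[Prop.~6.2]{HM}. Your route is the natural one: $I_x=Z_{\GL_{2n}}(x)^\theta$, with $Z_{\GL_{2n}}(x)=k[x]^\times$ for cyclic $x$ and $\theta$ acting on $k[x]$ through $\iota$, followed by $J\cong\epsilon^*(\chi^\reg)^*J\cong\epsilon^*I^\reg$ via Proposition~\ref{prop: regular centralizer package}. A few remarks will tighten it.

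\textbf{The opening paragraph.} The codimension-two paragraph is never used and should be dropped.

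\textbf{Descent along $\chi^\reg$.} Saying ``the map descends along $\chi^\reg$'' by $\bH$-equivariance is too quick for this pair. Over $\fD^\br$ the fibres of $\chi^\reg$ contain two regular $\bH$-orbits. Equivariance therefore only supplies the descent datum on the same-orbit locus of $(\fh^\perp)^\reg\times_\fc(\fh^\perp)^\reg$, and you would need an extra density argument. Pulling back along a section, as you then propose, avoids this entirely.

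\textbf{The main-obstacle fix can carry the whole proof.} Take $\epsilon(a)=\begin{pmatrix} & C_a\\ I_n & \end{pmatrix}$, where $C_a$ is the companion matrix of $Q_a(y)=y^n+a_2y^{n-1}+\cdots+a_{2n}$. Then $\chi(\epsilon(a))=a$. For any $a\in\fc(R)$ the stabilizer is $\{(h,h)\in\bH(R): hC_a=C_ah\}$. Since $e_1$ is a cyclic vector for $C_a$ over any ring, this stabilizer is $R[C_a]^\times\cong(R[y]/Q_a(y))^\times$. Its dimension is $n$, which is the generic and hence minimal stabilizer dimension, so $\epsilon$ lands in $(\fh^\perp)^\reg$. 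You then get $J\cong\epsilon^*I^\reg\cong\mathrm{Res}^{\fs/\iota}_\fc(\G_m)$ directly as group functors over $\fc$.

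This removes the need for the pointwise bijection, the Lie algebra comparison, and the fibrewise isomorphism criterion. It also avoids the claim, stated but not proved in the paper, that $\bH$-regularity agrees with $\GL_{2n}$-regularity at every point. You still use $\mathrm{char}\,k\neq 2$ only to identify $\fs/\iota$ with $\Spec A[y]/(Q(y))$.
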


\subsection{Rational structures in invariant theory}
\label{sec: rational structures}
The group $\bH = \GL_n\times \GL_n$ and $\bH$ representation $\bfh^\perp$ admit involutions, which in turn will allow us to deduce specific rational forms. First, we have two involutions on $\bH$:  
\begin{itemize}
	\item The involution $\tau_1\colon \bH\to \bH$ given by the transpose-inverse $h = (h_1,h_2)\mapsto \t h^{-1} = (\t h_1^{-1},\t h_2^{-1})$;
	\item The involution $\tau_2\colon \bH\to \bH$ given by the swapping map on the factors $h = (h_1,h_2)\mapsto (\t h_2^{-1},\t h_1^{-1})$.
\end{itemize}
We have involutions on $\bfh^\perp$ corresponding to each of the $\tau_j$ above. Namely, if we identify $\bfh^\perp \simeq \{(A,B)\mid A,B\text{ $n\times n$ matrices}\}$, then we have:
\begin{itemize}
	\item The involution $\tau_1\colon \bfh^\perp\to \bfh^\perp$ given by the swapping-transpose map $(A,B)\mapsto (-\t B,-\t A)$.
	\item The involution $\tau_2\colon \bfh^\perp\to \bfh^\perp$ given by the transpose map $(A,B)\mapsto (-\t A,-\t B)$.
\end{itemize}
The involutions $\tau_i$ on $\bH$ and $\bfh^\perp$ are compatible in the sense that $\tau_i(h\cdot x) = \tau_i(h)\cdot \tau_i(x)$ for all $h\in \bH,\, x\in \bfh^\perp$.

We now define $k$ rational forms of the invariant theoretic spaces of the previous section. Let $k = \bF_q$ be a finite field, and let $k'$ be a quadratic extension of $k$.  We denote by $\sigma$ the Frobenius element of $k$, thought of as the generator in $\mathrm{Gal}(k'/k)$.
Consider the Weil restrictions
\[
\bG' = \Res_{k'/k}\bG_{k'}\quad \text{and}\quad \bH' = \Res_{k'/k}\bH_{k'}
\]
The involution $\sigma$ acts on $\bG'$ and $\bH'$, and hence also on $(\bfh')^\perp\subset \bfg' = \mathrm{Lie}(\bG')$. The involutions $\tau_i$ similarly induce involutions on $\bH'$ and $(\bfh')^\perp$ which we call by the same name, and which commute with the action of $\sigma$. We form the two rational twists:
\begin{itemize}
	\item The split-inert or unitary case:
\[
H^\unitary =  (\bH')^{\tau_1\sigma}\quad \text{and}\quad V^\unitary = (\bfh'^\perp)^{\tau_1\sigma}
\]
	\item The inert-inert or Siegel case:
\[
H^\siegel = (\bH')^{\tau_2\sigma}\quad \text{and}\quad V^\siegel = (\bfh'^\perp)^{\tau_2\sigma}
\]
\end{itemize}

More precisely, set $g^* = {}^t\sigma(g)$ for any matrix $g$. We have the unitary group $\U_n$ consisting of $g\in \GL_n(k')$ such that $g\cdot g^* = 1$. Recall that $\Herm_n = \{x\in \mathfrak{gl}_n(k')\mid x = x^*\}$ is the space of $n\times n$ Hermitian matrices.

The above construction produces the following spaces.
\begin{itemize}
	\item \underline{the split-inert/Siegel case} Here, $H^\siegel = \Res_{k'/k}\GL_n$ acts on $V^\siegel=\Herm_n\times \Herm_n$ by $g\cdot (x,y) = (gxg^*,(g^*)^{-1}yg^{-1})$. This is the infinitesimal model of the symmetric variety $X^\Siegel = \Res_{k'/k}\GL_n\backslash \U_{2n}$.
	\item \underline{the inert-inert/unitary case} Here $H^\unitary=\U_{n}\times \U_n$ acts on $V^\unitary=\mathfrak{gl}_n(k')$ by $(g_1,g_2)\cdot x = g_1xg_2^*$. This is the infinitesimal model of the symmetric variety $X^\unitary = (\U_{n}\times \U_n)\backslash \U_{2n}$.
\end{itemize}

\begin{remark}
    The terminology of \emph{split-inert} and \emph{inert-inert} comes from \cite{LXZfund,LXZrtf}, and is motivated by a more general bi-quadratic setup. In this paper, we opt for the self-contained terminology of the \emph{Siegel} and \emph{unitary} cases as we do not discuss this more general setting.
\end{remark}

We keep the same notation as Section \ref{sec: invariant theory} with $V^\star$ replacing $\fh^\perp$ (for $\star = \siegel$ or $\unitary$).  The following is an easy variation on Lemma \ref{Lem: cat quot split}.

\begin{lem}\cite[Lemma 5.3]{LXZfund}
	\label{lem: GIT quotient does not depend on form}
	The GIT quotients $\fc$ for each of $X^\Siegel$ and $X^\unitary$ above are isomorphic as $k$-varieties. Setting $\fc = V^\star\git H^\star$ for this quotient, we have $\fc\simeq \bA^n$ is an affine space over $k$ equipped with a $\G_m$-action by the exponents $2,4,\cdots,2n$.
\end{lem}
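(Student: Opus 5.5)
The plan is to reduce to the geometric statement of Lemma \ref{Lem: cat quot split} by Galois descent. Both $V^\star$ and $H^\star$ are $k$-forms of the base changes of $\bfh^\perp$ and $\bH$ to $k'$, cut out as fixed points of the semilinear involution $\tau_i\sigma$. Since taking invariants commutes with flat base change, we have
\[
k'[V^\star]^{H^\star}\otimes_k \ol k \simeq \ol k[\bfh^\perp\times\bfh^\perp]^{\bH\times\bH}.
\]
Here $\bfh'^\perp\otimes_k\ol k\simeq \bfh^\perp\times\bfh^\perp$, with the two factors indexed by the embeddings $k'\hookrightarrow \ol k$. The twisted forms are obtained by imposing $\tau_i\sigma$-fixedness, which identifies the second factor with the first via $\tau_i$. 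Concretely, $V^\star\otimes_k\ol k\simeq\bfh^\perp_{\ol k}$ and $H^\star\otimes_k\ol k\simeq\bH_{\ol k}$ compatibly with the actions. Hence $\fc^\star:=V^\star\git H^\star$ is a $k$-form of $\fc_{\ol k}\simeq\A^n$, and the $k$-structure is determined by a Frobenius action on $\ol k[b_1,\dots,b_n]$. It remains to show this Frobenius action is the standard one on the coordinates $b_i$, for both $\star$.

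For this I would compute directly with the invariants $b_i(x)=(-1)^i\tr\bigwedge^i(XY)$ of Lemma \ref{Lem: cat quot split}. Write $x=(A,B)$. The twisted Frobenius acts on $\ol k[\bfh^\perp]$ by composing the $q$-power Frobenius on coefficients with $\tau_i$. So I need to check that each $b_i$ is $\tau_i$-invariant. For $\tau_1\colon (A,B)\mapsto(-{}^tB,-{}^tA)$ we get $XY\mapsto {}^tB\,{}^tA={}^t(AB)$, which has the same characteristic polynomial as $BA$ and hence as $AB$. For $\tau_2\colon(A,B)\mapsto(-{}^tA,-{}^tB)$ we get $XY\mapsto{}^tA\,{}^tB={}^t(BA)$, which again has the same characteristic polynomial. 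Thus the $b_i$ are defined over $k$ on each form, and they generate the invariant ring after base change. The same is then true over $k$ by faithfully flat descent: $k[V^\star]^{H^\star}\otimes\ol k=\ol k[V^\star]^{H^\star_{\ol k}}$ for the reductive group $H^\star$.

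Concretely, on $V^\siegel$ the map is $(x,y)\mapsto$ the coefficients of $\det(t-xy)$, and on $V^\unitary$ it is $x\mapsto$ the coefficients of $\det(t-xx^*)$. These are visibly $k$-rational because $xy$ and $xx^*$ are conjugate to their own $\sigma$-twists up to transpose. This gives both quotients as $\A^n=\Spec k[b_1,\dots,b_n]$, so they are isomorphic to each other. The $\G_m$-action is scaling $x\mapsto tx$, under which $b_i$ has weight $2i$; under the map $\fc\to\fc_{2n}$ with $b_i=a_{2i}$ this gives the exponents $2,4,\dots,2n$.

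The main obstacle is the compatibility of invariants with base change in characteristic $p$. For a reductive group over a field, formation of invariants commutes with flat base change of the ground field: invariants are a kernel of $f\mapsto \text{coaction}(f)-f\otimes1$, and tensoring with a field is exact. So this point should be routine. I would not invoke Levy beyond what Lemma \ref{Lem: cat quot split} already supplies over $\ol k$.
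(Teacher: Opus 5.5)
Your argument is correct and is essentially what the paper intends. The paper offers no proof beyond citing \cite[Lemma 5.3]{LXZfund} and calling the statement an easy variation on Lemma \ref{Lem: cat quot split}; your descent argument (the twisting cocycle $\tau_i$ fixes each $b_j$, and forming invariants commutes with field extension) is exactly that variation.

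One slip to fix: your first display should read $k[V^\star]^{H^\star}\otimes_k\ol k\simeq\ol k[\bfh^\perp]^{\bH}$. The ring $\ol k[\bfh^\perp\times\bfh^\perp]^{\bH\times\bH}$ is the invariant ring of the untwisted Weil restriction, which has a $2n$-dimensional quotient. Your next sentence already uses the correct identification $V^\star\otimes_k\ol k\simeq\bfh^\perp_{\ol k}$, so the rest of the argument is unaffected.
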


Moreover, the following result identifies the regular centralizer group schemes for $X^\unitary$ and $X^\Siegel$.

\begin{thm}
\label{thm: regular centralizers for rational forms}
    Consider the $k$ rational form $X^\star$ for $\star\in \{\unitary,\siegel\}$. There exists a canonical smooth commutative group $k$-scheme $J = J^\star$ over $\fc$ such that we have an isomorphism $(\chi^\reg)^*J = I^\reg$ defined over $k$ which extends uniquely to a morphism $\chi^*J\to I$. Moreover, $J$ satisfies the following:
    \begin{enumerate}
        \item The group schemes $J$ for $X^\unitary$ and $X^\Siegel$ are canonically isomorphic.
        \item $J$ can be explicitly described as the kernel of the norm map
        \[
\mathrm{Nm}\colon \Res^{(\fs/\iota)_{k'}}_{\fc}(\G_m)\to\Res^{\fs/\iota}_{\fc}(\G_m),
        \]
        which takes an $S$ point $\xi\in \Gamma\!\left(S\times_\fc(\fs/\iota)_{k'},\cO_{S\times_\fc(\fs/\iota)_{k'}}^\times\right)$ to $\xi\cdot \sigma(\xi)$.
    \end{enumerate}
\end{thm}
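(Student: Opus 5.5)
The strategy is Galois descent from the split picture over $\bar k$. Over $\bar k$ both $V^\star$ and $H^\star$ become $\fh^\perp$ and $\bH$. By Proposition \ref{prop: regular centralizer package}, $I^\reg$ descends to a smooth commutative group scheme $J_{\bar k}$ over $\fc_{\bar k}$, together with a map $\chi^*J_{\bar k}\to I$ extending the isomorphism on the regular locus. By Proposition \ref{prop: universal spectral}, $J_{\bar k}\simeq \Res^{\fs/\iota}_{\fc}(\G_m)_{\bar k}$.

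The rational form $X^\star$ is obtained by twisting the Frobenius by $\tau_i\sigma$, where $i=1$ for $\unitary$ and $i=2$ for $\siegel$. Since $\tau_i$ is compatible with the $\bH$-action and preserves the invariants (it acts trivially on $\fc$, by Lemma \ref{lem: GIT quotient does not depend on form}), $\tau_i\sigma$ induces a descent datum on $I^\reg$ over $(V^\star)^\reg$. Since $J$ is defined by descent along the smooth surjection $\chi^\reg$, and the descent is canonical (the isomorphism $I_x\simeq I_y$ for $x,y$ in the same fiber is the unique one, by commutativity), this datum descends to a Frobenius descent datum on $J_{\bar k}$. This gives a $k$-form $J^\star$. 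The identification $(\chi^\reg)^*J=I^\reg$ is then defined over $k$ by construction. The extension $\chi^*J\to I$ is unique because $(V^\star)^\reg$ is dense in $V^\star$ and $I$ is separated, so it also descends.

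To obtain (1) and (2), I will compute the induced Frobenius on $\Res^{\fs/\iota}_\fc\G_m$ explicitly. Over $\fh'^\perp=\fh^\perp\otimes_k k'$, Weil restriction doubles everything, so the relevant centralizer is $\Res^{(\fs/\iota)_{k'}}_{\fc}\G_m$, on which $\sigma$ acts through the Galois action of $k'/k$. The involution $\tau_i$ acts on $\bH$ by a transpose-inverse, with or without a swap. For $x=(A,B)$ and $h=(h_1,h_2)$ in the centralizer, the pair $(AB,BA)$ determines the centralizer as polynomials in $AB$ (equivalently in $x^2$, that is, functions on $\fs/\iota$). Transpose-inverse sends a centralizing element $f(x^2)$ to $f(x^2)^{-1}$, and the swap only exchanges the two blocks, which corresponds to the same function on $\fs/\iota$. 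Hence in both cases the $k$-structure is given by $\xi\mapsto \sigma(\xi)^{-1}$. The fixed points of $\xi\mapsto\sigma(\xi)^{-1}$ are exactly the kernel of $\xi\mapsto\xi\sigma(\xi)$, which gives (2). Since the resulting descriptions coincide for $\star=\unitary$ and $\star=\siegel$, we get (1), canonically, since the isomorphisms come from the common identification with the spectral description.

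The main obstacle is verifying that the twisted actions of $\tau_1$ and $\tau_2$ on $J_{\bar k}$ agree. Concretely, one must check that the swap in $\tau_2$ acts trivially on $\Res^{\fs/\iota}\G_m$. This amounts to checking that, for regular $x$, the centralizer element attached to $f\in\cO(\fs/\iota)^\times$ is $(f(AB),f(BA))$, and that both $\tau_i$ send it to $(f(\t\!A\,\t\!B)^{-1},\dots)$ up to the identification of $\t(AB)$ with $BA$ under the swap. I would carry this out at the Kostant--Rallis section $\epsilon$ of Theorem \ref{thm: levy's results}, or over the regular semisimple locus, which is dense, and extend by continuity since $J$ is separated and flat.
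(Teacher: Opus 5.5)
Your argument is correct, and your key computation is right: under $f\mapsto (f(AB),f(BA))$, both $\tau_1$ and $\tau_2$ induce $f\mapsto f^{-1}$ on $\Res^{\fs/\iota}_\fc(\G_m)$, with trivial action on $\fs/\iota$. However, your proof runs in the opposite direction from the paper's.

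You descend the twisted Frobenius from $I^\reg$ to $J_{\bar k}$. This gives $J^\star$ and the $k$-rational identification $(\chi^\reg)^*J=I^\reg$ for free, and only afterwards do you identify the resulting form with $\ker(\mathrm{Nm})$.

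The paper instead writes down $J^\dagger=\ker(\mathrm{Nm})$ as a candidate first. It verifies $(\chi^\rs)^*J^\dagger\simeq I^\rs$ by computing the Frobenius on the centralizer of the $\theta$-Cartan $\fa\cap V^\star$, a diagonal $\G_m^n$ on which Frobenius is inversion composed with the Galois action, and then uses conjugacy. It extends to the regular locus because a $k$-model of the smooth separated $\breve I^\reg$ is determined by its restriction to the dense regular semisimple locus.

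Your version makes canonicity and part (1) more transparent, and it checks the formula at every regular point at once. The paper's version avoids any descent along $\chi^\reg$.

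That descent step is the one place to tighten. Over $\fD^\br$, the fibers of $\chi^\reg$ contain two regular $\bH$-orbits. Moreover, $\tau_1$ interchanges them (e.g.\ the two regular nilpotent orbits), which is exactly why $X^\unitary$ has no $k$-rational Kostant--Rallis section. So for $x,y$ in different orbits of one fiber, the identification $I_x\simeq I_y$ is not induced by conjugation, and commutativity alone does not make it canonical.

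The fix is to check compatibility of the twisted Frobenius with the descent datum over the dense open part of $(\fh^\perp)^\reg\times_\fc(\fh^\perp)^\reg$ lying over $\fc\setminus\fD^\br$, where any two points of a fiber are conjugate. Then extend it, using that this fiber product is smooth (hence reduced) and the group schemes involved are separated.

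Alternatively, you can bypass the descent step entirely. The formula $f\mapsto (f(AB),f(BA))$ already defines a map $\chi^*\Res^{\fs/\iota}_\fc(\G_m)\to I$ on all of $\fh^\perp$, by Cayley--Hamilton for $AB$ and $BA$. Checking its equivariance for the twisted Frobenius directly then suffices.
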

\begin{proof}
    \newcommand{\symb}{\dagger} 
    By Proposition \ref{prop: regular centralizer package}, if we write $\breve{I}^\reg = I^\reg\otimes_k\ol k$, then there is a unique group scheme $\breve{J}$ such that $(\chi^\reg)^*\breve{J}\simeq \breve{I}^\reg$, with a unique extension to a map $(\chi^\reg)^*\breve{J}\to \breve{I}$. Moreover, by Proposition \ref{prop: universal spectral}, we may write $\breve{J} = \mathrm{Res}^{\fs/\iota}_\fc(\G_m)$.

    Consider the $k$-model $J^{\symb}$ of $\breve{J}$ given by the following Frobenius action:  Let $\sigma$ act diagonally on $\G_m\times \fs/\iota$, with the action on $\G_m$ by inversion and the action on $\fs/\iota$ induced by the natural action of $\sigma$ on $\fs$ such that $\sigma(x) = -x$. We note that $J^{\symb}$ is simply the kernel of the norm map as in (2) above. It is clear that $J^{\symb}$ is smooth and commutative. We claim that over the regular semisimple locus $(\chi^{\rs})^*J^{\symb}\simeq I^\rs$. Indeed, for either case if we use our chosen model of $G=\U_{2n}$, one verifies that the centralizer $C\subset H^\star$ of $\fa\cap V^\star$ for $\fa$ as in \eqref{eqn: theta Cartan} is a group scheme over $k$ identified with $C(\ol k)\simeq \G_m(\ol k)^n$ with Frobenius acting by inversion composed with the Galois action on coefficients. Since all regular semisimple elements are conjugate, and since $\fa\cap V^\star\neq \emptyset$ for $\star\in \{\unitary,\siegel\}$, we get our stated identification on the regular semisimple locus.

    Now, since $\breve{I}^\reg$ is smooth and separated, it admits a unique $k$-model agreeing with $I^\rs$ over $\fc^\rs$. Since $\chi^*J^{\symb}$ and $I^\reg$ are two such $k$ models, we conclude $(\chi^\reg)^*J^{\symb}\simeq I^\reg$. Moreover, since $J^{\symb}$ is smooth, there exists a canonical extension $\chi^*J^{\symb}\to I$. Hence, the result is proven with the $k$ model $J := J^{\symb}$ of $\breve{J}$.
\end{proof}

While the GIT quotient is independent of form, the existence of Kostant-Rallis sections is sensitive to which $k$ form of $\bX$ we consider.
\begin{thm}
\label{thm: section for rational forms}
    The section in Theorem \ref{thm: levy's results}(2) descends to a section 
    \[
    \fc:= V^\siegel\git H^\siegel\to V^{\siegel,\reg}
    \]
    over $k$ for $X^\Siegel$. However, such a section does not exist for $X^\unitary$.
\end{thm}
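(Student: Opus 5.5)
For the Siegel case I would write down an explicit section over $k$. For a point $a=(a_2,\dots,a_{2n})\in\fc$, take
\[
\epsilon(a)=(x,y)=\bigl(I_n,\ C(a)\bigr)\in \Herm_n\times\Herm_n ,
\]
where $C(a)$ is a companion-type matrix of the characteristic polynomial $t^n+a_2t^{n-1}+\cdots+a_{2n}$, chosen so that it is Hermitian. Concretely, I would conjugate the companion matrix by the antidiagonal matrix $w$; a companion matrix $C$ satisfies $wCw={}^tC$, and since its entries lie in $k$ we get $\sigma(C)=C$. The symmetrization is therefore a symmetric matrix with coefficients in $k$, hence Hermitian.

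By Lemma \ref{Lem: cat quot split}, $\chi$ is computed from the characteristic polynomial of the product, so $\chi(\epsilon(a))=a$. The section is polynomial in $a$ and defined over $k$. Regularity follows because the element of $\fgl_{2n}$ obtained from $(x,y)$ has characteristic polynomial $P_a(t^2)$, and its block form makes it conjugate to a companion matrix; regular in $\fh^\perp$ is equivalent to regular in $\fg$ for this pair. Alternatively, I could note that Levy's section in Theorem \ref{thm: levy's results}(2) can be taken to be $(I,C)$ and check that it is invariant under the twisted Frobenius $\tau_2\sigma$. I expect the only care needed is to pick the correct twist so that both entries are Hermitian.

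For the nonexistence in the unitary case I would argue by counting regular orbits over the zero fiber, $a=0$. A section over $k$ would produce a $k$-rational regular nilpotent $x\in V^\unitary=\fgl_n(k')$ with $\chi(x)=0$. In terms of $\fh'^\perp$ this means a pair $(A,B)=(x,-x^*)$ whose associated $2n\times 2n$ matrix $\begin{pmatrix}&A\\B&\end{pmatrix}$ is regular nilpotent. Regularity forces $AB$ and $BA$ to have ranks $n-1$ and $n$ up to order, and in particular $\operatorname{rank}A+\operatorname{rank}B=2n-1$ (the Kostant–Rallis components of the regular nilpotent cone are indexed by which factor has full rank). But $B=-x^*$ has the same rank as $A=x$, so $\operatorname{rank}A+\operatorname{rank}B$ is even, a contradiction.

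The main obstacle is verifying cleanly that regular nilpotents have $\{\operatorname{rank}A,\operatorname{rank}B\}=\{n,n-1\}$. I would prove this by noting that a regular nilpotent in $\fgl_{2n}$ is a single Jordan block $N$ with $N^{2n}=0$ and rank $2n-1$. Its rank equals $\operatorname{rank}A+\operatorname{rank}B$, with each rank at most $n$, so the claim follows. In the language of the paper, Frobenius swaps the two components of the regular nilpotent cone, indexed by the branching structure, so there is no rational point over $0$.
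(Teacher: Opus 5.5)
Your unitary argument is correct. The block matrix $\begin{pmatrix}0&x\\-x^*&0\end{pmatrix}$ has even rank $2\operatorname{rank}(x)$, while a regular nilpotent of $\fgl_{2n}$ has rank $2n-1$; parity alone suffices, so you do not even need the $\{n,n-1\}$ claim. This is a concrete form of the paper's remark that Frobenius swaps the two regular nilpotent orbits, which are distinguished by which block has full rank.

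The Siegel half, however, has a genuine gap. First, the identity $wCw={}^tC$ is false for companion matrices. For $n=2$ and $C=\begin{pmatrix}0&-c_0\\1&-c_1\end{pmatrix}$ one gets $wCw=\begin{pmatrix}-c_1&1\\-c_0&0\end{pmatrix}$, whereas ${}^tC=\begin{pmatrix}0&1\\-c_0&-c_1\end{pmatrix}$.

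More seriously, the plan $(x,y)=(I_n,C(a))$ with $C(a)$ Hermitian with entries in $k$ cannot be repaired. A Hermitian matrix with entries in $k$ is just a symmetric matrix over $k$. Since $(I_n,C)$ is regular if and only if $C$ is cyclic, your section would need $C(0)$ to be a regular nilpotent symmetric matrix over $k$. For $n=2$ this is a nonzero $\begin{pmatrix}p&q\\q&-p\end{pmatrix}$ with $p^2+q^2=0$, which is impossible when $-1$ is a non-square in $k$ (e.g. $k=\mathbb{F}_3$).

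Your alternative fails for the same reason: $(I_n,C)$ is not fixed by the twisted Frobenius, since $C^*={}^tC\neq C$. Replacing $C$ by a symmetrized $SC$ does not help either, because $(I_n,SC)$ has the wrong invariants.

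The missing idea is to put the symmetrizer into the first slot. Let $C(a)$ be the matrix of multiplication by $t$ on $k[t]/(Q_a)$ in the basis $1,t,\dots,t^{n-1}$. Let $G(a)$ be the Gram matrix of $(g,h)\mapsto \mathrm{Tr}(gh/Q_a')$ in the same basis.

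\begin{itemize}
\item By Euler's formula, $G(a)$ is a Hankel matrix vanishing above the anti-diagonal with $1$'s on it. Hence $\det G(a)=\pm1$ and $G(a)^{-1}$ is polynomial in $a$.
\item Self-adjointness of multiplication by $t$ gives $G(a)C(a)={}^tC(a)G(a)$, so $G(a)C(a)$ is symmetric.
\item Then $a\mapsto\bigl(G(a)^{-1},G(a)C(a)\bigr)$ is a pair of symmetric matrices over $k[\fc]$. Its product is $C(a)$, with characteristic polynomial $Q_a$.
\item Its first entry is invertible, so it is $\bH$-conjugate to $\begin{pmatrix}0&I_n\\C(a)&0\end{pmatrix}$, which is regular. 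This gives a $k$-rational section into the regular locus; it is the trace form underlying $h^\siegel$ later in the paper.
\end{itemize}

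Alternatively, use $(I_n,\epsilon(a))$ with $\epsilon$ a $k$-rational Kostant section of $\Herm_n$ under $\U_n$, whose entries genuinely lie in $k'$. It exists because $\U_n$ is quasi-split, and this is what the appendix does.

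Compared with the paper's route, the corrected explicit construction is more elementary. The paper gets a rational regular nilpotent from Frobenius-stability of the orbits (implicitly via Lang's theorem) and then descends Levy's section $e+\mathfrak v$ using McNinch's rational cocharacter. Note that the statement asserts that Levy's section itself descends. You recover this by feeding the rational regular nilpotent $\bigl(G(0)^{-1},G(0)C(0)\bigr)$ into the paper's reduction.
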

\begin{proof}
    Suppose that $e$ is a regular, nilpotent element in $V^\star(\ol k)$, and let $\lambda$ be an associated character for $e$ valued in $H^\star(\ol k)$, i.e. $\mathrm{Ad}(\lambda(t))e = t^2e$ for all $t\in \ol k^\times$. Such an associated character exists by \cite[Corollary 5.4]{levy}. By \cite[Lemma 6.30]{levy}, a section to the map $\chi\colon V^\star\to \fc$ over the algebraic closure $\ol{k}$ is given by $e+\mathfrak{v}$ where $e$ is a regular nilpotent element in $\fh^\perp$ and $\mathfrak{v}$ is any choice of $\lambda$ graded subspace of $\fh^\perp$ such that a slightly enhanced version of the complementary property
    \[
    [e,\fh]\oplus \mathfrak{v} = \fh^\perp
    \]
    holds. (``Enhanced'' here means that it remains complementary in a smaller subspace of $\fh^\perp$. See \cite[\S6.5]{levy} for a precise statement.) 
    
    Suppose that we can find a $k$ rational point $e\in V^\star(k)$ which is regular and nilpotent. Then since $k$ is perfect of characteristic good for $G$, by \cite[Theorem 26]{mcninch}, we can further assume that the associated cocharacter $\lambda$ is defined over $k$, and so takes values in $H^\star$. As $[e,\fh]\subset \fh^\perp$ is stable under the Frobenius action, we can therefore choose $\mathfrak{v}$ to be stable under Frobenius as well. In this case, Levy's section over $\ol k$ is stable under Frobenius and hence descends to a section over $k$. Note that the existence of a section over $k$ implies the existence of a $k$ rational regular, nilpotent element $e$, so we have proved that the existence of a section over $k$ is equivalent to the existence of a regular, nilpotent element over $k$.

    It reduces, therefore, to study for which rational forms there exists a $k$-rational regular nilpotent element $e\in V^\star(k)$. Let $\mathcal{N}^\star\subset V^\star$ denote the nilpotent cone in $V^\star$. Geometrically, there are two $\rH$-orbits of regular nilpotent elements $\mathcal{N}^{\star,reg}$, represented by the elements
    \[
    \begin{pmatrix}
        &e_n\\I_n&
    \end{pmatrix}  \quad\text{and}\quad \begin{pmatrix}
        &I_n\\e_n&
    \end{pmatrix},
    \] 
    where $e_n\in\fgl_n$ is regular nilpotent. It is not hard to see that the Frobenius element permutes these two orbits in the unitary case and stabilizes each them in the Siegel case. Hence in the unitary case $\mathcal{N}^{\unitary,\reg}(k)=\emptyset$ and in the Siegel case $\mathcal{N}^{\siegel,\reg}(k)\neq\emptyset$. The result follows.
\end{proof}

We will fix a choice of $k$-rational section for $X^\siegel$ for the rest of this paper, denoting it by $\e$ and referring to it as a \emph{Kostant--Rallis section}.

\subsection{Endoscopic symmetric varieties}
\label{sec: endoscopic spaces}

The \emph{endoscopic symmetric varieties} for a rational form $X^\star$ were studied by the second author for $X^\unitary$ in \cite{spencer_annals}, where a fundamental lemma was proven for $X^\unitary$ over a $p$-adic field, and for general symmetric varieties satisfying mild assumptions in \cite{sl_endoscopy}. We summarize the latter theory here for the special case of $X^\Siegel$.

Let $\canT$ denote the canonical torus for $G$. We have a canonical involution $\theta$ on the torus $\canT$ whose centralizer $\canC = \canT^\theta$ is identified with the centralizer subgroup of a generic regular, semisimple element of $\fh^\perp$ (see \cite[Proposition 1.12]{leslie} and \cite[Lemma 4.13]{HM}). Let $\wh{\bG}$ denote the Langlands dual group of $G$, considered over $\C$, with dual torus $\wh{\canT}$. Recall that we have a canonical torus $\canA$ associated to the symmetric variety $\mathbf{X}$, which fits into an exact sequence
\[
1\to \canC\to \canT\to \canA\to 1.
\]
This exact sequence dualizes to an exact sequence
\[
1\to \wh{\canA}\to \wh{\canT}\to \wh{\canC}\to 1.
\]

Recall the dual group $\wh{\bG}_X$ associated to $\mathbf{X}$, which in our case is the subgroup $\wh{\bG}_X = \Sp_{2n}$ of $\wh{\bG} = \GL_{2n}$. The homogeneous variety $\wh{\mathbf{X}} = \wh{\bG}_X\backslash\wh{\bG}$ is itself a symmetric variety for an involution $\wh{\theta}$. Moreover, the torus $\wh{\canT}$ acts on $\wh{\bX}$ freely through the quotient $\wh{\canT}\to \wh{\canC}$, so that the above sequence extends to a commutative diagram of
\[
\begin{tikzcd}
    \wh{\canA}\ar[r]\ar[d]&\wh{\canT}\ar[r]\ar[d]&\wh{\canC}\ar[d]\\
  \wh{\bG}_X\ar[r]&\wh{\bG} \ar[r]&\wh{\bX}.&
\end{tikzcd}
\]
 The variety $\wh{\mathbf{X}}$ is a quasi-Hamiltonian variety, admitting a group-valued moment map
\[
s\colon \wh{\mathbf{X}}\to \wh{\bG}, \qquad \wh{\bG}_X\cdot g\longmapsto g^{-1}\cdot \hat{\theta}(g).
\]
We will refer to the map $s$ as the \emph{symmetrization map} for $\wh{\mathbf{X}}$. Note that $s$ is almost a section of the canonical map $\wh{\canT}\to \wh{\canC}$ in the sense that $s$ is injective and the composition 
\[
\wh{\canC}\xrightarrow{s} \wh{\canT}\to \wh{\canC}
\]
is the map $x\mapsto x^2$.

We now recall the classification of elliptic endoscopic data for the unitary group $\U_{2n}$. Let $\bG = \GL_{2n}$ and $\rho_G$ be the $\Out(\bG)$-torsor which defines the unitary group $\U_{2n}$. For $\kappa\in \wh{\canT}$, we may take the neutral component of the centralizer $\wh{\bG}_\kappa$ of $\kappa$ in $\wh{\bG}$, and similarly for $(\wh{\bG}\rtimes \Out(\bG))_\kappa$ in $\wh{\bG}\rtimes \Out(\bG)$. Let $\pi_0(\kappa)$ be the group of connected components of $(\wh{\bG}\rtimes \Out(\bG))_\kappa$, which is a finite group fitting into a short exact sequence
\[
1\to \wh{\bG}_\kappa\to (\wh{\bG}\rtimes \Out(\bG))_\kappa\to \pi_0(\kappa)\to 1.
\]
The group $\pi_0(\kappa)$ admits morphisms
\begin{equation}
\label{eqn: pi 0 kappa to Out G}
\Out(\wh{\bG}_\kappa)\leftarrow \pi_0(\kappa)\to \Out(\wh{\bG})
\end{equation}
where the rightward map is induced by the projection onto the second factor and the leftward map is induced by the action of $(\wh{\bG}\rtimes \Out(\bG))_\kappa$ on $\wh{\bG}_\kappa$.

Let $F$ be either a non-archimedean local field with residue field $k$ or a function field $F = k(\Sigma)$ for $\Sigma$ a smooth, projective, geometrically-connected curve over $k$ of genus at least 2. Consider a quadratic extension $F'/F$. For example, in the case of a function field, such an extension is determined by an \'etale, degree 2 cover
\[
\vartheta\colon \Sigma'\to \Sigma,
\]
with $\Sigma'$ a smooth, projective, geometrically-connected curve over $k$. Let $\sigma\in \Gal(F'/F)$ be the canonical degree 2 involution, and let $G = \U_{2n}$ be the associated unitary group. That is, if $F$ is a non-archimedean local field,
\[
G = \U_{2n} = \{g\in \GL(F')\mid g\cdot ^t\!\!\sigma(g) = 1 \},
\]
while if $F = k(\Sigma)$ is a function field,
\[
G = \vartheta_*(\GL_{2n}\times \Sigma')^{\tau\sigma}
\]
where $\tau$ is the transpose map on $\GL_{2n}$ and $\sigma$ acts through the automorphism group of the cover. This group $G$ is determined by an $\Out(\bG)$-torsor  $\rho_G$ over $F$.

\begin{definition}
    Fix $x\in\Sigma(k)$ for the function field case and $x\in \Sigma^\bullet = \Spec(F)$ in the case of a non-archimedean local field. An \emph{endoscopic datum} for the unitary group $\U_{2n}$ is a pair $\fe =(\kappa,\rho)$, where $\kappa\in \wh{\canT}$ and $\rho$ is a homomorphism
    \[
    \rho\colon \pi_1(\Sigma,x)\to \pi_0(\kappa)
    \]
    which induces $\rho_G$ via the rightward map in \eqref{eqn: pi 0 kappa to Out G}.

    We call an endoscopic datum $(\kappa,\rho)$ \emph{elliptic} if $(Z(\wh{G}_\kappa)^{\pi_1(\Sigma,x)})^\circ = \{1\}$, where $(\bullet)^\circ$ denotes the neutral component and $\pi_1(\Sigma,x)$ acts through $\rho$.
\end{definition}

If the unitary group $\U_{2n}$ is defined over the local field $F = k(\!(\varpi)\!)$ for $k$ a finite field, then it is easy to verify that $(\kappa,\rho)$ is elliptic if and only if $\kappa\in \wh{\canC}$ is 2-torsion. Moreover, in this case, the map $\pi_0(\kappa)\to \Out(\bG)$ is a bijection of order-2 groups. Hence, the data $(\kappa,\rho)$ is completely determined by the data of $\kappa$ in such cases.

An endoscopic datum determines a quasisplit group $G^\fe$ over $\Sigma$ by considering the induced quasisplit group from the split group $\bG_\kappa$ dual to $\wh{\bG}_\kappa$ together with the $\Out(\bG_\kappa)$-torsor induced by $\rho_\kappa$ via the leftward arrow in \eqref{eqn: pi 0 kappa to Out G}. The associated endoscopic groups $G^\fe$ for $F$ a non-archimedean field are the unitary groups $\U_r\times \U_s$ for $r+s = 2n$.

We now add the data of a relative endoscopic datum.

\begin{definition}
\label{def: endoscopic datum}
    An \emph{endoscopic datum} for a symmetric variety $X^\siegel$, or \emph{relative endoscopic datum}, is a pair $(\kappa,\rho)$, where $\kappa\in \wh{\canC}$ and $(s(\kappa),\rho)$ is an endoscopic datum for the unitary group $G = \U_{2n}$.

    We say that $(\kappa,\rho)$ is \emph{elliptic} if $(s(\kappa),\rho)$ is elliptic as a an endoscopic datum for $\U_{2n}$.
\end{definition}

In \cite{sl_endoscopy}, the second author produced a symmetric space $Y^{\fe}$ for $G^\fe$, which satisfies the following properties.

\begin{thm}
\label{thm: existence of endoscopic symmetric variety}
    (\cite[Theorem 1.4]{sl_endoscopy}) For any elliptic relative endoscopic datum $(\kappa, \rho)$ for $X$ a ``well adapted'' symmetric space, there exists a symmetric space
    \[
    Y^\fe = H_{Y^\fe}\backslash G^\fe,
    \]
    unique up to $G^\fe$-inner form, where $G^\fe$ is the endoscopic group of $G$ determined by the endoscopic datum $(s(\kappa),\rho)$, such that
    \begin{enumerate}
        \item The canonical torus for $Y^\fe$ is identified with the canonical torus for $X$;
        \item $\wh{\bG}_{Y^\fe}$ is the centralizer of $\kappa$ in $\wh\bG_X$;
        \item There is a canonical map $\Aut_d(X)\to \Aut_d(Y^\fe)$ between the groups of doubling automorphisms (see \S3.4 of \emph{loc cit.}) such that the geometric 1-cocycle (see \S3.4.3 of \emph{loc cit.}) of $Y^\fe$ is induced from the geometric 1-cocycle of $X$ via this map.
        \item (\cite[Proposition 8.5]{sl_endoscopy}) There is canonical transfer morphism 
        \[
    \nu_\fe\colon \fc_\fe\to \fc = T^*X\git G,
        \]
        where $\fc_\fe = (\fg^\fe)^{\theta^\fe = -1}\git H_{Y^\fe}$, which is finite and flat.
    \end{enumerate}
\end{thm}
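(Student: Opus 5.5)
The plan is to build $Y^\fe$ on the dual side first and then pass back to $G^\fe$ through root data. Write $\check\kappa:=s(\kappa)\in\wh\canT$. Since $s(g)=g^{-1}\wh\theta(g)$, we have $\wh\theta(s(\kappa))=s(\kappa)^{-1}$, and the centralizer of an element equals the centralizer of its inverse. Hence $\wh\theta$ preserves $\wh\bG_{\check\kappa}$, so $\wh\theta$ restricts to an involution $\wh\theta_\kappa$ of $\wh\bG_{\check\kappa}$.

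\textbf{Step 1 (dual side).} Compute the fixed points of $\wh\theta_\kappa$. They are $\wh\bG_X\cap\wh\bG_{\check\kappa}$, which should contain the centralizer $Z_{\wh\bG_X}(\kappa)$ up to components.
\begin{itemize}
\item This intersection contains $\wh\canA$, and $\wh\canT$ acts on $\wh\bG_{\check\kappa}/(\wh\bG_{\check\kappa})^{\wh\theta_\kappa}$ through $\wh\canC$.
\item The roots of $\wh\bG_{\check\kappa}$ are the roots $\alpha$ of $\wh\bG$ with $\alpha(s(\kappa))=1$.
\item Restricting these to $\wh\canA$ gives the roots of $Z_{\wh\bG_X}(\kappa)$. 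Its Weyl group $W_{X,\kappa}$ is the stabilizer of $\kappa$ in $W_X$, possibly intersected with the reflection subgroup generated by the roots killing $\kappa$.
\end{itemize}
The ``well adapted'' hypothesis is exactly what I would use to show that restriction matches the two root systems. The issue is the multiplicity $2$ and doubled roots, where $\alpha$ versus $2\alpha$ arises because $s$ composed with the projection is squaring. The output of this step is (2).

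\textbf{Step 2 (passing to $G^\fe$).} Dualize. The involution $\wh\theta_\kappa$ of the based root datum of $\wh\bG_{\check\kappa}$ preserves $\wh\canT$, so it dualizes to an involution of the root datum of the split group $\bG_\kappa$ preserving $\canT$.

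By the classification of involutions through the Satake/Araki data, there is a unique conjugacy class of involutions $\theta^\fe$ of $\bG_\kappa$ with these data up to inner twist. For this $\theta^\fe$:
\begin{itemize}
\item the maximal $\theta^\fe$-split quotient of the canonical torus is $\canT/\canC=\canA$, which gives (1);
\item the dual group of $Y^\fe$ is the fixed group computed in Step 1, using the dual-group construction already used for $X$.
\end{itemize}

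\textbf{Step 3 (rationality).} Twist by $\rho$ via the leftward map of \eqref{eqn: pi 0 kappa to Out G} to obtain $G^\fe$ over $F$. The choice of $H_{Y^\fe}$ inside its inner class is governed by the geometric $1$-cocycle.

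The doubling automorphisms $\Aut_d(X)$ are built from $\wh\theta$ and central elements. They normalize $\wh\bG_{\check\kappa}$ and commute with $\wh\theta_\kappa$, so restriction defines $\Aut_d(X)\to\Aut_d(Y^\fe)$. Pushing the cocycle of $X$ along this map and twisting $\theta^\fe$ accordingly gives (3). Uniqueness up to $G^\fe$-inner form holds because only the inner class was pinned down.

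\textbf{Step 4 (transfer map).} By Levy's Chevalley restriction theorem (Theorem \ref{thm: levy's results}) applied to both spaces,
\[
\fc\simeq\fa\git W_X,\qquad \fc_\fe\simeq\fa\git W_{Y^\fe},
\]
with the same $\fa$ by (1) and $W_{Y^\fe}=W_{X,\kappa}\subset W_X$. Define $\nu_\fe$ as the induced map $\fa\git W_{Y^\fe}\to\fa\git W_X$. Its properties:
\begin{itemize}
\item It is finite, since $k[\fa]$ is finite over both rings of invariants.
\item Both sides are affine spaces in our good characteristic, so they are smooth of the same dimension, and the map is then flat by miracle flatness.
\item It is canonical and Galois-equivariant because the Galois actions on $\fa$ through $\rho$ and $\rho_G$ are compatible, so it descends to $F$.
\end{itemize}

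\textbf{Expected obstacle.} I expect the main difficulty to be Step 1 together with the rational part of Step 3. One must show that the centralizer of $\kappa$ in $\wh\bG_X$ really is the dual group of a symmetric variety for $G^\fe$, with no spurious component group or root doubling. One must also check that the cocycle transports compatibly. I would first verify the root-system comparison rank-one by rank-one, using the well-adaptedness axiom on each rank-one Levi of $X$, and then assemble the general case.
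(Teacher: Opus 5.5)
The paper does not prove this theorem. It imports it from \cite{sl_endoscopy}, and for $X^\siegel$ it uses only (1) and (4), which it reads off from the explicit map $\fa\git\big((\mathfrak S_r\times\mathfrak S_s)\rtimes\{\pm1\}^n\big)\to\fa\git W_X$. Your Step 4 is exactly that argument and is fine.

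\textbf{Step 2 produces the wrong involution.} You take the transpose of $\wh\theta_\kappa$, and this gives the wrong answer.
\begin{itemize}
\item Since $\wh\theta_\kappa|_{\wh\canT}=\wh\theta|_{\wh\canT}$ fixes $\wh\canA\subset\wh\bG^{\wh\theta}$ pointwise, the transposed involution $\tau$ of $\canT$ acts trivially on $\mathbb X_*(\wh\canA)=\mathbb X^*(\canA)$.
\item For any symmetric variety whose maximal split quotient of $\canT$ is $\canA$, the involution must act by $-1$ on $\mathbb X^*(\canA)$. Indeed, a character $\lambda$ trivial on $\canT^{\theta}$ satisfies $\lambda(t\,\theta(t))=1$.
\item So any $\theta^\fe$ inducing $\tau$ violates (1), contrary to your claim that its split quotient is $\canT/\canC=\canA$.
\end{itemize}
This already fails for $\kappa=1$ and $X=\GL_n\times\GL_n\backslash\GL_{2n}$. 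There $\wh\theta$ acts on the diagonal torus by $t\mapsto(t_{2n}^{-1},\dots,t_1^{-1})$, and its transpose is the same formula. That transpose is realized, with a maximal split torus inside the diagonal torus, by $g\mapsto J\,{}^tg^{-1}J^{-1}$ for $J$ an antidiagonal symplectic form. So your recipe returns $\Sp_{2n}\backslash\GL_{2n}$, whose dual group is $\GL_n$, not $\Sp_{2n}$; (1) and (2) both fail.

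The fix is to negate the transpose: set $\theta^\fe|_{\canT}:=-(\wh\theta_\kappa|_{\wh\canT})^\vee$. This is the relation $\wh\theta|_{\wh\canT}=-\theta^\vee$ that $X$ itself satisfies. When $\wh\theta$ stabilizes a Borel containing $\wh\canT$ (as for $\Sp_{2n}\subset\GL_{2n}$), $\wh\theta_\kappa$ preserves $\Phi^+\cap\Phi_\kappa$. The negated involution then sends a base of $\bG_\kappa$ to its negative, so it is realized by a quasi-split involution. In the case at hand it gives $(\GL_r\times\GL_r\backslash\GL_{2r})\times(\GL_s\times\GL_s\backslash\GL_{2s})$ over $\ol F$, as it should.

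\textbf{The main content is still deferred after this repair.}
\begin{itemize}
\item Step 1 only computes the target group $(\wh\bG_{s(\kappa)})^{\wh\theta}$. Proving (2) requires computing the dual group of the variety built in Step 2 from its normalized restricted roots and showing it equals this fixed-point group. That identification is not automatic for symmetric varieties; this is the root-doubling issue you yourself flag. Saying that well-adaptedness ``is exactly what I would use,'' without showing it passes from $X$ to $Y^\fe$, restates what has to be proved.
\item In Step 3 you assert without proof that the elements of $(\wh\bG\rtimes\Out(\bG))_{s(\kappa)}$ realizing $\rho$, and the doubling automorphisms, commute with $\wh\theta_\kappa$. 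This is exactly what is needed for $\theta^\fe$ to be defined over $F$ on the quasi-split $G^\fe$.
\item Uniqueness ``because only the inner class was pinned down'' needs two further ingredients. First, a Helminck-type classification: an involution is determined up to $\bG_\kappa(\ol F)$-conjugacy by its action on a maximal torus containing a maximal split torus. Second, a comparison of rational forms through the geometric cocycle.
\end{itemize}

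For the use this paper makes of the theorem, none of this is needed. Here $s(\kappa)$ has eigenvalues $\pm1$ with multiplicities $2r$ and $2s$, so $Z_{\Sp_{2n}}(s(\kappa))=\Sp_{2r}\times\Sp_{2s}$. This is the dual group of $X^\siegel_r\times X^\siegel_s$, and its Weyl group $(\mathfrak S_r\times\mathfrak S_s)\rtimes\{\pm1\}^n\subset W_X$ gives $\nu_\fe$ directly.
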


Conditions (1) through (3) above are needed to describe $Y^\fe$ up to $G^\fe$-inner form, but only conditions (1) and (4) will be needed in this paper.

We will now specialize to the case $X = X^\siegel$ and explicitly describe the transfer morphism $\nu_\fe$ of Theorem \ref{thm: existence of endoscopic symmetric variety}. The reader unfamiliar with \cite{sl_endoscopy} can, for the purposes of this paper, take the following as a definition.

Consider an elliptic relative endoscopic datum $(\kappa,\rho)$ for $X^\siegel$. By our earlier remarks, $\kappa\in \wh\canC$ must be 2 torsion, and $\rho$ is completely determined by $\kappa$. View $\kappa = (\kappa_1,\dots, \kappa_n)$ as an $n$-tuple with $\kappa_i\in \{\pm1\}$, and suppose that $\kappa$ has $r$ indices $+1$ and $s$ indices $-1$ (with $r+s = n$). Then,
\[
Y^\fe = X^{\Siegel}_{r}\times X^{\siegel}_s : = \big(\Res_{F'/F}\GL_r\backslash \U_{2r}\big)\times \big(\Res_{F'/F}\GL_s\backslash \U_{2s}\big).
\]
     For $\kappa$ having $\kappa_i = 1$ for $r$ indices and $\kappa_i = -1$ for $s$ indices, the transfer map $\nu_\fe$ is the morphism
    \[
    \fc_r\times \fc_s \simeq \fa\git \big((\mathfrak{S}_r\times \mathfrak{S}_s)\rtimes \{\pm1\}^n\big)\to \fa\git W_X
    \]
    with subscripts $r$ and $s$ denoting the rank of the symmetric variety factor, and the final map induced by the embedding $\mathfrak{S}_r\times \mathfrak{S}_s\subset \mathfrak{S}_n$ given by the distribution of $+1$'s and $-1$'s in $\kappa$. It is evident that this morphism is finite and flat. 

    Let $\fs_\fe\to \fc_\fe$ denote the spectral cover for $\fe$, with involution $\iota_\fe$, which is finite, flat of degree $2n$ over $\fc_\fe$. The following characterizes $\fs_\fe$ and its relationship to $\fs$.

    \begin{prop}
\label{prop: centralizers under transfer}
    There is a canonical partial normalization map $\fs_\fe\to \fs\times_\fc\fc_\fe$, which induces a canonical morphism $\nu_\fe^*J\to J_\fe$. The latter map of regular centralizers is an isomorphism over $\nu_\fe^{-1}(\fc \setminus \fD^\cent)$.
\end{prop}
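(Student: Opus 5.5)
Everything here is explicit, so I will write down the spectral covers and produce the partial normalization by hand. Over $\fc_\fe=\fc_r\times\fc_s$, with coordinates $(a',a'')$, set $P_{a'}(x)=x^{2r}+a'_2x^{2r-2}+\cdots$ and similarly $P_{a''}$. Then
\[
\fs_\fe = \fs_r\times\fs_s=\Spec\, \cO_{\fc_\fe}[x]/(P_{a'}(x))\ \sqcup\ \Spec\, \cO_{\fc_\fe}[x]/(P_{a''}(x)),
\]
with $\iota_\fe(x)=-x$ acting on each piece. Here the disjoint union is taken over $\fc_\fe$, so the total degree is $2r+2s=2n$. The transfer map $\nu_\fe$ is given on characteristic polynomials by $P_a=P_{a'}P_{a''}$, because it is induced by $\fa\git((\mathfrak S_r\times\mathfrak S_s)\rtimes\{\pm1\}^n)\to\fa\git W_X$. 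Consequently $\fs\times_\fc\fc_\fe=\Spec\,\cO_{\fc_\fe}[x]/(P_{a'}P_{a''})$. The ring map
\[
\cO_{\fc_\fe}[x]/(P_{a'}P_{a''})\to \cO_{\fc_\fe}[x]/(P_{a'})\times\cO_{\fc_\fe}[x]/(P_{a''})
\]
is the canonical map $\fs_\fe\to\fs\times_\fc\fc_\fe$. It is finite and $\iota$-equivariant. It is birational, and in fact an isomorphism wherever $P_{a'}$ and $P_{a''}$ are coprime, by the Chinese remainder theorem. Since both sides are finite flat of degree $2n$ over $\fc_\fe$ and generically reduced, this is a partial normalization.

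Next I would pass to centralizers. Taking $\iota$-quotients gives a finite birational map $\fs_\fe/\iota_\fe\to(\fs/\iota)\times_\fc\fc_\fe$. In the coordinate $y=x^2$ this is the analogous map for the polynomials $Q_{a'}(y)Q_{a''}(y)$. Restriction of scalars is functorial for such a map, and pulling back units gives a homomorphism
\[
\nu_\fe^*\Res^{\fs/\iota}_\fc\G_m\to\Res^{\fs_\fe/\iota_\fe}_{\fc_\fe}\G_m.
\]
Over $\ol k$ this is $\nu_\fe^*J\to J_\fe$, by Proposition \ref{prop: universal spectral}. For the $k$-rational statement I would use Theorem \ref{thm: regular centralizers for rational forms}(2). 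Both $J$ and $J_\fe$ are kernels of norm maps built from the $\sigma$-twisted covers. The map of covers commutes with $\sigma$, since $\sigma$ acts through the same rule $x\mapsto -x$ on both, so the unit-pullback commutes with the norm and restricts to the kernels. For the factor $X^\Siegel_s$ the relevant $J$ is again the norm-one torus of Theorem \ref{thm: regular centralizers for rational forms}. That theorem also gives the same $J$ for the unitary form, so the conclusion is insensitive to whether the second factor is read as Siegel or unitary. Canonicity holds because every map above is induced by the canonical map of covers.

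The isomorphism locus comes next. The map of covers is an isomorphism exactly where $\mathrm{Res}(Q_{a'},Q_{a''})$ is invertible. I would take $\fD^\cent\subset\fc$ to be the locus whose preimage under $\nu_\fe$ contains the vanishing of this resultant. Concretely, this is where two roots $\pm\delta_i$ and $\pm\delta_j$ with $i,j$ in different blocks of $\kappa$ coincide, up to sign. It is cut out by $\prod_{\kappa_i\neq\kappa_j}(\delta_i^2-\delta_j^2)$, which is the restricted-root hyperplane locus for roots $\pm\delta_i^*\pm\delta_j^*$ not in the endoscopic root system. Restriction of scalars along an isomorphism of covers is an isomorphism, which gives the final claim off $\nu_\fe^{-1}(\fD^\cent)$.

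The main obstacle I anticipate is matching $\fD^\cent$ with whatever divisor the paper intends: the resultant locus lives on $\fc_\fe$, so one must check that it is the full preimage of a divisor on $\fc$. I would handle this by verifying $W_X$-invariance of the product over all $\kappa$-splittings, or by defining $\fD^\cent$ as the image of the resultant locus under the finite map $\nu_\fe$.
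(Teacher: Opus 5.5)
Your proposal is correct and follows the paper's proof essentially step for step: the factorization $P_a=P_{a'}P_{a''}$, the map $A[x]/(P_{a'}P_{a''})\to A[x]/(P_{a'})\times A[x]/(P_{a''})$ as the partial normalization, passage to $\iota$-quotients, and the unit-pullback (adjunction) map on Weil restrictions, which is an isomorphism wherever the two components are disjoint. The only wobble is your last paragraph. $\fD^\cent$ is already fixed as $(\mathrm{Disc}\,Q_a)$ and should not be redefined; your product $\prod_{\kappa_i\neq\kappa_j}(\delta_i^2-\delta_j^2)$ is not $W_X$-invariant and really cuts out $\fR_\fe$ on $\fc_\fe$. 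The required containment follows directly from $\mathrm{Disc}(Q_{a'}Q_{a''})=\mathrm{Disc}(Q_{a'})\,\mathrm{Disc}(Q_{a''})\,\mathrm{Res}(Q_{a'},Q_{a''})^2$, i.e.\ $\nu_\fe^*\fD^\cent=\fD^\cent_\fe+2\fR_\fe$.
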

\begin{proof}
     If $X^\fe = X^\siegel_r\times X^\unitary_s$ for $r+s = n$, then identify $\fc_\fe \simeq \fc_r\times \fc_s$, with $\fc_r$ the GIT quotient for $X^\siegel_r$ and $\fc_s$ for $X^\unitary_s$. Let $\fs_r\to \fc_r$ and $\fs_s\to \fc_s$ denote the respective spectral covers. Then, we have
    \[
k[\fs_\fe] = k[\fs_r]\times k[\fs_s],
    \]
    so that $\fs_\fe = \fs_r\sqcup \fs_s$. Let $P_r(x), P_s(x)$ be the polynomials in $x$ such that $k[\fs_r] = A[x]/P_r(x)$ and $k[\fs_s] = A[x]/P_s(x)$. The coefficients of $P_r,P_s$ are the coordinate functions on $\fc_r,\fc_s$. Therefore, the morphism $\fc_r\times \fc_s\to \fc_n$, being induced by the inclusion $\fS_r\times \fS_s\subset \fS_n$, induces on any point $a\in \nu_{\fe}(\fc_\fe)$ a canonical factorization
    \[
  x^{2n}+a_2x^{2n-2}+\cdots +a_{2n} = P_r(x)\cdot P_s(x)  
    \]
    so that $k[\fs\times_\fc\fc_\fe] = A[x]/\big(P_r(x)\cdot P_s(x)\big)$. Hence, we obtain a $\iota$ equivariant morphism 
    \[
\fs_\fe\to \fs\times_{\fc}\fc_\fe
    \]
    over $\fc_\fe$ which is the partial normalization $\fs_r\sqcup \fs_s\to \fs_r\cup\fs_s$ at the intersection of $\fs_r$ and $\fs_s$ in $\fs\times_\fc\fc_\fe$. 
    
    In particular, we also deduce a partial normalization map
    \[
\fs_\fe/\iota_\fe\to \fs/\iota\times_{\fc}\fc_\fe
    \]
    Since $\mathrm{Res}^{\fs_\fe/\iota_\fe}_{\fc_\fe}(\G_m) = \mathrm{Res}^{\fs_\fe/\iota_\fe}_{\fs/\iota\times_\fc\fc_\fe}\left(\mathrm{Res}^{\fs/\iota\times_\fc\fc_\fe}_{\fc_\fe}(\G_m)\right)$, we obtain the morphism $\nu_\fe^*J\to J_\fe$ by the adjuction map for the Weil restriction $\mathrm{Res}^{\fs_\fe/\iota_\fe}_{\fs/\iota\times_\fc\fc_\fe}$. In particular, this morphism is an isomorphism over the open set in $\fc_\fe$ over which $\fs_r$ and $\fs_s$ are disjoint in $\fs\times_\fc\fc_\fe$, which includes the preimage of $\fc\setminus\fD^\cent$.
\end{proof}

In \cite{LXZfund}, it was noted that $Y^\fe$ is not the endoscopic space needed to formulate a fundamental lemma for $X^\siegel$. Indeed, the endoscopic variety $Y^\fe$ does not distinguish between the endoscopic datum for $\kappa = \mathbf{1}_r\times (-\mathbf{1}_s)$ and for $\kappa = (-\mathbf{1}_r)\times \mathbf{1}_s$ (and the corresponding orbital integrals are different and the full endoscopic datum does distinguish between the two). 

In \cite{LXZfund}, the authors propose the following endoscopic space, which we give as a definition.

\begin{definition}\label{def: end space}
    For an elliptic relative endoscopic datum $\fe = (\kappa,\rho)$, with $\kappa_i = 1$ for $r$ indices and $\kappa_i = -1$ for $s$ indices, we call the variety
    \[
    X^\fe = H^\fe\backslash G^\fe = \big(\Res_{F'/F}\GL_r\backslash \U_{2r}\big)\times \big((\U_s\times \U_s)\backslash \U_{2s}\big).
    \]
    the \emph{endoscopic symmetric space} associated to $\fe$. 
\end{definition}

Since $\fc_\fe$ and $J_\fe$ are the same for $X^\fe$ and $Y^\fe$, there is still a unique transfer morphism $\nu_\fe$ with canonical isomorphism $\nu_\fe^*J\simeq J_\fe$.

\begin{remark}
    The definition of $X^\fe$ above is more or less \emph{ad hoc} as it is not clear how to extract the implied comparison of orbital integrals from the formalism of \cite{sl_endoscopy}. A generalization of \emph{loc. cit.} that accommodates this case more naturally is ongoing work of the second author.
\end{remark}

\subsection{Divisors in $\fc$}
\label{sec: divisors}

In this section, we introduce several important divisors in the GIT quotient $\fc$ which will control the geometry of our local and global moduli spaces.

We consider first the discriminant divisor of the root system $\Phi_\bX$. Namely, we consider the function
\[
\prod_{\alpha\in \Phi_\bX} \mathrm{d}\alpha\in k[\fa]
\]
where we are viewing each root as a function $\mathrm{d}\alpha$ on $\fa$. As this function is $W_X$-invariant, it induces a divisor in $\fc$ which we denote by $\fD_\bX$ and call the \emph{discriminant divisor}. For the $\theta$-Cartan $\fa$ as in \eqref{eqn: theta Cartan} we have
\[
\fD_\bX = \left((-1)^{\frac{n^2+n}{2}}\left( \prod_i \delta_i^2 \right)\left(\prod_{i<j}(\delta_i-\delta_j)^2\right)\right) = \left(a_{2n}\cdot \mathrm{Disc}(x^n+a_2x^{n-1}+a_4x^{n-2}+\cdots +a_{2n}) \right)
\]
where $\delta_i$ is the coordinate function for the $i$-th diagonal entry in $\delta$ in \eqref{eqn: theta Cartan}, and $\mathrm{Disc}$ denotes the discriminant of the polynomial in $x$, which is a polynomial function of the coefficients $a_{2i}$. We record here the following basic observation about $\fD_\bX$.

\begin{lem}
	(\cite[Lem 1.10.1]{ngo}) The divisor $\fD_\bX$ is a reduced, effective divisor on $\fc$, and the map $\pi\colon \fa\to \fc$ is \'etale on the complement of $\fD_\bX$. Moreover, this divisor is stable under the action of $\Out(\bH)$.
\end{lem}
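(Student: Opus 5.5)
We reduce each assertion to a polynomial identity. The first thing to establish is the formula for $\fD_\bX$. The function $\prod_{\alpha\in\Phi_\bX}\mathrm{d}\alpha$ on $\fa$ is explicitly, up to sign,
\[
\prod_i \delta_i^2\prod_{i<j}(\delta_i-\delta_j)^2(\delta_i+\delta_j)^2 .
\]
Under $\fc\simeq\fa\git W_\fa$, the coordinates $a_{2i}$ are, up to sign, the elementary symmetric functions in the $\delta_i^2$. So $a_{2n}=\pm\prod\delta_i^2$, and $\mathrm{Disc}$ of the degree-$n$ polynomial with roots $-\delta_i^2$ is $\prod_{i<j}(\delta_i^2-\delta_j^2)^2$. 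This expresses the product of roots as $a_{2n}\cdot\mathrm{Disc}$, so it descends to a regular function on $\fc$ and defines an effective divisor.

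\emph{Reducedness.} The two factors $a_{2n}$ and $\mathrm{Disc}$ are distinct irreducible polynomials in $k[a_2,\dots,a_{2n}]$. That $\mathrm{Disc}$ is irreducible as a polynomial in the coefficients is classical, valid in characteristic $\neq 2$. To see that neither factor appears with multiplicity, restrict to a generic line in $\fc$ transverse to each component. Near a generic point of $\{a_{2n}=0\}$, the polynomial has a simple root at $0$ and $a_{2n}$ vanishes to order $1$. Near a generic point of $\{\mathrm{Disc}=0\}$, two roots collide simply, and $\mathrm{Disc}$ vanishes to order $1$ along a transverse curve in the coefficient space.

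\emph{Étaleness off the divisor.} We follow Ngô's argument. Since $p>n$, $W_\fa$ has order prime to $p$ (we also need $p\neq 2$ here), so $\pi\colon\fa\to\fa\git W_\fa=\fc$ is finite and flat. Indeed, $\fc$ is smooth and $\fa$ is Cohen--Macaulay, so miracle flatness applies. The ramification locus is the divisor cut out by the Jacobian determinant of $(a_2,\dots,a_{2n})$ with respect to $(\delta_1,\dots,\delta_n)$. This Jacobian equals, up to a unit, $\prod_i\delta_i\prod_{i<j}(\delta_i^2-\delta_j^2)$: it is the Vandermonde in the $\delta_i^2$ times $\prod 2\delta_i$. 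Its square is the pullback of $\fD_\bX$. Hence $\pi$ is étale exactly off $\pi^{-1}(\fD_\bX)$. Equivalently, off $\fD_\bX$ the stabilizers in $W_\fa$ are trivial (no $\delta_i=0$ and no $\delta_i=\pm\delta_j$), the action is free, and the quotient map is étale.

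\emph{Stability under $\Out(\bH)$.} An outer automorphism, for instance $\tau_1$ or $\tau_2$, preserves the representation $\fh^\perp$ up to the compatible automorphism. It therefore acts on $\fc$ and permutes the roots $\Phi_\bX$ up to sign, possibly after conjugating $\fa$ by $\bH$. Since $\prod\mathrm{d}\alpha$ is invariant up to a scalar under such permutations, the divisor is stable. Concretely, $\tau_1$ and $\tau_2$ act on $(X,Y)$ by transposes and swaps, which preserve $\tr\bigwedge^i(XY)$ by Lemma \ref{Lem: cat quot split}, so they act trivially on $\fc$.

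The main point needing care is the reducedness argument, specifically the irreducibility of $\mathrm{Disc}$ in positive characteristic. The explicit transverse-curve computation is the cleanest way to sidestep this.
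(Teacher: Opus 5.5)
Your argument is correct, but it follows a different, coordinate-based route. The paper gives no argument of its own and imports Ng\^o's general lemma for Chevalley quotients. That lemma applies after identifying $(\fa,W_\fa)$ with the Cartan and Weyl group of $\mathrm{Sp}_{2n}$; the $C_n$ and $B_n$ discriminants differ by a power of $2$.

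The standard uniform proof of that lemma uses only the reflection-group structure:
\begin{itemize}
\item $W$ acts freely off the root hyperplanes, so $\pi$ is \'etale there.
\item The inertia along each hyperplane $H_\alpha$ is $\{1,s_\alpha\}$, tame since $p\neq2$. Hence $\pi^*\fD=2\sum_{\alpha>0}H_\alpha$, which forces every prime factor of $\fD$ to occur with multiplicity one.
\item Automorphisms of the root datum permute the roots.
\end{itemize}

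You instead identify $\fD_\bX=(a_{2n}\cdot\mathrm{Disc})$ explicitly. Your product, including the factors $(\delta_i+\delta_j)^2$, is the correct one; the paper's display omits them. You then check multiplicity one for each prime factor directly on $\fc$, and locate the ramification via the Vandermonde Jacobian $\pm2^n\prod_i\delta_i\prod_{i<j}(\delta_i^2-\delta_j^2)$.

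Your computational route shows that only $\mathrm{char}(k)\neq2$ is needed. The appeal to $p>n$ is superfluous: finiteness holds for any finite group, and flatness follows from miracle flatness once $\fc\simeq\A^n$. It also produces directly the splitting $\fD_\bX=\fD^\br+\fD^\cent$ used later. The general route, by contrast, avoids computing invariants and works for any little Weyl group that is a reflection group.

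Two points to tighten.
\begin{itemize}
\item The ``transverse curve'' claim needs its one-line computation: along $t\mapsto((y-r)^2-t)h(y)$ one has $\mathrm{Disc}=4t\cdot(\text{unit})$. Combined with irreducibility of the discriminant locus (the image of $(r,g)\mapsto(y-r)^2g$), this gives irreducibility and multiplicity one of $\mathrm{Disc}$ without appealing to anything classical.
\item Your general sentence about $\Out(\bH)$ is too broad. Twisting $\fh^\perp$ by transpose-inverse on a single $\GL_n$ factor changes its central character (on the first summand, from $z_1z_2^{-1}$ to $z_1^{-1}z_2^{-1}$). So that outer automorphism has no compatible automorphism of $\fh^\perp$ and does not act on $\fc$ at all. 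The meaningful statement concerns the subgroup generated by $\tau_1,\tau_2$. Your concrete check that these act trivially on $\fc$ is exactly what is needed, since the products become ${}^t(XY)$ and ${}^t(YX)$, which have the same characteristic polynomial as $XY$.
\end{itemize}
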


We have a subdivisor of $\fD_\bX$ corresponding to the type $A_n$ root system on $\fa$, given by 
\[
\fD^\cent = \left((-1)^{\frac{n^2-n}{2}}\prod_{i<j}(\delta_i-\delta_j)^2\right) = \left(\mathrm{Disc}(x^n+a_2x^{n-1}+a_4x^{n-2}+\cdots +a_{2n}) \right).
\]
The centralizer group scheme $J$ is closely related to the type $\GL_n$ centralizers (see the spectral description given in Proposition \ref{prop: universal spectral} and part (2) of Theorem \ref{thm: regular centralizers for rational forms}), and so we call this divisor the \emph{centralizer divisor} associated to $\bX$. The following hints at its importance in describing regular centralizers using the spectral cover $\fs$.

\begin{lem}
	Let $\fs$ denote the spectral cover over $\fc$ described in \S\ref{sec: invariant theory}. The map $\fs/\iota\to \fc$ is \'etale away from the divisor $\fD^\cent$.
\end{lem}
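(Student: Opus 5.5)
The plan is to reduce the claim to a statement about the étaleness of a polynomial algebra over its coefficient ring. First we identify $\fs/\iota$ explicitly: with $A = k[\fc] = k[a_2,\dots,a_{2n}]$ and $k[\fs] = A[x]/(x^{2n}+a_2x^{2n-2}+\cdots+a_{2n})$, the involution $\iota(x)=-x$ has invariant subring generated over $A$ by $y = x^2$. Since $\fs\to\fc$ is finite flat and $\iota$ acts $A$-linearly with $k[\fs] = A[y]\cdot 1\oplus A[y]\cdot x$ (free, because the defining polynomial is even of degree $2n$), we get
\[
k[\fs/\iota] = k[\fs]^\iota = A[y]/(y^n + a_2 y^{n-1} + \cdots + a_{2n}).
\]
Here we use $\mathrm{char}(k)\neq 2$ to split $k[\fs]$ into $\pm1$-eigenspaces, so that taking invariants commutes with base change along $\fc$.

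Next we use the standard criterion that a monogenic finite flat algebra $A[y]/(Q(y))$ with $Q$ monic is étale over $\Spec A$ exactly where $Q$ and $Q'$ generate the unit ideal fiberwise. Equivalently, étaleness holds on the complement of the vanishing locus of the discriminant $\mathrm{Disc}(Q)\in A$, which equals (up to sign) the resultant $\mathrm{Res}(Q,Q')$ and is the norm of $Q'(y)$. Concretely, at a geometric point $a\in\fc$ the fiber is $\Spec \bar k[y]/(Q_a(y))$, which is reduced (hence étale over $\bar k$) if and only if $Q_a$ has distinct roots, that is, if and only if $\mathrm{Disc}(Q_a)\neq 0$. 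Because the morphism is finite and flat, étaleness can be checked on fibers as unramifiedness.

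Finally, with $Q(y) = y^n + a_2y^{n-1}+\cdots+a_{2n}$, the divisor $(\mathrm{Disc}(Q))$ is by definition $\fD^\cent$. This also matches its root-theoretic description: the roots of $Q$ are the $\delta_i^2$ on $\fa$, so $\mathrm{Disc}(Q) = \prod_{i<j}(\delta_i^2-\delta_j^2)^2$. This shows $\fs/\iota\to\fc$ is étale over $\fc\setminus\fD^\cent$.

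The only point needing care is this last identification. The displayed root formula for $\fD^\cent$ in terms of $\prod_{i<j}(\delta_i-\delta_j)^2$ should be read in the coordinates where the roots of $x^n+a_2x^{n-1}+\cdots$ are the squared coordinates, and we will use the polynomial description $\mathrm{Disc}(x^n+a_2x^{n-1}+\cdots+a_{2n})$ as the definition. With that reading there is no real obstacle: the étaleness claim is elementary, and the main thing to verify is the computation of the invariant ring.
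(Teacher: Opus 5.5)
Your argument is correct and is essentially the paper's proof. The paper simply observes that ramification over $a$ occurs exactly when $Q_a(y)=y^n+a_2y^{n-1}+\cdots+a_{2n}$ has a repeated root, and that $\mathrm{Disc}(Q_a)$ cuts out $\fD^\cent$. You make explicit two things the paper leaves implicit: the identification $k[\fs/\iota]=A[y]/(Q(y))$, and the finite-flat-plus-fiberwise-unramified criterion. This also avoids the paper's loose wording about $\fs\to\fc$, a map that additionally ramifies along $a_{2n}=0$.

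Your reading of the root formula as $\prod_{i<j}(\delta_i^2-\delta_j^2)^2$ is the correct one, since $\prod_{i<j}(\delta_i-\delta_j)^2$ is not invariant under the sign changes in $W_\fa$.
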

\begin{proof}
    Ramification of the spectral cover $\fs\to \fc$ can occur over a point $a = (a_i)$ in $\fc$ if and only if the corresponding spectral equation
    \[
    Q_a(x) = x^{n}+a_{2}x^{n-1}+\cdots+a_{2n}
    \]
    has repeated roots. Since $\fD^\cent$ is cut out by the discriminant of the polynomial $Q_a(x)$, this is precisely the condition that $a\in \fD^\cent$.
\end{proof}

We finally come to the most subtle divisor, the branching divisor for $\bX$. In \cite{HM}, it was shown that there exists a divisor $\fD^\br\subset \fc$ such that the fibers $\chi^{-1}(x)$ contain exactly 2 regular $\bH$-orbits when $x\in \fD^\br$ and exactly 1 regular $\bH$-orbit when $x\in\fc - \fD^\br$. We call this divisor the \emph{branching divisor} for $\bX$. The following structure result will serve as a model from which the regular parts of our moduli spaces will be understood.

\begin{thm}
	\label{thm: regular quotient}
	(\cite[Theorems 3.7 and 3.37]{HM}) Let $(\fh^\perp)^\reg\myfatslash \bH$ denote the nonseparated scheme obtained by gluing two copies of $\fc$ away from the branching divisor $\fD^\br$, i.e.
	\[
	(\fh^\perp)^\reg\myfatslash \bH = \fc\coprod_{\fc\setminus\fD^\br}\fc.
	\]
	Then, the natural map $[\chi^\reg]\colon [(\fh^\perp)^\reg/\bH]\to \fc$ factors through
	\[
[(\fh^\perp)^\reg/\bH]\to (\fh^\perp)^\reg\myfatslash \bH\to \fc
	\]
	with the first map being a gerbe banded by the regular centralizer group scheme, trivialized by a choice of Kostant-Rallis section $[\epsilon]$.
\end{thm}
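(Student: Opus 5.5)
This theorem is quoted from \cite{HM}, so I would reproduce its argument in the present explicit setting $\bH=\GL_n\times\GL_n$ acting on $\fh^\perp=\{(X,Y)\}$. The plan has three parts. First I identify the regular orbits in each fiber of $\chi$. Then I build the two open charts isomorphic to $\fc$. Finally I glue them and check the gerbe statement.

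\emph{Step 1: counting regular orbits.} Over $\fc^{\rs}$, a regular semisimple element has closed orbit, so the fiber $\chi^{-1}(a)$ is a single regular orbit. For general $a$, I would use Jordan decomposition $x=x_s+x_n$ together with Luna's slice at $x_s$. This reduces the count to regular nilpotent orbits for the centralizer symmetric pair $(\bH_{x_s},\fh^\perp_{x_s})$. That pair is a product of factors of type $\GL_m\times\GL_m\backslash\GL_{2m}$, coming from the nonzero eigenvalue pairs $\pm\delta$, and one further factor at eigenvalue $0$.
\begin{itemize}
\item The nonzero-eigenvalue factors are $\GL_m$ acting by conjugation on a Lie algebra, which has a unique regular nilpotent orbit.
\item The eigenvalue-zero factor is again of type $\GL_m\times\GL_m\backslash\GL_{2m}$. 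It has exactly two regular nilpotent orbits when $m\ge1$, represented by $\begin{pmatrix}&e_m\\I_m&\end{pmatrix}$ and $\begin{pmatrix}&I_m\\e_m&\end{pmatrix}$, as recalled in the proof of Theorem \ref{thm: section for rational forms}.
\end{itemize}
It follows that the number of regular orbits in $\chi^{-1}(a)$ is $2$ exactly when $a_{2n}=0$. So $\fD^\br=\{a_{2n}=0\}$, which is the factor $\prod\delta_i^2$ of $\fD_\bX$.

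\emph{Step 2: two Kostant--Rallis sections.} Using \cite[Lemma 6.30]{levy}, I build two sections $\epsilon_1,\epsilon_2\colon\fc\to(\fh^\perp)^\reg$, one through each regular nilpotent orbit. Explicit companion-type matrices should work: take $X$ to be the companion matrix of $Q_a$ and $Y=I_n$, and the transposed role for the other. Each map $\bH\times\fc\to(\fh^\perp)^\reg$, $(h,a)\mapsto h\cdot\epsilon_i(a)$, is smooth by a differential computation. Its image $U_i$ is open, because the complement of $U_i$ is the set of regular elements of the other nilpotent type over $\fD^\br$. One checks this is closed via the rank conditions $\mathrm{rk}X<n$ versus $\mathrm{rk}Y<n$. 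Then $U_1\cup U_2=(\fh^\perp)^\reg$ and $U_1\cap U_2=\chi^{-1}(\fc\setminus\fD^\br)\cap(\fh^\perp)^\reg$.

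\emph{Step 3: gluing and the gerbe.} On each $U_i$, the map $[U_i/\bH]\to\fc$ is a gerbe banded by $I^\reg$, which descends to $J$ by Proposition \ref{prop: regular centralizer package}. The section $\epsilon_i$ trivializes it, giving $[U_i/\bH]\simeq BJ$ over $\fc$. The quotients $U_i/\!/\!\!/\bH\simeq\fc$ therefore glue along $\fc\setminus\fD^\br$ into the nonseparated scheme $\fc\coprod_{\fc\setminus\fD^\br}\fc$. Since both charts are $J$-gerbes with compatible bands, $[(\fh^\perp)^\reg/\bH]$ is a $J$-gerbe over the glued space, and $\epsilon=\epsilon_1$ trivializes it over the first chart.

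\emph{Main obstacle.} The hardest step is the openness and smoothness in Step 2, uniformly over $\fD^\br$ and in characteristic $p$. This needs Levy's enhanced complementarity $[e,\fh]\oplus\mathfrak v=\fh^\perp$, which is where I would lean on \cite{levy}. I would first try to prove it by a direct tangent-space computation at the nilpotent points, then propagate it to all of $\fc$ using the $\G_m$-contraction.
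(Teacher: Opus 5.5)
Your proposal is correct. The paper gives no argument for this statement; it imports it from \cite{HM}, where it is a special case of a general construction of the regular quotient for symmetric pairs, with $\fD^\br$ computed separately for this example. Your route is a direct, case-specific verification instead: you count regular orbits fiberwise, build one open chart per regular nilpotent orbit from an explicit companion-matrix Kostant--Rallis section, and glue. This gives a self-contained argument with explicit charts. The cited route gives uniformity over all symmetric pairs, where such explicit charts are not available.

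A few corrections and simplifications.

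\emph{Step 1.} The factors attached to nonzero eigenvalue pairs $\pm\delta$ are of group type, i.e.\ $\GL_m$ acting on $\fgl_m$ by conjugation. They are not of type $\GL_m\times\GL_m\backslash\GL_{2m}$. Your bullet states this correctly, but the sentence before it does not.

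\emph{Your ``main obstacle''.} It is not an obstacle here. Your first chart is simply $U_1=\{\det Y\neq 0\}\cap(\fh^\perp)^\reg$. The slice $\{Y=I_n\}$ has stabilizer in $\bH$ equal to the diagonal $\GL_n$, so $U_1\simeq \bH\times^{\GL_n}\fgl_n^\reg$. Hence $[U_1/\bH]\simeq[\fgl_n^\reg/\GL_n]$ over $\fc$, induced by $(X,Y)\mapsto XY$. Smoothness of $\bH\times\fc\to U_1$ and the gerbe property therefore reduce to the classical statement for the companion-matrix section of $\fgl_n$. That statement holds in any characteristic, so you need neither Levy's complementarity nor a $\G_m$-contraction. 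The same applies to $U_2=\{\det X\neq 0\}\cap(\fh^\perp)^\reg$.

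\emph{Trivialization.} You only trivialize the gerbe over the first chart, but it is in fact neutral over the whole non-separated scheme. For $a\notin\fD^\br$, the element $(I_n,C_{Q_a})\in\bH$ carries $\epsilon_1(a)=(C_{Q_a},I_n)$ to $\epsilon_2(a)=(I_n,C_{Q_a})$. So $\epsilon_1$ and $\epsilon_2$ glue to a global section.
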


The branching divisor $\fD^\br$ can be described for more general spherical varieties, and it has a role in the relative Langlands program matching the determinant of the associated symplectic representation for tempered spherical varieties (see \cite{HML}). It is a surprising feature of this case that it is complementary to $\fD^\cent$ in the full discriminant divisor.

\begin{lem}
	(\cite[Prop 3.44, 3.45]{HM}) For the spherical variety $\bX = (\GL_n\times \GL_n)\backslash \GL_{2n}$, we have $\fD^\br = (a_{2n})$ is the divisor given by the determinant. In particular, we have a decomposition of reduced divisors
	\[
\fD_\bX = \fD^\cent+\fD^\br.
	\]
\end{lem}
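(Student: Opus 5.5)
To prove that $\fD^\br=(a_{2n})$, I would start from Theorem \ref{thm: regular quotient}. It tells us that $\fD^\br$ is exactly the locus of $a\in\fc$ over which $\chi^{-1}(a)$ contains two regular $\bH$-orbits rather than one. So the statement reduces to two things: computing the regular orbits in each fiber explicitly, and then comparing with the discriminant formula for $\fD_\bX$ already recorded.

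I would write $x=\begin{pmatrix} & X\\ Y& \end{pmatrix}$, so that $(h_1,h_2)$ acts by $X\mapsto h_1Xh_2^{-1}$ and $Y\mapsto h_2Yh_1^{-1}$. By Lemma \ref{Lem: cat quot split}, $\chi(x)$ is the characteristic polynomial of $XY$, and $a_{2n}=\pm\det(XY)=\pm\det X\det Y$. The argument then splits into two cases.

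\emph{Case $a_{2n}\neq 0$.} Here $X$ is invertible, and using $h_2$ we may normalize to $X=I_n$. The residual stabilizer is $\{h_1=h_2\}\simeq\GL_n$, which acts on $Y$ by conjugation. The $\bH$-orbits in $\chi^{-1}(a)$ therefore correspond to the $\GL_n$-conjugacy classes of $Y$ with characteristic polynomial $Q_a$. Regularity of $x$ in $\fh^\perp$ agrees with regularity in $\fg$, and for $X=I$ this is equivalent to regularity of $Y=XY$ in $\fgl_n$; I would check this by comparing centralizer dimensions. Since regular conjugacy classes in $\fgl_n$ with a fixed characteristic polynomial are unique, there is exactly one regular orbit, so $a\notin\fD^\br$.

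\emph{Case $a_{2n}=0$.} Here I would exhibit two distinct regular orbits, generalizing the two nilpotent representatives $\begin{pmatrix} & e_n\\ I_n& \end{pmatrix}$ and $\begin{pmatrix} & I_n\\ e_n& \end{pmatrix}$ from the proof of Theorem \ref{thm: section for rational forms}. For the first, take $X=I$ and $Y$ a companion matrix of $Q_a$. For the second, take $Y=I$ and $X$ a companion matrix of $Q_a$. Both are regular by the same centralizer computation as in the first case. They are not $\bH$-conjugate, because $\det Y=0$ in the first while $\det Y\neq 0$ in the second, and invertibility of $X$ and of $Y$ is $\bH$-invariant.

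To conclude that there are exactly two regular orbits, I would argue as follows. A regular $x$ cannot have both $X$ and $Y$ singular. This follows from comparing $\dim I_x$ with the generic value $n$, or equivalently from regularity of $x$ in $\fgl_{2n}$ forcing $\ker x$ to be $1$-dimensional. If $X$ is invertible we are reduced to the first case, giving one orbit; symmetrically, if $Y$ is invertible we also get one orbit. This completes the identification $\fD^\br=(a_{2n})$.

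The decomposition then follows from the displayed formula
\[
\fD_\bX=\big(a_{2n}\cdot\mathrm{Disc}(Q_a)\big)=\fD^\br+\fD^\cent.
\]
Reducedness holds because $a_{2n}$ and $\mathrm{Disc}(Q_a)$ are reduced and share no common factor. This is visible on $\fa$: $a_{2n}$ pulls back to $\prod\delta_i^2$, whereas $\mathrm{Disc}$ pulls back to $\prod_{i<j}(\delta_i^2-\delta_j^2)^2$, and these have distinct irreducible factors.

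The step I expect to be the main obstacle is the claim that regularity forces one of $X$ or $Y$ to be invertible. I would handle it via the criterion that regular in $\fh^\perp$ equals regular in $\fgl_{2n}$, so that $x$ has a cyclic vector and hence $\dim\ker x\le 1$. Since $\ker x\supseteq\ker Y\oplus\ker X$, at most one of $X$ and $Y$ can be singular.
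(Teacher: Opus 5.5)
Your proposal is correct, and it takes a different route from the paper, which gives no argument of its own: the lemma is simply imported from \cite[Prop 3.44, 3.45]{HM}, i.e.\ from the general theory of the branching divisor for symmetric pairs.

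You instead verify the defining property of $\fD^\br$ directly: two regular $\bH$-orbits in $\chi^{-1}(a)$ over $\fD^\br$, one elsewhere. You do this by an explicit orbit count in the model $x=\begin{pmatrix} & X\\ Y& \end{pmatrix}$.
\begin{itemize}
\item Normalizing an invertible block to $I_n$ identifies the regular orbits in which that block is invertible with regular $\GL_n$-conjugacy classes of characteristic polynomial $Q_a$. There is exactly one such class.
\item The identity $\ker x=\ker Y\oplus\ker X$, together with cyclicity of regular elements of $\fgl_{2n}$, shows that a regular $x$ has at most one singular block.
\item Both blocks are invertible exactly when $a_{2n}\neq 0$.
\item When $a_{2n}=0$, invertibility of $Y$ separates the two resulting orbits.
\end{itemize}
Since $\fD^\br$ is characterized set-theoretically (hence taken reduced) and $a_{2n}$ is a linear coordinate, this set-theoretic identification is all that is needed.

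Your route gives a short, elementary, self-contained proof. It uses only facts already recorded in the paper: agreement of regularity in $\fh^\perp$ and in $\fg$, and the formula $\fD_\bX=(a_{2n}\cdot\mathrm{Disc}(Q_a))$. The citation instead places the computation inside a framework that applies uniformly to other symmetric varieties, where no such matrix normal form is available.

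One small point in your final step. The pullback to $\fa$ proves only that $a_{2n}$ and $\mathrm{Disc}(Q_a)$ have no common factor. Since both pullbacks are squares up to a unit, that computation says nothing about reducedness on $\fc$. You can close this in either of two ways:
\begin{itemize}
\item cite the irreducibility of the generic discriminant; or
\item more economically, use the lemma quoted from \cite[Lem 1.10.1]{ngo} that $\fD_\bX$ is reduced, which gives reducedness of both factors and their coprimality at once.
\end{itemize}

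Your pullback $\prod_{i<j}(\delta_i^2-\delta_j^2)^2$ of the discriminant is the correct one. The paper's displays of $\fD_\bX$ and $\fD^\cent$ in the coordinates $\delta_i$ omit the factors $(\delta_i+\delta_j)^2$, so you should rely on the formulas in the $a_{2i}$.
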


Finally, we related the divisors above to their endoscopic counterparts via the transfer map. Let $X^\fe$ be an endoscopic symmetric variety for $X$, and let $\nu_\fe\colon \fc_\fe\to \fc$ be the corresponding transfer map (defined in \S\ref{sec: endoscopic spaces}). Choose $\Psi\subset \Phi_{X,\fa}-\Phi_{X^\fe,\fa}$ such that for any $\{\pm \alpha\}\subset \Phi_{X,\fa}-\Phi_{X^\fe,\fa}$, exactly one of $\alpha$ and $-\alpha$ is in $\Psi$. Then, by \cite[\S1.10.3]{ngo} the divisor $\nu_\fe^*\fD_\bX$ decomposes as
\[
\nu_\fe^*\fD_\bX = \fD_{\bX^\fe}+2\fR_{\fe}
\]
where $\fR_\fe = \left(\prod_{\alpha\in \Psi}\alpha\right)$ is the (reduced) ramification divisor for the map $\nu_\fe$. This decomposition admits a further refinement for $\fD^\cent$ and $\fD^\br$ as follows.

\begin{lem}
	\label{lem: pullback of divisors along transfer map}
	For $\fR_\fe$ as defined above, we have the following decompositions of pullbacks
	\[
	\nu_{\fe}^*\fD^\cent = \fD_\fe^\cent+2\fR_\fe,\quad \nu_{\fe}^*\fD^\br = \fD^\br_\fe.
	\]
\end{lem}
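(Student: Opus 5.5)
We verify both identities by pulling back to $\fa$, where every divisor in sight is an explicit product of linear forms in the coordinates $\delta_1,\dots,\delta_n$. Both $\fc_\fe$ and $\fc$ are quotients of the same $\theta$-Cartan $\fa$ by $W_\fe:=(\mathfrak{S}_r\times\mathfrak{S}_s)\rtimes\{\pm1\}^n$ and by $W_X=\mathfrak{S}_n\rtimes\{\pm1\}^n$. The map $\nu_\fe$ is the map induced by $W_\fe\subset W_X$, and $\pi_\fe\colon\fa\to\fc_\fe$ is finite flat and surjective. The coordinate rings are polynomial rings, hence UFDs, and $k[\fc_\fe]\subset k[\fa]$ is injective. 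Therefore an equality of effective principal divisors on $\fc_\fe$ can be checked after pulling back along $\pi_\fe$, i.e.\ as an equality of $W_\fe$-invariant functions up to units (constants) in $k[\fa]$.

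First we make the partition of roots explicit. Let $I_+$ and $I_-$ be the index sets where $\kappa_i=+1$ and $\kappa_i=-1$, of sizes $r$ and $s$. The roots of $X^\fe$ (type $B_r\times B_s$) are $\pm\delta_i^*$ for all $i$, together with $\pm\delta_i^*\pm\delta_j^*$ for $i,j$ in the same block. Consequently $\Phi_{X,\fa}-\Phi_{X^\fe,\fa}=\{\pm\delta_i^*\pm\delta_j^*: i\in I_+,\ j\in I_-\}$. We may take
\[
\Psi=\{\delta_i^*-\delta_j^*,\ \delta_i^*+\delta_j^*: i\in I_+,\ j\in I_-\},
\]
so that
\[
\prod_{\alpha\in\Psi}\alpha=\prod_{i\in I_+,\,j\in I_-}(\delta_i^2-\delta_j^2).
\]

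Next we compute the centralizer divisor. The pullback of $\fD^\cent$ to $\fa$ is (up to sign) $\prod_{i<j}(\delta_i^2-\delta_j^2)^2$; in $\delta$-coordinates this is the discriminant of the polynomial in $x$ whose roots are the $\delta_i^2$. Here I am reading the displayed formula $\prod(\delta_i-\delta_j)^2$ as written in terms of the variables $\delta_i^2$, since the discriminant of $x^n+a_2x^{n-1}+\cdots$ has the $\delta_i^2$ as roots. We split the pairs $i<j$ into three classes: both indices in $I_+$, both in $I_-$, and mixed. The $I_+$-pairs give $\fD^\cent_r$ pulled back from the first factor $\fc_r$, and the $I_-$-pairs give $\fD^\cent_s$ from the second factor. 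Since $\fD^\cent_\fe$ is the centralizer divisor of the product, equal to $\mathrm{pr}_1^*\fD^\cent_r+\mathrm{pr}_2^*\fD^\cent_s$, these two classes together give $\fD^\cent_\fe$. The mixed pairs give exactly $\left(\prod_{\alpha\in\Psi}\alpha\right)^2$, which is $W_\fe$-invariant up to sign and so defines $2\fR_\fe$ on $\fc_\fe$. Hence $\nu_\fe^*\fD^\cent=\fD^\cent_\fe+2\fR_\fe$.

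Finally we compute the branching divisor. Here $\fD^\br=(a_{2n})$, and $a_{2n}$ pulls back to $\pm\prod_i\delta_i^2$. On the endoscopic side, the branching divisor of $X^\fe$ is by definition the sum of the branching divisors of its factors, which is $(a_{2r}^{(r)})+(a_{2s}^{(s)})$. Pulled back to $\fa$ this is $\prod_{i\in I_+}\delta_i^2\cdot\prod_{j\in I_-}\delta_j^2$. Equivalently, the factorization $P_a=P_r\cdot P_s$ from the proof of Proposition \ref{prop: centralizers under transfer} gives $a_{2n}=a_{2r}a_{2s}$ after comparing constant terms. Therefore $\nu_\fe^*\fD^\br=\fD^\br_\fe$.

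The only real obstacle is justifying that the branching divisor of $X^\fe$ is $\fD^\br_r+\fD^\br_s$, in particular for the unitary factor $X^\unitary_s$. Branching is a geometric notion: it depends only on the count of regular orbits over $\ol k$, and both factors are forms of $\GL\times\GL\backslash\GL$. So the lemma \cite[Prop 3.44, 3.45]{HM} applies factorwise. It remains to check that a product of regular orbits is regular and that regular orbits in fibers multiply, which is immediate for a product action.
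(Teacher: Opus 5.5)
Your argument is correct and is essentially the paper's. Both identify $\nu_\fe$ with the quotient map $\fa\git W_\fe\to\fa\git W_X$ and observe that the roots discarded in passing to $W_\fe$ are the mixed roots $\pm\delta_i^*\pm\delta_j^*$ ($i\in I_+$, $j\in I_-$), which contribute only to $\fD^\cent$. The paper compresses this into the remark that the ramification is evidently a subdivisor of $\fD^\cent$, combined with Ng\^o's decomposition $\nu_\fe^*\fD_\bX=\fD_{\bX^\fe}+2\fR_\fe$; you instead do the root bookkeeping on $\fa$ directly, including the correct reading of $\fD^\cent$ as $\prod_{i<j}(\delta_i^2-\delta_j^2)^2$.
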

\begin{proof}
	Write $X^\fe = X^\unitary_{m}\times X^\Siegel_{n-m}$. We may explicitly express the map $\nu_\fe\colon \fc_\fe\to \fc$ as 
	\[
	\fa\git (W_{X^\unitary_{m}}\times W_{X^\Siegel_{n-m}}) = \fa\git \big((\mathfrak{S}_m\times \mathfrak{S}_{n-m})\rtimes \{\pm1\}^n\big)\to \fa\git W_\fa
	\]
	whose ramification is evidently described by a subdivisor of $\fD^\cent$.
\end{proof}

\section{Unitary Higgs Bundles}
\label{sec: unitary Higgs}

We review here the notation and definitions of the moduli spaces of unitary Higgs bundles of Ng\^o and Laumon \cite{ngo-laumon}. We will use some of their results to deduce properties of the relative Hitchin moduli spaces and their unitary relatives in \S\ref{sec: relative Hitchin spaces}.

First we fix the following data:  fix smooth, projective, geometrically-connected curves $\Sigma',\Sigma$ over $k$ with an \'etale degree 2 cover $\vartheta\colon \Sigma'\to \Sigma$. Let $\sigma\in \Aut(\Sigma'/\Sigma)$ denote the order-two generator of the automorphism group of the cover. We also fix a divisor $D$ on $\Sigma$ and let $S$ denote a test $k$-scheme.

\begin{definition}
    The moduli space of $\U_n$ Higgs bundles is the functor $\cN_n$ which takes values
    \[
    \cN_n(S) = \left\{ (\cE',h,\Phi')\left| \begin{array}{c} \cE'\text{ is a rank $n$ vector bundle on }\Sigma'\times S, \\ h\colon \cE'\to \sigma^*(\cE')^\vee\text{ is a Hermitian form on $\cE'$, i.e. $h = \sigma^*h^\vee$,}  \\ \Phi'\colon \cE'\to \cE'\otimes D,\; \text{satisfying $\sigma^*(\Phi')^\vee\circ h +h\circ \Phi' = 0$}\end{array}\right. \right\}.
    \]
\end{definition}

Note that, in the above, $(-)^\vee = \Hom(-,\cO_{\Sigma'})$ denotes the usual dual of vector bundles. In \cite{ngo-laumon}, it is shown that $\cN_n$ is representable by a smooth, finite-type algebraic stack. 

We now recall the construction of the Hitchin morphism. The involution $\sigma$ induces a decomposition
\[
\vartheta_*\cO_{\Sigma'} = \cO_\Sigma\oplus \cL
\]
where $\cL$ is the $(-1)$-eigenspace of $\sigma$ on $\vartheta_*\cO_{\Sigma'}$. Note that $\cL^{\otimes 2}\simeq \cO_\Sigma$. 

\begin{definition}
    Let $\cB_n$ denote the affine space
    \[
\cB_n = \bigoplus_{i=1}^n H^0(\Sigma,\cL(D)^{\otimes i}),
    \]
    where $\cL(D):=\cL\otimes D$. The Hitchin morphism $f\colon \cN_n\to \cB_n$ is the morphism taking $(\cE',h,\Phi')$ to the coefficients of the characteristic polynomial of $\Phi'$, i.e. $\tr\bigwedge^i\Phi'\in H^0(\Sigma,\cL(D)^{\otimes i})$.
\end{definition}

Fibers of the morphism $f$ have an interpretation via spectral covers similar to those associated to classical groups by Hitchin \cite{hitchin}. To state it, let $\wt{\Sigma}'$ and $\wt{\Sigma}$ be the base change of the spectral cover $\fs_n\to \fc$ along the respective morphisms fitting in the diagram
\[
\xymatrix{
\wt{\Sigma}' \ar[r]\ar[d]^-{\pi_a'} & \wt{\Sigma}\ar[d]^-{\pi_a}\ar[r] & (\fs_n)_{\cL(D)}\ar[d] \\
\Sigma'\times \cB_n\ar[r]^-{\vartheta} & \Sigma\times \cB_n\ar[r]^-{\mathrm{ev}} & (\fc_n)_{\cL(D)}.
}
\]
For every $a\in \cB_n(S)$, we denote by $\pi_a'\colon \wt{\Sigma}_a'\to \Sigma'$ the corresponding finite, flat cover of $\Sigma'$, and similarly for $\pi_a\colon \wt{\Sigma}_a\to \Sigma$. The automorphism $\sigma\in \Aut(\Sigma'/\Sigma)$  acts naturally on $\wt{\Sigma}'$, and  it is easy to verify that $\wt\Sigma_a = \wt\Sigma_a'/\sigma$ under this action.

\begin{Convention}\label{Convention for sheaves}For $\cG$ a sheaf on $\Sigma'$, we denote 
\[
\cG^\vee = \Hom_{\cO_{\Sigma'}}(\cG,\cO_{\Sigma'}),
\]
while for $\cF$ a sheaf on a spectral cover $\wt{\Sigma}'_a$ for $a\in \cA(S)$, we denote 
\[
\cF^\vee = \Hom_{\cO_{\wt{\Sigma}'_a}}\left(\cF,\omega_{\wt{\Sigma}_a'/\Sigma'\times S}\right).
\]
where $\omega_{\wt{\Sigma}_a'/\Sigma'\times S}$ is the relative dualizing sheaf for the finite, flat cover $\wt{\Sigma}_a'\to \Sigma'\times S$.
\end{Convention}
\begin{prop}
\label{prop: spectral description for unitary higgs}
    Let $\cN_a = f^{-1}(a)$. For all $a\in \cB_n(S)$ such that $\wt{\Sigma}_a'$ is reduced, we have
    \[
    \cN_a\simeq \left\{(\cF,h)\mid \cF\in \ol{\Pic}(\wt{\Sigma}'_a/S),\; h\colon \cF\to \sigma^*(\cF)^\vee\text{ Hermitian, i.e. }h = \sigma^*(h)^\vee \right\}
    \]
    where the $\ol{\Pic}$ denotes the compactified relative Picard scheme classifying torsion-free, generic rank-1 sheaves on $\wt{\Sigma}_a$, and the superscript indicates that we are considering the sub-functor classifying $\cF'$ such that $\cF'\simeq \sigma^*(\cF')^\vee$. 
\end{prop}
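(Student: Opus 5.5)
The plan is to run the classical Beauville--Narasimhan--Ramanan spectral correspondence for $\GL_n$-Higgs bundles on $\Sigma'$ and then track the extra Hermitian datum through it. First, forgetting $h$, a pair $(\cE',\Phi')$ on $\Sigma'\times S$ with characteristic polynomial $a$ is the same thing as a coherent sheaf $\cF$ on $\wt\Sigma'_a$ with $\pi'_{a,*}\cF=\cE'$. Here the $\cO_{\wt\Sigma'_a}$-module structure is generated by $\Phi'$ acting via $x$, and Cayley--Hamilton guarantees that it factors through $\cO_{\Sigma'}[x]/(\text{char. poly})$. Conversely, $\cE'=\pi'_{a,*}\cF$ and $\Phi'$ is multiplication by $x$. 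Since $\wt\Sigma'_a$ is reduced and $\cE'$ is locally free of rank $n$, the sheaf $\cF$ is torsion free. Because the generic point of each component of $\wt\Sigma'_a$ has function field $K$ with $\prod K$ of total degree $n$ over $k(\Sigma')$, the generic stalk of $\cF$ has rank one on each component. Hence $\cF\in\ol{\Pic}(\wt\Sigma'_a/S)$, and this identification is functorial in $S$ because $\pi'_a$ is finite and flat.

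Second, we translate the duality. The key input is Grothendieck duality for the finite flat map $\pi'_a$:
\[
\pi'_{a,*}\mathcal{H}om_{\cO_{\wt\Sigma'_a}}(\cF,\omega_{\wt\Sigma'_a/\Sigma'\times S})\simeq \mathcal{H}om_{\cO_{\Sigma'}}(\pi'_{a,*}\cF,\cO_{\Sigma'}),
\]
and this is why Convention~\ref{Convention for sheaves} defines $\cF^\vee$ using $\omega$. Under this isomorphism, the $\cO_{\wt\Sigma'_a}$-structure on the left corresponds to the transpose $(\Phi')^\vee$ acting on $(\cE')^\vee$. I will also use that $\sigma$ lifts to $\wt\Sigma'_a$ compatibly, with $\sigma^*\pi'_{a,*}=\pi'_{a,*}\sigma^*$. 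Here we must be careful about the sign: the condition is $\sigma^*(\Phi')^\vee h=-h\Phi'$, so a map $h\colon\cE'\to\sigma^*(\cE')^\vee$ intertwines $\Phi'$ with $-\sigma^*(\Phi')^\vee$. Since $\Phi'$ is a section twisted by $\cL(D)$ rather than $D$, on which $\sigma$ acts by $-1$, this sign is absorbed exactly by how $\sigma$ acts on the coordinate $x$ of the spectral cover. I will check that $\sigma$ on $\wt\Sigma'_a$ is defined so that $\sigma^*x$ corresponds to $-\sigma^*(\Phi')^\vee$ under the identification.

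With this in place, an $\cO_{\Sigma'}$-linear $h$ satisfying the Higgs compatibility is exactly an $\cO_{\wt\Sigma'_a}$-linear map $\cF\to\sigma^*(\cF)^\vee$. The symmetry condition $h=\sigma^*h^\vee$ transports verbatim, because the duality isomorphism is canonical, is compatible with biduality $\cF\simeq\cF^{\vee\vee}$ for torsion-free sheaves on a Gorenstein curve, and commutes with $\sigma^*$. Nondegeneracy (isomorphism) likewise transports because $\pi'_{a,*}$ is exact and conservative on these sheaves.

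The main obstacle is the sign bookkeeping in this second step: pinning down the $\sigma$-action on $(\fs_n)_{\cL(D)}$ so that the skew-Hermitian condition on $\Phi'$ becomes plain $\cO_{\wt\Sigma'_a}$-linearity of $h$. The remaining points are standard. We need $\wt\Sigma'_a$ to be Gorenstein (it is a planar curve, embedded in a line bundle total space), so that biduality holds with respect to $\omega$. Finally, all constructions are compatible with base change in $S$, which upgrades the bijection on points to an isomorphism of stacks.
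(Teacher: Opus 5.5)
Your proposal is correct and follows essentially the paper's approach. The paper only sketches the correspondence: push forward along $\pi'_a$, let $\Phi'$ be multiplication by the tautological section, and obtain $h$ from $\cF'\simeq\sigma^*(\cF')^\vee$ with duals taken relative to $\omega_{\wt\Sigma'_a/\Sigma'\times S}$, deferring details to Laumon--Ng\^o. Your Beauville--Narasimhan--Ramanan plus finite-duality argument, with the sign absorbed by the lift $\sigma(x)=-x$, is that sketch made explicit (one small fix: rank one on each component comes from the characteristic polynomial of $\Phi'$ being the reduced $a$, so each irreducible factor occurs once, not from the degree count alone).
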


Given a torsion-free, generic rank-1 sheaf $\cF'$, we construct the corresponding unitary Higgs bundle by taking $\pi_{a,*}'(\cF')$, with the isomorphism $\cF'\simeq \sigma^*(\cF')^\vee$ providing the Hermitian structure $h$ and multiplication by the tautological section providing the section $\Phi'$.

The entire moduli space is acted on by its group of symmetries 
\[
\cP_\cN = \Pic(\wt{\Sigma}'/\cB_n)^{\sigma^* = (-)^{\vee}}
\]
in the natural way.

\begin{definition}
    The \emph{anisotropic locus} $\cB_n^\ani\subset \cB_n$ is the set of $a\in \cB_n$ such that $\pi_0(\cP_{\cN,a})$ is finite; that is, such that the morphism $\wt{\Sigma}_a'\to \wt{\Sigma}_a$ induces a bijection on irreducible components. (See \cite[Lem 2.8.2]{ngo-laumon}.)
\end{definition}

It was shown in \cite{ngo-laumon} that $\cB_n^\ani$ is a nonempty, open subset of $\cB_n$; the restriction $\cN_n^\ani = f^{-1}(\cB_n^\ani)$ is a Deligne--Mumford stack; and the Hitchin morphism $f^\ani = f|_{\cN_n^\ani}$ is proper.

\section{The relative Hitchin moduli space}
\label{sec: relative Hitchin spaces}

\subsection{Global moduli spaces}
\label{sec: global construction}

We keep the same notation as in \S\ref{sec: unitary Higgs} and formulate the global forms of the rational symmetric spaces in  Section \ref{sec: rational structures}. Namely, let $\Sigma$ be a smooth, geometrically-connected projective curve of genus $g\geq 2$ over the finite field $k = \bF_q$, and let $\vartheta\colon \Sigma'\to \Sigma$ be an \'etale 2-cover over $k$, with $\Sigma'$ also smooth, projective, and geometrically connected. Let $\Aut(\Sigma'/\Sigma) = \langle \sigma \rangle$ for $\sigma$ an involution.

The involution $\sigma$ acts on the Weil restrictions
\[
\bG' = \vartheta_*\bG\quad \text{and}\quad \bH' = \vartheta_*\bH,
\]
 and hence also on the bundle $(\bfh')^\perp\subset \bfg' = \mathrm{Lie}(\bG')$. The involutions $\tau_i$ induce involutions on $\bH'$ and $(\bfh')^\perp$ which we call by the same name, and which commute with the action of $\sigma$. We form the two twists $(H^\unitary,V^\unitary)$ and $(H^\siegel, V^\siegel)$ as before.

In each case, $V^\star$ admits a $\G_m$-action by scaling, which commutes with the action of $H^\star$, and hence, we may form the stack $[V^\star_D/H^\star]$ over $\Sigma$, where $V^\star_D = V^\star\times^{\G_m}D^\times$ is the $D$-twisted form of $V'$. Here and below, the superscript $\star$ denotes either $\star = \unitary$ for the split-inert case (using $\tau_1$) or $\star = \Siegel$ for the inert-inert case (using $\tau_2$).  Both are rational twists of the case
\[
H = (\bH')^\sigma = \bH\times \Sigma \quad\text{and} \quad V=(\bfh'^\perp)^\sigma = \bfh^\perp\times \Sigma.
\]

Define the moduli stack $\cM^\star=\cM_{D}^\star$ as the mapping stack
\[
\cM_{D}^{\star} = \mathrm{Sect}_\Sigma([V^\star_D/H^\star]).
\]
Explicitly, we may describe $\cM^\star$ as functors of points classifying certain unitary Higgs bundles with additional data. 

\begin{lemma}
    The moduli  $\cM^\star$ represent the functors sending a scheme $S$ to
\begin{equation}
\label{eqn: unitary functor of points}
\cM^\unitary(S) = \left\{ (\cE_1',\cE_2',h_1,h_2,\phi')\left| \begin{array}{c}\cE_i'\text{ are rank $n$ vector bundles on $\Sigma'\times S$,} \\
\text{$h_i\colon \cE_i'\to \sigma^*(\cE_i')^\vee$ is a Hermitian form, i.e. $\sigma^*h_i^\vee = h_i$,}\\\text{and }\phi'\colon \cE_1'\to \cE_{2}'\otimes D\text{ such that }\sigma^*(\phi')^\vee\circ h_1+h_2\circ \phi'= 0 \end{array} \right.\right\}
\end{equation}
\begin{equation}
\label{eqn: siegel functor of points}
\cM^\Siegel(S) = \left\{ (\cE',h,\phi_{12}',\phi_{21}')\left| \begin{array}{c}\cE'\text{ is a rank $n$ vector bundle on $\Sigma'\times S$, }\phi_{12}'\colon \cE'\to \sigma^*(\cE')^\vee\otimes D\\ \text{and }\phi_{21}'\colon \sigma^*(\cE')^\vee\to \cE'\otimes D\text{ such that }\sigma^*\phi_{ij}'^\vee = \phi_{ij}' \\
\text{and }h\colon \cE'\to \sigma^*(\cE')^\vee\text{ is a Hermitian form on $\cE'$}\end{array} \right.\right\}.
\end{equation}
\end{lemma}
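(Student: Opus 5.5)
The plan is to unwind the definition $\cM^\star_D = \mathrm{Sect}_\Sigma([V^\star_D/H^\star])$ via descent along the étale double cover $\vartheta\colon\Sigma'\to\Sigma$. A section $\Sigma\times S\to[V^\star_D/H^\star]$ is the same as an $H^\star$-torsor $\cP$ on $\Sigma\times S$ together with a section of the associated bundle $\cP\times^{H^\star}V^\star_D$. Since $H^\star$ and $V^\star$ are obtained as $\tau_i\sigma$-fixed points of $\vartheta_*\bH$ and $\vartheta_*\bfh^\perp$, pulling back to $\Sigma'\times S$ trivializes the twist. Concretely, we obtain an equivalence between $H^\star$-torsors on $\Sigma\times S$ and $\bH$-torsors $\cP'$ on $\Sigma'\times S$ equipped with an isomorphism $\sigma^*\cP'\simeq\tau_i(\cP')$ satisfying the cocycle (involutivity) condition. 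The same description holds for sections of $V^\star_D$: a section of $\cP'\times^{\bH}\bfh^\perp_D$ is required to be compatible with this descent datum.

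Next we translate into vector bundles. An $\bH=\GL_n\times\GL_n$-torsor on $\Sigma'\times S$ is a pair $(\cE_1',\cE_2')$ of rank $n$ bundles, and $\tau_1,\tau_2$ act on such pairs by $(\cE_1',\cE_2')\mapsto(\cE_1'^\vee,\cE_2'^\vee)$ and $(\cE_1',\cE_2')\mapsto(\cE_2'^\vee,\cE_1'^\vee)$ respectively. In the unitary case the descent datum is therefore a pair of isomorphisms $h_i\colon\cE_i'\to\sigma^*(\cE_i')^\vee$, and the cocycle condition becomes $\sigma^*h_i^\vee=h_i$, so the $h_i$ are Hermitian forms. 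In the Siegel case the descent datum is a single isomorphism $\cE_2'\simeq\sigma^*(\cE_1')^\vee$. Setting $\cE'=\cE_1'$, the cocycle condition is automatic, and we may then choose $h$ by recording this identification, matching the stated functor.

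A section of $\bfh^\perp_D$ twisted by $(\cE_1',\cE_2')$ is a pair $(\phi_{12}'\colon\cE_2'\to\cE_1'\otimes D,\ \phi_{21}'\colon\cE_1'\to\cE_2'\otimes D)$. We then transport the $\tau_i\sigma$-invariance through the identifications above, using the formulas for $\tau_1,\tau_2$ on $\bfh^\perp$ given in \S\ref{sec: rational structures}. In the unitary case $\tau_1$ swaps the blocks with a transpose and a minus sign, so the $\phi_{12}'$ component is determined by $\phi'=\phi_{21}'$. The condition then becomes $\sigma^*(\phi')^\vee\circ h_1+h_2\circ\phi'=0$. In the Siegel case $\tau_2$ preserves the blocks, and after substituting $\cE_2'=\sigma^*(\cE')^\vee$ one obtains maps $\phi_{12}'\colon\cE'\to\sigma^*(\cE')^\vee\otimes D$ and $\phi_{21}'\colon\sigma^*(\cE')^\vee\to\cE'\otimes D$. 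These satisfy the self-adjointness $\sigma^*\phi_{ij}'^\vee=\phi_{ij}'$, which is the relative version of the Hermitian condition $x=x^*$ defining $V^\siegel=\Herm_n\times\Herm_n$.

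All identifications are functorial in $S$, and descent along the finite étale $\vartheta$ is effective for torsors and for sections of affine bundles. The equivalence of groupoids therefore gives representability of $\cM^\star$ by the stated functors, identified with the mapping stack. The main obstacle is the sign bookkeeping in the last step. One must check that the minus sign in $\tau_i$ on $\bfh^\perp$, combined with the Hermitian-form identifications, yields exactly the stated relations: the anti-self-adjointness in the unitary case and the self-adjointness with the correct twist in the Siegel case. The first thing I would do is verify this pointwise on the rational forms $V^\unitary=\fgl_n(k')$ and $V^\siegel=\Herm_n\times\Herm_n$ over $k$, and then globalize.
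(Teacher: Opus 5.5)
The paper gives no argument for this lemma beyond the definition $\cM^\star=\mathrm{Sect}_\Sigma([V^\star_D/H^\star])$. Your twisted descent along $\vartheta$ is the natural way to unwind it, and the torsor part is right: the $\tau_1$-cocycle gives $h_i=\sigma^*h_i^\vee$, and the $\tau_2$-cocycle determines the second isomorphism from the first.

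\textbf{The Siegel step fails.} You write ``we may then choose $h$ by recording this identification,'' but this does not work.
\begin{itemize}
\item The identification you record is an isomorphism $\cE_2'\xrightarrow{\sim}\sigma^*(\cE_1')^\vee$ between two different bundles. Once you set $\cE'=\cE_1'$ and $\cE_2':=\sigma^*(\cE')^\vee$, it is the identity. It is not a map $\cE'\to\sigma^*(\cE')^\vee$, let alone a Hermitian form.
\item Your own computation shows that an $H^\siegel=\Res_{\Sigma'/\Sigma}\GL_n$-torsor is nothing more than a rank $n$ bundle $\cE'$.
\item A Hermitian form on $\cE'$ is genuinely extra data. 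It forces $\cE'\simeq\sigma^*(\cE')^\vee$, hence $\deg\cE'=0$. It also shrinks the automorphism group of, e.g., the point with $\phi_{12}'=\phi_{21}'=0$ from $\Aut(\cE')$ to $\U(\cE',h)$.
\end{itemize}
So the functor with $h$ read as a Hermitian form on $\cE'$ is not equivalent to $\mathrm{Sect}_\Sigma([V^\siegel_D/H^\siegel])$, and no choice of $h$ repairs this. What your argument does prove is the description by triples $(\cE',\phi_{12}',\phi_{21}')$. Here $h$ must be understood as the tautological (hyperbolic) identification of the second block with $\sigma^*(\cE')^\vee$, which is how $h$ is actually used in the proof of Proposition~\ref{prop: fixed points of involution on N}. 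You should state that reading explicitly rather than claim a match.

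\textbf{Two other relations are asserted rather than derived, and neither comes out as displayed.}

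In the unitary case, write $\phi_{12}'\colon\cE_2'\to\cE_1'\otimes D$ as in your proposal. Then $\tau_1\sigma$-invariance of $(\phi_{12}',\phi')$ is the identity $h_1\circ\phi_{12}'+\sigma^*(\phi')^\vee\circ h_2=0$. This is a formula for the other block. The map $\phi'$ itself is unconstrained, as it must be since $V^\unitary=\fgl_n(k')$. The displayed relation $\sigma^*(\phi')^\vee\circ h_1+h_2\circ\phi'=0$ does not even typecheck for $\phi'\colon\cE_1'\to\cE_2'\otimes D$.

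In the Siegel case, $\tau_2(A,B)=(-{}^tA,-{}^tB)$ on $\bfh^\perp$ (as in \S\ref{sec: rational structures}). So the $\tau_2\sigma$-invariants are skew-Hermitian, and descent gives $\sigma^*\phi_{ij}'^\vee=-\phi_{ij}'$, not $+\phi_{ij}'$. Over $k$ one passes to $\Herm_n\times\Herm_n$ by multiplying by a trace-zero scalar of $k'$. This does not globalize: the $\sigma$-anti-invariant part $\cL$ of $\vartheta_*\cO_{\Sigma'}$ has no nonzero sections, since $\Sigma'$ is geometrically connected. So your plan to ``verify pointwise, then globalize'' yields the $+$ sign only after replacing $D$ by $\cL(D)$. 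This is harmless for the Hitchin base because $\cL^{\otimes 2}\simeq\cO$, but it has to be said.
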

We will describe these moduli spaces in terms of spectral covers in \S\ref{sec: invariant theory spectral}. First we show how they arise from taking fixed points under the appropriate involutions on the stack $\cN_{2n}$.

For $\star\in\{\unitary,\Siegel\}$, we define the maps $\mu^\star: \cM^\star\to \cN_{2n}$ as follows. In the unitary case, the map $\mu^\unitary$ is given on $S$-points by
\begin{equation}
    \mu^\unitary(\cE_1',\cE_2',h_1,h_2,\phi') = (\cE_1'\oplus \cE_2', h_1\oplus h_2,\Phi')
\end{equation}
where the Higgs field $\Phi'$ is given by the sum $\phi'\oplus \sigma^*(\phi')^\vee$, viewing the latter as a map $\cE_2'\to \cE_1'\otimes D$ by
    \[
    \cE_2'\xrightarrow{h_2} \sigma^*(\cE_2')^\vee \xrightarrow{\sigma^*(\phi')^\vee} \sigma^*(\cE_1')^\vee\otimes D\xrightarrow{h_1} \cE_1'\otimes D.
    \]
    It is now simple to check that $\mu^\unitary(\cE_1',\cE_2',h_1,h_2,\phi')\in \cN_{2n}(S)$.

    In the Siegel case, the map $\mu^\Siegel$ is given on $S$-points by
\begin{equation}
\mu^\Siegel(\cE',h,\phi_{12}',\phi_{21}')= (\cE'\oplus \sigma^\ast(\cE')^\vee, h\oplus \sigma^\ast(h)^\vee, \phi_{12}'\oplus\phi_{21}')\in \cN_{2n}(S).
\end{equation}

\subsection{Fixed Points on $\cN_{2n}$}

Denote by $\cN = \cN_{2n}$ the moduli space of unitary Higgs bundles of rank $2n$ (see \S\ref{sec: unitary Higgs}). To relate the moduli spaces defined in the previous section to fixed point stacks, we first consider involutions at the level of invariant theory.  Recall that $G = \U_{2n}$ is the unitary group over $k$, defined relative to the quadratic extension $k'$. Let $\theta^\star:G\to G$ denote the involution for which $H^\star = G^{\theta^\star}$. More precisely, write $G=\U(V)$ where $(V,h)$ is our fixed Hermitian space over $k$ with $h:V\overset{\sim}{\longrightarrow} \sigma^\ast(V)^\vee$; we sometimes write $\langle\cdot,\cdot\rangle:V\otimes V\to k'$ for the induced $\sigma$-sesquilinear form 
\[
\langle v, w\rangle := h(v)(\sigma^\ast({w})).
\] For any $g\in \GL(V)$ set $g^\dagger\in\GL( \sigma^\ast(V)^\vee)$ such that
\[
g^\dagger(h(v))(\sigma^\ast(w)) = h(v)(\sigma^\ast(g^{-1}w)) = \langle v,g^{-1}w\rangle;
\]
then $g\in \U(V)$ if and only if the diagram
\[
         \begin{tikzcd}
             V\ar[r,"h"]\ar[d,"{g}"]&\sigma^\ast(V)^\vee \ar[d,"{g^\dagger}"]\\
             V\ar[r,"h"]& \sigma^\ast(V)^\vee
         \end{tikzcd}
         \]
         commutes.
         
Let $J^\star: V\to V$ be the involution inducing an eigenspace decomposition $V=V_+\oplus V_-$ such that
\begin{enumerate}
    \item $(\star= \unitary)$ $\langle J^\unitary v,J^\unitary w\rangle= \langle v,w\rangle$. This implies that
    \[
    J^{\unitary,\dagger}\circ h = h\circ J^\unitary,
    \]which in particular forces the restriction of $h$ to $V_\pm$ is a non-degenerate Hermitian form.
    \item $(\star = \siegel)$  $\langle J^\siegel v,w\rangle+\langle v,J^\siegel w\rangle= 0.$ Note that this implies $\langle J^\Siegel v,J^\Siegel w\rangle= -\langle v,w\rangle$ since $(J^\siegel)^2=I$. This implies that
    \[
    J^{\siegel,\dagger}\circ h = -h\circ J^\siegel,
    \] and that the restriction of $h$ to $V_\pm$ vanishes so that $V_- \simeq V_+^\ast$.
\end{enumerate}
     There is an evident involution $\theta^\star:[\fg/G]\to [\fg/G]$ induced by $\theta^\star(g) = J^\star gJ^{\star,-1}$and $\theta^{\star}(x) = -J^\star xJ^{\star,-1}$ on the Lie algebra.

    Recall that an $S$-point of $[\fg/G]$ consists of the data of a triple $(\cE',h,\phi')$, with $\cE'$ a vector bundle on $S\otimes_kk'$, $\phi\colon \cE'\to \cE'$ a linear map, and $h\colon \cE'\to \sigma^*(\cE')^\vee$ a Hermitian form on $\cE'$, i.e. $h = \sigma^*(h)^\vee$.
\begin{lem}
\label{lem: standardization of involution}
    There is an automorphism $[\fg/G]\to [\fg/G]$ which brings $\theta^\star$ to the involution $\iota^\star$ which sends $(\cE',h,\phi')\in [\fg/G](S)$ to
    \[
\iota^\star(\cE',h,\phi') = \begin{cases}
(\cE',-h,-\phi') & \text{if }\star = \siegel \\
(\cE',h,-\phi') & \text{if }\star = \unitary
\end{cases}
    \]
\end{lem}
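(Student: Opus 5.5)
The plan is to write the automorphism of $[\fg/G]$ down explicitly, as a ``twist by $J^\star$'' on the Hermitian form. The key observation is that the data $(\cE',h,\phi')$ can be rewritten using the involution $J^\star$ of the fixed Hermitian space $(V,h)$. Applying $\theta^\star$ to a $G$-torsor means changing the structure group along $g\mapsto J^\star gJ^{\star,-1}$. On the associated triple this sends $(\cE',h,\phi')$ to $(\cE',h^{J},-\phi'^{J})$, where the superscript denotes transport of structure through $J^\star$.

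The first step is to identify $\theta^\star$ on $S$-points. A point of $[\fg/G]$ is the same as a torsor $P$ under $\U(V)$ together with a section of the adjoint bundle. The pushout of $P$ along $\theta^\star$ is $P$ itself with the twisted action $p\cdot g := p\,\theta^\star(g)$. Concretely, the vector bundle is $\cE'=P\times^{G}V$. Since $\theta^\star$ is conjugation by $J^\star$, which normalizes $G$ (in the unitary case $J^\unitary\in G$; in the Siegel case $J^\siegel$ is anti-unitary, i.e. $J^{\siegel,\dagger}h=-hJ^\siegel$), the map $[p,v]\mapsto[p,J^\star v]$ gives an isomorphism of underlying bundles. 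Transporting $h$ along this map produces $h$ itself in the unitary case and $-h$ in the Siegel case, by the relations $J^{\unitary,\dagger}\circ h=h\circ J^\unitary$ and $J^{\siegel,\dagger}\circ h=-h\circ J^\siegel$ respectively. The Higgs field becomes $J^\star(-\phi')J^{\star,-1}$, which is identified with $-\phi'$ under the same isomorphism.

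The second step is to promote this pointwise identification to a natural isomorphism of functors. The automorphism in question is conjugation by $J^\star$, viewed as an element of $\GL(V)$ normalizing $G$: it is inner in the unitary case and an outer-type similitude with multiplier $-1$ in the Siegel case. Because the resulting identification is canonical in $S$ and compatible with pullback, it defines a 2-isomorphism between $\theta^\star$ and $\iota^\star$ composed with this automorphism. For the Siegel case, one must also check that $(\cE',-h)$ is again a Hermitian form of the correct type, i.e. that the fixed Hermitian space $(V,-h)$ is isomorphic to $(V,h)$; the point is that the stack $[\fg/G]$ may otherwise change. This holds because $J^\siegel$ is itself an isometry $(V,h)\iso(V,-h)$, and composing with that isometry gives the required automorphism of $[\fg/G]$.

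The main obstacle is bookkeeping: keeping track of the sign conventions for $\dagger$ and duals, and confirming that the resulting map is genuinely an automorphism of the stack, compatible with the $\sigma$-semilinear structure, rather than just a bijection on isomorphism classes. I would handle this by working throughout with torsors and the explicit element $J^\star$, which avoids choosing trivializations.
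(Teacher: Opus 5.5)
Your proposal is correct and follows essentially the same route as the paper. You identify the $\theta^\star$-twisted bundle with $\cE'$ via $J^\star$, so that $\phi'$ goes to $-\phi'$ and $h$ goes to $J^{\star,\dagger}hJ^{\star,-1}=\pm h$ by the defining relation between $J^\star$ and $h$. The difference is only in presentation: the paper works with local trivializations and descent data $\{J^\star g_{\alpha,\beta}J^{\star,-1}\}$, gluing the form via $(J^\star gJ^{\star,-1})^\dagger=J^{\star,\dagger}g^\dagger(J^{\star,-1})^\dagger$, whereas you use associated bundles of the twisted torsor.
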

\begin{proof}
    To a triple $(\cE',h,\phi')\in [\fg/G](S)$, $\theta^\star$ induces a triple $(\theta^\star_*(\cE'),\theta^\star_*(h),\theta^\star_*(\phi'))$ as follows:  Fix an \'etale covering $\{U_\alpha\to S\otimes_kk'\}$ which is stable under the involution $\sigma$, with local trivializations $\nu_\alpha: \cE'|_{U_\alpha}\iso V\otimes \mathcal{O}_\alpha$ and descent data $\{g_{\alpha,\beta}\}$. By definition the diagrams
       \[
         \begin{tikzcd}
             V\otimes \mathcal{O}_{\alpha,\beta}\ar[r,"h_{\alpha,\beta}"]\ar[d,"{g_{\alpha,\beta}}"]&\sigma^\ast(V)^\vee \otimes \mathcal{O}_{\alpha,\beta}\ar[d,"{g_{\alpha,\beta}^\dagger}"]\\
             V\otimes \mathcal{O}_{\alpha,\beta}\ar[r,"h_{\alpha,\beta}"]& \sigma^\ast(V)^\vee \otimes \mathcal{O}_{\alpha,\beta}
         \end{tikzcd}
         \]
         commute for all $\alpha,\beta$. Then,
     \begin{enumerate}
         \item  the bundle $\theta_\ast^\star(\cE')$ is obtained from the descent data $\{J^\star g_{\alpha,\beta}J^{\star,-1}\}$. Intuitively, the local sections $\nu_\alpha$ induce local sections $J^\star v$ of $\theta_\ast^\star(\cE')$.
         \item the map $\theta_\ast^\star(\phi'): \theta_\ast^\star(\cE')\to \theta_\ast^\star(\cE')$ is obtained by locally conjugating by $J^\star$ and gluing together via $\{  J^\star g_{\alpha,\beta}J^{\star,-1}\}$
         \item We first claim that there is a natural isomorphism
         \[
         \theta_\ast^\star(\sigma^\ast(\cE')^\vee) \simeq \sigma^\ast(\theta_\ast^\star(\cE'))^\vee.
         \]
         To see this, identify $\{g_{\alpha,\beta}^\dagger\}$ as the descent data for $\sigma^\ast(\cE')^\vee$. Then the isomorphism follows from the matrix identity
         \[
         J^{\star,\dagger} g_{\alpha,\beta}^\dagger  (J^{\star,-1})^{\dagger} =   (J^{\star} g_{\alpha,\beta}J^{\star,-1})^\dagger.
         \]
         With this identity, the map $\theta^\star_\ast(h): \theta^\star_\ast(\cE') \longrightarrow\sigma^\ast( \theta^\star_\ast(\cE'))^\vee$ is obtained by gluing the local maps $\theta^\star_\ast(h)|_{U_\alpha}\equiv h$. We note that is well defined since
         \[
         (J^\star g_{\alpha,\beta}J^{\star,-1})^\dagger\circ h = h\circ(J^\star g_{\alpha,\beta}J^{\star,-1}),
         \]
         which can be checked using the compatibility between $J^\star$ and $h$.
     \end{enumerate}
    From the above, we see that the adjoint action by $J^{\star,-1}$ realizes an isomorphism $\theta^\star_*(\cE)\simeq \cE$ which identifies $\theta^\star_*(\phi)$ with $-\phi$. Finally, the Hermitian form $h$ is carried to $J^{\star,\dagger}hJ^{\star,-1} = \pm h$, with negative sign for $\star = \siegel$ and positive for $\star= \unitary$ due to the relations between $J^\star$ and $h$ above the lemma. 
\end{proof}
As a consequence of Lemma \ref{lem: standardization of involution}, we will work with the more explicit involutions $\iota^\star$. The involutions $\iota^\star$ induce involutions on the moduli of unitary Higgs bundles $\cN$ by sending $(\cE',h,\Phi')\in \cN(S)$ to
\begin{equation}
    \label{eqn: involution on cN}
\iota^\star(\cE',h,\Phi') = \begin{cases}
    (\cE',-h,-\Phi') & \text{if }\star = \siegel \\
    (\cE',h,-\Phi') & \text{if }\star = \unitary
\end{cases}
\end{equation}
Form the homotopy fixed point stack $\cM^{\star,\flat} = \cN^{\iota^\star}$ as the fixed points in $\cN$ of $\Z/2\Z$, with nontrivial element acting by $\iota^\star$. Explicitly, for a test scheme $S$,
\begin{equation}
\label{eqn: homotopy fixed points}
\cM^{\star,\flat}(S) =  \left\{ (\cE',h,\Phi',\alpha)\left| \begin{array}{c}(\cE',h,\Phi')\in \cN(S)\\
\alpha: (\cE',h,\Phi')\simeq \iota^\star(\cE',h,\Phi')\\
\iota^\star\alpha\circ\alpha = Id_{(\cE',h,\Phi')}\end{array} \right.\right\} 
\end{equation}
where in the above, $\alpha$ is an isomorphism in the groupoid $\cN(S)$. The functor $\cM^{\star,\flat}$ is representable by an algebraic stack by \cite[Theorem 3.3]{romagny1} and \cite[Prop 2.2.1]{ngo-laumon}. Moreover, since $\cN$ is locally of finite type, $\cM^{\star,\flat}$ is also locally of finite type by \cite[Proposition 3.7]{romagny1}.

For $r+s = 2n$, let $\cM^\unitary_{r,s}$ denote the moduli space whose $S$-points are
\[
\cM^\unitary_{r,s}(S) = \left\{ (\cE_1',\cE_2',h_1,h_2,\phi')\left| \begin{array}{c}\cE_1',\cE_2'\text{ are rank $r$,$s$ vector bundles on $\Sigma'\times S$, respectively,} \\
\text{$h_i\colon 
\cE_i'\to \sigma^*(\cE_i')^\vee$ is a Hermitian form, i.e. $\sigma^*h_i^\vee = h_i$,}\\\text{and }\phi'\colon \cE_1'\to \cE_{2}'\otimes D\text{ such that }\sigma^*(\phi')^\vee\circ h_1+h_2\circ \phi'= 0 \end{array} \right.\right\}
\]

\begin{prop}
    \label{prop: fixed points of involution on N}
    There are isomorphisms $\cM^{\siegel,\flat} \simeq \cM^\siegel$ and 
    \[
    \cM^{\unitary,\flat}\simeq \bigsqcup_{r+s = 2n} \cM^\unitary_{r,s}.
    \]
    In particular, $\cM^\unitary = \cM^\unitary_{n,n}\subset \cM^{\unitary,\flat}$ is a distinguished (disjoint) component.
\end{prop}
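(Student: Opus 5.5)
The plan is to unwind the homotopy fixed point data in \eqref{eqn: homotopy fixed points} and show that an $\iota^\star$-fixed structure is the same as a $\Z/2$-grading of $\cE'$ compatible with $h$ and anti-compatible with $\Phi'$. Let $(\cE',h,\Phi',\alpha)\in\cM^{\star,\flat}(S)$. The isomorphism $\alpha$ is an automorphism $\alpha\colon\cE'\to\cE'$ with $\alpha\circ\Phi'=-\Phi'\circ\alpha$ and $\sigma^*(\alpha)^\vee\circ h\circ\alpha=\pm h$, where the sign is $-$ for $\star=\siegel$ and $+$ for $\star=\unitary$. The cocycle condition $\iota^\star\alpha\circ\alpha=\mathrm{Id}$ becomes $\alpha^2=1$, since $\iota^\star$ acts trivially on morphisms: negating $h$ and $\Phi'$ does not change the underlying bundle maps. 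Because $2$ is invertible, $\cE'$ splits as $\cE'_+\oplus\cE'_-$ into the $\pm1$-eigenbundles of $\alpha$. These are direct summands of a vector bundle, hence locally free, and their ranks are locally constant on $S$. The anticommutation says $\Phi'$ swaps $\cE'_+$ and $\cE'_-$. This is the local-on-$S$ analogue of the argument behind Lemma \ref{lem: standardization of involution}.

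\emph{Unitary case.} Here $\alpha$ preserves $h$, so $h$ pairs $\cE'_\pm$ with $\sigma^*(\cE'_\pm)^\vee$. The cross terms vanish by the usual eigenvalue argument: for $v\in\cE'_+$ and $w\in\cE'_-$ we have $h(v)(\sigma^*w)=h(\alpha v)(\sigma^*\alpha w)=-h(v)(\sigma^*w)$. Hence $h=h_1\oplus h_2$ with each $h_i$ a nondegenerate Hermitian form. Write $\Phi'=\begin{pmatrix}0&\psi\\ \phi'&0\end{pmatrix}$ with $\phi'\colon\cE_1'\to\cE_2'\otimes D$. The skew-Hermitian condition on $\Phi'$ then says exactly that $\psi=h_1^{-1}\sigma^*(\phi')^\vee h_2$ up to the sign convention, together with $\sigma^*(\phi')^\vee h_1+h_2\phi'=0$. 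This recovers a point of $\cM^\unitary_{r,s}$, where $(r,s)$ are the locally constant ranks of $(\cE'_+,\cE'_-)$.

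The inverse functor sends such data to $\mu^\unitary$, generalized to ranks $(r,s)$, together with $\alpha=\mathrm{diag}(1,-1)$. I would check that these two constructions are mutually inverse on objects and on morphisms: an isomorphism in $\cM^{\star,\flat}$ must commute with $\alpha$, so it is diagonal. Decomposing $S$ by the locally constant ranks gives the disjoint union, with $\cM^\unitary=\cM^\unitary_{n,n}$ as one component.

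\emph{Siegel case.} Now $\alpha$ negates $h$, so $h(v)(\sigma^*w)=-h(\alpha v)(\sigma^*\alpha w)$. This forces $h$ to vanish on $\cE'_+\times\cE'_+$ and on $\cE'_-\times\cE'_-$. Nondegeneracy of $h$ then gives an isomorphism $\cE'_-\simeq\sigma^*(\cE'_+)^\vee$ induced by $h$, and consequently $\mathrm{rk}\,\cE'_+=\mathrm{rk}\,\cE'_-=n$. Setting $\cE'=\cE'_+$, the form $h$ becomes the standard off-diagonal hyperbolic form on $\cE'\oplus\sigma^*(\cE')^\vee$. The off-diagonal blocks of $\Phi'$ give $\phi'_{12}$ and $\phi'_{21}$, and the skew-Hermitian condition translates into $\sigma^*\phi_{ij}'^\vee=\phi'_{ij}$. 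In the description \eqref{eqn: siegel functor of points} the Hermitian form $h$ on $\cE'$ is also part of the data, so here one must match conventions carefully. The map $\mu^\siegel$ uses $h\oplus\sigma^*(h)^\vee$ as the diagonal form on the double, so I would compose with the isomorphism $\cE'\oplus\sigma^*(\cE')^\vee\simeq\cE'\oplus\cE'$ induced by $h$ to compare it with the hyperbolic picture.

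The main obstacle is this last bookkeeping in the Siegel case. I have to show that the data $(\cE',h,\phi'_{12},\phi'_{21})$ of \eqref{eqn: siegel functor of points}, pushed through $\mu^\siegel$ together with the appropriate $\alpha$ (for instance $\alpha$ built from $h$ swapping the two summands), gives an equivalence with the fixed-point data, and that no residual choice of $h$ is lost or added. If the literal $h\oplus\sigma^*(h)^\vee$ convention does not produce an $\alpha$ negating the form, I would instead take the hyperbolic model $\begin{pmatrix}0&1\\1&0\end{pmatrix}$ on the double. I would then argue, via the $\iota^\star$-standardization of Lemma \ref{lem: standardization of involution}, that this choice agrees with the intended $\mu^\siegel$ up to a canonical isomorphism.
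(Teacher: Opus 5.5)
Your argument is the paper's argument. You split $\cE'$ into the $\pm1$-eigenbundles of $\alpha$, use whether $\alpha$ preserves or negates $h$ to see that $h$ is block-diagonal (unitary) or pairs $\cE'_+$ with $\cE'_-$ (Siegel), and observe that $\Phi'$ is off-diagonal. You are in fact more explicit than the paper about $\alpha^2=1$, local constancy of the ranks, and morphisms.

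The one thing to fix is the Siegel identification you leave open as the ``main obstacle.'' Your fallback resolves it, and it is exactly what the paper's proof does. Because $h$ vanishes on $\cE'_+\times\cE'_+$, the fixed-point data contains no Hermitian form on $\cE'_+$ at all. The ``$h$'' the paper retains in $(\cE_1',h,\Phi'_{12},\Phi'_{21})$ is only the isomorphism $h\colon\cE_2'\iso\sigma^*(\cE_1')^\vee$, used to transport $\Phi'_{12}$ and $\Phi'_{21}$.

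So read the target as $\mathrm{Sect}_\Sigma([V^\siegel_D/H^\siegel])$ with $H^\siegel=\Res\GL_n$. Its points are triples $(\cE',\phi'_{12},\phi'_{21})$, since such a torsor is just a rank-$n$ bundle on $\Sigma'$ and carries no form. Take the inverse functor to be the hyperbolic form on $\cE'\oplus\sigma^*(\cE')^\vee$ together with $\alpha=\mathrm{diag}(1,-1)$.

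Drop the attempt to reconcile this with the block-diagonal form $h\oplus\sigma^*(h)^\vee$ via Lemma~\ref{lem: standardization of involution}; it cannot work. For any nondegenerate block-diagonal form, skew-Hermitianity of an off-diagonal $\Phi'$ already forces one off-diagonal block to be determined by the other. This is precisely your own unitary computation. Hence that convention cannot carry independent $\phi'_{12},\phi'_{21}$, and no choice of $\alpha$ or appeal to the standardization lemma changes this.

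Relatedly, in the unitary case the condition $\sigma^*(\phi')^\vee h_1+h_2\phi'=0$ that you carried over does not typecheck. The second block equation is $\sigma^*(\psi)^\vee h_1+h_2\phi'=0$. It is the $\sigma^*(-)^\vee$-transpose of the equation $\sigma^*(\phi')^\vee h_2+h_1\psi=0$ that defines $\psi$. So no condition on $\phi'$ remains, as it should be for $V^\unitary=\fgl_n(k')$.
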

\begin{proof}
    Consider the data of a point $(\cE',h,\Phi',\alpha)\in \cM^{\star,\flat}(S)$ as in the right-hand side of \eqref{eqn: homotopy fixed points}. We can consider $\cE' = \cE_1'\oplus \cE_2'$, with $\cE_1'$ and $\cE_2'$ being the $+1$ and $-1$ eigenspaces of $\alpha$ acting on $\cE'$, respectively. Then, the condition on $\alpha$ implies that the following diagram commutes
    \[
    \xymatrix{
    \cE'\ar[r]^-{\alpha}\ar[d]_-{\Phi'} & \cE'\ar[d]^-{-\Phi'} \\
    \cE'\otimes D\ar[r]^-{\alpha(D)} & \cE'\otimes D
    }
    \]
    In particular, restricting the top left corner to $\cE_i'\subset \cE'$, we see that $\Phi'$ must be in the form $\Phi' = \Phi_{12}'\oplus \Phi'_{21}$ where $\Phi'_{ij}\colon \cE_i'\to \cE_j'\otimes D$. 

    Now, we specialize to the case $\star =\siegel$. In this case, the Hermitian form $h$ satisfies the following condition:  let $\langle\cdot,\cdot\rangle\colon \cE'\otimes \sigma^*(\cE')\to \cO_{X'}$ denote the pairing associated to $h$. Then the following diagram commutes
    \[
    \xymatrix{
    \cE'\otimes \sigma^*(\cE')\ar[r]^-{\langle\cdot,\cdot\rangle}\ar[d]_-{\alpha\otimes \sigma^*\alpha} & \cO_{X'}\\
    \cE'\otimes \sigma^*(\cE')\ar[ru]_-{-\langle\cdot,\cdot\rangle} & 
    }
    \]
    In particular, for all open sets $U\subset \Sigma'$ and all sections $s\in \Gamma(\cE_i',U')$ and $t\in \Gamma(\cE_j',U')$ ($i,j\in \{1,2\}$), we have
	\[
\langle s,t\rangle = -\big\langle(-1)^{i-1}s,(-1)^{j-1}t\big\rangle = (-1)^{i+j-1}\langle s,t\rangle.
	\]
	That is, the condition on $h$ is equivalent to requiring 
	\[
\langle\cdot,\cdot\rangle\in (\cE_1'\otimes \cE_2')^*\oplus (\cE_2'\otimes \cE_1')^{*}
	\]
    which is to say that $h$ determines an isomorphism $h\colon \cE_2'\xrightarrow{\sim}\sigma^*(\cE_1')^\vee$. This allows us to identify the tuple $(\cE',h,\Phi',\alpha)$ with the equivalent data $(\cE_1',h,\Phi'_{12},\Phi'_{21})\in \cM^\siegel(S)$.

    Consider now the case $\star = \unitary$. For this case, the condition on $h$ is that the pairing $\langle\cdot,\cdot\rangle$ fit into the commutative diagram
    \[
    \xymatrix{
    \cE'\otimes \sigma^*(\cE')\ar[r]^-{\langle\cdot,\cdot\rangle}\ar[d]_-{\alpha\otimes \sigma^*\alpha} & \cO_{X'} \\
    \cE'\otimes \sigma^*(\cE')\ar[ru]_-{\langle\cdot,\cdot\rangle} & 
    }
    \]
    This condition implies that, for all open sets $U\subset \Sigma'$ and all sections $s\in \Gamma(\cE_i',U')$ and $t\in \Gamma(\cE_j',U')$ ($i,j\in \{1,2\}$), we have
	\[
\langle s,t\rangle = -\big\langle(-1)^{i-1}s,(-1)^{j-1}t\big\rangle = (-1)^{i+j}\langle s,t\rangle.
    \]
    so that 
	\[
\langle\cdot,\cdot\rangle\in (\cE_1'\otimes \cE_1')^*\oplus (\cE_2'\otimes \cE_2')^{*}.
	\]
    In other words, $h$ is a direct sum of Hermitian forms $h_i$ on $\cE_i'$. From the relation
    \[
    \sigma^*(\Phi')^\vee\circ h+h\circ \Phi' = 0
    \]
    in the definition of unitary Higgs bundles, it follows that $\Phi'$ is completely determined by $h_1,h_2$, and $\Phi'_{12}$, and so the data of $(\cE',h,\Phi',\alpha)$ is equivalent to the data of $(\cE_1',\cE_2',h_1,h_2,\Phi'_{12})$. depending on the ranks of $\cE_1'$ and $\cE_2'$, this determines a point in $\cM_{r,s}^\unitary$.

    It is not hard to show the maps on $S$-points above are functorial, fully faithful, and essentially surjective.
\end{proof}

\subsection{The relative Hitchin fibration}
\label{sec: hitchin fibration}
We now introduce the relative Hitchin fibration, whose geometry is governed by the invariant theory of \S\ref{sec: invariant theory}.

\begin{definition}
   The relative Hitchin base $\cA=\cA_D$ is the space of sections 
    \[
    \cA = \Gamma(\Sigma,\fc_{\cL(D)}) = \bigoplus_{i=1}^{n} H^0(\Sigma,\cL(D)^{\otimes 2i}) = \bigoplus_{i=1}^{n} H^0(\Sigma,D^{\otimes 2i})
    \]
    where $\fc_{\cL(D)} = \fc\times^{\G_m}\cL(D)$, and we note in the last equality that $\cL^{\otimes 2} = \cO$.
\end{definition}

As $\fc_{\cL(D)} = \fc_D$, we will use these two interchangeably. Note that the second equality relies on Lemma \ref{Lem: cat quot split}.

The Hitchin base $\cA$ is an affine space, and composition with the map $[\chi]\colon [\fh^\perp/\bH]\to \fc$ induces morphisms
\[
f^\star\colon \cM^\star\to \cA,
\]
which we call the \emph{relative Hitchin map for $X^\star$}.  On $S$-points, the maps are explicitly given by (using notation as in \S\ref{sec: relative Hitchin spaces})
\begin{align*}
    f^\unitary(\cE_1',\cE_2',h_1,h_2,\phi') &= \mathrm{char}(\sigma^*(\phi')^\vee\circ\phi')\\
f^\Siegel(\cE',h,\phi_{12}',\phi_{21}') &= \mathrm{char}(\phi_{12}'\circ \phi_{21}')
\end{align*}
Note that although the constructions differ, Lemma \ref{lem: GIT quotient does not depend on form} ensures that the Hitchin bases for $\cM^\unitary$ and $\cM^\Siegel$ are indeed identified.

Given a choice of $D'$ such that $(D')^2 = D$, for the form $X^\siegel$, Theorem \ref{thm: section for rational forms} along with an argument mirroring \cite[Proposition 2.5]{ngo_endoscopie} produces a Kostant-Rallis section which globalizes to a section of $h^\siegel$, which we denote by 
\begin{equation}\label{eqn: KR section of siegel}
 [\e] = [\e_{D'}]\colon \cA\to \cM^\siegel.   
\end{equation}
 Note that no such section exists in the unitary case $X^\unitary$.

\subsection{Spectral curves}
\label{sec: spectral curves}

Recall that $\fs\to \fc$ denotes the universal spectral cover restricted to $\fc$, see \S\ref{sec: invariant theory}. Note that $\fs$ carries a canonical $\G_m$-action (given by scaling $x$) and the map $\fs\to \fc$ is $\G_m$-equivariant with respect to this action. Hence we may form the twist $\fs_{D} = \fs\times^{\G_m}D^\times$.

Recall that in \S\ref{sec: unitary Higgs}, we defined the spectral covers $\wt{\Sigma}_a'$ and $\wt{\Sigma}_a$ associated to a point $a\in \cA(S)$ by taking the base change of the following diagram
\[
\xymatrix{
	\wt{\Sigma}'\ar[r]\ar[d] & \wt{\Sigma}\ar[r]\ar[d] & \fs_{D}\ar[d] \\
	\Sigma'\times \cA\ar[r] & \Sigma\times \cA\ar[r]^-{\mathrm{ev}} & \fc_{D}
}
\]
The involution $\iota(x) = -x$ on $\fs$ induces an involution, which we also denote by $\iota$, on $\wt\Sigma'$ and $\wt\Sigma$ by base change. We denote by $\ol\Sigma' = \wt\Sigma'/\iota$ and $\ol{\Sigma} = \wt\Sigma/\iota$. The action of $\sigma\in \Aut(\Sigma'/\Sigma)$ lifts to an involution on $\wt\Sigma_a'$ by letting $\sigma(x) = -x$ while $\sigma$ acts via the Galois action on coefficients. As previously noted, $\wt\Sigma_a = \wt\Sigma_a'/\sigma$. In particular, for any $a\in \cA(S)$, we have a commutative diagram of spectral curves
\begin{equation}\label{eqn: diagram of covers}
   \begin{tikzcd}  
	\wt{\Sigma}'_a\ar[r,"/\sigma"]\ar[d,swap,"/\iota"]& \wt{\Sigma}_a\ar[d] \\
	\ol\Sigma'_a \ar[r] &\ol{\Sigma}_a,
\end{tikzcd} 
\end{equation}
with each arrow being an \'etale 2-cover over $k$. There is clearly a universal such diagram over $\Sigma\times \mathcal{A}$ pulling back to this one for any $a\in \cA(S)$

For a general $a\in \cA(S)$, the spectral cover $\wt\Sigma_a$ need not be smooth or even reduced. To control the behavior of $\wt\Sigma_a$, we will now introduce the smooth locus $\cA^\diamondsuit$ and the heart locus $\cA^\heartsuit$ in the Hitchin base. The notation and definitions below are chosen to mirror those of \cite[\S4.5,\S4.7]{ngo}.

\begin{definition}
	Let $\mathrm{ev}\colon \Sigma\times \cA\to \fc_D$ denote the evaluation map. We denote by $\cA^\heartsuit$ the projection of $\mathrm{ev}^{-1}\big((\fc\setminus \fD^\cent)_D\big)$ to $\cA$. That is, $\cA^\heartsuit\subset\cA$ is the open subvariety whose $\ol{k}$-points are given by
	\[
\cA^\heartsuit(\ol{k}) = \{ a\in \cA(\ol{k})\mid a(\Sigma_{\ol{k}})\not\subset \fD_D^\cent \}.
	\]
		
	Likewise, the \emph{smooth locus} $\cA^\diamondsuit$ is the open subvariety of $\cA$ consisting of $a\in \cA(S)$ whose associated spectral curve $\wt{\Sigma}_a'$ is smooth and irreducible. It contains all points $a$ of $\cA$ whose image $a(\Sigma)$ meets the divisor $\fD^\cent_D$ transversely.

    We denote $\cM^{\star,\heartsuit} = (f^\star)^{-1}(\cA^\heartsuit)$ and $\cM^{\star,\diamondsuit} = (f^\star)^{-1}(\cA^\diamondsuit)$.
\end{definition}

\begin{lem}
	Assume that $\deg(D) > 2g$. The smooth locus $\cA^\diamondsuit$ is a \emph{nonempty} open subset of $\cA$, and $\cA^\diamondsuit\subset \cA^\heartsuit$.
\end{lem}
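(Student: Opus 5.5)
The inclusion $\cA^\diamondsuit\subset\cA^\heartsuit$ is the easy half, and I will do it first. If $a\notin\cA^\heartsuit$, then $a(\Sigma_{\ol k})\subset\fD^\cent_D$. So at every point of $\Sigma'$ the polynomial $Q_a(y)=y^n+a_2y^{n-1}+\cdots+a_{2n}$ has a repeated root, where $y=x^2$. Suppose first that $Q_a$ factors generically as a product with a square factor. Then $P_a(x)=Q_a(x^2)$ is generically non-squarefree, so $\wt\Sigma'_a$ is non-reduced and in particular not smooth. Suppose instead that $Q_a$ is generically squarefree but its discriminant vanishes identically. Then $\mathrm{Disc}(Q_a)$ vanishes as an element of the function field. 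Since $\mathrm{char}\,k=p>n$, the discriminant of a squarefree polynomial of degree $n$ can only vanish through inseparability, and this is excluded. Hence $Q_a$ is generically non-squarefree, and we are back in the first case. Either way $\wt\Sigma'_a$ is not smooth, which gives the inclusion.

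For nonemptiness and openness, openness is standard: smoothness of the fibres of the flat proper family $\wt\Sigma'\to\cA$ is an open condition, and geometric irreducibility of the fibres of a flat proper family with reduced fibres is open on the smooth locus. Nonemptiness is what needs work. I will argue by a Bertini-type argument in the total space $V_D=\mathrm{Tot}(D)$ over $\Sigma'$, where $\wt\Sigma'_a$ is cut out by the section
\[
x^{2n}+a_2x^{2n-2}+\cdots+a_{2n},\qquad a_{2i}\in H^0(\Sigma,D^{2i}).
\]
The plan is to show that, for generic $a$, this curve is smooth. Away from $x=0$ the family of equations is base-point free, because the $a_{2i}$ vary in complete linear systems of degree $2i\deg D>2g$ on $\Sigma$, so these separate points and tangents; pulling back to $\Sigma'$ keeps this property. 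Along $x=0$ the singularity condition reduces to $a_{2n}$ having a double zero, and I will avoid this by choosing $a_{2n}$ with simple zeros.

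There is one obstacle: the linear system only involves even powers of $x$, so it does not separate the points $x$ and $-x$. A Bertini argument on the quotient by $\iota$, in the coordinate $y=x^2$, handles smoothness of $\ol\Sigma'_a$. Then $\wt\Sigma'_a\to\ol\Sigma'_a$ is a double cover branched along $a_{2n}=0$, and it is smooth as soon as $a_{2n}$ has reduced zeros. Here $\deg D>2g$ guarantees that $H^0(D^{2n})$ is very ample enough to produce such $a_{2n}$ and to run the Bertini argument over finite fields after a base change if needed.

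I expect irreducibility to be the main step. My plan there is to exhibit one explicit point, $a=(0,\dots,0,a_{2n})$ with $a_{2n}$ having simple zeros, so that $\wt\Sigma'_a$ is given by $x^{2n}=-a_{2n}$. This cyclic cover is totally ramified at the zeros of $a_{2n}$, hence smooth and irreducible. This point alone gives nonemptiness. If that direct point fails smoothness in small characteristic, I will fall back on the generic Bertini argument combined with connectedness from the ampleness of $D$.
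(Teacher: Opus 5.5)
Your proof is correct, but it reaches nonemptiness by a different route from the paper.

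\textbf{The paper's route.} It notes that any $a$ meeting the full discriminant $\fD_X=\fD^\cent+\fD^\br$ transversely lies in $\cA^\diamondsuit$. It then repeats Ng\^o's dimension count \cite[Prop 4.7.1]{ngo}: for $\deg(D)>2g$ the evaluation maps from $\cA$ to first-order jets at a point are surjective, so the non-transversal locus has positive codimension. The inclusion and openness are treated as evident.

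\textbf{Your route.} You exhibit an explicit witness, $x^{2n}+a_{2n}$ with $a_{2n}$ having simple zeros, and this works. Since $p>n$ and $p\neq 2$, we have $2n\in k^\times$, so $\partial_x$ vanishes only along $x=0$. There $\partial_t=a_{2n}'\neq 0$, because \'etaleness of $\vartheta$ keeps the zeros simple on $\Sigma'$; this gives smoothness. The polynomial is Eisenstein at any zero of $a_{2n}$, which gives geometric irreducibility. Strictly speaking, smoothness comes from this Jacobian computation and irreducibility from total ramification, not ``totally ramified, hence smooth''.

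\textbf{Comparison.} Your argument is elementary and checks the irreducibility clause in the definition of $\cA^\diamondsuit$ head-on. The paper's one-line proof leaves that clause implicit: it does follow from transversality, via a resultant argument showing $Q_a$ is irreducible over $\ol k(\Sigma')$, together with the simple zeros of $y=x^2$ along $\fD^\br$. The paper's route, in turn, gives the stronger fact that the whole dense open transversal locus lies in $\cA^\diamondsuit$. That ties $\cA^\diamondsuit$ to the transversality conditions driving the later local arguments (cf.\ Proposition \ref{prop: transverse is dimension zero}). Your witness is not transversal, since $\mathrm{Disc}(y^n+a_{2n})=\pm n^n a_{2n}^{n-1}$ and $a$ passes through $\fD^\cent\cap\fD^\br$. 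So it yields only nonemptiness, though density then follows from your openness argument because $\cA$ is an affine space.

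Your inclusion argument is fine: it correctly uses $p>n$ to rule out a squarefree $Q_a$ with identically vanishing discriminant. Your openness argument (smooth locus open, number of geometric components locally constant on it) is also fine.

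The generic Bertini sketch can simply be dropped, and as written it is loose. Sections pulled back from $\Sigma$ cannot separate the two points of a fibre of $\vartheta$. In characteristic $p$, base-point-freeness alone does not give Bertini smoothness; one would instead check smoothness on $\wt\Sigma_a$, since $\wt\Sigma'_a\to\wt\Sigma_a$ is \'etale. The small-characteristic fallback is likewise never needed.
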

\begin{proof}
	The only nontrivial claim is that $\cA^\diamondsuit$ is nonempty. Since $a\in \cA^\diamondsuit(S)$ holds in particular whenever $a$ intersects the full divisor $\fD_X = \fD^\cent+\fD^\br$ transversely, an identical argument to \cite[Prop 4.7.1]{ngo} proves this.
\end{proof}

Similar to \cite[\S4.5 and \S4.7]{ngo}, we have the following interpretations of $\cA^\heartsuit$ in terms of the spectral cover $\wt{\Sigma}_a'$, with identical proof.

\begin{prop}
    If $a\in \cA^\heartsuit(k)$, the spectral curve $\wt{\Sigma}_a'$ (and hence also $\wt{\Sigma}_a$, $\ol{\Sigma}_a'$, $\ol{\Sigma}_a$) is reduced over $k$.
\end{prop}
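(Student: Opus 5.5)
Since $\wt\Sigma_a'$ is a finite flat cover of the smooth curve $\Sigma'$, the plan is to show it is generically reduced and then use that it is Cohen--Macaulay. Locally over an affine open $U'=\Spec B\subset\Sigma'$ trivializing $D$, the spectral curve is $\Spec B[x]/(P_a(x))$ with
\[
P_a(x)=x^{2n}+a_2x^{2n-2}+\cdots+a_{2n},
\]
and the $a_{2i}$ regarded as sections of $D^{\otimes 2i}$ pulled back to $\Sigma'$. This scheme is a relative hypersurface over a regular one-dimensional base, so it is Cohen--Macaulay of pure dimension one. By Serre's criterion ($R_0+S_1$), it therefore suffices to check that $\wt\Sigma_a'$ is reduced at its generic points, i.e.\ that $P_a$ is separable over the function field $k(\Sigma')$ (equivalently over $k(\Sigma)$, since the coefficients come from $\Sigma$).

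To see separability, use the hypothesis $a\in\cA^\heartsuit(k)$: there is a point $v\in\Sigma(\ol k)$ with $a(v)\notin\fD^\cent_D$. By the explicit formula in \S\ref{sec: divisors}, this means the discriminant of $Q_a(y)=y^n+a_2y^{n-1}+\cdots+a_{2n}$ is a nonzero section, so $Q_a$ is separable over $k(\Sigma)$. Since $P_a(x)=Q_a(x^2)$, its roots are the square roots of the roots of $Q_a$. These are distinct as long as $Q_a$ has no repeated roots and $0$ is not a root, which uses $a_{2n}\neq 0$ and $\mathrm{char}\neq 2$. Thus the only potential failure is when $a_{2n}\equiv 0$.

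This is the delicate point: $a_{2n}$ vanishes on $\fD^\br$, not on $\fD^\cent$. If $a_{2n}$ is identically zero, then $x=0$ is a generic double root of $P_a$, so $\wt\Sigma_a'$ is nonreduced along $x=0$ even though $\ol\Sigma_a'$ (with coordinate $y=x^2$) is reduced. I therefore expect one of two resolutions. Either the definition of $\cA^\heartsuit$ is intended to avoid the full discriminant $\fD_\bX=\fD^\cent+\fD^\br$, as in Ng\^o's convention where $\mathrm{ev}^{-1}$ of the regular semisimple locus is used; or one shows $a_{2n}\not\equiv0$ separately. I would handle this by checking $a(\Sigma)\not\subset(\fD_\bX)_D$, and noting that for $a$ in the intended heart locus both $\mathrm{Disc}(Q_a)$ and $a_{2n}$ are nonzero at the generic point. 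This is the main obstacle. Granting it, $P_a$ is separable and $\wt\Sigma_a'$ is reduced.

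Reducedness then descends to the other three curves. $\wt\Sigma_a=\wt\Sigma_a'/\sigma$ is the quotient by a free (étale) involution, so $\wt\Sigma'_a\to\wt\Sigma_a$ is étale, and étale descent of reducedness applies; the same argument covers $\ol\Sigma_a'\to\ol\Sigma_a$. The curves $\ol\Sigma_a'$ and $\ol\Sigma_a$ are cut out by $Q_a(y)$, which is separable, so the same Cohen--Macaulay argument applies to them directly. Finally, reducedness over $k$ (rather than geometrically) is automatic, since the separability argument takes place after base change to $\ol k$, which is stronger.
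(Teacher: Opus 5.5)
Your diagnosis is right, and the obstacle you isolate cannot be overcome: with the definition the paper actually gives, $\cA^\heartsuit(\ol k)=\{a : a(\Sigma_{\ol k})\not\subset\fD^\cent_D\}$, the assertion is false for $\wt\Sigma_a'$ and $\wt\Sigma_a$.

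\textbf{Counterexample.} Take $Q_a(y)=y\prod_{j=1}^{n-1}(y-s_j)$ with $s_j\in H^0(\Sigma,D^{\otimes 2})$ nonzero and pairwise distinct. For $n=1$ this is just $a=0$, and in that case $\fD^\cent$ is empty. Then $\mathrm{Disc}(Q_a)\not\equiv 0$, so $a\in\cA^\heartsuit(k)$. But $x^2$ divides $P_a(x)=Q_a(x^2)$, so $x\prod_j(x^2-s_j)$ is a nonzero nilpotent and $\wt\Sigma_a'$ is not reduced.

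\textbf{Where the paper goes wrong.} Globally this is the identity $\mathrm{Disc}(P_a)=\pm 4^n a_{2n}\,\mathrm{Disc}(Q_a)^2$. So the $\GL_{2n}$-discriminant pulls back to $\fD^\br+2\fD^\cent$ on $\fc$, and the analogue of Ng\^o's heart locus is the complement of $\fD_\bX$, not of $\fD^\cent$. The paper's proof consists only of the remark that Ng\^o's argument (\S4.5, \S4.7 of \cite{ngo}) carries over verbatim. Ng\^o's heart locus is cut out by the full discriminant divisor. So the intended hypothesis is $a(\Sigma)\not\subset(\fD_\bX)_D$, which matches the convention $\fc^{\rs}=\fc-(\fD^\cent\cup\fD^\br)$ used elsewhere in the paper.

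\textbf{Your argument under the corrected hypothesis.} It is complete, and it is the standard argument the citation points to. A relative hypersurface over a smooth curve is Cohen--Macaulay, so reducedness reduces to $P_a$ being squarefree over $k(\Sigma')$. That follows from $\mathrm{Disc}(Q_a)\not\equiv 0$, $a_{2n}\not\equiv 0$ and $\mathrm{char}(k)\neq 2$. Your passage to the $\sigma$-quotients is also fine.

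\textbf{The direction of the parenthetical.} With the paper's literal $\cA^\heartsuit$, your argument applied to $Q_a(y)$ does show that $\ol\Sigma_a'$ and $\ol\Sigma_a$ are reduced. So the parenthetical ``and hence also'' in the statement runs the wrong way. The $\iota$-quotient curves are reduced on all of $\cA^\heartsuit$, whereas $\wt\Sigma_a'$ and $\wt\Sigma_a$ need the additional condition $a_{2n}\not\equiv 0$.
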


\subsection{Spectral description of relative Hitchin fibers}
\label{sec: invariant theory spectral}

In this section we interpret the moduli spaces $\cM^\star$ in terms of appropriate spectral covers. Recall our Convention \ref{Convention for sheaves}.

Proposition \ref{prop: spectral description for unitary higgs} gives the description of $\cN^\heartsuit$ in terms of spectral curve
\begin{equation}
\label{eqn: spectral for unitary higgs}
\cN^\heartsuit(S) = \{(a,\cF',h)\mid a\in \cB^\heartsuit(S),\; \cF'\in \ol{\Pic}(\wt{\Sigma}_a'),\; h\colon \cF'\xrightarrow{\sim}\sigma^*(\cF')^\vee\text{ a Hermitian form}\}.
\end{equation}
Recall that a triple $(a,\cF',h)$ gives a $\U_{2n}$ Higgs bundle by considering the vector bundle $E' = \pi_{a,*}'(\cF')$ on $\Sigma'\times S$ (with $\pi_a'\colon \wt\Sigma'_a\to \Sigma'\times S$ the covering map) together with the Hermitian form $E\to \sigma^*(E)^\vee$ obtained by descent from $h$. The section $\Phi'$ is obtained by multiplication by the tautological section.

We have an involution on $\cB$ given by sending $a = (a_1,\dots, a_{2n})$ to $\iota_{\cB}(a) = \left((-1)^ia_i\right)$. The fixed points $\cB^{\iota_{\cB}}$ gives the relative Hitchin base $\cA$. Recall further that we are denoting by $\iota$ the involution on the spectral curve $\wt{\Sigma}_a'$ defined over $\Sigma'$ (given by negating the tautological section).

\begin{lem}
\label{lem: involution on spectral data}
    The involution $\iota^\star$ defined in \eqref{eqn: involution on cN} restricts to the involution on $\cN^\heartsuit$ sending the data $(a,\cF',h)\in \cN^{\heartsuit}(S)$ in the right hand side of \eqref{eqn: spectral for unitary higgs} to
    \[
    \iota^\star(a,\cF',h) = \begin{cases}
        (\iota_\cB(a),\iota^*\cF',-\iota^*h) & \text{if }\star = \siegel \\
        (\iota_\cB(a),\iota^*\cF',\iota^*h) & \text{if }\star = \unitary.
    \end{cases}
    \]
\end{lem}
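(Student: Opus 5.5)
We trace the involution through the spectral correspondence of Proposition \ref{prop: spectral description for unitary higgs}. Fix $a\in\cB^\heartsuit(S)$ and a triple $(a,\cF',h)$. Its image in $\cN^\heartsuit(S)$ is $(E',h_E,\Phi')$, where $E'=\pi'_{a,*}\cF'$, $\Phi'$ is multiplication by the tautological section $x$, and $h_E$ is $\pi'_{a,*}h$ composed with the duality isomorphism $\pi'_{a,*}(\sigma^*\cF')^\vee\simeq \sigma^*(\pi'_{a,*}\cF')^\vee$. The latter is built from the relative dualizing sheaf $\omega_{\wt\Sigma'_a/\Sigma'\times S}$ of Convention \ref{Convention for sheaves}, i.e.\ from the trace pairing. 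We first apply $\iota^\star$ and then try to recognise the result as the spectral data of another triple.

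First, the characteristic polynomial of $-\Phi'$ has coefficients $(-1)^ia_i$, so the new point lies over $\iota_\cB(a)$. There is a canonical isomorphism $\iota\colon\wt\Sigma'_a\iso\wt\Sigma'_{\iota_\cB(a)}$ over $\Sigma'$ sending $x\mapsto -x$. Since $\pi'_{\iota_\cB(a)}\circ\iota=\pi'_a$, we get $\pi'_{\iota_\cB(a),*}(\iota^*\cF')\simeq \pi'_{a,*}\cF' = E'$ as $\cO_{\Sigma'}$-modules. Under this identification, multiplication by the tautological section $x$ on the $\iota_\cB(a)$ side becomes multiplication by $-x$ on $\cF'$. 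So $(E',-\Phi')$ corresponds to the torsion-free sheaf $\iota^*\cF'$ on $\wt\Sigma'_{\iota_\cB(a)}$. Here we use that $\iota$ commutes with $\sigma$ (both send $x\mapsto -x$ up to Galois action on coefficients). This commutation makes $\iota^*\sigma^*=\sigma^*\iota^*$, so $\iota^*h$ is a map $\iota^*\cF'\to\sigma^*(\iota^*\cF')^\vee$ and remains Hermitian.

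Second, we must check that the duality isomorphism is compatible with $\iota$. That is, we need that $\iota^*\omega_{\wt\Sigma'_{\iota_\cB(a)}/\Sigma'}\simeq\omega_{\wt\Sigma'_a/\Sigma'}$ canonically, and that under $\pi'_*$ this identification induces the identity on $\Hom_{\cO_{\Sigma'}}(E',\cO_{\Sigma'})$. The trace description makes this clear: the trace $\pi'_*\cO\to\cO_{\Sigma'}$ is invariant under any automorphism of $\wt\Sigma'_a$ over $\Sigma'$, in particular under $\iota$. Consequently the Hermitian form on $E'$ produced from $\iota^*h$ is exactly $h_E$. Putting the steps together: in the unitary case $\iota^\star$ keeps $h_E$, so it matches $(\iota_\cB(a),\iota^*\cF',\iota^*h)$. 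In the Siegel case $\iota^\star$ replaces $h_E$ by $-h_E$, which matches $-\iota^*h$.

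The main obstacle is the second step. This is where one could get a sign wrong: the Grothendieck duality isomorphism for $\pi'$ might a priori pick up a sign under $x\mapsto -x$. For instance, the explicit local generator $1/P_a'(x)$ of $\omega$ changes sign, because $P'$ is odd when $P$ is even. I would resolve this by working with the canonical trace-form description of $\omega$ rather than the generator $1/P'$. With that description $\iota$-equivariance is tautological, whereas the sign of the generator is only an artifact of the choice of trivialisation. The remaining verifications are functoriality in $S$ and compatibility with isomorphisms of the data; both are formal.
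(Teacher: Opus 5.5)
Your proposal is correct and follows the paper's argument: push forward along $\pi'_a$, use $\pi'\circ\iota=\pi'$ and the fact that $\iota$ negates the tautological section, and observe that $\pm\iota^*h$ descends to $\pm h_E$. Your extra step is a check the paper leaves implicit. The identification $\iota^*\omega\simeq\omega$ must be the canonical one, i.e.\ functoriality of $\Hom_{\cO_{\Sigma'}}(\pi'_*\cO,\cO_{\Sigma'})$, and not the one induced by the generator $1/P'_a$. Doing it this way is the right way to rule out a stray sign.
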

\begin{proof}
For spectral data $(a,\cF',h)\in \cN^\heartsuit(S)$ in the right hand side of \eqref{eqn: spectral for unitary higgs}, let $(\cE',h,\Phi')$ be the associated $\U_{2n}$ Higgs bundle. We have $\pi_{a,*}'(\cF') = \pi_{a,*}'(\iota^*\cF')$ since $\pi_{a}'\circ\iota = \pi_a'$. Moreover, $\iota$ has the effect of negating the tautological section, so that multiplication by the tautological section on $\cE'=\pi_{a,*}'(\iota^*\cF')$ is $-\Phi'$. The Hermitian form $\pm \iota^*h$ descends to $\pm h$ on $\cE'$, and so we deduce that $\iota^\star(a,\cF',h)$ corresponds exactly to $\iota^\star(\cE',h,\Phi')$. 
\end{proof}

The following result can be seen as a unitary version of \cite[Proposition 2.2]{schaposnik}.

\begin{cor}
    \label{cor: spectral description using fixed points}
    The functor $\cM^{\star,\heartsuit}$ is the homotopy fixed point stack of $\cN^\heartsuit$ under the action of $\iota^\star$. In particular, $\cM^{\star,\heartsuit}$ is isomorphic to the functor
    \[
\cM^{\star,\heartsuit}(S) = \left\{ (a,\cF',h,\alpha)\;\left| \; \begin{array}{c} a\in \cA^\heartsuit(S),\; \cF'\in \ol{\Pic}(\wt{\Sigma}_a'/S),\; \\ h\colon \cF'\xrightarrow{\sim}\sigma^*(\cF')^\vee\text{ is Hermitian,}\; \\\alpha\colon (a,\cF',h)\xrightarrow{\sim}\iota^\star(a,\cF',h)\text{ such that }\iota^\star\alpha\circ\alpha = \mathrm{Id}\end{array} \right.\right\}
    \]
    where in the above $\alpha$ is the data of an isomorphism $\alpha\colon \cF'\to \iota^*\cF'$ which transports $h$ to $-\iota^*h$ in the Siegel case and $\iota^\star h$ in the unitary case. That is, $\alpha$ must fit into a commutative diagram
    \[
\xymatrix{
	\cF\otimes \cF\ar[r]^-{h}\ar[d]_-{\alpha\otimes \alpha} & \omega_{\wt{\Sigma}_a'/\Sigma'\times S} \\
	\iota^*\cF\otimes \iota^*\cF\ar[ur]_-{\pm \iota^*h}
}		
	\] 
    with the horizontal maps being the pairings induced by $h$ and $\pm \iota^*h$ and the sign on the latter determined by the case $\star\in \{\siegel,\unitary\}$ ($+1$ in the case $\star = \unitary$ and $-1$ in the case $\star = \siegel$).
\end{cor}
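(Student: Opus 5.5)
The plan is to combine three earlier results: Proposition \ref{prop: fixed points of involution on N}, the spectral description of Proposition \ref{prop: spectral description for unitary higgs} (in its form \eqref{eqn: spectral for unitary higgs}), and Lemma \ref{lem: involution on spectral data}. The first identifies $\cM^\star$ with (a component of) the homotopy fixed point stack $\cN^{\iota^\star}$. The second identifies $\cN^\heartsuit$ with spectral data. The third transports the involution $\iota^\star$ to that data.

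\textbf{Step 1: reduce to the heart.} By Proposition \ref{prop: fixed points of involution on N}, $\cM^\siegel\simeq\cN^{\iota^\siegel}$ and $\cM^\unitary=\cM^\unitary_{n,n}$ is a distinguished component of $\cN^{\iota^\unitary}$. The Hitchin map of a fixed point is $\iota_\cB$-invariant, so it lies in $\cB^{\iota_\cB}=\cA$. Points of $\cM^\star$ are those whose characteristic polynomial $\mathrm{char}(\Phi')$ is even, and under the map $\cB\supset\cA$ the relative Hitchin map agrees with $f$. Restricting to $\cA^\heartsuit$, I therefore need that $\cA^\heartsuit=\cA\cap\cB^\heartsuit$, or at least that $\cA^\heartsuit\subset\cB^\heartsuit$. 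This holds because $\cA^\heartsuit$ forces $\wt\Sigma'_a$ to be reduced, by the preceding proposition, and reducedness is exactly the hypothesis of Proposition \ref{prop: spectral description for unitary higgs}. Taking homotopy fixed points commutes with base change to the $\iota_\cB$-stable open $\cB^\heartsuit$, so $\cM^{\star,\heartsuit}=(\cN^\heartsuit)^{\iota^\star}$.

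There is a subtlety in the unitary case. The other components $\cM^\unitary_{r,s}$ with $r\neq s$ also map to $\cA$. I expect the statement should be read with $\cM^{\unitary,\heartsuit}$ meaning the fixed-point stack, or restricted to $r=s$ via a discrete invariant. I would note this by observing that the rank of the $+1$-eigensheaf of $\alpha$ is locally constant, and that it recovers the decomposition.

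\textbf{Step 2: transport across the spectral isomorphism.} The isomorphism \eqref{eqn: spectral for unitary higgs} is $\iota^\star$-equivariant by Lemma \ref{lem: involution on spectral data}. An equivariant isomorphism of stacks with $\Z/2$-action induces an isomorphism of homotopy fixed point stacks. The fixed points of the right-hand side are tuples $(a,\cF',h,\alpha)$ with $\iota_\cB(a)=a$, so that $a\in\cA^\heartsuit$, together with an isomorphism $\alpha\colon(\cF',h)\simeq(\iota^*\cF',\pm\iota^*h)$ in the groupoid satisfying $\iota^\star\alpha\circ\alpha=\mathrm{Id}$.

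\textbf{Step 3: unwind $\alpha$.} A morphism in the spectral groupoid consists of an isomorphism of sheaves $\cF'\to\iota^*\cF'$ on $\wt\Sigma'_a$ compatible with the Hermitian forms. This compatibility is precisely the commutative diagram with $h$ and $\pm\iota^*h$, where the sign is inherited from Lemma \ref{lem: involution on spectral data}.

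The main obstacle is making sure that morphisms in $\cN^\heartsuit$ between pushforwards correspond exactly to $\cO_{\wt\Sigma'_a}$-linear maps of the $\cF'$. A Higgs-compatible map $\pi'_{a*}\cF'\to\pi'_{a*}\iota^*\cF'$ intertwining $\Phi'$ with $\Phi'$ is linear over $\cO_{\Sigma'}[\Phi']=\pi'_{a*}\cO_{\wt\Sigma'_a}$. This is the standard BNR equivalence, and it is already implicit in Proposition \ref{prop: spectral description for unitary higgs}, so it carries over. I also need the identification $\omega$-duals $\sigma^*(\iota^*\cF')^\vee\simeq\iota^*\sigma^*(\cF')^\vee$, which holds because $\iota$ is an automorphism over $\Sigma'\times S$ that commutes with $\sigma$ and preserves the relative dualizing sheaf.
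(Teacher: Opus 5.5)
Your Steps 2 and 3 match the paper's argument, which is just Proposition~\ref{prop: fixed points of involution on N} combined with Lemma~\ref{lem: involution on spectral data}. Step 1, however, has a genuine gap in the unitary case. The statement asserts $\cM^{\unitary,\heartsuit}=(\cN^\heartsuit)^{\iota^\unitary}$, so you must show that the extra components $\cM^\unitary_{r,s}$ with $r\neq s$ contribute nothing over the heart. You do not prove this; you propose to reinterpret the statement. Your remark that the rank of the $+1$-eigensheaf of $\alpha$ is locally constant only recovers the disjoint decomposition of Proposition~\ref{prop: fixed points of involution on N}. It does not remove those components.

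The missing observation, which is exactly what the paper uses, is that $\cM^\unitary_{r,s}$ maps into the complement of $\cA^\heartsuit$ when $r\neq s$. Write $\Phi'=\Phi'_{12}\oplus\Phi'_{21}$ in block form, with $\cE'_1,\cE'_2$ of ranks $r,s$, and say $r>s$. The Schur complement gives $\det(x-\Phi')=x^{r-s}\det(x^2-\Phi'_{12}\Phi'_{21})$. Since $r+s=2n$, the difference $r-s\geq 2$ is even, so $x^2$ divides the characteristic polynomial $P_a(x)$. In particular $a_{2n}\equiv 0$, and $\wt\Sigma'_a$ contains the non-reduced subscheme $\{x^2=0\}$. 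This contradicts the reducedness of $\wt\Sigma'_a$ over $\cA^\heartsuit$ that you already invoke, and such points lie outside the range of Proposition~\ref{prop: spectral description for unitary higgs} in any case.

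Note that for $|r-s|=2$ it is the vanishing of $a_{2n}$, not of the discriminant of $Q_a$, that excludes these points. So it is really the reducedness of $\wt\Sigma'_a$ that does the work here.

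Hence $\cM^\unitary_{r,s}\times_\cA\cA^\heartsuit=\emptyset$ for $r\neq s$, and $(\cN^\heartsuit)^{\iota^\unitary}=\cM^\unitary_{n,n}\times_\cA\cA^\heartsuit=\cM^{\unitary,\heartsuit}$ with no reinterpretation needed. The rest of your argument then goes through.
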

\begin{proof}
    The statement mostly follows from Proposition \ref{prop: fixed points of involution on N} and Lemma \ref{lem: involution on spectral data}. It suffices to note that, in the case $\star = \unitary$, the image of $\cM^\unitary_{r,s}$ under the relative Hitchin map $h^\unitary$ is disjoint from $\cA^\heartsuit$, so that $\cM^{\unitary,\heartsuit} = h^{\unitary,-1}(\cA^\heartsuit)$.
\end{proof}

\section{Symmetries of relative Hitchin fibers}

It was observed in \cite{ngo_endoscopie} that globalization of endoscopic groups is intimately tied to the groups of symmetries acting on the Hitchin fibration. For example, in type $A$, this manifests in an action of Picard stacks of spectral covers on the moduli of Higgs bundles. Here, we define a smooth, commutative group stack $\cP$ which acts on the relative Hitchin moduli $\cM^\star$ over $\cA$, and give a spectral description of $\cP$ and its action. We will study its relation to endoscopy further in \S\ref{sec: component groups of P}.

Recall that the Hitchin base $\cA$ is identified with the space of sections $\Gamma(\Sigma,\fc_{D})$. For any $a\in \cA(S)$, we consider the group scheme $J_a = a^*J$ defined as the pullback of the group scheme $J$ from Theorem \ref{thm: regular centralizers for rational forms} to $\Sigma \times S$. In particular, $J_a$ is a smooth, commutative group scheme over $\Sigma\times S$.

\begin{definition}
	Let $\cP$ denote the functor sending a $k$-scheme $S$ to
	\[
	\cP(S) = \{(a,\cT)\mid a\in \cA(S),\; \cT\text{ is a $J_a$-torsor on }\Sigma\times S\}.
	\]
    We again denote $\cP^\heartsuit = \cP|_{\cA^\heartsuit}$.
\end{definition}

Let $\wt\Sigma'$ be the spectral cover of $\Sigma'\times \cA$ defined in \S\ref{sec: spectral curves}. Recall from \eqref{eqn: diagram of covers} the quotients $\bar{\Sigma}' = \wt{\Sigma}'/\iota$ and $\ol\Sigma = \ol\Sigma'/\sigma$. The spectral description of $J$ (see Theorem \ref{thm: regular centralizers for rational forms}) gives the following spectral description for $\cP$.

\begin{prop}
	\label{prop: spectral description of P star}
	Over $\cA^\heartsuit$, we have an identification $\cP^\heartsuit\simeq \Prym_{/\cA^\heartsuit}(\ol{\Sigma}',\ol{\Sigma})$, where $\Prym_{/\cA^\heartsuit}(\ol{\Sigma}',\ol{\Sigma})$ denotes the group of pairs $(\cG',\xi)$ where $\cG'$ is in the kernel of the norm map
	\[
\Pic(\ol{\Sigma}'/\cA^\heartsuit)\to \Pic(\ol{\Sigma}'/\cA^\heartsuit),\quad \cG'\mapsto \cG'\otimes \sigma^*\cG'
	\]
    and $\xi\colon \cG'\simeq \sigma^*\cG'^{\otimes -1}$ is an isomorphism realizing this.

	Moreover, the action of $\cP^\heartsuit$ on $\cM^{\star,\heartsuit}$ is identified with the map
	\[
	\cP^\heartsuit\times \cM^{\star,\heartsuit}\to \cM^{\star,\heartsuit}
	\]
	\[
	(\cG',\cF')\mapsto q'^*\cG'\otimes \cF'
	\]
	where in the above, $q'\colon \wt{\Sigma}'\to \ol{\Sigma}'$ is the quotient map by $\iota$. In this statement, we view $\cM^{\star,\heartsuit}$ in terms of the spectral description (cf. Corollary \ref{cor: spectral description using fixed points}). The Hermitian form on $q'^*\cG'\otimes \cF'$ is induced by the corresponding forms on $\cG'$ and $\cF'$ by 
    \[
    q'^*\cG'\otimes \cF'\xrightarrow{q'^*\xi\otimes h}q'^*\sigma^*\cG'^{\otimes -1}\otimes (\sigma^*\cF')^\vee = \sigma^*(q'^*\cG'\otimes \cF')^\vee,
    \]
    and $\alpha$ is induced by
    \[
    q'^*\cG\otimes \cF\xrightarrow{1\otimes \alpha}q'^*\cG\otimes \iota^*\cF = \iota^*(q'^*\cG\otimes \cF ).
    \]
\end{prop}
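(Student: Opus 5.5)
The plan is to globalize the spectral description of $J$ from Theorem \ref{thm: regular centralizers for rational forms}(2) and then read off the action from the description of $\cM^{\star,\heartsuit}$ in Corollary \ref{cor: spectral description using fixed points}.

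\textbf{Step 1: identify $J_a$ spectrally.} Twisting $J$ by $D$ and pulling back along $a\colon \Sigma\times S\to \fc_D$, Theorem \ref{thm: regular centralizers for rational forms}(2) identifies $J_a$ with the kernel of the norm map
\[
\vartheta_*\pi'_*\G_m\to \pi_*\G_m, \qquad \xi\mapsto \xi\cdot\sigma(\xi),
\]
where $\pi'\colon\ol\Sigma'_a\to\Sigma'\times S$ and $\pi\colon\ol\Sigma_a\to\Sigma\times S$ are the covers. Here one must check that the base change of $(\fs/\iota)_{k'}$ is $\ol\Sigma'_a$, with $\sigma$ acting as the deck involution of $\ol\Sigma'_a\to\ol\Sigma_a$ from diagram \eqref{eqn: diagram of covers}. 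This holds because $\Sigma'\to\Sigma$ globalizes $k'/k$, and on the $\iota$-quotient the sign twist $x\mapsto -x$ in $\sigma$ becomes trivial.

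\textbf{Step 2: identify torsors with Prym data.} A $J_a$-torsor is then a torsor under $\ker(\mathrm{Nm})$ for the finite flat morphism $p\colon\ol\Sigma'_a\to\Sigma\times S$. Use the exact sequence
\[
1\to J_a\to p_*\G_m\xrightarrow{\mathrm{Nm}}\pi_*\G_m.
\]
From it, a $J_a$-torsor is the same as a line bundle $\cG'$ on $\ol\Sigma'_a$ together with a trivialization of $\cG'\otimes\sigma^*\cG'$ compatible with $\sigma$, i.e.\ an isomorphism $\xi\colon\cG'\simeq\sigma^*\cG'^{\otimes-1}$. This uses the fact that $\pi_*$ and $p_*$ are exact for finite morphisms, so $H^1(\Sigma\times S,p_*\G_m)=\Pic(\ol\Sigma'_a)$, with the norm on $H^1$ being $\cG'\mapsto\cG'\otimes\sigma^*\cG'$. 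The functorial form of this statement is cleanest phrased stackily: the torsor groupoid of a kernel is the fiber product groupoid of $\Pic(\ol\Sigma')\to\Pic(\ol\Sigma)$ over the trivial object. Over $\cA^\heartsuit$ the curves are reduced, so this applies uniformly in families. I expect Step 2 to be the main obstacle. The difficulty is bookkeeping of the descent: $\sigma$ on $\ol\Sigma'$ versus $\sigma$ on $\Sigma'$, and the requirement that $\xi$ satisfy the symmetric cocycle condition $\sigma^*\xi^\vee=\xi$. I would handle it by reducing to the étale-local case where $\Sigma'$ splits, where both sides are just $\Pic$ of one copy.

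\textbf{Step 3: the action.} The action of $J_a$ on $\cM^{\star}$ comes from $\chi^*J\to I$ in Proposition \ref{prop: regular centralizer package}. Over $\cA^\heartsuit$, a point of $\cM^\star$ is an $\iota^\star$-fixed point of $\cN$, i.e.\ a triple $(\cF',h,\alpha)$. The centralizer $I$ acts on $\pi'_{a,*}\cF'$ by multiplication by units of $\cO_{\wt\Sigma'_a}$ that are $\iota$-invariant (so they commute with $\alpha$) and unitary (so they preserve $h$). This is exactly $p_*\G_m$-units of norm one pulled back along $q'$. Twisting by torsors then gives $\cF'\mapsto q'^*\cG'\otimes\cF'$, with Hermitian form $q'^*\xi\otimes h$ and fixed-point datum $1\otimes\alpha$, as stated.

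Finally, verify that $1\otimes\alpha$ still satisfies $\iota^\star\alpha\circ\alpha=\mathrm{Id}$ and transports the form with the correct sign. This is immediate because $q'^*\cG'$ is canonically $\iota$-equivariant, since $q'\circ\iota=q'$.
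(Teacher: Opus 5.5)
Your proposal is correct and follows essentially the same route as the paper. Both use Theorem~\ref{thm: regular centralizers for rational forms} to realize $J_a$ as the norm-one torus of $\ol\Sigma'_a\to\ol\Sigma_a$, read off $J_a$-torsors as Prym data $(\cG',\xi)$, and identify the action through $\iota$-invariant unitary units of $\cO_{\wt\Sigma'_a}$, i.e.\ via $\cG'\mapsto q'^*\cG'$. In Step 2 you should state explicitly that $\mathrm{Nm}\colon p_*\G_m\to\pi_*\G_m$ is an epimorphism of \'etale sheaves: it becomes $(u,v)\mapsto uv$ after the \'etale base change that splits $\Sigma'\to\Sigma$, and this is what justifies identifying torsors under the kernel with the fiber product of $\Pic(\ol\Sigma'_a)\to\Pic(\ol\Sigma_a)$ over the trivial object.
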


\begin{remark}
In the above result, $\Prym$ refers to the full kernel of the norm map, not just the identity component. For example, since $\vartheta$ is unramified, the $\Prym$ above has two connected components when $\ol{\Sigma}_a'$ is irreducible. In fact, it is easy to compute the component group in general; see Lemma \ref{lem: component group of Pa}.
\end{remark}

\begin{proof}
    By Theorem \ref{thm: regular centralizers for rational forms}, the regular centralizer group $J$ can be described as the kernel of the morphism
\[
\mathrm{Nm}\colon \Res^{(\fs/\iota)_{k'}}_{\fc}(\G_m)\to\Res^{\fs/\iota}_{\fc}(\G_m),
\]
    where $\mathrm{Nm}$ is norm map. Since torsors over $J_a$ for $a\in \cA^\heartsuit(S)$ are identified with $H^1(\Sigma\times S,J_a)$, the above description of $J_a$ describes $\cP_a$ as the kernel of the norm map
    \[
\Pic(\ol{\Sigma}'_a)\to \Pic(\ol{\Sigma}_a),\quad \cG\mapsto \cG\otimes \sigma^*\cG	.
    \]
    The group $\cP_a$ is identified with the sections of $\pi_{a,*}(\cO_{\wt\Sigma_a}^\times)$ that preserve the standard Hermitian form $h\colon \cO_{\wt\Sigma_a}\simeq \sigma^*\cO_{\wt\Sigma_a}^\vee$ via the map $\cG\mapsto q'^*\cG$, and the action is given by the natural action of this unitary group on $\ol{\Pic}(\wt\Sigma_a')$. 
\end{proof}

\subsection{Open subvarieties of the relative Hitchin base}
\label{sec: open subsets}

In this section, we introduce some more (open) subvarieties of the Hitchin base that will be relevant later, namely the anisotropic locus $\cA^\ani$ (Definition \ref{def: ani locus}) and the dagger locus $\cA^\dagger$ (Definition \ref{def: dagger locus}), both of which will be crucial in geometric stabilization.

\begin{definition}
\label{def: ani locus}
    The \emph{anisotropic locus} in $\cA$, denoted by $\cA^\ani$, is the set of all $a\in \cA^\heartsuit(S)$ such that the double cover $\wt{\Sigma}_a'\to \wt{\Sigma}_a=\wt{\Sigma}_a'/\sigma$ induces a bijection on irreducible components, i.e. the involution $\sigma$ acts trivially on the set of irreducible components of the spectral cover $\wt{\Sigma}_a'$.
    
    We denote $\cM^{\star,\ani} =\cM^\star\times_\cA\cA^\ani$ and $f^{\star,\ani}\colon \cM^{\star,\ani}\to \cA^\ani$ the restriction of $f^\star$ to $\cM^{\star,\ani}$, and likewise for $\cP^{\ani}$ and $g^\ani\colon \cP^\ani\to \cA^\ani$.
\end{definition}

\begin{remark}
    Note that the above definition uses the spectral curve $\wt{\Sigma}_a'$, not $\ol{\Sigma}_a'$. The above condition implies the corresponding condition for $\ol{\Sigma}_a'$. As such, the above can be thought of as an intersection between 2 elliptic loci, one for the relative Hitchin system itself and one for unitary Hitchin system $\cN_{2n}$ from \cite{ngo-laumon}. While analysis of the full elliptic locus for the relative Hitchin system is important for the relative trace formula \cite{LXZrtf}, this smaller locus suffices for our local aims.
\end{remark}

\begin{lem}
For $a\in \cA(S)$, we have $a\in \cA^\ani(S)$ if and only if $\pi_0(\Prym(\wt{\Sigma}_a',\wt{\Sigma}_a))$ is finite. In this case, $\pi_0(\cP_a)$ is also finite.
\end{lem}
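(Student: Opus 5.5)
The plan is to compute $\pi_0$ of both Prym groups explicitly in terms of irreducible components, following the model of \cite[Lem 2.8.2]{ngo-laumon}. For $a\in\cA^\heartsuit$ the curves in \eqref{eqn: diagram of covers} are reduced, so I would pass to normalizations. Let $\wt\Sigma_a^{\flat}$ denote the normalization of $\wt\Sigma_a'$ and $\wt{\Sigma}_a^{\natural}$ that of $\wt\Sigma_a$. The standard d\'evissage
\[
1\to R\to \Pic(\wt\Sigma_a')\to \Pic(\wt\Sigma_a^{\flat})\to 1,
\]
with $R$ a connected affine group, reduces the question to $\Pic(\wt\Sigma_a^\flat)$. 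The latter is a product over the irreducible components $C$ of $\wt\Sigma'_{a,\bar k}$ of $\Pic(\wt C)$, and $\pi_0(\Pic(\wt C))=\Z$ via the degree.

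Next I would analyse the norm map. Here $\sigma$ permutes the components, and the kernel of $\cG\mapsto\cG\otimes\sigma^*\cG$ on the degree lattice $\Z^{\Irr(\wt\Sigma_a')}$ is computed orbit by orbit.

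On a $\sigma$-fixed component, the condition reads $\deg+\deg=0$, so the kernel has trivial degree part. The component group contributed is then that of the Prym of a double cover of a single smooth curve, which is finite: it is of order at most $2$, coming from the two-component phenomenon noted in the remark after Proposition \ref{prop: spectral description of P star}.

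On a free orbit $\{C,\sigma C\}$, the kernel contains pairs of degrees $(d,-d)$, which contributes a copy of $\Z$ to $\pi_0$.

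The torus part $R$, together with the analogous norm kernel on it, contributes only finite component groups, since the unipotent and torus kernels of norms have finite $\pi_0$. Hence $\pi_0(\Prym(\wt\Sigma_a',\wt\Sigma_a))$ is finite exactly when there are no free $\sigma$-orbits on components. This is precisely the condition that $\wt\Sigma_a'\to\wt\Sigma_a$ induce a bijection on irreducible components, i.e.\ Definition \ref{def: ani locus}.

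For the second claim I would run the same argument with $\ol\Sigma'_a\to\ol\Sigma_a$, using Proposition \ref{prop: spectral description of P star}, which identifies $\cP_a$ with $\Prym(\ol\Sigma_a',\ol\Sigma_a)$. I need that $\sigma$ also acts trivially on $\Irr(\ol\Sigma_a')$. This holds because $q'\colon\wt\Sigma_a'\to\ol\Sigma_a'$ is surjective, so $\Irr(\ol\Sigma'_a)$ is the quotient of $\Irr(\wt\Sigma'_a)$ by $\iota$, and $\sigma$ commutes with $\iota$. Hence, if $\sigma$ fixes every component upstairs, it fixes every component downstairs. The same degree computation then gives finiteness of $\pi_0(\cP_a)$.

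The main obstacle I expect is the careful bookkeeping of $\pi_0$ through the d\'evissage in the non-smooth case: showing that the norm kernel restricted to the affine part $R$ has finite component group, and that passing from $\Pic$ to its norm kernel does not create $\Z$-summands. My first approach would be to use that the norm composed with the inclusion $\Pic(\ol\Sigma_a)\to\Pic(\ol\Sigma'_a)$ is multiplication by $2$, which makes all ranks comparable up to finite index.
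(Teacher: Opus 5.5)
Your proposal is correct and takes essentially the paper's approach. The paper likewise characterizes finiteness of $\pi_0$ by the condition that every component of $\wt\Sigma_a$ has connected preimage in $\wt\Sigma_a'$ (i.e.\ no free $\sigma$-orbits on components). It then deduces finiteness of $\pi_0(\cP_a)$ by passing to $\iota$-quotients, using that $\sigma$ and $\iota$ commute, together with Proposition~\ref{prop: spectral description of P star}.

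The paper only asserts the first step as ``easy to see,'' and your normalization and d\'evissage argument is the standard way to fill it in. There, only finiteness of the contributions from $\sigma$-stable components and from the affine part matters; the affine part is handled because affine groups of finite type have finitely many components. The exact order does not matter, and for the bare kernel of $\cG\mapsto\cG\otimes\sigma^*\cG$ it can in fact exceed $2$.
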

\begin{proof}
    It is easy to see that the Prym variety $\Prym(\wt{\Sigma}_a',\wt{\Sigma}_a)$ has finite component group if and only if for each component of $\wt{\Sigma}_a$, its preimage under the cover $\wt{\Sigma}_a'\to \wt{\Sigma}_a$ is a geometrically connected cover, which is exactly the condition of $\cA^\ani$. 
    
    Now, suppose that $\wt{\Sigma}_a'\to \wt{\Sigma}_a$ induces a bijection on irreducible components. This map is $\iota$ equivariant, so after passing to the quotient by $\iota$ in diagram \eqref{eqn: diagram of covers}, we deduce that $\ol{\Sigma}_a'\to \ol{\Sigma}_a$ also induces a bijection on irreducible components. Therefore, finiteness of $\pi_0(\cP_a)$ follows from the spectral description Proposition \ref{prop: spectral description of P star} using an identical argument to the first paragraph.
\end{proof}

\begin{lem}
\label{lem: inclusions of opens in relative bases}
    The locus $\cA^\ani$ is open in $\cA$, and we have inclusions $\cA^\diamondsuit\subset \cA^\ani\subset \cA^\heartsuit$. In particular, $\cA^\ani$ is nonempty.
\end{lem}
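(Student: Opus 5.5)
The statement has three parts: openness of $\cA^\ani$, the chain $\cA^\diamondsuit\subset\cA^\ani\subset\cA^\heartsuit$, and nonemptiness. Nonemptiness follows from the inclusions together with the earlier lemma that $\cA^\diamondsuit$ is nonempty once $\deg D>2g$. The inclusion $\cA^\ani\subset\cA^\heartsuit$ holds by definition, since $\cA^\ani$ was defined as a subset of $\cA^\heartsuit$. So the real content is openness and the inclusion $\cA^\diamondsuit\subset\cA^\ani$.

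For $\cA^\diamondsuit\subset\cA^\ani$: if $a\in\cA^\diamondsuit$, then $\wt\Sigma_a'$ is smooth and irreducible, hence geometrically it has a single irreducible component. The involution $\sigma$ therefore acts trivially on the set of components. The map $\wt\Sigma_a'\to\wt\Sigma_a$ is surjective, so $\wt\Sigma_a$ is also irreducible, and the map induces a bijection on components. We already know $\cA^\diamondsuit\subset\cA^\heartsuit$, so $a\in\cA^\ani$.

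For openness, I would argue exactly as in \cite[Lem 2.8.2]{ngo-laumon} and \cite[\S5.4]{ngo}, using the preceding lemma: $a\in\cA^\ani$ iff $\pi_0$ of the Prym of $\wt\Sigma_a'\to\wt\Sigma_a$ is finite. The plan has four steps.
\begin{enumerate}
\item Over $\cA^\heartsuit$, consider the smooth commutative group scheme $\cP_{\cN}$ restricted along the closed embedding $\cA\hookrightarrow\cB_{2n}$ given by $\cA=\cB^{\iota_\cB}$. Note that $\cA^\heartsuit$ maps into $\cB^\heartsuit$, since $\fD^\cent$ pulls back into the discriminant divisor.
\item The anisotropic locus $\cB_{2n}^\ani$ was shown in \cite{ngo-laumon} to be open; its defining condition is exactly that the $\sigma$-cover induces a bijection on irreducible components.
\item Check that $\cA^\ani=\cA^\heartsuit\cap\cB_{2n}^\ani$, where the intersection is taken via this embedding, since the two conditions are the same condition on $\wt\Sigma_a'$.
\item Conclude that $\cA^\ani$ is open in $\cA$.
\end{enumerate}

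Alternatively, one can show directly that the complement is closed using upper semicontinuity of the dimension of the affine part of $\cP_a$. The Néron model / Chevalley dévissage gives $\dim$ of the maximal torus of $\Prym(\wt\Sigma_a',\wt\Sigma_a)$ as an upper semicontinuous function of $a$. Anisotropy is equivalent to this torus having no split part, i.e. $\pi_0$ being finite, as in Ngô's argument.

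The main obstacle is step (3): identifying $\wt\Sigma_a'$ for $a\in\cA$ with the Ngô–Laumon spectral curve of the image point in $\cB_{2n}$. This requires the $\cL$-twist conventions to match, which they do because $\cL^{\otimes2}=\cO$ and only even coefficients appear. Once this is verified, openness is inherited from \cite{ngo-laumon}.
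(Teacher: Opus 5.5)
Your proposal is correct and follows essentially the same route as the paper: openness is inherited from the Ngô--Laumon anisotropic locus (the paper simply cites \cite[Lem 2.8.1]{ngo-laumon}, and later uses the identification $\cA^\ani=\cA\cap\cB^\ani$ exactly as in your step (3)), and $\cA^\diamondsuit\subset\cA^\ani$ holds because a smooth, geometrically irreducible $\wt\Sigma'_a$ has a single component. One small inaccuracy, which your argument does not rely on: the parenthetical in step (1) is wrong. Since $\mathrm{Disc}(P_a)=(-4)^n a_{2n}\,\mathrm{Disc}(Q_a)^2$, the discriminant for $\U_{2n}$ pulls back to $\fD^\br+2\fD^\cent$, so $\cA^\heartsuit$ (defined by avoiding only $\fD^\cent$) does not land in $\cB^\heartsuit$ at points with $a_{2n}\equiv 0$.
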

\begin{proof}
    That this locus is open follows from \cite[Lem 2.8.1]{ngo-laumon}. For $a\in \cA^\diamondsuit(S)$, the spectral cover $\wt{\Sigma}_a'\to \Sigma'\times S$ restricts to a smooth and connected, hence irreducible, curve over any geometric point $s$ of $S$. Hence, $\wt{\Sigma}_a'\to \wt{\Sigma}_a$ induces an injection (and so also a bijection) on irreducible components, and the inclusion $\cA^\diamondsuit\subset \cA^\ani$ follows.
\end{proof}

\begin{prop}
    \label{prop: smoothness and properness of relative hitchin fibration}
    For $\star\in \{\unitary,\siegel\}$, the relative Hitchin morphism $f^{\star,\ani}\colon \cM^{\star,\ani}\to \cA^\ani$ is proper and $\cM^{\star,\ani}$ is a smooth, finite type Deligne--Mumford stack.
\end{prop}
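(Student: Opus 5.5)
Our plan is to deduce everything from the corresponding properties of the Laumon--Ng\^o unitary Hitchin fibration $f\colon \cN_{2n}\to\cB_{2n}$, using the fixed-point description of Proposition \ref{prop: fixed points of involution on N} and Corollary \ref{cor: spectral description using fixed points}. The involution $\iota_\cB$ on $\cB=\cB_{2n}$ has fixed locus $\cA$. Our definition of $\cA^\ani$ requires $\sigma$ to act trivially on the irreducible components of $\wt\Sigma_a'$, so the closed embedding $\cA\hookrightarrow\cB$ carries $\cA^\ani$ into $\cB^\ani$. Over $\cA^\heartsuit$, $\cM^{\star}$ is the homotopy fixed point stack of $\iota^\star$ on $\cN^\heartsuit|_{\cA^\heartsuit}$; for $\star=\unitary$ we discard the components $\cM^\unitary_{r,s}$ with $(r,s)\neq(n,n)$, which do not meet $\cA^\heartsuit$. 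Hence
\[
\cM^{\star,\ani}\simeq\big(\cN^\ani\times_{\cB^\ani}\cA^\ani\big)^{\iota^\star}.
\]

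\emph{Properness and finite type.} By \cite{ngo-laumon}, $\cN^\ani\to\cB^\ani$ is proper and $\cN^\ani$ is a finite type Deligne--Mumford stack, so the base change $\cN^\ani_{\cA^\ani}\to\cA^\ani$ is proper as well. The natural map from the homotopy fixed points $Y^{\Z/2}\to Y$ of a $\Z/2$-action on a Deligne--Mumford stack is representable, finite and unramified (\cite{romagny1}). It is proper because it is finite: an object has finitely many automorphisms $\alpha$, and the condition $\iota^\star\alpha\circ\alpha=\mathrm{Id}$ is closed. Composing, $f^{\star,\ani}$ is proper and $\cM^{\star,\ani}$ is of finite type. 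It is Deligne--Mumford because the automorphism groups of its objects are subgroups of those in $\cN^\ani$, hence finite and reduced, since $p>n\geq 2$ is not $2$.

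\emph{Smoothness.} This is the main step. We use deformation theory of maps to $[V^\star_D/H^\star]$. At a point $(\cE,\phi)$, the tangent complex is
\[
C^\bullet=\big[\mathrm{ad}(\cE)_{\fh^\star}\xrightarrow{[\cdot,\phi]}V^\star_{\cE}\otimes D\big]
\]
in degrees $[0,1]$. Smoothness follows once $\mathbb H^2(\Sigma,C^\bullet)=0$. By Serre duality, $\mathbb H^2(C^\bullet)$ is dual to
\[
\mathbb H^0\big(\Sigma,[(V^\star_\cE)^\vee\otimes D^{-1}\otimes K\to \mathrm{ad}(\cE)^\vee_{\fh^\star}\otimes K]\big).
\]
Using the invariant forms to identify $(V^\star)^\vee\simeq V^\star$ and $\fh^{\star\vee}\simeq\fh^\star$, an element of this group is a section $y\in H^0(V^\star_\cE\otimes K D^{-1})$ with $[y,\phi]=0$ in $\fh^\star$. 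This is the $\iota^\star$-anti-invariant part of the analogous obstruction space for $\cN$ at $\mu^\star(\cE,\phi)$, namely sections $Y$ of $\fg_{\cE}\otimes KD^{-1}$ commuting with $\Phi$. Such $Y$ lie in the centralizer of $\Phi$, which over the generic point of $\Sigma$ is the function algebra of $\wt\Sigma'_a$. The argument of Ng\^o and Laumon shows that over $\cB^\ani$ these sections vanish, using $\deg D>2g$, $\deg(KD^{-1})<0$, and the fact that $\wt\Sigma_a'\to\wt\Sigma_a$ is a bijection on components. That vanishing restricts to the $(-1)$-eigenspace, so $\mathbb H^2=0$. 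Alternatively, smoothness of a fixed-point stack of a finite group of order invertible in $k$ acting on a smooth stack follows formally (cf.\ \cite{romagny1}); we will use that as a shortcut if the duality bookkeeping becomes messy.

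We expect the main obstacle to be the careful identification of the obstruction space with the $\iota^\star$-anti-invariant part of the unitary one, including the Hermitian twists in $V^\siegel$ versus $V^\unitary$. The fallback via tame group actions avoids this entirely, since $\cN^\ani$ is smooth \cite{ngo-laumon}.
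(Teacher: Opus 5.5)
Your proposal is correct. For properness, finite type and the Deligne--Mumford property it follows the paper's argument. You identify $\cM^{\star,\ani}$ with the $\iota^\star$-fixed point stack over $\cN^\ani$, then combine the Laumon--Ng\^o properties of $\cN^\ani\to\cB^\ani$ with representability and properness of the fixed-point map $\cN^{\iota^\star}\to\cN$.

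One step should be tightened. Having finitely many isomorphisms $y\simeq\iota^\star y$ only gives quasi-finiteness of $\cM^{\star,\ani}\to\cN^\ani$. For a non-separated Deligne--Mumford stack the fixed-point map need not be finite, so your general claim as stated is too strong. What you need is that the $\mathrm{Isom}$-spaces of $\cN^\ani$ are finite, i.e.\ that $\cN^\ani$ is separated. As the paper notes explicitly, this follows from properness of $\cN^\ani\to\cB^\ani$ over the separated base $\cB^\ani$.

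For smoothness, the paper uses exactly your fallback: Romagny's theorem that fixed points of a tame finite group ($\Z/2$, with $p\neq 2$) acting on the smooth stack $\cN^\ani$ form a smooth stack. Your primary route is genuinely different and also works. The Serre dual of $\mathbb{H}^2$ of $[\mathrm{ad}(\cE)_{\fh^\star}\to V^\star_\cE\otimes D]$ is the space of $y\in H^0(V^\star_\cE\otimes KD^{-1})$ with $[y,\phi]=0$. Since $[V^\star,V^\star]\subset\fh^\star$, this is the anti-invariant part of $\{Y\in H^0(\mathfrak{u}_{\cE}\otimes KD^{-1}) : [Y,\Phi]=0\}$.

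To kill the latter you need the vanishing argument itself, not just the statement that $\cN^\ani$ is smooth; smoothness alone does not force $\mathbb{H}^2=0$. Such a $Y$ acts on the spectral sheaf, so it is a section of the pullback of $KD^{-1}$ to the normalization of $\wt\Sigma'_a$. That pullback has negative degree on every component once $\deg D>2g-2$. This uses only reducedness of $\wt\Sigma'_a$. The bijection on components you cite plays no role here; it is what makes automorphism groups finite.

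The trade-off is as follows. The paper's route is a one-line formal consequence of smoothness of $\cN^\ani$. Yours is self-contained and avoids the fixed-point smoothness theorem. It also gives smoothness of $\cM^\star$ over the whole locus where $\wt\Sigma'_a$ is reduced, not just over $\cA^\ani$.
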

\begin{proof}
	Since the anisotropic condition is the intersection of the anisotropic condition for $\U_{2n}$ Higgs bundles, $\cM^{\star,\ani}$ is the fixed point stack of $\iota^\star$ acting on $\cN^\ani$ by Proposition \ref{prop: fixed points of involution on N} and Corollary \ref{cor: spectral description using fixed points}. Hence, smoothness follows from \cite[Theorem 1.2.1]{romagny2}. By \cite[Theorem 3.3]{romagny1}, the morphism $\cM^{\star,\ani}\to \cN^\ani$ is representable, and by \cite[Prop 2.8.4]{ngo-laumon}, $\cN^{\ani}$ is Deligne--Mumford. Hence, $\cM^{\star,\ani}$ is Deligne--Mumford. 
	
	We now note that since $\cB$ is affine, it is separated. Therefore, as $f^\ani\colon \cN^{\ani}\to \cB^\ani$ is proper (and so, in particular, separated), $\cN^\ani$ is also separated. Properness of $f^{\star,\ani}$ now follows by \cite[Proposition 3.7]{romagny1} since $f^\ani$ is proper (see \S\ref{sec: unitary Higgs} and \cite[Prop 2.8.4]{ngo-laumon}). Moreover, \cite[Proposition 3.7]{romagny1} implies the map $\cM^{\star,\ani}\to \cN^{\ani}$ is proper and so in particular is finite type. Hence, $\cM^{\star,\ani}$ is locally of finite type since $\cN^\ani$ is. (See \cite[Prop 6.1.3]{ngo}.)
\end{proof}

In fact, the open subset $\cA^\ani\subset \cA^\heartsuit$ can be arbitrarily large relative to the degree of $D$, as the following lemma makes plain.

\begin{lem}
\label{lem: codim of anisotropic}
    The codimension of the complement $\cA^\heartsuit - \cA^\ani$ in $\cA^\heartsuit$ is at least $\deg(D)$.
\end{lem}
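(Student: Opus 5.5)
We bound the complement $\cA^\heartsuit-\cA^\ani$ by a union of images of "reducible" loci, in the spirit of \cite[Prop 2.8.1, Lem 2.8.2]{ngo-laumon}, and count dimensions in each piece.

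\textbf{Step 1: factor the characteristic polynomial.} Let $a\in\cA^\heartsuit(\ol k)$ lie outside $\cA^\ani$. Then $\sigma$ acts nontrivially on the irreducible components of $\wt\Sigma_a'$. Since $\wt\Sigma'_a\subset (\fs_D)\times_\Sigma\Sigma'$ is cut out by $P_a(x)=x^{2n}+a_2x^{2n-2}+\cdots+a_{2n}$, with coefficients pulled back from $\Sigma$, the components of $\wt\Sigma_a'$ correspond to irreducible factors of $P_a$ over the function field $\ol k(\Sigma')$. Some factor $Q$ satisfies $\sigma(Q)\neq Q$, where $\sigma$ acts on coefficients and by $x\mapsto -x$. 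The product $Q\,\sigma(Q)$ is then $\sigma$-stable and divides $P_a$. Taking the smallest $\langle\sigma,\iota\rangle$-stable factor containing $Q$, we obtain a factorization
\[
P_a = R\cdot \sigma(R)\cdot P_b,
\]
where $R$ is a monic polynomial of degree $m\geq 1$ with coefficients in sections of $D^{\otimes i}$ pulled back to $\Sigma'$, $P_b$ has the same shape as $P_a$ but rank $n-m$, and $\iota$ either preserves $R$ or exchanges it with another factor. Since the coefficients of $P_a$ are even, it is cleanest to work with $Q_a(y)=P_a(x)$ for $y=x^2$. Then $R\,\iota(R)$ defines a rank-$m$ Hermitian-type datum over $\Sigma'$ with coefficients in $H^0(\Sigma',\vartheta^*D^{\otimes 2i})$.

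\textbf{Step 2: count dimensions.} The complement is therefore contained in the union, over $1\le m\le n$, of the images of the finite maps
\[
\cB'_m\times\cA_{n-m}\to\cA,\qquad \cB'_m=\bigoplus_{i=1}^m H^0(\Sigma',\vartheta^*D^{\otimes 2i}),
\]
sending $(R,b)\mapsto R\,\sigma(R)\,P_b$. The map is finite because it is a multiplication of monic polynomials. Since $\deg D\ge 2g$, Riemann--Roch computes every space exactly. We get
\[
\dim\cA=\sum_{i=1}^n(2i\deg D+1-g).
\]
For $\cB'_m$ we need a small correction: only the $\sigma$-twisted part contributes new moduli beyond a sum over $\Sigma$, because an arbitrary $R$ over $\Sigma'$ gives $R\sigma(R)$ in the image of the norm. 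The cleanest way to see this is to note that the norm $\cB'_m\to\cA_m$ has dimension of source
\[
\sum_{i\le m}(2\cdot 2i\deg D+1-g')=\sum_{i\le m}(4i\deg D+2-2g),
\]
using $g'-1=2(g-1)$. This is too large, so the naive count is wrong and the parametrization must be refined. The needed constraint is that $R\sigma(R)$ has even coefficients. This forces $R$ to be determined by "half" of its data, namely the $\iota$-compatibility $R(-x)=\pm\sigma(R)(x)$ or its analogue. I will therefore parametrize $R$ as a polynomial in $x$ of degree $m$ with coefficients in $H^0(\Sigma',\vartheta^*D^{j})$, $j\le m$, satisfying $\sigma(R)(x)=\pm R(-x)$, up to the appropriate twist by $\cL$. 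This constraint cuts the dimension by half, giving
\[
\sum_{j=1}^m (j\deg D+1-g)
\]
via the $\sigma$-eigenspaces $\cO\oplus\cL$. The codimension is then
\[
\sum_{i=1}^m(2i\deg D+1-g)-\sum_{j=1}^m(j\deg D+1-g)=\deg D\cdot\tfrac{m(m+1)}{2}\ \ge\ \deg D.
\]

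\textbf{Main obstacle.} The hardest step is the correct parametrization of the reducible locus, that is, the exact symmetry constraint relating $\sigma(R)$ and $\iota$. This is what yields the halved dimension count. I would first carry it out in the case $n=1$ as a sanity check: there $a_2\in H^0(D^2)$, and reducibility of $x^2+a_2$ over $\Sigma'$ amounts to $-a_2$ being a norm-type square $c\,\sigma(c)$ with $c\in H^0(\Sigma',\vartheta^*D)$ antiinvariant. That locus has dimension about $\deg D+1-g$, versus $2\deg D+1-g$ for $\cA$, so the codimension is $\deg D$. I would then generalize by induction on $m$, following \cite[Prop 2.8.1]{ngo-laumon}.
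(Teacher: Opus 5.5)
Your strategy (stratify $\cA^\heartsuit-\cA^\ani$ by factorizations of $P_a$, then bound each stratum via a finite multiplication map and Riemann--Roch) is the spectral form of the Levi-stratification argument of Ng\^o's Cor.~6.3.6, which the paper invokes. However, the family you finally parametrize does not cover the complement. For an irreducible factor $R$ of $P_a$ over $\ol k(\Sigma')$ with $\sigma(R)\neq R$ there are three cases: (a) $\iota(R)=\sigma(R)$; (b) $\iota(R)=R$; (c) the $\langle\sigma,\iota\rangle$-orbit of $R$ has four elements. Your constrained family ($R$ fixed by $\sigma\iota$, coefficients in $H^0(\Sigma,D^{\otimes j})$, dimension $\sum_{j\le m}(j\deg D+1-g)$) handles (a). It handles (c) after regrouping $T=R\cdot\sigma\iota(R)$. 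It does not handle (b), even though your Step 1 explicitly allows $\iota$ to preserve $R$. Evenness of $R\,\sigma(R)$ does not force $\sigma(R)(x)=\pm R(-x)$, because it holds automatically whenever $R$ itself is even. Concretely, take $n=2$ and $Q_a(y)=(y+s)(y+\sigma^*s)$ with $s\in H^0(\Sigma',\vartheta^*D^{\otimes 2})$ generic. The factors $x^2+s$ and $x^2+\sigma^*s$ are irreducible and exchanged by $\sigma$, so $a\in\cA^\heartsuit-\cA^\ani$. Yet $P_a$ has no factorization $R\,\sigma(R)\,P_b$ with $\sigma(R)=\iota(R)$. Your $n=1$ test cannot detect this, since case (b) requires $y$-degree at least $2$.

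You discarded the unconstrained family because of a bookkeeping error, not a genuine overcount. If $S$ has degree $m$ in $y=x^2$ with coefficients in $H^0(\Sigma',\vartheta^*D^{\otimes 2j})$, then $S\cdot\sigma^*S$ has $y$-degree $2m$, so the cofactor lies in $\cA_{n-2m}$. The dimension of the source must therefore be compared with $\dim\cA_n-\dim\cA_{n-2m}=\sum_{i=n-2m+1}^{n}(2i\deg D+1-g)$, not with $\sum_{i=1}^{m}(2i\deg D+1-g)$. Done correctly, stratum (b) has codimension at least $\bigl[2m(2n-2m+1)-2m(m+1)\bigr]\deg D=2m(2n-3m)\deg D\ge 2\deg D$. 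This uses $2m\le n$, and the constants $2m(1-g)$ cancel. So the repair is to keep both families.

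The same mis-indexing appears in your final formula for (a). The correct difference is $\sum_{i=n-m+1}^{n}(2i\deg D+1-g)-\sum_{j\le m}(j\deg D+1-g)=\tfrac{m(4n-3m+1)}{2}\deg D$. This dominates your $\tfrac{m(m+1)}{2}\deg D$, so your bound survives, but only through this unstated comparison.

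In the Levi language of the paper's reference, (b) is the stratum of a $\theta$-stable Levi whose $\GL$-blocks are preserved by $\theta$ but swapped by $\sigma$, i.e.\ a factor $\Res_{\Sigma'/\Sigma}(\GL_{2m}/\GL_m\times\GL_m)$. It must be counted alongside the $\Res_{\Sigma'/\Sigma}\GL_m/\U_m$-type strata coming from (a).
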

\begin{proof}
    Identical to \cite[Cor 6.3.6]{ngo}.
\end{proof}

\begin{remark}
    Note that, while we use the above lemma in \S\ref{sec: dimension formula}, we will need a slightly stronger version (see Lemma \ref{lem: complement of ani locus}) for the proof of the main result. 
\end{remark}

We finally introduce the following subset of the anisotropic base.

\begin{definition}
	\label{def: dagger locus}
	Let $\cA^\dagger\subset\cA^\ani$ denote the subvariety of $\cA^{ani}$ whose $S$-points $a\colon \Sigma\times S\to \fc_D$ satisfy that image intersects transversely with the branching divisor $\fD_D^\br$.
	
	We likewise denote $\cM^\dagger = \cM\times_\cA\cA^\dagger$ and $h^\dagger = h|_{\cA^\dagger}$ for the restrictions of $\cM$ and $h$, respectively, to $\cA^\dagger$.
\end{definition}

The subset $\cA^\dagger$ will be used in \ref{sec: support thm for relative case} in tandem with the support theorem. We note here the following density statement, which follows from an argument mirroring that of \cite[Prop 4.7.1]{ngo}.

\begin{lem}
	\label{lem: density of dagger locus}
		The set $\cA^\dagger$ is open and dense in $\cA$ as long as $\deg(D)>2g$. 
\end{lem}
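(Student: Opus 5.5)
Openness of $\cA^\dagger$ in $\cA$ is the easy half. Since $\cA^\ani$ is open (Lemma \ref{lem: inclusions of opens in relative bases}), it suffices to show that transversality to $\fD^\br_D$ is an open condition. Consider the universal section $\mathrm{ev}\colon \Sigma\times\cA\to\fc_D$. The locus of pairs $(x,a)$ where $a(\Sigma)$ meets $\fD^\br_D$ non-transversely at $x$ is closed: it is cut out by the vanishing of $a_{2n}$ and of its derivative along $\Sigma$, using $\fD^\br=(a_{2n})$. Because $\Sigma$ is proper, the projection of this closed locus to $\cA$ is closed. Its complement is therefore open, and intersecting with $\cA^\ani$ gives openness of $\cA^\dagger$.

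For density, $\cA$ is an irreducible affine space, so it is enough to show $\cA^\dagger\neq\emptyset$. I would follow the argument of \cite[Prop 4.7.1]{ngo} in its simplest form. Since $\fD^\br=(a_{2n})$, a point $a=(a_2,\dots,a_{2n})$ meets $\fD^\br_D$ transversely exactly when the section $a_{2n}\in H^0(\Sigma,D^{\otimes 2n})$ has only simple zeros. Because $\deg D>2g$, the line bundle $D^{\otimes 2n}$ is very ample, with degree at least $2g+1$. A Bertini argument, carried out by a dimension count on the incidence variety of sections with a double zero, shows that sections with a double zero form a proper closed subset, of codimension at least $1$. Hence the transversality locus is a nonempty open subset of $\cA$; over a finite field I would argue over $\ol k$, since density only concerns geometric points.

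It remains to check that this locus meets $\cA^\ani$. By Lemma \ref{lem: inclusions of opens in relative bases}, $\cA^\diamondsuit\subset\cA^\ani$, and $\cA^\diamondsuit$ is nonempty and open; its nonemptiness was shown by exhibiting $a$ transverse to the full divisor $\fD_X=\fD^\cent+\fD^\br$. Any such $a$ is in particular transverse to $\fD^\br$, so it lies in $\cA^\dagger$. Alternatively, two nonempty opens in the irreducible space $\cA$ must intersect. Either way $\cA^\dagger$ is nonempty, and it is dense because $\cA$ is irreducible.

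The main obstacle is the Bertini step: showing that the non-transverse locus is a proper closed subset when $\deg D>2g$. I would handle it with the standard jet-evaluation surjectivity $H^0(\Sigma,D^{\otimes 2n})\to D^{\otimes 2n}\otimes\cO_\Sigma/\mathfrak m_x^2$ for every $x$, which holds because $\deg D^{\otimes 2n}\geq 2g+1$. This surjectivity makes the incidence variety have dimension $\dim\cA-2+1=\dim\cA-1$, and its image in $\cA$ therefore has codimension at least $1$.
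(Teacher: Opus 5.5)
Your proof is correct and takes the same approach as the paper, which gives no details beyond saying the lemma follows from an argument mirroring \cite[Prop 4.7.1]{ngo}. You carry out that argument: openness via properness of $\Sigma$, and nonemptiness by the jet-surjectivity/incidence dimension count specialized to the smooth hyperplane $\fD^\br=(a_{2n})$, intersected with the nonempty open $\cA^\ani$ in the irreducible space $\cA$.
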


\subsection{Geometry over the regular locus}
\label{sec: geometry over regular}
We consider here the geometry of the regular locus in the moduli spaces $\cM^\star$, which is the global analogue of the regular locus of the representation $V^\star$ of $H^\star$. It turns out that this geometry is considerably richer than that of usual Higgs bundles, which will be an important technical tool in our proof of geometric stabilization.

\begin{definition}
    The \emph{regular locus} in $\cM^\star$ is the mapping space 
    \[
    \cM^{\star,\reg} := \mathrm{Sect}_\Sigma([V_D^{\star,\reg}/H^\star])
    \]
    where $V^{\star,\reg} = [(\fh'^\perp)^\reg]^{\tau_\star\sigma}$ is the open locus in $V_D^\star$ with the notion of regularity defined in \S\ref{sec: invariant theory}.
\end{definition}

We recall from Theorem \ref{thm: regular quotient} that
\[
[\chi^\reg]\colon [(\fh^\perp)^\reg/\bH]\to \fc
\]
factors through the nonseparated scheme $(\fh^\perp)^\reg\myfatslash \bH$ and the map
\[
[(\fh^\perp)^\reg/\bH]\to (\fh^\perp)^\reg\myfatslash \bH
\]
is a gerbe banded by the regular centralizer group scheme $J$. We introduce the following global analogue of this intermediate space, which is studied in \cite[\S5]{HM}. 

\begin{definition}
    The \emph{regular base} is the mapping space $\cA^{\star,\reg} = \mathrm{Sect}_\Sigma(V^{\star,\reg}_D\myfatslash \bH^\star)$.
\end{definition}

Note that the action of $\Aut(\Sigma'/\Sigma)$ on the regular nilpotent orbits in $V^{\star,\reg}_D$ is different in the unitary and Siegel cases; in particular, $\cA^{\unitary,\reg}$ and $\cA^{\Siegel,\reg}$ are not isomorphic. We have the following important properties of $\cA^{\star,\reg}$.

\begin{prop}
\label{prop: factorization thru A reg}
    The relative Hitchin morphism 
    \[
    f^\star|_{\cM^{\star,\reg}}\colon \cM^{\star,\reg}\to \cA
    \]
    factors canonically through morphisms 
    \[
\cM^{\star,\reg}\xrightarrow{f^{\star,\reg}}\cA^{\star,\reg}\xrightarrow{\varphi^\star}\cA
    \]
    Moreover, $\varphi^\star$ is \'etale and the smooth, commutative group stack $(\varphi^\star)^*\cP$ acts on $\cM^{\star,\reg}$ over $\cA^\reg$, realizing $\cM^{\star,\reg}$ as a $(\varphi^\star)^*\cP$-torsor.
\end{prop}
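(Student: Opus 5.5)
Because $\cM^{\star,\reg}$, $\cA^{\star,\reg}$, $\cA$ and $\cP$ are all section spaces over $\Sigma$ of stacks over $\Sigma$, I will obtain the statement by applying $\mathrm{Sect}_\Sigma(-)$ to a $k$-rational, $D$-twisted form of the factorization in Theorem \ref{thm: regular quotient}. The first step is to descend that theorem to the forms $(H^\star,V^\star)$. Over $\ol k$, $[\chi^\reg]$ factors as
\[
[(\fh^\perp)^\reg/\bH]\to (\fh^\perp)^\reg\myfatslash \bH\to \fc,
\]
and the first map is a gerbe banded by $J$. Both maps are canonical, so they commute with the twisted Frobenius $\tau_\star\sigma$. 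They therefore descend to
\[
[V^{\star,\reg}/H^\star]\to V^{\star,\reg}\myfatslash H^\star\to \fc .
\]
Here $V^{\star,\reg}\myfatslash H^\star$ is the form of $\fc\coprod_{\fc\setminus\fD^\br}\fc$ in which Frobenius either swaps the two sheets or fixes them. By the proof of Theorem \ref{thm: section for rational forms}, it swaps them in the unitary case and fixes them in the Siegel case. By Theorem \ref{thm: regular centralizers for rational forms}, the first map is a gerbe banded by the $k$-form $J$ of regular centralizers. Everything is $\G_m$-equivariant (after the squaring twist), so we may twist by $D$ over $\Sigma$, or by $D'$ and then descend. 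This gives a gerbe $[V^{\star,\reg}_D/H^\star]\to V^{\star,\reg}_D\myfatslash H^\star$ over $\Sigma$, banded by $J_D$.

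Second, I apply $\mathrm{Sect}_\Sigma$ to get $f^{\star,\reg}$ and $\varphi^\star$; their composite is $f^\star|_{\cM^{\star,\reg}}$ because the composite of the local maps is $[\chi^\reg]$. To show $\varphi^\star$ is étale I will use the deformation-theoretic criterion for section spaces. The map $V^{\star,\reg}_D\myfatslash H^\star\to\fc_D$ is a local isomorphism on each sheet: it is an étale, non-separated map that is an isomorphism on each open copy of $\fc_D$. Hence its relative tangent complex vanishes. It follows that, for a section $b$ of the source, the relative deformation and obstruction spaces $H^0$ and $H^1$ of $b^*T_{\mathrm{rel}}$ vanish, so $\varphi^\star$ is formally étale. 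It is locally of finite presentation because both source and target are finite type over $k$. Concretely, near a point $b\in \cA^{\star,\reg}$, the map is an open immersion onto the sections $a$ whose choice of sheet at the finitely many points of $a^{-1}(\fD^\br)$ is locally constant in families. I will have to check non-separatedness carefully here, but I expect it to cause no difficulty.

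Third, the torsor statement. For a gerbe $\mathcal{X}\to Y$ over $\Sigma$ banded by a smooth commutative group scheme $J_D$, the fiber of $\mathrm{Sect}_\Sigma(\mathcal{X})\to\mathrm{Sect}_\Sigma(Y)$ over a section $b$ is the groupoid of sections of the pulled-back gerbe $b^*\mathcal{X}$ on $\Sigma\times S$. This groupoid is either empty or a torsor under $J_a$-torsors on $\Sigma\times S$, where $a=\varphi^\star(b)$, that is, under $(\varphi^\star)^*\cP$. So the main obstacle is nonemptiness, étale locally on $S$, meaning neutrality of the gerbe $b^*\mathcal{X}$. In the Siegel case I would neutralize it using the Kostant--Rallis section $[\e]$ on the sheet through which it passes, together with the analogous section obtained from the other regular nilpotent orbit; both are $k$-rational by Theorem \ref{thm: section for rational forms}. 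In the unitary case these sections exist only over $k'$, or after Frobenius twisting, and are swapped by Frobenius. So instead I would show that the obstruction lies in $H^2(\Sigma\times S,J_a)$ and vanishes étale locally on $S$, because $J_a$ is smooth and commutative over a relative curve. Alternatively, I would argue locally on $\Sigma$ and glue, using that the local gerbe is neutralized étale locally and that $H^2$ of a curve with coefficients in a smooth connected group vanishes over strictly henselian bases. Compatibility of the $\cP$-action defined earlier with this gerbe-theoretic action then follows from Theorem \ref{thm: regular centralizers for rational forms}, since the morphism $\chi^*J\to I$ restricts on the regular locus to the isomorphism $(\chi^\reg)^*J\simeq I^\reg$.
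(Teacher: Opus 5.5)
Your first two steps are sound, and they amount to reproving the cited input. The paper's proof is only a reference to \cite[Theorem 5.7 and Proposition 5.6]{HM}, plus the remark that every property can be checked over $\ol k$. The factorization is $\mathrm{Sect}_\Sigma$ of the descended regular-quotient factorization. $\varphi^\star$ is formally \'etale by unique infinitesimal lifting along the \'etale map $V^{\star,\reg}_D\myfatslash H^\star\to\fc_D$. The fibres of $f^{\star,\reg}$ are pseudo-torsors under $(\varphi^\star)^*\cP$. Your aside that $\varphi^\star$ is Zariski-locally an open immersion is wrong: over $\cA^\dagger$ in the Siegel case it is finite \'etale of degree $2^{2n\deg D}$. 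Nothing depends on that aside.

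\textbf{The gap is in the one non-formal step, local triviality of $\cM^{\star,\reg}\to\cA^{\star,\reg}$.} In the Siegel case, a section $b$ of the regular quotient chooses a sheet independently at each point of $a^{-1}(\fD^\br_D)$. To use the two Kostant--Rallis sections you must glue them over $\Sigma\setminus a^{-1}(\fD^\br_D)$ along the $J_a$-torsor $\mathrm{Isom}(\e_1\circ a,\e_2\circ a)$. You do not address this, and that torsor is nontrivial in general. Take $n=1$, so that $H^\siegel=\Res_{k'/k}\G_m$ acts on $\Herm_1\times\Herm_1$ by $(x,y)\mapsto(\mathrm{Nm}(g)x,\mathrm{Nm}(g)^{-1}y)$. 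The two sections are $a\mapsto(1,a)$ and $a\mapsto(a,1)$. The torsor $\{g:\mathrm{Nm}(g)=a\}$ has class $a\neq 0$ in $H^1(\G_{m,k},\U_1)\simeq a^{\Z}/a^{2\Z}$. Since $H^1(\A^1_k,\U_1)=0$, the gerbe over $V^{\siegel,\reg}\myfatslash H^\siegel$ is not even neutral over $k$. Correspondingly, $(1,\varpi)$ and $(\varpi,1)$ lie in different $H^\siegel(F_v)$-orbits at an inert place. In the unitary case you assert that the class of $b^*\mathcal X$ in $H^2(\Sigma\times S,J_a)$ dies \'etale locally on $S$ ``because $J_a$ is smooth and commutative over a relative curve.'' That is not a reason: $H^2(\Sigma_{\ol k},\mu_m)\neq 0$. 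You also give no argument for the relative statement over strictly henselian bases. Globally the Kostant--Rallis sections live over the cover $\Sigma'$ of the \emph{curve}, not over $k'$. Since $\Sigma'$ is geometrically connected, passing to $\ol k$ does not split the forms, so a genuine cohomological input cannot be avoided.

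\textbf{A fix that handles both cases at once.} This is the route implicit in ``check over $\ol k$''.
\begin{enumerate}
\item $f^{\star,\reg}$ is smooth. The relative tangent complex of the gerbe is $\mathrm{Lie}(J)[1]$, so obstructions to deforming a section over $b$ lie in $H^2(\Sigma\times S,\mathrm{Lie}(J_a)\otimes I)=0$ for affine $S$.
\item Hence sections exist \'etale locally as soon as the geometric fibres are nonempty, that is, as soon as $H^2(\Sigma_{\ol k},J_a)=0$.
\item Write $\beta\colon\ol\Sigma_a\to\Sigma$, $\beta'\colon\ol\Sigma'_a\to\Sigma$ and $\vartheta_a\colon\ol\Sigma'_a\to\ol\Sigma_a$. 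The sequence $1\to J_a\to\beta'_*\G_m\xrightarrow{\mathrm{Nm}}\beta_*\G_m\to 1$ is exact because $\vartheta$ is \'etale. Together with $H^2(\ol\Sigma'_a,\G_m)=0$, it identifies $H^2(\Sigma_{\ol k},J_a)$ with the cokernel of $\mathrm{Nm}\colon\Pic(\ol\Sigma'_a)\to\Pic(\ol\Sigma_a)$.
\item That cokernel vanishes. On identity components, $\mathrm{Nm}\circ\vartheta_a^*=2$ and $\mathrm{char}(k)\neq 2$. On component groups, compare degrees.
\end{enumerate}
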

\begin{proof}
    This is a restatement of \cite[Theorem 5.7 and Proposition 5.6]{HM}. In \emph{loc. cit.} is assumed that $k$ is algebraically closed, but all properties in the proposition statement can be checked after base changing to $\ol{k}$.
\end{proof}

In fact, it is relatively straightforward to describe $\cA^{\star,\reg}$ over the open set $\cA^\dagger\subset\cA$ (see Definition \ref{def: dagger locus}). To prepare for this, we first prove the following lemma.

\begin{lem}
\label{lem: nilpotent covers in each case}
    Consider the map
    \begin{equation}
    \label{eqn: twisted chevalley}
    V_D^{\star,\reg}\myfatslash H^\star\to \fc_D.
    \end{equation}
    In the Siegel case ($\star = \Siegel$), the restriction to the zero fiber $\Sigma = 0_D\subset \fc_D$ is the trivial 2-cover
    \[
    \Sigma\sqcup\Sigma\to \Sigma.
    \]
    In the unitary case ($\star = \unitary$), the restriction to the zero fiber is our fixed \'etale double cover
    \[
    \vartheta\colon \Sigma'\to \Sigma.
    \]
\end{lem}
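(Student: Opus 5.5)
The plan is to reduce the statement to a computation with the geometric nilpotent cone over $\ol k$ plus the Frobenius/Galois action recorded in the proof of Theorem \ref{thm: section for rational forms}. Over the algebraically closed field, Theorem \ref{thm: regular quotient} identifies $(\fh^\perp)^\reg\myfatslash \bH$ with two copies of $\fc$ glued away from $\fD^\br=(a_{2n})$. Since $0\in\fD^\br$, the fiber over $0\in\fc$ consists of two points. These are exactly the two geometric regular nilpotent $\bH$-orbits, represented by $\begin{pmatrix}&e_n\\ I_n&\end{pmatrix}$ and $\begin{pmatrix}&I_n\\ e_n&\end{pmatrix}$.

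Next I globalize over $\Sigma$. The zero section $0_D\subset\fc_D$ is $\G_m$-fixed, so twisting by $D^\times$ does not change the fiber over it. Hence the restriction of \eqref{eqn: twisted chevalley} to $0_D\simeq\Sigma$ is a finite \'etale degree-$2$ cover of $\Sigma$, which is trivial after passing to $\Sigma'$. Concretely, over $\Sigma'$ the groups $H^\star$ and $V^\star$ become the split forms $\bH\times\Sigma'$ and $\fh^\perp\times\Sigma'$, so the fiber is $\Sigma'\sqcup\Sigma'$. This must be checked compatibly with the $\G_m$-twist: the $\G_m$-action preserves each nilpotent orbit, since scaling a regular nilpotent stays in the same $\bH$-orbit via the associated cocharacter $\lambda$.

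The cover over $\Sigma$ is therefore obtained by descending $\Sigma'\sqcup\Sigma'$ along $\sigma$, where $\sigma$ acts through the descent datum $\tau_\star\sigma$. I determine how $\tau_\star$ permutes the two orbit labels. For $\star=\unitary$, $\tau_1$ sends $(A,B)\mapsto(-{}^tB,-{}^tA)$ and swaps the blocks, so it exchanges the two orbits; ${}^te_n$ is conjugate to $e_n$, and signs are absorbed by $\bH$. For $\star=\Siegel$, $\tau_2$ sends $(A,B)\mapsto(-{}^tA,-{}^tB)$ and preserves each orbit. This is exactly the fact already noted in the proof of Theorem \ref{thm: section for rational forms}, now applied relative to $\Sigma'/\Sigma$ instead of $k'/k$.

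With this, the descended cover is $(\Sigma'\sqcup\Sigma')/\langle\sigma\times\mathrm{swap}\rangle\simeq\Sigma'$ in the unitary case, mapping to $\Sigma$ via $\vartheta$. In the Siegel case it is $(\Sigma'\sqcup\Sigma')/\langle\sigma\times\mathrm{id}\rangle=\Sigma\sqcup\Sigma$.

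The main obstacle is making rigorous that $V^{\star,\reg}_D\myfatslash H^\star$ is formed compatibly with \'etale base change and twisting, so that its zero fiber is computed as this descent. I would invoke the construction of \cite{HM}: the regular quotient is defined by a universal property, namely as the coarse space of the gerbe $[V^{\reg}/H]$. That universal property commutes with \'etale base change on $\Sigma$ and with the $\G_m$-twist, which gives the needed compatibility.
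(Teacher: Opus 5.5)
Your proposal is correct and follows the paper's route. The paper views the restriction to $0_D$ as an \'etale double cover whose fiber is the pair of regular nilpotent orbits, with monodromy factoring through $\Aut(\Sigma'/\Sigma)$: it swaps the two orbits in the unitary case and fixes them in the Siegel case. You phrase the same computation as Galois descent along $\Sigma'\to\Sigma$ with descent datum $\tau_\star\sigma$, and you supply details the paper leaves implicit: which block is invertible distinguishes the orbits, so $\tau_1$ exchanges them while $\tau_2$ preserves them, and the $\G_m$-twist acts trivially on the zero fiber.
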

\begin{proof}
    Choose $\infty\in \Sigma(k)$. The action of $\pi_1(\Sigma,\infty)$ on the fiber of \eqref{eqn: twisted chevalley} over $\infty\in \Sigma = 0_D$ acts through the quotient map
    \[
    \pi_1(\Sigma,\infty) \to \Aut(\Sigma'/\Sigma),
    \]
    where the latter acts through via the Frobenius on the set of regular nilpotent $H^\star$-orbits in $V^\star$. This action is nontrivial on the set of regular nilpotent orbits in the unitary case and trivial in the Siegel case. Since the cover $\vartheta^{-1}(0_D)\to \Sigma$ is \'etale, it is determined by this action.
\end{proof}

Consider the evaluation map $\cA^\dagger\to \Sym^{2n\deg(D)}(\Sigma)$ sending $a\colon \Sigma\times S\to \fc_D$ to $a^{-1}(\fD^\br_D)$. By Lemma \ref{lem: nilpotent covers in each case}, this evaluation map lifts to morphisms
\[
\cA^{\Siegel,\reg}\times_\cA\cA^\dagger\to \Sym^{2n\deg(D)}(\Sigma\sqcup\Sigma) = \bigsqcup_{i=0}^{2n\deg(D)}\Sym^i(\Sigma)\times \Sym^{2n\deg(D)-i}(\Sigma)
\]
\[
\cA^{\unitary,\reg}\times_\cA\cA^\dagger\to \Sym^{2n\deg(D)}(\Sigma')
\]
These evaluation maps capture the entire behavior of the morphism $\varphi^\star$ over $\cA^\dagger$.

\begin{prop}
    \label{prop: A reg over dagger locus}
   The following diagrams, with horizontal arrows given by the evaluation morphisms described above and with the right vertical arrows induced by the double covers $\Sigma\sqcup \Sigma\to \Sigma$ and $\Sigma'\to \Sigma$ respectively, are Cartesian.
   \[
   \xymatrix{
\cA^{\Siegel,\reg}\times_\cA\cA^\dagger\ar[r]\ar[d] & \Sym^{2n\deg(D)}(\Sigma\sqcup\Sigma)\ar[d] \\
\cA^\dagger\ar[r] & \Sym^{2n\deg(D)}(\Sigma)
   } \hspace{2cm}
   \xymatrix{
\cA^{\unitary,\reg}\times_\cA\cA^\dagger\ar[r]\ar[d] & \Sym^{2n\deg(D)}(\Sigma')\ar[d] \\
\cA^\dagger\ar[r] & \Sym^{2n\deg(D)}(\Sigma)
   }
   \]
\end{prop}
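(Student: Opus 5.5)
The plan is to work fiber by fiber over $\Sigma$, using the description of the regular quotient in Theorem \ref{thm: regular quotient} together with its rational forms from Lemma \ref{lem: nilpotent covers in each case}. Over $\ol k$, $V^{\star,\reg}_D\myfatslash H^\star$ is obtained from $\fc_D$ by doubling the branching divisor $\fD^\br_D=(a_{2n})$. So a section of $V^{\star,\reg}_D\myfatslash H^\star$ is the same as a section $a$ of $\fc_D$ together with a choice, at each point of $a^{-1}(\fD^\br_D)$, of one of the two sheets. Over $k$, the two sheets lying over $\fD^\br_D$ form an étale double cover of $\fD^\br_D$. Since $\fD^\br_D$ deformation retracts to the zero section $0_D\simeq\Sigma$ (it is a $\G_m$-stable cone over $\Sigma$), Lemma \ref{lem: nilpotent covers in each case} identifies this double cover. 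It is pulled back from $\Sigma\sqcup\Sigma\to\Sigma$ in the Siegel case and from $\vartheta\colon\Sigma'\to\Sigma$ in the unitary case, via the projection $\fD^\br_D\to\Sigma$.

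The first step is to make this identification precise. I would show that the double cover $(V^{\star,\reg}_D\myfatslash H^\star)\times_{\fc_D}\fD^\br_D\to\fD^\br_D$ is étale: over $\ol k$ it is two copies of $\fD^\br$, by Theorem \ref{thm: regular quotient}. I would then show that it is isomorphic to the pullback of the cover of Lemma \ref{lem: nilpotent covers in each case} along $\fD^\br_D\to\Sigma$. Because étale covers are determined by their restriction to the zero section (the $\G_m$-contraction gives $\pi_1(\fD^\br_D)\simeq\pi_1(\Sigma)$ here, or one can argue directly with the $\G_m$-equivariance of the nilpotent orbits), the lemma supplies exactly this.

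The second step is the local computation at a transverse point. For $a\in\cA^\dagger(S)$, the locus $Z_a=a^{-1}(\fD^\br_D)\subset\Sigma\times S$ is finite étale of degree $2n\deg D$ over $S$, since transversality makes it reduced. A lift of $a$ to $V^{\star,\reg}_D\myfatslash H^\star$ is unique away from $Z_a$, because the map is an isomorphism off $\fD^\br$. Near a point of $Z_a$, the non-separated scheme looks like a line with doubled origin pulled back along $a$. When $a$ meets the divisor transversely, lifts of the map from the local curve are in bijection with choices of sheet at the intersection point. I would verify this using the valuative description of the two branches (a map from a DVR meeting the gluing locus with valuation $1$ extends through exactly one chosen copy), carried out in families over $S$. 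The result is that lifts of $a$ correspond to sections of the double cover restricted to $Z_a$, which is the same as lifting $Z_a\in\Sym^{2n\deg D}(\Sigma)(S)$ to a point of $\Sym^{2n\deg D}(\Sigma\sqcup\Sigma)$, respectively $\Sym^{2n\deg D}(\Sigma')$.

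Finally, I would check that both constructions are functorial in $S$, which gives the Cartesian squares. The main obstacle is the second step done in families: one has to show that sections of the non-separated scheme over $\Sigma\times S$ really decompose as independent choices of branch over the finite étale $Z_a$. For this I would compare with Proposition \ref{prop: factorization thru A reg}. Since $\varphi^\star$ is étale, it suffices to check bijectivity on $\ol k$-points together with an identification of the tangent spaces, and both sides are étale over $\cA^\dagger$, so only the fiberwise count remains. That count is $2^{|Z_a|}$ on each side.
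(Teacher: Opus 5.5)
Your proposal is correct and follows the paper's route. The paper's entire proof is that the squares commute and both vertical arrows are étale of the same degree over $\cA^\dagger$ (via Proposition \ref{prop: factorization thru A reg}); your step 2, identifying lifts of $a$ with choices of sheet at the points of $a^{-1}(\fD^\br_D)$, supplies the fiberwise bijectivity that a bare degree count leaves implicit, so state your last step as bijectivity on $\ol k$-points rather than as equality of counts.

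One minor correction: in characteristic $p$ the $\G_m$-contraction does not give $\pi_1(\fD^\br_D)\simeq\pi_1(\Sigma)$. It only gives $H^1(\fD^\br_D,\Z/2)\simeq H^1(\Sigma,\Z/2)$ since $p\neq 2$, which is all you need for a double cover; alternatively, use your $\G_m$-equivariance argument.
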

\begin{proof}
    Since the diagrams above are commutative, and since the left vertical arrows are \'etale of the same degree as the right vertical arrows, the proposition follows.
\end{proof}

\subsection{The component groups $\pi_0(\cP_a)$}
\label{sec: component groups of P}

We will now consider the component groups $\pi_0(\cP_a)$ for $a\in \cA^\ani(S)$ and relate them to the character lattice of the dual torus $\wh{\canC}$ (see Lemma \ref{lem: component group of Pa} and Proposition \ref{prop: C surjects on components of P}). In addition to utility in calculations, this will be necessary for decomposing $\bR f^{\star,\ani}_*\ol{\Q}_\ell$ into $\kappa$ isotypic components.

The first description is a simple calculation using the spectral description in Proposition \ref{prop: spectral description of P star}.

\begin{lem}
    \label{lem: component group of Pa}
    For $a\in \cA^\ani(S)$, let $\mathrm{Irr}(\ol{\Sigma}_a)$ denote the set of irreducible components of $\ol{\Sigma}_a$ (which is in bijection with $\mathrm{Irr}(\ol{\Sigma}_a')$). Then, we have 
    \[
    \pi_0(\cP_a) = (\Z/2\Z)^{\#\mathrm{Irr}(\ol{\Sigma}_a)}.
    \]
\end{lem}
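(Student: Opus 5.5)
Using Proposition \ref{prop: spectral description of P star}, I would identify $\cP_a$ (working geometrically, over $\ol k$) with the Prym of the étale double cover $\ol\Sigma_a'\to\ol\Sigma_a$: the group of pairs $(\cG',\xi)$ with $\cG'\in\Pic(\ol\Sigma_a')$ and $\xi\colon\cG'\simeq\sigma^*\cG'^{\otimes-1}$. The plan is to compute $\pi_0$ through the normalization, which reduces the statement to the classical fact that the Prym of a connected étale double cover of a smooth projective curve has exactly two components.

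\emph{Step 1 (normalization).} Let $\nu\colon\ol\Sigma_a^{\flat}\to\ol\Sigma_a$ be the normalization and let $\ol\Sigma_a'^{\flat}=\ol\Sigma_a'\times_{\ol\Sigma_a}\ol\Sigma_a^{\flat}$, which is the normalization of $\ol\Sigma_a'$ because the cover is étale. Write $J$ and $J^\flat$ for the corresponding unit-kernel tori. The normalization exact sequence
\[
1\to \mathrm{Nm}\text{-kernel of }(\nu_*\cO^\times/\cO^\times)\to \cP_a\to \cP_a^{\flat}\to 1
\]
has affine kernel, namely the norm-one part of a finite-dimensional affine group supported on the singular points. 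I would show that this kernel is connected by a local computation: at each singular point the local affine group is an extension of a unipotent group by the torus $\ker(\mathrm{Nm}\colon \Res_{k'}\G_m\to\G_m)$ on finitely many points. Because the double cover is étale and $\sigma$ fixes the components (this is the anisotropic hypothesis), these norm-one tori are connected, up to a global $\pm1$ that is absorbed by constants. With the kernel connected, $\pi_0(\cP_a)=\pi_0(\cP_a^\flat)$.

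\emph{Step 2 (product over components).} On the smooth side everything splits over components. Since $a\in\cA^\ani$, each component $C$ of $\ol\Sigma_a^\flat$ has a connected preimage $C'$, which is a connected étale double cover of $C$ (this is the bijection of components from the lemma preceding Lemma \ref{lem: inclusions of opens in relative bases}). Hence $\cP_a^\flat=\prod_C \Prym(C'/C)$. Mumford's classical result then gives that each factor has exactly two connected components, distinguished by the parity invariant (equivalently, $\ker(\mathrm{Nm})$ on $\Pic$ has component group $\Z/2$ for a connected étale double cover). Taking the product yields $(\Z/2\Z)^{\#\Irr(\ol\Sigma_a)}$.

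\emph{Main obstacle.} The delicate step is the connectedness of the kernel in Step 1. A disconnected norm-one torus, $\mu_2$-type pieces, could add extra $\Z/2$ factors or kill some, and $\sigma$-conjugate pairs of branches over a singular point behave differently from $\sigma$-fixed branches. My first approach is to handle this pointwise, separating branches swapped by $\sigma$ (which contribute induced tori, hence connected) from the rest. As a consistency check and a fallback, I would compare with the $\iota$-descent from Ng\^o--Laumon's computation of $\pi_0$ of the unitary Picard of $\wt\Sigma_a'$, which also gives one $\Z/2$ per component.
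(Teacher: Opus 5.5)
Your proof is correct, but it takes a different route from the paper's. The paper never passes to the normalization. It uses the anti-norm sequence $1\to\G_{m,\ol\Sigma_a}\to\Res_{\ol\Sigma_a'/\ol\Sigma_a}(\G_m)\to J\to 1$, with second map $\psi\mapsto\psi\cdot\sigma(\psi)^{-1}$; this sequence is exact because the cover is \'etale. Taking cohomology presents $\cP_a$ as a quotient,
$\Pic(\ol\Sigma_a)\to\Pic(\ol\Sigma_a')\to\cP_a\to 1$.
The paper then applies right-exactness of $\pi_0$ together with $\pi_0\Pic(\ol\Sigma_a)\simeq\Z^{\#\Irr(\ol\Sigma_a)}$ via multidegree, which holds for singular reduced curves as well. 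Anisotropy makes pullback multiply each component degree by $2$, so the cokernel is $(\Z/2\Z)^{\#\Irr(\ol\Sigma_a)}$.

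That argument is essentially one line and needs no local analysis. It also contains Mumford's two-component theorem as the special case of a smooth curve, rather than using it as input. Your route costs more, namely connectedness of the kernel of $\cP_a\to\cP_a^\flat$ plus Mumford's theorem. In exchange it yields extra structure: the affine part of $\cP_a$ contributes nothing to $\pi_0$, which comes entirely from $\prod_C\Prym(C'/C)$ on the normalization.

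In Step 1, the worries you list do not arise, and the justification comes from \'etaleness, not anisotropy. Since $\sigma$ acts freely on $\ol\Sigma_a'$, there are no $\sigma$-fixed branches. Over $\ol k$ the double cover splits over the strict henselization at each singular point $x$. This gives $J(\cO_x)\cong\cO_x^\times$ and $J^\flat(\wt\cO_x)\cong\wt\cO_x^\times$, with $\wt\cO_x$ the normalization of $\cO_x$. The local quotient $\wt\cO_x^\times/\cO_x^\times$ is a quotient of a connected group, hence connected. The kernel of $\cP_a\to\cP_a^\flat$ is a further quotient of the product of these local groups, by the image of $H^0(\ol\Sigma_a^\flat,J^\flat)$, so it is connected no matter what that image is. Anisotropy is needed only in Step 2, to make each $C'\to C$ a connected double cover.
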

\begin{proof}
    The proof is identical to \cite[Lem 2.8.2]{ngo-laumon}. Namely, the unramified morphism $\ol\Sigma_a'\to \ol\Sigma_a$ induces an exact sequence
    \[
    1\to \G_{m,\ol \Sigma_a}\to \Res_{\ol\Sigma_a'/\ol\Sigma_a}(\G_m)\xrightarrow{\beta} \Res_{\ol\Sigma_a'/\ol\Sigma_a}(\G_m)^{\sigma^*=(\cdot)^{-1}}\to 1
    \]
    where $\beta$ is the anti-norm map $\beta(\psi) = \psi\cdot \sigma(\psi)^{-1}$. This sequence therefore induces an exact sequence on cohomology
    \[
    \Pic(\ol\Sigma_a)\to \Pic(\ol\Sigma_a')\to \cP_a\to 1.
    \]
    The above sequence remains exact on passing to $\pi_0$ since taking component groups is right exact, and by the condition that $a\in \cA^\ani(S)$, we get $\Pic(\ol\Sigma_a)\simeq \Pic(\ol\Sigma_a')\simeq \Z^{\#\Irr(\ol \Sigma_a)}$. The natural map
    \[\pi_0\big(\Pic(\ol\Sigma_a)\big)\to \pi_0\big(\Pic(\ol\Sigma_a')\big)\]
    is identified with the multiplication-by-2 map $\Z^{\#\Irr(\ol \Sigma_a)}\xrightarrow{2} \Z^{\#\Irr(\ol \Sigma_a)}$, and so the lemma follows.
\end{proof}

We now connect the component groups of fibers with the character group of the dual torus $\wh{\canC}$. To do so, we must first briefly introduce a cameral-type cover. For wider applications of the geometry of Hitchin systems to relative endoscopic fundamental lemmas, we expect a greater use of similar cameral covers will be necessary.

\begin{definition}
\label{def: relative cameral cover}
	The cameral cover $\cameral{\Sigma}$ over $\Sigma$ and $\cameral{\Sigma}'$ over $\Sigma'$ are determined by the Cartesian diagrams
	\[
\xymatrix{
\cameral{\Sigma}'\ar[r] \ar[d] & \cameral{\Sigma} \ar[r] \ar[d] & (\fa\git \{\pm1\}^n)_D \ar[d] \\
\cA^\ani\times \Sigma'\ar[r] & \cA^\ani\times\Sigma\ar[r] & \fc_D.
}	
	\]
\end{definition}

Since $\fs = \fa\git (\mathfrak{S}_{n-1}\ltimes\{\pm1\}^n)$, any of the $n$ natural embeddings $\mathfrak{S}_{n-1}\to \mathfrak{S}_n$ induce factorizations 
\begin{equation}
	\label{eqn: factorizations of cameral cover}
\xymatrix{
\cameral{\Sigma}'\ar[r]\ar[d] & \ol{\Sigma}'\ar[r]\ar[d] & \cA^\ani\times \Sigma'\ar[d] \\
\cameral{\Sigma}\ar[r] & \ol{\Sigma}\ar[r] & \cA^\ani\times \Sigma.
}
\end{equation}

Now, fix $\infty\in \Sigma(k)$ which splits in $\Sigma'$ and let $\infty'_1,\infty'_2\in \Sigma'(k)$ denotes the two points lying over $\infty$. We define the following modifications of the anisotropic Hitchin base.

\begin{definition}
\label{def: cameral cover of base}
		Denote by $\cA_{\infty}^\ani\subset \cA^\ani$ the subset of points $a$ of $\cA^\ani$ such that the finite, flat map 
		\[
            \cameral{\Sigma}_a'\to \Sigma'\to \Sigma
		\]
	is unramified at $\infty$. Note that since $\vartheta$ is unramified, the diagram \eqref{eqn: factorizations of cameral cover} induces a commutative diagram
   		\[
        \begin{tikzcd}
            \cameral{\Sigma}_a'\ar[r]\ar[d]&\overline{\Sigma}_a'\ar[d]\ar[r]& \Sigma'\ar[d,"\vartheta"]\\\cameral{\Sigma}_a\ar[r]&\overline{\Sigma}_a \ar[r]&\Sigma
        \end{tikzcd}
		\] with \'etale maps.
		
	Denote $\cameral{\cA}^\ani=\cameral{\cA}^\ani_{\infty}\subset \cA^\ani_\infty\times \cameral{\Sigma}'$ for the space of pairs $(a,\cameral{\infty}')$ such that $a\in \cA^\ani_\infty$ and $\cameral{\infty}'\in \cameral{\Sigma}_a'$ has image $\infty$ under $\cameral{\Sigma}_a'\to \Sigma$. We will drop the subscript $\infty$ in what follows for notational simplicity. By definition of $\cA^\ani_{\infty}$, the map $\cameral{\cA}^\ani \to \cA^\ani_{\infty}$ is \'etale of degree $2(n!)$. {Note that $\sigma\in \Aut(\Sigma'/\Sigma)$ lifts to an action on $\cameral{\cA}^\ani$ over $\cA^\ani_{\infty}$ via its action on $\cameral{\Sigma}'$.}
\end{definition}

Now for $i=1,2$, we let $Z_{i}\subset \cA_\infty^\ani\times\ol\Sigma'$ denote the space of pairs $(a,\ol\infty')$ such that $\ol\infty'\in \ol{\Sigma}'_a$ has image $\infty'_i\in \Sigma'(k)$.  {Note that $Z_{i}\to \cA^\ani_\infty$ is \'etale of degree $n$ and $\sigma$ induces an isomorphism $Z_1\to Z_2$.}

 We have a concrete realization of $\cameral{\cA}^\ani$ as follows.
\begin{lem}
	\label{lem: cameral description via Z infinity} 
	There is an isomorphism of spaces over $\cA^\ani_\infty$
	\[
\cameral{\cA}^\ani\simeq (Z_{1}^n\setminus \Delta_1) \bigsqcup (Z_{2}^n\setminus \Delta_2),
	\]
	where $\Delta_i$ denotes the union of all pairwise diagonals in the $n$-fold product $Z_{i}^n$. {This isomorphism intertwines the action of $\sigma$ on $\cameral{\cA}^\ani$ with the induced isomorphism $(Z_{1}^n\setminus \Delta_1) \to(Z_{2}^n\setminus \Delta_2)$.}
\end{lem}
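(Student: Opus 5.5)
The plan is to make the fiber of $\cameral{\Sigma}'\to \cA^\ani_\infty\times\Sigma'$ over $\infty$ explicit in terms of the fibers of $\ol{\Sigma}'$, and then read off the description of $\cameral{\cA}^\ani$. Pointwise, for $a\in\cA^\ani_\infty(\ol k)$, the fiber of $(\fa\git\{\pm1\}^n)_D\to\fc_D$ over $a(\infty)$ is a $\mathfrak{S}_n$-torsor. Unramifiedness at $\infty$ means $a(\infty)\notin\fD^\cent$ and the $n$ values $\delta_1^2,\dots,\delta_n^2$ are distinct. Here we identify $\fa\git\{\pm1\}^n\simeq\A^n$ with coordinates $\delta_i^2$, and $\fs/\iota\simeq \fa\git(\mathfrak{S}_{n-1}\times\{\pm1\}^n)$ with coordinate $y=x^2$. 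A point of the cameral fiber over $a(\infty)$ is an ordering $(y_1,\dots,y_n)$ of the roots of $Q_a(y)=y^n+a_2y^{n-1}+\cdots+a_{2n}$ at $\infty$. Each $y_j$ is a point of the fiber of $\fs/\iota$, and the roots are pairwise distinct. This gives a map $\cameral\Sigma|_\infty\to (\ol\Sigma|_\infty)^n\setminus\Delta$ given by the $n$ factorizations in \eqref{eqn: factorizations of cameral cover}, i.e.\ the $n$ embeddings $\mathfrak{S}_{n-1}\subset\mathfrak{S}_n$.

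Next I would globalize this over the base. For each $j$, the $j$-th factorization gives a morphism $\cameral{\Sigma}'\to\ol{\Sigma}'$ over $\cA^\ani_\infty\times\Sigma'$. Restricting to points lying over $\infty$, the fiber of $\Sigma'$ over $\infty$ is $\{\infty_1',\infty_2'\}$ because $\infty$ splits. So $\cameral{\cA}^\ani$ decomposes as a disjoint union of the loci where $\cameral\infty'$ lies over $\infty_1'$, respectively over $\infty_2'$. On the piece over $\infty_i'$, the $n$ morphisms assemble into a map $\cameral{\cA}^\ani_{(i)}\to Z_i^n$ over $\cA^\ani_\infty$. Its image avoids $\Delta_i$, since distinct indices give distinct roots of $Q_a$ at $\infty$ by unramifiedness. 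Both source and target $Z_i^n\setminus\Delta_i$ are finite étale over $\cA^\ani_\infty$: the source is étale of degree $n!$ on each piece (total $2(n!)$, as stated after Definition \ref{def: cameral cover of base}), and the target has degree $n(n-1)\cdots1=n!$ since $Z_i$ is étale of degree $n$. A morphism between finite étale covers of the same degree that is injective on geometric fibers is an isomorphism. Injectivity on fibers is exactly the pointwise statement of the first paragraph: an ordering of the roots is determined by the tuple.

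For $\sigma$-equivariance, $\sigma$ acts compatibly on $\cameral\Sigma'$, $\ol\Sigma'$ and $\Sigma'$. The factorization maps are $\sigma$-equivariant because $\sigma$ acts on the spectral data through $x\mapsto -x$ and Galois conjugation, both of which commute with the $\mathfrak{S}_n$-structure used. Since $\sigma$ swaps $\infty_1'$ and $\infty_2'$ and induces $Z_1\iso Z_2$, the isomorphism intertwines the two actions.

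The main obstacle is the degree and étaleness bookkeeping for $Z_i\to\cA^\ani_\infty$. Here I would use that $\ol{\Sigma}'\to\Sigma'$ is finite flat of degree $n$ and étale over $\infty_i'$ by the unramifiedness assumption. I would also need to justify that $\cameral\Sigma'\to\ol\Sigma'$ is compatible with the chosen coordinates, which should be routine once everything is written over the base change $\Sigma'_{\ol k}$.
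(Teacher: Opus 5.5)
Your proposal is correct and follows essentially the same route as the paper. The $n$ factorizations through $\mathfrak{S}_{n-1}\subset\mathfrak{S}_n$ give a map $\cameral{\cA}^\ani\to Z_1^n\sqcup Z_2^n$ that avoids the diagonals by unramifiedness at $\infty$, and an injective map between finite \'etale covers of the same degree ($2(n!)$ in total) is an isomorphism. Your splitting into the two pieces over $\infty_1'$ and $\infty_2'$, and your $\sigma$-equivariance remark, spell out what the paper leaves implicit; the latter really only needs that $\cameral\Sigma'\to\ol\Sigma'$ is the base change of $\cameral\Sigma\to\ol\Sigma$ along $\Sigma'\to\Sigma$.
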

\begin{proof} 
	The $n$ distinct factorizations of \eqref{eqn: factorizations of cameral cover} induce a map  
	\begin{equation}
		\label{eqn: product of embedding maps}
\fa\git \{\pm1\}^n	\to \prod_{i=1}^n\fa\git (\mathfrak{S}_{n-1}\rtimes \{\pm1\}^n)
	\end{equation}
	sending
	\[
	(x_1,\dots, x_n) \mapsto \prod_i (x_i,[x_1,\dots,x_{i-1},x_{i+1},\dots, x_n]),
	\]
	where square brackets denote unordered tuples. The resulting morphism  $\cameral{\cA}^\ani\to Z_{1}^n \bigsqcup Z_2^n$ obtained by composing the evaluation map is defined over $\cA_\infty^\ani$ has image in the complement of $\Delta$ since we are assuming the underlying point $a$ is in $\cA_\infty^\ani$. Moreover, both sides are finite and \'etale over $\cA_\infty^\ani$ of degree $2(n!)$.  Since \eqref{eqn: product of embedding maps} is an injection, the map $\cameral{\cA}^\ani\to Z_{1}^n \bigsqcup Z_2^n$ is injective and hence an isomorphism.
\end{proof}

Recall that the canonical torus $\canT$ of $G = \U_{2n}$ fits into an exact sequence
\[
1\to \canC\to \canT\to \canA\to 1,
\]
where $\canC$ is the fixed points of a canonical involution on $\canT$. Let $\Omega$ be the set of weights of the defining representation of $\GL_{2n}$. That is, $\Omega = \{\varepsilon_1,\dots, \varepsilon_{2n}\}$ where $\varepsilon_i$ is the $i$-th coordinate function on the lattice $\mathbb{X}^*(\canT)$. The Frobenius $\sigma$ acts on $\Omega$ and we can choose an ordering of the $\varepsilon_i$ so that $\varepsilon_{i+n} = \sigma(\varepsilon_i)$ for all $i = 1,\dots, n$.

For $\omega\in \Omega$, we have an induced function $\omega\colon \cameral\cA^\ani\to Z$ which, under the identification $\cameral\cA^\ani\simeq (Z_1^n\setminus \Delta_1)\sqcup (Z_2^n\setminus \Delta_2)$ of Lemma \ref{lem: cameral description via Z infinity}, is expressed by
\[
\varepsilon_i(\ol\infty'_1,\dots, \ol\infty'_n) = \ol\infty'_i
\]
\[
\varepsilon_{i+n}(\ol\infty'_1,\dots, \ol\infty'_n) = \sigma(\ol\infty'_i)
\]
for $i = 1,\dots, n$.

Denote $Z = Z_1\sqcup Z_2$ for the space of pairs $(a,\ol{\infty}')\in \cA_\infty^\ani\times_{\cA} \ol{\Sigma}'$ such that $\ol{\infty}'\mapsto \infty$. There is a natural action of $\sigma$ on $Z$ from the isomorphism $\sigma\colon Z_1\to Z_2$.

\begin{cor}
	\label{cor: cameral cover indexes components of P}
	The base change 
	\[
Z\times_{\cA^\ani_\infty }\cameral{\cA}^\ani\to \cameral{\cA}^\ani
\]
 is canonically isomorphic to the trivial cover
	\[
\coprod_{\omega\in \Omega}\cameral{\cA}^\ani\to \cameral{\cA}^\ani,
	\]
	where $\Omega$ is the set of weights of the defining representation of $\U_{2n}$, with $\sigma$ acting freely on the components via the conjugation action on the weight lattice of $\U_{2n}$. In particular, for every weight $\omega\in \Omega$, we have a section
	\[
	y_\omega\colon \cameral{\cA}^\ani\to \ol{\Sigma}'\times_{\cA^\ani_\infty }\cameral{\cA}^\ani.
	\]
\end{cor}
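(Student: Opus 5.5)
The plan is to build the trivialization by hand from the coordinate description of Lemma \ref{lem: cameral description via Z infinity}, and then check that it is an isomorphism by comparing degrees.

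First, the maps in the two directions. For each $\omega\in\Omega$, the preceding paragraph gives a morphism $\omega\colon\cameral{\cA}^\ani\to Z$ over $\cA^\ani_\infty$. Concretely, on a point $(\ol\infty'_1,\dots,\ol\infty'_n)$ the weight $\varepsilon_i$ returns $\ol\infty'_i$ and $\varepsilon_{i+n}$ returns $\sigma(\ol\infty'_i)$. Its graph $(\omega,\mathrm{id})$ is a section of the projection $Z\times_{\cA^\ani_\infty}\cameral{\cA}^\ani\to\cameral{\cA}^\ani$. Taking all of these together gives a morphism
\[
\Psi\colon \coprod_{\omega\in\Omega}\cameral{\cA}^\ani\to Z\times_{\cA^\ani_\infty}\cameral{\cA}^\ani
\]
over $\cameral{\cA}^\ani$.

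Second, the degree count. Both sides are finite étale over $\cameral{\cA}^\ani$ of degree $2n$. On the right this is because $Z=Z_1\sqcup Z_2$, with each $Z_i$ finite étale of degree $n$ over $\cA^\ani_\infty$. On the left it is because $\#\Omega=2n$. Each component $\omega(\cameral{\cA}^\ani)$ is the image of a section of a separated étale map, so it is open and closed. It therefore suffices to show that the $2n$ sections $y_\omega$ are pairwise disjoint at every geometric point. Then $\Psi$ is a monomorphism between finite étale covers of the same degree, hence an isomorphism.

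Third, disjointness, which I expect to be the only real point. Work on the component $Z_1^n\setminus\Delta_1$; the other component is symmetric.
\begin{itemize}
\item For $i\neq j\le n$: $\varepsilon_i$ and $\varepsilon_j$ give $\ol\infty'_i\neq\ol\infty'_j$, because we removed the diagonals $\Delta_1$.
\item Similarly, $\sigma(\ol\infty'_i)\neq\sigma(\ol\infty'_j)$.
\item For $\varepsilon_i$ versus $\varepsilon_{j+n}$: $\ol\infty'_i$ lies over $\infty'_1$, while $\sigma(\ol\infty'_j)$ lies over $\sigma(\infty'_1)=\infty'_2\neq\infty'_1$. These are distinct because $\infty$ splits in $\Sigma'$.
\end{itemize}
So the sections are pairwise disjoint fiberwise, and $\Psi$ is an isomorphism.

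The phrase ``canonically'' just means this isomorphism depends only on the fixed labelling of $\Omega$ and on Lemma \ref{lem: cameral description via Z infinity}.

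Finally, $\sigma$-equivariance. By Lemma \ref{lem: cameral description via Z infinity}, $\sigma$ acts on $\cameral{\cA}^\ani$ by swapping the two components via $Z_1\to Z_2$. From the formulas, $\sigma\circ\varepsilon_i=\varepsilon_{i+n}$ and $\sigma\circ\varepsilon_{i+n}=\varepsilon_i$. Hence under $\Psi$ the action of $\sigma$ on $Z$ permutes the index set $\Omega$ via $\varepsilon_i\leftrightarrow\varepsilon_{i+n}$. This is exactly the Galois (conjugation) action on the weights of $\U_{2n}$, and it is fixed-point free.

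The sections $y_\omega$ are then the composites of the inclusion of the $\omega$-th component with $Z\times_{\cA^\ani_\infty}\cameral{\cA}^\ani\hookrightarrow\ol{\Sigma}'\times_{\cA^\ani_\infty}\cameral{\cA}^\ani$.
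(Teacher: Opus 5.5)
Your argument is correct and is essentially the paper's, run in the other direction. The paper uses a counting argument to show that each point $\ol\infty'$ of the degree-$2n$ fibre equals $\omega(\cameral\infty')$ for a unique $\omega\in\Omega$, and then writes down your $\Psi$ as the inverse; you obtain the same bijection from pairwise disjointness of the $2n$ sections together with the degree count.

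One small point: the permutation $\varepsilon_i\leftrightarrow\varepsilon_{i+n}$ of components comes from $\sigma$ acting on the $Z$ factor alone. Under the diagonal action each component is preserved, so your remark about $\sigma$ acting on $\cameral{\cA}^\ani$ is not needed.
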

\begin{proof}
    By Lemma \ref{lem: cameral description via Z infinity}, a point in the left hand side is a tuple $\cameral a = (a,\ol{\infty}',\cameral\infty')$ where $\ol\infty'$ has image $\infty$; $\cameral\infty'=(\ol\infty_1',\dots, \ol\infty_n')$ for $\ol\infty'_i\in \ol\Sigma'_a$; there exists a choice of $i\in \{1,2\}$ so that each $\ol\infty'_i$ has image $\ol\infty'_i$; and $\ol\infty_j'\neq\ol\infty_\ell'$ for all $j\neq \ell$. In particular, since the map $\fa\git \{\pm1\}^n\to \fa\git(\fS_{n-1}\rtimes \{\pm 1\}^n)$ is of degree $(n-1)!$, there must exist some index $j$ such that $\ol\infty'$ and $\ol\infty'_j$ have the same image in $\fs_D$. In other words, such that either $\ol\infty' = \ol\infty'_j$ or $\ol\infty' = \sigma(\ol\infty'_j)$; and so there exists a unique $\omega_{\cameral a}\in \Omega$ such that $\omega_{\cameral a}(\cameral\infty') = \ol\infty'$. Therefore, we have an explicit isomorphism
    \[
Z\times_{\cA^\ani_\infty }\cameral\cA^\ani\to \bigsqcup_{\omega\in \Omega}\cameral\cA^\ani
    \]
    \[
\cameral a=(a,\ol{\infty}',\cameral\infty')\mapsto \big(\cameral \infty'\big)_{\omega_{\cameral a}}
    \]
    where the subscript $\omega_{\cameral a}$ denotes the $\omega_{\cameral a}$-th copy of $\cameral \cA^\ani$. The inverse of the above map is given by $(\cameral\infty')_\omega\mapsto (a,\omega(\cameral\infty'),\cameral\infty')$
\end{proof}

We now apply this to study the components of $\cP$. Denote by $\cameral{p}\colon \cameral{\cA}^\ani\to \cA^\ani_\infty$, and $\cameral{\cP} = \cameral{p}^*(\cP|_{\cA^\ani_\infty})$. By \cite[Proposition 6.2]{ngo_endoscopie}, there is a constructible sheaf $\pi_0(\cameral{\cP}/\cameral{\cA}^\ani)$ on $\cameral{\cA}^\ani$ whose fiber over any $a$ in $\cameral{\cA}^\ani$ is the component group of the fiber $\cameral{\cP}_a$.

Corollary \ref{cor: cameral cover indexes components of P} gives a canonical map
\[
\mathbb{X}_*(\canT)\times \cameral\cA^\ani\to \Pic\big(\ol{\Sigma}'\times_{\cA_\infty^\ani}\cameral\cA^\ani\big/\cameral\cA^\ani\big)
\]
\[
(\lambda,a)\mapsto \cO_{\ol \Sigma_a'}\left( \sum_{\omega\in \Omega}\langle \omega,\lambda\rangle y_\omega \right)
\]
When $\lambda\in \mathbb{X}_*(\canC)$, we have $\langle \omega,\lambda\rangle = -\langle \sigma(\omega),\lambda\rangle$ for $\omega\in\Omega$, where $\lambda$ is viewed in $\mathbb{X}_*(\canT)$. The line bundle above then becomes
\[
\cO_{\ol \Sigma_a'}\left( \sum_{\omega\in \Omega}\langle \omega,\lambda\rangle y_\omega \right) = \cO_{\ol \Sigma_a'}\left( \sum_{i=1}^n\langle \varepsilon_i,\lambda\rangle (y_i - \sigma(y_i)) \right).
\]
In particular, this carries a unique Hermitian structure, and so the image lies in the Prym variety $\cameral\cP$, and we deduce a morphism
\[
\mathbb{X}_*(\canC)\times \cameral\cA^\ani\to \cP
\]
over $\cameral\cA^\ani$. This induces a surjection on $\pi_0$ by Lemma \ref{lem: component group of Pa}, and hence we obtain the following.

\begin{prop}
	\label{prop: C surjects on components of P}
    There is a canonical surjective morphism of sheaves on $\cameral{\cA}^\ani$
    \begin{equation}
    \label{eqn: C surjects on components of P}
    \mathbb{X}^*(\wh{\canC})\times \cameral{\cA}^\ani\to \pi_0(\cameral{\cP}/\cameral{\cA}^\ani)
    \end{equation}
    which is given explicitly by identifying $\mathbb{X}^*(\wh{\canC})\simeq \Z^n$ and sending $((d_1,\dots,d_n),a)$ to the component of the bundle
    \[
\cO\left( \sum_{i=1}^n d_i(y_i - \sigma(y_i)) \right) \in \cameral{\cP}_a
    \]
    where $y_1,\dots, y_n$ are the sections in Corollary \ref{cor: cameral cover indexes components of P} for the elements $\varepsilon_1,\dots,\varepsilon_n\in \Omega$.
\end{prop}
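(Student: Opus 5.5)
The plan is to build the morphism \eqref{eqn: C surjects on components of P} from the map $\mathbb{X}_*(\canC)\times\cameral\cA^\ani\to\cameral\cP$ constructed just before the statement, and then check surjectivity fiberwise using Lemma \ref{lem: component group of Pa}. First I will identify $\mathbb{X}^*(\wh\canC)$ with $\mathbb{X}_*(\canC)$ by duality. Since $\canC=\canT^\theta$ with $\theta$ swapping $\varepsilon_i$ and $\varepsilon_{i+n}$ up to inversion, a cocharacter $\lambda\in\mathbb{X}_*(\canC)$ is determined by $d_i=\langle\varepsilon_i,\lambda\rangle$ for $i=1,\dots,n$, and $\langle\sigma(\varepsilon_i),\lambda\rangle=-d_i$. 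This gives $\mathbb{X}^*(\wh\canC)\simeq\Z^n$, under which $\lambda$ is sent to the line bundle $\cO(\sum_i d_i(y_i-\sigma(y_i)))$, exactly as displayed above.

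Next I will check that this bundle lies in $\cameral\cP$, i.e.\ is a point of the Prym of Proposition \ref{prop: spectral description of P star}. Its norm $\cG\otimes\sigma^*\cG$ is canonically trivial, because $\sigma^*$ swaps $y_i$ and $\sigma(y_i)$. The canonical trivialization $\xi\colon\cG\simeq\sigma^*\cG^{-1}$ comes from the identity of divisors, and $\xi$ satisfies the Hermitian compatibility. The sections $y_\omega$ are defined over all of $\cameral\cA^\ani$ by Corollary \ref{cor: cameral cover indexes components of P}, so the construction is a homomorphism of group stacks over $\cameral\cA^\ani$. Composing with the quotient map gives a morphism of constructible sheaves $\mathbb{X}^*(\wh\canC)\times\cameral\cA^\ani\to\pi_0(\cameral\cP/\cameral\cA^\ani)$.

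Surjectivity can be checked on geometric fibers. By Lemma \ref{lem: component group of Pa}, $\pi_0(\cP_a)\simeq\pi_0(\Pic(\ol\Sigma'_a))/2\simeq(\Z/2)^{\#\Irr(\ol\Sigma_a)}$, where the class of a line bundle is its multidegree modulo $2$. Since $\ol\Sigma'_a\to\ol\Sigma_a$ is bijective on components, the class of $\cO(y_i-\sigma(y_i))$ is determined by the multidegree on $\ol\Sigma'_a$ of $y_i-\sigma(y_i)$, which is $e_{C(y_i)}-e_{C(\sigma y_i)}$. Because $\sigma$ preserves components over $\cA^\ani$, this difference vanishes, so I cannot compute the class this naively. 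The main obstacle is to make the class map correct.

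For that I will use the description of $\pi_0(\cP_a)$ as the cokernel of the map induced by $\Pic(\ol\Sigma'_a)\to\cP_a$, $\cM\mapsto\cM\otimes\sigma^*\cM^{-1}$ (the anti-norm). In this description $\cO(y_i-\sigma(y_i))$ is the image of $\cO(y_i)$, whose multidegree is the indicator of the component $C_i$ containing $y_i$. So the class of $(d_i)$ is $\sum_i d_i e_{C_i}\bmod 2$. Each component of $\ol\Sigma'_a$ meets the fiber over $\infty'_1$, since $\ol\Sigma'_a\to\Sigma'$ is finite flat and hence surjective on every component. The points $\ol\infty'_1,\dots,\ol\infty'_n$ enumerate the whole fiber over $\infty'_1$, so every basis vector $e_C$ is hit and the map is surjective. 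The remaining care is to verify that the identification of Lemma \ref{lem: component group of Pa} is compatible with this anti-norm description, which I expect to follow from the exact sequence in its proof.
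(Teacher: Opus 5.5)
Your proposal is correct and follows the paper's route. The morphism comes from the sections $y_\omega$ of Corollary~\ref{cor: cameral cover indexes components of P}, it lands in the Prym because $\sigma$ swaps $y_i$ and $\sigma(y_i)$, and surjectivity on $\pi_0$ comes from Lemma~\ref{lem: component group of Pa}. You also supply the detail the paper leaves implicit: $\cO(y_i-\sigma(y_i))$ is the anti-norm image of $\cO(y_i)$, so its class is $e_{C(y_i)} \bmod 2$, which is exactly the formula used later in Proposition~\ref{prop: endoscopic description of components of P}.

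The only slip is that on the half $Z_2^n\setminus\Delta_2$ of $\cameral\cA^\ani$ the points $y_i$ enumerate the fiber over $\infty'_2$ rather than $\infty'_1$. This changes nothing, since every component of $\ol\Sigma'_a$ also meets that fiber.
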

\begin{proof}
    Immediate from Corollary \ref{cor: cameral cover indexes components of P} and the discussion above.
\end{proof}

We denote $\cameral{g}^\ani\colon \cameral{\cP}^\ani\to \cameral{\cA}^\ani$ and $\cameral{f}^{\star,\ani}\colon \cameral{\cM}^{\star,\ani}\to \cameral{\cA}^\ani$ for the base change of the respective structure maps $g$ and $f$ to $\cameral{\cA}^\ani$. Note that $\cameral\cP$ acts on $\bR \cameral{g}_!^\ani\ol{\Q}_\ell$ and $\bR \cameral{f}^\ani_*\ol{\Q}_\ell$ through $\pi_0(\cameral\cP/\cameral{\cA}^\ani)$. Hence, the map \eqref{eqn: C surjects on components of P} defines $\kappa$-decompositions
\begin{equation}\label{eqn: kappa decomp}
    \bR \cameral{g}_!^\ani\ol{\Q}_\ell = \bigoplus_\kappa (\bR \cameral{g}_!^\ani\ol{\Q}_\ell)_\kappa,\quad \bR \cameral{f}^\ani_*\ol{\Q}_\ell = \bigoplus_\kappa(\bR \cameral{f}^{\star,\ani}_*\ol{\Q}_\ell)_\kappa,
\end{equation}
where the each direct sum is finite--indexed over the same set--of $\kappa \in \wh{\canC}$, and the subscript $\kappa$ indicates the subsheaf on which the character group $\mathbb{X}^*(\wh{\canC})$ acts through the evaluation map at $\kappa$. 

\begin{cor}
\label{cor: kappa must be 2 torsion}
    For $\kappa\in \wh\canC$, we have $(\bR \cameral{f}^{\star,\ani}_*\ol{\Q}_\ell)_\kappa\neq 0$ if and only if $\kappa$ is 2-torsion in $\wh\canC$. In particular, $(\bR \cameral{f}^{\star,\ani}_*\ol{\Q}_\ell)_\kappa\neq 0$ if and only if $\kappa$ extends to an elliptic endoscopic datum $\fe$.
\end{cor}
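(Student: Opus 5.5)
The plan is to show that the action of $\mathbb{X}^*(\wh\canC)\simeq\Z^n$ on $\bR\cameral{f}^{\star,\ani}_*\ol{\Q}_\ell$ factors through the quotient $\mathbb{X}^*(\wh\canC)/2\mathbb{X}^*(\wh\canC)$. Both directions of the first claim then follow from the $\kappa$-decomposition \eqref{eqn: kappa decomp}. The second claim is the translation, recorded in \S\ref{sec: endoscopic spaces}, that elliptic relative endoscopic data over a local field are exactly the $2$-torsion points $\kappa\in\wh\canC$.

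\textbf{The ``only if'' direction.} The action of $\cameral\cP$ on cohomology factors through $\pi_0(\cameral\cP/\cameral\cA^\ani)$, and by Proposition \ref{prop: C surjects on components of P} the lattice acts through the surjection \eqref{eqn: C surjects on components of P}. It therefore suffices to check that $2\mathbb{X}^*(\wh\canC)$ maps to the neutral component fiberwise. By Lemma \ref{lem: component group of Pa}, $\pi_0(\cP_a)=(\Z/2\Z)^{\#\Irr(\ol\Sigma_a)}$ is $2$-torsion, so $2\lambda$ maps to zero for every $\lambda$. Alternatively, one can see this directly: $\cO(2\sum d_i(y_i-\sigma y_i))$ is $\cO(\sum d_i(y_i-\sigma y_i))$ tensored with itself. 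Either way, every nonzero isotypic piece has $\kappa^2=1$.

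\textbf{The ``if'' direction.} For each $2$-torsion $\kappa$ we need a fiber on which the $\kappa$-part is nonzero. I would work over the smooth locus $\cA^\diamondsuit\cap\cA^\ani_\infty$. Here $\ol\Sigma_a$ is irreducible, so $\pi_0(\cP_a)=\Z/2$. The fiber $\cM^\star_a$ is a finite union of $\cP_a$-torsors, and its cohomology is a sum of $\pi_0(\cP_a)$-permutation modules. The step I expect to be the main obstacle is that a single irreducible fiber only sees characters of $\Z/2$, that is, only the trivial $\kappa$ and one nontrivial $\kappa$. To realize an arbitrary $\kappa=(\kappa_i)\in\{\pm1\}^n$, I would instead pass to a point $a$ whose spectral curve $\ol\Sigma_a$ splits into components according to the partition of indices where $\kappa_i=\pm1$. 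For example, take $a=\nu_\fe(a_\fe)$ with $a_\fe$ generic in $\cameral\cA^\ani_\fe$. Then $\pi_0(\cP_a)=(\Z/2)^2$, with the map from $\Z^n$ summing $d_i$ mod $2$ over each block. The character $\kappa$ factors through this group, and it remains to show that $\cM^\star_a$ has a $\cP_a$-orbit whose stabilizer in $\pi_0$ is killed by $\kappa$.

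\textbf{Nonvanishing on that fiber.} The cleanest route is the Siegel case. There the Kostant--Rallis section $[\e]$ gives a free $\cP_a$-orbit, which is open in the regular locus by Proposition \ref{prop: factorization thru A reg}. Its top compactly supported cohomology is then the regular representation of $\pi_0(\cP_a)$, which contains every character. In the unitary case, where no section exists, I would use the torsor structure over $\cA^{\unitary,\reg}$ together with Proposition \ref{prop: A reg over dagger locus} to find a regular point. Finally, I would use the properness of $f^{\star,\ani}$ (Proposition \ref{prop: smoothness and properness of relative hitchin fibration}) and proper base change to pass from fiber cohomology to the stalk of $(\bR\cameral{f}^{\star,\ani}_*\ol{\Q}_\ell)_\kappa$.
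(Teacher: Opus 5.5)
Your argument is correct. The ``only if'' half is exactly the paper's: the surjection of Proposition~\ref{prop: C surjects on components of P} lands in the $2$-torsion group $\pi_0(\cP_a)\simeq(\Z/2\Z)^{\#\Irr(\ol\Sigma_a)}$ of Lemma~\ref{lem: component group of Pa}.

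For the converse, the paper uses a single point for all $\kappa$ at once. It takes the maximally split $a$ with characteristic polynomial $\prod_i(\lambda^2-s_i)$. There $\ol\Sigma_a$ has $n$ components, so $\mathbb{X}^*(\wh\canC)/2\mathbb{X}^*(\wh\canC)\to\pi_0(\cP_a)$ is an isomorphism and every $2$-torsion $\kappa$ is a character of $\pi_0(\cP_a)$. The paper then simply asserts that the $\kappa$-part of the stalk is nonzero.

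You instead take a $\kappa$-adapted point of $\cameral\cA_\kappa=\cameral\nu_\fe(\cameral\cA_\fe)$, where the two-block component group suffices. More usefully, you supply the nonvanishing that the paper leaves implicit. A regular $\cP_a$-orbit is an open $\cP_a$-torsor in $\cM^\star_a$. Its $H^{2d}_c$ is the regular representation of $\pi_0(\cP_a)$. It injects into $H^{2d}(\cM^\star_a)$ because $\dim\cM^\star_a=d=\dim\cP_a$ (Corollary~\ref{cor: ani is flat}). You should state this dimension fact explicitly, since it is what makes $2d$ the top degree. The same argument works verbatim at the paper's point, so the real gain is the justification, not the choice of point.

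Two small adjustments are needed.
\begin{enumerate}
\item The global section $[\e_{D'}]$ requires a square root $D'$ of $D$, which does not exist over $\ol k$ when $\deg D$ is odd. It is safer to use your regular-torsor argument in the Siegel case as well.
\item For that argument, take $a_\fe$ generic so that $a\in\cA^\dagger$. This is possible since $\nu_\fe^*\fD^\br=\fD^\br_\fe$ (cf.\ Lemma~\ref{lem: dagger locus is dense in A kappa}). The Cartesian squares of Proposition~\ref{prop: A reg over dagger locus} then produce a geometric point of $\cA^{\star,\reg}$ over $a$, because the reduced divisor $a^{-1}(\fD^\br_D)$ always lifts to $\Sigma'_{\ol k}$ (resp.\ $\Sigma_{\ol k}\sqcup\Sigma_{\ol k}$). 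The fiber of the $(\varphi^\star)^*\cP$-torsor $\cM^{\star,\reg}$ over that point is your open orbit; it is open because $\varphi^\star$ is \'etale.
\end{enumerate}
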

\begin{proof}
    By Proposition \ref{prop: C surjects on components of P}, for each $a\in \cA^\ani(S)$ the map $\mathbb{X}^*(\wh\canC)\to \pi_0(\cP_a)$ factors through the quotient $\mathbb{X}^*(\wh\canC)/2\mathbb{X}^*(\wh\canC)$. In particular, if $(\bR \cameral{f}^{\star,\ani}_*\ol{\Q}_\ell)_\kappa$ has nonzero stalk at $a$, $\kappa$ must be order 2 in $\wh\canC$.

    Conversely, we note that for any distinct nonzero sections $s_i\in H^0(\Sigma,L)$, we can consider the point $a\in \cA^\ani(k)$ associated to the characteristic polynomial
    \[
\prod_{i=1}^n(\lambda^2-s_i).
    \]
    That is, $a_{2i}$ is the coefficient of $\lambda^{2(n-i)}$ in the above polynomial, which is a section in $H^0(\Sigma,L^{2i})$. The corresponding spectral cover $\ol\Sigma_a$ has $n$ components, each of which is isomorphic to $\Sigma$, and since $\ol\Sigma_a'=\ol\Sigma_a\times_\Sigma\Sigma'$, the same is true for $\ol\Sigma_a'$. We conclude that $a\in \cA^\ani$, and that $\pi_0(\cP_a) = \Z^{n}$ is maximal. Therefore, the stalk of $(\bR \cameral{f}^{\star,\ani}_*\ol{\Q}_\ell)_\kappa$ is nontrivial at $a$ for every 2 torsion element $\kappa$ and the converse holds.
\end{proof}

\subsection{The Endoscopic Case}

Let $\fe = (\kappa,\rho)$ be a relative endoscopic datum for $X^\siegel$ over $\Sigma$. From \S\ref{sec: endoscopic spaces}, we have the associated endoscopic space $X^\fe$, defined over $\Sigma$, which is a product of Siegel and unitary cases of complementary ranks. We may define all the same structures using the bundle of $H^\fe$-representations $V^\fe_D$. In particular, we have a moduli of $X^\fe$-Higgs bundles, $\cM^\fe$, which comes equipped with a Hitchin fibration
\[
f^\fe\colon \cM^\fe\to \cA_\fe,
\]
which itself is a product of Siegel and unitary cases. The transfer morphism $\nu_\fe\colon \fc_\fe\to \fc$ induces a morphism of Hitchin bases, which we also denote
\[
\nu_\fe\colon \cA_\fe\to \cA.
\]
Recall from Proposition \ref{prop: centralizers under transfer} that $\fs_\fe/\iota_\fe \to \fs/\iota\times_\fc\fc_\fe$ is a partial normalization which separates two components of $\fs/\iota\times_\fc\fc_\fe$ canonically. In particular,  for $\nu_\fe(a_\fe) = a$ there is a map $i_\fe\colon \ol\Sigma_{a_\fe}'\to \ol\Sigma_{a}'$ which realizes $\ol\Sigma_{a_\fe}'$ as a union of components of $\ol\Sigma_a'$. We describe this explicitly as follows:  Suppose that $\fe = (\kappa,\rho)$ is an endoscopic datum associated to a $2$-torsion element $\kappa\in \wh{\canC}$. Let $S_\kappa\subset \{1,\dots, n\}$ be the subset of indices for which $\kappa_i = 1$ and set $\#S_\kappa = r$. For $a\in \cA^\heartsuit(k)$, we have an associated characteristic polynomial $Q_a(\lambda^2)$, where $Q(\lambda)$ is a polynomial of degree $n$ with $i$-th coefficient in $H^0(\Sigma,L^{2i})$. A lift $a_\fe\in \cA^\ani_\fe(k)$ is the data of a factorization $Q_a = Q^\siegel Q^\unitary$, where $\deg(Q^\siegel) = r$ and $\deg(Q^\unitary) = n-r$. Furthermore, if $\ol\Sigma_{a_\fe}^\star$ denotes the vanishing locus of $Q^\star(\lambda)$ in the total space of $L$, then the image of the sections $\{y_j\mid j\in S_\kappa\}$ of Corollary \ref{cor: cameral cover indexes components of P} are in $\ol\Sigma_{a_\fe}^\siegel$ and $\{y_j\mid j\not\in S_\kappa\}$ are in $\ol\Sigma_{a_\fe}^\unitary$.

Therefore, for any such $a_\fe\in \cA_\fe^\heartsuit$, the spectral curve $\ol\Sigma_{a_\fe}$ is a disjoint union $\ol\Sigma_{a_\fe}=\ol\Sigma_{a_\fe}^\siegel\sqcup \ol\Sigma_{a_\fe}^\unitary$ while $\ol\Sigma_a = \ol\Sigma_{a_\fe}^\siegel\cup \ol\Sigma_{a_\fe}^\unitary$ is a (not necessarily disjoint) union of $\ol\Sigma_{a_\fe}^\siegel$ and $\ol\Sigma_{a_\fe}^\unitary$. Each $\ol\Sigma_{a_\fe}^\star$ is a union of components of $\ol\Sigma_{a}$ and the map $\ol\Sigma_{a_\fe}\to \ol\Sigma_{a}$ is 
induced by the inclusion maps of $\ol\Sigma_{a_\fe}^\star$ into $\ol\Sigma_{a}$. If $a\in \cA^\ani$ and $a_\fe\in\cA_\fe^\ani$, then this discussion can be lifted to the double covers 
$i_\fe\colon\ol\Sigma_{a_\fe}'=(\ol\Sigma_{a_\fe}^{\siegel})'\sqcup(\ol\Sigma_{a_\fe}^{\unitary})'\rightarrow \ol{\Sigma}'_a$.

By Lemma \ref{lem: cameral description via Z infinity}, the map $\nu_\fe$ lifts to the cover
\begin{equation}
\label{eqn: transfer on cameral}
\cameral\nu_\fe\colon \cameral \cA_\fe\to \cameral \cA.
\end{equation}
More precisely, if $\kappa = (1_r,-1_{n-r})$ the corresponding map is 
\[
\cameral\nu_\fe\left[\left(a_\fe,(\ol\infty_1',\dots, \ol\infty_r'),(\ol\infty_{r+1}',\dots, \ol\infty_n')\right) \right] = \left(\nu_\fe(a_\fe),(i_\fe(\ol\infty_1'),\dots,i_\fe( \ol\infty_n'))\right).
\] 
(When $\kappa$ is a permutation of $(1_r,-1_s)$, we interlace the orders as specified by the order of the signs on $\kappa$.) 
\begin{lem}
\label{lem: transfer on bases is closed embedding}
    The map \eqref{eqn: transfer on cameral} is a closed embedding over the heart locus $\cameral{\cA}^\heartsuit$.
\end{lem}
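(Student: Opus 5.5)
The plan is to reduce the statement to an injectivity-plus-properness check. We use the explicit description of both cameral bases from Lemma \ref{lem: cameral description via Z infinity} and the factorization description of lifts $a_\fe$ of $a$.

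First, we show that $\cameral\nu_\fe$ is finite, hence proper. The map $\nu_\fe\colon \cA_\fe\to\cA$ is induced by the finite flat morphism $\fc_\fe\to\fc$, so on Hitchin bases it is finite: a lift of $Q_a$ is a factorization $Q_a=Q^\siegel Q^\unitary$ with monic factors of fixed degrees, and the space of such factorizations is finite over the space of $Q_a$. The cameral covers $\cameral\cA_\fe\to\cA_\fe$ and $\cameral\cA\to\cA$ are finite étale, and $\cameral\nu_\fe$ is a map over $\nu_\fe$ between them. Hence $\cameral\nu_\fe$ is finite.

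It then suffices to show that $\cameral\nu_\fe$ is a monomorphism, i.e. injective on $S$-points, over $\cameral\cA^\heartsuit$. Given a point $(a,(\ol\infty'_1,\dots,\ol\infty'_n))$ in the image, we reconstruct $a_\fe$ together with its marked points, as follows.
\begin{enumerate}
\item Since $a\in\cA^\heartsuit$, the curve $\ol\Sigma_a'$ is reduced.
\item A lift $a_\fe$ is the same as a factorization $Q_a=Q^\siegel Q^\unitary$. This is equivalently a decomposition of $\ol\Sigma_a$ into two unions of irreducible components, $\ol\Sigma^\siegel_{a_\fe}\cup\ol\Sigma^\unitary_{a_\fe}$. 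Reducedness guarantees that $Q^\siegel$ and $Q^\unitary$ are determined by their vanishing loci, since monic polynomials with reduced product are determined by their zero sets.
\item The marked points pin down which components go to which factor. By construction, $\ol\infty'_j$ lies on $\ol\Sigma^\siegel_{a_\fe}$ exactly when $j\in S_\kappa$. Over $\infty$ the cover is étale, so each point over $\infty$ lies on a unique component.
\item Every component of $\ol\Sigma_a'$ meets the fiber over $\infty$, because the cover is finite flat over $\Sigma'$. So the marked points $\{\ol\infty'_j\}$ hit every component of one sheet, and by the $\sigma$-symmetry they hit every component of $\ol\Sigma_a$.
\item Hence the set of components in $\ol\Sigma^\siegel_{a_\fe}$, and so the factorization, is determined by the image point. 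The marked points of $\cameral\cA_\fe$ are then recovered by $i_\fe$, which is injective near $\infty$ because $a\in\cA^\ani_\infty$ makes the points over $\infty$ distinct.
\end{enumerate}
To do this in families rather than on geometric points, we argue on $S$-points: the factorization is recovered as the divisor on $\ol\Sigma_a$ cut out by the union of the connected components of $\cameral$-data containing the $y_j$, $j\in S_\kappa$.

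Finally, a finite monomorphism is a closed embedding (a proper monomorphism is a closed immersion, EGA IV 18.12.6).

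The main obstacle is the family version of the monomorphism step. One needs the reconstruction of $Q^\siegel$ from the marked points to be scheme-theoretic rather than just set-theoretic. I would try first to show that $\cameral\nu_\fe$ is unramified, by checking that the tangent map is injective: a first-order deformation of the factorization with fixed product is governed by $\mathrm{Res}(Q^\siegel,Q^\unitary)$. That resultant is nonzero generically but can vanish, which is exactly where $\fR_\fe$ appears (Lemma \ref{lem: pullback of divisors along transfer map}). So instead I would use that points of $\cameral\cA^\heartsuit$ are those with $a(\Sigma)\not\subset\fD^\cent_D$, which makes $Q^\siegel$ and $Q^\unitary$ coprime at the generic point of $\Sigma$. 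This should give injectivity on tangent spaces by evaluating at a point of $\Sigma$ away from $\fD^\cent$, where the factorization is étale-locally rigid. Combined with the injectivity on geometric points above, that yields the closed embedding.
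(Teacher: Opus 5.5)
Your proposal is correct and follows the paper's route: finite plus unramified, reducing everything to injectivity on geometric points. The paper simply takes finiteness and unramifiedness of $\cameral\nu_\fe$ from Ng\^o's Proposition 7.2, whereas you verify them directly; your geometric-point argument (the ordered points over $\infty$ meet every irreducible component, hence determine which components form $\ol\Sigma^\siegel_{a_\fe}$ and thus the factorization $Q_a=Q^\siegel Q^\unitary$) is the paper's argument spelled out in more detail. Your direct check of unramifiedness is sound: $d\nu_\fe$ is injective away from $\fR_\fe$, so any tangent vector it kills vanishes on a dense open of $\Sigma$ once $a(\Sigma)\not\subset\fD^\cent_D$.
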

\begin{proof}
    Since the map $\fc_\fe\to \fc$ is induced by the inclusion of Weyl groups $W_{X^\fe}\subset W_X$, it follows from \cite[Proposition 7.2]{ngo_endoscopie} that \eqref{eqn: transfer on cameral} is finite and unramified. Therefore, it is enough to check the map is injective on geometric points. Suppose that $\left(a,(\ol\infty_1',\dots,\ol\infty_n')\right)\in \cameral\cA^\heartsuit(\ol k)$ is in the image of $\nu_\fe$. Then, the data of $\kappa$ and the ordering $(\ol\infty_1',\dots,\ol\infty_n')$ determine a decomposition $\ol\Sigma'_a = (\ol\Sigma_{a}^{\siegel})'\cup(\ol\Sigma_{a}^{\unitary})'$, which forces a unique preimage for $\left(a,(\ol\infty_1',\dots,\ol\infty_n')\right)$ in $\cameral\cA_\fe(\ol k)$.
\end{proof}

For any $\kappa\in \wh{\canC}$, we let $\cameral\cA_\kappa$ denote the support of $(\bR \cameral{g}_!^\ani\ol{\Q}_\ell)_\kappa$, which is a closed subvariety of $\cameral\cA^\ani$. 

\begin{prop}
    \label{prop: endoscopic description of components of P}
    For any $\kappa\in \wh{\canC}$, we have a decomposition of top degree cohomology
    \[
    \bR^\top\cameral{g}^\ani_!\ol{\Q}_{\ell} = \bigoplus_{\fe}\cameral\nu_{\fe,*}(\ol{\Q}_\ell)
    \]
    where the sum is over all endoscopic data $\fe = (\kappa,\rho)$. Moreover, the sheaf $\pi_0(\cameral\cP/\cameral{\cA}^\ani)$ is locally constant on an open subset of $\cameral\cA_\kappa$, $(\bR^\top\cameral g_!^\ani\ol \Q_\ell)_\kappa = \cameral\nu_{\fe,*}(\ol\Q_\ell)$ for the unique endoscopic datum $\fe = (\kappa,\rho)$ extending $\kappa$, and $\cameral \cA_\kappa=\cameral\nu_\fe(\cameral\cA_\fe)$, where $\fe = (\kappa,\rho)$ is the (unique) endoscopic datum containing $\kappa$.
\end{prop}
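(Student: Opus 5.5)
This is the relative analogue of Ngô's description of $\pi_0$ of the Picard stack in \cite[\S6.4–6.5]{ngo_endoscopie}, and I would follow that argument, using the explicit spectral description of $\cP$.

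\textbf{Step 1: top cohomology is the component-group algebra.} Since $\cameral g^\ani\colon\cameral\cP^\ani\to\cameral\cA^\ani$ is a smooth commutative group stack of relative dimension $d$, the top compactly supported cohomology is computed by the trace map:
\[
\bR^{2d}\cameral g^\ani_!\ol\Q_\ell\simeq \ol\Q_\ell[\pi_0(\cameral\cP/\cameral\cA^\ani)](-d).
\]
Via the surjection of Proposition \ref{prop: C surjects on components of P}, the $\kappa$-isotypic piece is the following. Over a geometric point $a$, it is one-dimensional exactly when the character $\kappa$ of $\mathbb{X}^*(\wh\canC)$ factors through $\pi_0(\cameral\cP_a)$, and it vanishes otherwise. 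Hence $(\bR^\top\cameral g^\ani_!\ol\Q_\ell)_\kappa$ is the extension by zero of a rank-one sheaf on $\cameral\cA_\kappa$. Moreover, $\cameral\cA_\kappa$ is the locus where $\kappa$ kills the kernel of $\mathbb{X}^*(\wh\canC)\to\pi_0(\cameral\cP_a)$. The decomposition over all $\kappa$ then follows once each piece is identified.

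\textbf{Step 2: compute the kernel.} By Lemma \ref{lem: component group of Pa}, $\pi_0(\cP_a)=(\Z/2)^{\Irr(\ol\Sigma_a)}$, and $(d_i)$ maps to $\cO(\sum d_i(y_i-\sigma y_i))$. The component of $y_i-\sigma(y_i)$ is the generator indexed by the irreducible component $C(i)$ of $\ol\Sigma'_a$ containing $y_i$. So the image of $(d_i)$ is $\big(\sum_{i:C(i)=C}d_i \bmod 2\big)_C$. Therefore $\kappa$ factors through $\pi_0$ if and only if $\kappa_i=\kappa_j$ whenever $y_i,y_j$ lie on the same component. This is exactly the condition that the polynomial $Q_a$ factor as $Q^\siegel Q^\unitary$ with the roots labelled by $S_\kappa$ in the first factor. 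In other words, $(a,\cameral\infty')$ lies in $\cameral\nu_\fe(\cameral\cA_\fe)$ for the endoscopic datum $\fe$ determined by $\kappa$.

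Uniqueness of $\fe$ is the remark in \S\ref{sec: endoscopic spaces}: for elliptic data, $\rho$ is determined by $\kappa$. Non-2-torsion $\kappa$ contribute zero, by Corollary \ref{cor: kappa must be 2 torsion}. By Lemma \ref{lem: transfer on bases is closed embedding}, $\cameral\nu_\fe$ is a closed embedding, so $\cameral\cA_\kappa=\cameral\nu_\fe(\cameral\cA_\fe)$.

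\textbf{Step 3: trivialize the rank-one sheaf.} It remains to show that the rank-one sheaf on $\cameral\cA_\kappa$ is constant, i.e.\ isomorphic to $\cameral\nu_{\fe,*}\ol\Q_\ell$. The quotient $\pi_0(\cameral\cP)\to\ol\Q_\ell$ given by $\kappa$ receives the constant sheaf $\mathbb{X}^*(\wh\canC)$ surjectively. The $\kappa$-part is therefore generated by the image of a global constant section, and it is trivial wherever it is nonzero, since evaluation at $\kappa$ sends the constant section $1$ to a nowhere-vanishing section. Local constancy of $\pi_0$ on an open subset of $\cameral\cA_\kappa$ can be taken on the locus where $\ol\Sigma'_a$ has exactly the two components $(\ol\Sigma^\siegel)'$ and $(\ol\Sigma^\unitary)'$. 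This locus is open and dense in $\cameral\nu_\fe(\cameral\cA_\fe)$, by an argument like Lemma \ref{lem: density of dagger locus} applied to each factor of $\cA_\fe$.

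\textbf{Main obstacle.} The delicate point is making Step 1 rigorous at the sheaf level rather than stalkwise. Concretely, one must check that the kernel computation in Step 2 glues into a closed condition cut out precisely by $\cameral\nu_\fe$, including at points where extra components appear. My approach is to use the étale sections $y_\omega$ from Corollary \ref{cor: cameral cover indexes components of P}, which make the component labelling functorial over $\cameral\cA^\ani$, and to use properness of $\cameral\nu_\fe$ to conclude equality of closed supports.
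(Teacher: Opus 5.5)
Your proposal is correct and follows the paper's proof. Both compute stalkwise that $\kappa$ kills the kernel of $\mathbb{X}^*(\wh\canC)\to\pi_0(\cameral\cP_a)$, $(d_j)\mapsto(\sum_{j\in S_C}d_j \bmod 2)_C$ (with $S_C$ the indices $j$ such that $y_j(a)$ lies on the component $C$), exactly when $S_\kappa$ is a union of the $S_C$; this identifies $\cameral\cA_\kappa$ with $\cameral\nu_\fe(\cameral\cA_\fe)$, and your ``union'' formulation is the accurate one. Your Step 3 realizes the $\kappa$-part as a quotient of the constant sheaf $\ol\Q_\ell$, hence as $i_*\ol\Q_\ell$ on its automatically closed support; this is a cleaner justification of the paper's claim that the natural map is an isomorphism, and it already disposes of the sheaf-level ``main obstacle'' you flag, since equality of two closed supports can be checked on geometric points.
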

\begin{proof}
    The support of $(\bR\cameral{g}_!^\ani\ol{\Q}_{\ell})_\kappa$ can be computed by Lemma \ref{lem: component group of Pa} and Proposition \ref{prop: C surjects on components of P}. Namely, the support is the locus in $\cameral\cA^\ani$ for which the kernel of the evaluation map at $\kappa$, 
    \begin{equation}
    \label{eqn: evaluation at kappa}
    \mathbb{X}_*(\canC)\to \ol \Q_\ell^\times
    \end{equation}
    contains the kernel of \eqref{eqn: C surjects on components of P}. Corollary \ref{cor: kappa must be 2 torsion}, the only $\kappa$ with nontrivial $(\cameral{g}_!^\ani\ol{\Q}_{\ell,\cP})_\kappa$ are those for which $\kappa = (\kappa_1,\dots, \kappa_n)\in \wh\canC$ is 2-torsion; thus, $\kappa_i \in \{\pm1\}$ for all $i$. Let $\kappa\in \wh\canC[2]$ and let $S_\kappa\subset \{1,\dots,n\}$ denote the set of all indices $i$ for which $\kappa_i = -1$. The kernel of the map \eqref{eqn: evaluation at kappa} is given explicitly by
    \[
    \left\{ (x_1,\dots, x_n)\in \mathbb{X}_*(\wh\canC) \;\;\left|\;\; \sum_{i\in S_\kappa}x_i \equiv 0\mod 2 \right. \right\}
    \]
    The map \eqref{eqn: C surjects on components of P} is also quite explicit:  Recall $\Omega$ is the set of weights of the defining representation of $\U_{2n}$. Fix $\varepsilon_1,\dots, \varepsilon_n\in \Omega$ so that $\Omega = \{\varepsilon_1,\dots,\varepsilon_n,\sigma(\varepsilon_1),\dots,\sigma(\varepsilon_n\}$. At $a\in \cameral\cA^{\ani}(k)$, the spectral curve $\ol\Sigma_a'$ decomposes into irreducible components $C_i$. We put $S_i\subset \{1,\dots, n\}$ for the set of indices $j$ such that the image $y_j(a)$ of the section $y_j$ of Corollary \ref{cor: cameral cover indexes components of P} lies in $C_i$. We have a partition
    \[
\{1,\dots, n\} = S_1\sqcup \cdots \sqcup S_\ell.
    \]
Then the map sends 
    \[
    (x_1,\dots, x_n)\mapsto \left( \sum_{j\in S_i} x_j \right)_i\in (\Z/2\Z)^{\#\Irr(\ol\Sigma_a)}
    \]
    over $a$. In particular, we see that the containment of kernels happens exactly when $S_\kappa$ is one of the $S_i$; that is, the sections $y_j(a)$ for $j\in S_\kappa$ determine a union of components of $\ol\Sigma_a$. It follows immediately that $\cameral\cA_\kappa$ is the preimage of $\nu_\fe(\cA_\fe)$. Moreover, for fixed $a\in \cA^\ani$, the above a bijection between the set of $\kappa$ for which $a\in \cA_\kappa$ and subsets of $\mathrm{Irr}(\ol\Sigma_a)$, so that the natural map 
    \[
    \bigoplus_{\fe}\cameral\nu_{\fe,*}(\ol{\Q}_\ell) \to \bR^\top\cameral g^\ani\ol\Q_{\ell,\cameral\cP}
    \]
    is an isomorphism.
\end{proof}

\subsection{The $\delta$-stratification on $\cA^\ani$}
\label{sec: delta stratification}

The group stack $\cP$ is a gerbe over a commutative group scheme $P$. By Chevalley's structure theorem, for any $a\in \cA^\ani$, we have an exact sequence
\[
0\to R_a\to P_a\to A_a\to 0,
\]
where $R_a$ is the unique maximal connected affine subgroup of $P_a$ and $A_a$ is an abelian variety.

\begin{definition}
	The (global) $\delta$-invariant at $a\in \cA^\ani$ is the dimension of the affine part of $P_a$, i.e. $\delta(a) = \dim R_a$.

    We denote by $\cA_\delta$ the locally closed subset of $\cA^\ani$ consisting of $a$ in $\cA^\ani$ with $\delta(a) = \delta$.
\end{definition}

\begin{lem}
	\label{lem: delta invariant is same as GLn}
	Let $a\in \cA^\ani(\ol{k})$, and let $\delta_{n,a}$ denote the dimension of the affine part of the Jacobian $\Jac(\ol{\Sigma}_a)$. Then, we have $\delta(a) = \delta_{n,a}$.
\end{lem}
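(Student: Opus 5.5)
The plan is to compute $\delta(a)$ directly from the spectral description of $\cP_a$ in Proposition \ref{prop: spectral description of P star}, and to show that the two dimension counts coming from the \'etale double cover $\ol\Sigma_a'\to\ol\Sigma_a$ cancel. Over $\cA^\ani\subset\cA^\heartsuit$, $P_a$ is (up to the gerbe, which does not affect dimensions of affine parts) the kernel of the norm map $\Pic(\ol\Sigma_a')\to\Pic(\ol\Sigma_a)$. We may work over $\ol k$ throughout.

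First, we use the exact sequence from the proof of Lemma \ref{lem: component group of Pa}:
\[
\Pic(\ol\Sigma_a)\to\Pic(\ol\Sigma_a')\to\cP_a\to 1,
\]
coming from the antinorm. Passing to neutral components, $P_a^\circ$ is the quotient of $\Jac(\ol\Sigma_a')$ by the image of $\Jac(\ol\Sigma_a)$; this image is isogenous to $\Jac(\ol\Sigma_a)$, since the composite of pullback and norm is multiplication by $2$. Taking affine parts in Chevalley's structure theorem is exact up to isogeny on connected commutative algebraic groups: the affine part of a quotient is the image of the affine part, because the quotient of an abelian variety has no affine part. Hence
\[
\delta(a)=\dim R(\Jac(\ol\Sigma_a'))-\dim R(\Jac(\ol\Sigma_a)).
\]

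Next, compute the affine part of a generalized Jacobian of a reduced curve $C$. Let $\tilde C\to C$ be the normalization. The affine part of $\Jac(C)$ has dimension $\sum_x\delta_x(C)+\#\pi_0(\tilde C)-\#\pi_0(C)$, where $\delta_x(C)=\dim\cO_{\tilde C}/\cO_C$ at $x$. Since $\ol\Sigma_a'\to\ol\Sigma_a$ is an \'etale double cover, local $\delta$-invariants pull back: each singular point $x$ of $\ol\Sigma_a$ has two preimages with the same $\delta$. Thus $\sum\delta(\ol\Sigma_a')=2\sum\delta(\ol\Sigma_a)$. The anisotropy condition gives a bijection on irreducible components, via the argument in the lemma after Definition \ref{def: ani locus}. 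The normalization of $\ol\Sigma_a'$ is the pullback of the normalization of $\ol\Sigma_a$, and each of its components is a connected double cover, so the component counts agree. Connectedness is similarly preserved. We therefore get $\delta(a)=2\delta_{n,a}-\delta_{n,a}=\delta_{n,a}$, where $\delta_{n,a}$ is the affine dimension of $\Jac(\ol\Sigma_a)$.

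The main point needing care is the identification of the affine part of $P_a$ with the difference of affine parts, that is, the exactness and isogeny step. To handle it, I would write $R(\Jac(\ol\Sigma_a'))$ explicitly as $\ker(\Jac(\ol\Sigma_a')\to\Jac(\widetilde{\ol\Sigma}{}_a'))$. I would then check that the antinorm restricts to an isogeny between the corresponding torus and unipotent pieces, using the local description $\prod_x(\cO_{\tilde x}^\times/\cO_x^\times)$ and its $\sigma$-twisted version. The anti-invariant part of $\prod_{x'\mapsto x}$ of these local groups has dimension exactly $\delta_x$, plus the toric contribution from components. This reproduces the count above and identifies $R_a$ with the $\sigma$-anti-invariant part.
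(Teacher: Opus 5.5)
Your reduction $\delta(a)=\delta'-\delta_{n,a}$, where $\delta'$ is the affine dimension of $\Jac(\ol\Sigma_a')$, is fine and is the paper's route. The paper uses the norm sequence $1\to\Prym(\ol\Sigma_a',\ol\Sigma_a)\to\Jac(\ol\Sigma_a')\to\Jac(\ol\Sigma_a)\to 1$ where you use the antinorm quotient; that difference is harmless. The problem is the doubling $\delta'=2\delta_{n,a}$.

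First, your formula has the wrong sign. For a reduced proper curve $C$ over $\ol k$,
\[
\dim R(\Jac C)=\sum_x\delta_x(C)-\big(\#\pi_0(\tilde C)-\#\pi_0(C)\big),
\]
because the torus $H^0(\tilde C,\cO^\times)/H^0(C,\cO^\times)$ is divided out of $\prod_x\tilde\cO_x^\times/\cO_x^\times$. For example, two copies of $\mathbb{P}^1$ meeting in two nodes have $\Jac=\G_m$.

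Second, and decisively, look at what the facts you do establish give. Local $\delta$'s double, and $\#\Irr$ and $\#\pi_0$ are unchanged under $\ol\Sigma_a'\to\ol\Sigma_a$. Writing $m=\#\Irr(\ol\Sigma_a)$ and $c=\#\pi_0(\ol\Sigma_a)$, this gives $\delta'=2\sum_x\delta_x-(m-c)$ and $\delta_{n,a}=\sum_x\delta_x-(m-c)$. The component correction occurs once in $\delta'$ but twice in $2\delta_{n,a}$. So your computation actually yields $\delta(a)=\sum_x\delta_x(\ol\Sigma_a)=\delta_{n,a}+m-c$. The final step ``$2\delta_{n,a}-\delta_{n,a}$'' is a non sequitur unless every connected component of $\ol\Sigma_a$ is irreducible.

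This cannot be patched, because the statement itself fails for reducible spectral curves. The paper's one-line justification (``$\delta'_{n,a}=2\delta_{n,a}$ since the cover is unramified'') has exactly the same defect. For a counterexample, take $n=2$ and $Q_a(y)=(y-s_1)(y-s_2)$. Choose $s_1,s_2\in H^0(\Sigma,D^{\otimes 2})$ generic: simple zeros, and $s_1-s_2$ with $2\deg D$ simple zeros away from those of $s_1s_2$. This is a point of $\cA^\ani$ of the kind used in Corollary~\ref{cor: kappa must be 2 torsion}.
\begin{itemize}
\item $\ol\Sigma_a$ is two copies of $\Sigma$ glued at $2\deg D$ nodes, so $\delta_{n,a}=2\deg D-1$.
\item $\dim P_a=p_a(\ol\Sigma_a)-1=2g+2\deg D-2$, with abelian part $\Prym(\Sigma',\Sigma)^2$ of dimension $2g-2$.
\item Hence $\delta(a)=2\deg D$. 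This agrees with $\sum_v\delta_v(a)$ from the RASF dimension formula, since $d^\cent_v=2$ and $c_v=0$ at each node.
\end{itemize}
The correct statement is $\delta(a)=\sum_x\dim_{\ol k}(\tilde\cO_x/\cO_x)$ for $\ol\Sigma_a$. This is what the product formula and the later arguments actually use.

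Your final paragraph is the right proof of that corrected version. The anti-invariant part of $\prod_{x'\mapsto x}\tilde\cO_{x'}^\times/\cO_{x'}^\times$ has dimension $\delta_x$. By anisotropy, the anti-invariant global units of the normalization of $\ol\Sigma_a'$ are finite: they are constants $c_i$ with $c_i^2=1$. So there is no ``toric contribution from components'' on the $P_a$ side at all; that correction appears only in $\Jac(\ol\Sigma_a)$.
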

\begin{proof}
	Let $\delta_{n,a}'$ denote the dimension of the affine part of $\Jac(\ol{\Sigma}_a')$. Since $\ol{\Sigma}_a'\to \ol{\Sigma}_a$ is an unramified double cover, we have $\delta_{n,a}'= 2\delta_{n,a}$. Hence, the result follows from the short exact sequence
	\[
	1\to \Prym(\ol{\Sigma}_a',\ol{\Sigma}_a)\to \Jac(\ol{\Sigma}_a')\to \Jac(\ol{\Sigma}_a)\to 1
	\]
 by Proposition \ref{prop: spectral description of P star}.
\end{proof}
    
    We may form the relative Jacobian
\[
\mathrm{Jac}(\ol{\Sigma}/\cA)\to \cA,
\]
    which over the locus $\cA^\heartsuit$ is a relative Jacobian of a family of curves with reduced fibers. As the dimension of the affine part of $\mathrm{Jac}(\ol{\Sigma}_a)$, $\delta_{n,a}$ is an upper semi-continuous function, which agrees with $\delta_a = \dim\cP_a$ by Lemma \ref{lem: delta invariant is same as GLn}. By \cite[\S4.4.1]{ngo} and \cite[Prop 5.7.2]{ngo}, we obtain the following as a corollary with the necessary constraint on characteristic of $k$.
    
\begin{cor}
\label{cor: delta regularity in the limit}
    Assume that $\mathrm{char}(k)>n$. For every integer $\delta_0\geq 0$, there exists some integer $N_0$  such that if $\deg(D)\geq N_0$, then for every $\delta\leq \delta_0$, we have 
	\[
	\mathrm{codim}_{\cA^\ani}(\cA_\delta)\geq \delta.
	\]
\end{cor}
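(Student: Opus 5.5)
Since $\delta(a)$ has just been identified with $\delta_{n,a}$, the dimension of the affine part of $\Jac(\ol\Sigma_a)$, I will deduce the statement from the analogous codimension estimate for the $\GL_n$ Hitchin base, as in \cite[\S4.4.1, Prop 5.7.2]{ngo}.

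The curve $\ol\Sigma_a\subset$ total space of $D^{\otimes 2}$ is the spectral curve of the degree-$n$ polynomial $Q_a(\lambda)=\lambda^n+a_2\lambda^{n-1}+\cdots+a_{2n}$, where $a_{2i}\in H^0(\Sigma,D^{\otimes 2i})$. Consequently the map $a\mapsto(a_2,\dots,a_{2n})$ identifies $\cA$ with the $\GL_n$ Hitchin base $\cA_{\GL_n,D^2}=\bigoplus_i H^0(\Sigma,(D^{2})^{\otimes i})$ for the line bundle $D^{\otimes 2}$, which has degree $2\deg D$. Under this identification the spectral curve $\ol\Sigma_a$ is the $\GL_n$ spectral curve, the heart locus $\cA^\heartsuit$ matches the $\GL_n$ heart locus (both are defined by $\fD^\cent=\mathrm{Disc}(Q_a)$), and $\delta(a)$ is the $\GL_n$ $\delta$-invariant by Lemma \ref{lem: delta invariant is same as GLn}. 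In particular the stratum $\cA_\delta$ is the intersection of $\cA^\ani$ with the $\GL_n$ $\delta$-stratum $\cA_{\GL_n,\delta}^\heartsuit$. Since $\cA^\ani$ is open in this affine space (Lemma \ref{lem: inclusions of opens in relative bases}), we get
\[\codim_{\cA^\ani}(\cA_\delta)\ \ge\ \codim_{\cA^\heartsuit_{\GL_n}}(\cA^\heartsuit_{\GL_n,\delta}),\]
where $\cA_\delta$ is empty or of the same codimension locally.

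It then remains to invoke Ng\^o's result for $\GL_n$. By the product formula for $\delta$ \cite[\S4.4.1]{ngo}, $\delta(a)$ is a sum of local $\delta_v(a)$, and \cite[Prop 5.7.2]{ngo} gives that for fixed $\delta_0$ there is $N_0$ with $\codim\ge\delta$ for all $\delta\le\delta_0$ once the degree of the line bundle is at least $N_0$. The characteristic hypothesis there ($p$ larger than the rank, to control local $\delta$-strata via Goresky–Kottwitz–MacPherson type bounds $\codim\ge\delta_v$ in the arc space) is exactly $\mathrm{char}(k)>n$. Replacing $N_0$ by $\lceil N_0/2\rceil$ handles the passage from $D$ to $D^{\otimes 2}$.

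The main obstacle is making the identification of bases and strata honest. One must check that the upper semicontinuity of $\delta$ and the identification of $\ol\Sigma_a$ with the $\GL_n$ spectral curve hold in families, not just at points. One must also confirm that Ng\^o's estimate is stated on the heart locus rather than only on his anisotropic locus. If it is stated only on the latter, I would instead rerun his argument: reduce via the local–global evaluation map $\cA\to\prod_v\fc(\cO_v)/\text{jets}$, which is smooth and surjective for $\deg D\gg0$, to the local estimate $\codim\ge\delta_v$.
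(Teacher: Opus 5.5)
Your proposal is correct and follows the paper's own argument. You identify $\delta(a)$ with the $\GL_n$ $\delta$-invariant of $\ol\Sigma_a$, viewed as the $\GL_n$ spectral curve of $Q_a$ for the line bundle $D^{\otimes 2}$, and invoke \cite[\S4.4.1, Prop.~5.7.2]{ngo}; your identification of the bases and heart loci, the open-subset codimension comparison, and the halving of $N_0$ just spell out what the paper leaves implicit.

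The point you already flag does need checking. $\cA^\ani$ contains points where $\ol\Sigma_a$ is reducible, and there $\delta(a)=\sum_v\delta_v(a)$ exceeds the dimension of the affine part of $\Jac(\ol\Sigma_a)$ by $\#\Irr(\ol\Sigma_a)-1$. So what you need is Ng\^o's estimate for the $\sum_v\delta_v$-strata on the whole heart locus, not only on his anisotropic locus.
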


\section{Relative affine Springer fibers}
\label{sec: RASF}

In this section, we prove basic results about certain relative affine Springer fibers (RASFs) which arise in the local study of endoscopic orbital integrals for Friedberg--Jacquet varieties. In particular, we give moduli and lattice descriptions of RASFs; we study the component groups of RASFs for split elements (Proposition \ref{prop: dim formula split case}); and we relate the RASFs to relative Hitchin fibers via a product formula (Proposition \ref{prop: product formula}). We use the product formula together with a global argument to obtain a dimension formula for RASFs (Theorem \ref{prop: dim of RASF}).

\subsection{Lattice definition of RASFs}
\label{sec: lattice description of RASF}

Similar to the $\GL_n$ case of usual affine Springer fibers (cf. \cite[\S3.6]{ngo} for example), relative affine Springer fibers admit a definition in terms of moduli spaces of lattices. We choose to define these local spaces first in terms of lattices because this definition is at once elementary and makes clear questions of representability. We then give a definition that is likely to generalize to other spherical varieties in \S\ref{sec: general def of rasf}, relying on an important lifting property (Proposition \ref{prop: lifts of a over rs}) which we prove at the end of the present section.

We begin with a definition of lattices.

\begin{definition}
    Let $F$ be a non-archimedean local field and let $\cO\subset F$ be the corresponding ring of integers. Let $R$ denote an $\cO$-algebra which is free as an $\cO$-module. An $R$-lattice in $R\otimes_\cO F$ is an $R$-submodule $\Lambda\subset R\otimes_\cO F$ which is free as an $\cO$-module and for which $\Lambda\otimes_\cO F = R\otimes_\cO F$. 
\end{definition}

Now we specialize to our settings. Throughout this section, we let $F = k(\!(\varpi_v)\!)$ and we let $F'=F'_v$ be a fixed unramified quadratic extension of $F$ with $\sigma\in \Gal(F'/F)$ the nontrivial element in the Galois group. In particular, $F'=k'(\!(\varpi_v)\!)$, where $k'$ is the unique quadratic extension $k'$ of $k$. (For simplicity, we will drop the subscripts $v$ throughout this section.) Let $\fc^\heartsuit(\cO):=\fc(F)^{\mathrm{rs}}\cap \fc(\cO_F)$ denote all elements $a\in \fc(\cO_F)$ which are regular, semisimple over $F$, i.e. for which the centralizer $J_a$ is a torus. For $a = (a_{2i})_{i=1}^n\in \fc^\heartsuit(\cO_F)$, let $P_a(t) = x^{2n}+a_2x^{2n-2}+\cdots +a_{2n}\in \cO_F[t]$ denote the corresponding characteristic polynomial. Let 
\begin{equation}\label{eqn: def of R_a}
   \text{$R_a(F') = F'[x]/(P_a(x))$ and $R_a(\cO') = \cO'[x]/(P_a(x))$.} 
\end{equation}
There are two natural involutions on these algebras:  We let $\iota(f(x)) = f(-x)$ and 
\[
\sigma\left(\sum_{i=0}^{2n-1}\alpha_i x^i\right) = \sum_{i=0}^{2n-1}\sigma(\alpha_i)\cdot (-x)^i,
\]
where $\sigma(\alpha_i)$ denotes the Frobenius action on coefficients. We set $R_a(F) = R_a(F')^{\sigma}$ and $R_a(\cO) = R_a(\cO')^{\sigma}$. The involution $\iota$ descends to an involution on $R_a$ which we also denote by $\iota$. We note that we have Cartesian diagrams of algebras analogous to \eqref{eqn: diagram of covers}:
\[
\begin{tikzcd}  
	R_a(F')\ar[r,"/\sigma"]\ar[d,swap,"/\iota"]& R_a(F)\ar[d] \\
	R_a(F')^\iota \ar[r] & R_a(F)^\iota
\end{tikzcd} \qquad \qquad 
\begin{tikzcd}  
	R_a(\cO')\ar[r,"/\sigma"]\ar[d,swap,"/\iota"]& R_a(\cO)\ar[d] \\
	R_a(\cO')^\iota \ar[r] & R_a(\cO)^\iota
\end{tikzcd}
\]

A non-unitary version of a relative affine Springer fiber could be defined as the functor classifying $R_a(\cO)$-lattices $\Lambda$ in $R_a(F)$ satisfying $\iota(\Lambda) = \Lambda$. To describe our unitary versions $\cM^\star(a)$, we need to consider the additional structure of a Hermitian form on $R_a(F')$ whose compatibility with $\iota$ is dependent on whether we are in the Siegel or unitary case.

We begin with $\cM^\Siegel(a)$. Consider the natural pairing $h^\Siegel\colon R_a(F')\times R_a(F')\to F'$ by
\[
h^\Siegel(f,g) = \mathrm{tr}_{R_a(F')/F'}\left( \frac{f\cdot \sigma(g)}{P'_a(x)} \right)
\]
The pairing $h^\Siegel$ is clearly Hermitian and is nondegenerate by Euler's formula. We call an $\cO'$-lattice $\Lambda$ in $R_a(F')$ \emph{$h^\Siegel$-self-dual} if $\Lambda = \Lambda^{\Siegel-\vee}$, where $\Lambda^{\Siegel-\vee}$ is the dual lattice with respect to $h^\Siegel$, i.e.
\begin{equation}
\label{eqn: Siegel dual lattice}
\Lambda^{\Siegel-\vee} = \{f\in R_a(F')\mid h^\Siegel(f,g)\in \cO' \text{ for all }g\in \Lambda\}.
\end{equation}

\begin{definition}
\label{def: siegel RASF}
    The Siegel relative affine Springer fiber $\cM^\Siegel(a)$ is the functor classifying all $R_a(\cO')$-lattices $\Lambda$ in $R_a(F')$ such that $\iota(\Lambda) = \Lambda$ and $\Lambda$ is $h^\Siegel$-self-dual, i.e. for any finitely generated $k$-algebra $R$,
    \[
\cM_v^\Siegel(a)(R) = \{ \Lambda\subset R_a(F')\hat{\otimes}_k R\mid \Lambda \text{ is an $R_a(\cO')$-lattice satisfying }\iota(\Lambda) = \Lambda\text{ and }\Lambda^{\siegel-\vee} = \Lambda \},
    \]
    where $\hat{\otimes}_k$ is the completed tensor product.
\end{definition}

Note that the Hermitian form $h^\Siegel$ restricts to zero on the vector subspace $R_a(F')^{\iota}$. In particular, it is ill-suited for $\cM_v^\unitary(a)$, which is meant to model the local case where the Hermitian form restricts to a non-degenerate Hermitian form on $R_a(F')^{\iota}$. We instead define the form $h^\unitary\colon R_a(F')\times R_a(F')\to F'$ as follows:  Denote by $R_a(F')^{\pm\iota}$ the subset of invariant/anti-invariant functions $f\in R_a(F')$ under the involution $\iota$. For any $f,g\in R_a(F')^{\iota}$, respectively $f,g\in R_a(F')^{-\iota}$, we have $f\cdot \sigma(g)\in R_a(F')^{\iota}$ and so we may define
\[
h^\unitary(f,g) = \mathrm{tr}_{R_a(F')^{\iota}/F'}\left( \frac{f\cdot \sigma(g)}{Q'_a(x^2)} \right)
\]
where $Q_a(x) = x^n+a_2x^{n-1}+\cdots+a_{2n}$ is the polynomial in $t$ such that $Q_a(x^2) = P_a(x)$. When $f\in R_a(F')^{\iota}$ and $g\in R_a(F')^{-\iota}$, we put $h^\unitary(f,g) = 0$. One may now easily verify that $h^\unitary$ induces a non-degenerate Hermitian form on $R_a(F')$.

For any lattice $\Lambda$, we may define the dual lattice $\Lambda^{\unitary-\vee}$ in the same way as \eqref{eqn: Siegel dual lattice}, and we again say that $\lambda$ is $h^\unitary$-self-dual if $\Lambda = \Lambda^{\unitary-\vee}$.

\begin{definition}
\label{def: unitary RASF}
    The unitary relative affine Springer fiber $\cM^\unitary(a)$ is the functor classifying all $R_a(\cO')$-lattices $\Lambda$ in $R_a(F')$ such that $\iota(\Lambda) = \Lambda$ and $\Lambda$ is $h^\unitary$-self-dual, i.e. For any $k$-algebra $R$,
    \[
\cM^\unitary(a)(R) = \{ \Lambda\subset R_a(F')\hat{\otimes}_k R\mid \Lambda \text{ is an $R_a(\cO')\hat\otimes_kR$-lattice satisfying }\iota(\Lambda) = \Lambda\text{ and }\Lambda^{\unitary-\vee} = \Lambda \}.
    \]
\end{definition}

\begin{ex}
	\label{ex: rank 1 lattices}
	We compute the $k$-points of the spaces $\cM_v^\star(a)$ above in the case $n = 1$. Let $a = \varpi_v^m$. We have 
\[
R_a(F') = F'[x]/(x^2 +\varpi_v^m).
\]	
	It is easy to see that a lattice $\Lambda\subset R$ which is fixed by the involution $\iota$ is generated by $\varpi^i$, $\varpi^jx$, where $j\leq i\leq j+m$.
	
	The Hermitian form $h^\Siegel$ can be computed by means of Euler's formula:  in terms of the $F'$-basis $\{1,t\}$ for $R_a(F')$, it has matrix
	\[
	\begin{pmatrix}
		0 & 1 \\ 1 & 0
	\end{pmatrix}.
	\]
	In particular, we can compute $\Lambda^{\Siegel-\vee} = \langle \varpi_v^{-j},\varpi_v^{-i}x \rangle$, and so $\Lambda$ is $h^\Siegel$-self-dual if and only if $\Lambda = \langle \varpi_v^i,\varpi_v^{-i}x\rangle$ for $0\leq 2i\leq m$. 
	
	The Hermitian form $h^\unitary$ can be computed in terms of the same $F'$-basis as
	\[
\begin{pmatrix}
	1 & 0 \\ 0 & \varpi_v^m
\end{pmatrix}	.
	\]
	We have $\Lambda^{\unitary-\vee} = \langle \varpi_v^{-i},\varpi_v^{m-j}x \rangle$, and so $\Lambda$ is $h^\unitary$-self-dual if and only if $m$ is even and $\Lambda = \langle 1,\varpi_v^{m/2}x\rangle$. 

    We note that we have $\cM_v^\siegel(a)(k') = \cM_v^{\unitary}(a)(k')$ where $k'$ is the quadratic extension $k'$ of $k$ for which $F'\otimes_kk'$ splits, which in all cases is an infinite disjoint union of points.
\end{ex}

\begin{cor}
	\label{cor:  representability and basic props}
	The functors $\cM^\star(a)$ (for $\star = \siegel$ or $\unitary$) are representable by projective ind-schemes which are locally of finite type.
\end{cor}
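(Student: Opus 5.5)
The plan is to realize each $\cM^\star(a)$ as a closed sub-ind-scheme of a known projective ind-scheme, namely the affine Grassmannian of $\cO'$-lattices in the $2n$-dimensional $F'$-vector space $R_a(F')$, viewed over $k$. Concretely, I would take $\Gr$ to be the functor of $\cO'\hat\otimes_k R$-lattices in $R_a(F')\hat\otimes_k R$. Equivalently, via Weil restriction along $k'/k$, this is $\Res_{k'/k}$ of the $\GL_{2n}$ affine Grassmannian over $k'$. It is classically an ind-projective ind-scheme, locally of finite type: it is the increasing union of the closed projective subschemes $\Gr^{(N)}$ of lattices $\Lambda$ with $\varpi^N\Lambda_0\subset\Lambda\subset\varpi^{-N}\Lambda_0$, where $\Lambda_0 = R_a(\cO')$. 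These are closed subschemes of finite Grassmannians of the $k$-vector space $\varpi^{-N}\Lambda_0/\varpi^N\Lambda_0$.

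Inside $\Gr$, I would check that each of the three conditions defining $\cM^\star(a)$ cuts out a closed condition on every $\Gr^{(N)}$.
\begin{enumerate}
\item Stability under $R_a(\cO')$: it suffices to impose $x\Lambda\subset\Lambda$, since $\cO'$-stability is automatic. This is the closed condition that the nilpotent-free endomorphism induced by $x$ on $\varpi^{-N'}\Lambda_0/\varpi^{N'}\Lambda_0$ preserves the subspace $\Lambda/\varpi^{N'}\Lambda_0$. Here $N'\ge N$ is chosen large enough that $x\varpi^{-N}\Lambda_0\subset\varpi^{-N'}\Lambda_0$; this is possible because $a\in\fc(\cO)$ makes $x$ integral.
\item $\iota$-stability: $\iota$ is a $k'$-linear automorphism preserving $\Lambda_0$, hence each $\Gr^{(N)}$, and the condition $\iota\Lambda=\Lambda$ is the fixed locus of an automorphism of a separated scheme, hence closed.
\item Self-duality: $\Lambda\mapsto\Lambda^{\star-\vee}$ is an involutive automorphism of $\Gr$ (it is semilinear, but over $k$ this is harmless after Weil restriction). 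Since $h^\star$ is nondegenerate, $\Lambda_0^{\star-\vee}\subset\varpi^{-c}\Lambda_0$ for some constant $c$ depending on $a$, via $\val(\mathrm{Disc})$. The map therefore sends $\Gr^{(N)}$ into $\Gr^{(N+c)}$, and the locus $\Lambda=\Lambda^{\star-\vee}$ is the equalizer of the identity and a morphism to a separated target, hence closed.
\end{enumerate}

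For the Siegel and unitary cases one must verify that $h^\siegel$ and $h^\unitary$ are indeed nondegenerate with a bounded discrepancy between $\Lambda_0$ and its dual. For $h^\siegel$ this follows from Euler's formula, which gives $R_a(\cO')^{\vee}=P_a'(x)^{-1}\cdot P_a'(x)R_a(\cO')$-type statements. For $h^\unitary$ the analogous formula with $Q_a'$ applies on each $\iota$-eigenspace. The bound $c$ exists because $a\in\fc^\heartsuit(\cO)$ is regular semisimple, so $P_a'$ is a unit in $R_a(F')$.

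Combining these steps, $\cM^\star(a)=\varinjlim_N(\cM^\star(a)\cap\Gr^{(N)})$ is a union of closed, hence projective, subschemes of projective schemes of finite type. It is therefore a projective ind-scheme, locally of finite type. The only step needing care is the self-duality condition; the main point there is the uniform shift $c$, which keeps the duality map compatible with the filtration. Everything else is the standard argument for $\GL_n$ affine Springer fibers, as in \cite[\S3.6]{ngo}.
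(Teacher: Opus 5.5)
Your strategy is the same as the paper's: realize $\cM^\star(a)$ inside a moduli space of lattices and check that $x$-stability, $\iota$-stability and $h^\star$-self-duality are closed conditions. Those closedness checks are fine. (In step 1, $N'=N$ already works because $\Lambda_0=R_a(\cO')$ is $x$-stable, and the endomorphism need not be ``nilpotent-free''.)

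The problem is your last sentence. From ``$\cM^\star(a)$ is an increasing union of closed projective finite-type subschemes $\cM^\star(a)\cap\Gr^{(N)}$'' you conclude ``locally of finite type''. The same reasoning applies verbatim to $\Gr=\bigcup_N\Gr^{(N)}$ itself, which is not locally of finite type in the intended Kazhdan--Lusztig/Ng\^o sense. That sense is that the reduced ind-scheme is a scheme locally of finite type, in particular finite-dimensional, and it is what the paper uses later (dimension formula, finite-type quotient by $\Gamma_a$, product formula). Your argument only proves ``ind-projective of ind-finite type''. Correspondingly, it never uses that $[x]$ is regular semisimple, except to bound the dual lattice.

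The missing ingredient is exactly where regular semisimplicity enters. The paper takes as ambient space not $\Gr$ but the space of $R_a(\cO')$-stable lattices, i.e.\ the $\GL_{2n}$ affine Springer fiber of the regular semisimple endomorphism $[x]$ of $R_a(F')$. Kazhdan--Lusztig show this space is locally of finite type, and closed subschemes inherit the property.

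To make your route self-contained, supply the KL-type finiteness argument:
\begin{enumerate}
\item Let $\wt R$ be the normalization of $R=R_a(\cO')$ and set $c=\val\mathrm{Disc}(P_a)$. Every $R$-stable lattice satisfies $\varpi^c f_\Lambda\wt R\subset\Lambda\subset f_\Lambda\wt R$, where $\wt R\Lambda=f_\Lambda\wt R$.
\item Hence these lattices are covered by the translates of finitely many finite-type closed pieces under the discrete group $R_a(F')^\times/\wt R^\times$.
\item Only finitely many translates meet a given $\Gr^{(N)}$, since $\varpi^N\wt R\subset\wt R\Lambda\subset\varpi^{-N}\wt R$ there.
\end{enumerate}
This locally finite closed cover by finite-type pieces is what makes the reduced space locally of finite type; compare the construction of the $Y_s$ in the proof of Proposition~\ref{prop: lattice action on RASF}.
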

\begin{proof}
    The space of $R_a(F')$-lattices is representable by a projective ind-scheme which is locally of finite type by \cite[\S3, Proposition 1]{kl}. As the above moduli spaces of lattices are closed subvarieties of the moduli space of all $R_a(F')$-lattices, the result is now immediate.
\end{proof}

Before concluding this section, we remark on another application of the local rings $R_a(F')$ to lifting elements $a\in \fc^\heartsuit(\cO_v)$. The following result will be used to give a more conceptual definition of RASFs in the following section.

\begin{prop}
	\label{prop: lifts of a over rs}
	For $\star\in \{\siegel,\unitary\}$ and for any $a\in \fc^\heartsuit(\cO_F)$, there is a canonical lift $[\gamma]\in [V^\star/H^\star](F)$ of $a$.
\end{prop}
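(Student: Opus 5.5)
The plan is to construct $[\gamma]$ directly from the algebra $R_a(F')$ with its Hermitian forms, rather than from a Kostant--Rallis section, which does not exist in the unitary case by Theorem \ref{thm: section for rational forms}. First I will use the local version of the functor-of-points description \eqref{eqn: unitary functor of points}, \eqref{eqn: siegel functor of points}, replacing $\Sigma'$ by $\Spec F'$. In that form an $F$-point of $[V^\star/H^\star]$ is the following groupoid of linear-algebra data. In the unitary case it is a pair of $n$-dimensional Hermitian $F'$-spaces $(E_1,h_1)$, $(E_2,h_2)$ together with an $F'$-linear map $\phi\colon E_1\to E_2$ such that $\phi^*h_2+h_1\phi^{\dagger}$-type compatibility $\sigma^*(\phi)^\vee\circ h_1+h_2\circ\phi=0$ holds. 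In the Siegel case it is one $n$-dimensional $F'$-space $E$ with a Hermitian form $h$, together with maps $\phi_{12}\colon E\to\sigma^*E^\vee$ and $\phi_{21}\colon\sigma^*E^\vee\to E$ that are self-adjoint. This is the usual dictionary: $H^\star$-torsors over $F$ are the same as the corresponding Hermitian or linear structures, and sections of the associated $V^\star$-bundle are such maps. The invariant map to $\fc$ sends these data to the characteristic polynomial of $\sigma^*(\phi)^\vee\phi$, respectively of $\phi_{12}\phi_{21}$, exactly as in the formulas for $f^\star$.

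Next I construct the data. Split $R_a(F')=R_a(F')^{\iota}\oplus R_a(F')^{-\iota}$. Since $R_a(F')$ is free over $F'$ with basis $1,x,\dots,x^{2n-1}$, each summand has dimension $n$: the first is spanned by the even powers and the second by the odd powers. Let $m_x$ denote multiplication by $x$, which exchanges the two summands.
\begin{itemize}
\item In the unitary case, take $E_1=R_a(F')^{\iota}$ and $E_2=R_a(F')^{-\iota}$, with $h_i$ the restrictions of $h^\unitary$, and $\phi=m_x\colon E_1\to E_2$. Each $h_i$ is nondegenerate because $h^\unitary$ is nondegenerate and block diagonal.
\item In the Siegel case, take $E=R_a(F')^{\iota}$. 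Since $h^\siegel$ vanishes on $E\times E$, it identifies $R_a(F')^{-\iota}\simeq\sigma^*E^\vee$. Take $\phi_{12}=m_x\colon E\to R_a(F')^{-\iota}$ and $\phi_{21}=m_x\colon R_a(F')^{-\iota}\to E$. The Hermitian form $h$ on $E$ comes from the fixed Hermitian structure used to identify the two halves; concretely, I match the description $\mu^\Siegel$, so $h\oplus\sigma^*(h)^\vee$ is $h^\siegel$ up to the chosen sign convention.
\end{itemize}

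The sign conditions come from the single identity $\sigma(x)=-x$. Because the trace is $\sigma$-equivariant, this gives
\[
h^\star(xf,g)=-h^\star(f,xg).
\]
In the unitary case this is exactly the relation $\sigma^*(\phi)^\vee\circ h_1+h_2\circ\phi=0$. In the Siegel case the analogous computation, with the Hermitian symmetry of $h^\siegel$, gives self-adjointness of $\phi_{12}$ and $\phi_{21}$.

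To identify the image, observe that $\phi_{12}\phi_{21}$ and $\sigma^*(\phi)^\vee\phi$ are both multiplication by $\pm x^2$ on $R_a(F')^{\iota}=F'[y]/(Q_a(y))$ with $y=x^2$; I will fix signs so that this matches the normalization $b_i=(-1)^i\tr\bigwedge^i(XY)$ of Lemma \ref{Lem: cat quot split}. The characteristic polynomial of multiplication by $y$ on $F'[y]/(Q_a(y))$ is $Q_a$. Hence the point lies over $a$, and it is regular semisimple since $a\in\fc^{\mathrm{rs}}(F)$. Canonicity holds because the construction uses no choices, only $R_a$, $\iota$, $\sigma$ and the trace forms, so it is functorial in $a$.

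I expect the main obstacle to be bookkeeping. First, I have to justify the groupoid description of $[V^\star/H^\star](F)$ over a local field. Second, in the Siegel case, I have to check carefully that restricting $h^\siegel$ produces a genuinely Hermitian $h$, rather than a skew-Hermitian one, and that the resulting signs give $\chi=a$ and not $\iota_\cB(a)$. I would check both points on the rank-one case of Example \ref{ex: rank 1 lattices} before treating general $n$.
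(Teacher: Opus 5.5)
Your construction is essentially the paper's own proof: it also takes $(R_a(F'),h^\star,[x])$, splits $R_a(F')$ into the even part $R_a(F')^{\iota}$ and the odd part $x\cdot R_a(F')^{\iota}$, and uses $[x]$ as the Higgs field. The paper phrases the $F$-point as a Hermitian space $\cE_1'\oplus\cE_2'$ whose form is block diagonal (unitary) or off-diagonal (Siegel), which makes clear that in the Siegel case there is no separate Hermitian form on $E$ to construct---$h^\siegel$ enters only through the identification $R_a(F')^{-\iota}\simeq\sigma^*E^\vee$. On the sign you flag: since $\sigma(x)=-x$ and $P_a'$ is odd, $h^\siegel$ is actually skew-Hermitian, and it is this, together with $h^\siegel(xf,g)=-h^\siegel(f,xg)$, that makes $h^\siegel(x\,\cdot\,,\cdot)$, and hence $\phi_{12}$ and $\phi_{21}$, Hermitian; in any case, rescaling by a unit $\epsilon\in\cO'^{\times}$ with $\sigma(\epsilon)=-\epsilon$ switches between the Hermitian and skew-Hermitian normalizations without affecting anything else.
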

\begin{proof}
	Let $[x]$ denote the multiplication-by-$x$ endomorphism on the ring $R_a(F')$. We claim that the tuple $(R_a(F'),h^\star,[x])$ gives an $F$-point of the stack $[V^\star/H^\star]$, with the characteristic polynomial of $[x]$ being given by $a\in \fc^\heartsuit(\cO_F)$. For this, we note that an $F$-point of the stack $[V^\star/H^\star]$ is the following data:
\begin{itemize}
    \item Two rank $n$ vector bundles $\cE_1',\cE_2'$ on $\Spec(F')$ with Hermitian structure $h$ on $\cE_1'\oplus \cE_2'$ which either $(\star = \unitary)$ is a direct sum of Hermitian forms on $\cE_1'$ and $\cE_2'$ or $(\star = \siegel)$ is a direct sum of isomorphisms $\cE_1'\simeq  \sigma^*\cE_2
    '^\vee$ and $\cE_2'\simeq \sigma^*\cE_1'^\vee$;
    \item Endomorphisms $\phi_{ij}'\colon \cE_i'\to \cE_j'$ for $i\neq j$ in $\{1,2\}$ such that $\phi' = \phi_{12}'\oplus \phi_{21}'$ satisfies $\sigma^*(\phi')^\vee\circ h+h\circ \phi' = 0$. 
\end{itemize}
    Then, we identify $\cE_1' = R_a(F')^\iota$ with elements represented by polynomials concentrated in even degrees and $\cE_2'= x\cdot R_a(F')^\iota$ with those in odd degree. We take $\phi = [x]$, which interchanges the two. We can check that the condition on the Hermitian form is satisfied for the forms $h^\star$ on $R_a(F')$.
\end{proof}

\subsection{General RASFs}
\label{sec: general def of rasf}
For the rest of this paper, we denote $\cO_v = k[\![\varpi_v]\!]$ and $F_v = k(\!(\varpi_v)\!)$, together with their unramified extensions $\cO_v'$ and $F_v'$, which we assume to be non-split. We also denote $\Sigma_v = \Spec(\cO_v)$ and $\Sigma_v^\bullet = \Spec(F_v)$. For the purposes of this section, $H^\star$ denotes the quasisplit forms of the relevant symmetric subgroups defined over $\cO_v$ constructed in Section \ref{sec: rational structures}. In particular, $H^\star$ splits over the $\cO_v'$. Let $\Gr_{H^\star} = H^\star(F_v)/H^\star(\cO_v)$ denote the affine Grassmannian of $H^\star$.

To an element $\gamma\in V^{\star,\rs}(F_v)\cap V^\star(\cO_v)$, we may define $\fX_\gamma^{\star,\mathrm{pre}}$ to be the functor sending a $k$-algebra $R$ to
\begin{equation}
	\label{eqn: RASF functor of points with lift}
\fX_\gamma^{\star,\mathrm{pre}}(R) = \{ h\in \Gr_{H^\star}(R)\mid h^{-1}\cdot \gamma\in V^\star(R\hat{\otimes}_k\cO_v) \}.
\end{equation}
We define $\fX^\star_{\gamma}$ to be the reduced subfunctor of $\fX_\gamma^{\star,\mathrm{pre}}$.

We immediately note that $\fX_\gamma^\star$ only depends on the image of $\gamma$ in the stack $[V^\star/H^\star](F_v)$. Indeed, the image $[\gamma]\in [V^\star/H^\star](F_v)$ is the data of a tuple:
\begin{itemize}
    \item when $\star = \siegel$, $[\gamma] = (\cE',h,\phi_{12}',\phi_{21}')$, where $\cE'$ is a rank-$n$ vector bundle over $\Sigma_v^\bullet\wh{\otimes}_kk'$; $h\colon \cE'\to \sigma^*(\cE')^\vee$ such that $h = \sigma h^\vee$; $\phi_{12}'\colon E'\to \sigma^*(\cE')^\vee$; and $\phi_{21}'\colon \sigma^*(\cE')^\vee\to \cE'$; satisfying $\sigma^*\phi_{12}'^\vee = \phi_{21}'$.
    \item when $\star = \unitary$, $[\gamma] = (\cE_1',\cE_2',h_1,h_2,\phi')$, where $\cE_i'$ are rank $n$ vector bundles on $\Sigma_v^\bullet\wh{\otimes}_k k'$; $h_i\colon \cE_i\to \sigma^*(\cE_i)^\vee$ such that $h=\sigma^*h_i^\vee$; and $\phi'\colon \cE_1'\to \cE_2'$ satisfying $\sigma^*\phi'^\vee\circ h_1+h_2\circ \phi' = 0$.
\end{itemize}
By base change, $[\gamma]$ induces similar data $[\gamma]_R$ for any $k$ scheme $R$, replacing $\Sigma_v^\bullet$ with $\Sigma_v^\bullet\wh{\otimes}_k R$. We may reformulate \eqref{eqn: RASF functor of points with lift} as
\[
\fX_{[\gamma]}^{\star,\mathrm{pre}}(R) = \left\{ (\cE,\xi)\left| \begin{array}{c}
	\cE \in [V^\star/H^\star](R\hat{\otimes}_k\cO_v)\text{ and } \\
	\xi\colon \cE|_{\Sigma_v^\bullet\wh{\otimes}_kR}\xrightarrow{\sim} [\gamma]_R\text{ is a choice of trivialization}\end{array} \right.\right\}.
\]

\begin{prop}
	\label{prop: RASF lattice description}
	Let $a\in \fc^\heartsuit(\cO_v)$. For $\star \in \{\siegel,\unitary\}$, the RASF $\cM_v^\star(a)$ (see Definitions \ref{def: siegel RASF} and \ref{def: unitary RASF}) is isomorphic to $\fX_{[\gamma]}^\star$ where $[\gamma]\in [V^\star/H^\star](F_v)$ is the canonical lift of $a$ of Proposition \ref{prop: lifts of a over rs}.
\end{prop}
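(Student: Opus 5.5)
We will construct mutually inverse maps between the two functors, working with $R$-points for a finitely generated $k$-algebra $R$ and then passing to reduced structures. The guiding idea is that an $\cO_v\hat\otimes_k R$-point $\cE$ of $[V^\star/H^\star]$ with a generic trivialization $\xi$ to $[\gamma]_R$ amounts to an $\cO'$-lattice in the underlying vector bundle of $[\gamma]$, namely $R_a(F')\hat\otimes_k R$. This lattice is graded compatibly with the $\iota$-eigenspace decomposition, self-dual for the Hermitian form, and stable under the Higgs field $[x]$.

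\textbf{From $\fX^{\star,\mathrm{pre}}_{[\gamma]}$ to $\cM^\star_v(a)$.} First we unwind the data of $\cE$ as in the proof of Proposition \ref{prop: lifts of a over rs}. The point $\cE$ gives $\cO'\hat\otimes R$-modules $\cE_1',\cE_2'$ of rank $n$, Hermitian data $h$ of the appropriate type, and $\phi_{12}',\phi_{21}'$. Via $\xi$, the modules become lattices $\Lambda_1\subset R_a(F')^{\iota}\hat\otimes R$ (even polynomials) and $\Lambda_2\subset x R_a(F')^{\iota}\hat\otimes R$ (odd polynomials). We then set $\Lambda=\Lambda_1\oplus\Lambda_2$, and check three properties in turn.
\begin{enumerate}
\item $\iota(\Lambda)=\Lambda$. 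This holds because $\iota$ acts by $\pm1$ on the two summands.
\item $\Lambda$ is an $R_a(\cO')$-module. Integrality of $\phi'$, i.e. $\phi_{ij}'(\Lambda_i)\subset\Lambda_j$, says exactly that $x\Lambda\subset\Lambda$, and $R_a(\cO')$ is generated over $\cO'$ by $x$.
\item $\Lambda$ is $h^\star$-self-dual. The requirement that $h$ be a perfect pairing over $\cO'\hat\otimes R$ between the appropriate summands (between $\Lambda_1$ and $\Lambda_2$ for $\siegel$; on each $\Lambda_i$ for $\unitary$) says exactly this, once we use that $h^\siegel$ pairs even with odd parts while $h^\unitary$ is block diagonal. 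For $h^\siegel$ this needs the Euler formula computation that $\mathrm{tr}(x^{j}/P_a')$ vanishes for the relevant parities, as in Example \ref{ex: rank 1 lattices}.
\end{enumerate}

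\textbf{Inverse map.} Conversely, let $\Lambda$ be an $\iota$-stable self-dual $R_a(\cO')$-lattice. Since $\iota$ is an involution and $2$ is invertible, $\Lambda$ decomposes as $\Lambda^{\iota}\oplus\Lambda^{-\iota}$, and each summand is projective of rank $n$ over $\cO'\hat\otimes R$. Restricting $h^\star$ and $[x]$ to these summands defines $\cE$, and the identity inclusion into $R_a(F')\hat\otimes R$ gives $\xi$. These two constructions are visibly inverse. Both are functorial in $R$, so they identify $\fX^{\star,\mathrm{pre}}_{[\gamma]}$ with the (possibly non-reduced) lattice functor. Taking reduced subfunctors then gives the isomorphism. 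If Definitions \ref{def: siegel RASF} and \ref{def: unitary RASF} are read as reduced, we simply pass to reductions on both sides.

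\textbf{Main obstacle.} The main obstacle is the compatibility of the Hermitian data in the Siegel case. A point of $[V^\siegel/H^\siegel]$ is $(\cE',h,\phi_{12}',\phi_{21}')$, where $\cE_2'\simeq\sigma^*(\cE')^\vee$ is determined by $h$. We must verify that this identification matches the restriction of $h^\siegel$ under $\xi$, i.e. that $\Lambda_2=\Lambda_1^{\siegel-\vee}\cap x R_a(F')^\iota$. We also need the trace-form identity
\[
h^\siegel(f,g)=\mathrm{tr}\bigl(f\sigma(g)/P_a'\bigr),
\]
which pairs the two $\iota$-eigenspaces perfectly, with the sign twist in $\sigma(x)=-x$ consistent with the symmetry conditions $\sigma^*\phi_{ij}'^\vee=\phi_{ij}'$. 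We plan to check this directly on the basis $\{x^j\}$ using Euler's formula, reducing to the rank-one computation of Example \ref{ex: rank 1 lattices} after base change to $\breve F$, where $R_a$ splits and the check becomes coordinatewise.
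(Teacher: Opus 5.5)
Your proposal is correct and follows essentially the paper's argument: $\xi$ transports the integral structure of $\cE$ to an $[x]$-stable, $\iota$-stable, $h^\star$-self-dual lattice in $R_a(F_v')$, and conversely. The only difference is that the paper first passes to the image in $[\mathfrak{u}_{2n}/\U_{2n}]$ and then reads off the $\iota$-compatibility of the form, while you work with the graded pieces $\Lambda_1\oplus\Lambda_2$ throughout.

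Your ``main obstacle'' is in fact automatic. Since $\xi$ is a morphism to $[\gamma]_R$, it already matches the duality $\cE_2'\simeq\sigma^*(\cE')^\vee$ with the one induced by $h^\siegel$, which is also why you need no counterpart of the paper's normalization of $h_\Lambda$ to $h^\star$. So the planned basis check can simply be dropped, including the reduction to the rank-one example, which would only be available when $a$ is split.
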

\begin{proof}
	Suppose that $(\cE,\xi)\in \fX_{[\gamma]}^\star(R)$. In particular, $\cE$ determines a point $(\Lambda,h_\Lambda)\in [\mathfrak{u}_{2n}/\U_{2n}](\cO_v\wh{\otimes}R)$, which is the data of a $R_a(\cO'_v)$-module $\Lambda$ equipped with a Hermitian form $h_\Lambda$ so that $h_\Lambda\circ [x]+[x]\circ h_{\Lambda} = 0$, where $[x]$ denotes the multiplication-by-$x$ operation on $\Lambda$. The isomorphism $\xi$ determines, in particular, a choice of isomorphism $\Lambda\otimes_{\cO'_v}F_v'\simeq R_a(F_v')$, which allows us to view $\Lambda$ as a submodule of $R_a(F_v')$.

    The Hermitian form $h_\Lambda$ extends to a Hermitian form $h_\Lambda$ on $R_a(F_v')$ under which $\Lambda$ is $h_\Lambda$ self dual. The remaining data in the pair $(\cE,\xi)$ involve the compatibility of this form with the involution $\iota$. In particular, we see that the induced Hermitian form $h_\Lambda$ must restrict to zero on $R_a(F_v')^{\iota}$ in the Siegel case and must be a direct sum of Hermitian forms on $R_a(F_v')^{\pm\iota}$ in the unitary case. Up to automorphism of $R_a(F_v)$ commuting with $\iota$ we can therefore normalize $h_\Lambda$ to $h^\star$, which gives a bijection between $\fX_{[\gamma]}^\star(R)$ and $\cM^\star_v(a)(R)$.
\end{proof}

\begin{remark}
	While this lattice definition of $\cM_v^\star(a)$ is special to the unitary Friedberg--Jacquet varieties and other symmetric varieties for classical groups, we expect an identical construction to $\fX_{[\gamma]}^\star$ to work for any spherical variety $X$. The obstruction is to find lifts $[\gamma]\in [T^*X/G](F_v)$ for any $a\in (T^*X\git G)^\heartsuit(\cO_v)$. This is of course nontrivial, and we leave this point to future work.
\end{remark}

For the Siegel case, there is a Kostant-Rallis section, which permits us to use the more naive definition in terms of $\fX_\gamma^\siegel$. The following ensures this is well defined.

\begin{lem}
\label{lem: kostant section and canonical lift agree}
	For $a\in \fc^\heartsuit(\cO_v)$ and a fixed Kostant-Rallis section $[\e]$, the canonical lift $[\gamma]\in [V^\siegel/H^\siegel](F_v)$ of Proposition \ref{prop: lifts of a over rs} is isomorphic to the image of $[\e](a)\in V^\siegel(F)$. In particular, $\cM_v^\siegel(a) \simeq \fX_{[\gamma]}^\siegel \simeq \fX_{[\e](a)}^\siegel$.
\end{lem}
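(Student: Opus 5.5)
The plan is to show that $[\e](a)$ and the canonical lift $[\gamma]=(R_a(F_v'),h^\siegel,[x])$ are two $F_v$-points of $[V^\siegel/H^\siegel]$ lying over the same regular semisimple $a$, and that such points are classified by $H^1(F_v,J_a)$. It then suffices to show both have trivial class.

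First, over $\fc^{\rs}$ the fiber of $[V^\siegel/H^\siegel]\to\fc$ is a gerbe banded by $J$. By Theorem \ref{thm: regular quotient} the regular quotient has a single sheet away from $\fD^\br$; on the rs-locus this is a gerbe over $\fc$ itself. This gerbe is neutralized by the Kostant--Rallis section $[\e]$. Hence isomorphism classes of $F_v$-points over $a$ form a torsor under $H^1(F_v,J_a)$, based at $[\e](a)$. We must therefore show that $\mathrm{inv}([\gamma],[\e](a))=0$.

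Second, we identify these classes spectrally. By Theorem \ref{thm: regular centralizers for rational forms}(2), $J_a$ is the norm-one torus of $R_a(F_v')^{\iota}/R_a(F_v)^{\iota}$. The standard computation for unitary groups (as in \cite{ngo-laumon}) identifies a point over $a$ with the pair $(R_a(F_v'),h)$ consisting of the free rank-one module with a Hermitian form $h$ compatible with $\iota$ in the Siegel sense, up to isomorphism. Equivalently, one records the class of the scalar $c\in (R_a(F_v)^{\iota})^\times/\mathrm{Nm}$ with $h=h^\siegel(c\,\cdot,\cdot)$. For $[\gamma]$ this scalar is $c=1$ by construction.

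The key step is to compute the scalar for $[\e](a)$. Since $\e$ is a global algebraic section over all of $\fc$, defined over $k$, the form attached to $\e(a)$ is the specialization of a universal form on $k[\fs]$ over $k[\fc]$. Using the construction in Theorem \ref{thm: section for rational forms}, I would take $e$ regular nilpotent with $\e(0)=e$. The cyclic vector generated by $[x]$ gives an $A[x]/P$-module isomorphism $k^{2n}\otimes A\cong A[x]/(P)$. Under this isomorphism the Hermitian form becomes $h^\siegel(c\,\cdot,\cdot)$ for some unit $c\in (k[\fs]^\iota)^\times$, and the unit group of $k[\fs]^\iota$ over the polynomial ring $k[\fc]$ is just $k^\times$ (graded argument, $\G_m$-equivariance). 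Thus $c\in k^\times$ (or $k'^\times$ fixed by $\sigma$, i.e. in $k^\times$). Every element of $k^\times$ is a norm from $k'^\times$, so $c\in\mathrm{Nm}$ and the class is trivial.

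The main obstacle is this last step: checking that the form really transports to a unit multiple of the trace form $h^\siegel$ uniformly in $a$. This requires the Euler-formula dual basis to match the basis $e,xe,\dots$ coming from the section, with the Siegel isotropy condition on $R^{\iota}$ preserved. If the $\G_m$-equivariance argument fails, I would instead verify it directly at $a=0$ and extend by the torsor rigidity from Proposition \ref{prop: regular centralizer package}.

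Finally, the isomorphism $[\gamma]\simeq[\e](a)$ induces $\fX^\siegel_{[\gamma]}\simeq\fX^\siegel_{[\e](a)}$, because $\fX$ only depends on the stacky point. Combining this with Proposition \ref{prop: RASF lattice description} gives the stated chain of isomorphisms.
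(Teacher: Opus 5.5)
Your reduction is the right one, and it is sharper than the paper's proof. The paper writes $h^{\e}=h^{\siegel}(\eta\,\cdot,\cdot)$ with $\eta\in(R_a(F_v')^\iota)^\times$ and declares that $\eta$ gives the isomorphism. But multiplication by $u\in(R_a(F_v')^\iota)^\times$ carries $h^\siegel$ only to $h^\siegel(u\sigma(u)\,\cdot,\cdot)$, so what is really needed is that the class of $\eta$ in $(R_a(F_v)^\iota)^\times/\mathrm{Nm}\simeq H^1(F_v,J_a)$ vanishes. Your globalization over $\fc$ is a good way to prove this.

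The step you flag as the main obstacle is harmless. By Euler's formula the pairing $(f,g)\mapsto\mathrm{Tr}(f\sigma(g)/P')$ is perfect over $A'=k'[\fc]$. Hence every sesquilinear form on $A'[x]/(P)$ for which $[x]$ is skew equals $h^\siegel(c\,\cdot,\cdot)$ for a unique $c$, and $c$ is a unit exactly when the form is perfect. Isotropy of both $\iota$-eigenspaces forces $c\in A'[y]/(Q)\simeq k'[a_2,\dots,a_{2n-2},y]$ (with $y=x^2$), whose units are $k'^\times$. Hermitian symmetry then puts $c$ in $k^\times=\mathrm{Nm}(k'^\times)$.

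The gap is in the cyclic-vector identification $\cE\simeq A'[x]/(P)$.
\begin{itemize}
\item For $c$ to compute the invariant of $[\e](a)$ relative to $[\gamma]$, this identification must be an isomorphism of $H^\siegel$-data. That is, it must carry the summand $\cE_1$ onto the even part $R^\iota$, so the generator must lie in $\cE_1$ over all of $\fc$, including $a_{2n}=0$.
\item This holds exactly when $\phi_{12}\colon\cE_1\to\cE_2$ is invertible at $\e(0)$. In that case $\det\phi_{12}(\e(a))$ is homogeneous and nonzero at $0$, hence a nonzero constant, and $\cE_1$ is free of rank one over the polynomial ring $A'[y]/(Q)$.
\item As in the proof of Theorem \ref{thm: section for rational forms}, there are two $k$-rational regular nilpotent orbits in the Siegel case. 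If $\e(0)$ lies in the other one, the global generator lies in $\cE_2$.
\item Over $\fc^{\rs}$ you must then precompose with multiplication by $x$. This replaces $c$ by $-c\,x^2$, so the class is $[x^2]$, which is nontrivial whenever $\val(a_{2n})$ is odd.
\end{itemize}

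Already $n=1$ shows this. There $H^\siegel(F_v)=F_v'^\times$ acts on $(X,Y)$ by $(\mathrm{Nm}(g)X,\mathrm{Nm}(g)^{-1}Y)$, with $a_2=-XY$. Both $a_2\mapsto(1,-a_2)$ and $a_2\mapsto(-a_2,1)$ are $k$-rational Kostant--Rallis sections. For $\val(a_2)$ odd they are not conjugate. The canonical lift, one of whose coordinates is the unit coming from $x\colon R^\iota\to xR^\iota$, is conjugate to only one of them.

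So your argument, which never pins down $\e(0)$, would prove a false statement for one of the two choices. You must normalize $\e$ so that $\phi_{12}$ is invertible at $\e(0)$, and take the generator in $\cE_1$. The same normalization is silently needed in the paper's own proof.
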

\begin{proof}
    The image of $[\e](a)\in V^\siegel(F_v)$ in $[V^\siegel/H^\siegel](F_v)$ consists of the trivial $H^\siegel$-bundle together equipped with an endomorphism--which can be used to identify it with $R_a(F_v')$--and a compatible Hermitian form $h^\e$. The Hermitian form $h^\e$ is characterized up to scaling by the following properties:
    \begin{itemize}
        \item $h^\e$ is compatible with the endomorphism $[x]$ in the sense that $h^\e(x\cdot f,g) +h^\e(f,x\cdot g)$ for all $f,g\in R_a(F_v')$;
        \item $h^\e$ restricts to zero on $R_a(F_v')^\iota$.
    \end{itemize}
    We may write $h^\e(f,g) = h^\siegel(\eta f,g)$ for some automorphism $\eta$, which by the first condition is in $R_a(F_v')^\times$ and by the second is in $(R_a(F_v')^\times)^\iota$. In particular, $\eta$ determines an isomorphism between the image of $[\e](a)$ and $[\gamma]$.
\end{proof}

\begin{remark}
\label{rem: RASF computed with lift}
    If $k'/k$ is quadratic extension over which our unitary groups split, then $\cM_v^\siegel(a)\otimes_k k'\simeq \cM_v^\unitary(a)\otimes_k k'$. Hence, Lemma \ref{lem: kostant section and canonical lift agree} together with the conjugation invariance of $\fX_\gamma^\star$ imply that the spaces $\cM_v^\siegel(a)\otimes_k k'$ and $\cM_v^\unitary(a)\otimes_k k'$ can be computed using any lift $\gamma'\in V^\star(k')$. We will use this fact for algebraically closed fields in \S\ref{sec: split case}. 
\end{remark}

\subsection{The Obstruction Class}
\label{sec: obstruction}

In this section, we study a class $\mathrm{Ob}_v^\star(a)$, which serves as the obstruction to the existence of $k$-points in the RASFs $\cM_v^\star(a)$. This obstruction only exists in the unitary case ($\star = \unitary$) and that it is controlled by the parity of $\val(a_{2n})$.

Given $a\in \fc^\heartsuit(\cO_v)$, recall that we may form the ring $R_a(F_v')$, which comes equipped with a Hermitian form $h^\star$. The tuple $(R_a(F_v'),h^\star,[x])$ gives an $F_v$ point of the stack $[\mathfrak{u}_{2n}/\U_{2n}](F_v)$, which is the image of the canonical lift $[\gamma]$ of $a$ (see Proposition \ref{prop: lifts of a over rs}) under the map $[V^\star/H^\star]\to [\mathfrak{u}_{2n}/\U_{2n}]$. In particular, $(R_a(F_v'),h^\star)$ gives a $\U_{2n}$ torsor on $\Sigma_v^\bullet = \Spec(F_v)$.

\begin{definition}
	The obstruction class $\Ob_v^\star(a)$ is the class
	\[
\Ob_v^\star(a) = [(R_a(F_v'),h^\star)]\in H^1(F_v,\U_{2n}) \simeq \Z/2\Z
	\]
    If no superscript is written, we mean the unitary case, i.e. $\Ob_v(a) = \Ob_v^\unitary(a)$.
\end{definition}

Our convention to drop the superscripts in the unitary case is justified by the following lemma.

\begin{lem}
	\label{lem: obstruction is always trivial for siegel}
	If $\star = \siegel$, then $\Ob^\siegel_v(a)$ is trivial for all $a\in \fc^\heartsuit(\cO_v)$.
\end{lem}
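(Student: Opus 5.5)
The quickest route is to show that the Hermitian space $(R_a(F_v'),h^\siegel)$ has a Lagrangian, hence is split. The key input is the fact already used in the definition: $h^\siegel$ restricts to zero on $R_a(F_v')^{\iota}$. For $f,g$ both even in $x$, the element $f\sigma(g)$ is again even, since $\sigma$ acts on $x$ by $x\mapsto -x$ and so preserves parity. Also $P_a'(x)$ is odd, since $P_a$ is even in $x$. Hence $f\sigma(g)/P_a'$ is odd. Its trace to $F_v'$ vanishes by Euler's formula, because $\mathrm{tr}(x^k/P_a')$ is zero unless $k=2n-1+m$ with $m\geq 0$, and odd-in-$x$ elements only involve odd $k$. (Alternatively, $\mathrm{tr}\circ\iota=\mathrm{tr}$ combined with $\iota(y)=-y$ for odd $y$.)

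So $L=R_a(F_v')^{\iota}$ is an $n$-dimensional totally isotropic $F_v'$-subspace of the $2n$-dimensional nondegenerate Hermitian space $(R_a(F_v'),h^\siegel)$. The same computation shows that $xL=R_a(F_v')^{-\iota}$ is also totally isotropic, since a product of two odd elements is even.

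Next I would use the classification of Hermitian forms over the unramified quadratic extension $F_v'/F_v$. The class in $H^1(F_v,\U_{2n})\simeq\Z/2\Z$ is detected by the discriminant, namely $(-1)^{n}\det h$ modulo $\mathrm{Nm}(F_v'^\times)$. A space containing a Lagrangian is isometric to the orthogonal sum of $n$ hyperbolic planes, and this has the same discriminant as the standard split form used to define $\U_{2n}$. I would make this concrete by choosing a basis $e_1,\dots,e_n$ of $L$ and the dual basis $e_i^*$ of $xL$ under $h^\siegel$; the pairing $L\times xL\to F_v'$ is perfect by nondegeneracy, since both summands are isotropic. In this basis the Gram matrix is $\begin{pmatrix}0&I\\ I&0\end{pmatrix}$, which is exactly the shape in Example \ref{ex: rank 1 lattices} for $n=1$. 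Its determinant is $(-1)^n$, so the class is trivial.

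The main obstacle is bookkeeping rather than substance. One must make sure the reference form defining $\U_{2n}$ (the base point of $H^1$) is itself split, i.e.\ has discriminant $(-1)^n$ up to norms. This holds because the quasi-split $\U_{2n}$ contains the Siegel Levy $\Res_{F'/F}\GL_n$ and so admits a Lagrangian. As a consistency check, I would use Lemma \ref{lem: kostant section and canonical lift agree}: the canonical lift agrees with $[\e](a)$, which is an $F_v$-point of $V^\siegel$ for the fixed rational form. Its image in $[\mathfrak u_{2n}/\U_{2n}]$ therefore lies over the trivial torsor, and this gives an independent proof of triviality.
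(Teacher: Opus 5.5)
Your proof is correct, but it takes a different route from the paper's. The paper works directly in the power basis $1,x,\dots,x^{2n-1}$. By Euler's formula the entry $h^\siegel(x^i,x^j)=\pm\mathrm{tr}(x^{i+j}/P_a')$ vanishes for $i+j<2n-1$ and is $\pm1$ for $i+j=2n-1$. So the Gram matrix is anti-triangular with unit anti-diagonal, $\det h^\siegel$ is a unit, and the class, detected by $\val_v\det(h)\bmod 2$, is trivial.

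You instead split $R_a(F_v')=R_a(F_v')^{\iota}\oplus x\,R_a(F_v')^{\iota}$ into two complementary Lagrangians, so the space is hyperbolic. In your adapted basis the determinant is already a unit, so you could finish with the same valuation criterion and skip the appeal to the classification of Hermitian forms.

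What each approach buys:
\begin{itemize}
\item The paper's computation is the same bookkeeping that, in Lemma \ref{lem: obstruction for unitary}, produces $\det h^\unitary=a_{2n}$. The Siegel/unitary contrast therefore comes out of one calculation.
\item Yours uses only parity rather than the full Euler formula, and it explains \emph{why} the Siegel obstruction vanishes. The isotropic splitting is exactly what makes $(R_a(F_v'),h^\siegel,[x])$ an $F_v$-point of $[V^\siegel/H^\siegel]$ as in Proposition \ref{prop: lifts of a over rs}. The $\U_{2n}$-torsor is then induced from $H^\siegel=\Res_{F_v'/F_v}\GL_n$, whose $H^1$ vanishes by Shapiro and Hilbert 90. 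This is also the real content of your Kostant-section check, which does not actually need the section.
\item In the unitary case the same two summands are orthogonal rather than isotropic, and that is precisely where $a_{2n}$ enters.
\end{itemize}

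One correction: your Euler-formula justification of isotropy does not work as written. If $f\sigma(g)$ is even, then $f\sigma(g)/P_a'$ is a combination of the $x^k/P_a'$ with $k$ \emph{even}. Euler's formula alone says nothing about $\mathrm{tr}(x^k/P_a')$ for even $k\ge 2n$; you would also need that odd-degree complete symmetric functions of a negation-stable multiset of roots vanish. Make your parenthetical the main argument instead: $\iota$ is an $F_v'$-algebra automorphism, so $\mathrm{tr}\circ\iota=\mathrm{tr}$, and every $\iota$-odd element therefore has trace zero.
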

\begin{proof}
	We can calculate $H^1(F_v,\U_{2n}) = \Z/2\Z$ with a $\U_{2n}$-bundle $(E,h)$ being taken to $\val_v(\det(h))\mod 2$. But by our definition of $h^\siegel$ and Euler's formula for trace, the matrix of $h^\siegel$ with respect to the basis $\{1,x,\dots, x^{2n-1}\}$ of $R_a(F_v')$ is lower triangular, with 1's on the anti-diagonal and nonzero entries only possible in the lower right triangular entries. In particular, $\det(h^\siegel) = 1$, so the class of $(R_a(F_v'),h^\siegel)$ in $H^1(F_v,\U_{2n})$ is trivial.
\end{proof}

The unitary case is more interesting. The following computes $\Ob_v(a) = \Ob_v^\unitary(a)$.

\begin{lem}
	\label{lem: obstruction for unitary}
	If $\star = \unitary$, then 
	\[
	\Ob_v(a) = \val (a^*\fD^\br) = \val(a_{2n})\in \Z/2\Z
	\]
	 is the parity of the valuation of the determinant component of $a = (a_2,\dots, a_{2n})\in \fc^\heartsuit(\cO_v)$.
\end{lem}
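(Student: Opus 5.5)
The plan is to follow the proof of Lemma \ref{lem: obstruction is always trivial for siegel}. We use the identification $H^1(F_v,\U_{2n})\simeq\Z/2\Z$ given by $(E,h)\mapsto \val_v(\det h)\bmod 2$. Here $\det h$ is computed in any $F_v'$-basis of $E$. It lies in $F_v^\times$ because $h$ is Hermitian, and changing basis by $g$ multiplies it by $\mathrm{Nm}(\det g)$. Since $F_v'/F_v$ is unramified, norms have even valuation, so the parity of $\val_v(\det h)$ is well defined. The task is therefore to compute $\val_v(\det h^\unitary)$ modulo $2$ in a convenient basis of $R_a(F_v')$.

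Step one is to use the orthogonal decomposition $R_a(F_v')=R_a(F_v')^{\iota}\oplus R_a(F_v')^{-\iota}$. By definition $h^\unitary$ vanishes between the two summands, so $\det h^\unitary=\det(h|_{+})\det(h|_{-})$. On the even part, with basis $1,x^2,\dots,x^{2n-2}$, we write $y=x^2$, so that $R_a(F_v')^\iota\simeq F_v'[y]/(Q_a(y))$. The form is then $(f,g)\mapsto \mathrm{tr}(f\sigma(g)/Q_a'(y))$. Note that $\sigma$ fixes $y$, since $\sigma(x)=-x$, and it acts only on coefficients. By Euler's formula the Gram matrix in the basis $y^i$ is anti-triangular with $1$'s on the anti-diagonal, exactly as in the Siegel computation. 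Hence $\det(h|_+)=\pm1$.

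Step two treats the odd part, with basis $x,x^3,\dots,x^{2n-1}=x\cdot y^i$. For $f=xf_0$ and $g=xg_0$ we get $f\sigma(g)=-y f_0\sigma(g_0)$, so $h(xf_0,xg_0)=-\mathrm{tr}(y f_0\sigma(g_0)/Q_a'(y))$. The Gram matrix is therefore $-M_y\cdot(\text{even Gram matrix})$ up to transpose conventions, where $M_y$ is multiplication by $y$ on $F_v'[y]/(Q_a)$. Its determinant is $\pm\det(M_y)\cdot(\pm1)=\pm N(y)=\pm a_{2n}$, since the norm of $y$ is $(-1)^n Q_a(0)=\pm a_{2n}$. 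This matches Example \ref{ex: rank 1 lattices}, where $\det=\varpi^m$. Consequently $\val_v\det h^\unitary=\val_v(a_{2n})$. Since $\fD^\br=(a_{2n})$, this also equals $\val(a^*\fD^\br)$.

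The main obstacle is making sure the identification of $H^1(F_v,\U_{2n})$ with the parity of $\val\det$ is the correct one, and handling the signs. The signs are harmless: they are units, and $-1$ is a norm from the unramified extension, so they do not affect the parity of the valuation. One must also check that the anti-triangular structure of the trace pairing, $\mathrm{tr}(y^{i+j}/Q'_a)=0$ for $i+j<n-1$ and $=1$ for $i+j=n-1$, survives the twist by $\sigma$. It does, because $\sigma$ fixes the basis $y^i$.
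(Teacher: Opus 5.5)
Your proposal is correct and follows essentially the same route as the paper: you split $h^\unitary$ orthogonally into its $\pm\iota$ parts, use Euler's formula in the basis $1,x^2,\dots,x^{2n-2}$ to get a unit determinant on the even part, and identify the odd part with the even form composed with multiplication by $x^2$, whose determinant is $\pm a_{2n}$. The one difference is that you keep track of the signs (including the $-1$ coming from $\sigma(x)=-x$) that the paper suppresses; this is harmless, since units do not change the valuation.
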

\begin{proof}
	Similar to Lemma \ref{lem: obstruction is always trivial for siegel}, it suffices to calculate the valuation of $\det(h^\unitary)$. By Euler's formulas (using an identical argument to Lemma \ref{lem: obstruction is always trivial for siegel} using the basis $1,x^2,\dots, x^{2n-2}$), we know the restriction of $h^\unitary$ to $R_a(F_v')^{\iota}$ has determinant $1$. On the other hand, the restriction of $h^{\unitary}$ to $R_a(F_v')^{-\iota}$  can be computed with respect to the basis $x,\dots, x^{2n-1}$ as follows. We can identify $R_a(F_v')^{\iota}\simeq R_a(F_v')^{-\iota}$ as $F_v'$-vector spaces by multiplying by $x$ and under this identification, the form $h^{\unitary}|_{R_a(F_v')^{-\iota}}$ is identified with the composition of $h^{\unitary}|_{R_a(F_v')^{\iota}}$ and the multiplication-by-$x^2$ endomorphism. As the former has determinant 1 and the latter has determinant $a_{2n}$, we conclude that $h^\unitary$ restricted to $R_a(F_v')^{-\iota}$ has determinant $a_{2n}$. Since $h^\unitary$ is defined so that $R_a(F_v')^{\iota}$ and $R_a(F_v')^{-\iota}$ are orthogonal with respect to $h^\unitary$, it follows that $\det(h^\unitary) = a_{2n}$, and the lemma follows.
\end{proof}

Finally, we prove that the obstruction class indeed captures the existence of rational points. This will end up being a non-vanishing statement for stable orbital integrals.

\begin{prop}
	\label{prop: obstruction measures existence of k points}
	The obstruction class $\Ob_v^\star(a)$ is trivial if and only if the RASF $\cM_v^\star(a)$ has a $k$ point. In particular, $\cM_v^\unitary(a)$ has a $k$ point if and only if $ \val(a^*\fD^\br) = \val(a_{2n})$ is even.
\end{prop}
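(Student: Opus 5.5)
The plan is to show both directions by comparing self-dual lattices with the Hermitian space $(R_a(F_v'),h^\star)$. Proposition \ref{prop: RASF lattice description} identifies $\cM_v^\star(a)$ with $\fX^\star_{[\gamma]}$ for the canonical lift $[\gamma]$. The $k$-points of $\fX^\star_{[\gamma]}$ are pairs $(\cE,\xi)$, where $\cE$ is an integral point of $[V^\star/H^\star]$ over $\cO_v$ and $\xi$ identifies its generic fiber with $[\gamma]$.

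For the forward direction, suppose $\Lambda\in\cM_v^\star(a)(k)$. Then $\Lambda$ is an $\cO_v'$-lattice in $R_a(F_v')$ that is self-dual for $h^\star$, so $(\Lambda,h^\star|_\Lambda)$ is a $\U_{2n}$-torsor over $\cO_v$. The unitary group scheme over $\cO_v$ is smooth with connected special fiber, since $F_v'/F_v$ is unramified and $p>2$. By Lang's theorem and Hensel's lemma its torsors over $\cO_v$ are therefore trivial. The generic fiber $(R_a(F_v'),h^\star)$ is consequently the trivial class in $H^1(F_v,\U_{2n})$, i.e.\ $\Ob_v^\star(a)=0$. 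Concretely, a self-dual lattice has a basis in which $\det h^\star$ is a unit, so $\val\det h^\star$ is even. This agrees with the invariant used in Lemma \ref{lem: obstruction is always trivial for siegel}.

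For the converse, assume $\Ob^\star_v(a)$ is trivial. In the Siegel case this always holds by Lemma \ref{lem: obstruction is always trivial for siegel}. There I would simply exhibit a point: Lemma \ref{lem: kostant section and canonical lift agree} identifies $\cM_v^\siegel(a)$ with $\fX_{[\e](a)}^\siegel$, and $[\e](a)\in V^\siegel(\cO_v)$ since $\e$ is defined over $k$ and $a$ is integral, so the identity coset gives a $k$-point. Alternatively, in the $\GL$-lattice terms of Example \ref{ex: rank 1 lattices}, one can build a self-dual $\iota$-stable $R_a(\cO_v')$-lattice directly. In the unitary case, Lemma \ref{lem: obstruction for unitary} says that triviality means $\val(a_{2n})$ is even. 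I would then construct a self-dual $\iota$-stable $R_a(\cO'_v)$-lattice of the form $\Lambda=\Lambda_+\oplus\Lambda_-$, with $\Lambda_\pm\subset R_a(F_v')^{\pm\iota}$. The natural choice is $\Lambda_+=R_a(\cO_v')^\iota$, which is $h^\unitary$-self-dual by the Euler formula computation in Lemma \ref{lem: obstruction for unitary} ($\det=1$). For $\Lambda_-$ we need $x\cdot\Lambda_+\subset\Lambda_-$ and $x\cdot\Lambda_-\subset\Lambda_+$, with $\Lambda_-$ self-dual for the form $h^\unitary(xf,xg)=h^\unitary_+(x^2f,g)$. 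Writing $\Lambda_-=xM$, this means finding an $S=R_a(\cO'_v)^\iota=\cO'_v[y]/(Q_a(y))$-lattice $M$ with $\Lambda_+\subset M\subset y^{-1}\Lambda_+$ that is self-dual for the form $h_+(yf,g)$.

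The crux, and the main obstacle, is producing this $M$. My first approach is a reduction to the fixed points of $\sigma$ on a finite-dimensional Lagrangian-type problem. The quotient $W=y^{-1}\Lambda_+/\Lambda_+$ is a module over $S/(y)$ of $\cO'_v$-length $\val(a_{2n})$. The form $y h_+$ induces a nondegenerate $\sigma$-Hermitian pairing on $W$ valued in $F'_v/\cO'_v$, and $M$ corresponds to a Lagrangian $S$-submodule of $W$. Since $y$ acts nilpotently and $W$ has even length, I would construct such a Lagrangian over $k$ inductively. At each step I take the socle line $\ker y\subset W$, which is $\sigma$-stable, and check that it is isotropic. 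Then I pass to $L^\perp/L$ and decrease the length by $2$, in the style of the Jordan-type argument for unitary Springer fibers in \cite{ngo-laumon}. The parity hypothesis is exactly what makes the induction terminate at $0$ rather than at a length-one anisotropic remainder. If the socle argument fails when $y$ is not cyclic on $W$, my fallback is Lang's theorem on the (connected) variety of Lagrangians, or the general principle that a $\U_{2n}$-torsor with trivial generic class admits an integral reduction compatible with the Lie-algebra element. In either version, the $\iota$-compatibility, i.e.\ stability under multiplication by $x$, is the subtle point to verify.
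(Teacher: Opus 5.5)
Your forward direction and your Siegel converse are the paper's argument: an integral self-dual lattice forces the class of $(R_a(F_v'),h^\star)$ to be trivial, and the Kostant--Rallis section gives a $k$-point.

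For the unitary converse you take a genuinely different, and more careful, route. The paper uses that $\val(a_{2n})$ is even to choose orthonormal bases $\cB_\pm$ of $R_a(F_v')^{\pm\iota}$, and sets $\Lambda=\Lambda_+\oplus\Lambda_-$. This lattice is self-dual and $\iota$-stable. However, the paper does not check that $\Lambda$ is an $R_a(\cO_v')$-module, and for $n>1$ an orthonormal lattice need not even be stable under $S=R_a(\cO_v')^\iota$.

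Your choice $\Lambda_+=S$ and $\Lambda_-=xM$, with $M$ an $S$-module satisfying $\Lambda_+\subset M\subset y^{-1}\Lambda_+$, builds stability under $x$ in from the start. Those two inclusions are exactly $x\Lambda_+\subset\Lambda_-$ and $x\Lambda_-\subset\Lambda_+$. So your closing worry about $x$-compatibility is already settled by your own reduction.

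The crux is also simpler than you anticipate. We have $S/(y)=\cO_v'[y]/(Q_a(y),y)\cong\cO_v'/(a_{2n})$. Hence $W=y^{-1}S/S\cong S/yS\cong\cO_v'/\varpi_v^m$ with $m=\val(a_{2n})$, and $y$ acts on $W$ by zero. So ``the socle line $\ker y$'' is all of $W$; the socle you want is $W[\varpi_v]$.

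It follows that the $S$-submodules of $W$ are exactly the chain $\varpi_v^jW$. Perfectness of the pairing gives $(\varpi_v^jW)^\perp=\varpi_v^{m-j}W$. Therefore a Lagrangian exists if and only if $m$ is even, and it is then $\varpi_v^{m/2}W$. This yields the explicit point
\[
\Lambda=S\oplus x\bigl(S+\varpi_v^{m/2}y^{-1}S\bigr)\in\cM_v^\unitary(a)(k),
\]
so neither the inductive isotropic reduction nor the Lang-type fallback is needed.

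The paper's route is shorter, relying only on the classification of Hermitian forms over $F_v'/F_v$. But as written it produces only a self-dual $\iota$-stable $\cO_v'$-lattice, not a point of the RASF. Yours actually produces the required $R_a(\cO_v')$-lattice.
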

\begin{proof}
	We always have $\Ob^\siegel_v(a) = 0$ by Lemma \ref{lem: obstruction is always trivial for siegel}. Moreover, by the existence of the Kostant-Rallis section, we have $\cM_v^\siegel(a)(k)\neq \emptyset$ for all $a\in \fc^\heartsuit(\cO_v)$. Therefore, the only nontrivial statement is for $\star = \unitary$.
		
	Suppose first that there exists a $h^\unitary$-self-dual lattice $\Lambda$ such that $\iota(\Lambda) = \Lambda$. Then, the Hermitian form $h^\unitary$ restricts to a perfect pairing on $\Lambda$, and so must have determinant a norm. Hence, the $\U_{2n}$ bundle $(R_a(F_v'),h^\unitary)$ is trivial, and $\Ob_v(a) = 0$. We note that this argument proves the stronger claim that, if $\Ob_v(a)\neq 0$, then there are no $h^\unitary$-self-dual lattices (regardless of stability under $\iota$).
	
	Now, suppose that $\Ob_v(a) = 0$. Then, $(R_a(F_v'),h^\unitary)$ is the trivial $\U_{2n}$ bundle, and moreover, by our formula for $h^\unitary$ and the proof of Lemma \ref{lem: obstruction for unitary}, the restriction of $h^\unitary$ to $R_a(F_v')^{\iota}$ and to $R_a(F_v')^{-\iota}$ are each isometric to the trivial Hermitian form. That is, there exist bases $\cB_{\pm}$ for $R_a(F_v')^{\pm\iota}$ with respect to which $h^\unitary$ has matrix the identity $I_n$ when restricted to $R_a(F_v')^{\pm\iota}$. In particular, we can consider the lattices $\Lambda_\pm\subset R_a(F_v')^{\pm\iota}$, defined by
	\[
\Lambda_\pm = \bigoplus_{e\in \cB_\pm}\cO_v'\cdot e	
	\]
	Then $\Lambda = \Lambda_+\oplus \Lambda_-$ is self dual and satisfies $\iota(\Lambda) = \Lambda$.
\end{proof}

\subsection{Symmetries of RASFs}
\label{sec: symmetries of RASFs}

Let $a\in \fc^\heartsuit(\cO_v):=\fc^\rs(F_v)\cap \fc(\cO_v)$. For any group $J_a'$ over $\cO_v$ with generic fiber $J_a'(F_v)\simeq J_a(F_v)$ agreeing with the group scheme of regular centralizers $J_a$, we consider the affine Grassmannian
\[
\cP_v(J_a') := \Gr_{J_a'} = J_a(F_v)/J_a'(\cO_v)
\]
For each $\gamma\in V^\star(F_v)$ lifting $a$, we obtain an identification of the fiber $J_a(F_v)$ with the centralizer $I_{\gamma}\subset H^\star(F_v)$. The latter group acts on the scheme $\fX_\gamma^\star$ by left multiplication. This action factors through $\cP_v(J'_a)$, and we deduce the following.

\begin{prop}
	\label{lem: symmetries of RASF}
	For any $\gamma\in V^\star(F_v)$ lifting $a$, the group scheme $\cP_v(J'_a)$ acts on $\fX_\gamma^\star$. In particular, there is an action of $\cP_v(J'_a)$ on  $\cM_v^\star(a)$.
\end{prop}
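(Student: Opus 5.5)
We plan to argue directly from the definitions and reduce everything to the action of the centralizer $I_\gamma$ on the affine Grassmannian $\Gr_{H^\star}$. First, fix a lift $\gamma\in V^\star(F_v)$ of $a$. Since $a$ is regular semisimple over $F_v$, $\gamma\in V^{\star,\rs}(F_v)\subset V^{\star,\reg}(F_v)$. Theorem \ref{thm: regular centralizers for rational forms} then gives a canonical isomorphism $(\chi^\reg)^*J\simeq I^\reg$ over $k$, whose fiber at $\gamma$ is an isomorphism of $F_v$-groups $J_a\otimes F_v\simeq I_\gamma$. We use it to identify $J_a(F_v)=J'_a(F_v)$ with $I_\gamma(F_v)\subset H^\star(F_v)$, and more generally $J_a(F_v\hat\otimes_k R)$ with $I_\gamma(F_v\hat\otimes_k R)$ for $k$-algebras $R$.

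Next, we check that left multiplication preserves the moduli problem \eqref{eqn: RASF functor of points with lift}. If $j\in I_\gamma(F_v\hat\otimes R)$ and $h\in \Gr_{H^\star}(R)$ satisfies $h^{-1}\cdot\gamma\in V^\star(R\hat\otimes\cO_v)$, then
\[
(jh)^{-1}\cdot\gamma=h^{-1}j^{-1}\cdot\gamma=h^{-1}\cdot\gamma .
\]
Thus $I_\gamma$ (or more precisely its loop group $LI_\gamma$) acts on $\fX^{\star,\mathrm{pre}}_\gamma$. Because $LI_\gamma$ is reduced-compatible, this action passes to the reduced subfunctor $\fX^\star_\gamma$.

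To get an action of the affine Grassmannian $\cP_v(J'_a)=LJ_a/L^+J'_a$ rather than of the loop group, the natural route is the moduli reformulation of $\fX^\star_{[\gamma]}$ as pairs $(\cE,\xi)$. We would first let $J_a$-torsors over $\cO_v\hat\otimes R$ trivialized on the punctured disc act by twisting. Concretely, we use the canonical map $\chi^*J\to I$ of Theorem \ref{thm: regular centralizers for rational forms}: an integral point $\cE$ over $\cO_v\hat\otimes R$ with invariant $a$ receives a map $J_a\to \Aut(\cE)$ over $\cO_v\hat\otimes R$. So for a $J'_a$-torsor $\cT$ together with a generic trivialization, we can form $\cT\times^{J'_a}\cE$. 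Here we need $J'_a\to J_a$ over $\cO_v$, which holds if $J'_a$ is an integral model mapping to the Néron-type model $J_a=a^*J$. The intended examples are $J_a$ itself and its finite-type/connected variants. If $J'_a$ is a different model, we pass through $J'_a(\cO_v)\subset J_a(F_v)$ stabilizing lattices. The twisted object comes with the induced trivialization $\xi$ on $\Sigma_v^\bullet\hat\otimes R$, and this defines the action. Equivalently, in group-theoretic terms, the subgroup $J'_a(\cO_v\hat\otimes R)$ acts trivially modulo the identification with $H^\star(\cO_v\hat\otimes R)$-cosets: an element $j$ integral on $\cO_v$ satisfies $jh=h(h^{-1}jh)$, and $h^{-1}jh\in I_{h^{-1}\gamma}$ is integral because it is the image of $j$ under $J_a(\cO_v)\to I_{h^{-1}\gamma}(\cO_v)$, coming from the map $\chi^*J\to I$ at the integral point $h^{-1}\gamma$.

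Finally, we transfer the action to $\cM^\star_v(a)$ via Proposition \ref{prop: RASF lattice description}. In lattice terms the action is simply $\Lambda\mapsto j\Lambda$ for $j$ in the norm-one units of $(R_a(F'_v)^{\iota})^\times$. This preserves $R_a(\cO')$-stability and $\iota$-stability, and it preserves $h^\star$-self-duality since $j\sigma(j)=1$. As a check, this action is independent of the choice of $\gamma$ by conjugation invariance.

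The main obstacle is the integrality step, namely showing that $J'_a(\cO_v)$ acts trivially on cosets, i.e.\ that it lands in $h H^\star(\cO_v)h^{-1}$. This relies on extending $\chi^*J\to I$ over non-regular integral points, which Theorem \ref{thm: regular centralizers for rational forms} supplies.
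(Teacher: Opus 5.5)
Your proposal is correct and follows the paper's argument. You identify $J_a(F_v)$ with $I_\gamma\subset H^\star(F_v)$, let it act on $\fX^\star_\gamma$ by left multiplication, check that the action factors through $\cP_v(J'_a)$, and transport it to $\cM^\star_v(a)$ via Proposition~\ref{prop: RASF lattice description}. Your integrality step via $\chi^*J\to I$ supplies the detail the paper leaves implicit; more directly, norm-one units of $R_a(\cO'_v)^{\iota}$ fix every $R_a(\cO'_v)$-lattice.

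You are also right that a map $J'_a\to J_a$ over $\cO_v$ is needed, as the paper assumes explicitly later. So drop the hedge about ``a different model''. For the finite-type N\'eron model, whose integral points are norm-one units of the normalization, the action does not factor, because those units do not in general preserve $R_a(\cO'_v)$-lattices. Note also that $a^*J$ is the regular-centralizer model, not a N\'eron-type one.
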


Analogously to the classical setting, this action consists of a free action by a lattice in $\cP_v(J'_a)$ together with a an action of a group scheme of finite type. More precisely, for $a\in \fc^\heartsuit(\cO_v)$, we let $\Gamma_a$ denote the lattice $\Gamma_a = \mathbb{X}_*(J_a)_{\breve F_v}$ of $\breve F_v$-rational cocharacters. Then $\Gamma_a$ inherits an action of Frobenius, and hence descends to an ind-scheme over $k$. If $\gamma\in V^\star(F_v)$ is a lift of $a$, and $I_\gamma\subset H^\star(F_v)$ its centralizer group, then we have a canonical identification $\Gamma_a \simeq \mathbb{X}_*(I_\gamma)_{\breve F_v}$. More explicitly, suppose that we have a factorization
\[
R_a(F_v)^{\iota} \simeq \prod_{j=1}^r K_j
\]
where $K_j$ is a finite extension of $F_v$ for all $j$. Let $\varpi_j$ be a uniformizer of $K_j$. Then, $(m_1,\dots, m_r)\in \Gamma_a\otimes_k k'\simeq \Z^r$ acts on $\cM_v^\star(a)\otimes_k k'$ by multiplication by $(\varpi_1^{m_1},\dots, \varpi_r^{m_r})\in R_a(F_v)^{\iota}$.

\begin{prop}
    \label{prop: lattice action on RASF}
    Fix $a\in \fc^\heartsuit(\cO_v)$.
    \begin{enumerate}
        \item The action of $\Gamma_a$ on $\cM_v^\star(a)$ is free;
        \item The quotient $\cM_v^\star(a)/\Gamma_a$ is a finite-type projective scheme, and $\cP_v(J_a)/\Gamma_a$ is a commutative group scheme of finite type;
        \item The stabilizers of the action of $\cP_v(J_a)$ on $\cM_v^\star(a)$ are finite type and affine.
    \end{enumerate}
\end{prop}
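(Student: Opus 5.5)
The plan is to reduce everything to the classical unitary affine Springer fiber of Laumon--Ng\^o, using the fact (noted in the introduction) that $\cM_v^\star(a)$ is the closed sub-ind-scheme of $\iota$-stable lattices inside the unitary affine Springer fiber $\cN_v(a)$ attached to $(R_a(F_v'),h^\star,[x])$. Equivalently, it is a closed subfunctor of the ind-scheme of all $R_a(\cO_v')$-lattices in $R_a(F_v')$. After base change to $\ol k$ the Hermitian structure becomes a splitting condition, and the statements become statements about $\GL_{2n}$-lattices. Freeness, finiteness and affineness can all be checked geometrically, so I will work over $\ol k$ throughout.

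For (1), write $R_a(\breve F_v')^\iota\simeq\prod_j K_j$. Let $(m_j)\in\Gamma_a$ act by multiplication by $(\varpi_j^{m_j})$. Each such element lies in $J_a(\breve F_v)$, which is a norm-one subtorus of $(R_a^\iota)^\times$ over $\breve F_v'$, so it preserves self-duality and $\iota$-stability. If $\varpi^m\Lambda=\Lambda$ for some lattice $\Lambda$, then $\varpi^m$ lies in the stabilizer $\mathrm{Stab}(\Lambda)\subset (R_a(\breve F'_v))^\times$. That stabilizer is compact, since it is contained in $\Aut_{\cO'}(\Lambda)$, so $\varpi^m$ must have valuation zero in each factor $K_j$ (equivalently, in each factor of $R_a(\breve F_v')$ lying over $K_j$). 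This forces $m=0$, which gives freeness.

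For (2) and (3), I will imitate \cite[\S3]{kl} and \cite[Prop.~3.4.1]{ngo}. First, the orbits of $\Gamma_a$ on the set of lattices satisfying $\Lambda_0\subset\Lambda\subset\Lambda_0^{\vee}$-type bounds are controlled by the following comparison. Let $\wt\cO$ be the normalization of $R_a(\cO_v')$. Every $R_a(\cO')$-lattice $\Lambda$ satisfies $\varpi^{N}\wt\cO\Lambda\subset\Lambda\subset\wt\cO\Lambda$, with $N$ bounded by the conductor, and hence by $\delta$-invariants depending only on $a$. The lattice $\wt\cO\Lambda$ is an invertible $\wt\cO$-module, so it equals $t\cdot\wt\cO$ with $t\in\prod(\text{fields})^\times$. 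Next, I use the constraints $\iota(\Lambda)=\Lambda$ and self-duality to move $t$, up to a bounded error, into the image of $\Gamma_a$. Self-duality pins down the norm part of $t$, and $\iota$-invariance lets me average. The consequence is that every $\Gamma_a$-orbit meets a fixed bounded family of lattices $\{\Lambda:\varpi^{M}\wt\cO\subset\Lambda\subset\varpi^{-M}\wt\cO\}$, which is a projective finite-type closed subscheme of a Grassmannian. Its image surjects onto $\cM_v^\star(a)/\Gamma_a$, and combined with (1) this shows the quotient is a finite-type projective scheme. The same bound applied to $J_a(\breve F_v)/J_a(\cO_v)$ gives $\cP_v(J_a)/\Gamma_a$ of finite type, as an extension of a finite group by the affine group $\wt J(\cO)/J_a(\cO)$. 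For (3), the stabilizer of $\Lambda$ in $\cP_v(J_a)$ meets $\Gamma_a$ trivially by (1). It embeds into $(\wt J(\cO)\cap\Aut(\Lambda))/J_a(\cO)$, which is a quotient of affine finite-type groups through a finite-dimensional jet quotient, so it is affine of finite type.

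The main obstacle is the middle step of (2): showing that the $\Gamma_a$-translates cover all lattices modulo a bounded region. $\Gamma_a$ is only the cocharacter lattice of the unitary-type torus $J_a$, which has rank equal to the number of factors $K_j$ of $R_a^\iota$ over which the extension is not a norm-one twist. So I must check, factor by factor, that self-duality kills the remaining directions, i.e.\ that the valuation of $t$ in the $\sigma$-conjugate factors is determined up to bounded error. I would do this first by reducing to the corresponding statement for $\cN_v(a)$ proved in \cite{ngo-laumon}, and then noting that $\iota$-stable lattices form a closed subset on which the same $\Gamma_a$ (the $\iota$-invariant part of their lattice) still acts cocompactly.
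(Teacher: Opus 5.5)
Your outline is sound and has the same skeleton as the paper. It diverges at exactly the step you flag as the main obstacle.

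\textbf{Where you agree with the paper.}
\begin{itemize}
\item For (1), the paper tracks the invariant $v(\Lambda)\in\Gamma=(\prod_j\Z)^\iota$, defined by $\wt R\Lambda=f_\Lambda\wt R$. It uses $v(\check\lambda\cdot\Lambda)=v(\check\lambda(\varpi_v))+v(\Lambda)$ together with injectivity of $v$ on $\Gamma_a$. This has the same content as your observation that a lattice stabilizer lies in $\wt R^\times$.
\item For (2), the paper uses the same conductor sandwich $\varpi_v^c f_\Lambda\wt R\subset\Lambda\subset f_\Lambda\wt R$, with $c=\val\mathrm{Disc}(P_a)$ coming from $\wt R\subset R^\vee=(P_a')^{-1}R$. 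It also uses the same extension of a lattice by $(\wt{R^\iota})^\times/(R^\iota)^\times$ to handle $\cP_v(J_a)/\Gamma_a$.
\item For (3), the paper embeds stabilizers into $\cP_v(J_a)/\Gamma_a$. You instead identify them directly as closed subgroups of $\wt J(\cO)/J_a(\cO)$, which is slightly sharper.
\end{itemize}

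\textbf{Where you differ.} The paper never invokes self-duality in (2). It asserts that $v\colon\Gamma_a\to\Gamma$ has finite cokernel ``because $\Gamma_a$ and $\Gamma$ are free of the same rank.'' It then covers $\cM_v^\star(a)$ by boxes $Y_s$ of $\iota$-stable lattices, one for each $s\in\Gamma/\Gamma_a$. After base change to $\ol k$, however, $R=R_a(\breve\cO_v)\times R_a(\breve\cO_v)$, and $\Gamma$ has twice the rank of $\Gamma_a$. Already for $n=1$ one has $\Gamma\simeq\Z^2$ but $\Gamma_a\simeq\Z$. So $\iota$-stable lattices alone are not cocompact under $\Gamma_a$. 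Your point that self-duality is what cuts the possible values of $v(\Lambda)$ down to finitely many $\Gamma_a$-cosets is correct. It supplies a step the paper's argument passes over.

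\textbf{How to carry out that step.} I would do it directly rather than through Laumon--Ng\^o.
\begin{itemize}
\item Over $\ol k$, write $\Lambda=\Lambda_1\times\Lambda_2$. The form $h^\star$ pairs the two copies, so self-duality says $\Lambda_2$ is the dual of $\Lambda_1$. Hence $\wt R\Lambda_2$ is determined by $\wt R\Lambda_1$ up to an error bounded in terms of $c$ and the different.
\item $v(\Lambda_1)$ is exactly $\iota$-invariant, because $\wt R\Lambda$ is $\iota$-stable, so no averaging is needed.
\item The image of $\Gamma_a$ has finite index in the $\iota$-invariant valuation vectors of a single copy.
\end{itemize}
If you do route through $\cN_v(a)$, note that cocompactness of the smaller lattice $\Gamma_a$ on the closed subset of $\iota$-stable points is not formal. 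You must translate only by $\iota$-invariant elements. This is precisely the averaging $2v(\Lambda)\approx v(\lambda\,\iota(\lambda))$ for $\lambda$ in the Laumon--Ng\^o lattice, and it should be written out.

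\textbf{A small correction.} $\Gamma_a$ consists of $\breve F_v$-rational cocharacters, and $F_v'$ splits over $\breve F_v$. So its rank is simply the number of factors of $R_a(\breve F_v)^\iota$, and the ``not a norm-one twist'' qualifier should be dropped.
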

\begin{proof}
    As all claims are geometric, we will assume throughout that $k=\ol k$. Put $c = \val\bigg( \mathrm{Disc}(P_a)\bigg)$. The proof will be via explicit computation. For simplicity throughout this proof, we denote $R= R_a(\cO_v')\otimes_k\ol k$ and $\wt{R}$ the normalization of $R$. Note that our assumption that $k=\ol k$ does not mean that we work with $R_a(\breve\cO_v)$, but rather $R_a(\cO_v')\otimes_k\ol k\simeq R_a(\breve\cO_v)\oplus R_a(\breve\cO_v)$. If we decompose 
    \[
R\otimes_{\cO_v'}F_v' = R_a(F_v')\simeq \prod_j E_j
    \]
    as a product of finite extensions $E_i$ of $F_v'$, then $\wt{R}\simeq \prod_j\cO_{E_j}$. We denote by
    \[
v\colon R_a(F_v')^\times/\wt{R}^\times\xrightarrow{\sim} \prod_j \Z
    \]
    the valuation map on each $E_j^\times$. As $\wt{R}$ is stable under the involution $\iota$, we can transfer $\iota$ to the right-hand side, and so can define $\Gamma = \left( \prod_j \Z \right)^\iota$. The valuation map $v$ determines an embedding $v\colon \Gamma_a\to \Gamma$, which because $\Gamma_a$ and $\Gamma$ are free abelian groups of the same rank, has finite cokernel. 

    Since $\wt{R}$ is integral, we must have $\wt{R}\subset R^\vee$, where $R^\vee = (P_a')^{-1}\cdot R$ is the dual $\cO_v'$-lattice to $R$ in $R_a(F_v')$, where $P_a$ is as in \eqref{eqn: def of R_a} and $(P_a')^{-1}\cdot R$ denotes the fractional ideal generated by $(P_a')^{-1}$. As $\mathrm{Disc}(P_a)\in P_a'\cdot R$, it follows that $\varpi_v^c\wt{R}\subset R$. On the other hand, since every $\wt{R}$-lattice in $R_a(F_v')$ is principal for every $R$-lattice $\Lambda$, there exists some $f_\Lambda\in R_a(F_v')$ such that $\wt{R}\cdot \Lambda = f_\Lambda\cdot \wt{R}$. In particular, we can define $v(\Lambda) := v(f_\lambda)$. Note that $v(\Lambda)\in \Gamma$ when $\Lambda$ is an $\iota$-fixed lattice. From the inclusion $\varpi_v^c\wt{R}\subset R$ of the previous paragraph, we get
    \begin{equation}
        \label{eqn: inclusion of lattices}
        \varpi_v^cf_\Lambda\cdot \wt{R}\subset \Lambda\subset f_\Lambda\cdot \wt{R}.
    \end{equation}

    Now, let us prove (1). If $\check\lambda\cdot \Lambda = \Lambda$ for $\check\lambda\in \Gamma_a$ and $\Lambda\in \cM_v^\star(a)(\ol k)$, then taking valuations gives 
    \[
    v(\Lambda) = v(\check\lambda\cdot \Lambda) = v(\check\lambda(\varpi_v))+v(\Lambda),
    \]
    so that $v(\check\lambda(\varpi_v)) = 0$. Since $v$ is injective on $\Gamma_a$, this implies that $\check\lambda=0$, and so $\Gamma_a$ acts freely on $\cM_v^\star(a)$.

    We prove (2) by constructing a finite type, projective subvariety of $\cM_v^\star(a)$ which contains a fundamental domain for the $\Gamma_a$-action. For each of the finitely many cosets $s\in \Gamma/\Gamma_a$, we take a representative $f_s\in R_a(F_v')^\times$. Define $Y_s$ to be the moduli of all $\iota$-fixed $R$-lattices $\Lambda$ such that
    \[
    \varpi_v^cf_s\cdot \wt{R}\subset \Lambda\subset f_s\cdot \wt{R}
    \]
    We can view $Y_s$ as a closed subvariety of the Grassmannian of the finite dimensional vector space $f_s\cdot \wt{R}/\varpi_v^cf_s\cdot \wt{R}$, so $Y_s$ is a projective algebraic variety of finite type. Consider the space
    \[
    Y_\cM := \cM_v^\star(a)\cap \left( \bigcup_{s\in \Gamma/\Gamma_a} Y_s \right).
    \]
    Then, for any $\Lambda\in \cM_v^\star(a)$, pick $\check\lambda\in \Gamma_a$ and $s\in \Gamma/\Gamma_a$ such that $v(\Lambda)-v(\check\lambda(\varpi_v)) = v(f_s)$. Then, $\wt{R}\cdot (\check\lambda^{-1} \Lambda) = f_s\cdot\wt{R}$, and so by \eqref{eqn: inclusion of lattices}, we see that $\check\lambda^{-1}\cdot \Lambda\in Y_s\subset Y_\cM$. We conclude that $Y_\cM$ contains a fundamental domain for the $\Gamma_a$-action, and part (2) follows.

    Consider now the symmetry group $\cP_v(J_a)$. We compute $\cP_v(J_a)\simeq (R^\iota\otimes_{\cO_v'}F_v')^\times/(R^\iota)^\times$. Therefore, $\cP_v(J_a)$ fits into a short exact sequence
    \begin{equation}
    \label{eqn: decomposition of P(Ja) into lattice and finite parts}
    0\to (\wt{R^\iota})^\times/(R^\iota)^\times\to \cP_v(J_a)\to (R^\iota\otimes_{\cO_v'}F_v')^\times/(\wt{R^\iota})^\times\to 0
    \end{equation}
    where $\wt{R^\iota}$ is the normalization of $R^\iota$. In particular, $(R^\iota\otimes_{\cO_v'}F_v')^\times/(\wt{R^\iota})^\times\simeq \prod_i\Z$ is a product of copies of $\Z$ since $(\wt{R^\iota})^\times$ splits as a product of DVR's, and the the composition $\Gamma_a\to (R^\iota\otimes_{\cO_v'}F_v')^\times/(\wt{R^\iota})^\times$ is injective between abelian groups of the same rank and hence is of finite index. Therefore, in the resulting exact sequence
    \[
0\to (\wt{R^\iota})^\times/(R^\iota)^\times\to \cP_v(J_a)/\Gamma_a\to \left[(R^\iota\otimes_{\cO_v'}F_v')^\times/(\wt{R^\iota})^\times\right]/\Gamma_a\to 0,
    \]
    the cokernel is finite and the kernel is affine and of finite type. Hence, $\cP_v(J_a)/\Gamma_a$ is finite type. By (1), the stabilizer subgroup of any $m\in \cM_v^\star(a)(\ol k)$ under the action of $\cP_v(J_a)$ can be viewed as a subgroup of $\cP_v(J_a)/\Gamma_a$, which necessarily must be affine and of finite type since it is an extension of a finite type, affine group by a a finite group.    
\end{proof}

\begin{remark}
    In contrast to the case of usual affine Springer fibers, the lattice $\Gamma_a$ does not act transitively on the set of irreducible components of $\cM_v^\star(a)$. However, for each connected component of $\cM_v^\star(a)$, we will see that the lattice action is free and transitive on the set of irreducible components \emph{in that connected component} of $\cM_v^\star(a)$.
\end{remark}

\subsection{Dimension in the split and transverse cases}
\label{sec: split case}

The relative affine springer fibers $\cM_v^\star(a)$ are particularly simple in the case of \emph{split} $a\in \fc^\heartsuit(\cO_v)$. We will give an explicit description of the dimension and component groups in this case, which we will use in Section \ref{sec: dimension formula} to describe a general dimension formula. 

\begin{definition}
\label{def: split}
	We say $a\in \fc^\heartsuit(\cO_v)$ is \emph{split} if the centralizer $J_a$ is a split torus over $F_v$.
\end{definition} 

Recall that we use a superscript breve to denote base change to $\ol k$; for example $\breve{F}_v=F_v\hat{\otimes}_k\ol k$, $\breve\cO_v = \cO_v\hat{\otimes}_k\ol k$, etc. 

\begin{lem}
	\label{lem:  split representatives}
	Let $a\in \fc^\heartsuit(\cO_v)$ be split. Then, there is a lift $\gamma\in \fh^\perp(\breve F_v)$ of $a$ of the form
	\[
	\gamma = \begin{pmatrix}
		0 & A \\
		I_n & 0
	\end{pmatrix}	\quad \text{where }A = \begin{pmatrix}
		u_1\varpi_v^{\delta_1} & & & \\
		& u_2\varpi_v^{\delta_2} & & \\
		& & \ddots & \\
		& & & u_n\varpi_v^{\delta_n}
	\end{pmatrix}
	\]
	where $0\leq \delta_1\leq \cdots \leq \delta_n$ and $u_i\in \cO_v^\times$ such that $u_i\neq u_j$ whenever $\delta_i=\delta_j$. This lift is unique up to reordering of pairs $u_i,u_j$ such that $\delta_i=\delta_j$
\end{lem}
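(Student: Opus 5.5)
Since $a$ is split, the centralizer $J_a$ is a split torus over $F_v$. By Theorem \ref{thm: regular centralizers for rational forms} and Proposition \ref{prop: universal spectral}, over $\breve F_v$ this means $R_a(\breve F_v)^\iota = \breve F_v[t]/(Q_a(t))$ splits as a product of $n$ copies of $\breve F_v$. So $Q_a(t) = \prod_{i=1}^n (t - \lambda_i)$ with distinct roots $\lambda_i \in \breve F_v$. These roots are integral because $a \in \fc(\cO_v)$, and they are nonzero because $a$ is regular semisimple, i.e. $a_{2n} \neq 0$. I would then write $\lambda_i = u_i \varpi_v^{\delta_i}$ with $u_i \in \breve\cO_v^\times$, reorder so that $\delta_1 \le \cdots \le \delta_n$, and put $A = \mathrm{diag}(\lambda_i)$.

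The matrix $\gamma$ in the statement lies in $\fh^\perp(\breve F_v)$. By Lemma \ref{Lem: cat quot split}, $\chi(\gamma)$ is computed from the characteristic polynomial of $XY = A \cdot I_n = A$. Hence $\chi(\gamma)$ has $b_i = (-1)^i \tr \bigwedge^i A$, which are exactly the coefficients of $\prod_i (t - \lambda_i) = Q_a(t)$. So $\gamma$ lifts $a$.

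It remains to check the conditions on the $u_i$. The $\lambda_i$ are distinct since $a \notin \fD^\cent$ (regular semisimplicity). If $\delta_i = \delta_j$ for some $i \neq j$, then $u_i \neq u_j$ follows from $\lambda_i \neq \lambda_j$. Depending on the intended meaning, one might want the stronger condition that the residues of $u_i$ and $u_j$ differ. That is not automatic, so I read the statement as only requiring $u_i \neq u_j$ as units, which the argument gives. I would also note, via Remark \ref{rem: RASF computed with lift}, that any $\breve F_v$-lift is enough for computing the RASF geometrically.

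For uniqueness, the normal form is built only from the multiset $\{\lambda_i\}$, which is determined by $a$. The only remaining freedom is the choice of ordering. Requiring $\delta$ to be nondecreasing fixes the order except among indices with equal $\delta_i$, so the lift is unique up to permuting such pairs $(u_i, u_j)$. The main (mild) obstacle is making sure the splitting of $J_a$ really yields roots of $Q_a$ in $\breve F_v$ and not just roots of $P_a$. This follows directly from the description of $J$ as the Weil restriction from $\fs/\iota$, whose coordinate ring is given by $Q_a$.
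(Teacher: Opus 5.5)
Your proof is correct, but it takes a different route from the paper's.

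\textbf{Your route.} You build the normal form directly from the roots.
\begin{itemize}
\item Splitness of $J_a$ over $\breve F_v$, through $J\otimes_k\ol k\simeq \mathrm{Res}^{\fs/\iota}_{\fc}(\G_m)$, gives $n$ distinct nonzero roots $\lambda_i\in\breve\cO_v$ of $Q_a$.
\item The explicit formula for $\chi$ shows that $\begin{pmatrix}0&\mathrm{diag}(\lambda_i)\\ I_n&0\end{pmatrix}$ lifts $a$.
\item Uniqueness comes from $\mathrm{char}(A)=Q_a$.
\end{itemize}

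\textbf{The paper's route.} It starts from an arbitrary lift $\gamma=\begin{pmatrix}0&A\\ B&0\end{pmatrix}$ and uses that $I_\gamma\simeq J_a(\breve F_v)$ is split.
\begin{itemize}
\item It conjugates so that both projections of $I_\gamma$ to $\GL_n(\breve F_v)$ are the diagonal torus.
\item This forces $AB$ and $BA$ to be diagonal, and $A$, $B$ to be invertible.
\item It then conjugates by $\mathrm{diag}(I_n,B)$.
\end{itemize}

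\textbf{Comparison.} Your argument is shorter and more transparent: the invariants $\delta_i$ and $u_i$ are just the valuations and unit parts of the roots of $Q_a$. The paper's argument proves slightly more, namely that every lift of $a$ in $\fh^\perp(\breve F_v)$ is $H(\breve F_v)$-conjugate to the normal form. That is what the next proposition uses when it computes $\cM_v^\star(a)\otimes_k\ol k$ with this particular $\gamma$. You recover the same point from the remark on independence of the lift. You could also get it directly, since $H^1(\breve F_v,J_a)=0$ for a split torus by Hilbert 90.

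You are also right to read $u_i\neq u_j$ as inequality of units rather than of residues. The paper later uses a split $a$ with $\delta_i=\delta_j=0$ and $\val(u_i-u_j)=1$, where the residues coincide.
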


\begin{proof}
	Choose any lift $\gamma\in \fh^\perp(\breve F_v)$ of $a$. We have a canonical identification $I_\gamma\simeq J_a(\breve F_v)$, which is a split torus over $\breve F_v$ by assumption. Hence, the image of $I_\gamma$ under the each of the two projections $H(\breve F_v) = \GL_n(\breve F_v)\times \GL_n(\breve F_v)\to \GL_n(\breve F_v)$ is a split maximal torus of $\GL_n(\breve F_v)$, and up to $H(\breve F_v)$ conjugacy, we may assume that the image of $I_\gamma$ under each projection is the standard maximal torus consisting of diagonal matrices. Now, let
	\[
	\gamma = \begin{pmatrix}	
		0 & A \\ B & 0
	\end{pmatrix}
	\]
	and denote $T_{\GL_n}\subset \GL_n$ and $T\subset H$ the standard diagonal maximal tori. Then, it is easy to see that $(g,h)\in I_\gamma\subset T(\breve F_v)$ implies that $g\in C_{\GL_n(\breve F_v)}(BA)$ and $h\in C_{\GL_n(\breve F_v)}(AB)$. In particular, we see that $T_{\GL_n}(\breve F_v)\subset C_{\GL_n(\breve F_v)}(BA)$ and $T_{\GL_n}(\breve F_v)\subset C_{\GL_n(\breve F_v)}(AB)$. Since we have an equality of characteristic polynomials (with $\lambda$ a variable)
    \[
    \mathrm{char}(\gamma,\lambda) = \mathrm{char}(AB,\lambda^2) = \mathrm{char}(BA,\lambda^2),
    \]
    it follows that $AB$ and $BA$ are regular, semisimple over $\breve F_v$ with no zero eigenvalues. Hence, we conclude that $AB$ and $BA$ are diagonal, and since $A$ and $B$ have no zero eigenvalues, $A$ and $B$ are invertible. Therefore, we have
\[
\begin{pmatrix}	
		I_n & 0 \\ 0 & B^{-1}
	\end{pmatrix}\begin{pmatrix}	
		0 & A \\ B & 0
	\end{pmatrix}\begin{pmatrix}	
		I_n & 0 \\ 0 & B
	\end{pmatrix} = \begin{pmatrix}	
		0 & AB \\ I_n & 0
	\end{pmatrix}
\]
where $AB$ is diagonal, regular semisimple. This representation is unique up to reordering of the diagonal entries of $AB$, and so the lemma follows. 
\end{proof}

We now prove a dimension formula for affine Springer fibers for split $a$. Note that, by Remark \ref{rem: RASF computed with lift}, we can compute $\cM_v^\star(a)\otimes_k\ol k\simeq \fX^\star_\gamma$ using the lift $\gamma\in V^\star(\breve F_v) = \fh^\perp(\breve F_v)$ of Lemma \ref{lem:  split representatives}.

We may identify $\fc\simeq \fc_{\GL_n}$, with the corresponding $\G_m$ actions doubled in $\fc$. More explicitly, for $a = (a_1,\dots, a_{n})\in \fc_{\GL_n}$ thought of as a characteristic polynomial
\[
P_a(\lambda) = \lambda^{n} +a_1\lambda^{n-1}+\cdots +a_{n},
\]
we send $a$ to the same $n$-tuple in $\fc$, thought of as the polynomial $P_a(\lambda^2)$. If $a\in \fc$, we call $a_{\GL}\in \fc_{\GL_n}$ as above the \emph{associated element to $a$ in $\fc_{\GL_n}$.} 

The following (without component groups) appears in \cite{polak}. We give a simpler argument here.

\begin{prop}
	\label{prop: dim formula split case}
	Let $a\in \fc^\heartsuit(\cO_v)$ be split with $a_{\GL}$ the associated element in $\fc_{\GL_n}(\breve \cO_v)$. Then, the RASF $\cM_v^\star(a)$ is equidimensional with
	\begin{equation}
    \label{eqn: dim for split case}
	\dim\cM_v^\star(a) = \dim \cP_v(J_a) =\dim \cM_v(a_{\GL})
	\end{equation}
	where $\cM_v(a_{\GL})$ denotes the affine Springer fiber for the group $G=\GL_n$ over $a_{\GL}$. Moreover, if 
	\[
	\gamma = \begin{pmatrix}
		0 & A \\
		I_n & 0
	\end{pmatrix}	\quad \text{where }A = \begin{pmatrix}
		u_1\varpi_v^{\delta_1} & & & \\
		& u_2\varpi_v^{\delta_2} & & \\
		& & \ddots & \\
		& & & u_n\varpi_v^{\delta_n}
	\end{pmatrix}
	\]
	is a lift of $a$ as in Lemma \ref{lem:  split representatives}, then 
	\[
\mathrm{Irr}(\cM_v^\star(a)) \simeq \{(m^{(1)},m^{(2)})\in \Z^n\times \Z^n\colon 0\leq m_{i}^{(1)}-m_{i}^{(2)}\leq \delta_i\}
	\]
	with the action of the Frobenius $\sigma$ for $k$ coming from
	\[
	\sigma(m^{(1)},m^{(2)}) = \begin{cases}
		(-m^{(1)},-m^{(2)}) & \text{in the unitary case, $\star = \unitary$} \\
		(-m^{(2)},-m^{(1)}) & \text{in the Siegel case, $\star = \Siegel$} \\
	\end{cases}
	\]
\end{prop}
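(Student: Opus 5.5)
The approach is to work geometrically, over $\ol k$. By Remark \ref{rem: RASF computed with lift}, $\cM_v^\star(a)\otimes_k\ol k\simeq \fX^\star_\gamma$ for the split lift $\gamma$ of Lemma \ref{lem:  split representatives}. Over $\breve F_v$ we have $H=\GL_n\times\GL_n$, so a point $(g_1,g_2)\in\Gr_H$ is a pair of lattices $L_1=g_1\breve\cO_v^n$ and $L_2=g_2\breve\cO_v^n$. The condition $h^{-1}\cdot\gamma\in \fh^\perp(\breve\cO_v)$ for $\gamma=\begin{pmatrix}0&A\\ I_n&0\end{pmatrix}$ unwinds to the chain
\[
A L_2\subset L_1\subset L_2 .
\]
Equivalently, $L_1\oplus L_2$ is stable under multiplication by $x$, which matches the lattice description of Proposition \ref{prop: RASF lattice description}. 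For $n=1$ this recovers Example \ref{ex: rank 1 lattices}. Two consequences follow at once: $AL_2\subset L_2$, so $L_2$ is a point of the $\GL_n$ affine Springer fiber $\cM_v(a_{\GL})$ attached to the diagonal element $A$; and, since $A$ kills $L_2/AL_2$, the admissible $L_1$ over a fixed $L_2$ form the full Grassmannian of subspaces of the $\sum_i\delta_i$-dimensional space $L_2/AL_2$.

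The centralizer $I_\gamma$ is the diagonal torus $T\subset\GL_n$, embedded diagonally in $H$. Hence $\cP_v(J_a)\otimes\ol k\simeq T(\breve F_v)/T(\breve\cO_v)\times(\text{unipotent part})$, and the lattice $\Gamma_a=\Z^n$ acts on both $L_1$ and $L_2$ simultaneously by $\varpi_v^{\lambda}$. Because the $u_i$ are distinct whenever the $\delta_i$ agree, the eigenvalues $u_i\varpi_v^{\delta_i}$ of $A$ satisfy $\val(u_i\varpi_v^{\delta_i}-u_j\varpi_v^{\delta_j})=\min(\delta_i,\delta_j)$. The standard $\GL_n$ computation then gives
\[
\dim\cP_v(J_a)=\dim\cM_v(a_{\GL})=\sum_{i<j}\min(\delta_i,\delta_j),
\]
which is the second equality in \eqref{eqn: dim for split case}.

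To compute $\dim\cM_v^\star(a)$ and its components, I would use the action of the torus $T(\ol k)\times\G_m^{\mathrm{rot}}$ together with a Bia\l ynicki-Birula decomposition on each finite-type $\Gamma_a$-translate of the fundamental domain $Y_\cM$ built in Proposition \ref{prop: lattice action on RASF}. The fixed points are the pairs of coordinate lattices $L_k=\bigoplus_i\varpi_v^{m^{(k)}_i}\breve\cO_v$. The chain condition on these becomes exactly $0\le m^{(1)}_i-m^{(2)}_i\le\delta_i$, which is the index set in the statement. (One should check that $L_1\subset L_2$ and $AL_2\subset L_1$ really give this orientation of the inequality.) For each fixed point I would compute the dimension of its attracting cell from the tangent weights. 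The computation should split into a sum over pairs $i<j$ of rank-$2$ contributions, each equal to $\min(\delta_i,\delta_j)$ regardless of the fixed point; the diagonal ($i=j$) contributions should vanish, consistent with the $n=1$ case, where every point is isolated. This would show that every cell has dimension $\sum_{i<j}\min(\delta_i,\delta_j)$, so the cell closures are exactly the irreducible components, $\cM_v^\star(a)$ is equidimensional, and $\mathrm{Irr}$ is in bijection with the fixed points.

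The Frobenius action I would read off from the twists $\tau_1\sigma$ and $\tau_2\sigma$ of \S\ref{sec: rational structures} applied to the fixed lattices. In the unitary case $\tau_1$ is transpose-inverse on each factor separately, which sends $\varpi^{m}\mapsto\varpi^{-m}$ and gives $(m^{(1)},m^{(2)})\mapsto(-m^{(1)},-m^{(2)})$. In the Siegel case $\tau_2$ additionally swaps the two factors, which gives $(-m^{(2)},-m^{(1)})$. (In the unitary case this must be reconciled with the fact that $\sigma$ should preserve the index set.) This bookkeeping must be done compatibly with the identification $R_a(\cO_v')\otimes\ol k\simeq R_a(\breve\cO_v)^{\oplus 2}$.

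I expect the main obstacle to be the cell-dimension count. The naive fibration over $\cM_v(a_{\GL})$ has Grassmannian fibres of positive dimension, so the equality $\dim\cM_v^\star(a)=\dim\cM_v(a_{\GL})$ cannot come from a fibre count. It has to come from a cancellation: strata of $L_2$ with large fibres must lie in low-dimensional loci of $\cM_v(a_{\GL})$. My first attempt would be to make this precise via the tangent-weight computation at the fixed points, reducing to $n=2$ by comparing the weights on $\mathfrak{gl}_n\oplus\mathfrak{gl}_n$ pair by pair. If that fails, I would fall back on stratifying by the $T(\breve\cO_v)$-orbit type of $(L_1,L_2)$ relative to the eigenline decomposition of $A$.
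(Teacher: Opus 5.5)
Your setup is correct. Over $\ol k$ the RASF is the space of pairs with $AL_2\subset L_1\subset L_2$, the $T(\ol k)$-fixed points are the coordinate pairs, and their index set is exactly $0\le m^{(1)}_i-m^{(2)}_i\le\delta_i$. The gap is the decisive step: that every attracting cell is irreducible of dimension $\dim\cM_v(a_{\GL})$. You do not prove this, and the method you propose cannot prove it.

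\begin{itemize}
\item \textbf{Tangent weights do not compute cell dimensions here.} Bia\l ynicki-Birula cells have dimension equal to the number of positive tangent weights only at smooth fixed points. At a singular fixed point you only get $\dim(\text{cell})\le\dim(T_p\fX_\gamma)^{+}$; for example, the cusp $y^2=x^3$ with weights $2,3$ has two positive weights but a one-dimensional cell. You also learn nothing about irreducibility of the cell. The coordinate pairs are typically singular points of $\fX_\gamma$, since several components pass through them; already the $T$-fixed points in the chain of $\mathbb{P}^1$'s of Proposition~\ref{prop: 1 dimensional case} are nodes.
\item \textbf{Your target number is wrong.} The condition $u_i\neq u_j$ is just regular semisimplicity and does not give $\val(u_i-u_j)=0$. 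The correct value is $\dim\cM_v(a_{\GL})=\sum_{i<j}\val(u_i\varpi_v^{\delta_i}-u_j\varpi_v^{\delta_j})$. This exceeds $\sum_{i<j}\min(\delta_i,\delta_j)$ as soon as $\delta_i=\delta_j$ and $u_i\equiv u_j\bmod\varpi_v$. In Proposition~\ref{prop: 1 dimensional case} all $\delta_i=0$, yet the RASF is one-dimensional.
\item \textbf{Loop rotation is not a symmetry.} It carries $\fX_\gamma$ to $\fX_{\mathrm{rot}_t(\gamma)}$, which in general is a different subspace because the eigenvalues $u_i\varpi_v^{\delta_i}$ are not rescaled uniformly unless $\gamma$ is homogeneous. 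It is also unnecessary, since $T(\ol k)$ alone already has the coordinate pairs as isolated fixed points.
\item \textbf{The fibre over $L_2$ is not a Grassmannian.} The fibre of $(L_1,L_2)\mapsto L_2$ is the variety of $\breve\cO_v$-submodules of $L_2/AL_2$, because $\varpi_v$ need not kill $L_2/AL_2$.
\end{itemize}

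The missing idea, which is the paper's route, is to compute the cells directly rather than infinitesimally.
\begin{itemize}
\item For a generic cocharacter of $T$, the attracting set of $t=(\varpi_v^{m^{(1)}},\varpi_v^{m^{(2)}})$ is precisely the Iwasawa piece $t\cdot\fX^N_{t^{-1}\cdot\gamma}$, with $N=N_n\times N_n$. So your decomposition is the paper's.
\item For $\gamma$ itself and $n=(n_1,n_2)\in N(\breve F_v)$, the conditions are $n_2^{-1}n_1\in\fgl_n(\breve\cO_v)$ and $n_1^{-1}An_2\in\fgl_n(\breve\cO_v)$. An integral unipotent upper triangular matrix lies in $N_n(\breve\cO_v)$, so the first condition forces $n_1N_n(\breve\cO_v)=n_2N_n(\breve\cO_v)$.
\item The second condition then says $n_1$ lies in the unipotent piece $\fX^{N_n}_A$ of the $\GL_n$ affine Springer fibre. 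This is an affine space of dimension $\dim\cM_v(a_{\GL})$.
\item The paper treats the remaining pieces, where $t^{-1}\cdot\gamma=\begin{pmatrix}0&A'\\A''&0\end{pmatrix}$ with $A'A''=A$ diagonal, the same way. For $n=2$ the piece is $\{z\in(\breve F_v/\breve\cO_v)^2: Mz\in\breve\cO_v^2\}$ with $\det M=u_2\varpi_v^{\delta_2}-u_1\varpi_v^{\delta_1}$, visibly an affine space of the same dimension.
\item Every piece is then irreducible of one common dimension. This gives equidimensionality and identifies $\mathrm{Irr}(\cM^\star_v(a))$ with the index set, with no tangent computation.
\end{itemize}

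Finally, your doubt about the unitary Frobenius is justified. The displayed map $(m^{(1)},m^{(2)})\mapsto(-m^{(1)},-m^{(2)})$ cannot preserve the index set once some $\delta_i>0$. With $h^\unitary=\mathrm{diag}(1,\varpi_v^m)$ as in Example~\ref{ex: rank 1 lattices}, the dual of $\langle\varpi_v^i,\varpi_v^jx\rangle$ is $\langle\varpi_v^{-i},\varpi_v^{-j-m}x\rangle$. So the bookkeeping produces a shift by $\delta$ in the second coordinate.
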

\begin{proof}
	To compute dimension and components, it is harmless to assume that $k = \ol{k}$ is algebraically closed and identify $H$ with $\GL_n\times \GL_n$. We begin by taking $\gamma$ to be slightly more general than the above form:  Assume that
	\begin{equation}
		\label{eqn: gamma after t action}
		\gamma = \begin{pmatrix}
			0 & A' \\
			A'' & 0
		\end{pmatrix}
	\end{equation}
	where $A',A''$ are diagonal and $A'A'' = A$ is of the form of Prop \ref{lem:  split representatives}. Consider the unipotent subgroup of $H$, which we denote by $N=N_n\times N_n$, with $N_n$ the standard upper triangular unipotent subgroup of $\GL_n$. Denote by $\fX^{N}_\gamma$ the unipotent RASF whose $\ol{k}$ points are given by
	\[
	\fX_\gamma^N(\ol{k}) = \{n\in \Gr_N\mid \mathrm{Ad}(n)^{-1}\cdot \gamma\in \fh^\perp(\cO_v) \}.
	\]
	We may similarly define $\fX_{\gamma'}^{N_n}$ for $\gamma'\in \mathfrak{gl}_n(\cO_v)$ a regular semisimple nil-element. Let $\gamma' = A = A'A''$ denote the diagonal matrix in the upper right block of $\gamma$. We have a map
	\begin{equation}
		\label{eqn: U diagonal comparison}
		\fX_\gamma^N\to \fX_{\gamma'}^{N_n}\times \fX_{\gamma'}^{N_n}
	\end{equation}
	taking $n$ to each of its components $(n_1,n_2)$. A simple calculation shows that $n = (n_1,n_2)\in N(F_v)$ represents an element of $\fX_\gamma^N$ if and only if $n_2^{-1}n_1, n_1^{-1}An_2\in \mathfrak{n}_n(\cO_v)$. Since $n_2$ and $n_1$ are invertible in a unipotent subgroup, the former condition holds if and only if $n_1 = n_2\cdot N_n(\cO_v)$, and so the above two conditions are equivalent to the conditions $n_1,n_2\in \fX_{\gamma'}^{N_n}$ and $n_1 = n_2\cdot N_n(\cO_v)$. That is, the map \eqref{eqn: U diagonal comparison} identifies $\fX_{\gamma}^N$ with the diagonal in the product $\fX_{\gamma'}^{N_n}\times \fX_{\gamma'}^{N_n}$.
	
	Now consider the Iwasawa decomposition $h = tnk$ for $t\in T(F_v)$ and $n\in N(F_v)$ uniquely determined up to right multiplication by $T(\cO_v)$ and $N(\cO_v)$, respectively. Note that when $t^{-1}\cdot \gamma$ is in $\fh^\perp(\cO_v)$, it is of the form \eqref{eqn: gamma after t action}, and hence, $\fX_{t^{-1}\cdot \gamma}^{N}\simeq \fX_{\gamma}^N$. Let $\delta = (\delta_1,\dots,\delta_n)$ denote the exponents of the diagonal entries of $A$. Then $t^{-1}\cdot \gamma$ is integral valued when $t^{-1}\cdot \gamma=\gamma_{\eta,\eta'}$ where $\eta,\eta'$ are $n$-tuples of nonnegative integers satisfying $\eta+\eta' = \delta$. We have a morphism
	\[
	\fX_\gamma\to \bigsqcup_{\substack{\eta+\eta' = \delta \\ \eta,\eta'\geq 0}}\fX_{\gamma_{\eta,\eta'}}^N
	\]
	whose fibers are discrete ind-schemes of dimension zero, which are torsors for the action of the lattice $\Gamma_a$. In particular, we at once deduce the claimed dimension formula and the description of component groups.
	
	It is a simple exercise to see the $\sigma$-action on $\Gr_H\simeq \Gr_{\GL_n}\times\Gr_{\GL_n}$ restricts to the actions described in the proposition statement.
\end{proof}

\begin{remark}
    An explicit formula for the dimension \eqref{eqn: dim for split case} will be given in \S\ref{sec: dimension formula} using basic invariants associated to $a$.
\end{remark}

We conclude this section with the \emph{transverse} case. Namely, we prove:

\begin{prop}
    \label{prop: transverse is dimension zero}
    Suppose that $a\in \fc^\heartsuit(\cO_v)$ intersects with the full discriminant divisor $\fD_X$ transversely. Then, the ring $R_a(\cO_v')$ is regular. Moreover, the affine Springer fiber $\cM^\star_v(a)$ has dimension zero.
\end{prop}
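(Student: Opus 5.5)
Two things need proving: that $R_a(\cO_v')$ is regular, and that $\cM_v^\star(a)$ is zero-dimensional.

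\textbf{Regularity.} Transversality of $a$ to $\fD_X=\fD^\cent+\fD^\br$ means $\val(a^*\fD_X)\le 1$. Since $a\in\fc^\heartsuit(\cO_v)$, this leaves two cases: either $\val(a_{2n})=1$ and $\mathrm{Disc}(Q_a)$ is a unit, or $a_{2n}$ is a unit and $\val(\mathrm{Disc}(Q_a))=1$.

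Regularity is local and insensitive to the étale base change $\cO_v\to\cO_v'$, so it can be checked on $\breve\cO_v[x]/(P_a(x))$. The plan is to compute $\mathrm{Disc}(P_a)$ in terms of the data of $Q_a$, namely $\mathrm{Disc}(P_a)=\pm 2^{2n}a_{2n}\,\mathrm{Disc}(Q_a)^2$. This is the standard formula for $Q(x^2)$; I would double-check it.

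\emph{Case $\val(a_{2n})=1$, $\mathrm{Disc}(Q_a)$ a unit.} Here $Q_a$ reduces mod $\varpi_v$ to a separable polynomial with the simple root $0$ (using $p>n\ge 2$). By Hensel's lemma, $Q_a$ factors over $\breve\cO_v$ as $(y-c)\prod_j(y-c_j)$ with $\val(c)=1$ and the $c_j$ distinct units. Then $P_a$ factors accordingly as $(x^2-c)\prod_j(x^2-c_j)$. Each factor $x^2-c_j$ is étale over $\cO_v$, since $2c_j$ is a unit. The factor $x^2-c$ is Eisenstein. These factors are pairwise coprime mod $\varpi_v$, so $R_a$ is a product of regular local rings.

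\emph{Case $a_{2n}$ a unit, $\val(\mathrm{Disc}(Q_a))=1$.} Transversality to $\fD^\cent$ means exactly one pair of roots of $Q_a$ collides mod $\varpi_v$, and it does so to first order. Hensel's lemma splits off a quadratic factor $y^2+by+c$ whose discriminant $b^2-4c$ has valuation $1$; the remaining factors are separable. Substituting $y=x^2$ gives a quartic factor $x^4+bx^2+c$ of $P_a$. Its reduction is $(x^2-\bar r)^2$ with $\bar r\neq 0$, so it has two distinct double roots $\pm s$ mod $\varpi_v$. At each of these, one checks by the Jacobian criterion that the local ring $\breve\cO_v[x]/(x^4+bx^2+c)$ localized at $(\varpi_v,x\mp s)$ is regular. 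Concretely, the maximal ideal is generated by $\varpi_v$ and $x\mp s$, and the equation lies in $\mathfrak m\setminus\mathfrak m^2$ precisely because $\val(b^2-4c)=1$. Alternatively, one can say directly that the $x$-discriminant of the quartic has valuation $1$ at each point.

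\textbf{Dimension zero.} Once $R=R_a(\cO_v')$ is regular, it is normal, so $R=\wt R$. In the notation of the proof of Proposition~\ref{prop: lattice action on RASF}, any $R$-lattice $\Lambda$ is then an invertible $\wt R$-module, so $\Lambda=f_\Lambda\wt R$; equivalently, one may take $c=0$ in \eqref{eqn: inclusion of lattices}. Hence each $Y_s$ is a single point, and $\cM_v^\star(a)\otimes\ol k$ is a discrete union of points. The same conclusion can be phrased through the exact sequence \eqref{eqn: decomposition of P(Ja) into lattice and finite parts}: the group $(\wt{R^\iota})^\times/(R^\iota)^\times$ is trivial, so $\cP_v(J_a)$ is discrete, and $\cM_v^\star(a)$ is a finite union of discrete orbits.

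\textbf{Main obstacle.} The hardest step is the local regularity check in the second case. The factor $x^4+bx^2+c$ does not split further over $\breve\cO_v$, so Hensel's lemma does not finish the argument, and the first-order condition must be converted carefully into $f\notin\mathfrak m^2$. This computation is where the hypothesis $p>n$ (in particular $p\neq 2$) is used.
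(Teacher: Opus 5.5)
Your argument is correct, but the regularity part takes a different route from the paper; the dimension-zero part is the same as the paper's.

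\textbf{The paper's route.} The paper avoids any case split. It checks regularity at each $\ol k$-point $x_0$ of $\Spec R_a(\cO_v')$.
\begin{itemize}
\item At $x_0=0$, a singularity would force $\val(a_{2n})\ge 2$, contradicting transversality to $\fD^\br$.
\item At $x_0\neq 0$, the map $x\mapsto x^2$ is \'etale, so $x_0$ is singular iff $x_0^2$ is a singular point of $R_a(\cO_v')^\iota=\cO_v'[y]/(Q_a(y))$.
\item That ring is regular by a short lemma: if $\val\mathrm{Disc}(Q)\le 1$, then comparing discriminants of $R_Q\subset\wt{R_Q}$ forces $R_Q$ to be normal.
\end{itemize}

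\textbf{Your route.} You split according to which component of $\fD_X$ is met and Hensel-factor $Q_a$. You then verify regularity factor by factor: \'etale factors, an Eisenstein factor, and an explicit $f\notin\fm^2$ computation for $x^4+bx^2+c$. This is more computational, but it exhibits the local structure of $R_a$ explicitly. The paper's argument is uniform and only ever needs the discriminant lemma.

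\textbf{Small corrections.}
\begin{itemize}
\item Your case list omits $\val(a^*\fD_X)=0$. That case is immediate: $\mathrm{Disc}(P_a)=\pm 2^{2n}a_{2n}\mathrm{Disc}(Q_a)^2$ is then a unit, so $R_a$ is \'etale over $\cO_v'$.
\item Your own discriminant formula already gives $\val\mathrm{Disc}(P_a)\le 1$ in your first case as well. So the discriminant lemma applied directly to $P_a$ settles it without Hensel. Only your second case, where $\val\mathrm{Disc}(P_a)=2$, genuinely needs a local argument.
\item Your claim that $x^4+bx^2+c$ does not split further over $\breve\cO_v$ is inaccurate. Its reduction $(x-s)^2(x+s)^2$ has coprime factors, so Hensel splits it as $h(x)h(-x)$, and $\val\mathrm{Disc}(h)=1$ because the resultant is a unit. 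After that, the discriminant lemma, or your Jacobian check, finishes the argument. So your ``main obstacle'' is milder than you suggest.
\end{itemize}
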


\begin{proof}
    It suffices to show that every $\ol k$-point of $\Spec R_a(\cO_v')$ is regular. An $\ol k$-point of $R_a(\cO_v')$ is an element $x_0\in \ol k$ such that $P_a(x_0)\equiv 0\mod \varpi_v$. Such an $x_0$ is non-regular if and only if $P_a(x)\in \fm_{x_0}^2$ where $\fm_{x_0} = (\varpi_v,x-x_0)\subset R_a(\cO_v')$ is the maximal ideal corresponding to $x_0$. More explicitly, let us write
    \[
    P_a(x) = \sum_{i\geq 0}P_i(x)\varpi_v^i,\quad P_i(x)\in k[x].
    \]
    Then, $x_0$ is non-regular if $(x-x_0)^2$ divides $P_0(x)$ and $P_1(x_0)=0$.

    Suppose that $x_0$ is a non-regular $\ol k$-point of $\Spec R_a(\cO_v')$. If $x_0 = 0$, then $P_a(x_0)\equiv 0\mod \varpi_c$ implies $a_{2n}\equiv 0 \mod \varpi_v$, and $P_1(x_0) = 0$ then implies that $a_{2n}\equiv 0\mod \varpi_v^2$. Therefore, $\val(a_{2n})\geq 2$, contradicting transversality with $\fD^\br=(a_{2n})\subset \fD_X$.

    If instead $x_0\neq 0$, then we claim that $x_0^2$ is a nonzero, non-regular point of $\Spec R_a(\cO_v')^{\iota}$. Indeed, 
    \[
    R_a(\cO_v')^{\iota} = \cO_v'[y]/(Q_a(x)),\quad\quad Q_a(x) = x^n+a_{2}x^{n-1}+a_{2n-2}x^{n-2}+\cdots +a_{2n}
    \]
    with $Q_a(x^2) = P_a(x)$. In particular, $x_0^2$ is a closed point of $\Spec R_a(\cO_v')^{\iota}$. If we express
    \[
    Q_a(x) = \sum_{i\geq 0}Q_i(x)\varpi_v^i,\quad P_i(x)\in k[x],
    \]
    then one can further check that $Q_1(x_0^2) = P_1(x_0)$ so that $x_0^2$ is non-regular as a point of $\Spec R_a(\cO_v')^{\iota}$ if and only if $x_0$ is a non-regular point of $\Spec R_a(\cO_v')$. Since $\fD^\cent = (\mathrm{Disc}\; Q_a(x^2))$, transversality implies $\val \mathrm{Disc}\; Q_a(x_0^2)\leq 1$. Hence, the regularity comes from the following lemma.

\begin{lemma}
    Let $Q\in\mathcal{O}_v'[x]$ be a monic polynomial with $\mathrm{val}\,\mathrm{Disc}(Q)\le 1$. Then $R_Q=\mathcal{O}_v'[x]/(Q)$ is a regular local ring.
\end{lemma}

\begin{proof}
Since $\mathrm{Disc}(Q)\neq 0$, $R_Q$ is reduced. Let $\widetilde{R_Q}$ be its normalization. Since the trace of an integral element is integral, we have $R_Q\subset\widetilde{R_Q}\subset (R_Q)^\vee$, where $(R_Q)^\vee$ is the
dual lattice to $R_Q$ in $R_Q\otimes_{\cO_v'}F_v'$. Hence $\widetilde{R_Q}$ is also a $\cO_v'$-lattice in $R_Q\otimes_{\cO_v'}F_v'$. Let $M$ be the matrix expressing a basis of $R_Q$ in terms of a basis of $\widetilde{R_Q}$. Then, we can compute the discriminant via the discriminant of lattices
\[
  \mathrm{Disc}(Q)=\mathrm{Disc}(R_Q)=\det(M)^2\,\mathrm{Disc}(\widetilde{R_Q}),
  \qquad \mathrm{Disc}(\widetilde{R_Q})\in\mathcal{O}_v'.
\]
Since $\mathrm{val}\,\mathrm{Disc}(Q)\le 1$, we are forced to have $\val \det M = 0$, so that $\wt{R_Q} = R_Q$ is normal. Therefore, $R_Q$ is regular since it is a reduced, normal, one-dimensional ring.
\end{proof}

    We now complete the proof of Proposition \ref{prop: transverse is dimension zero}. Since $R_a(\cO_v')$ is regular, it is a product of DVR's $\prod_{i=1}^{2n} R_i$, each of which is finite over $\cO_v'$. Let $\varpi_i$ denote a uniformizer in $R_i$ for each $i$. As a consequence, any $R_a(\cO_v')$-lattice $\Lambda$ in $R_a(F_v')$ must be of the form $\Lambda = \langle \ell_1\varpi_1,\dots, \ell_n\varpi_n\rangle$. Hence, $\cM_v^\star(a)\otimes_k\ol k$ embeds in a discrete space and $\dim \cM_v^\star(a) = 0$.
\end{proof}

\subsection{The product formula}

We will now build a bridge between the local models (RASFs) and global moduli spaces (relative Hitchin fibration) by means of a product formula (Proposition \ref{prop: product formula}). The proofs of relevant statements are similar to the case of \cite[\S4.15]{ngo}.

Let $a\in \cA^\heartsuit(k)$, and for any place $v\in |\Sigma|$, denote by $a_v$ the restriction of the map $a\colon \Sigma\to \fc_D$ to the infinitesimal disk $\Sigma_v\subset \Sigma$. It is easy to see  that $a_v\in \fc^\heartsuit(\cO_v)$. 

Let $U_a = a^{-1}(\fc_D^\rs)$ denote the preimage of the regular semisimple locus in $\Sigma$. We denote by $\breve\Sigma = \Sigma\otimes_k\ol{k}$ the base change to the algebraic closure $\ol{k}$. Recall that we denote by $\cM_a^\star = f^{\star,-1}(a)$ the fiber of the relative Hitchin map at $a$. We will first describe a gluing map 
\begin{equation}
	\label{eqn: naive gluing}
\prod_{v\in |\breve\Sigma-U_a|}\cM_v^\star(a_v)\to \cM_a^\star.
\end{equation}
Let $\wt{U}_a'\subset \wt{\Sigma}'_a$ be the preimage of $U_a$. First, note that the cover $\wt{U}'_a\to U_a$ is smooth and \'etale, so that $\omega_{\wt{U}'_a/U_a} = \cO_{\wt{U}_a'}$ and the canonical global section $x\in \Gamma\big(\wt{U}'_a,\cO_{\wt{U}'_a}\big)$ induced by the element $x\in k[\fs]$ is invertible. Note also that $\iota^*x=-x$. Using the description from Corollary \eqref{cor: spectral description using fixed points}, we obtain a canonical $X^\star$-Higgs bundle on $U_a$ with characteristic data $a|_{U_a}$ by considering the structure sheaf $\cO_{\wt{U}_a'}$ together with the identity map
\[
\alpha = \mathrm{id}\colon \cO_{\wt{U}_a'}\to \iota^*\cO_{\wt{U}_a'}\simeq \cO_{\wt{U}_a'}
\]
and the Hermitian form
\[
h^\star\colon \cO_{\wt{U}_a'}\to \sigma^*\cO_{\wt{U}_a'}\simeq \cO_{\wt{U}_a'}
\]
where $h^\unitary = \mathrm{id}$ and $h^\siegel = x$ is multiplication by the section $x$. We call the triple $(\cO_{\wt{U}_a'},h^\star,\alpha)$ the trivial $X^\star$-bundle.

The data of an $R_a(\cO_v')$-lattice in $R_a(F_v')$ gives gluing data along the punctured disk for a trivial rank $2n$ vector bundle on $\Sigma_v'$. The assumptions on self duality and $\iota$ fixed imply that we can glue the Hermitian forms $h^\star$ on $\Sigma_v'$ with $h^\star$ on $U_a'$ compatibly with the isomorphisms $\alpha$. By the formal gluing of Beauville and Laszlo \cite{beauville-laszlo}, we may glue such lattices with the trivial $X^\star$-bundle and therefore obtain the map \eqref{eqn: naive gluing}.

Likewise, we have a map $\prod_{v\in |\breve{\Sigma}-U_a|}\cP_v(J_a)\to \cP_a$, and the map \eqref{eqn: naive gluing} is compatible with the actions of these symmetry groups. We deduce a map
\begin{equation}
	\label{eqn: gluing map}
\prod_{v\in |\breve\Sigma - U_a|}\cM_v^\star(a_v)\wedge^{\prod_{v\in |\breve{\Sigma}-U_a|}\cP_v(J_a)} \cP_a\to \cM_a^\star
\end{equation}

We prove the following close analogue of \cite[Prop 4.15.1]{ngo},\cite[Thm 4.6]{ngo_endoscopie}. 

\begin{prop}
	\label{prop: product formula}
		Let $a\in \cA^\ani(\ol{k})$. Then the quotient of $\prod_{v\in |\breve\Sigma - U_a|}\cM_v^\star(a)\times \cP_a$ by $\prod_{v\in |\breve{\Sigma}-U_a|}\cP_v(J_a)$ is a proper Deligne--Mumford stack. Moreover, the map \eqref{eqn: gluing map} is a universal homeomorphism; namely, it induces a bijection of groupoids of $\ol{k}$-points. 
        
        If $a\in \cA^\ani(K)$ for an extension $K/k$, the bijection is compatible with the respective Galois actions by $\Gal(\ol k/K)$. In particular, we have an equivalence of groupoids
        \[
        \prod_{v\in |\Sigma_{K} - U_a|} [\cM_v^\star(a)/\cP_v(J_a)](K_v)\xrightarrow{\sim} [\cM_a^\star/\cP_a](K),
        \]
        for every $K/k$, with $\Sigma_{K} :=\Sigma\otimes_kK$ and $K_v$ the residue field of $\Sigma_K$ at the place $v$.
\end{prop}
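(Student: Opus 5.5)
Following Ng\^o's proof of \cite[Prop 4.15.1]{ngo} and \cite[Thm 4.6]{ngo_endoscopie}, I would work throughout with the spectral description of Corollary \ref{cor: spectral description using fixed points}. A point of $\cM_a^\star(\ol k)$ is a torsion-free rank-one sheaf $\cF'$ on $\wt\Sigma_a'$ together with a Hermitian form $h$ and an $\iota$-linearization $\alpha$ compatible with $h$. Since $a\in\cA^\ani\subset\cA^\heartsuit$, the curve $\wt\Sigma_a'$ is reduced, and over $\wt U_a'$ it is smooth and \'etale over $U_a$.

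\textbf{Step 1 (local trivialization).} The restriction $(\cF',h,\alpha)|_{\wt U_a'}$ is an $X^\star$-Higgs bundle on $U_a$ with regular semisimple characteristic. By Theorem \ref{thm: regular quotient} and Theorem \ref{thm: regular centralizers for rational forms}, $[V^{\star,\rs}/H^\star]\to\fc^\rs$ is a gerbe banded by $J$ (the branching divisor lies in the discriminant, so it does not meet $U_a$). Hence the isomorphism classes of such objects with characteristic $a|_{U_a}$ form a torsor under $H^1(U_a,J_a)$. In particular, after twisting by an element of $\cP_a$, any point becomes isomorphic over $U_a$ to the trivial $X^\star$-bundle $(\cO_{\wt U_a'},h^\star,\mathrm{id})$.

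\textbf{Step 2 (bijection on $\ol k$-points).} Choosing such a trivialization and restricting to the formal disks at $v\in|\breve\Sigma-U_a|$ yields, via Proposition \ref{prop: RASF lattice description}, self-dual $\iota$-stable $R_{a_v}(\breve\cO_v')$-lattices. These are points of $\cM_v^\star(a_v)$. Conversely, Beauville--Laszlo gluing produces the map \eqref{eqn: gluing map}. The ambiguity in the choice of trivialization is exactly $J_a(U_a)$ together with the twisting by $\cP_a$. To see this I would use the exact sequence
\[
1\to J_a(U_a)\to \prod_v J_a(\breve F_v)/J_a(\breve\cO_v)\to \cP_a\to H^1(U_a,J_a)\to 1,
\]
which is obtained by gluing torsors, again via Beauville--Laszlo. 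Combining the two directions shows that the quotient of $\prod_v\cM_v^\star(a_v)\times\cP_a$ by $\prod_v\cP_v(J_a)$ maps bijectively onto $\cM_a^\star$ on groupoids of $\ol k$-points, with matching automorphism groups. Separately, the map is finite type and proper: the source is proper by Proposition \ref{prop: lattice action on RASF}, since $\cP_a$ has finite $\pi_0$ on $\cA^\ani$, and the target is proper by Proposition \ref{prop: smoothness and properness of relative hitchin fibration}. A proper morphism that is bijective on geometric points with isomorphic stabilizers is a universal homeomorphism. The Deligne--Mumford property follows because stabilizers are finite: they embed into automorphisms in $\cN^\ani$.

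\textbf{Step 3 (Galois descent).} For $a\in\cA^\ani(K)$, every construction above is canonical, so it commutes with $\Gal(\ol k/K)$. The places of $\breve\Sigma$ are permuted in Frobenius orbits corresponding to closed points of $\Sigma_K$. Taking Galois-fixed groupoids, and using Lang's theorem to pass from $\ol k$-points of quotient stacks to $K$-points, converts each orbit into a factor $[\cM_v^\star(a)/\cP_v(J_a)](K_v)$ by restriction of scalars. This gives the stated equivalence.

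\textbf{Main obstacle.} I expect the hardest part to be Step 1 and the gerbe identification in Step 2. The trivial object uses the specific forms $h^\star$, and for $\star=\unitary$ there is no Kostant--Rallis section. One therefore has to check that every object over $U_a$ really is a $J_a$-twist of the trivial one, rather than lying in a different regular orbit. I would first verify this over $U_a$ via Theorem \ref{thm: regular quotient}, where the two copies of $\fc$ agree off $\fD^\br$, together with the spectral description: there, $\cF'$ is a line bundle on $\wt U_a'$, and $(\alpha,h)$ reduce it to a $J_a$-torsor via Proposition \ref{prop: spectral description of P star}.
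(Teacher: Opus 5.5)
Your proposal is correct and follows essentially the same route as the paper, which adapts Ng\^o's proof. In both, you twist by an element of $\cP_a$ (using that $[V^{\star,\rs}/H^\star]\to\fc^\rs$ is a $J$-gerbe, so every object over $U_a$ is a $J_a$-twist of the trivial $X^\star$-bundle), read off points of the RASFs at the bad places, use Beauville--Laszlo gluing both for the inverse map and to absorb the ambiguity in the trivialization, and obtain Galois compatibility from the canonicity of the trivial object.

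Two small corrections:
\begin{itemize}
\item Your sequence should begin $1\to H^0(\breve\Sigma,J_a)\to J_a(U_a)\to\cdots$. This group is nontrivial, since it contains $-1$, so it contributes automorphisms.
\item Surjectivity of $\cP_a\to H^1(U_a,J_a)$ needs $H^1(\breve F_v,J_a)=0$ for the torus $J_a$ over $\breve F_v$. This is the input the paper uses to extend the twisting torsor from $U_a$ to all of $\Sigma$.
\end{itemize}
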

\begin{proof}
	To show that the space $\prod_{v\in |\breve\Sigma - U_a|}\cM_v^\star(a)\wedge^{\prod_{v\in |\breve{\Sigma}-U_a|}\cP_v(J_a)} \cP_a$ is a proper Deligne--Mumford stack, we can argue identically to \cite[Lemme 4.15.1]{ngo} using the fact that the lattice $\Gamma_a$ acts freely on $\cM_v(a)$ with quotient $\cM_v(a)/\Gamma_a$ a projective, finite type scheme; see Proposition \ref{prop: lattice action on RASF}. Alternatively, one can deduce this from the morphism $\cM_v^\star(a)\to \cN_v(a)$ using \cite[Theorem 3.3]{romagny1}.
    
    By considering the quotient by $\cP_a$ on each side of \eqref{eqn: gluing map}, it suffices to show the following map is an equivalence of groupoids
	\begin{equation}
	\label{eqn: product formula on groupoids}
			\beta\colon \prod_{v\in |\breve\Sigma - U_a|} [\cM_v^\star(a)/\cP_v(J_a)](\ol{k})\to [\cM_a^\star/\cP_a](\ol{k}),
	\end{equation}
    compatible with the Galois action. First, we prove $\beta$ is fully faithful. If $\Lambda = \{\Lambda_v\}_v$ and $\Lambda' = \{\Lambda_v'\}_v$ are two $\ol k$-points in the left hand side such that $\beta(\Lambda)$ and $\beta(\Lambda')$ are isomorphic, then there exists $p\in \cP_a(\ol k)$ so that $p\cdot \beta(\lambda) = \beta(\Lambda')$. Since $\beta(\Lambda)$ and $\beta(\Lambda')$ are glued from the trivial $X^\star$-Higgs bundle over $U_a$, the element $p$ restricts to the identity over $U_a$, and therefore, can be obtained by gluing of $p_v\in \cP_v(J_a)$ for $v\in |\breve\Sigma - U_a|$. Then, $p_v\cdot \Lambda_v = \Lambda_v'$ for all places $v$, and we conclude that $\beta$ is fully faithful.

    We now prove that $\beta$ is essentially surjective. Let $m_0$ denote the trivial $X^\star$-Higgs bundle over $U_a$, and let $m\in \cM_a^\star(\ol{k})$. Since $m|_{U_a}$ is regular semisimple, there exists a $J_a$-torsor $p'$ on $U_a$. such that $m|_{U_0} = p'\cdot m_0$. Since $H^0(\ol k,H^\star) = 0$, $p'$ extends to a $J_a$-torsor on $\Sigma$, i.e. $p\in \cP_a(\bar k)$. Then, $p\cdot m$ carries a trivialization identifying $p\cdot m|_{U_a}\simeq m_0$. For $v\in |\breve\Sigma-U_a|$, the restriction of $p\cdot m$ to the infinitesimal disk $\Sigma_v$ comes equipped with gluing data over the punctured disk $\Sigma_v^\bullet$, which by the moduli description of $\cM_v^\star(a)$ is a point $m_v\in \cM_v^\star(a)(\ol k)$. The element $p$ gives an explicit isomorphism between $\beta\big((m_v)\big)$ and $m$, proving surjectivity.

    Finally, we note that the gluing map \eqref{eqn: naive gluing} intertwines the Galois actions. Indeed, the Frobenius $\sigma$ acts on $\cM^\star_a$ through $\Aut(\Sigma'/\Sigma)$, which restricts to actions of the Galois group $\mathrm{Gal}(F_v'/F_v)$ for every $v\in |\breve\Sigma|$.
\end{proof}

\begin{remark}
\label{rmk: product formula for Ja'}
    Let $a\in \cA^\ani(\ol k)$ be as above. Suppose that $J_a'$ is a smooth group scheme equipped with a map $J_a'\to J_a$, which is an isomorphism on $U_a\subset \breve\Sigma$. Then $\cP_v(J_a')$ acts on $\cM_v^\star(a)$ through $\cP_v(J_a)$, and moreover, we may form a Picard stack $\cP_a'$ of $J_a'$ torsors on $\Sigma$ which acts on $\cM^\star_a$. We claim that Proposition \ref{prop: product formula} holds replacing $\cP_a$ with $\cP_a'$ and $\cP_v(J_a)$ with $\cP_v(J_a')$. Indeed, by a Beauville-Laszlo gluing argument, the morphism 
    \[
    \cP_a'\to \cP_a
    \]
    is a surjective map with kernel a torsor under the group $\prod_{v\in |\breve\Sigma-U_a|} J_{a,v}(\cO_v)/J_{a,v}'(\cO_v)$. In particular, 
    \[
\cP_a\simeq \cP_a'\wedge^{\prod_{v\in |\breve\Sigma-U_a|}\cP_v(J_a')}\prod_{v\in |\breve\Sigma-U_a|}\cP_v(J_a).
    \]
    Therefore, the map $J_a'\to J_a$ induces an isomorphism
    \begin{align*}
\prod_{v\in |\breve\Sigma - U_a|}\cM_v^\star(a)\wedge^{\prod_{v\in |\breve{\Sigma}-U_a|}\cP_v(J_a')} \cP_a' &\simeq 
\prod_{v\in |\breve\Sigma - U_a|}\cM_v^\star(a)\wedge^{\prod_{v\in |\breve{\Sigma}-U_a|}\cP_v(J_a)} \left( \prod_{v\in |\breve{\Sigma}-U_a|}\cP_v(J_a) \wedge^{\prod_{v\in |\breve{\Sigma}-U_a|}\cP_v(J_a')} \cP_a'\right)\\
&\simeq  \prod_{v\in |\breve\Sigma - U_a|}\cM_v^\star(a)\wedge^{\prod_{v\in |\breve{\Sigma}-U_a|}\cP_v(J_a)} \cP_a.
    \end{align*}
    Therefore, Proposition \ref{prop: product formula} gives a morphism
    \[
\prod_{v\in |\breve\Sigma - U_a|}\cM_v^\star(a_v)\wedge^{\prod_{v\in |\breve{\Sigma}-U_a|}\cP_v(J_a')} \cP_a'\xrightarrow{\sim} \cM_a^\star ,
    \]
    which is a universal homeomorphism over $k$.
\end{remark}

\subsection{Local Constancy of RASFs}

In this section, we will prove the following proposition, mirroring \cite[Proposition 3.5.1]{ngo} in our relative context. The proof here is simpler due to our use of spectral covers.

\begin{prop}
	\label{prop: local constancy}
	Suppose $a\in \fc^\heartsuit(\cO_v)$ satisfies $\Ob_v^\star(a) = 0$, and let $\cM_{v}^\star(a)$ (for $\star = \siegel$ or $\unitary$) denote the associated affine Springer fibers and  $\cP_v(J_a)$ the group of symmetries. There exists an integer $N>0$ such that for all finite extensions $K$ of $k$ and for all $a'\in \fc^\heartsuit(\cO_v\otimes_k K)$ satisfying
	\[
	a\equiv a'\mod \varpi_v^N,
	\]
	the affine Springer fiber $\cM_v^\star(a')$ with the action of $\cP_v(J_{a'})$ is isomorphic to $\cM_v^\star(a)\otimes_k K$ with the action of $\cP_v(J_a)\otimes_k K$.
\end{prop}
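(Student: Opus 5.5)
The plan is to work entirely with the lattice description of $\cM_v^\star(a)$ from \S\ref{sec: lattice description of RASF} and to build an explicit isomorphism $R_a(F_v')\simeq R_{a'}(F_v')$ that preserves all the relevant structure. The required compatibilities are: the $\cO_v'$-integral structures $R_a(\cO_v')$ and $R_{a'}(\cO_v')$, the involutions $\iota$ and $\sigma$, and the Hermitian forms $h^\star$ up to an $\iota$-invariant unit rescaling. Once such an isomorphism is found, it transports $\iota$-stable self-dual $R_a(\cO_v')$-lattices to the corresponding lattices for $a'$, and it intertwines the actions of $\cP_v(J_a)=(R_a^\iota\otimes F_v')^\times/(R_a^\iota)^\times$ and $\cP_v(J_{a'})$.

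Let $c=\val\mathrm{Disc}(P_a)$, and choose $N>c$ large enough; for instance $N=2c+1$ should suffice, possibly together with the bound from the fundamental-domain argument. If $a\equiv a'\bmod \varpi_v^N$, then $\val\mathrm{Disc}(P_{a'})=c$ as well. Since $P_a(x)=Q_a(x^2)$ with $Q_a$ separable over $F_v$, I work with $Q_a$ and $Q_{a'}$, whose coefficients agree mod $\varpi_v^N$.

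\textbf{Step 1 (Krasner/Hensel).} Factor $Q_a=\prod_j Q_{a,j}$ into irreducibles over $F_v$. For $N\gg c$, Hensel's lemma in its discriminant-controlled form, i.e. Newton's method with error term $\varpi_v^{N-2c}$, gives a matching factorization of $Q_{a'}$. It also gives roots $y'_j$ of $Q_{a'}$ with $y'_j\equiv y_j$ modulo a high power of the uniformizer, where $y_j$ is the corresponding root of $Q_a$. Krasner's lemma then identifies the fields $F_v(y_j)\simeq F_v(y'_j)$. These identifications give an isomorphism $\Psi\colon R_a(F_v')^\iota\simeq R_{a'}(F_v')^\iota$ of $F_v'$-algebras, equivariant for $\sigma$.

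\textbf{Step 2 (extend $\Psi$ to the full algebras).} On the full algebras, the root $x$ satisfies $x^2=y$. I define $\Psi(x)=x\cdot u$, where $u^2=y'/\Psi(y)$. This $u$ is a $1$-unit congruent to $1$ modulo $\varpi_v^{N-c'}$, so its square root exists and lies in $R^\iota$, because the characteristic $p$ is odd. With this definition $\Psi$ commutes with $\iota$ and $\sigma$.

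\textbf{Step 3 (integral structures).} Because $\varpi_v^c\widetilde R\subset R$, every lattice in $\cM_v^\star(a)$ is determined within the window $\varpi_v^c f\widetilde R\subset\Lambda\subset f\widetilde R$. The map $\Psi$ carries $\widetilde R_a$ to $\widetilde R_{a'}$, and it agrees with the naive identification $x\mapsto x$ modulo $\varpi_v^{N-c}\widetilde R$. For $N>2c$, this implies that $\Psi(R_a(\cO_v'))=R_{a'}(\cO_v')$. Indeed, $R_a(\cO_v')$ is generated over $\cO_v'$ by $1,x,\dots,x^{2n-1}$, and each element $\Psi(x)^i-x^i$ lies in $\varpi_v^{N-c}\widetilde R\subset\varpi_v^{N-2c}R$. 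So $\Psi$ preserves the lattice structure, and it carries $R_a$-lattices bijectively to $R_{a'}$-lattices.

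\textbf{Step 4 (forms).} The forms $h^\star$ are defined by traces against $1/P_a'$, respectively $1/Q_a'(x^2)$. Under $\Psi$, the pullback of $h^\star_{a'}$ equals $h^\star_a(\eta\,\cdot,\cdot)$ for some unit $\eta\in (R^\iota)^\times$ with $\eta\equiv1$ modulo a high power of $\varpi_v$, since the derivatives differ by a $1$-unit. Because $\sigma$ acts by inversion on the relevant norm-one group and $p$ is odd, I can write $\eta=\zeta\sigma(\zeta)$ with $\zeta$ a $1$-unit; this also works when $\Ob_v^\star(a)=0$. Replacing $\Psi$ by $\zeta\Psi$ then makes it an isometry. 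Since $\zeta\in(R^\iota_a)^\times$ normalizes $R_a(\cO_v')$-lattices, the modified map still preserves all the lattice structure.

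The main obstacle will be Step 4. I must check carefully that the correcting unit $\zeta$ can be chosen $\iota$-invariant and integral, i.e. inside $R_a(\cO_v')^\iota$ and not only in its normalization. The hypothesis $\Ob_v^\star(a)=0$ should enter here to guarantee that the two Hermitian spaces are isometric at all. Finally, I will verify that the construction is compatible with base change to $\cO_v\otimes_k K$, so the resulting isomorphism is defined over $K$.
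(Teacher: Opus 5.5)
Your proposal is correct. It shares the paper's skeleton: produce an $\cO_v'$-algebra isomorphism $R_a(\cO_v')\simeq R_{a'}(\cO_v')$ commuting with $\sigma$ and $\iota$, then transport $\iota$-stable self-dual lattices and the norm-one units $J_a(\cO_v)$ along it. You differ in how that isomorphism is produced. The paper gets it in one stroke from Artin approximation (Lemma~\ref{lem: spectral cover mod N}, modeled on Ng\^o's Lemma 3.5.2), applied directly to the $\sigma$- and $\iota$-equivariant local spectral cover $\wt\Sigma'_{a,v}$. It then simply asserts that the forms $h^\star$ are respected. You build the isomorphism by hand, in three steps:
\begin{enumerate}
\item Krasner/Hensel on the irreducible factors of $Q_a$;
\item a Hensel square root of a $\sigma$-fixed $1$-unit (using $p$ odd) to adjoin $x$;
\item the conductor bound $\varpi_v^c\wt R\subset R$ for integrality.
\end{enumerate}
Your route gives an effective $N$ that is visibly uniform in $K$, since every estimate depends only on valuations of root differences over $\ol F_v$. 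It also makes explicit the point the paper glosses over: the transported form differs from $h^\star$ by an $\iota$-invariant unit. The paper's route avoids the field-by-field bookkeeping and the separate handling of $x$ versus $x^2$, at the price of being non-effective.

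Two small corrections.

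\emph{Step 2.} Your formula is inverted. You need $\Psi(x)^2=\Psi(y)$, so take $u^2=\Psi(y)/y'$, where $y'$ is the class of $x^2$ in $R_{a'}$. Also, this step loses up to $\val(a_{2n})\le c$, and the square root and the passage from $\wt R$ to $R$ lose further multiples of $c$. So $N=2c+1$ is too optimistic, though some $N$ linear in $c$ works.

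\emph{Step 4.} The obstacle you anticipate is not there, and $\Ob_v^\star(a)=0$ plays no role; the paper's proof does not use it either. Suppose $\eta\in R^\times$. For any $R$-lattice $\Lambda$, the dual with respect to $h^\star(\eta\,\cdot,\cdot)$ is $\eta^{-1}\Lambda^\vee=\Lambda^\vee$, because $\Lambda^\vee$ is itself an $R$-module. So no correcting $\zeta$ is needed, and in any case $\zeta\Psi(\Lambda)=\Psi(\Lambda)$ for $\zeta\in R^\times$. The same observation explains why the paper can pass directly from an equivariant isomorphism of the integral algebras to an isomorphism of RASFs. For any such isomorphism the discrepancy is again a unit of $R$: in the Siegel case, $P'_a$ and the transport of $P'_{a'}$ both generate the inverse different of $R$. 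In the unitary case there is an additional factor relating $x$ and the transport of $x$, and it is a unit because the isomorphism preserves the ideal $xR$ generated by $\iota$-anti-invariants.
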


We introduce some preliminaries before the proof below. Let $\wt{\Sigma}_{a,v}' = \Spec R_a(\cO'_v)$, where $R_a(\cO'_v)$ is as in \S\ref{sec: lattice description of RASF}. This cover $\wt{\Sigma}_{a,v}'$ fits into a Cartesian diagram
\[
\xymatrix{
\wt{\Sigma}_{a,v}'\ar[r]\ar[d] & \fs\ar[d] \\
\Sigma'_v\ar[r] & \fc
}
\]
Recall from \S\ref{sec: lattice description of RASF} that $\wt{\Sigma}_{a,v}'$ carries two involutions:  $\sigma$ acting by the nontrivial involution in $\Aut(\Sigma'/\Sigma)$ and $\tau$ the involution of the spectral cover induced by $x\mapsto -x$ (see \S\ref{sec: spectral curves} for a discussion).

\begin{lem}
    \label{lem: spectral cover mod N}
	For any $a\in \fc^\heartsuit(\cO_v)$ and integer $N_1$, there exists an integer $N>N_1$ such that for all finite extensions $K$ of $k$ and all $a'\in \fc(\cO_v\otimes_kK)$ satisfying
	\[
	a\equiv a'\mod \varpi_v^N,
	\]
	there is a $\sigma$- and $\tau$-equivariant isomorphism $\wt{\Sigma}_{a',v}'\simeq \wt{\Sigma}_{a,v}'\otimes_kK$ over $\Sigma_v'\otimes_kK$ which lifts the obvious one mod $\varpi_v^{N_1}$.
\end{lem}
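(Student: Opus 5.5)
Since $P_a$ is monic and separable over $F'_v$, the strategy is Krasner's lemma / Hensel's lemma applied to the roots of $P_a$. Let $P_a(x)=\prod_j g_j(x)$ be the factorization into irreducible factors over $\cO_v'\otimes_k\ol k$. Then $R_a(F'_v)\simeq\prod_j E_j$, and the normalization is $\wt R=\prod_j\cO_{E_j}$. Set $c=\val\mathrm{Disc}(P_a)$. For $a'\equiv a\bmod\varpi_v^N$ with $N$ large relative to $c$, the standard root-continuity argument gives the following. Each root $\alpha$ of $P_a$ has a unique root $\alpha'$ of $P_{a'}$ with $\val(\alpha-\alpha')>\max_{\beta\ne\alpha}\val(\alpha-\beta)$. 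By Krasner, $F'_v(\alpha')=F'_v(\alpha)$. Hence $P_{a'}=\prod_j g'_j$ with $g'_j$ irreducible and $g'_j\equiv g_j$ to high order. Because $\iota$ permutes roots by $\alpha\mapsto-\alpha$ and $\sigma$ acts via Frobenius on coefficients composed with $x\mapsto -x$, the matching $\alpha\mapsto\alpha'$ is unique and therefore automatically equivariant. Indeed, $a'$ is even in $x$ and Frobenius-stable, so $-\alpha'$ is the root closest to $-\alpha$, and similarly for $\sigma$.

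Next we build the isomorphism of algebras. There is an $F'_v\otimes K$-algebra isomorphism $\psi\colon R_a(F'_v)\otimes K\to R_{a'}(F'_v\otimes K)$, given componentwise by $\alpha\mapsto\alpha'$ in each $E_j$. It identifies the normalizations $\wt R$ and $\wt R'$, and it is $\iota$- and $\sigma$-equivariant by the previous paragraph. We need $\psi$ to carry $R_a(\cO'_v)$ onto $R_{a'}(\cO'_v)$. Since $\varpi_v^c\wt R\subset R_a(\cO'_v)\subset\wt R$, a subring between $\varpi_v^c\wt R$ and $\wt R$ is determined by its image in $\wt R/\varpi_v^c\wt R$. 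It therefore suffices to check $\psi(x)\equiv x'\bmod \varpi_v^{c}\wt R'$. That congruence holds because $\val(\alpha-\alpha')\ge N/[E_j:F'_v]$ roughly, which is $\ge c+N_1$ for $N$ large. The same congruence gives the lift of the obvious isomorphism mod $\varpi_v^{N_1}$.

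The main obstacle is the descent from $\ol k$ to $K$: $\psi$ is defined over $\ol k$, and we must show it commutes with Frobenius over $K$. I would argue exactly as with $\iota$ and $\sigma$, using uniqueness. The geometric Frobenius of $K$ sends the root closest to $\alpha$ to the root closest to $\mathrm{Frob}(\alpha)$, so the uniqueness of the matching forces $\psi$ to be Galois-equivariant and hence descend. I would also need uniformity in $K$. Here I note that $c$, the bound on ramification degrees, and the root separation depend only on $a$ and not on $K$, so a single $N$ works for all $K$.
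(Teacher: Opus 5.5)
Your argument is correct, but it follows a different route from the paper. The paper proves this in one line by appealing to Artin's algebraic approximation results (\cite[Lemma 3.12]{artin}), in parallel with Ng\^o's Lemma 3.5.2 for cameral covers. That is a general finite-determinacy statement for finite flat, generically \'etale algebras over a complete discrete valuation ring.

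You instead use that $R_a(\cO_v')=\cO_v'[x]/(P_a)$ is monogenic. Closest-root matching plus Krasner gives an explicit isomorphism $\psi$ of generic fibres. The sandwich $\varpi_v^c\wt R\subset R_a(\cO_v')\subset\wt R$ reduces integrality of $\psi$ to a single congruence for $\psi(x)-x'$ in $\wt R'$. Equivariance for $\iota$, $\sigma$ and the Galois group of $K$ then follows from uniqueness of the closest root.

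Your approach gives an explicit $N$, depending only on $c=\val\mathrm{Disc}(P_a)$ and the root separations of $P_a$, and hence visibly uniform in $K$. It also makes the equivariance automatic, which the one-line citation leaves implicit. The citation gives brevity and does not need a monogenic presentation.

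Three points to tighten:
\begin{enumerate}
\item The estimate you actually get is $\val(\alpha-\alpha')\geq N-\val P_a'(\alpha)\geq N-c$, once $N/2n$ exceeds the root separations. It is not $N/[E_j:F_v']$, although any bound tending to infinity with $N$ suffices.
\item You should note that $N>c$ forces $\val\mathrm{Disc}(P_{a'})=c$. This is what gives $\varpi_v^c\wt R'\subset R_{a'}(\cO_v')$, and it makes $a'$ automatically regular semisimple.
\item The descent is cleanest if you run the matching over $L=K(\!(\varpi_v)\!)$ and a separable closure of $L$ from the start. Then $\Gal(L^{\mathrm{sep}}/L)$ preserves the valuation and permutes the roots of both $P_a$ and $P_{a'}$. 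The resulting $\psi$ is $\mathrm{id}_{\cO_v'}\otimes\psi_0$, so $\sigma$-equivariance reduces to $\iota$-equivariance of $\psi_0$.
\end{enumerate}
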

\begin{proof}
    This is a consequence of \cite[Lemma 3.12]{artin} and is very close to the statement recorded in \cite[Lemma 3.5.2]{ngo}.
\end{proof}

\begin{proof}[Proof of Proposition \ref{prop: local constancy}]
    Fix $a\in \fc^\heartsuit(\cO_v)$. Let $N$ be as in Lemma \ref{lem: spectral cover mod N} for $N_1 = 1$. Without loss of generality, we assume that $k=K$, and suppose that $a'\in \fc^\heartsuit(\cO_v)$ is such that $a'\equiv a\mod \varpi_v^N$. Then, by Lemma \ref{lem: spectral cover mod N}, $R_a(\cO'_v)\simeq R_{a'}(\cO'_v)$ compatibly with the involutions $\sigma$ and $\iota$. Compatibility with $\sigma$ implies in particular that the isomorphism respects the Hermitian forms $h^\star$ on each side, and so from the lattice definition (see \S\ref{sec: lattice description of RASF}), we obtain an isomorphism $\cM_v(a)\simeq \cM_v(a')$
\end{proof}

\subsection{Dimension formula}
\label{sec: dimension formula}

In \cite{bez}, it was shown that regular locus in (ordinary) affine Springer fibers is dense, and hence one can deduce a formula for the dimension of an affine Springer fiber from the dimension of its group of symmetries. This method fails for RASF's as regularity is no longer a dense condition. It also fails in the multiplicative setting of \cite{jingren,griffin} for a similar reason:  multiplicative affine Springer fibers have rich component groups, related to Mirkovic--Vilonen cycles, and in particular, have no dense regular locus. In work of Jingren Chi \cite{jingren}, a dimension formula and equidimensionality property in the multiplicative case is proved by passing to the global setting. We follow an analogous argument, with some simplifications. In particular, for any $a_v\in \fc^\heartsuit(\cO_v)$, we construct a curve in the Hitchin base which interpolates between $a_v$ and a split element, and use upper semi-continuity of fiber dimensions for proper maps to deduce the dimension bound. We also collect as a corollary that the relative Hitchin morphism is flat over the anisotropic locus, from which equidimensionality follows.

Let $\Sigma$ be a smooth projective curve of genus $g\geq 2$ and $v\in |\Sigma|$. Recall from Section \ref{sec: lattice description of RASF} $\fc^\heartsuit(\cO_v) = \fc(\cO_v)\cap \fc^{\mathrm{rs}}(F_v)$. As we will be exchanging between the local and global settings in this section, we will denote by $a_v$ a fixed element in the local base $\fc^\heartsuit(\cO_v)$ and by $a$ as element in the global Hitchin base $\cA = \cA_{D}$.

Associated to any $a_v\in \fc^\heartsuit(\cO_v)$, we have two local invariants mimicking the invariants that control the dimension of affine Springer fibers in \cite{bez}. First, we put
\begin{equation}\label{eqn: dim cent}
    d_v^\cent(a_v):= \deg(a_v^*\fD^\cent)
\end{equation}
to be the multiplicity of intersection of $a_v$ at the centralizer divisor. We also have the invariant $c_v(a_v)$ controlling how split $a_v$ is (see Definition \ref{def: split}):  define
\[
c_v(a_v) = n - \mathrm{rank}\; \Gamma_a,
\]
where $\Gamma_a = \mathbb{X}_*(J_{a_v,\ol F_v})$ is the lattice of $\breve F_v$-rational cocharacters of the torus $J_{a_v,\breve F_v}$ over $\breve F_v$ acting on the RASF as defined in \S\ref{sec: symmetries of RASFs}. Note in particular that $c_v(a_v) = 0$ if and only if $a_v$ is split.

We begin by proving the following dimension formula.

\begin{thm}
	\label{prop: dim of RASF}
	Let $a_v\in \fc^\heartsuit(\cO_v)$. The RASF $\cM_v^\star(a_v)$ has dimension equal to that of $\cP_v(J_a)$. Explicitly, we have
	\[
	\dim \cM_v(a_v) = \dim \cP_v(J_{a_v}) = \frac{d_v^\cent(a)-c_v(a)}{2}
	\]
\end{thm}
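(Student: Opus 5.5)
The plan follows the global strategy announced at the start of \S\ref{sec: dimension formula}, after Jingren Chi. It has three parts: compute $\dim\cP_v(J_{a_v})$ directly, prove $\dim\cM_v^\star(a_v)\ge\dim\cP_v(J_{a_v})$ locally, and prove the reverse inequality by a global semicontinuity argument.

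\textbf{Dimension of the symmetry group.} By Proposition \ref{prop: universal spectral} and Theorem \ref{thm: regular centralizers for rational forms}, over $\ol k$ we have
\[
\cP_v(J_{a_v})\simeq (R^\iota\otimes F_v')^\times/(R^\iota)^\times,
\]
where $R^\iota=\cO_v'[y]/(Q_a(y))$ is the $\GL_n$ spectral ring attached to $a_{\GL}$. Hence $\dim\cP_v(J_{a_v})$ equals the dimension of the $\GL_n$ local Picard group at $a_{\GL}$. Bezrukavnikov's formula \cite{bez} gives this as
\[
\tfrac12\bigl(\val\mathrm{Disc}(Q_a)-(n-\mathrm{rank}\,\Gamma_a)\bigr).
\]
Since $\fD^\cent=(\mathrm{Disc}\,Q_a)$, the first term is $d^\cent_v(a_v)$. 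The second term is $c_v(a_v)$, because $\Gamma_a$ is the cocharacter lattice of the same torus. This settles the second equality.

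\textbf{Lower bound.} I would show $\dim\cM_v^\star\ge\dim\cP_v$ by exhibiting one orbit with finite stabilizer. In the Siegel case, the Kostant--Rallis point $[\e](a_v)$ has trivial stabilizer in $\cP_v(J_a)$, since it is regular. In the unitary case I would pass to $\ol k$: by Remark \ref{rem: RASF computed with lift} the two spaces agree there, so the same point works. This yields an orbit of dimension $\dim\cP_v$.

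\textbf{Upper bound (main obstacle).} Here I globalize. First choose a curve $\Sigma$, a point $v$ and $D$ of large degree. Then build a one-parameter family $a_t\in\cA^\ani$, $t\in\A^1$, with the following properties:
\begin{enumerate}
\item $a_0$ restricts at $v$ to an element congruent to $a_v$ modulo $\varpi_v^N$, where $N$ comes from Proposition \ref{prop: local constancy};
\item $a_0$ is transverse to $\fD_X$ away from $v$;
\item for $t\neq0$, $a_t$ is split at every point where it meets $\fD^\cent$ non-transversally, and has the same total $\deg a_t^*\fD^\cent$ and the same global $\delta$.
\end{enumerate}
Concretely, I would write $Q_{a_v}$ as a deformation of a product of linear factors $\prod(y-s_i\varpi^{m_i})$ carrying the same discriminant valuation, and interpolate the coefficients linearly. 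Properness of $f^{\star,\ani}$ (Proposition \ref{prop: smoothness and properness of relative hitchin fibration}) and upper semicontinuity of fiber dimension then give $\dim\cM^\star_{a_0}\le\dim\cM^\star_{a_t}$.

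By the product formula, Proposition \ref{prop: product formula}, we have $\dim\cM^\star_a=\dim\cP_a+\sum_w(\dim\cM_w-\dim\cP_w)$. The dimension of $\cP_a$ is constant in the family: by Lemma \ref{lem: delta invariant is same as GLn} it is governed by the arithmetic genus, which stays fixed. At transverse points both local dimensions vanish by Proposition \ref{prop: transverse is dimension zero}. At split points the local term is zero by Proposition \ref{prop: dim formula split case}. Combining these, $\dim\cM_v(a_v)-\dim\cP_v\le 0$.

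The main difficulty is arranging the interpolating family so that it stays in $\cA^\ani$ and degenerates only at $v$ without creating new non-split, non-transverse points. It must also keep $\dim\cP_{a_t}$ constant, which may require matching $c_v$ as well as $d^\cent_v$; this is the step I expect to need the most care. Here I would try first to use splitting of $\Gamma_a$ over a ramified base change $t^{e}$, allowing the split specialization to carry the same $d^\cent$.
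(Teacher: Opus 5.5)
Your lower bound is fine, and so is your computation of $\dim\cP_v(J_{a_v})$. The paper computes the latter directly as $\mathrm{length}(\widetilde{R^\iota}/R^\iota)$ via discriminants rather than quoting \cite{bez}, but it remarks that the two agree.

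The gap is in the upper bound: the semicontinuity inequality goes the wrong way. For the proper map $f^{\star,\ani}$, the function $a\mapsto\dim\cM^\star_a$ is upper semicontinuous on the base, so fiber dimension can only jump \emph{up} at special points. In your family the globalization of $a_v$ sits at the special point $t=0$, and the split elements sit at the generic points $t\neq0$. Semicontinuity therefore gives $\dim\cM^\star_{a_0}\ge\dim\cM^\star_{a_t}=\dim\cP_{a_t}$, which is just the lower bound again. Nothing else in the setup recovers the reverse inequality. In particular, flatness of $f^{\star,\ani}$ (Corollary \ref{cor: ani is flat}) is itself deduced from this theorem, so it cannot be used.

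The missing idea is to make the split element a \emph{specialization} of $a_v$ rather than a generization. The paper does this with $a_t=a_v\bigl(t\varpi_v+(1-t)\varpi_v^e\bigr)$, where $e$ is the lcm of the degrees of the field factors of $R_a(F_v)$.
\begin{itemize}
\item For $t\neq0$ the substitution is only a change of uniformizer, so $\cM_v^\star(a_t)\simeq\cM_v^\star(a_v)$.
\item At $t=0$ one gets $a_0=a_v(\varpi_v^e)$, which is split.
\end{itemize}
One then globalizes via a section of $\cA\to\fc(\cO_v/\varpi_v^N)$ whose image lands in the anisotropic locus and is transverse to $\fD_X$ away from $v$. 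Semicontinuity now gives $\dim\cM^\star_{s(a_t)}\le\dim\cM^\star_{s(a_0)}=\dim\cP_{s(a_0)}=\dim\cP_{s(a_t)}$. The product formula together with Proposition \ref{prop: transverse is dimension zero} then isolates the local term at $v$.

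This also removes the need for your condition (3). As you observed, the global $\dim\cP_a$ is constant in the family, and that is all the argument uses; nothing about $d^\cent$ or $\delta$ has to be matched. This matters for two reasons:
\begin{itemize}
\item A split element has $d^\cent_v=2\delta_v$, which is even. So you cannot match $d^\cent_v(a_v)$ with a split element whenever $c_v(a_v)$ is odd.
\item Linear interpolation of coefficients does not keep the generic members of the family split in any case.
\end{itemize}
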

\begin{remark}
    The final formula for $\dim \cP_v(J_{a_v})$ can be considered as a consequence of Bezrukavnikov's formula computing the dimension of symmetry groups for affine Springer fibers \cite{bez}. Namely, the spectral description of regular centralizers (see Theorem \ref{thm: regular centralizers for rational forms}) implies that the regular centralizer group scheme is controlled by a type $A$ root system in the sense of \cite[Theorem 7.7]{knop1996automorphisms} (also see \cite[Proposition 2.4.7]{ngo}), whose associated discriminant divisor is $\fD^\cent$. However, in the proof that follows, we give a simpler argument using the spectral description of $\cP_v(J_a)$.
\end{remark}
\begin{proof}
Our proof follows \cite[\S5.2]{jingren}, with some simplifications in our case. As dimension is a geometric property, we will base change to an algebraic closure $\ol k$ throughout the rest of this section.

First, we note that, there exists a point $m\in \cM_v^\reg(a_v)(\ol k)$ given by the image of a Kostant-Rallis section. The stabilizer of $m$ in $\cP_v(J_{a_v})$ is trivial since $m$ is regular, so we have $\dim \cM_v(a_v)\geq \dim \cP_v(J_{a_v})$. The rest of this proof will concern the opposite inequality.

Recall the algebra of functions $R = R_a(\cO_v)$ on $\Sigma_{a_v,v} = \Sigma_v\times_\fc \fs$ considered in \S\ref{sec: lattice description of RASF}, where we recall that $\fs$ is the spectral cover appearing in \S\ref{sec: invariant theory}. Then $R_a(F_v)=R\otimes_{\cO_v}F_v$ splits as a product of extensions $\prod_i E_i$ of $F_v$ of degrees $e_i$. Put $e=\mathrm{LCM}(e_i)$ to be the least common multiple of the degrees $e_i$; in particular, we have $e=1$ if and only if $a_v$ is split. Consider the family $a_t$ given by viewing $a$ as $n$-tuple of power series in $\varpi_v$ and substituting $\varpi_v\mapsto t\varpi_v+(1-t)\varpi_v^e$ in $a_v$, i.e.
\[
a_t = a_v\big(t\varpi_v+(1-t)\varpi_v^e\big)\in \fc^\heartsuit(\cO_v).
\]
We note that $\cM_{v}^\star(a_t)\simeq \cM_v^\star(a_1) = \cM_v^\star(a_v)$ for $t\neq 0$ and that $a_0 = a_v(\varpi_v^e)$ is split since $\varpi_v\mapsto \varpi_v^e$ splits $R_a(F_v)$. Let $N>0$ be as in Proposition \ref{prop: local constancy} for both $a_v$ and $a_0$, so that $\cM_v^\star(a_{v}')\simeq \cM_v^\star(a_v)$ whenever $a_v'\equiv a_v\mod \varpi_v^N$ and likewise for $a_0$. Fix a smooth, projective curve $\Sigma$ of genus $g\geq 2$ and an \'etale double cover $\vartheta\colon \Sigma'\to \Sigma$ so that at a distinguished place $v$, the extension of $\Sigma_v'$ by $\Sigma_v$ is the extension $\cO_v\subset \cO_v'$. Choose a line bundle $D$ on $\Sigma$ such that
\[
\deg(D)>\max\{N\cdot n, 2g\}.
\]
where $n =\dim\fa$ is the rank of the symmetric variety $X^\star$. Let $\cM^{\star,\ani} = \cM^{\star,\ani}_{\Sigma,D}$ be the relative Hitchin moduli space, which is smooth by Proposition \ref{prop: smoothness and properness of relative hitchin fibration}. For every $u\neq v$ in $|\Sigma|$, we have an affine linear map
\begin{equation}
    \label{eqn: local evaluation map for v and v'}
\cA = \cA_D\to \fc(\cO_v/\varpi_v^N)\times \fc(\cO_{u}/\varpi_{u}^2),
\end{equation}
which is surjective by Riemann-Roch since $\deg(D)\geq\max\{N\cdot n, 2g\}-1\geq \frac{1}{2}(2g+N+1)$. Let $C$ denote the curve in $H^0(\Sigma,\fc(\cO_v/\varpi_v^N))$ given by the family $a_t$. Define the closed subset $L_C\subset \cA$ to be the preimage of $C$, and let $Z$ denote the set of points $a$ such that $a|_{\Sigma - \{v\}}$ meets the discriminant divisor $\fD_X$ transversely. Put $Z_C = Z\cap L_C$. The set $Z$ is open in $\cA^\heartsuit$ and intersects each fiber of the map $L_C\to C$ non-trivially by surjectivity of \eqref{eqn: local evaluation map for v and v'}. Moreover, since by Lemma \ref{lem: codim of anisotropic}, the codimension of $\cA-\cA^\ani$ in $\cA$ is greater than $\deg(D)$ and $Z_C$ is open in $L_C$, which is of codimension $N\cdot n$ in $\cA$, it follows that $Z_C^\ani =Z_C\cap \cA^\ani$ is also nonempty and open in $L_C$. 

Composing \eqref{eqn: local evaluation map for v and v'} with projection onto the first factor gives
\begin{equation}
	\label{eqn: local evaluation map at v}
\cA = \cA_{D}\to \fc(\cO_v/\varpi_v^N),
\end{equation}
which is also a linear map between vector spaces. Choose a section $s$ of the map \eqref{eqn: local evaluation map at v} such that $s(a_0)\in Z_C^\ani$. As $Z_C^\ani\subset L_C$ is open, this implies that $s^{-1}(Z_C^\ani)$ is open in $C$. Therefore, we may choose a fixed $t\neq 0$ such that $s(a_t)\in Z_C^\ani$. 

As $f^{\star,\ani}$ is proper (Proposition \ref{prop: smoothness and properness of relative hitchin fibration}), by upper semi-continuity of fiber dimensions we have
\[
\dim \cM^\star_{s(a_t)} \leq \dim \cM^\star_{s(a_0)}
\]
By Proposition \ref{prop: dim formula split case}, we know that $\dim \cM^\star_{s(a_0)} = \dim  \cP_{s(a_0)}$, which by smoothness of the group scheme $\cP$ is equal to $\dim \cP_{s(a_t)}$. Using the product formula \ref{prop: product formula}, we deduce that
\begin{align*}
\dim\cM^\star_{s(a_t)} -\dim \cP_{s(a_t)} &= \sum_{v'\in |\breve\Sigma - (U_a\cup \{v\})|} \big(\dim \cM^\star_{v'}(a) - \dim \cP_{v'}(J_a)\big) + \big(\dim \cM^\star_{v}(a) - \dim \cP_{v}(J_a)\big) \\
& = \dim \cM^\star_{v}(a) - \dim \cP_{v}(J_a)
\end{align*}
where the second equality follows from Proposition \ref{prop: transverse is dimension zero}. Hence, we conclude that
\[
\dim \cM^\star_{v}(a) - \dim \cP_{v}(J_a) = \dim\cM^\star_{s(a_t)} -\dim \cP_{s(a_t)}\leq \dim\cM^\star_{s(a_0)} -\dim \cP_{s(a_0)} = 0
\]
so that $\dim \cM^\star_v(a) = \dim \cP_v(J_a)$, as desired. 

We now compute the dimension of $\cP_v(J_{a})$. Using notation from the proof of Proposition \ref{prop: lattice action on RASF}, we denote $R = R_a(\cO_v')\otimes_k\ol k$. From the short exact sequence \eqref{eqn: decomposition of P(Ja) into lattice and finite parts}, whose cokernel is a lattice and hence has dimension zero, we have
\[
\dim \cP_v(J_a) = \dim_{\ol k}\left( (\wt{R^\iota})^\times/ ({R^\iota})^\times\right) = \dim_{\ol k}\left( \wt{R^\iota}/ {R^\iota}\right) = \mathrm{length}_{\cO_v'}\left( \wt{R^\iota}/ {R^\iota}\right).
\]
We consider $R^\iota\subset \wt{R^\iota}$ as $\cO_v'$-lattices in $R^\iota\otimes_{\cO_v'}F_v'$. In particular, for an $\cO_v'$-lattice $\Lambda$ in $R^\iota\otimes_{\cO_v'}F_v'$ we may compute its discriminant by choosing an $\cO_v'$ basis $\ul e = \{e_1,\dots, e_n\}$ and putting
\[
\mathrm{disc}(\Lambda) = \det \mathrm{Tr}(\Lambda,\ul e), \quad\quad \mathrm{Tr}(\Lambda,\ul e) := \left( \mathrm{Tr}_{R^\iota\otimes_{\cO_v'}F_v'/F_v'}(e_ie_j) \right)_{1\leq i,j\leq n}.
\]
It is a standard exercise that $\mathrm{disc}(\Lambda)$ is well defined up to unit in $(\cO_v')^\times$. Choose $\cO_v'$ bases $\ul e = \{e_1,\dots,e_n\}$ for $R^\iota$ and $\tilde{\ul{e}} = \{\tilde e_1,\dots,\tilde e_n\}$ for $\wt R^\iota$ and let $M$ be the matrix with entries in $\cO_v'$ so that for all $i$,
\[
\tilde e_i = \sum_jM_{ij}e_j.
\]
Then, it is easy to check that $\val(\det M) = \mathrm{length}_{\cO_v'}(\wt R^\iota/R^\iota)$ and 
\begin{equation}
\label{eqn: relating bases of R iota and normalization}
    M^t\cdot \mathrm{Tr}(\wt R^\iota,\tilde{\ul{e}})\cdot M = \mathrm{Tr}(R^\iota,\ul e).
\end{equation} 
In particular, taking determinants and valuations of \eqref{eqn: relating bases of R iota and normalization}, we get
\[
\mathrm{length}_{\cO_v'}(\wt R^\iota/R^\iota) = \frac{1}{2}\left(\val \mathrm{disc}(R^\iota) - \val \mathrm{disc}(\wt R^\iota)\right).
\]
Finally, we compute these discriminants:  With respect to the basis $1,x^2,x^4,\dots,x^{2n-2}$, one can compute $\mathrm{disc}(R^\iota) = \mathrm{disc}(Q_a)$ is the discriminant of the polynomial $Q_a$, and so
\[
\val \mathrm{disc}(R^\iota) = \val \mathrm{disc}(Q_a) = d^\cent_v(a).
\]
where the last equality follows from the description of $\fD^\cent$ given in \S\ref{sec: divisors}.

On the other hand, $R^\iota\otimes_{\cO_v'}F_v'$ splits as a product $\prod_{i=1}^r K_i'$ where each $K_i'$ is a finite extension of $F_v'$. We likewise have a splitting of the $\sigma$ fixed points: Put $K_i = (K_i')^\sigma$ $k_i=[K_i':F_i'] = [K_i:F_i]$. The cocharacter lattice of the torus $J_{a,\breve F_v}$ is $\prod_{i=1}^r\mathrm{Res}_{K_i/F_v}(\bG_m)$
and so $c_v(a) = n-r$. But on the other hand, the discriminant of the DVR $\cO_{K_i'}$ in $K_i'$ is
\[
\val \mathrm{disc}(\wt R^\iota)=\sum_{i=1}^r (e_i-1) = n-r.
\]
Hence, $c_v(a) = \val \mathrm{disc}(\wt R^\iota)$, and the formula for dimension now follows.
\end{proof}

\begin{cor}
	\label{cor: ani is flat}
	The relative Hitchin fibration restricted to anisotropic locus $f^{\star,\ani}\colon \cM^{\star,\ani}\to \cA^\ani$ is flat.
\end{cor}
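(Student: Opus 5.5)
The plan is to deduce flatness from miracle flatness, using that $\cM^{\star,\ani}$ is smooth (Proposition \ref{prop: smoothness and properness of relative hitchin fibration}) and $\cA^\ani$ is an open subset of an affine space, hence smooth. A morphism between regular (Cohen--Macaulay source, regular target) locally noetherian stacks is flat as soon as all of its fibers have dimension equal to $\dim\cM^{\star,\ani}-\dim\cA^\ani$. This can be checked smooth-locally on the source, so the dimension statement is what has to be proved. So I would show that every fiber $\cM^\star_a$, $a\in\cA^\ani(\ol k)$, has dimension $\dim\cM^{\star,\ani}-\dim\cA$. Equidimensionality of the fibers is also needed here, because the fiber-dimension criterion wants every component of every fiber to have the expected dimension.

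First, compute the relative dimension generically. Over the smooth locus $\cA^\diamondsuit\subset\cA^\ani$ (Lemma \ref{lem: inclusions of opens in relative bases}) the fibers are finite unions of $\cP_a$-torsors. Indeed, by the product formula (Proposition \ref{prop: product formula}) together with Proposition \ref{prop: transverse is dimension zero}, all local factors $\cM_v^\star(a_v)$ are zero-dimensional. Hence $\dim\cM^\star_a=\dim\cP_a$ there. Since $\cM^{\star,\ani}$ is smooth, and this open set is dense, the expected relative dimension is $\dim\cP_a$. Here $\cP$ is smooth over $\cA^\ani$, being a Picard stack of torsors under the smooth group scheme $J_a$, so $\dim\cP_a$ is independent of $a$. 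I must also check that $\cM^{\star,\ani}$ is equidimensional, or at least that every irreducible component dominates. The latter will follow once the fiber-dimension bound below holds everywhere: any component then has dimension at most $\dim\cA+\dim\cP_a$, and by smoothness each connected component is irreducible of its own dimension.

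Second, bound the dimension of an arbitrary fiber using the product formula. For $a\in\cA^\ani(\ol k)$,
\[
\dim\cM^\star_a=\dim\cP_a+\sum_{v\in|\breve\Sigma-U_a|}\big(\dim\cM_v^\star(a_v)-\dim\cP_v(J_{a_v})\big),
\]
and each summand vanishes by Theorem \ref{prop: dim of RASF}. Hence $\dim\cM^\star_a=\dim\cP_a$ for all anisotropic $a$. Combined with upper semicontinuity (the map is proper), this shows every fiber has exactly the expected dimension.

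The main obstacle is equidimensionality: a fiber might have a component of smaller dimension, which would break the criterion. I would handle this by applying the general fact that for a morphism from a smooth (Cohen--Macaulay) source to a smooth target, each irreducible component of a fiber has dimension at least $\dim(\text{source})-\dim(\text{target})$. This is the lower bound from Krull's principal ideal theorem, applied to the fiber cut out locally by $\dim\cA$ equations. Together with the upper bound $\dim\cP_a$ from the previous step, every component has the same dimension, and miracle flatness then gives that $f^{\star,\ani}$ is flat. This also yields equidimensionality of $\cM^\star_a$, and via the product formula the equidimensionality of the RASFs.
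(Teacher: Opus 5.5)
\textbf{Comparison with the paper.} Your outline follows the paper's proof: miracle flatness, then the product formula together with Theorem~\ref{prop: dim of RASF} to get $\dim\cM^\star_a=\dim\cP_a$ for every $a\in\cA^\ani$, then constancy of $\dim\cP_a$ from the smoothness of $\cP$. Your Krull lower bound is a cleaner route to equidimensionality of the fibers than the paper's appeal to upper semicontinuity.

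\textbf{The gap.} Both the Krull bound and miracle flatness need $\dim_x\cM^{\star,\ani}=\dim\cA+\dim\cP_a$ at \emph{every} point $x$. Equivalently, every irreducible (hence connected, by smoothness) component of $\cM^{\star,\ani}$ must dominate $\cA^\ani$. Your argument does not establish this.
\begin{itemize}
\item Density of $\cA^\diamondsuit$ in $\cA^\ani$ says nothing about density of $(f^{\star,\ani})^{-1}(\cA^\diamondsuit)$ in $\cM^{\star,\ani}$.
\item Your fallback, that each component has dimension \emph{at most} $\dim\cA+\dim\cP_a$, is the wrong inequality. Consider a component $C$ lying over a proper closed $Z\subsetneq\cA^\ani$ with $\dim C=\dim Z+\dim\cP_a$. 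It is consistent with that upper bound and with $\dim\cM^\star_a=\dim\cP_a$.
\item It is also consistent with Krull's bound, which at points of $C$ only gives fiber dimension $\ge\dim C-\dim\cA$.
\end{itemize}
At points of such a $C$ the miracle-flatness equality fails, and $f^{\star,\ani}$ really would not be flat. Indeed, flat morphisms of finite presentation are open, whereas $C$ is open with image inside $Z$. The paper's own proof uses this input tacitly as well.

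\textbf{How to close it.} Compute the dimension of the total space independently.
\begin{enumerate}
\item The tangent complex of $\mathrm{Sect}_\Sigma([V^\star_D/H^\star])$ at $(E,\phi)$ is $R\Gamma(\Sigma,[\mathfrak h_E\to V_E\otimes D])$.
\item By Serre duality, its $\mathbb{H}^1$ is dual to the sections of $V_E\otimes K_\Sigma D^{-1}$ commuting with $\phi$.
\item Over $\cA^\heartsuit$, the spectral description embeds these into sections of the pullback of $K_\Sigma D^{-1}$ to the normalization of $\wt\Sigma'_a$. Those vanish once $\deg D>2g-2$.
\item Hence at every point, $\dim_x\cM^{\star,\ani}=\chi(V_E\otimes D)-\chi(\mathfrak h_E)=2n^2\deg D$.
\item Riemann--Roch gives $\dim\cA+\dim\cP_a=\bigl(n(n+1)\deg D+n(1-g)\bigr)+\bigl(n(n-1)\deg D-n(1-g)\bigr)=2n^2\deg D$.
\end{enumerate}
With this, your upper bound and the Krull lower bound force every component of every fiber to have dimension exactly $\dim\cP_a$, and miracle flatness applies.
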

\begin{proof}
	Since $\cM^{\star,\ani}$ is smooth by Proposition \ref{prop: smoothness and properness of relative hitchin fibration}, by Miracle Flatness it suffices to show the fibers of $f^{\star,\ani}$ are equidimensional of the same dimension.  Since by Proposition \ref{prop: smoothness and properness of relative hitchin fibration} $f^{\star,\ani}$ is proper, the fiber dimension is upper semicontinuous on the source. Hence, it suffices to show the fiber dimensions are equal over any $a\in \cA^\ani$. For any $a\in \cA^\ani$, we have by product formula (Proposition \ref{prop: product formula}) and the dimension formula for RASFs (Theorem \ref{prop: dim of RASF}) that
	\[
\dim [\cM_a^\star/\cP_a] = \sum_{v\in |\breve\Sigma - U_a|} \dim [\cM_v^\star(a)/\cP_v(J_a)] = 0.
	\]
	Hence, $\dim \cM_a^\star = \dim\cP_a$ for all $a\in \cA^\ani$. Moreover, by uppersemicontiuity of local dimension of fibers, this constrains all components in $\cM_a^\star$ to also be of dimension $\dim \cP_a$. Finally, since $\cP$ is smooth over $\cA^\ani$, we conclude that $\dim \cM_a^\star$ is constant on $\cA^\ani$.
\end{proof}

\begin{prop}
\label{prop: equidimensionality}
	Let $a_v\in \fc^\heartsuit(\cO_v)$. The RASF $\cM_v^\star(a_v)$ is equidimensional.
\end{prop}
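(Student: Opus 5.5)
We would prove equidimensionality by globalizing $a_v$, as in the proof of Theorem \ref{prop: dim of RASF}, and using flatness of the relative Hitchin fibration (Corollary \ref{cor: ani is flat}) together with the product formula. Work over $\ol k$. Choose $N$ as in Proposition \ref{prop: local constancy} for $a_v$, a curve $\Sigma$ with \'etale double cover realizing $\cO_v\subset\cO_v'$ at a place $v$, and $D$ of large degree. By surjectivity of the evaluation map \eqref{eqn: local evaluation map for v and v'} and the codimension estimate of Lemma \ref{lem: codim of anisotropic}, there is then a global point $a\in\cA^\ani(\ol k)$ with $a\equiv a_v\mod\varpi_v^N$ whose restriction to $\Sigma-\{v\}$ meets $\fD_X$ transversely. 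By Proposition \ref{prop: local constancy}, $\cM_v^\star(a)\simeq \cM_v^\star(a_v)$. By Proposition \ref{prop: transverse is dimension zero}, every other local factor is zero-dimensional, and in fact a discrete set of points on which $\cP_{v'}(J_a)$ acts with finitely many orbits.

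The product formula (Proposition \ref{prop: product formula}) then identifies $\cM^\star_a$, up to universal homeomorphism, with
\[
\Big(\cM_v^\star(a_v)\times \textstyle\prod_{v'\neq v}\cM^\star_{v'}(a)\Big)\wedge^{\prod \cP_{v'}(J_a)}\cP_a .
\]
Since $\cP_a$ is a smooth group of dimension $\dim\cP_a$ and the $v'\neq v$ factors are discrete, the irreducible components of $\cM^\star_a$ correspond to $\cP_v(J_a)$-orbits of irreducible components of $\cM_v^\star(a_v)$. Concretely, each component $Z\subset\cM_v^\star(a_v)$ gives a component of $\cM_a^\star$ of dimension $\dim Z+\dim\cP_a-\dim\cP_v(J_a)$; one checks this using Proposition \ref{prop: lattice action on RASF} to pass to finite-type quotients by $\Gamma_a$.

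In the proof of Corollary \ref{cor: ani is flat}, flatness of $f^{\star,\ani}$ from the smooth stack $\cM^{\star,\ani}$ forces every irreducible component of every fiber $\cM^\star_a$ to have dimension exactly $\dim\cP_a$. Indeed, every component of a fiber of a flat morphism between pure-dimensional spaces has dimension $\dim\cM^{\star,\ani}-\dim\cA^\ani$. Plugging this into the formula above gives $\dim Z=\dim\cP_v(J_a)$ for every irreducible component $Z$ of $\cM_v^\star(a_v)$, which is equal to $\dim\cM_v^\star(a_v)$ by Theorem \ref{prop: dim of RASF}.

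The main obstacle is making the dimension bookkeeping in the product formula rigorous for every component, not just the maximal-dimensional ones. Three things must be controlled: stabilizers of $\cP_v(J_a)$ can jump between components (Proposition \ref{prop: lattice action on RASF}(3) guarantees they are of finite type); the quotient is only a universal homeomorphism, which suffices since dimension and irreducible components are topological; and a small component must not be absorbed into the closure of a larger one after gluing. For the last point I would work with the $\Gamma_a$-quotients, which are finite type, and argue that the map $\cM_v^\star(a_v)\times\cP_a\to\cM^\star_a$ is smooth of relative dimension $\dim\cP_v(J_a)$ onto its image, so components pull back to components of the expected dimension.
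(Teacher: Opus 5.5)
Your proposal is correct and is essentially the paper's argument: globalize $a_v$ (via Proposition \ref{prop: local constancy} and a large-degree $D$) to a point of $\cA^\ani$ meeting $\fD_X$ transversely away from $v$, use the product formula so that all other local factors are zero-dimensional, and invoke flatness of $f^{\star,\ani}$ to force equidimensionality of $[\cM_a^\star/\cP_a]$, hence of $[\cM_v^\star(a)/\cP_v(J_a)]$. Your component-by-component bookkeeping ($\dim Z+\dim\cP_a-\dim\cP_v(J_a)$, with smoothness of the quotient map ruling out absorption of small components) spells out what the paper asserts in one line; the existence of the interpolation point still needs the incidence-variety dimension count (the paper's $Z_1,Z_2$ argument, after Ng\^o's Proposition 4.7.1), which you only gesture at.
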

\begin{proof}
Let $N>0$ be as in Proposition \ref{prop: local constancy}. Choose a line bundle $D$ on $\Sigma$ such that 
\[
\deg(D)>\max\{N\cdot n, 2g\}.
\]
Let $\cM^{\star,\ani} = \cM^{\star,\ani}_{\Sigma,D}$ be the relative Hitchin moduli space, which is smooth by Proposition \ref{prop: smoothness and properness of relative hitchin fibration}.

Let $L\subset \cA$ be the subset of points $a$ such that
\[
a\equiv a_v\mod \varpi_v^N.
\]
In particular, $L$ is a closed subset of $\cA^\heartsuit$ since $a_v\in \fc^\heartsuit(\cO_v)\subset \fc^{\mathrm{rs}}(F_v)$. Alternatively, we note that $L$ is the preimage of $a_v$ under the local evaluation map
\[
\cA\to \fc(\cO_v/\varpi_v^N),
\]
which is surjective using Riemann--Roch by the assumption on $\deg(D)$. Therefore, the codimension of $L$ in $\cA$ is given by
\[
\mathrm{codim}_{\cA}(L) = N\cdot n.
\]
The complement of the anisotropic locus has codimension at least $\deg(D) >N\cdot n$ by Lemma \ref{lem: codim of anisotropic}. Hence, $L$ intersects non-trivially with the open set $\cA^\ani$, and the set $L^\ani = L\cap \cA^\ani$ is locally closed in $\cA$. We claim that $L^\ani$ contains a point $a$ such that for any $v'\in |\Sigma|\setminus \{v\}$, the image of $a$ under the local evaluation map
\begin{equation}
	\label{eqn: local eval map for x,y}
	\cA = \cA_{\Sigma,D}\to \fc(\cO_{v'}/\varpi_{v'}^2)
\end{equation}
is nonzero. That is, $a|_{\Sigma\setminus \{v\}}$ meets the discriminant divisor transversely away from $v$ and we have
\[
a\equiv a_v\mod \varpi_v^N.
\]
In this case, by the product formula (Proposition \ref{prop: product formula}), we have
	\[
	[\cM_{a}^\star/\cP_{a}]\underset{\text{Homeo}}{\simeq} [\cM_v^\star(a)/\cP_v(J_a)]\times \prod_{v'\in |\breve\Sigma -(U_a\cup\{v\})|} [\cM_{v'}^\star(a)/\cP_{v'}(J_a)] ,
	\]
where, for any $v'\in |\breve \Sigma - (U_a\cup\{v\})|$, we have $[\cM_{v'}^\star(a)/\cP_{v'}(J_a)] $ is equidimensional of dimension zero by Proposition \ref{prop: transverse is dimension zero}. Since $f^{\star,\ani}$ is flat (Corollary \ref{cor: ani is flat}), $[\cM_a^\star/\cP_a]$ is equidimensional (of dimension zero), and so $[\cM_v^\star(a)/\cP_v(J_a)]$ must also be equidimensional. The result then follows. 
	
	To prove the existence of such a point $a$, we argue similarly to \cite[Proposition 4.7.1]{ngo} and \cite[Lemma 5.2.6]{jingren}. By our assumptions on the degree of $D$, the evaluation map
	\begin{equation}
		\label{eqn: surj of local evaluation}
		L\to \fc(\cO_{v'}/\varpi_{v'}^2)
	\end{equation}
	is surjective. Let $\fD^\cent = \fD^{\mathrm{sm}}\cup\fD^{\mathrm{sing}}$ where $\fD^\mathrm{sm}$ is the smooth locus of $\fD^\cent$ and $\fD^{\mathrm{sing}}$ its complement (the intersection of root hyperplanes). Then, define
	\[
	Z_1 = \{ (c,a)\in \fD_D^{\mathrm{sm}}\times \cA \mid a\text{ meets }\fD_D\text{ at $c$ with multiplicity $\geq2$} \}
	\]
	\[
	Z_2 = \{ (c,a)\in \fD_D^{\mathrm{sing}}\times L \mid a\text{ meets at $\fD_D$ at $c$} \}
	\]
	For each of the $Z_j$, the surjectivity of \eqref{eqn: surj of local evaluation} implies that
	\[
	\dim(Z_j)\leq \dim(L) - 1
	\]
	Hence, the complement of the image of the projection $Z_1\cup Z_2\to L$ is a nonempty open set in $L$.
\end{proof}

Finally, we collect as a corollary a relationship with dimensions of endoscopic RASF's. Recall from Lemma \ref{lem: pullback of divisors along transfer map} that there exists a divisor $\fR_\fe\subset \fc_\fe$ so that $\nu_\fe^*\fD^\cent = \fD_\fe^\cent+2\fR_\fe$. Define $r_v(a_\fe) = \val(a_\fe^*\fR_\fe)$ for $a_\fe\in \fc^\heartsuit_\fe(\cO_v)$. We will denote by $\delta(a)$ the dimension of the RASF $\cM_v^\star(a)$, which is independent of $\star\in \{\siegel,\unitary\}$.

\begin{lem}
    \label{lem: delta vs endoscopic delta}
    For $a_\fe\in \fc_\fe^\heartsuit(\cO_v)$ and $a\in \fc^\heartsuit(\cO_v)$ such that $\nu_\fe(a_\fe) = a$, we have
    \[
    \delta_v(a) = \delta_{\fe,v}(a_\fe)+r_v(a_\fe)
    \]
\end{lem}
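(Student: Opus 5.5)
The plan is to reduce everything to the explicit dimension formula of Theorem \ref{prop: dim of RASF}, applied on both sides, together with the divisor identity of Lemma \ref{lem: pullback of divisors along transfer map}.

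On the non-endoscopic side, Theorem \ref{prop: dim of RASF} gives
\[
\delta_v(a)=\tfrac12\big(d_v^\cent(a)-c_v(a)\big).
\]
On the endoscopic side, $\cM^\fe_v(a_\fe)=\cM^\siegel_v(a^\siegel)\times\cM^\unitary_v(a^\unitary)$, so $\delta_{\fe,v}(a_\fe)$ is the sum of the two factor dimensions. Applying the theorem to each factor, with ranks $r$ and $s$, gives
\[
\delta_{\fe,v}(a_\fe)=\tfrac12\big(d^\cent_v(a^\siegel)+d^\cent_v(a^\unitary)-c_v(a^\siegel)-c_v(a^\unitary)\big).
\]
Since $\fD^\cent_\fe$ is the pullback of the sum of the two factor centralizer divisors, the first two terms combine to $\val(a_\fe^*\fD^\cent_\fe)$.

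The first key step is to pull back the divisor identity $\nu_\fe^*\fD^\cent=\fD^\cent_\fe+2\fR_\fe$ along $a_\fe$. This gives
\[
d_v^\cent(a)=\val(a_\fe^*\fD^\cent_\fe)+2r_v(a_\fe).
\]
Concretely, with $Q_a=Q^\siegel Q^\unitary$, this is the classical identity
\[
\mathrm{Disc}(Q_a)=\mathrm{Disc}(Q^\siegel)\,\mathrm{Disc}(Q^\unitary)\,\mathrm{Res}(Q^\siegel,Q^\unitary)^2,
\]
where $\fR_\fe$ is cut out by the resultant. So one may either invoke the lemma or verify this directly.

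The second key step is to show that the splitting defect is additive: $c_v(a)=c_v(a^\siegel)+c_v(a^\unitary)$. By Proposition \ref{prop: centralizers under transfer}, $\fs_\fe/\iota_\fe\to \fs/\iota\times_\fc\fc_\fe$ is a partial normalization. It is an isomorphism over the generic point, because $a$ is regular semisimple over $F_v$, so the resultant is nonzero there. Hence
\[
R_a(F_v')^\iota\simeq R_{a^\siegel}(F_v')^\iota\times R_{a^\unitary}(F_v')^\iota,
\]
and the same holds after taking $\sigma$-fixed points. Consequently $J_{a,F_v}\simeq J_{\fe,a_\fe,F_v}$ as tori, the field factors $K_i$ of $a$ are the union of those of $a^\siegel$ and $a^\unitary$, and $r=r_1+r_2$. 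Since $n=r+s$, we get $c_v(a)=n-\mathrm{rk}\,\Gamma_a=(r-r_1)+(s-r_2)$, which is the sum of the endoscopic defects.

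Substituting both steps into the formula for $\delta_v(a)$ gives
\[
\delta_v(a)=\tfrac12\big(\val(a_\fe^*\fD^\cent_\fe)+2r_v(a_\fe)-c_v(a^\siegel)-c_v(a^\unitary)\big)=\delta_{\fe,v}(a_\fe)+r_v(a_\fe).
\]

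The main obstacle I expect is bookkeeping rather than substance. First, the definition of $c_v$ must be matched for each factor, especially for the unitary factor $X^\unitary_s$, which is also covered by Theorem \ref{prop: dim of RASF}. Second, one must check that the multiplicity of $\fR_\fe$ equals the valuation of the resultant exactly, with no factor of two hidden in the $\lambda\mapsto\lambda^2$ doubling. I would check the latter first, using the explicit root description $\fR_\fe=\prod_{\alpha\in\Psi}\alpha$ against $\prod_{i\in S,\,j\notin S}(\delta_i-\delta_j)$, after squaring to pass to the $Q$-variables.
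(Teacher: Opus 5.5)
Your proposal is correct and follows the paper's proof. Both arguments apply the dimension formula of Theorem~\ref{prop: dim of RASF} on each side, use $\nu_\fe^*\fD^\cent=\fD^\cent_\fe+2\fR_\fe$ to compare the $d^\cent$ terms, and use the generic isomorphism $J_{a,F_v}\simeq J_{\fe,a_\fe,F_v}$ from Proposition~\ref{prop: centralizers under transfer} to get $c_v(a)=c_{\fe,v}(a_\fe)$. The only difference is that you split the endoscopic side into its Siegel and unitary factors explicitly, whereas the paper treats $c_{\fe,v}$ and $d^\cent_{\fe,v}$ for $X^\fe$ as a whole; in writing this up, avoid using $r$ both for the Siegel rank and for the number of field factors.
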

\begin{proof}
    Note that $c_v(a)$ and $c_{\fe,v}(a_\fe)$ depend only on the generic fiber of $J_a$, respectively $J_{a_\fe}$. By Proposition \ref{prop: centralizers under transfer}, $J_a$ and $J_{a_\fe}$ have isomorphic generic fibers, and so $c_v(a)=c_{\fe,v}(a_\fe)$. On the other hand, by Lemma \ref{lem: pullback of divisors along transfer map}, $d_v^\cent(a) = d_{\fe,v}^\cent(a_\fe)+2r_v(a_\fe)$. Hence, by the dimension formula, Proposition \ref{prop: dim of RASF}, we have
    \[
    \delta_v(a) = \frac{d_v^\cent(a)-c_v(a)}{2} = \frac{d_{\fe,v}^\cent(a_\fe)-c_{\fe,v}(a_\fe)}{2}+r_v(a_\fe) = \delta_{\fe,v}(a_\fe)+r_v(a_\fe) \qedhere
    \]
\end{proof}

Lemma \ref{lem: delta vs endoscopic delta} implies a global result as well. We fix global data $\vartheta\colon \Sigma'\to \Sigma$ and $D$ on $\Sigma$ as in \S\ref{sec: relative Hitchin spaces}. The divisor $\fR_\fe$ is homogeneous for the $\bG_m$ action on $\fc_\fe$, and so we can form the divisor $(\fR_\fe)_D$ in $\fc_{\fe,D}$. Put $r_v(D) = \deg a_\fe^*(\fR_\fe)_D$. When $X^\fe = X^\siegel_r\times X^\unitary_s$ for $r+s = n$, we get $r_v(D) = 2rs\deg(D)$. 

\begin{cor}
    \label{cor: delta vs endoscopic delta, global}
    Fix a divisor $D$ on $\Sigma$, and let $\fe = (\kappa,\rho)$ be an endoscopic datum. For any $a_\fe\in \cA_\fe^\ani(\ol k)$ and $a\in\cA^\ani(\ol k)$ such that $\nu_\fe(a_\fe) = a$, we have
    \[
    \delta(a) = \delta_\fe(a_\fe)+r_v(D)
    \]
\end{cor}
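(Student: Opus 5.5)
The plan is to reduce the global statement to the local Lemma \ref{lem: delta vs endoscopic delta}, using the product formula and the fact that both $\delta$-invariants are dimensions of the affine parts of Picard-type groups. First, for $a\in\cA^\ani(\ol k)$, $\delta(a)=\dim R_a$ is the dimension of the affine part of $\cP_a$. The standard local--global comparison (the Beauville--Laszlo exact sequence underlying Remark \ref{rmk: product formula for Ja'}) gives
\[
\delta(a)=\sum_{v\in|\breve\Sigma-U_a|}\dim\cP_v(J_{a_v}).
\]
Concretely, one compares $\cP_a$ with the Picard stack of $J_a^\flat$-torsors, where $J_a^\flat$ is the Néron/normalized model. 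The latter has abelian-variety neutral component, and the kernel is $\prod_v J_a^\flat(\cO_v)/J_a(\cO_v)$, whose dimension is $\dim\cP_v(J_a)$ by \eqref{eqn: decomposition of P(Ja) into lattice and finite parts}. Combined with Theorem \ref{prop: dim of RASF}, this gives $\delta(a)=\sum_v\delta_v(a_v)$. The same argument applied to $X^\fe$, a product of a Siegel and a unitary factor, gives $\delta_\fe(a_\fe)=\sum_v\delta_{\fe,v}(a_{\fe,v})$. In both cases the sum runs over the finitely many places where $a$ (equivalently $a_\fe$) fails to be regular semisimple.

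Next, apply Lemma \ref{lem: delta vs endoscopic delta} place by place to $a_{\fe,v}\mapsto a_v$. The local hypotheses hold because $a\in\cA^\ani\subset\cA^\heartsuit$ and $\nu_\fe$ commutes with restriction to $\Sigma_v$. Summing over all $v$, the places outside the bad set contribute $0$ on both sides, and we obtain
\[
\delta(a)=\delta_\fe(a_\fe)+\sum_{v\in|\breve\Sigma|}\val_v\!\left(a_\fe^*(\fR_\fe)_D\right).
\]
The last sum equals $\deg a_\fe^*(\fR_\fe)_D=r_v(D)$. This holds because $a_\fe^*(\fR_\fe)_D$ is an effective divisor on $\breve\Sigma$, and it is a genuine divisor rather than all of $\Sigma$ since $a\in\cA^\heartsuit$ forces $a_\fe(\Sigma)\not\subset\fR_\fe\subset\nu_\fe^{-1}\fD^\cent$. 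Its degree is computed by homogeneity: $\fR_\fe$ is a product of roots, hence a homogeneous function on $\fc_\fe$ for the $\G_m$-action. It therefore defines a section of $D^{\otimes w}$ for the appropriate weight $w$, giving $\deg=w\deg D$. This matches the value $2rs\deg(D)$ recorded in the text.

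The main obstacle is the first step, namely identifying $\delta(a)$, defined globally as $\dim R_a$, with the sum of local $\dim\cP_v(J_a)$. This requires checking that the normalization $J_a^\flat$ of the centralizer $\mathrm{Res}$-norm-kernel torus of Theorem \ref{thm: regular centralizers for rational forms} has Picard stack with proper neutral component. My first approach would be to use the spectral description, Proposition \ref{prop: spectral description of P star}. There $J_a^\flat$ corresponds to the Prym of the normalizations of $\ol\Sigma'_a\to\ol\Sigma_a$, which is an abelian variety up to finite groups. The affine part of $\cP_a$ is then $\prod_v$ of the local quotients $(\wt{R^\iota})^\times/(R^\iota)^\times$ restricted to norm-one elements. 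Alternatively, one can use Lemma \ref{lem: delta invariant is same as GLn} directly: $\delta(a)$ is the classical $\delta$-invariant of $\ol\Sigma_a$, namely $\sum_v\mathrm{length}(\wt{R^\iota}/R^\iota)$ (halved appropriately for the $\sigma$-descent). This is exactly the local quantity computed at the end of the proof of Theorem \ref{prop: dim of RASF}.
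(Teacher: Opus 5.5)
Your proposal is correct and follows the paper's argument. You write both global $\delta$-invariants as sums of local ones; the paper simply cites the product formula for this step, and your comparison with the N\'eron model via Beauville--Laszlo gluing is the fuller justification of it. You then apply Lemma \ref{lem: delta vs endoscopic delta} place by place and identify $\sum_v \val_v(a_\fe^*(\fR_\fe)_D)$ with $r_v(D)=\deg a_\fe^*(\fR_\fe)_D$, exactly as the paper does.

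One small correction: the places where $a$ and $a_\fe$ fail to be regular semisimple do not coincide. A place where $a_\fe$ meets only $\fR_\fe$ is bad for $a$ but not for $a_\fe$, and there $\delta_v(a)=r_v(a_\fe)$. Since you sum over all places, this does not affect your argument.
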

\begin{proof}
    By the product formula Proposition \ref{prop: product formula}, it suffices to note that
    \[
r_v(D) = \sum_{v\in |\breve\Sigma - U_{a_\fe}|} \deg(v)r_v(a_\fe).\qedhere
    \]
\end{proof}

\subsection{Density of $\cA^\dagger$ in $\delta$-strata} We now extract some global corollaries from these local results. Recall the $\delta$-stratification of $\cA$ in Section \ref{sec: delta stratification}, as well as the $\dagger$-locus $\cA^\dagger$ in Definition \ref{def: dagger locus}.
We prove the following density result.

\begin{prop}
    \label{prop: density of dagger locus in delta strata}
    Fix a positive integer $\delta$, and suppose that $\deg(D)\geq g+\frac{3}{2}\delta$. Then the locus $\cA^\dagger_\delta := \cA^\dagger\cap \cA_\delta$ is open and dense in every component of $\cA_\delta$.
\end{prop}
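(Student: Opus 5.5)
Openness is immediate: $\cA^\dagger$ is open in $\cA^\ani$ (Lemma \ref{lem: density of dagger locus}), so $\cA^\dagger_\delta$ is open in $\cA_\delta$. The content is density: no irreducible component of $\cA_\delta$ may lie inside the closed complement $\cA_\delta\setminus\cA^\dagger$, i.e.\ inside the locus where $a$ meets $\fD^\br_D=(a_{2n})$ non-transversely. I will follow \cite[Prop 5.7.2]{ngo} and \cite[Prop 4.7.1]{ngo}, stratifying $\cA_\delta$ by local singularity types. Over $\ol k$, a point of $\cA_\delta$ is determined up to the transverse part by finitely many places $v_1,\dots,v_m$ and local germs $a_{v_i}\in\fc^\heartsuit(\cO_{v_i})$ with $\sum_i\deg(v_i)\delta_{v_i}(a_{v_i})=\delta$ (Lemma \ref{lem: delta invariant is same as GLn}, Theorem \ref{prop: dim of RASF}). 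By Proposition \ref{prop: local constancy}, only a jet of $a_{v_i}$ of order $N_i$ matters. For each such $\delta$ there are finitely many jet types, and each $N_i$ is bounded linearly in $\delta_{v_i}$, since $d^\cent_v\le 2\delta_v+n$ bounds the relevant discriminant valuation.

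Given an irreducible component $Y$ of $\cA_\delta$, take its generic point and record the finitely many places and jets $(v_i,j_i)$ where $a$ meets $\fD^\cent$ with positive $\delta_{v_i}$. I then consider the affine-linear subspace $L\subset\cA$ of sections whose jets at the moving points $v_i$ equal $j_i$, allowing the points to move in $\Sigma$. By Riemann--Roch, the evaluation map $\cA\to\prod_i\fc(\cO_{v_i}/\varpi_{v_i}^{N_i})\times\fc(\cO_u/\varpi_u^2)$ is surjective as soon as $\deg D$ exceeds $2g-2$ plus the total jet order. This is the step where the hypothesis $\deg(D)\ge g+\tfrac32\delta$ is used, and I will check that the total jet order is bounded by roughly $\tfrac32\delta$ after the doubling in the grading $D^{2i}$. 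With surjectivity, $Y$ is dense in this family, and the family contains points that are transverse to $\fD_X$ at every other place $u$ and in particular to $\fD^\br$ away from the $v_i$. The argument is the same as the $Z_1,Z_2$ dimension count in the proof of Proposition \ref{prop: equidimensionality}.

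The main obstacle is the behaviour at the singular places $v_i$ themselves. There the jet $j_i$ could force $\val(a_{2n})\ge 2$, since $\delta$ may be concentrated at $x=0$ on the spectral curve. I would first show that $\delta_v$ is controlled only by $\fD^\cent$: by Theorem \ref{prop: dim of RASF}, $\delta_v=(d^\cent_v-c_v)/2$ involves only $\fD^\cent$ and the splitting behaviour of $R_a^\iota$. Within the locally constant family, one can then translate $Q_a(y)\mapsto Q_a(y-\varpi_v\epsilon)$, or otherwise perturb the constant term $a_{2n}$ at $v_i$, to make $\val_{v_i}(a_{2n})\le1$ without changing the isomorphism class of $R_a^\iota$, and hence without changing $\delta_{v_i}$. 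Since $\fD^\br$ and $\fD^\cent$ are complementary reduced components of $\fD_X$, such a perturbation exists generically. If the perturbation must be made inside the fixed jet, I will instead re-choose the representative jet in its $\delta$-class. This perturbation is the delicate point, and it is where the degree bound must absorb the extra order of vanishing needed.
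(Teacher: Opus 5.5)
There is a genuine gap, located exactly at the step you call delicate. Because you freeze the jets $j_i$ of the generic point of $Y$ at the singular places $v_i$, whether $\val_{v_i}(a_{2n})\ge 2$ is decided by $j_i$ alone. Nothing in your family $L$ can change it.

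None of your three proposed remedies works:
\begin{itemize}
\item \emph{Translation by $\varpi_v\epsilon$.} The substitution $y\mapsto y-\varpi_v\epsilon$ preserves the maximal ideal $(y,\varpi_v)$. So if the singular point of $\ol\Sigma_{a,v}$ lies over $y=0$, then $Q_a(y)\in(y,\varpi_v)^2$ forces $Q_a(-\varpi_v\epsilon)\in(\varpi_v^2)$, and $\val(a_{2n})\ge 2$ persists.
\item \emph{Perturbing $a_{2n}$ by $\varpi_v c$.} This makes the $\varpi_v$-linear part of $Q_a$ nonzero at the singular point. The point becomes smooth and $\delta_{v_i}$ drops.
\item \emph{Staying ``within the locally constant family''.} If $a'\equiv a \bmod \varpi_v^N$ with $N\ge 2$, then $\val(a'_{2n})\ge 2$ if and only if $\val(a_{2n})\ge 2$, so nothing can change.
\end{itemize}
Re-choosing the jet produces points of a different family. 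Showing that those points lie in the same component $Y$ is precisely the content of the proposition, and you have not argued it.

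Your degree bookkeeping has a related problem. For arbitrary singularity types, the orders $N_i$ come from Proposition~\ref{prop: local constancy}, which is proved by Artin approximation and gives no explicit bound. The inequality $d^\cent_v\le 2\delta_v+n$ controls a discriminant valuation, not a determinacy order. So nothing guarantees that the total jet order fits under $\deg(D)\ge g+\frac32\delta$.

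The missing idea is the reduction the paper makes before fixing any jets.
\begin{itemize}
\item By Ng\^o (proof of \cite[Prop 5.7.2]{ngo}), $\cA_{(1^\delta)}$ is open and dense in $\cA_\delta$.
\item This is refined to $\cA^{\min}_{(1^\delta)}$, where each of the $\delta$ singular points has $\delta_x=1$ and $d^\cent_x=2$. These conditions are detected on $3$-jets by an open dense $\mathfrak U$ inside a closed $\mathfrak Z\subset\fc(\cO_v/\varpi_v^3)$.
\item Hence $\cA_{\ge(1^\delta)}$ decomposes into fibers $\cA_{\underline{\xi}}$ over $3$-jets at $\delta$ points. The hypothesis $\deg(D)> g+\frac32\delta$ is exactly what makes evaluation onto $\prod_i\fc(\cO_{x_i}/\varpi_{x_i}^3)$, plus two auxiliary points, surjective.
\item The $Z_1,Z_2$ dimension count then gives density of $\cA^\dagger$ in each $\cA_{\underline{\xi}}$.
\end{itemize}
Because $\underline{\xi}$ ranges over an open dense set of jets rather than being frozen, placing a node over $y=0$ is only a proper closed condition.

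If you want to keep your own strategy, your translation instinct is right, but the translation must be global and by a unit. The group $H^0(\Sigma,D^{\otimes 2})$ acts on $\cA$ by $Q_a(y)\mapsto Q_a(y-s)$.
\begin{itemize}
\item This moves $\ol\Sigma_a$ isomorphically and preserves $\mathrm{Disc}(Q_a)$, hence preserves $\cA^\heartsuit$ and $\delta$.
\item Being connected, the group preserves each component of the $\delta$-stratum of $\cA^\heartsuit$. The components of $\cA_\delta$ are open dense in these, since $\cA^\ani$ is open.
\item For generic $s$ (which needs $H^0(\Sigma,D^{\otimes 2})$ to separate $1$-jets, true since $\deg D>g$), the section $-s$ avoids the finitely many singular points of $\ol\Sigma_a$ and meets it transversally. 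So $Q_a(-s)$, which is $\pm a_{2n}$ of the translate, vanishes simply.
\end{itemize}
Intersecting this nonempty open subset with the nonempty open subset $\cA^\ani$ inside an irreducible component then gives the density.
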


The proof of this proposition involves some results in deformation theory. We begin with the following claim.

\begin{lem}
\label{lem: local evaluation map for delta strata}
    Fix distinct points $x_0,x_1,\dots, x_r,y\in \breve\Sigma(\ol{k})$. If $\deg(D)> g+\frac{3}{2}r$, then the product of local evaluation maps
    \begin{equation}
    \label{eqn: local evaluation map is surjection}
\cA\to \fc(\cO_{y}/\varpi_{y})\oplus \fc(\cO_{x_0}/\varpi_{x_0})\oplus\prod_{i=1}^r \fc(\cO_{x_i}/\varpi_{x_i}^3)
    \end{equation}
    is surjective.
\end{lem}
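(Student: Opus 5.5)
The map \eqref{eqn: local evaluation map is surjection} decomposes componentwise. Since $\cA = \bigoplus_{i=1}^n H^0(\Sigma, D^{\otimes 2i})$ and $\fc_D \simeq \bigoplus_i D^{\otimes 2i}$ by Lemma \ref{Lem: cat quot split}, the evaluation map is a direct sum over $i=1,\dots,n$ of the linear restriction maps
\[
H^0(\Sigma, D^{\otimes 2i}) \to H^0\big(\Sigma, D^{\otimes 2i}\otimes \cO_{Z}\big),
\qquad Z = y + x_0 + 3x_1 + \cdots + 3x_r ,
\]
where $Z$ is viewed as an effective divisor on $\breve\Sigma$. Surjectivity of linear maps can be checked after base change to $\ol k$, and the points are distinct, so $\cO_Z$ is the product of the local truncations appearing on the right-hand side. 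The plan is therefore to prove surjectivity for each $i$ separately and then take the direct sum.

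For fixed $i$, consider the short exact sequence
\[
0 \to D^{\otimes 2i}(-Z) \to D^{\otimes 2i} \to D^{\otimes 2i}\otimes \cO_Z \to 0 .
\]
Surjectivity on global sections follows once $H^1(\Sigma, D^{\otimes 2i}(-Z)) = 0$. By Serre duality this holds whenever $\deg D^{\otimes 2i}(-Z) > 2g-2$. Here $\deg Z = 2 + 3r$, so it suffices to have $2i\deg(D) - 3r - 2 > 2g-2$.

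Now use the hypothesis $\deg(D) > g + \tfrac{3}{2}r$. Then
\[
2i\deg(D) \ge 2\deg(D) > 2g + 3r,
\]
so $2i\deg(D) - 3r - 2 > 2g - 2$. This is exactly the required bound, and it holds for every $i \ge 1$. The worst case is $i=1$, and the stated bound is tuned precisely to it.

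The main point to be careful about is that the hypothesis is strict. It is also important to count the two points $y$ and $x_0$, each with multiplicity one, in $\deg Z$: they contribute the $-2$, which the $2g-2$ threshold absorbs exactly. There is no deformation-theoretic subtlety at this stage. The only other check is that the $\G_m$-weights $2,4,\dots,2n$ make each coordinate of $\fc_D$ a section of $D^{\otimes 2i}$ (rather than $\cL(D)^{\otimes 2i}$), which holds because $\cL^{\otimes 2}\simeq\cO$.
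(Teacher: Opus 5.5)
Your proof is correct and is essentially the paper's argument: the paper identifies the cokernel with $H^1\big(\Sigma,\fc_D(-y-x_0-3\sum_{i} x_i)\big)$ and kills it summand by summand via $\fc_D=\bigoplus_i D^{\otimes 2i}$ and Riemann--Roch. There, as in your version, the $i=1$ summand is the binding case, giving exactly $2\deg(D)-3r-2>2g-2$; your write-up simply makes the short exact sequence and the reduction to $i=1$ explicit.
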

\begin{proof}
    This morphism has cokernel given by
    \[
    H^1\!\left(\Sigma,\fc_D\!\!\left( -y-x_0 - 3\sum_{i=1}^r x_i \right)\right)
    \]
    Since $\fc_D = \bigoplus_i D^{\otimes 2i}$, this cohomology group vanishes by Riemann--Roch when
    \[
    2\deg(D) - 3r - 2> 2g-2
    \]
    which can be rearranged to read
    \[
    \deg(D)> g+\frac{3}{2}r.
    \]
    Starting with this assumption, we see that \eqref{eqn: local evaluation map is surjection} is a surjection. 
\end{proof}

We will use a local description of $\delta$ invariants.

\begin{definition}
    For $a\in \fc^\heartsuit(\cO_v)$, we denote by $\delta_v(a) = \dim\cM^\star_v(a)$ the dimension of the corresponding RASF at $a$.
\end{definition}

Note that Proposition \ref{prop: dim of RASF} gives a formula computing $\delta_v(a)$. Moreover, by the product formula (Proposition \ref{prop: product formula}), for any $a\in \cA^\ani(\ol k)$ we have an equality
\[
\delta(a) = \sum_{v\in |\breve\Sigma|} \delta_v(a),
\]
where only finitely many terms in the right hand side are nonzero.

Let $\cA_{(1^\delta)}\subset \cA_\delta$ denote the locus of points $a$ in $\cA_\delta$ such that $\delta_{x}(a)\leq 1$ for all $x\in \breve\Sigma(\ol{k})$. This subset was used in \cite{ngo} alongside other locally closed sets subsets $\cA_{\lambda}$ for partitions $\lambda\vdash \delta$ to give a stratification of $\cA_\delta$. In particular, Ng\^o shows, in the proof of \cite[Proposition 5.7.2]{ngo} that:

\begin{lem}
    \label{lem: density of 1,1,... locus in delta strata}
    The subset $\cA_{(1^\delta)}$ is open and dense in $\cA_\delta$.
\end{lem}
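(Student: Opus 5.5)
The plan is to follow Ngô's argument in the proof of \cite[Proposition 5.7.2]{ngo}, transported to the relative base $\cA$ via the spectral description. By Lemma \ref{lem: delta invariant is same as GLn}, $\delta(a)$ equals the $\delta$-invariant of the curve $\ol\Sigma_a$. Concretely, $\ol\Sigma_a$ is the spectral curve of $Q_a$, the degree-$n$ characteristic polynomial with coefficients in $H^0(\Sigma,D^{\otimes 2i})$. The open subset $\cA_{(1^\delta)}$ will be characterized as the locus where $\ol\Sigma_a$ has exactly $\delta$ singular points, each of local $\delta$-invariant $1$ (ordinary nodes or cusps), counted over $\ol k$. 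By the dimension formula (Theorem \ref{prop: dim of RASF}), this is the same as $\delta_x(a)\leq 1$ for every $x$.

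\emph{Openness.} The first step is to show that $\cA_{(1^\delta)}$ is open in $\cA_\delta$. Over $\cA_\delta$, the total $\delta$ is constant. The function $x\mapsto \delta_x(a)$ is upper semicontinuous in families, by the standard semicontinuity of local $\delta$-invariants of a family of reduced curves. Hence the condition $\max_x\delta_x(a)\leq 1$ is open. Equivalently, the number of singular points is lower semicontinuous and is bounded above by $\delta$, so the locus where it equals $\delta$ is open.

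\emph{Density.} Let $a\in\cA_\delta$ have singular points $x_1,\dots,x_m$ with $\sum\delta_{x_i}(a)=\delta$. The aim is to deform $a$ inside $\cA_\delta$ so that each singularity splits into $\delta_{x_i}(a)$ points of $\delta$-invariant one. First, work locally: in the versal deformation of the plane curve singularity $\ol\Sigma_{a,x_i}$ (inside $\fc(\cO_{x_i})$ via $Q_a$), the equigeneric stratum, where the total $\delta$ is preserved, has the $(1^{\delta_{x_i}})$-locus, consisting of nodes only, dense in it. This is the Diaz--Harris / Teissier result used by Ngô, and it applies because $\mathrm{char}(k)>n$. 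Second, globalize these local deformations. By local constancy (Proposition \ref{prop: local constancy}), the local geometry at $x_i$ depends only on $a$ modulo $\varpi_{x_i}^N$. For $\deg D$ large, an analogue of Lemma \ref{lem: local evaluation map for delta strata} shows that the evaluation map $\cA\to\prod_i\fc(\cO_{x_i}/\varpi_{x_i}^N)$ is surjective and smooth. The stratum $\cA_\delta$ near $a$ therefore maps smoothly onto the product of the local equigeneric strata, and the preimage of the dense nodal loci is dense near $a$. The locus where new singularities appear away from the $x_i$ has positive codimension, by a dimension count as in the proof of Proposition \ref{prop: equidimensionality}, and can be avoided.

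The main obstacle is the degree bound. One needs surjectivity of the evaluation maps at enough jets to make the global-to-local comparison smooth. Ngô's argument obtains this from $\deg D$ large relative to $\delta$ (Corollary \ref{cor: delta regularity in the limit}). The plan is to reduce to the $\GL_n$ spectral curve case of \emph{loc.\ cit.}, since $\cA$ is the usual $\GL_n$ Hitchin base for $D^{\otimes 2}$. Ngô's density statement then applies verbatim, and pulling back along the identification of $\delta$-strata from Lemma \ref{lem: delta invariant is same as GLn} gives the lemma.
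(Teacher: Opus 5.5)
Your proposal is essentially the paper's proof. The paper proves this lemma only by citing Ngô's proof of \cite[Proposition 5.7.2]{ngo}, and your closing reduction is exactly what makes that citation apply: you identify $\cA=\bigoplus_i H^0(\Sigma,D^{\otimes 2i})$ with the $\GL_n$ Hitchin base for $D^{\otimes 2}$ and $\ol\Sigma_a$ with the corresponding $\GL_n$ spectral curve, you match the global and local $\delta$-invariants via Lemma \ref{lem: delta invariant is same as GLn} and Theorem \ref{prop: dim of RASF}, and you note that restricting to the open set $\cA^\ani$ preserves open density. You are also right that, as in Ngô, a lower bound on $\deg D$ in terms of $\delta$ is implicitly needed.

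One small correction to your sketch: the condition $\delta_x(a)\le 1$ for all $x\in\breve\Sigma$ is strictly stronger than ``$\ol\Sigma_a$ has $\delta$ singular points of $\delta$-invariant one,'' because two such points can lie over the same $x$. So state openness and density directly for the condition on $\delta_x$. Also, do not rely on the Diaz--Harris/Teissier density statement in characteristic $p>n$ without a reference; the citation to Ngô is what carries that step.
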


We will need the following refinement of $\cA_{(1^\delta)}$. Let $\cA_{(1^\delta)}^{\min}$ be the set of points $a$ in $\cA_{(1^\delta)}$ such that, for all $x$ such that $\delta_x(a) = 1$, we also have $d_x^\cent(a) = 2$. Note that by the dimension formula Proposition \ref{prop: dim of RASF}, we must have $d_x^\cent(a)\geq 2\delta_x(a)$ for all $x$, so this subset should be thought of as points of $\cA_\delta$ which meet the divisor $\fD^\cent_D$ ``as transversely as possible.''

\begin{lem}
\label{lem: d=2 and delta>1 detectable mod 3}
    Treat $\fc(\cO_v/\varpi_v^3)$ as an affine space. There exists a closed subvariety $\mathfrak{Z}\subset \fc(\cO_v/\varpi_v^3)$ and an open, dense subset $\mathfrak{U}\subset \mathfrak{Z}$ such that the condition $\delta_{v}(a)>0$ is equivalent to the image of $a$ under the local evaluation map 
    \[
    \cA\to \fc(\cO_v/\varpi_v^3)
    \]
    landing in $\mathfrak{Z}$ while the condition that \emph{both} $\delta_v(a) = 1$ and $d_v^\cent(a) = 2$ is equivalent to the image of $a$ landing in $\mathfrak{U}$.
\end{lem}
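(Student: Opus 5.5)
The plan is to reduce both conditions to statements about the ring $R^\iota = R_a(\cO_v')^\iota = \cO_v'[y]/(Q_a(y))$, where $Q_a(y^2) = P_a$, using the computation in the proof of Theorem \ref{prop: dim of RASF}:
\[
\delta_v(a) = \mathrm{length}_{\cO_v'}\big(\wt{R^\iota}/R^\iota\big) = \tfrac12\big(d_v^\cent(a) - c_v(a)\big).
\]

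\textbf{The locus $\delta_v(a)>0$.} By the formula above, $\delta_v(a)>0$ if and only if $R^\iota$ is not normal. Since $R^\iota$ is reduced and one-dimensional ($a$ is regular semisimple), this is equivalent to $R^\iota$ failing to be regular at some closed point.

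As in the proof of Proposition \ref{prop: transverse is dimension zero}, write $Q_a = \sum_i Q_i(y)\varpi_v^i$. The point $y_0 \in \ol k$ is non-regular exactly when
\[
Q_0(y_0) = Q_0'(y_0) = 0 \quad\text{and}\quad Q_1(y_0) = 0.
\]
These conditions depend only on $a \bmod \varpi_v^2$, hence a fortiori on $a \bmod \varpi_v^3$. So I would define $\mathfrak{Z}$ as the image under projection of the incidence variety
\[
\widetilde{\mathfrak{Z}} = \{(y_0,\bar a) \in \A^1 \times \fc(\cO_v/\varpi_v^3) \mid Q_0(y_0) = Q_0'(y_0) = Q_1(y_0) = 0\}.
\]
The fibre of $\widetilde{\mathfrak{Z}}$ over each $y_0$ is cut out by linear equations in the coefficients. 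In the coordinates $a_{2i}$ these are three independent linear conditions on the coefficients of $Q_0$ and $Q_1$, so $\widetilde{\mathfrak{Z}}$ is a vector bundle over $\A^1$ and in particular irreducible.

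To see that $\mathfrak{Z}$ is closed, I would use that roots of a monic polynomial are integral over its coefficients. Then $\widetilde{\mathfrak{Z}}$ is closed in the finite cover of $\fc(\cO_v/\varpi_v^3)$ given by the spectral cover of $Q_0$, and the projection is finite, hence closed. Thus $\mathfrak{Z}$ is closed and irreducible, and $\delta_v(a)>0$ if and only if the image of $a$ lies in $\mathfrak{Z}$.

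\textbf{The locus $\delta_v(a)=1$, $d_v^\cent(a)=2$.} The dimension formula gives $d_v^\cent(a) - c_v(a) = 2\delta_v(a)$ with $c_v(a) \geq 0$. Hence $\delta_v(a)>0$ together with $d_v^\cent(a) \leq 2$ forces $d_v^\cent(a) = 2$, $\delta_v(a) = 1$ and $c_v(a) = 0$. Conversely, $\delta_v(a)=1$ and $d_v^\cent(a)=2$ certainly imply $\delta_v(a)>0$. So the second condition is equivalent to
\[
\bar a \in \mathfrak{Z} \quad\text{and}\quad \val\,\mathrm{Disc}(Q_a) \leq 2.
\]
Since $\mathrm{Disc}(Q_a) \bmod \varpi_v^3$ is a polynomial in $a \bmod \varpi_v^3$, the locus $\{\mathrm{Disc} \equiv 0 \bmod \varpi_v^3\}$ is closed. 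I would therefore set
\[
\mathfrak{U} = \mathfrak{Z} \setminus \{\mathrm{Disc} \equiv 0 \bmod \varpi_v^3\},
\]
which is open in $\mathfrak{Z}$.

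\textbf{Density of $\mathfrak{U}$.} Because $\mathfrak{Z}$ is irreducible, it suffices to show $\mathfrak{U}$ is nonempty, which I would do by an explicit example. Take
\[
Q = \big((y-y_0)^2 - u\varpi_v^2\big)\prod_{j}(y - y_j),
\]
with $u$ a unit, $y_0 \neq 0$, and the $y_j$ distinct nonzero residues different from $y_0$. This $Q$ lies in $\mathfrak{Z}$ at $y_0$, and its discriminant has valuation exactly $2$.

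\textbf{Expected obstacle.} I expect the main care to go into verifying that the three linear conditions are independent for every $y_0$, including $y_0 = 0$, so that $\widetilde{\mathfrak{Z}}$ really is a vector bundle and $\mathfrak{Z}$ is irreducible. If irreducibility fails, the fallback is to run the same explicit construction in each component of $\mathfrak{Z}$.
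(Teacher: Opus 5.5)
Your proposal is correct and follows essentially the same route as the paper: $\delta_v(a)>0$ is detected by non-regularity of the germ of the $\iota$-quotient spectral curve (vanishing of $Q_0$, $Q_0'$, $Q_1$ at some $y_0$), and once $\delta_v(a)>0$ the dimension formula forces $d_v^\cent(a)\geq 2$, so $\mathfrak{U}$ is cut out in $\mathfrak{Z}$ by $\mathrm{Disc}(Q_a)\not\equiv 0 \bmod \varpi_v^3$. You also supply what the paper leaves implicit, namely closedness of $\mathfrak{Z}$ via the finite projection and density of $\mathfrak{U}$ via irreducibility plus your example. The independence you were worried about is automatic: write $a_{2i,j}$ for the $\varpi_v^j$-coefficient of $a_{2i}$; then for $n\geq 2$ the three conditions are triangular in $a_{2n,0}$, $a_{2n-2,0}$, $a_{2n,1}$ with coefficient $1$ for every $y_0$, so the incidence variety is a trivial affine (not vector) bundle over $\A^1$.
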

\begin{proof}
    Since our $\delta$-invariant coincides with the $\delta$-invariant for the local Jacobian of $\ol{\Sigma}_{a,v}$ (see Lemma \ref{lem: delta invariant is same as GLn}), the condition that $\delta_{v}(a)>0$ is equivalent to the condition that the germ of the associated spectral curve is singular. This can be described by vanishing of partial derivatives with respect to $x$ and $\varpi_v$ and is hence a closed condition.

    For the second claim, note that $d_v^\cent(a) = \mathrm{val}_{v}(a^*\fD^\cent_D)$ is an upper semicontinuous function, and so the locus with minimal valuation is open in the space where $\delta_v(a)>0$. In particular, by the dimension formula (Theorem \ref{prop: dim of RASF}), $d^\cent_v(a) = 2$ is the minimal valuation if we impose $\delta_v(a)>0$. As having valuation 2 is detectable mod $\varpi_v^3$, the lemma follows.
\end{proof}

Fix $\delta$ distinct points $\ul{x} = (x_1,\dots, x_\delta)$ on $\breve\Sigma$. For $\ul{\xi} = (\xi_1,\dots, \xi_\delta)$ with $\xi_i\in H^0(\Sigma,\cO_{x_i})$ satisfying both $\delta_{x_i}(a)=1$ and $d_{x_i}^\cent(a) = 2$, we denote by $\cA_{\ul{\xi}}$ the preimage of $\ul{\xi}$ under the local evaluation map
    \[
\cA\to \prod_{i=1}^\delta \fc(\cO_{x_i}/\varpi_{x_i}^3).
    \]
    Let $\cA_{\geq(1^\delta)}$ denote the subvariety of $a\in \cA$ for which there exist at least $\delta$ distinct points $x_1,\dots, x_\delta$ of $\breve\Sigma$ such that $\delta_{x_i}(a)\geq 1$. Then, by Lemma \ref{lem: d=2 and delta>1 detectable mod 3}, we have
    \begin{equation}
    \label{eqn: local decomposition of minimal delta cell}
        \cA_{\geq(1^\delta)} = \bigcup_{\ul{\xi}\in \ev^{-1}(\mathfrak{Z})} \cA_\xi,\quad \quad \cA_{(1^\delta)}^{\min} = \bigcup_{\ul{\xi}\in \ev^{-1}(\mathfrak{U})} \cA_\xi
    \end{equation}
    where $\mathfrak{Z}$ and $\mathfrak{U}$ are as in the statement of the lemma. In particular, we also deduce the following.

\begin{cor}
    \label{cor: density of valuation 2 locus in delta strata}
Suppose that $\deg(D)\geq g+\frac{3}{2}\delta$. The subset $\cA_{(1^\delta)}^{\min}$ contains an open and dense subset of $\cA_{(1^\delta)}$. In particular, it contains an open, dense subset of $\cA_\delta$.
\end{cor}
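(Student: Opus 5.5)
The plan is to run a dimension count on the incidence decomposition \eqref{eqn: local decomposition of minimal delta cell}, using the surjectivity of local evaluation maps from Lemma \ref{lem: local evaluation map for delta strata}. First, for a fixed tuple of distinct points $\ul x=(x_1,\dots,x_\delta)$, Lemma \ref{lem: local evaluation map for delta strata} with $r=\delta$ (dropping $x_0,y$ if needed, which only weakens the hypothesis, and the degree bound $\deg(D)\geq g+\frac32\delta$ suffices up to the harmless strict/non-strict adjustment) shows that
\[
\ev_{\ul x}\colon \cA\to \prod_{i=1}^\delta \fc(\cO_{x_i}/\varpi_{x_i}^3)
\]
is a surjective affine linear map. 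Hence each fiber $\cA_{\ul\xi}$ is an affine subspace of constant codimension $3n\delta$.

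Next, globalize over the configuration space of $\delta$ distinct points of $\breve\Sigma$. Consider the incidence varieties
\[
\wt Z=\{(\ul x,a)\mid \ev_{\ul x}(a)\in \mathfrak Z^\delta\},\qquad \wt U=\{(\ul x,a)\mid \ev_{\ul x}(a)\in \mathfrak U^\delta\}.
\]
Since the evaluation is a surjective vector bundle map over the configuration space, $\wt Z\to \mathfrak Z^\delta$-bundle is a vector bundle of constant rank, and $\wt U$ is open in $\wt Z$. Because $\mathfrak U$ is open dense in $\mathfrak Z$ by Lemma \ref{lem: d=2 and delta>1 detectable mod 3}, $\wt U$ is dense in $\wt Z$. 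Density survives taking images under the projection to $\cA$, whose images are $\cA_{\geq(1^\delta)}$ and $\cA^{\min}_{(1^\delta)}$. It follows that $\cA^{\min}_{(1^\delta)}$ is dense in $\cA_{\geq(1^\delta)}$, hence in the locally closed subset $\cA_{(1^\delta)}\subset\cA_{\geq(1^\delta)}$ (intersected with $\cA^\ani$).

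To get an open, and not merely constructible, subset, I would observe that the conditions $d_x^\cent(a)=2$ at the singular points of $a$ are open within $\cA_{(1^\delta)}$. This follows from upper semicontinuity of $d^\cent$, with the total $\delta$ fixed so that the singular points vary in a finite étale way locally. This gives an open dense subset.

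The final clause then follows from Lemma \ref{lem: density of 1,1,... locus in delta strata}. The main obstacle I anticipate is making ``dense in $\cA_{\geq(1^\delta)}$'' pass to ``dense in $\cA_{(1^\delta)}$'' on every irreducible component. The risk is that some component of $\cA_{(1^\delta)}$ is not the image of a component of $\wt Z$ meeting $\wt U$. I would handle this by noting that every irreducible component of $\mathfrak Z^\delta$ meets $\mathfrak U^\delta$, or else restricting to the components of $\mathfrak Z$ that contain $\mathfrak U$. Components with $d^\cent>2$ have strictly smaller dimension by the computation of Lemma \ref{lem: d=2 and delta>1 detectable mod 3}, so they cannot dominate a component of $\cA_{(1^\delta)}$.
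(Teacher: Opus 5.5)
Your proposal is correct and is essentially the paper's argument. Surjectivity of the jet evaluation from Lemma~\ref{lem: local evaluation map for delta strata} pulls the open density of $\mathfrak U$ in $\mathfrak Z$ (Lemma~\ref{lem: d=2 and delta>1 detectable mod 3}) back to $\cA$ through the decomposition \eqref{eqn: local decomposition of minimal delta cell}, and Lemma~\ref{lem: density of 1,1,... locus in delta strata} gives the last clause. Two of your additions make explicit what the paper leaves implicit: globalizing over the configuration space of points, and dropping $x_0,y$, which is what lets the non-strict bound $\deg(D)\geq g+\frac32\delta$ suffice.

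Two steps should be stated more carefully. First, passing from density in $\cA_{\geq(1^\delta)}$ to density in $\cA_{(1^\delta)}$ needs $\cA_{(1^\delta)}$ to be \emph{open} in $\cA_{\geq(1^\delta)}\cap\cA^\ani$, not merely locally closed. Openness follows from upper semicontinuity of the $\delta$-invariant, since it is at least $\delta$ on $\cA_{\geq(1^\delta)}\cap\cA^\ani$; with this, the obstacle in your last paragraph never arises. Second, openness of the condition $d^\cent_x=2$ is better justified by properness of $\Sigma$ than by a finite \'etale claim. The bad locus is the image of a closed subset of $\Sigma\times\cA$, hence closed.
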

\begin{proof}
    By Lemma \ref{lem: local evaluation map for delta strata}, the hypothesis on degrees implies that the local evaluation map \eqref{eqn: local evaluation map is surjection} is surjective, and so $\ev^{-1}(\mathfrak{U})$ is open and dense in $\ev^{-1}(\mathfrak{Z})$. The result follows therefore from \eqref{eqn: local decomposition of minimal delta cell}.
\end{proof}

\begin{proof}[Proof of Proposition \ref{prop: density of dagger locus in delta strata}.]

    Fix any points $x_1,\dots, x_n$ on $\breve\Sigma$ which are pairwise distinct. For any fixed $x$, $y$ (with $x\neq y$) distinct from each of the $x_i$, by Lemma \ref{lem: local evaluation map for delta strata} and our assumption on $\deg(D)$, the map 
    \[
    \cA_{\ul{\xi}}\to \fc(\cO_x/\varpi_x)\oplus \fc(\cO_y/\varpi_y)
    \]
    remains surjective. Therefore, if we consider the variety $Z_1\subset \fD_D^\cent\oplus \cA_{\ul{\xi}}$ of pairs $(c,a)$ such that $a(\breve{\Sigma})$ meets $\fD_D^\cent$ at $c$ and $d_c^\cent(a) > 1$, we can show that $\dim(Z_1)\leq \dim(\cA_{\ul{\xi}}) - 1$, and similarly for $Z_2\subset \fD^\br_D\oplus \cA_{\ul{\xi}}$ consisting of pairs $(c,a)$ such that $a(\breve{\Sigma})$ meets $\fD_D^\br$ at $c$ and $d_c^\br(a) > 1$. Since $\cA_{\ul{\xi},\delta}^\dagger:=\cA_{\ul{\xi}}\cap \cA_\delta^\dagger$ contains the complement of the image of $Z_1\cup Z_2$ in $\cA_{\ul{\xi}}$, we deduce that $\cA_{\ul{\xi},\delta}^\dagger$ contains an open, dense subset of $\cA_{\ul{\xi}}$. By the decomposition \eqref{eqn: local decomposition of minimal delta cell}, it follows that $\cA_\delta^\dagger$ is dense in $\cA_{(1^\delta)}^{\min}$. The result now follows from Corollary \ref{cor: density of valuation 2 locus in delta strata}.
\end{proof}

\section{Point Counting}
\label{sec: point counting (whole)}

In this section, we connect the geometry of relative affine Springer fibers to the arithmetic of orbital integrals by counting points. In particular, this will permit us to access orbital integrals by studying cohomology of relative affine Springer fibers.

\subsection{Orbital Integrals}

We begin by recalling the definition of orbital integrals for each of the unitary and Siegel cases. Let $V^\star$ denote the representation of the group $H^\star$ over $k$, where $\star = \siegel$ or $\unitary$ as in \S\ref{sec: rational structures}. We may choose Haar measures $dh_v$ on $H^\star(F_v)$, normalized so that $\mathrm{vol}(H^\star(\cO_v),dh_v) = 1$, and $dt_v$ on $I_\gamma\subset H^\star(F_v)$. The orbital integral at $\gamma$ is defined as 
\[
\Orb^\star(\gamma,dt_v) = \int_{I_\gamma\backslash H^\star(F_v)} \mathbb{1}_{V^\star(\cO_v)}(h^{-1}\cdot \gamma)\, \frac{dh_v}{dt_v}
\]
where $\mathbb{1}_{V(\cO_v)}$ denotes the indicator function for the integral points $V^\star(\cO_v)$. 

Recall that an element $\gamma\in V^\star(F_v)$ is \emph{geometrically conjugate} to $\gamma_0$  when $\gamma = h\cdot \gamma_0$ for $h\in H(\breve F_v)$; in this setting, we have the Galois cocycle 
\[
\mathrm{inv}(\gamma,\gamma_0)\in H^1(F_v,I_{\gamma_0}) = H^1(F_v,J_a),
\]
where $a$ is the common image of $\gamma$ and $\gamma_0$ in $\fc(F_v)$, as follows:  choose $h\in H(\breve F_v)$ such that $h\cdot \gamma_0 = \gamma$. Then, for every $\varsigma\in \Gal(\ol{F}_v/F_v)$, the element $h^{-1}\cdot\varsigma(h)$ fixes $\gamma_0$ and hence must lie in the centralizer $I_{\gamma_0}$. The class $\mathrm{inv}(\gamma,\gamma_0)$ is the cocycle
\[
\varsigma\mapsto h^{-1}\cdot\varsigma(h);
\]
this is independent of the choice of $h$ since $h$ is well defined up to translation by the centralizer $I_{\gamma_0}$. When $\gamma_0$ is regular semi-simple (so that $I_{\gamma_0}$ is abelian), this cocycle is well defined up to multiplication by a 1-coboundary $\varsigma\mapsto x^{-1}\cdot\varsigma(x)$ for $x\in I_{\gamma_0}$.

For $\kappa\in H^1(F_v,I_{\gamma_0})^D$ (where $D$ here refers to the Pontryagin dual for finite abelian groups), we form the $\kappa$ orbital integral by  considering the weighted sum
\[
\Orb^{\star,\kappa}_a(dt_v) = \sum_{\gamma} \langle\kappa,\mathrm{inv}(\gamma,\gamma_0)\rangle \Orb^{\star}(\gamma,dt_v)
\]
We will frequently omit the dependence on the Haar measure $dt_v$ when it is understood from context. When $\kappa=\mathbf{1}$ is trivial, we denote $\SOrb_a^\star = \Orb_a^{\star,\mathbf{1}}$.

We first explain how a character on this cohomology group gives rise to an endoscopic datum  of $X^\Siegel$ in the sense of Definition \ref{def: endoscopic datum}. Recall that Tate--Nakayama duality gives $H^1(F_v,I_{\gamma_0})^D\simeq \pi_0(\widehat{I}_{\gamma_0}^{\Gamma})$. We may identify dual tori $\widehat{{I}}_{\gamma_0}\simeq \widehat{\canC}$, which may not respect the Galois actions on the two dual tori. Nevertheless, \cite[Lemma 7.14]{sl_endoscopy} implies that there exists a $1$-cocyle $w: \Gamma \to W_{X}$ valued in the little Weyl group of ${X}^\star$ such that this isomorphism becomes $\Gamma$-equivariant if we endow $\widehat{\canC}$ with the twisted action
\[
\sigma\ast t = w_\sigma(t).
\]
Thus we may identify these two tori, and if $\kappa\in H^1(F_v,I_{\gamma_0})^D\simeq \pi_0(\widehat{\canC}^{\Gamma,\ast})$, we may choose a lift $\dot{\kappa}\in \widehat{\canC}^{\Gamma,\ast}$. Composing with the symmetrization map $s: \widehat{\mathbf{X}}\to \widehat{\bG}$, we obtain an endoscopic datum via the classical construction (cf. \cite[\S  1.8]{ngo}).

For an endoscopic datum $\fe = (\kappa,\rho)$ for $X^\Siegel$, recall from Lemma \ref{lem: pullback of divisors along transfer map} that we have a reduced divisor $\fR_\fe$ such that
\begin{equation}
\label{eqn: identities for pulling back on transfer}
\nu_\fe^*\fD^\cent = \fD^\cent_\fe+2\fR_\fe, \quad \nu_\fe^*\fD^\br = \fD^\br_\fe.
\end{equation}
For $a\in \fc^\heartsuit(\cO_v)$, we denote 
\begin{equation}\label{eqn: dim names}
    r_\fe(a) = \deg(a^*\fR_\fe), \quad \quad d^\cent(a) = \deg(a^*\fD^\cent),\quad \quad d^\br(a) = \deg(a^*\fD^\br)
\end{equation}
We similarly denote $d_\fe^\cent(a_\fe)$ and $d_\fe^\br(a_\fe)$ for $a_\fe\in \fc_\fe^\heartsuit(\cO_v)$. Note that the identities \eqref{eqn: identities for pulling back on transfer} imply that, if $a_\fe$ maps to $a$ under $\nu_\fe$, then $d^\br(a) = d^\br_\fe(a_\fe)$ and $d^\cent(a) = d^\cent_\fe(a_\fe)+2r_\fe(a_\fe)$.

Note that if $\nu_\fe(a_\fe) = a$ for $a_\fe\in \fc_\fe(\cO_v)$ and $a\in \fc^\heartsuit$, then $J_a(F_v)\simeq J_{a_\fe}^\fe$ by Proposition \ref{prop: centralizers under transfer}. Hence, for a choice of Haar measure $dt_v$ on $J_a(F_v)$, we also obtain such a measure on $J_{a_\fe}^\fe$. The main theorem of this paper is the following.

\begin{thm}
    \label{thm: main}
    Let $\fe$ be an elliptic endoscopic datum, and suppose that $a\in \fc^\heartsuit(\cO_v)$ and $a_\fe\in \fc_\fe^
    \heartsuit(\cO_v)$ are matched in the sense that $\nu_\fe(a_\fe) = a$. Then, we have
    \[
    \Orb_a^{\Siegel,\kappa}(dt_v) = q^{r_\fe(a)}\SOrb_{a_\fe}^{\fe}(dt_v).
    \]
\end{thm}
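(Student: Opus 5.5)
We will follow the Ngô--Laumon strategy announced in the introduction, in four steps.

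\textbf{Step 1: point counts.} First we convert both sides into weighted groupoid counts on relative affine Springer fibers. Using the Kostant--Rallis section $[\e](a)$ as base point $\gamma_0$ (Lemma \ref{lem: kostant section and canonical lift agree}), the $H^\Siegel(F_v)$-orbits in the stable class of $a$ are indexed by $H^1(F_v,J_a)$. For each orbit, the integral $\Orb^\Siegel(\gamma)$ is a weighted count of $k$-points of $\fX_\gamma^\Siegel$ modulo $I_\gamma$. Twisting by $J_a$-torsors reassembles these into the single groupoid $[\cM_v^\Siegel(a)/\cP_v(J_a)](k)$. The sign $\langle\kappa,\mathrm{inv}\rangle$ becomes the sign $\langle\kappa,\mathrm{cl}(O)\rangle$ obtained from $\pi_0(\cP_v(J_a))$ via Tate--Nakayama. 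The same argument on the endoscopic side expresses $\SOrb^\fe_{a_\fe}$ as an unweighted count on $\cM^\fe(a_\fe)=\cM^\siegel(a^\siegel)\times\cM^\unitary(a^\unitary)$, using the map $J_a\to J_{\fe,a_\fe}$ of Proposition \ref{prop: centralizers under transfer}. Normalizing both sides with the same measure $dt_v$, Theorem \ref{thm: main} reduces to Theorem \ref{thm: local main_intro} with $K=k$.

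\textbf{Step 2: the easy vanishing case.} By Proposition \ref{prop: obstruction measures existence of k points}, the unitary factor $\cM^\unitary(a^\unitary)$ has no $k$-points when $\val(a^\unitary_{2s})$ is odd, so the right side is $0$. In that case we will show directly that the $\kappa$-sum on the Siegel side also vanishes, by exhibiting an element of $\cP_v(J_a)(k)$ on which $\kappa$ is $-1$ and which acts freely on orbits. The natural candidate is a uniformizer of the factor field corresponding to the $\kappa=-1$ block.

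\textbf{Step 3: globalization and geometric stabilization.} For the unobstructed case, we globalize. Choose $\Sigma$, $\Sigma'$, a place $v$ and $D$ of large degree. Using Proposition \ref{prop: local constancy} and surjectivity of the local evaluation maps (Riemann--Roch, as in Lemma \ref{lem: local evaluation map for delta strata}), we construct $a_\fe\in\cA_\fe^\ani(k)$ with the following properties: it agrees with the given local $a_\fe$ modulo $\varpi_v^N$ at $v$; it meets $\fD_\fe$ transversely elsewhere, so that the other local factors are points, by Proposition \ref{prop: transverse is dimension zero}; and every place has trivial obstruction, so that no local factor vanishes. By the product formula (Proposition \ref{prop: product formula}), the global $\kappa$-count on $[\cM^\Siegel_a/\cP_a](k)$ factors as a product. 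So does the stable endoscopic count, and the auxiliary factors at the other places can be computed by hand.

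On the geometric side, we prove Theorem \ref{thm: global main_intro} over a good open set. By the decomposition theorem and the $\kappa$-decomposition \eqref{eqn: kappa decomp}, $(\bR\cameral f^{\Siegel,\ani}_*\ol\Q_\ell)_\kappa$ is a semisimple perverse sheaf up to shift. Ngô's support theorem applies there, since we have smoothness, properness, flatness (Corollary \ref{cor: ani is flat}), $\delta$-regularity (Corollary \ref{cor: delta regularity in the limit}) and the product formula. It follows that every support meets $\cameral\cA^\dagger$, where the top cohomology is computed through the regular locus. By Proposition \ref{prop: factorization thru A reg}, Proposition \ref{prop: A reg over dagger locus} and Proposition \ref{prop: endoscopic description of components of P}, that top cohomology is $\cameral\nu_{\fe,*}\ol\Q_\ell$, and this identification is compatible with the endoscopic side. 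The shift $r_\fe(D)$ comes from Corollary \ref{cor: delta vs endoscopic delta, global}. Taking the Grothendieck--Lefschetz trace at our global point then gives the global identity. The local factor $q^{r_\fe(a)}$ is extracted by comparing global and local dimension shifts via Lemma \ref{lem: delta vs endoscopic delta}.

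\textbf{Main obstacle.} The hardest part will be the interpolation in Step 3. We must simultaneously keep the point in the good locus, where the complement has large codimension (Proposition \ref{prop: density of dagger locus in delta strata}), keep every auxiliary local count nonzero, and make the parities of $\val(a_{2n})$ consistent with a global $a_\fe$. We would try first to choose $D$ and the auxiliary transverse zeros of $a^\unitary_{2s}$ so that every other place has even valuation, absorbing any parity defect at one extra place of even degree.
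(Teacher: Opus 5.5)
Your overall architecture (point counts, product formula, support theorem over a good locus, interpolation) is the paper's, but Step~2 fails as stated. Translating by an element $p$ of $\cP_v(J_a)$ sends an object $(m,p')$ of $[\cM_v^\Siegel(a)/\cP_v(J_a)](k)$ to $(pm,\,p\,p'\sigma(p)^{-1})$. That object is isomorphic to $(m,p')$ via $p$, so translation preserves $\cl$ and cannot pair objects of opposite $\kappa$-sign.

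Perhaps you instead mean translating the invariant by a fixed class $u\in H^1(F_v,J_a)$ with $\kappa(u)=-1$, such as a uniformizer of a factor of the $\kappa=-1$ block. Then nothing makes the orbital integrals invariant under this shift, and the mechanism never uses that $\val(a^\unitary_{2s})$ is odd. Indeed, take $n=1$ and $a=\varpi_v^m$ with $m$ even. Here $\kappa$ is the nontrivial character of $F_v^\times/\mathrm{Nm}((F_v')^\times)$ and $\kappa(\varpi_v)=-1$. Yet both sides of the theorem are nonzero, since Example~\ref{ex: rank 1 lattices} gives an $\iota$-stable $h^\unitary$-self-dual lattice.

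What is needed is a symmetry \emph{outside} $\cP_v(J_a)$ whose class shift depends on $a$. The paper uses the duality $\bbD(\cE',h,\iota,\phi';\xi)=((\cE')^{\vee},(h^{-1})^{\vee},\iota^{\vee},(\phi')^{\vee};\theta\circ(\xi^{-1})^{\vee})$. It satisfies $\bbD(pm)=p^{-1}\bbD(m)$ and $\bbD(\sigma m)=d_\theta\cdot\sigma(\bbD m)$, so it induces an involution of the groupoid with $\cl(\bbD x)=[\cT_a]\cl(x)^{-1}$. One computes $[\cT_a]=x^2\in R_a(F_v)^{\iota}/\mathrm{Nm}(R_a(F_v')^{\iota})$, whose norm on the unitary block is $\pm(a^\unitary_\fe)_{2s}$. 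Hence $\kappa([\cT_a])=(-1)^{\val(a^\unitary_\fe)_{2s}}$, which gives the functional equation $\Orb_a^{\Siegel,\kappa}=(-1)^{\Ob(a_\fe)}\Orb_a^{\Siegel,\kappa}$.

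Step~3 has two further holes. First, the support theorem together with Propositions~\ref{prop: density of dagger locus in delta strata} and \ref{prop: kappa piece of top degree cohomology} only shows that every perverse constituent on either side is an intermediate extension from a dense open of $\cameral\nu_\fe(\cameral\cA_\fe)$. It does not identify the local systems. Note that $\cameral\nu_{\fe,*}\ol\Q_\ell$ is the top cohomology of $\cP$, not of $\cM^\Siegel$, and top degree does not determine the local systems in the other perverse degrees. The paper matches the two sides by comparing Frobenius traces at all points of a dense open $\cU\subset\cameral\cA^\dagger_\fe$ (transverse to $\fD^\cent_\fe$ and $\fD^\br_\fe$, with disjoint supports) and applying Chebotarev. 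Via the product formula, this requires the local identity in the hand-computed base cases:
Proposition~\ref{prop: zero dimensional cases};
Proposition~\ref{prop: 1 dimensional case}, where Ng\^o's chain of $\bP^1$'s produces the factor $q$;
Proposition~\ref{prop: obstructed case}.
So the functional equation above is needed globally as well.

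Second, your interpolation plan does not control obstructions. A transverse zero of $a^\unitary_{2s}$ at an inert place has valuation $1$, so it is obstructed and makes the global endoscopic count zero; the global identity then says nothing about the factor at $v$. Moreover, reciprocity for $\eta_{F'/F}$ forces inert places of odd valuation to occur in even number, so no single auxiliary place can absorb a parity defect. The paper forces even valuation at every inert place simultaneously by restricting $a^\unitary_{2s}$ to the image of $\mathrm{Nm}\colon H^0(\Sigma',\vartheta^*D^{\otimes s})\to H^0(\Sigma,D^{\otimes 2s})$. This costs a new codimension bound for the non-anisotropic locus inside this norm locus (Lemma~\ref{lem: complement of ani locus}), plus surjectivity of the local evaluations on it. Suitable points are only produced over extensions $K$ of large degree (Lang--Weil), and the identity over $k$ is recovered from all such $K$.
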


We will drop the measure $dt_v$ from the notation for orbital integrals where it is understood. This result will be reinterpreted geometrically in terms of RASFs in Theorem \ref{thm: local main} after we've developed some point counting techniques for affine Springer fibers. The proof of Theorem \ref{thm: main} is completed in \S\ref{sec: proof of main}.

\subsection{Point Counting on RASFs}
\label{sec: point counting for RASFs}

We now interpret these orbital integral as weighted point counts of groupoids of $k$-points associated to relative affine Springer fibers. We first recall the classical setting in \cite[Chapter 8]{ngo}, which will apply to our relative Hitchin systems verbatim.

\subsubsection{General framework}
Let $P$ be a smooth commutative group acting on a scheme $M$ over $k$. We assume that $\pi_0(P)$ is a finite-type abelian group, the stabilizer $\mathrm{Stab}_P(m)$ is finite type for all points $m\in M$, and there exists a discrete subgroup $\Lambda\subset P$ such that $P/\Lambda$ and $M/\Lambda$ are finite type. This will apply to our cases of interest by Proposition \ref{prop: lattice action on RASF}.

For $P$ and $M$ satisfying the above properties, we put $\sX = [M/P]$ the stack over $k$ whose set of $k$-points is given by
\[
\sX(k) = \{(m,p)\in M(\ol{k})\times P(\ol{k})\mid p\cdot \sigma(m) = m\},
\]
where we recall that $\sigma$ denotes the Frobenius element of $k$. The $k$-points $\sX(k)$ naturally have the structure of a groupoid. For any $x\in \sX(k)$, we denote by $\Aut_{\sX(k)}(x)$ the automorphism group of $x$. Let $P_\sigma$ denote the set of $\sigma$-twisted conjugacy classes in $P(\ol{k})$, i.e. elements of $P_\sigma$ are sets
\[
\{ g p \sigma(g)^{-1}\mid g\in P(\ol{k})\}
\]
for $p\in P(\ol{k})$. The Galois cohomology $H^1(k,P)$ is the maximal torsion subgroup contained in $P_\sigma$.  We have a class map
\[
\cl\colon \sX(k)\to H^1(k,P)
\]
defined by taking $(m,p)$ to the $\sigma$-twisted conjugacy class of $p$.

\begin{definition}
	For any character $\kappa\colon H^1(k,P)\to \Q^\times$, we define
	\[
	\# \sX(k)_\kappa = \sum_{x\in |\sX(k)|} \frac{\kappa(x)}{\# \Aut_{\sX(k)}(x)}
	\]
	where the sum above ranges over isomorphism classes in $\sX(k)$. Here we abuse notation and set $\kappa(x) = \kappa(\cl(x))$. When $\kappa = 1$ is the trivial character, we abbreviate this sum simply by $\# \sX(k)$.
\end{definition}

We now specialize to our cases of interest. Let $a\in \fc^\heartsuit(\cO_v)$, and let $\sX = [\cM_v^\star(a)/\cP_v(J_a')]$ for $J_a'\to J_a$ a fixed group scheme with connected fibers over $\cO_v$ which induces an isomorphism on the generic fiber. We will deduce a formula relating the point counts $\#[\cM_v^\star(a)/\cP_v(J_a')](k)_\kappa$ with the $\kappa$ orbital integral $\Orb_a^{\star,\kappa}$. Note that by Theorem \ref{thm: regular centralizers for rational forms}, $J_a$ has connected fibers for all $a\in \fc^\heartsuit(\cO_v)$, and hence all results that follow will hold for $J_a'=J_a$. Recall the notation of $\fX_\gamma^\star$ from \eqref{eqn: RASF functor of points with lift} and  Proposition \ref{prop: RASF lattice description}. 

We begin by recalling a lemma from \cite{ngo}.

\begin{lem}(\cite[Lemma 8.2.4]{ngo})
	\label{lem: H1 from P to J} 
    Let $J'$ and $J$ be smooth, commutative group schemes over $\cO_v$ with $J'$ having connected fibers. Suppose that there is a map $J'\to J$ which is an isomorphism over $F_v$, and that the common fiber over $F_v$ is a torus. Then, there is an isomorphism $H^1(k,\cP_v(J'))\simeq H^1(F_v,J)$, where $\cP_v(J') = J'(F_v)/J'(\cO_v)$ is the affine Grassmannian of $J'$.
\end{lem}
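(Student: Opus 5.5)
We will use the short exact sequence of sheaves (for the fppf or étale topology on $k$-schemes) underlying the affine Grassmannian,
\[
1\to J'(\cO_v\hat\otimes_k -)\to J(F_v\hat\otimes_k -)\to \cP_v(J')\to 1,
\]
where $J(F_v\hat\otimes_k -)$ denotes the loop group $LJ$ and $J'(\cO_v\hat\otimes_k-)$ the positive loop group $L^+J'$. Taking Galois cohomology over $k$ (i.e. $\Gal(\bar k/k)$-cohomology of $\bar k$-points, which for our ind-schemes is computed as $\sigma$-twisted conjugacy classes) gives a long exact sequence
\[
H^1(k,L^+J')\to H^1(k,LJ)\to H^1(k,\cP_v(J'))\to H^2(k,L^+J').
\]
The first step is to kill the outer terms. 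Since $J'$ is smooth with connected fibers, $L^+J'$ is a pro-algebraic group which is an inverse limit of smooth connected groups $J'(\cO_v/\varpi_v^m)$ (Greenberg realization), with smooth surjective transition maps whose kernels are vector groups. By Lang's theorem each finite level has trivial $H^1$ over the finite field $k$, and a Mittag-Leffler argument (surjectivity of transition maps on $\bar k$-points, vector group kernels) passes this to the limit, so $H^1(k,L^+J')=0$. Since $\Gal(\bar k/k)\simeq\hat{\Z}$ has cohomological dimension one, $H^2(k,L^+J')=0$ as well (for torsion coefficients; more concretely, surjectivity onto $\sigma$-twisted classes is again Lang's theorem). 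Hence $H^1(k,\cP_v(J'))\simeq H^1(k,LJ)$.

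The second step identifies $H^1(k,LJ)$ with $H^1(F_v,J)$. Here $H^1(k,LJ)=H^1(\Gal(\breve F_v/F_v),J(\breve F_v))$, the unramified cohomology. We then use the inflation–restriction sequence for $\breve F_v\subset \bar F_v$ together with Steinberg's theorem: $\breve F_v$ has cohomological dimension $\le 1$ (it is $C_1$-like, being the completion of the maximal unramified extension of a local function field), so $H^1(\breve F_v,J)=0$ for the connected group (torus) $J_{F_v}$. Thus inflation $H^1(\breve F_v/F_v,J(\breve F_v))\to H^1(F_v,J)$ is an isomorphism. In characteristic $p$ one must be slightly careful about inseparable extensions; we will use that a torus is smooth so étale and fppf cohomology agree, and Steinberg's theorem holds for perfect-closure-free fields of dimension $\le 1$ as in Serre's Galois Cohomology, or simply cite the corresponding step of \cite[Lemma 8.2.4]{ngo}.

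The main obstacle is the first step in the infinite-dimensional setting: making precise that $H^1$ of the pro-algebraic $L^+J'$ vanishes and that the quotient map $LJ\to\cP_v(J')$ behaves well on $\bar k$-points (i.e. $\cP_v(J')(\bar k)=J(\breve F_v)/J'(\breve\cO_v)$ and the $\sigma$-twisted class maps are compatible). I would handle it by working directly with $\sigma$-twisted conjugacy classes on $\bar k$-points: surjectivity of $J'(\breve\cO_v)\to\cP$-stabilizer data reduces level-by-level to Lang's theorem for $J'(\cO_v/\varpi_v^m)$, and the limit argument is the standard one from Ngô's proof, which applies verbatim since only smoothness and connectedness of fibers of $J'$ and the torus hypothesis on the generic fiber are used.
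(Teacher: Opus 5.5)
Your architecture (the sequence $1\to L^+J'\to LJ\to\cP_v(J')\to 1$, Lang's theorem to kill $L^+J'$, then Steinberg plus inflation--restriction for $LJ$) is the natural one for the cited lemma; the paper gives no proof of its own, only the reference to Ng\^o. However, the cohomology bookkeeping in your second step fails as written.

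You declare that $H^1(k,-)$ is computed by $\sigma$-twisted conjugacy classes of $\bar k$-points. In that framework your first step is fine. The $H^2$ term even vanishes for free, since the infinite cyclic group $\langle\sigma\rangle$ has cohomological dimension one for all modules. So you get $\cP_v(J')(\bar k)_\sigma\simeq J(\breve F_v)_\sigma$.

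The problem is that $J(\breve F_v)_\sigma$ is Kottwitz's $B(J)\simeq X_*(J)_{\Gal(\bar F_v/F_v)}$. This group has a free part of rank equal to the $F_v$-split rank of $J$, so it is \emph{not} $H^1(F_v,J)$. Already for $J=\G_m$, the $\sigma$-twisted classes in $\breve F_v^\times$ form $\Z$ via the valuation, while $H^1(F_v,\G_m)=0$. This does occur in the paper: if a field factor $K$ of $R_a(F_v)^\iota=F_v[y]/(Q_a(y))$ contains $F_v'$, then $J_a$ has a factor $\Res_{K/F_v}\G_m$.

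There is also no inflation map from these classes to $H^1(F_v,J)$, since $\breve F_v$ is not algebraic over $F_v$ and ``$\Gal(\breve F_v/F_v)$'' is not a Galois group. Inflation--restriction with Steinberg computes $H^1(F_v,J)$ as continuous $\hat\Z$-cohomology of the \emph{discrete} module $J(F_v^{\mathrm{ur}})$. Continuity of the cocycle is exactly what kills the free part.

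To repair this you must pick one cohomology theory and use it throughout.
\begin{enumerate}
\item[(a)] Keep $\sigma$-coinvariants, together with the convention of the paper's point-counting framework (\S\ref{sec: point counting for RASFs}) that $H^1(k,P)$ is the maximal torsion subgroup of $P_\sigma$. Your first step then gives $H^1(k,\cP_v(J'))\simeq B(J)_{\mathrm{tors}}$. The needed input is then Kottwitz/Tate--Nakayama, $H^1(F_v,J)\simeq (X_*(J)_{\Gal(\bar F_v/F_v)})_{\mathrm{tors}}\simeq B(J)_{\mathrm{tors}}$, rather than Steinberg.
\item[(b)] Work with honest \'etale cohomology of $\Spec k$. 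Here the middle term is the discrete module $\varinjlim_n J(k_n(\!(\varpi_v)\!))=J(F_v^{\mathrm{ur}})$, and your Steinberg/inflation--restriction step is correct. But now $H^2(k,L^+J')$ is the cohomology of the non-torsion module $\varinjlim_n J'(k_n[\![\varpi_v]\!])$; for instance $1+\varpi_v$ has infinite order. So $\mathrm{cd}(\hat\Z)=1$ does not apply. You must instead show the norm maps $J'(k_n[\![\varpi_v]\!])\to J'(k[\![\varpi_v]\!])$ are surjective. This follows from Lang's theorem at each truncation level, which makes $\hat H^0$ of each finite level vanish, plus a limit argument over the finite nonempty fibres.
\end{enumerate}

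The discrepancy disappears exactly when $J_{F_v}$ is anisotropic, which is why it is easy to overlook. The lemma, however, is applied to arbitrary $a\in\fc^\heartsuit(\cO_v)$.
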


Note in particular that $H^1(k,\cP_v(J_a))\simeq H^1(F_v,J_a)$ and $H^1(k,\cP_v(J_{\fe,a_\fe}))\simeq H^1(F_v,J_a)$

The class function now gives a map
\begin{equation}
\label{eqn: cl function}
\cl\colon [\cM_v^\star(a)/\cP_v(J_a')](k)\to H^1(F_v,J_a).
\end{equation}
This map is not surjective; we characterize its image in the following lemma.

\begin{lem}
	\label{lem:  image of class function lies in ker1}
	Let $a\in \fc^\heartsuit(\cO_v)$, and choose $\gamma\in V^\star(F_v)$ lifting  $a$. The canonical isomorphism $J_a\simeq I_{\gamma}\subset H^\star(F_v)$ from Proposition \ref{prop: regular centralizer package} induces a morphism
	\begin{equation}
		\label{eqn: ker1}
	H^1(F_v,J_a)\to H^1(F_v,H^\star)
	\end{equation}
	Let $\ker_1^{\star,\gamma}$ denote the kernel of \eqref{eqn: ker1}. Then, for any $x\in [\fX_\gamma^\star/\cP_v(J_a')](k)$, we have $\cl(x)\in \ker_1^{\star,\gamma}$. 
\end{lem}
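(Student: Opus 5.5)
The plan is to unwind the definition of the class map and check directly that the resulting cocycle dies in $H^1(F_v,H^\star)$. A point $x\in[\fX_\gamma^\star/\cP_v(J_a')](k)$ is a pair $(m,p)$ with $m\in\fX_\gamma^\star(\ol k)$, $p\in\cP_v(J_a')(\ol k)$ and $p\cdot\sigma(m)=m$. Its class $\cl(x)$ is the $\sigma$-twisted conjugacy class of $p$, and it lands in $H^1(F_v,J_a)$ through the isomorphism of Lemma \ref{lem: H1 from P to J}. I would first make that isomorphism explicit: $H^1(k,\cP_v(J_a'))\simeq H^1(\breve F_v/F_v,J_a)$ sends the class of $p$, with $p$ lifted to $\tilde p\in J_a(\breve F_v)=I_\gamma(\breve F_v)$, to the unramified cocycle $\sigma\mapsto\tilde p$, up to sign convention. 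This identification is Ng\^o's, via Lang's theorem for $J_a'(\breve\cO_v)$, which uses connectedness of fibers.

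Next, write $m$ as the coset $g H^\star(\breve\cO_v)$ with $g\in H^\star(\breve F_v)$ and $g^{-1}\gamma\in V^\star(\breve\cO_v)$. The relation $p\cdot\sigma(m)=m$ gives $\tilde p\,\sigma(g)=g\,k$ for some $k\in H^\star(\breve\cO_v)$. Hence the image of $\tilde p$ in $H^1(F_v,H^\star)$ is represented by the cocycle
\[
\tilde p = g\,k\,\sigma(g)^{-1},
\]
which is cohomologous to $k\in H^\star(\breve\cO_v)$. It is important here that $\gamma$ is $F_v$-rational, since this is what makes $I_\gamma\subset H^\star$ Frobenius-stable, so the map \eqref{eqn: ker1} is just the inclusion on cocycles.

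It then suffices to show that every unramified cocycle valued in $H^\star(\breve\cO_v)$ is trivial, i.e. that $H^1(\breve F_v/F_v, H^\star(\breve\cO_v))=0$. This follows from Lang's theorem together with smoothness of the $\cO_v$-group scheme $H^\star$ with connected special fiber, via Hensel's lemma. Both $\Res_{\cO'_v/\cO_v}\GL_n$ and $\U_n\times\U_n$ are reductive over $\cO_v$, so the hypothesis holds. Consequently the image of $\cl(x)$ in $H^1(F_v,H^\star)$ is trivial, which gives $\cl(x)\in\ker_1^{\star,\gamma}$.

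The main obstacle is bookkeeping: checking that the identification of Lemma \ref{lem: H1 from P to J} is compatible with the conjugation $J_a\simeq I_\gamma$ and with the $\cP_v(J_a')$-action on $\fX_\gamma^\star$, which factors through $J_a(F_v)$. In particular one must pass from a $\sigma$-twisted class in $J_a(\breve F_v)/J_a'(\breve\cO_v)$ to an honest element of $I_\gamma(\breve F_v)$, so that $\tilde p$ is well defined up to $J_a'(\breve\cO_v)\subset H^\star(\breve\cO_v)$. That indeterminacy is harmless because it changes $k$ only by an element of $H^\star(\breve\cO_v)$.
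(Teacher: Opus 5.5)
Your proposal is correct and is essentially the paper's proof. The paper likewise writes $m=gH^\star(\breve\cO_v)$, reads off $g^{-1}j\,\sigma(g)\in H^\star(\breve\cO_v)$ from $p\cdot\sigma(m)=m$, and concludes because every element of $H^\star(\breve\cO_v)$ is $\sigma$-conjugate to the identity (Lang's theorem plus smoothness), which is your vanishing of $H^1(\breve F_v/F_v,H^\star(\breve\cO_v))$.

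One small correction to your last paragraph. No lift-indeterminacy needs handling, since $g^{-1}\tilde p\,\sigma(g)\in H^\star(\breve\cO_v)$ holds for every lift $\tilde p$ of $p$. Also, $J_a'(\breve\cO_v)$ need not lie in $H^\star(\breve\cO_v)$ when $\gamma$ is not integral; what holds is that its $\sigma(g)$-conjugate does, because $\sigma(g)^{-1}\gamma$ is integral.
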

\begin{proof}
	Let $x = (m,p)$. We write $m = gH^\star(\breve{\cO}_v)$ for $g\in H^\star(\breve{F}_v)$ and $p = jJ_a(\breve{\cO}_v)$ for $j\in J_a(\breve{F}_v)$. The condition $p\cdot \sigma(m) = m$ is equivalent to $g^{-1}\cdot j \cdot\sigma(g)\in H^\star(\breve{\cO}_v)$.  Since every element of $H^\star(\breve{\cO}_v)$ is $\sigma$-conjugate to the identity, we conclude that $j$ is $\sigma$-conjugate to the identity under $H^\star(\breve{F}_v)$, and $\cl(x) \in \ker_1^{\star,\gamma}$.
\end{proof}

\begin{prop}
	\label{prop: point counting for P(J')}
	For $\kappa\in H^1(F_v,J_a)^D$ a character, we have an equality
	\[
	\# [\cM_v^\star(a)/\cP_v(J_a')](k)_\kappa = \mathrm{vol}(J_a'(\cO_v),dt_v)\cdot \Orb_a^{\star,\kappa}
	\]
\end{prop}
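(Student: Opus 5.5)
We follow Ngô's argument \cite[\S8.2]{ngo}, adapted to the relative setting. The first step is to decompose the groupoid $[\cM_v^\star(a)/\cP_v(J_a')](k)$ according to the class map \eqref{eqn: cl function}. By Lemma \ref{lem:  image of class function lies in ker1}, the class of every $k$-point lies in $\ker_1^{\star,\gamma}$. Conversely, each class $\xi\in\ker_1^{\star,\gamma}$ should correspond to an $H^\star(F_v)$-orbit in the stable orbit of $\gamma$. Concretely, choose $h\in H^\star(\breve F_v)$ with $h^{-1}\sigma(h)$ representing $\xi$. Then $\gamma_\xi=h\cdot\gamma$ is $F_v$-rational, and $\mathrm{inv}(\gamma_\xi,\gamma)=\xi$. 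By Lang's theorem for $H^\star(\breve{\cO}_v)$, every class in $H^1(F_v,J_a)$ mapping trivially to $H^1(F_v,H^\star)$ arises this way. Using the identification $\cM_v^\star(a)\simeq\fX^\star_{[\gamma]}\simeq\fX^\star_{\gamma}$ (Proposition \ref{prop: RASF lattice description} and Remark \ref{rem: RASF computed with lift}), we then get an equivalence of groupoids
\[
\cl^{-1}(\xi)\;\simeq\;[\fX^\star_{\gamma_\xi}/\cP_v(J_a')](k)_{\cl=1}\;\simeq\; I_{\gamma_\xi}(F_v)\backslash\{h\in H^\star(F_v)/H^\star(\cO_v): h^{-1}\gamma_\xi\in V^\star(\cO_v)\}\big/\!\!\sim,
\]
where on the right $I_{\gamma_\xi}(F_v)$ acts through $\cP_v(J_a')(k)=J_a(F_v)/J_a'(\cO_v)$. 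To get the first equivalence, translate a point $(m,p)$ with trivial class: twisting by $g\in\cP_v(J_a')(\bar k)$ with $p=g^{-1}\sigma(g)$ turns it into a $\sigma$-fixed point, and a $\sigma$-fixed point of $\fX^\star_{\gamma_\xi}$ is an $F_v$-rational coset, using $H^1(k,H^\star(\breve\cO_v))=0$.

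The second step is the standard mass formula. For a discrete set $Y$ with an action of $J_a(F_v)/J_a'(\cO_v)$ having finite stabilizers and finitely many orbits, we have
\[
\sum_{y\in Y/J}\frac{1}{\#\mathrm{Stab}(y)}=\mathrm{vol}(J_a'(\cO_v),dt_v)\int_{J_a(F_v)\backslash H^\star(F_v)}\mathbb 1_{V^\star(\cO_v)}(h^{-1}\gamma_\xi)\frac{dh_v}{dt_v}.
\]
This uses $\mathrm{vol}(H^\star(\cO_v))=1$. The automorphism group of a point in the groupoid is exactly the stabilizer in $\cP_v(J_a')(k)$. Finiteness of these stabilizers, and of the orbit set, follows from Proposition \ref{prop: lattice action on RASF}, since $\Gamma_a$ acts freely with finite-type quotients. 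Summing over $\xi$ with weight $\langle\kappa,\xi\rangle$ gives $\mathrm{vol}(J_a'(\cO_v))\sum_\xi\kappa(\xi)\Orb^\star(\gamma_\xi)$. Changing base point from $\gamma$ to $\gamma_0$ multiplies by the constant $\kappa(\mathrm{inv}(\gamma,\gamma_0))$. In the Siegel case this constant equals $1$ when $\gamma=[\e](a)$. In the unitary case the $\kappa=1$ statement makes it irrelevant.

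The main obstacle is the compatibility in the first step between the class map $\cl$ to $H^1(k,\cP_v(J_a'))$ and the invariant $\mathrm{inv}(\gamma_\xi,\gamma)\in H^1(F_v,J_a)$ under the isomorphism of Lemma \ref{lem: H1 from P to J}. We would check this by unwinding the definitions. Lemma \ref{lem: H1 from P to J} sends the $\sigma$-twisted class of $j\in J_a(\breve F_v)$ to the cocycle $\sigma\mapsto j$. The relation $g^{-1}j\sigma(g)\in H^\star(\breve\cO_v)$, after $\sigma$-conjugating the right-hand side to $1$ by Lang's theorem, shows that $j$ is exactly $\mathrm{inv}$ for the rational element $h\gamma$, where $h$ is the adjusted $g$. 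A secondary point is that rational points of $\fX^\star_\gamma$ lifting a class with $\xi\notin\ker_1$ do not exist; this is Lemma \ref{lem:  image of class function lies in ker1}. The case where the unitary stable orbit has no rational point, that is, where $\Ob_v(a)\neq0$, is then consistent, since both sides vanish by Proposition \ref{prop: obstruction measures existence of k points}.
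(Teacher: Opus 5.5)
Your proposal is correct and follows the paper's proof, which is Ng\^o's argument \cite[\S8.2]{ngo}: decompose the groupoid by the class map into classes $\xi\in\ker_1^{\star,\gamma}$, twist each class-$\xi$ piece into the $\sigma$-fixed points of $\fX^\star_{\gamma_\xi}$ for a rational $\gamma_\xi$ with invariant $\xi$, identify these with $I_{\gamma_\xi}\backslash H^\star(F_v)/H^\star(\cO_v)$ with automorphism groups $(I_{\gamma_\xi}\cap h'H^\star(\cO_v)(h')^{-1})/J_a'(\cO_v)$, and sum with $\kappa$-weights. Your remarks on the choice of base point and on the obstructed unitary case, where both sides vanish, make explicit what the paper leaves implicit; the one slip is that every $\xi\in\ker_1^{\star,\gamma}$ arises by definition of the kernel (together with Steinberg's theorem, which lets you compute $H^1(F_v,-)$ by $\sigma$-twisted conjugacy over $\breve F_v$), not by Lang's theorem for $H^\star(\breve\cO_v)$.
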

\begin{proof}
We may decompose
	\[
	[\cM_v^\star(a)/\cP_v(J_a')](k) = \bigsqcup_{\xi\in \ker_1^{\star,\gamma}}[\fX_\gamma^\star/\cP_v(J_a')](k)_\xi
	\]
	where the disjoint union is over the image under the class function. That is, if $j_\xi\in \cP_v(J_a')$ denotes a representative for the $\sigma$-twisted conjugacy class $\xi\in H^1(F_v,J_a)\simeq H^1(k,\cP_v(J_a'))$, then we have
	\[
	[\fX_\gamma^\star/\cP_v(J_a')](k)_\xi = \{  h\in \Gr_H(\ol{k})\mid h^{-1}j_\xi \sigma(h) = 1\text{ and }\mathrm{ad}(h)^{-1}\gamma\in V^\star(\breve{\cO}_v) \}
	\]
	Since the image of $j_\xi$ lies in $\ker_1^{\star,\gamma}$, there exists some $h_\xi\in H^\star(\breve{F})$ such that $h_\xi^{-1}j_\xi\sigma(h_\xi) = 1$. For any $h\in [\fX_\gamma^\star/\cP_v(J_a')](k)_\xi$, the map sending $h$ to $h' = h_\xi^{-1}h\in \Gr_H(k)$ gives an isomorphism of groupoids
	\begin{equation}
		\label{eqn: xi part of quotient}
	[\fX_\gamma^\star/\cP_v(J_a')](k)_\xi = \{ h'\in \Gr_H(k)\mid \mathrm{ad}(h')^{-1}\gamma_\xi\in V^\star(\cO_v) \}
	\end{equation}
	where $\gamma_\xi = \mathrm{ad}(h_\xi)^{-1}\gamma$. Note the groupoid structure on the above is realized through the action of $I_{\gamma_\xi}$ on the right-hand side.
	
	It is evident that the isomorphism classes of the groupoid on the right-hand side of \eqref{eqn: xi part of quotient} are given by the double coset space
	\[
\{ h'\in I_{\gamma_\xi}\backslash H^\star(F_v)/H^\star(\cO_v)\mid \mathrm{ad}(h')^{-1}\gamma_\xi\in V^\star(\cO_v) \}	
	\]
	with automorphism groups
	\[
\Aut(h') = (I_{\gamma_\xi}\cap h'H^\star(\cO_v)(h')^{-1})/J_a'(\cO_v)	
	\]
	Thus, we compute that
	\begin{align*}
	\#[\fX_\gamma^\star/\cP_v(J_a')](k)_\xi &= \mathrm{vol}(J_a'(\cO_v),dt_v)\cdot \int_{I_{\gamma_\xi}\backslash \Gr_H} \mathbb{1}_{V^\star(\cO_v)}(\mathrm{ad}(h')^{-1}\gamma_\xi)\cdot dt_v \\
	& = \mathrm{vol}(J_a'(\cO_v),dt_v)\cdot \Orb({\gamma_\xi})
	\end{align*}
We conclude by summing over $\xi\in \ker_1^{\star,\gamma}$ according to the appropriate $\kappa$ weights.
\end{proof}

Since $J$ has connected fibers, we have the following special case.

\begin{cor}
    \label{cor: point counting for P(J)}
    Fix $a\in \fc^\heartsuit(\cO_v)$. Let $\kappa$ be a character of $H^1(k,\cP_v(J_a))\simeq H^1(F_v,J_a)$. Then, we have
    \[
    \#[\cM_v^\star(a)/\cP_v(J_a)](k)_\kappa = \vol(J_a(\cO_v),dt_v)\cdot \Orb_a^{\star,\kappa}
    \]
\end{cor}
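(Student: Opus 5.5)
This is the special case $J_a'=J_a$ of Proposition \ref{prop: point counting for P(J')}. The plan is to check that $J_a$ satisfies that proposition's hypotheses and then specialise.

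First, by Theorem \ref{thm: regular centralizers for rational forms}, $J$ is a smooth commutative group scheme over $\fc$. Its pullback $J_a$ along $a\colon\Spec\cO_v\to\fc$ is therefore a smooth commutative group scheme over $\cO_v$. Since $a\in\fc^\heartsuit(\cO_v)$, the generic fiber $J_{a,F_v}$ is a torus. The identity map $J_a\to J_a$ is trivially an isomorphism over $F_v$. So the only hypothesis needing any thought is that $J_a$ has connected fibers. For this I would use the explicit description in Theorem \ref{thm: regular centralizers for rational forms}(2): $J$ is the kernel of the norm map $\Res^{(\fs/\iota)_{k'}}_{\fc}\G_m\to\Res^{\fs/\iota}_{\fc}\G_m$. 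Over $\ol k$ this becomes $\Res^{\fs/\iota}_{\fc}\G_m$ (Proposition \ref{prop: universal spectral}), and a Weil restriction of $\G_m$ along a finite flat cover has connected fibers, since it is the unit group of a finite algebra. Connectedness is a geometric property, so the $k$-form $J_a$ also has connected fibers, as the paper remarks just before Lemma \ref{lem: H1 from P to J}.

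With these hypotheses in place, Lemma \ref{lem: H1 from P to J} gives $H^1(k,\cP_v(J_a))\simeq H^1(F_v,J_a)$. This lets us regard $\kappa$ as a character of either group, so the $\kappa$-weighted point count on the left-hand side is well defined through the class map \eqref{eqn: cl function}. Proposition \ref{prop: lattice action on RASF} supplies the finiteness conditions needed for the point-counting framework. Applying Proposition \ref{prop: point counting for P(J')} with $J_a'=J_a$ then yields exactly
\[
\#[\cM_v^\star(a)/\cP_v(J_a)](k)_\kappa=\vol(J_a(\cO_v),dt_v)\cdot\Orb_a^{\star,\kappa}.
\]

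I do not expect a real obstacle. The only step needing care is connectedness of the fibers of $J_a$, which is settled by the spectral description above.
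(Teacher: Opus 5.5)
Your proposal is correct and matches the paper's argument: the corollary is recorded as the special case $J_a'=J_a$ of Proposition~\ref{prop: point counting for P(J')}, justified by the fact that $J$ has connected fibers (Theorem~\ref{thm: regular centralizers for rational forms}). Your explicit check of connectedness is a valid expansion of the paper's one-line justification: geometrically $J$ is $\Res^{\fs/\iota}_{\fc}\G_m$, whose fibers are unit groups of finite algebras and hence open in affine space.
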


\subsection{Point counting along the relative Hitchin fibration}

We now relate the local point-counting formula in Proposition \ref{prop: point counting for P(J')} to point counts along the anisotropic locus in the relative Hitchin fibration.

Let $a\in \cA^\ani(k)$, and let $J_a'\to J_a$ be a map of group schemes over $\Sigma$. As always, we demand in the sequel that $J_a'$ have connected geometric fibers and that the map be an isomorphism on the generic fiber. Define $\cP_a'$ to be the space of $J_a'$-torsors on $\Sigma$, equipped with the natural map $\cP_a'\to \cP_a$.

We form the stacky quotient $[\cM_a^\star/\cP_a']$. The class map for $x$ the groupoid $[\cM_a^\star/\cP_a'](k)$ lands in $\cl(x)\in H^1(k,\cP'_a) \simeq \pi_0(\cP'_a)_\sigma$, so we may consider the weighted point count $\#[\cM_a^\star/\cP_a'](k)_{\kappa'}$ as in \S\ref{sec: point counting for RASFs} for $\kappa'\in H^1(k,\pi_0(\cP_a'))^D$. We consider $\kappa'$ that arise as follows. Recall that, since $\pi_0(\cP_a)$ is finite, we have $H^1(k,\pi_0(\cP_a))\simeq \pi_0(\cP_a)_\sigma$ is naturally identified with the space of Frobenius-twisted conjugacy classes in $\pi_0(\cP_a)$, and similarly for $\cP_a'$. Given a $\sigma$-invariant character
\[
\kappa\colon \pi_0(\cP_a)\to \ol{\Q}_\ell^\times,
\]
we obtain a character $\kappa'$ on $H^1(k,\pi_0(\cP_a'))$ by pre-composing with the map $\pi_0(\cP_a')\to \pi_0(\cP_a)$. We denote the corresponding point count by $\#[\cM_a^\star/\cP_a'](k)_{\kappa}$.

Similarly, for any $v\in |\Sigma|$, we have the Beauville--Laszlo gluing map $\cP_v(J_a')\to \cP_a'$. Pre-composing $\kappa$ as above with the map $\pi_0(\cP_v(J_a'))_\sigma\to \pi_0(\cP_a')_\sigma$ gives a character $\kappa'_v$ of $H^1(k,\pi_0(\cP_v(J_a')))\simeq H^1(F_v,J_a')$. We again denote the corresponding weighted point count of the RASF at $v$ by $\#[\cM_v^\star(a)/\cP_v(J_a')](k)_\kappa$. We can relate the global and local point counts now as follows.

\begin{prop}
	\label{prop: local to global point count}
	Let $a\in \cA^\ani(k)$, and set $U_a = a^{-1}(\fc_D^\mathrm{rs})$ where $\fc^{\mathrm{rs}} = \fc - (\fD^\cent\cup\fD^\br)$. For any $\sigma$-invariant character $\kappa\colon \pi_0(\cP_a)\to \ol{\Q}_\ell^\times$, we have
	\begin{equation}
    \label{eqn: point count product formula}
	\#[\cM_a^\star/\cP_a'](k)_\kappa = \prod_{v\in |\Sigma - U_a|} \#[\cM_v^\star(a)/\cP_v(J_a')](k_v)_\kappa
	\end{equation}
\end{prop}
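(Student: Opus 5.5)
The plan is to deduce the formula from the product formula in the form of Remark \ref{rmk: product formula for Ja'}, which gives an equivalence of groupoids over $k$,
\[
\prod_{v\in |\Sigma - U_a|} [\cM_v^\star(a)/\cP_v(J_a')](k_v)\xrightarrow{\sim} [\cM_a^\star/\cP_a'](k).
\]
This is Proposition \ref{prop: product formula} with $J_a$ replaced by $J_a'$. Here the places $v$ are closed points of $\Sigma$, so each $\bar k$-place appears in a Frobenius orbit of size $\deg v$.

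First I pass from the $\bar k$-statement to the $k$-statement. The factors over $|\breve\Sigma - U_a|$ are grouped by Frobenius orbits. For an orbit corresponding to a closed point $v$ of degree $d_v$, the $k$-points of the Weil-restricted factor $\prod_{w\mid v}[\cM_w^\star/\cP_w]$ identify with the $k_v$-points of $[\cM_v^\star(a)/\cP_v(J_a')]$; this is Shapiro's lemma for groupoids. Since the equivalence is compatible with Galois actions, the isomorphism classes and automorphism groups on both sides match, with automorphism groups of products being products. The unweighted counts therefore multiply.

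Next I handle the weights $\kappa$. I check that the class map is compatible with the equivalence. The gluing map $\prod_v \cP_v(J_a')\to \cP_a'$ induces
\[
\bigoplus_v H^1(k_v,\cP_v(J_a'))\to H^1(k,\cP_a'),
\]
and the class of a glued point is the sum of the local classes. This holds because the $\sigma$-twisted element $p$ with $p\cdot\sigma(m)=m$ can be taken to be the product of local $p_v$'s, since $m$ is glued from the trivial object on $U_a$, exactly as in the fully faithfulness argument. The local characters $\kappa_v$ are defined precisely by pulling $\kappa$ back along this map composed with $\pi_0$. Hence $\kappa(\cl(x))=\prod_v\kappa_v(\cl(x_v))$, and the weighted sum factors into the product \eqref{eqn: point count product formula}.

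The main obstacle is making the class-map compatibility precise when passing through the quotient by $\cP_a'$ rather than a mere product. Two identifications need care: $H^1(k,\cP_a')$ with $\pi_0(\cP_a')_\sigma$, which requires Lang's theorem on the connected part, and the corresponding local identification via Lemma \ref{lem: H1 from P to J}. I would handle this by working with the explicit groupoid description of $\sX(k)$ as pairs $(m,p)$ and chasing the gluing, following \cite[\S8.4]{ngo}. The remaining finiteness needed for the sums is supplied by Proposition \ref{prop: lattice action on RASF} and properness of $f^{\star,\ani}$.
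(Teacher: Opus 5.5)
Your proposal is correct and follows the paper's route. You use the product formula with $J_a'$ in place of $J_a$ (Remark \ref{rmk: product formula for Ja'}) to get an equivalence of groupoids over $k$, then check that this equivalence respects the class maps, so that $\kappa(\cl(x))=\prod_v\kappa_v(\cl(x_v))$ and the weighted counts factor. The paper asserts this class-map compatibility in one line; your gluing of the $p_v$ over the trivial object on $U_a$, and your Shapiro-type passage to $k_v$-points (already contained in the $K$-rational part of Proposition \ref{prop: product formula}), only fill in details the paper leaves implicit.
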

\begin{proof}
	Recall that by the product formula (Proposition \ref{prop: product formula}), we have an equivalence of groupoids
	\[
	[\cM_a^\star/\cP_a'](k)\simeq \prod_{v\in |\Sigma - U_a|}[\cM_v^\star(a)/\cP_v(J_a')](k_v)
	\]
	Consider the class functions $\cl$ defined in \eqref{eqn: cl function} on the groupoids $[\cM_a^\star/\cP_a'](k)$ and $[\cM_v^\star(a)/\cP_v(J_a')](k_v)$.  Then, by the definition of $\kappa'$ and $\kappa'_v$, the above equivalence preserves the class function; that is,
	\[
	\langle \cl(x),\kappa'\rangle = \prod_{v\in |\Sigma - U_a|} \langle \cl(x_v),\kappa'_v\rangle.
	\]
	for any $x\in [\cM_a^\star/\cP_a'](k)$ and $x_v\in [\cM_v^\star(a)/\cP_v(J_a')](k_v)$ corresponding to one another via the product formula. The proposition then follows.
\end{proof}

By computing the lefthand side of \eqref{eqn: point count product formula} using the Grothendieck--Lefschetz trace formula, we now deduce:
\begin{cor}
    \label{cor: Grothendieck-Lefschetz for Hitchin fibers}
    For any $\sigma$-invariant character $\kappa\colon \pi_0(\cP_a)\to \overline{\Q}_\ell^\times$, we have
    \[
    \sum_{i\geq0} (-1)^i\tr\left( \sigma, H^i(\cM_a^\star)_\kappa \right) = \#(\cP'_a)^\circ(k)\cdot \prod_{v\in |\Sigma - U_a|}\#[\cM_v^\star(a)/\cP_v(J_a')](k_v)_\kappa
    \]
\end{cor}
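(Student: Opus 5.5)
The plan is to combine Proposition \ref{prop: local to global point count} with the Grothendieck--Lefschetz trace formula for the quotient stack $[\cM_a^\star/\cP_a']$, and then to pass from the stack to $\cM_a^\star$ itself. The argument follows \cite[\S8]{ngo}.

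\textbf{Step 1: the weighted count as a trace.} By Proposition \ref{prop: smoothness and properness of relative hitchin fibration}, the fiber $\cM_a^\star$ is a proper Deligne--Mumford stack of finite type for $a\in\cA^\ani(k)$. By Lemma \ref{lem: component group of Pa}, $\pi_0(\cP_a)$ is finite, and so $\pi_0(\cP_a')$ is finite as well: it surjects onto $\pi_0(\cP_a)$, and the kernel of $\cP_a'\to\cP_a$ is a finite-type affine group by the Beauville--Laszlo description in Remark \ref{rmk: product formula for Ja'}. Hence $\cP_a'$ acts on $H^*(\cM_a^\star\otimes\ol k)$ through $\pi_0(\cP_a')$; the neutral component acts trivially by homotopy invariance. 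This gives a decomposition $H^i(\cM_a^\star)=\bigoplus_\kappa H^i(\cM_a^\star)_\kappa$ indexed by characters of $\pi_0(\cP_a')$. Because $\kappa$ is $\sigma$-invariant, each summand is Frobenius-stable, so the left-hand side of the statement makes sense.

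\textbf{Step 2: reduce to the $(\cP_a')^\circ$-quotient.} Consider the fibration
\[
[\cM_a^\star/(\cP_a')^\circ]\to[\cM_a^\star/\cP_a'],
\]
which is a torsor for the finite group $\pi_0(\cP_a')$, and the map $\cM_a^\star\to[\cM_a^\star/(\cP_a')^\circ]$, which is a $(\cP_a')^\circ$-torsor. Lang's theorem for the connected group $(\cP_a')^\circ$ gives
\[
\#\cM_a^\star(k)=\#(\cP_a')^\circ(k)\cdot\#[\cM_a^\star/(\cP_a')^\circ](k).
\]
The same holds for the $\kappa$-twisted versions, where one replaces Frobenius by Frobenius composed with an element $p\in\pi_0(\cP_a')$. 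Fourier inversion over the finite group $\pi_0(\cP_a')_\sigma$ then shows that the $\kappa$-weighted groupoid count $\#[\cM_a^\star/\cP_a'](k)_\kappa$ equals $(\#(\cP_a')^\circ(k))^{-1}$ times the twisted trace $\sum_i(-1)^i\tr(\sigma,H^i(\cM_a^\star)_\kappa)$.

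Concretely, for each class $\xi\in H^1(k,\cP_a')$ the groupoid points of class $\xi$ are counted by the twisted form $\cM_a^{\star,\xi}$. Lefschetz gives $\#\cM_a^{\star,\xi}(k)=\tr(p_\xi\sigma,H^*_c)$, and since $\cM_a^\star$ is proper, $H^*_c=H^*$. Summing against $\kappa(\xi)$ and using orthogonality of characters isolates the $\kappa$-isotypic part. This is exactly the formalism of \cite[Prop.~8.1.x]{ngo}, and the hypotheses needed there hold by Proposition \ref{prop: lattice action on RASF}.

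\textbf{Step 3: substitute the product formula.} Inserting \eqref{eqn: point count product formula} from Proposition \ref{prop: local to global point count} gives the statement.

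The main obstacle is Step 2: handling the stackiness carefully, including the automorphism groups, the normalization of the counting measure by $\#(\cP_a')^\circ(k)$, and the correct normalization of the finite-group Fourier inversion. The plan is to follow Ng\^o's argument verbatim and only check that nothing there uses properties special to $G$-Higgs bundles. The key inputs are the finiteness of $\pi_0(\cP_a')$ and the properness of $\cM_a^\star$, both of which hold over $\cA^\ani$.
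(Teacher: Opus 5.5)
Your proposal is correct and follows the paper's route: the paper proves this by combining Proposition~\ref{prop: local to global point count} with Ng\^o's trace formula for weighted groupoid counts \cite[Proposition 8.1.6]{ngo}, and your Step~2 sketches the proof of that formula (twisting by classes in $H^1(k,\cP_a')\simeq\pi_0(\cP_a')_\sigma$, Lefschetz on the proper fiber, orthogonality of characters). One small correction: the hypotheses for this global formula are the properness and finite type of $\cM_a^\star$ and the finiteness of $\pi_0(\cP_a')$, which you already verify in Step~1. Proposition~\ref{prop: lattice action on RASF} is instead the local input that makes the right-hand side well defined, and it is already used inside Proposition~\ref{prop: local to global point count}.
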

\begin{proof}
    Follows directly from Proposition \ref{prop: local to global point count} and \cite[Proposition 8.1.6]{ngo}.
\end{proof}

\subsection{The geometric relative Fundamental Lemma and cases}
\label{sec: cases and statements}

In this section, we state the precise form of our main result using the language of RASFs. We then establish this result for a few basic cases, which will serve as base cases for the global argument in \S\ref{sec: proof of main}.

By Proposition \ref{prop: point counting for P(J')}, Theorem \ref{thm: main} is equivalent to the (local) geometric incarnation below.

\begin{thm}
    \label{thm: local main}
    Let $\fe$ be an elliptic endoscopic datum, and suppose that $a\in \fc^\heartsuit(\cO_v)$ and $a_\fe\in \fc_\fe^
    \heartsuit(\cO_v)$ are such that $\nu_\fe(a_\fe) = a$. Let $J_a'\to J_a$ be a smooth commutative group scheme over $\cO_v$ with connected fibers which is isomorphic to $J_a$ on the generic fiber $\Sigma_v^\bullet = \Spec(F_v)$. Then, for all finite extensions $K/k_v$, we have
    \begin{equation}
    \label{eqn: local main}
    \#[\cM_v^\Siegel(a)/\cP_v(J_a')](K)_\kappa = q^{r_\fe(a)\deg(v)[K:k_v]}\cdot\#[\cM_v^\fe(a_\fe)/\cP_v(J_a')](K)
    \end{equation}
\end{thm}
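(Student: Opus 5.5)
We argue globally, following Ng\^o's strategy. First we reduce to $J_a'=J_a$. The quotient $J_a(\cO_v)/J_a'(\cO_v)$ is a connected unipotent-by-torus group scheme of finite type; changing $J_a'$ to $J_a$ multiplies both sides of \eqref{eqn: local main} by the same factor $\#(J_a(\cO_v)/J_a'(\cO_v))(K)$. This holds because $\cP_v(J_a')$ acts on both $\cM_v^\Siegel(a)$ and $\cM_v^\fe(a_\fe)$ through $\cP_v(J_a)$, and because connectedness gives compatibility of the class maps (Lemma \ref{lem: H1 from P to J}). We may also take $K=k_v$, after replacing $k$ by $K$. Next we dispose of the degenerate case in which both sides vanish. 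By Proposition \ref{prop: obstruction measures existence of k points}, if $\val(a_{\fe}^{\unitary,*}\fD^\br)$ is odd then $\cM^\unitary_v(a^\unitary)(k)=\emptyset$, so the right side is zero. On the left side, the $\kappa$-weights then pair against a nontrivial character on each $\cP_v(J_a)$-orbit fiber (via the class map into $\ker_1$, Lemma \ref{lem:  image of class function lies in ker1}), and the sum cancels. I expect this cancellation to be checked directly from the parity of $\val(a_{2n})$.

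In the main case we globalize. Choose a curve $\Sigma$ with \'etale double cover $\Sigma'\to\Sigma$ that is inert at a place $v$ of the required degree. Choose $D$ of large degree, and take $N$ as in Proposition \ref{prop: local constancy} for $a$ and for $a_\fe$. We then find a global $a_\fe^{gl}\in\cA_\fe^\ani(k)$ with the following properties. Its germ at $v$ agrees with $a_\fe$ modulo $\varpi_v^N$. Away from $v$ it meets $\fD_{X^\fe}$ transversely, and it meets $\nu_\fe^*\fD_X$ in a controlled way. At every other place $u$, $\val_u(a_{2n})$ is even on the unitary factor, so that every local factor is nonzero. The image $a^{gl}=\nu_\fe(a_\fe^{gl})$ must lie in $\cA^\ani$. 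Existence comes from Riemann--Roch surjectivity of local evaluation maps (as in Lemma \ref{lem: local evaluation map for delta strata}) combined with codimension estimates for the non-anisotropic locus, imposing the branching-divisor parity by prescribing the zeros of the top coefficient. At the places $u\neq v$, the transversality and Proposition \ref{prop: transverse is dimension zero} make both sides of \eqref{eqn: local main} explicit finite counts, which can be checked by hand; there one has $r_\fe=r_{\fe,u}$, coming only from intersections with $\fR_\fe$.

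With such a point, Corollary \ref{cor: Grothendieck-Lefschetz for Hitchin fibers} expresses $\#[\cM_{a^{gl}}^\Siegel/\cP_{a^{gl}}](k)_\kappa$ and the endoscopic count as traces of Frobenius on $(\bR \cameral f^{\Siegel,\ani}_*\ol\Q_\ell)_\kappa$ and $(\bR\cameral f^{\fe,\ani}_*\ol\Q_\ell)_{\st}$ at the corresponding points. The global comparison (Theorem \ref{thm: global main_intro}) equates these up to the Tate twist $q^{r_\fe(D)}$. We first use it over $\cameral\cA^\good_\fe$, where the support theorem applies, arranging $a^{gl}$ to lie there using Proposition \ref{prop: density of dagger locus in delta strata} and Corollary \ref{cor: delta regularity in the limit}. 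By Corollary \ref{cor: delta vs endoscopic delta, global} the twists match. Dividing the global product formula (Proposition \ref{prop: local to global point count}) by the explicitly verified factors at $u\neq v$, which are nonzero by construction, leaves \eqref{eqn: local main} at $v$. Local constancy (Proposition \ref{prop: local constancy}) then transfers the identity back to the original $a,a_\fe$.

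The main obstacle is the interpolation step: producing $a_\fe^{gl}$ simultaneously in the good locus with nonvanishing local factors everywhere. This requires the branching-divisor parity and the $\kappa$-obstruction to be compatible with the global sign $\prod_u\langle\kappa,\cl\rangle$. I would first handle it by adding one auxiliary place where $a_{2n}$ has a prescribed even or odd valuation, absorbing the global obstruction. This mirrors the role of $\Ob_v$ in \S\ref{sec: obstruction}.
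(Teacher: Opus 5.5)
Your overall architecture matches the paper's: treat the obstructed case locally, interpolate $a_\fe$ to a global anisotropic point with controlled behaviour off $v$, apply Theorem~\ref{thm: global main} on the good locus, divide out the factors at $u\neq v$, and use local constancy. However, two load-bearing steps fail as you have written them.

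\textbf{The obstructed case.} Knowing $\cl(x)\in\ker_1$ and $\kappa\neq 1$ gives no cancellation, either orbit-by-orbit or class-by-class. For instance, the $\cP_v(J_a)$-orbit of the Kostant--Rallis point $[\e](a)$ has trivial stabilizer and trivial class, so it contributes exactly $1$. The cancellation therefore has to occur \emph{between} different classes, and for that you need a sign-reversing involution of the groupoid.
\begin{itemize}
\item The paper uses the duality $\bbD(\cE',h,\iota,\phi';\xi)=(\cE'^\vee,(h^{-1})^\vee,\iota^\vee,\phi'^\vee;\theta\circ(\xi^{-1})^\vee)$. It satisfies $\bbD(p\cdot m)=p^{-1}\cdot\bbD(m)$.
\item It twists Frobenius by a cocycle representing $[\cT_a]\in H^1(F_v,J_a)$, the class of the torsor of isomorphisms $\bbD(R_a(\breve F_v'),h^\siegel,\iota,[x])\simeq (R_a(\breve F_v'),h^\siegel,\iota,[x])$. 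Hence $\cl(\bbD x)=[\cT_a]\cl(x)^{-1}$.
\item The computation $[\cT_a]=x^2$ in $R_a(F_v)^\iota/\mathrm{Nm}\,R_a(F_v')^\iota$ gives $\kappa([\cT_a])=(-1)^{\val(a^\unitary_\fe)_{2s}}$. This yields the functional equation $\Orb^{\siegel,\kappa}_a=-\Orb^{\siegel,\kappa}_a$.
\end{itemize}
This case cannot be pushed through the global argument instead. By reciprocity for the quadratic character of $\Sigma'/\Sigma$, a global $a^\unitary_{2s}$ has odd valuation at an even number of inert places. At each such place both local factors vanish, so the global identity degenerates to $0=0$.

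\textbf{The interpolation.} Requiring transversality to $\fD^\br_\fe$ away from $v$ together with even $\val_u(a^\unitary_{2s})$ at every inert $u\neq v$ forces $a^\unitary_{2s}$ to have no zeros at inert places other than $v$. That is an arithmetic, Frobenius-dependent condition, not a Zariski-open one, so Riemann--Roch surjectivity plus the codimension bound on $\cA\setminus\cA^\ani$ cannot produce it.

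Your fallback, an auxiliary place with prescribed odd valuation, fails for the reason just given: both local factors there are $0$, so you cannot divide by them. In the unobstructed case there is in fact no global parity obstruction to absorb.

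The paper proceeds as follows.
\begin{itemize}
\item It drops transversality to $\fD^\br$, asking only that $a_\fe^{-1}(\fD^\cent_D)$ and $a_\fe^{-1}(\fD^\br_D)$ have disjoint supports off $v$.
\item It works on the norm locus $Q=\cA_r\times\cA_s^{\mathrm{Nm}}$, where $a^\unitary_{2s}=t\,\sigma(t)$ with $t\in H^0(\Sigma',\vartheta^*D^{\otimes s})$. This makes every inert valuation even automatically.
\item The real work is checking two things: local evaluation at $v$ and at an auxiliary $u$ stays surjective on $Q$, and $\codim_Q(Q\setminus Q^\ani)\geq\deg(D)+1-g$, via a dimension count over the Levi strata $\cA_M$.
\end{itemize}

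Finally, such a point is only produced over extensions $K$ with $[K:k]$ large, by Lang--Weil. So you cannot specialize to $K=k_v$ at the start. You must prove the identity for all sufficiently large $K$ and descend to $k$ at the end.

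A smaller point: at places $u\neq v$ where $a_\fe$ meets $\fR_\fe$, the Siegel fiber is the one-dimensional chain of $\mathbb{P}^1$'s of Proposition~\ref{prop: 1 dimensional case}. It is not a zero-dimensional fiber as in Proposition~\ref{prop: transverse is dimension zero}.
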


\begin{remark}
    We note that since $J_a$ has connected fibers, Theorem \ref{thm: local main} remains true for $J_a'=J_a$. It is important to note, however, that the quotient on either side of \eqref{eqn: local main} is by the \emph{same} symmetry group. 
\end{remark}

The full proof of Theorem \ref{thm: local main} will be contained in \S\ref{sec: proof of main}. For now, we consider some particularly simple cases. As a first example, we consider the case of nontrivial obstruction class, where the theorem is a consequence of a functional equation; see also \cite[Lem 2.5.3]{YunJR} and \cite[Lem 3.6.4]{griffin2}.

\begin{prop}
	\label{prop: obstructed case}
	 Suppose that $a,a_\fe$ are as in Theorem \ref{thm: local main}, and $\kappa\in H^1(F_v,J_a)^D$ is a nontrivial character. If $\Ob_v(a_\fe)\neq 0$ (i.e., $\mathrm{val}(\det(a))$ is odd), then $\Orb_a^{\Siegel,\kappa} =0$. In particular, Theorem \ref{thm: local main} holds in this case with both sides being equal to zero.
\end{prop}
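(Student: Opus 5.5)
The plan is to show that the $\kappa$-weighted count $\#[\cM_v^\Siegel(a)/\cP_v(J_a')](K)_\kappa$ vanishes, using a functional-equation argument. The right-hand side of \eqref{eqn: local main} vanishes automatically in the obstructed case. Write $X^\fe = X_r^\siegel\times X_s^\unitary$, so $\cM_v^\fe(a_\fe)=\cM_v^\siegel(a^\siegel)\times\cM_v^\unitary(a^\unitary)$. Nontriviality of $\kappa$ means $s\ge 1$ and the unitary factor is present. The hypothesis $\Ob_v(a_\fe)\neq 0$ says that $\val(a^\unitary_{2s})$ is odd, since by Lemma \ref{lem: obstruction is always trivial for siegel} only the unitary factor contributes. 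Proposition \ref{prop: obstruction measures existence of k points} then gives $\cM_v^\unitary(a^\unitary)(K)=\emptyset$ for every finite extension $K/k_v$; over odd-degree extensions this is immediate from the proposition, and over even-degree ones it should still hold after checking that the obstruction is the parity of the valuation. Consequently the groupoid $[\cM_v^\fe(a_\fe)/\cP_v(J_a')](K)$ is empty. It therefore remains to prove $\Orb_a^{\Siegel,\kappa}=0$; by Corollary \ref{cor: point counting for P(J)}, this is equivalent to the vanishing of the weighted count on the left.

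For the left side I would find an element $\xi\in H^1(F_v,J_a)$ with two properties. First, $\kappa(\xi)=-1$. Second, translation by $\xi$ preserves the orbital integrals, that is, $\Orb^\Siegel(\gamma)=\Orb^\Siegel(\gamma')$ whenever $\mathrm{inv}(\gamma',\gamma)=\xi$. The sum $\sum_\gamma\kappa(\mathrm{inv}(\gamma,\gamma_0))\Orb(\gamma)$ then equals its own negative. The candidate comes from the centralizer of $\nu_\fe$: by Proposition \ref{prop: centralizers under transfer}, $J_a$ maps to $J_{a_\fe}=J_{a^\siegel}\times J_{a^\unitary}$. On the unitary factor $\Res^{(\fs_s/\iota)_{k'}}\G_m$ with norm-one condition, $H^1$ is detected by the parity of norm valuations on each field factor. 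I take $\xi$ to be the class of the element $x^2$ (the image of the tautological section) restricted to the unitary factor of $\fs_\fe/\iota_\fe$, that is, the class of $a^\unitary_{2s}$ under the norm $\mathrm{Nm}_{R^\iota/F}$. Its pairing with $\kappa$ should be $(-1)^{\val a^\unitary_{2s}}=-1$. The reason is that $\kappa$ is $-1$ exactly on the indices in $S_\kappa$, and the discussion in the proof of Proposition \ref{prop: endoscopic description of components of P} shows that $\kappa$ evaluates the total parity of the valuations on those components.

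The key step, and the main obstacle, is to realize $\xi$ geometrically by an explicit automorphism of the lattice problem. Concretely, I would construct an $F_v$-linear, $\iota$-compatible automorphism $\Psi$ of $R_a(F_v')$ that commutes with $[x]$ up to the harmless twist. Its unitary component is multiplication by $x$ (or by the unit-normalized $x$) on the $x$-adic part, and it is the identity on the Siegel part. $\Psi$ should transform $h^\Siegel$ into a Hermitian form of the same Siegel type, representing the orbit $\gamma'$ with invariant $\xi$. It should also carry $h^\Siegel$-self-dual, $\iota$-stable $R_a(\cO_v')$-lattices bijectively onto self-dual lattices for the new form, compatibly with the automorphism groups. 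This would identify $[\fX_\gamma/\cP_v(J_a')](k)_{\eta}$ with $[\fX_\gamma/\cP_v(J_a')](k)_{\eta+\xi}$ as groupoids, in the notation of the proof of Proposition \ref{prop: point counting for P(J')}, and hence equate the corresponding orbital integrals. Multiplication by $x$ swaps the $\iota$-even and $\iota$-odd parts and scales the Siegel form by $\sigma(x)\cdot x=-x^2$. This is where the odd valuation enters: it makes the class of the transformed form differ by exactly $\xi$, and this must be checked carefully. The subtle points are the integrality of $x$ versus $x^{-1}$ on the lattices, which I would handle by passing to $\Gr$ and using the free lattice action of $\Gamma_a$ (Proposition \ref{prop: lattice action on RASF}), and the verification that the result stays in $\ker_1^{\Siegel,\gamma}$ (Lemma \ref{lem:  image of class function lies in ker1}). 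With the bijection in hand, the $\kappa$-sum cancels in pairs and the proposition follows.
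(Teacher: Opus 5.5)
Your overall mechanism is the right one, and it parallels the paper. You want a class-shifting bijection on $[\cM_v^\Siegel(a)/\cP_v(J_a)](k)$ by some $\xi\in H^1(F_v,J_a)$ with $\kappa(\xi)=-1$, giving a functional equation. The paper's duality $\bbD$ does this, with $\cl(\bbD(x))=[\cT_a]\cdot\cl(x)^{-1}$ and $[\cT_a]=x^2$.

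\textbf{The gap.} The map you actually propose (identity on the Siegel factor, multiplication by $x$ on the unitary factor) does not work.
\begin{enumerate}
\item The decomposition $R_a(F_v')=R^\Siegel\times R^\unitary$ exists only generically. Its idempotents do not lie in $R_a(\cO_v')$ once $r_\fe(a)>0$.
\item For $u=(1,x)$ one has $u^{-1}\iota(u)=(1,-1)$. So $u\Lambda$ is $\iota$-stable only if $\Lambda$ splits along the factorization. Equivalently, conjugating by $u$ replaces $\iota$ by $(1,-1)\circ\iota$, so you land in a different lattice problem, not in the RASF of another rational orbit.
\item The class $(1,x^2)$ cannot serve as $\xi$ in general. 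Combined with the invariance under $[x^2]$ shown below, invariance under $(1,x^2)$ would give invariance under $(x^2,1)$. Take the all-$(-1)$ character (endoscopic space $X^\unitary_n$, $r_\fe=0$). Its orbital integral would then vanish whenever $\val\big((a_\fe^\Siegel)_{2r}\big)$ and $\val\big((a_\fe^\unitary)_{2s}\big)$ are both odd. That contradicts Theorem \ref{thm: main}: in that case $\val(a_{2n})$ is even, so the unitary stable orbital integral is nonzero by Proposition \ref{prop: obstruction measures existence of k points}.
\end{enumerate}

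\textbf{The repair.} Ignore the factorization and multiply by $x$ on all of $R_a(F_v')$.
\begin{enumerate}
\item Since $a_{2n}\neq 0$, $x$ is a unit, and $x\Lambda$ is again an $\iota$-stable $R_a(\cO_v')$-lattice.
\item The map is defined over $k$ and commutes with $J_a$.
\item One has $h^\Siegel(xf,xg)=h^\Siegel(-x^2f,g)$. Hence $\Lambda\mapsto x\Lambda$ identifies, $J_a(F_v)$-equivariantly, the lattices self-dual for $h^\Siegel(\eta\,\cdot,\cdot)$ with those self-dual for $h^\Siegel(-x^{-2}\eta\,\cdot,\cdot)$.
\item $-1$ and $x^4$ are norms, and $H^1(F_v,H^\Siegel)=0$, so every class is realized by a rational orbit. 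Therefore $\Orb^\Siegel(\gamma_\zeta)=\Orb^\Siegel(\gamma_{\zeta\cdot[x^2]})$ for all $\zeta\in H^1(F_v,J_a)$.
\item This gives $\Orb_a^{\Siegel,\kappa}=\kappa([x^2])\,\Orb_a^{\Siegel,\kappa}$. Moreover $\kappa([x^2])=(-1)^{\val((a_\fe^\unitary)_{2s})}=-1$, exactly as in the paper's lemma, because $\kappa$ only sees the unitary component and $\mathrm{Nm}_{R^\unitary/F_v}(x^2)=\pm(a_\fe^\unitary)_{2s}$.
\end{enumerate}

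With this correction your translation argument is a valid and somewhat simpler alternative to the paper's duality. Both permute classes by $\zeta\mapsto[x^2]\zeta$; the paper's $\zeta\mapsto[x^2]\zeta^{-1}$ is the same map since $H^1(F_v,J_a)$ is $2$-torsion. Translation needs no dual modules and no choice of $\theta$. Note also that the odd valuation enters only through $\kappa([x^2])=-1$, not through the class shift. Your concerns about integrality of $x^{\pm1}$ and about $\ker_1^{\Siegel,\gamma}$ do not arise.

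\textbf{A separate error.} You claim $\cM_v^\unitary(a_\fe^\unitary)(K)=\emptyset$ also for even-degree $K$; this is false. Once $K\supseteq k'$, $F_v'\otimes_kK$ splits, the obstruction group vanishes, and the unitary RASF has $K$-points. Compare Example \ref{ex: rank 1 lattices}, where $\cM_v^\unitary(a)(k')\neq\emptyset$ for every $m$. ``Both sides vanish'' holds only for $K$ of odd degree over $k_v$. That is what the paper's proof (carried out over $k$) gives, and it is all the reduction in \S\ref{sec: proof of main} uses.
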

\begin{proof}
Note that since $\Ob_v(a)\neq 0$, we have $\cM_v^\fe(a)(k) = \emptyset$, and hence the stable point count is $\#[\cM_v^\fe(a)/\cP_v(J_a)](k) = 0$. To show the $\kappa$-twisted point count on $\cM_v^\siegel(a)$ vanishes, we construct a duality $\bbD$ on $\cM_v^\siegel(a)(\ol k)$, which will produce a functional equation for the point count at $a$. Recall from Proposition \ref{prop: RASF lattice description} that $\cM_v^\Siegel(a)(\ol k)$ is the moduli of tuples $(\cE',h,\iota,\phi';\xi)$ where $(\cE',h,\iota,\phi')$ is a Higgs object consisting of:
\begin{enumerate}
	\item A free rank $2n$ $\cO'\otimes_k\ol k$-module $\cE'$;
	\item A nondegenerate Hermitian form $h\colon \cE'\to \sigma^*\cE'^\vee$;
	\item An $\cO'\otimes_k\ol k$-linear involution $\iota\colon \cE'\to \cE'$ such that $h\circ \iota =-h$;
	\item An $\cO'\otimes_k\ol k$-linear endomorphism $\phi'\colon \cE'\to \cE'$ such that $\iota\circ \phi' = -\phi'\circ \iota$, $h(\phi'v,w)+h(v,\phi'w)=0$, and $\phi'$ has characteristic polynomial $a$;
\end{enumerate}
and $\xi$ is a trivialization on the punctured infinitesimal disk $\breve\Sigma_v^\bullet = \Spec \breve F_v'$, i.e.
\[
\xi\colon (\cE',h,\iota,\phi')|_{\breve\Sigma_v^\bullet}\xrightarrow{\sim} (R_a(\breve F_v'), h^\siegel, \iota, [x]).
\]
There is a natural duality on Higgs objects given by 
\[
\bbD(\cE',h,\iota,\phi') = (\cE'^\vee,(h^{-1})^\vee,\iota^\vee,\phi'^\vee)
\]
 Note that over any extension of $F_v$ where  $F_v'$ splits, there exists an isomorphism
\[
\theta\colon \bbD(R_a(\breve F_v'), h^\siegel, \iota, [x])\to (R_a(\breve F_v'), h^\siegel, \iota, [x]).
\]
In fact, the space of isomorphisms is a torsor $\cT_a$ under the action of $J_a$, and so determines a class $[\cT_a]\in H^1(F_v,J_a)$. For any choice of $\theta$, we have an explicit cocyle representing $[\cT_a]$ given by
\[
d_\theta = \theta\cdot \sigma(\theta)^{-1},
\]
where $\sigma(\theta)$ is the Frobenius-twisted isomorphism $\sigma\circ \theta\circ \sigma^{-1}$.

Fix a choice of such a $\theta$, and now define a duality $\mathbb{D} = \mathbb{D}_\theta$ on $\cM_v(a)(\ol k)$ by
\[
\bbD(\cE',h,\iota,\phi';\xi) = (\bbD(\cE',h,\iota,\phi');\theta\circ (\xi^{-1})^\vee).
\]
This is compatible with inversion on $\cP_v(J_a)$ in the sense that for every $m\in \cM_v(a)(\ol k)$ and $p\in \cP_v(J_a)(\ol k)$, we have
\[
\bbD(p\cdot m) = p^{-1}\cdot \bbD(m).
\]
The duality $\bbD$ satisfies the compatibility with Frobenius given by the cocyle $d_\theta$; that is, for any $m\in \cM_v(a)(\ol k)$,
\[
\bbD(\sigma(m)) = d_\theta\cdot \sigma(\bbD(m))
\]
Therefore, $\bbD$ induces an involution (which we also call $\bbD$) on the groupoid $[\cM_v^\siegel(a)/\cP_v(J_a)](k)$ by $\bbD(m,p) = (\bbD(m),d_\theta\cdot  p^{-1})$. For any $x = (m,p)$, we compute
\[
\cl(\bbD(x)) = \cl(\bbD(m),d_\theta\cdot  p^{-1}) = d_\theta\cdot  p^{-1}\in \pi_0(\cP_v(J_a))_\sigma = H^1(F,J_a)
\]
and hence $\cl(\bbD(x)) = [\cT_a]\cdot\cl(x)^{-1}$ in $H^1(F_v,J_a)$.

Suppose now that $\kappa\in H^1(F_v,J_a)^D$ and $\nu_\fe(a_\fe) = a$. For any $x\in [\cM_v^\siegel(a)/\cP_v(J_a)](k)$, by our formula for the class function of $\bbD(x)$, we have 
\[
\langle \kappa,\cl(\bbD(x))\rangle = \kappa([\cT_a])\cdot \kappa(\cl(x))^{-1}.
\]
Since $H^1(F_v,J_a)$ is 2-torsion, $\kappa(\cl(x))^{-1}=\kappa(\cl(x))$. On the other hand, we prove the following lemma to compute $\kappa([\cT_a])$.

\begin{lem}
    With notation as above, write $X^\fe =X^\siegel_r\times X^\unitary_s$ and decompose $a_\fe = (a_\fe^\siegel,a_\fe^\unitary)$. Then 
    \[
    \kappa([\cT_a]) = (-1)^{\val (a_\fe^\unitary)_{2s}} = \begin{cases}
        1 & \text{if }\Ob(a_\fe) = 0 \\
        -1 & \text{if }\Ob(a_\fe) \neq 0
    \end{cases}
    \]
\end{lem}
\begin{proof}
    By the description given in Theorem \ref{thm: regular centralizers for rational forms}, we compute
    \[
    H^1(F_v,J_a) = R_a(F_v)^{\iota}/\mathrm{Nm}(R_a(F_v')^{\iota})
    \]
    where $\mathrm{Nm}$ denotes the norm map
    \[
    \mathrm{Nm}\colon R_a(F_v')^{\iota}\to R_a(F_v)^{\iota}.
    \]
    Recall that $R_a(F_v')^{\iota} = F_v[x^2]/(Q_a(x^2))$ for the polynomial $Q_a$ satisfying $Q_a(x^2) = P_a(x)$. By Proposition \ref{prop: centralizers under transfer}, we see that the endoscopic character $\kappa$ determines an ordered factorization $Q_a = Q^{\siegel}\cdot Q^\unitary$ and hence a decomposition 
\[
R_a(F_v)^{\iota}/\mathrm{Nm}(R_a(F_v')^{\iota})\simeq R^\siegel\oplus R^\unitary.
\]
    Note that the coefficients of $Q^\siegel$ (resp., $Q^\unitary$) are precisely those given by the tuple $a_\fe^\siegel$ (resp., $a_\fe^\unitary$). Pairing a class in $(r^\siegel,r^\unitary)\in R^\siegel\oplus R^\unitary\simeq H^1(F_v,J_a)$ with $\kappa$ is given by (compare \cite[Lemma A.2]{Lesliedescent})
\[
\kappa(r^\siegel,r^\unitary) = (-1)^{\val(\mathrm{Nm}_{R^\unitary/F_v}(r^\unitary))}.
\]
    
    We now compute the class of the torsor $\cT_a$ in this group explicitly. If we express the multiplication-by-$x$ map $[x]$ and its dual $[x]^\vee$ with respect to the decomposition $R_a(F_v') = R_a(F_v')^{\iota}\oplus x\cdot R_a(F_v')^{\iota}$, we get
    \[
[x] = \begin{pmatrix}
    0 & x^2 \\ 1 & 0
\end{pmatrix},\quad [x]^\vee = \begin{pmatrix}
    0 & 1 \\ x^2 & 0
\end{pmatrix}.
    \]
    If an isomorphism $\theta\colon R_a(F_v')^{\iota}\to R_a(F_v')^{\iota}$ preserves $\iota$ and intertwines $[x]$ and $[x]^\vee$, then $\theta$ must have the form
    \[
    \theta = \begin{pmatrix}
    x^2t & 0 \\ 0 & t
\end{pmatrix}
    \]
    for some $t\in (R_a(F_v')^{\iota})^\times$. To preserve $h$, we obtain the further condition on $t$ that $t\cdot \sigma(t)^{-1} = x^{-2}$. Hence, we conclude that
    \[
    [\cT_a] = x^{-2} = x^2\in R_a(F_v)^{\iota}/\mathrm{Nm}(R_a(F_v')^{\iota})
    \]
    Since $Q^\unitary$ is monic of degree $s$, we have $\mathrm{Nm}_{R^\unitary/F_v}(x^2) = (-1)^sQ^\unitary(0)$ is the constant term of $Q^\unitary$ up to sign, which is exactly $(a_\fe^\unitary)_{2s}$. We therefore compute
    \[
\kappa([\cT_a]) = (-1)^{\val (a_\fe^\unitary)_{2s}}.
    \]
    The final line of the lemma follows from the computation of $\Ob(a_\fe)$ given in Lemma \ref{lem: obstruction for unitary}.
\end{proof}

 Hence, we conclude from Corollary \ref{cor: point counting for P(J)} the functional equation
	\[
\Orb_a^{\Siegel,\kappa} = (-1)^{\Ob(a_\fe)}\Orb_a^{\Siegel,\kappa}
	\]
	from which the Proposition \ref{prop: obstructed case} follows.
\end{proof}

We conclude this section by proving Theorem \ref{thm: local main} for some special cases of split elements for which the dimension of the corresponding relative affine Springer fiber is at most one. We note that the geometry of all but the first of these cases appear in \cite[Lem 8.5.7]{ngo}.

\begin{prop}
    \label{prop: zero dimensional cases}
    Let $\fe$ be an elliptic endoscopic datum. Theorem \ref{thm: local main} holds for all $a_\fe\in \fc_\fe^\heartsuit(\cO_v)$ with image $a\in \fc^\heartsuit(\cO_v)$ satisfying either (c.f. \eqref{eqn: dim names})
    \[
    d^\br(a) = d^\cent(a) = 0
    \]
    or
    \[
    d^\br(a) = 0, \quad d^\cent(a) = d^\cent_\fe(a_\fe)= 1.
    \]
    Moreover, in this case, both sides of \eqref{eqn: local main} are nonzero rational numbers.
\end{prop}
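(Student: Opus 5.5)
The plan is to show that in both cases every relative affine Springer fiber in sight is a single $\cP_v(J_a)$-orbit, and that this orbit contains a $K$-rational point of trivial class. Both sides of \eqref{eqn: local main} then reduce to the same explicit nonzero number.

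First I pin down the invariants. In both cases $d^\br(a)=0$, so $\val(a_{2n})=0$. The identities \eqref{eqn: identities for pulling back on transfer} give $d^\cent(a)=d^\cent_\fe(a_\fe)+2r_\fe(a_\fe)$, and in either case this forces $r_\fe(a)=0$, so the power of $q$ in \eqref{eqn: local main} is trivial. Also $\val(a_{2n})=0$ implies $\val(a^\unitary_{\fe})_{2s}=0$, so $\Ob_v(a_\fe)=0$ by Lemma \ref{lem: obstruction for unitary}.

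Next I use regularity. Since $a$ meets $\fD_X=\fD^\cent+\fD^\br$ with multiplicity at most one, Proposition \ref{prop: transverse is dimension zero} shows that $R_a(\cO_v')$ is regular, hence a product of DVRs. The same holds for $R_{a_\fe}$ on each factor. Because $r_\fe(a)=0$, the spectral curves $\fs_r$ and $\fs_s$ do not meet over $a_\fe$. The partial normalization of Proposition \ref{prop: centralizers under transfer} is then an isomorphism, so $J_a\simeq J_{\fe,a_\fe}$ over $\cO_v$, and the two symmetry groups $\cP_v(J_a)$ and $\cP_v(J_{\fe,a_\fe})$ coincide.

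The central step is transitivity. Since $R$ is a product of DVRs, every $R$-lattice is principal, of the form $f\cdot R$ with $f\in R_a(F_v')^\times$ well defined modulo $R^\times$. The $\iota$-stability and $h^\star$-self-duality conditions then become conditions on the class of $f$: $\iota(f)\equiv f$, and $f\sigma(f)$ (suitably normalized by $P_a'$ or $Q_a'$) is a unit. I will show that any two solutions differ by an element of $\ker(\mathrm{Nm})=J_a(F_v)$ modulo $J_a(\cO_v)$. This uses Hilbert 90 for the unramified extensions $E_j'/E_j$, which lets one adjust units so that the quotient of two solutions lies in the norm-one torus. Consequently $\cM_v^\star(a)$, and likewise each factor of $\cM_v^\fe(a_\fe)$, is either empty or a single $\cP_v(J_a)$-torsor. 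I expect this transitivity, especially matching the self-duality normalization with the kernel of the norm, to be the main obstacle.

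Rational points then finish the argument. On the Siegel side, the Kostant--Rallis section gives a point of $\cM_v^\siegel(a)(K)$. Its class is trivial, since it is the trivial torsor by Lemma \ref{lem: kostant section and canonical lift agree}. For the unitary factor of $X^\fe$, Proposition \ref{prop: obstruction measures existence of k points} supplies a $K$-point because $\Ob_v(a_\fe)=0$. Since each space is a torsor, the groupoid $[\cM/\cP_v(J_a')](K)$ has one isomorphism class. That class is trivial, so $\kappa$ evaluates to $1$ on it. Its automorphism group is $(J_a(\cO_v)/J_a'(\cO_v))(K)$ on both sides, using the equality of the symmetry groups established above. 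Hence both sides equal $1/\#(J_a(\cO_v)/J'_a(\cO_v))(K)$, which is a nonzero rational number, and this proves \eqref{eqn: local main} in these cases.
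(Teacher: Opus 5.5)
Your proof is correct, but it establishes the key facts by a different route than the paper. Both arguments show three things: each relative affine Springer fiber is a single orbit with trivial stabilizers under its discrete symmetry group; $r_\fe=0$ gives $J_a\simeq J_{\fe,a_\fe}$ over $\cO_v$; and hence the counts agree.

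The paper proceeds as follows.
\begin{itemize}
\item It gets the torsor property from the dimension formula: Theorem \ref{prop: dim of RASF} gives $\delta_v(a)=\delta_{\fe,v}(a_\fe)=0$, and it then appeals to the structure found in the proof of Proposition \ref{prop: dim formula split case}.
\item It computes the counts via orbital integrals. Corollary \ref{cor: point counting for P(J)} turns the count $1$ for the $\cP_v(J)$-quotient into $\vol(J(\cO_v))\cdot\SOrb=1$. Proposition \ref{prop: point counting for P(J')} then converts this into $\vol(J_a'(\cO_v))/\vol(J(\cO_v))$ for the $\cP_v(J_a')$-quotient on each side.
\end{itemize}

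You instead read the torsor property off the regularity of $R_a(\cO_v')$ (Proposition \ref{prop: transverse is dimension zero}) via principal lattices, and you compute the groupoid directly. This has three advantages.
\begin{itemize}
\item It is purely local and avoids the global input behind the dimension formula.
\item It handles the second case cleanly. There $\val\mathrm{Disc}(Q_a)=1$ forces $a$ to be non-split, so the split-case analysis the paper cites does not literally apply.
\item It makes explicit, via the Kostant--Rallis point (Lemma \ref{lem: kostant section and canonical lift agree}), why the unique class has trivial invariant and so $\kappa$ contributes $1$ on the Siegel side. The paper leaves this implicit and only writes the unweighted count there.
\end{itemize}

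Two points should be tightened.

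First, the point count needs the torsor property geometrically, over $\bar k$. There $F_v'\hat\otimes_k\bar k$ splits, self-duality just identifies one copy of $R_a(\breve\cO_v)$-lattices with the dual of the other, and the norm-one condition becomes vacuous. So the step you flag as the main obstacle is only needed for statements about rational points. That step is really surjectivity of the norm on units in unramified extensions, not Hilbert 90. You get the relevant rational points anyway from the Kostant--Rallis section and Proposition \ref{prop: obstruction measures existence of k points}. On the stable side, even that is unnecessary: the quotient of a torsor by its group is a point whether or not the torsor has a $K$-point.

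Second, passing from $\cP_v(J_a)$ to $\cP_v(J_a')$ uses Lang's theorem for the connected kernel $J_a(\cO_v)/J_a'(\cO_v)$. This is what guarantees that $[\cM/\cP_v(J_a')](K)$ has a single isomorphism class, with automorphism group $(J_a(\cO_v)/J_a'(\cO_v))(K)$. State it explicitly.
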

\begin{proof}
    By the dimension formula (Theorem \ref{prop: dim of RASF}), we must have $\delta_v(a) = \delta_{\fe,v}(a) = 0$, so that by the proof of Proposition \ref{prop: dim formula split case}, the affine Springer fiber $\cM_v^\Siegel(a)$, respectively $\cM_v^{\fe}(a)$, is a torsor for the action of the lattice $\cP_v(J_a) = \cP_v(J_{\fe,a_\fe}) = \Gamma_a$. Therefore, on the endoscopic side, we have
    \[
    1 = \#[\cM_v^\fe(a_\fe)/\cP_v(J_{\fe,a_\fe})](k) = \vol(J_{\fe,a_\fe}(\cO_v))\cdot \SOrb_{a_\fe}
    \]
    by Corollary \ref{cor: point counting for P(J)} as well as
    \[
    \#[\cM_v^\fe(a_\fe)/\cP_v(J_a')](k) = \vol(J_{a}'(\cO_v))\cdot \SOrb_{a_\fe}
    \]
    by Proposition \ref{prop: point counting for P(J')}. Combining these statements, we get that
    \[
    \#[\cM_v^\fe(a_\fe)/\cP_v(J_a')](k) = \frac{\vol(J_{a}'(\cO_v))}{\vol(J_{\fe,a_\fe}(\cO_v))}.
    \]
    By an identical argument, we also get
    \[
    \#[\cM_v^\Siegel(a)/\cP_v(J_a')](k) = \frac{\vol(J_{a}'(\cO_v))}{\vol(J_{a}(\cO_v))}.
    \]
    To conclude, we note in these cases that the morphism $J_a\to J_{\fe,a_\fe}$ is an isomorphism so that the two denominators above agree.
\end{proof}

\begin{prop}
\label{prop: 1 dimensional case}
    Let $\fe$ be an elliptic endoscopic datum. Theorem \ref{thm: local main} holds for $a\in \fc^\heartsuit(\cO_v)$ such that $d^\br(a) = 0$, $d^\cent(a) = 2$ and $d^\cent_\fe(a) = 0$. Moreover, in this case, both sides of \eqref{eqn: local main} are nonzero rational numbers.
\end{prop}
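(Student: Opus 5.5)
Since $d^\cent(a)=d^\cent_\fe(a_\fe)+2r_\fe(a)$ by \eqref{eqn: identities for pulling back on transfer}, the hypotheses force $r_\fe(a)=1$, so the goal is to show the left side of \eqref{eqn: local main} equals $q^{\deg(v)[K:k_v]}$ times the right side. I would treat the two sides separately, handling the endoscopic side first. Because $d^\br_\fe(a_\fe)=d^\cent_\fe(a_\fe)=0$, the argument of Proposition \ref{prop: zero dimensional cases} applies verbatim to $a_\fe$. Thus $\cM_v^\fe(a_\fe)$ is a $\Gamma_{a}$-torsor, and the right side of \eqref{eqn: local main} equals $\vol(J_a'(\cO_v))/\vol(J_{\fe,a_\fe}(\cO_v))$. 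This quantity is nonzero, and since $a_\fe$ is unramified, $J_{\fe,a_\fe}$ is the smooth torus model over $\cO_v$ (the Néron model of the common generic fiber). By Lemma \ref{lem: delta vs endoscopic delta} we get $\delta_v(a)=1$; equivalently, $c_v(a)=c_{\fe,v}(a_\fe)=0$ in Theorem \ref{prop: dim of RASF}.

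On the Siegel side, I would first localize the geometry. Since $d^\br(a)=0$, the element $a_{2n}$ is a unit, so $\iota$ acts freely on the special fiber of $\Spec R_a(\cO_v')$. Since $d^\cent(a)=2$ and $d^\cent_\fe(a_\fe)=0$, exactly two roots of $Q_a$ collide modulo $\varpi_v$, one from each factor $Q^\siegel, Q^\unitary$ of the factorization determined by $\kappa$. By Hensel's lemma, $R_a(\cO_v')^\iota$ splits as an étale part times a rank-two ring $\cO_v'[y]/((y-u)^2-\varpi_v^2w)$ with $w$ a unit, i.e.\ an ordinary node whose two branches are $\sigma$-stable. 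The RASF and $\cP_v(J_a)$ then factor accordingly. The étale factors contribute lattice torsors whose counts match those on the endoscopic side exactly, as in Proposition \ref{prop: zero dimensional cases}. This reduces the statement to the rank-two Siegel case $n=2$ at a split node, which is the relative analogue of \cite[Lem 8.5.7]{ngo}.

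For the node, I would compute explicitly with the lattice description of Definition \ref{def: siegel RASF}. The group $\cP_v(J_a)$ is an extension of a lattice by the one-dimensional group $(\wt{R^\iota})^\times/(R^\iota)^\times$, using \eqref{eqn: decomposition of P(Ja) into lattice and finite parts}; with the Frobenius twist of Theorem \ref{thm: regular centralizers for rational forms}, this group is a rank-one torus $T_1$ over $k$. The space $\cM_v^\siegel(a)$ is a chain of $\mathbb{P}^1$'s. It has one open $\cP_v(J_a)$-orbit of regular lattices (the orbit of the Kostant--Rallis point, with stabilizer $J_a(\cO_v)$) and one closed orbit of $\wt{R}$-stable self-dual lattices, which is a torsor under $\cP_v(J_a)/T_1$. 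I would then evaluate $\#[\cM_v^\siegel(a)/\cP_v(J_a')](K)_\kappa$ orbit by orbit. The regular orbit contributes $\vol(J_a')/\vol(J_a)$ with class $1$. The closed orbit contributes $\#T_1(K)$ times $\vol(J_a')/\vol(J_{\fe,a_\fe})$, weighted by $\kappa$ evaluated on the class of the $\wt R$-lattices. Since $\kappa$ separates the two branches, that class is expected to pair to $-1$. The identity $\vol(J_a)^{-1}=\vol(J_{\fe,a_\fe})^{-1}\cdot\#T_1(K)$ adjusted by the relevant index should then collapse the sum to $q^{[K:k_v]\deg v}\,\vol(J_a')/\vol(J_{\fe,a_\fe})$, which is nonzero.

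The main obstacle is the last step: pinning down the Frobenius form of $T_1$ (split $\G_m$ versus norm-one torus, which gives $q\mp1$), together with the $\kappa$-sign and the exact orbit and stabilizer bookkeeping on the closed orbit. These must combine so that the answer is $q$ rather than, say, $q+2$. I would settle them first by a direct $2\times2$ matrix computation, using the Siegel Hermitian form $h^\siegel$ from Example \ref{ex: rank 1 lattices} extended to the rank-two node, and cross-check the result against Lemma \ref{lem: component group of Pa}.
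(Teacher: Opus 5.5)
Your setup agrees with the paper. You get $r_\fe(a)=1$, you handle the endoscopic side exactly as in Proposition~\ref{prop: zero dimensional cases}, and you describe $\cM_v^\siegel(a)$ as a chain of $\mathbb{P}^1$'s with one free orbit (containing the Kostant--Rallis point) and one orbit of $\wt{R}$-stable lattices. Your Hensel localization is a reasonable substitute for the paper's route to the chain. The paper instead lifts $a$ to the matrix with blocks $D=\mathrm{diag}(u_i)$ and $I_n$, and notes that integrality of $g_1^{-1}Dg_2$ and $g_2^{-1}g_1$ forces $g_1=g_2$. It then identifies the fiber with Ng\^o's chain $M\times^P\cP_v(J_a)$ and quotes \cite[Example 8.1.15]{ngo} for $\#[M/P](k)_\kappa=\#k_v/\#A_a(k_v)$.

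The gap is that the count itself, which is the whole content of the proposition, is not carried out, and the bookkeeping you do write down is wrong. The stabilizer in $\cP_v(J_a')$ of an $\wt R$-stable lattice is $J_{\fe,a_\fe}(\cO_v)/J_a'(\cO_v)$. So the closed orbit contributes $\kappa(c_0)\,\vol(J_a'(\cO_v))/\vol(J_{\fe,a_\fe}(\cO_v))$, with no factor $\#T_1(K)$. With the factor as you wrote it, and using $\vol(J_a)^{-1}=\#T_1(K)\,\vol(J_{\fe,a_\fe})^{-1}$, the sum becomes $(1+\kappa(c_0))\,\#T_1(K)\,\vol(J_a')/\vol(J_{\fe,a_\fe})$. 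That is $0$ when $\kappa(c_0)=-1$, and no ``index adjustment'' repairs it.

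The two inputs you defer are exactly what produce the factor $q$, and both follow from results already in the paper.

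\emph{The torus.} At the node, $T_1=A_a=(\wt{R^\iota})^\times/(R^\iota)^\times$ is identified over $\bar k$ with $\bar k^\times$ via the ratio of residues on the two branches, which are rational. By Theorem~\ref{thm: regular centralizers for rational forms}, Frobenius on $J$ is the Galois action composed with inversion, so it acts on $A_a(\bar k)$ by $t\mapsto t^{-\#k_v}$. Hence $A_a$ is the norm-one torus, not $\mathbb{G}_m$, and $\#A_a(k_v)=\#k_v+1$.

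\emph{The sign.} Frobenius acts by $-1$ on the cocharacters $e_i,e_j$ of the two branches, so it reverses the chain. The Kostant--Rallis point is a regular $k_v$-point, so the component $C_0$ containing it is Frobenius-stable. Its two nodes $m$ and $\check\lambda\cdot m$ are therefore interchanged, where $\check\lambda=\pm(e_i-e_j)$ is the chain translation. The unique isomorphism class on the closed orbit thus has class $\check\lambda$. Since $\kappa$ puts $u_i$ and $u_j$ in different factors, $\kappa(\check\lambda)=-1$.

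\emph{The total.} The open orbit contributes $\vol(J_a')/\vol(J_a)=(\#k_v+1)\vol(J_a')/\vol(J_{\fe,a_\fe})$. Adding the closed orbit gives $\bigl((\#k_v+1)-1\bigr)\vol(J_a')/\vol(J_{\fe,a_\fe})$, which is the required identity. For $[K:k_v]$ odd nothing changes. For $[K:k_v]$ even, $A_a$ splits and the nodes become rational with trivial class, giving $(\#K-1)+1=\#K$.
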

\begin{proof}
    Since $d^\cent_\fe(a) = 0$, by the dimension formula (Theorem \ref{prop: dim of RASF}) we have $c_{\fe,v}(a_\fe) = 0$. Since $c_v(a)$ depends only on the generic fiber and $J_a\to J_{\fe,a_\fe}$ is an isomorphism on the generic fiber (see Proposition \ref{prop: centralizers under transfer}), it also follows that $c_v(a) = 0$. Therefore, we have $\delta_{\fe,v}(a_\fe) = 0$ while $\delta_v(a) = 1$, and $a$ is split.

    On the endoscopic side, an identical argument to the proof of Proposition \ref{prop: zero dimensional cases} implies that
    \begin{equation}
    \label{eqn: endoscopic dim zero point count}
    \#[\cM_v^\fe(a)/\cP_v(J_a')](k) = \frac{\vol(J_{a}'(\cO_v))}{\vol(J_{\fe,a_\fe}(\cO_v))}.
    \end{equation}

    We now consider the Siegel case. Consider the polynomial $Q_a(x^2)=P_a(x)$ (\emph{cf.} Proposition \ref{prop: lattice action on RASF}). Since $a$ is split, it follows that $Q_a(y)$ splits as a product of linear factors $Q_a(z)=\prod_{i=1}^n(z-u_i)$ over $\breve F_v$. Furthermore, by Gauss's Lemma, $u_i\in \breve\cO_v$ for all $i$, and because $d^{\mathrm{br}}(a)=\val(a_{2n})=0$ and $a_{2n} = \prod_i u_i$, the $u_i$ are units in $\breve\cO_v$. Since $d^{\mathrm{cent}}(a)=\operatorname{val}\operatorname{Disc}(Q_a)=2$, there exists exactly one pair of roots, $u_i,u_j$, such that $\val(u_i-u_j)=1$, and all other differences $u_{i'}-u_{j'}$ are units. By the assumption that $d^\cent_{\fe}(a_\fe)=0$, the $\kappa$ factorization $Q_a=Q^\siegel Q^\unitary$ places $u_i$ and $u_j$ in different factors.

We reuse notation $R = R_a(\breve\cO_v)$ and $R^\iota$ from Section \ref{sec: symmetries of RASFs}. We also use $\wt{R^\iota}$ to denote the normalization of $R^\iota$. By Theorem \ref{thm: regular centralizers for rational forms} and Proposition \ref{prop: centralizers under transfer}, we have identifications
\[
J_a(\breve\cO_v) \simeq \breve\cO_v[y]/Q_a(y)=R^\iota, 
\]
and of $J_{\fe,a_\fe}(\breve\cO_v)$ with the normalization 
\[
J_a(\breve\cO_v)\simeq \breve\cO_v[y]/Q^\siegel(y)\times \breve\cO_v[y]/Q^\unitary(y) = \wt{R^\iota}.
\]
Define a torus $A_a:=J_a(\breve\cO_v)/J_{\fe,a_\fe}(\breve\cO_v) =\wt{R^\iota}/R^\iota$, with the natural $k$ structure defined by the Galois actions on $J_a(\breve\cO_v)$ and $J_{\fe,a_\fe}(\breve\cO_v)$. By out descriptions of $J_a(\breve\cO_v)$ and $J_{\fe,a_\fe}(\breve\cO_v)$ above, the torus $A_a$ is of rank 1.

\begin{lemma}
    There is a short exact sequence 
    \begin{equation}
    \label{eqn: exact sequence of rank 1 torus}
0\to A_a(\ol k)\to \cP_v(J_a)(\ol k)\to \Gamma_a\to 0,
\end{equation}
    where $\Gamma_a$ is the lattice of Section \ref{sec: symmetries of RASFs}. Moreover, all morphisms are equivariant with respect to the Galois action of $\mathrm{Gal}(\ol k/k_v)$.
\end{lemma}
\begin{proof}
    Explicitly, the lattice $\Gamma_a$ is given by
    \[
    \Gamma_a = (R^\iota\otimes_{\breve\cO_v}\breve F_v)/\wt{R^\iota}
    \]
    Hence, the sequence \eqref{eqn: exact sequence of rank 1 torus} is explicitly
    \[
0\to \wt{R^\iota}/R^\iota\to (R^\iota\otimes_{\breve\cO_v}\breve F_v)/R^\iota\to (R^\iota\otimes_{\breve\cO_v}\breve F_v)/\wt{R^\iota}\to 0,
    \]
    whose exactness is evident.
\end{proof}

Note in particular, that $\Gamma_a$ is of rank $n$. We may compute
\[
\#A_a(k_v) = \frac{\mathrm{vol}\; J_{\fe,a_\fe}(\cO_v)}{\mathrm{vol}\; J_a(\cO_v)}.
\]

The relative affine Springer fiber $\cM_v^\siegel(a)$ was implicitly computed in Proposition \ref{prop: dim formula split case}; we make this computation more explicit here. Over $\ol k$, we have a lift of $a$ of the form
\[
\gamma = \begin{pmatrix}
    0 & D \\ I_n & 0
\end{pmatrix}\in V^\fe(\ol k),\quad D = \mathrm{diag}(u_1,\dots,u_n).
\]
By Lemma \ref{lem: kostant section and canonical lift agree}, we compute that $\cM_v^\siegel(a)\otimes_k\ol k\simeq \fX_{\gamma}$ is represented by a pair $(g_1,g_2)$ with $g_1^{-1}Dg_2$ and $g_2^{-1}g_1$ having integral entries. The determinants multiply to the unit $\det D$, so both $g_1^{-1}Dg_2$ and $g_2^{-1}g_1$ are invertible over $\breve{\mathcal O}_v$. In particular, $g_1 = g_2$ in the affine Grassmannian for $\GL_n$, and we can identify $\cM_v(a)\otimes_k\ol k$ with the $\GL_n$ affine Springer fiber of the diagonal matrix $D$. With the condition on $D$, this is precisely the case considered in \cite[\S8.3]{ngo}, with our torus $A_a$ coinciding with Ng\^o's torus $A_{\pm \alpha}$. Namely, let 
\[
M = \bigcup_{j\in \Z}C_j,\quad C_j\simeq \bP^1\text{ for all }j
\]
be an infinite chain of projective lines over $\ol k$ with a Galois action swapping components $\sigma(C_i) = C_{1-i}$ and a unique fixed point. There is an action of $P = A_a\times \Z$ on $M$ with the torus $A_a$ acting on each irreducible component $C_j$ by fixing the two intersection points with consecutive components $C_{j-1}$ and $C_{j+1}$, and $\Z$ acting by translating along the chain. There is a natural inclusion $P\subset \cP_v(J_a)$, and the elementary computations in \emph{loc. cit.} show that
\[
\cM_v(a)\simeq M\times^P\cP_v(a),
\]
with the natural action of $\cP_v(a)$ through the second factor. Therefore, we have 
\[
\#[\cM_v(a)/\cP_v(J_a)](k)_\kappa = \#[M/P](k)_{\kappa'}
\]
for the induced character $\kappa'\colon H^1(k,P)\hookrightarrow H^1(k,\cP_v(J_a))\to \ol\Q_\ell^\times$. By \cite[Example 8.1.15]{ngo}, we deduce that
\[
 \#[\cM^\siegel_v(a)/\cP_v(J_a)](k)_\kappa
       =\frac{q_v}{\#A_a(k_v)} = q_v\cdot \frac{\mathrm{vol}\; J_a'(\cO_v)}{\mathrm{vol}\; J_{\fe,a_\fe}(\cO_v)}.
\]
Therefore, $\#[\cM^\siegel_v(a)/\cP_v(J_a')](k)_\kappa = q\cdot\#[\cM^\fe_v(a)/\cP_v(J_a')](k)$, and Theorem \ref{thm: local main} follows in this case since $r_\fe(a)=1$.
\end{proof}

\section{The support theorem and computation of cohomology}
\label{sec: support theorem (whole)}

In this section, we recall Ng\^o's support theorem. In \S\ref{sec: support thm for relative case}, we will apply this support theorem by analyzing top degree cohomology over $\cA^\dagger$, which will demonstrate the importance of the regular quotient in relative endoscopy.

\subsection{The support theorem}

We begin by introducing some notions needed to state the support theorem.

\begin{definition}
\label{def: weak abelian fibration}
    Let $S$ be a scheme. A \emph{weak abelian fibration} over $S$ is the data of a proper morphism $f\colon M\to S$, with $M$ a Deligne--Mumford stack, together with a smooth commutative group scheme $g\colon P\to S$ acting on $M$ over $S$ such that
    \begin{enumerate}
        \item the morphisms $f$ and $g$ are of the same relative dimension $d$;
        \item for every $m\in M(k)$ mapping to $s\in S(k)$, the stabilizer of $m$ in $P_s$ is affine;
        \item the Tate module $T_{\ol{\Q}_\ell}(P^0)$ of the fiberwise neutral component $P^0$ admits a polarization. More precisely, \'etale-locally on $S$ there exists a pairing 
        \begin{equation}
        \label{eqn: polarization pairing}
        T_{\ol{\Q}_\ell}(P^0)\times T_{\ol{\Q}_\ell}(P^0)\to \ol{\Q}_\ell(1)
        \end{equation}
        such that, for any $s\in S(k)$, if
        \[
        0\to R_s\to P_s^0\to A_s\to 0
        \]
        is the Chevalley decomposition of $P_s^0$--with $R_s$ the maximal connected affine subgroup of $P_s^0$ and $A_s$ an abelian variety--then when restricted to $s$, the bilinear map \eqref{eqn: polarization pairing} factors through $T_{\ol \Q_\ell}(A_s)$ and induces a perfect pairing on $T_{\ol{\Q}_\ell}(A_s)$.
    \end{enumerate}
\end{definition}

Given a weak abelian fibration, we have a $\delta$-stratification on $S$ determined by the function $\delta(s) = \dim R_s$, where $R_s\subset P_s$ is the maximal connected affine subgroup of $P_s$ as above.

\begin{definition}
    \label{def: delta regularity}
    A weak abelian fibration $(f,g)$ on $S$ is \emph{$\delta$-regular} if for all $\delta\geq 0$, we have $\codim_S(S_\delta)\geq \delta$.
\end{definition}

We now can state Ng\^o's support theorem. 
\begin{thm}
\label{thm: ngos support thm}
(\cite[Theorem 7.2.1]{ngo})
    Let $S$ be a geometrically irreducible scheme over $k$, and let $f\colon M\to S$ and $g\colon P\to S$ form a $\delta$-regular weak abelian fibration of relative dimension $d$. Let $\cF$ be a pure and self-dual constructible sheaf on $M$\footnote{For example, we will take $\cF = \bar{\Q}_{\ell,M}$ when $M$ is smooth.}.

    Let $K$ be a simple constituent of the perverse cohomology sheaf $^p\H^n(f_*\cF)$ with support $Z$. Then $Z$ satisfies the following two properties.
    \begin{enumerate}
        \item $Z$ is $\delta$-critical, i.e. $\codim_S(Z) = \delta_Z$ where $\delta_Z$ is the minimum value of the $\delta$-invariant on $Z$.
        \item There exists a non-empty open set $U\subset S_{\ol{k}}$ with $U\cap Z\neq \emptyset$ and a local system $L$ on $U\cap Z$ such that $i_*L$ is a direct factor of $\bR^{2d}f_*\cF|_U$, the top degree ordinary cohomology sheaf restricted to $U$.
    \end{enumerate}
\end{thm}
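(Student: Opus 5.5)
This statement is Ng\^o's support theorem, \cite[Theorem 7.2.1]{ngo}, quoted verbatim. In the paper it is imported rather than reproved, so the proof consists of checking that our hypotheses match Ng\^o's. The plan below recalls the structure of Ng\^o's argument, since that is what we will rely on when we apply it to $\cameral f^{\siegel,\ani}$ and $\cameral f^{\fe,\ani}$ over $\cameral\cA^\good$.

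\textbf{Step 1: relative dimension and decomposition.} By properness of $f$ and purity and self-duality of $\cF$, the decomposition theorem \cite{BBD} makes $f_*\cF$ a direct sum of shifted simple perverse sheaves over $\ol k$. Fix a simple constituent $K$ of ${}^p\H^n(f_*\cF)$ with support $Z$. The key input is a freeness statement: over a dense open subset of $Z$, the cohomology $\bigoplus_n {}^p\H^n(f_*\cF)|_Z$ is a free module over $\Lambda^\bullet T_{\ol\Q_\ell}(P)$ via the cap-product action of $P$. This uses the following facts.
\begin{itemize}
\item By the affineness of stabilizers, $P$ acts on $\bR^\bullet f_*\cF$ through $\pi_0$.
\item The Tate module of $P$ acts by cap product.
\item By the polarization condition, the abelian part $A_s$ gives a pure free action (via Deligne's weight argument and the hard Lefschetz-type symmetry of perverse degrees).
\end{itemize}

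\textbf{Step 2: amplitude inequality.} From this freeness we get that the perverse degrees in which $K$ occurs span an interval of length at least $2\dim A_Z = 2(d-\delta_Z)$. Goresky--MacPherson-type bounds on the perverse amplitude of $f_*\cF$ give a span of at most $2(d-\codim Z)$. Together these yield
\[
\codim_S Z \le \delta_Z .
\]
Combined with $\delta$-regularity, $\codim_S Z\ge \delta_Z$ on the generic stratum of $Z$. Hence $Z$ is $\delta$-critical, which is part (1).

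\textbf{Step 3: top-degree cohomology.} Equality in Step 2 forces $K$ to contribute to the extreme perverse degree. By Verdier duality together with the freeness of Step 1, the restriction to $U\cap Z$ then appears as a local-system summand of $\bR^{2d}f_*\cF$ on a suitable open set $U$, which is part (2).

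\textbf{Main obstacle.} The hard step is the freeness statement in Step 1: showing that the cap-product action of $\Lambda^\bullet T_{\ol\Q_\ell}(A)$ on the perverse cohomology is free. This needs the polarization and weight arguments; we defer to \cite[\S7.4--7.5]{ngo} for it.

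For our application, the work consists of verifying the hypotheses of Definition~\ref{def: weak abelian fibration}:
\begin{itemize}
\item properness and smoothness come from Proposition~\ref{prop: smoothness and properness of relative hitchin fibration};
\item equality of relative dimensions comes from Corollary~\ref{cor: ani is flat};
\item affine stabilizers come from Proposition~\ref{prop: lattice action on RASF} and the product formula;
\item the polarization is induced from that of the Prym of $\ol\Sigma'_a/\ol\Sigma_a$;
\item $\delta$-regularity comes from Corollary~\ref{cor: delta regularity in the limit}.
\end{itemize}
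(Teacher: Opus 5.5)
You treat the statement exactly as the paper does. The paper gives no argument beyond the citation to Ng\^o's support theorem \cite[Theorem 7.2.1]{ngo}, so at the level of proof your proposal agrees with it. The shape of your outline is also Ng\^o's: freeness, an amplitude-plus-duality bound giving $\codim_S Z\le\delta_Z$, $\delta$-regularity for the reverse inequality, and equality pushing the $Z$-part into ordinary degree $2d$.

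The recalled sketch, however, misstates its central input. In Step~1 the freeness is over $\Lambda^\bullet T_{\ol\Q_\ell}(A_Z)$, the exterior algebra on the Tate module of the \emph{abelian part}, which has rank $2(d-\delta_Z)$. It is not over $\Lambda^\bullet T_{\ol\Q_\ell}(P)$. On a dense open of $Z$, the affine part $T_{\ol\Q_\ell}(R_Z)$ acts by zero on $\bigoplus_n {}^p\H^n(f_*\cF)_Z$ for weight reasons. Indeed, ${}^p\H^n$ and ${}^p\H^{n-1}$ are pure of weights differing by one, and $T_{\ol\Q_\ell}(A_Z)$ has weight $-1$, but $T_{\ol\Q_\ell}(R_Z)$ has weight $-2$. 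So the module is never free over the full exterior algebra once $R_Z$ contains a torus. If it were, your Step~2 would give $\codim_S Z\le\delta_Z-\tfrac12\,\mathrm{rk}\,T_{\ol\Q_\ell}(R_Z)$. That would rule out the proper endoscopic supports $\cameral\cA_\kappa$, which satisfy $\codim_S Z=\delta_Z$ and whose generic affine part is a product of rank-one tori as in Proposition~\ref{prop: 1 dimensional case}. Your Step~2 silently uses the correct count $2\dim A_Z$, so this is a slip, but it is exactly the point of the theorem.

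You also misassign the roles of two hypotheses.
\begin{itemize}
\item $P$ acts on $\bR^\bullet f_*\cF$ through $\pi_0$ simply because a connected group acts trivially on cohomology; affine stabilizers play no role there.
\item Affine stabilizers are what produce freeness. Since $k$ is finite, at a closed geometric point $s$ the quotient $A_s$ lifts up to isogeny to an abelian subvariety $B\subset P_s$. Then $B$ acts on the proper fiber $M_s$ with stabilizers that are both affine and proper, hence finite, and this makes $H^\bullet(M_s)$ free over $\Lambda^\bullet T_{\ol\Q_\ell}(A_s)$.
\item The polarization is what transports this fiberwise freeness, compatibly with Verdier duality, to the $Z$-supported part of $\bigoplus_n{}^p\H^n$.
\end{itemize}

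The cap-product action in Step~1 also requires $\cF$ to be $P$-equivariant. This holds for $\cF=\ol\Q_\ell$ on smooth $M$, which is the only case the paper uses, but not for an arbitrary pure self-dual $\cF$ as in the displayed statement.

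Finally, in Steps~2 and~3 it is the whole $Z$-supported part of $\bigoplus_n{}^p\H^n(f_*\cF)$ whose degrees span the long interval and which reaches ordinary degree $2d$, not necessarily $K$ itself. This is harmless, since~(2) only asks for some local system $L$.
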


\subsection{Application to relative Hitchin systems}
\label{sec: support thm for relative case}

We now apply Theorem \ref{thm: ngos support thm} to our cases of interest. We begin by showing that $(f^{\star,\ani},g^\ani)$ forms a weak abelian fibration.

\begin{prop}
\label{prop: h ani is a weak abelian fibration}
    The action of the group scheme $g^\ani\colon \cP^\ani\to \cA^\ani$ on $f^{\star,\ani}\colon \cM^{\star,\ani}\to \cA^\ani$ gives $(f^{\star,\ani},g^\ani)$ the structure of a weak abelian fibration.
\end{prop}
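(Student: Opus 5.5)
We check the three conditions of Definition \ref{def: weak abelian fibration} in turn, using earlier results for the first two and a reduction to Ng\^o--Laumon's unitary Hitchin fibration for the polarization.

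\emph{Properness and the Deligne--Mumford property.} By Proposition \ref{prop: smoothness and properness of relative hitchin fibration}, $\cM^{\star,\ani}$ is a smooth Deligne--Mumford stack of finite type and $f^{\star,\ani}$ is proper. The group $\cP^\ani\to\cA^\ani$ is smooth and commutative: it classifies torsors under the smooth group scheme $J_a$ on a curve, so it is smooth, and the obstruction $H^2$ vanishes on a curve. Strictly, $\cP$ is a Picard stack. As in \cite[\S4.10]{ngo}, we pass to the associated group scheme $P$ (or rigidify along $\infty$). This does not change $\delta$, nor the action on cohomology through $\pi_0$.

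\emph{Equal relative dimension.} Corollary \ref{cor: ani is flat} (via Theorem \ref{prop: dim of RASF} and the product formula) gives $\dim\cM^\star_a=\dim\cP_a$ for every $a\in\cA^\ani$. Since $\cP^\ani$ is smooth over $\cA^\ani$, this common dimension is locally constant, which yields condition (1).

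\emph{Affine stabilizers.} By the product formula, Proposition \ref{prop: product formula}, the stabilizer in $\cP_a$ of a point $m\in\cM^\star_a$ is an extension involving the stabilizers in $\prod_v\cP_v(J_a)$ of the local components $m_v$. Proposition \ref{prop: lattice action on RASF}(3) says these local stabilizers are affine of finite type. The stabilizer also meets the lattice part trivially, since $\Gamma_a$ acts freely, so the global stabilizer is affine. Alternatively, we can use that $\cM^{\star,\ani}$ is Deligne--Mumford, so automorphism groups are finite. Since $\cP_a$ acts through a finite quotient of the stabilizer, the stabilizer is an extension of a finite group by a closed subgroup of the affine part; we expect this version to be cleaner.

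\emph{Polarization.} By Proposition \ref{prop: spectral description of P star}, $\cP^0_a$ is the neutral component of $\Prym(\ol\Sigma'_a/\ol\Sigma_a)$. Its abelian part is the Prym of the normalizations $\ol\Sigma'^{\flat}_a\to\ol\Sigma^{\flat}_a$, which is an \'etale double cover. The polarization comes from the relative Weil pairing on $T_{\ol\Q_\ell}\Jac(\ol\Sigma'/\cA^\ani)$, given by cup product on $R^1$ of the family of reduced projective curves, restricted to the anti-invariant part. As in \cite[\S4.12]{ngo} and its unitary analogue in \cite{ngo-laumon}, this pairing kills the torus part and is perfect on the abelian quotient, because the Weil pairing of $\Jac$ of the normalization is perfect and $\sigma$-equivariant. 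Restricting it to the $(-1)$-eigenspace for $\sigma$ therefore remains perfect, up to a factor of $2$, which is invertible in $\ol\Q_\ell$.

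\emph{Expected obstacle.} The main work is in the polarization step, which must be done uniformly over $\cA^\ani$ rather than just fibrewise. We would globalize following Ng\^o's argument in \cite[Prop.~4.12.2]{ngo}, applied to the finite flat family $\ol\Sigma'\to\ol\Sigma$.
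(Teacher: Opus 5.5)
Your treatment of properness and the Deligne--Mumford property, of equal relative dimension (via Corollary \ref{cor: ani is flat}), and of the polarization agrees with the paper. The paper simply cites Proposition \ref{prop: smoothness and properness of relative hitchin fibration}, Corollary \ref{cor: ani is flat} and Ng\^o's argument in \cite[\S4.12]{ngo}. Your explicit version of the polarization is fine: the Weil pairing on the relative $H^1$ of $\ol\Sigma'$ is $\sigma$-invariant, its $(\pm1)$-eigenspaces are mutually orthogonal, so it stays perfect on the abelian part of the anti-invariant piece.

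The real difference is the affine-stabilizer condition. The paper settles it in one line by identifying stabilizers with $H^0(\Sigma,J_a)$. That group is really the automorphism group of the unit of $\cP_a$, and the stabilizer in $P_a$ of a non-regular point can be positive-dimensional. For example, globalize a node of the chain of $\mathbb{P}^1$'s in Proposition \ref{prop: 1 dimensional case}: it is fixed by the rank-one torus $A_a$ of that proof. Your first argument, which is Ng\^o's, is what actually proves the condition. Make it precise as follows:
\begin{itemize}
\item If $m$ corresponds to $((m_v)_v,p_0)$ under Proposition \ref{prop: product formula}, then on $\ol k$-points $\mathrm{Stab}_{P_a}(m)$ is the image of $\prod_v\mathrm{Stab}_{\cP_v(J_a)}(m_v)$ under the gluing homomorphism $\prod_v\cP_v(J_a)\to\cP_a$.
\item Each factor is affine of finite type by Proposition \ref{prop: lattice action on RASF}(3).
\item The image of an affine group of finite type under a homomorphism is affine, and affineness may be checked on the reduced stabilizer.
\end{itemize}
The freeness of $\Gamma_a$ is already used inside Proposition \ref{prop: lattice action on RASF}(3) and plays no further role globally.

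Discard the Deligne--Mumford alternative you call ``cleaner''; it is not a proof. The DM property says $\Aut(m)$ is finite. The only link between $\Aut(m)$ and the stabilizer is the map $H^0(\Sigma,J_a)=\Aut_{\cP_a}(1)\to\Aut(m)$. That map controls the automorphism layer of the stabilizer stack $\cP_a\times_{\cM_a}\{m\}$, not the group of isomorphism classes of torsors $p$ with $p\cdot m\cong m$, which is the stabilizer in question. There is no sense in which $\cP_a$ ``acts through a finite quotient of the stabilizer''. Nothing in the DM property prevents a positive-dimensional abelian subvariety of $P_a$ from fixing $m$. Excluding this is exactly what the local input does once transported by the product formula: $\Gamma_a$ acts freely, and $\cP_v(J_a)/\Gamma_a$ is an extension of a finite group by an affine one. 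The conclusion you state there is true, but only as a consequence of your first argument.
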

\begin{proof}
Proposition \ref{prop: smoothness and properness of relative hitchin fibration} shows that $f^{\star,\ani}$ is a proper and flat morphism  whose (equidimensional) fibers are of the same relative dimension as $g^\ani$ (see Corollary \ref{cor: ani is flat}). Moreover, the centralizers of the action of $\cP_a$ on $\cM_a$ are identified with $H^0(\Sigma,J_a)$, which is affine since $J_a$ is commutative. Finally, we note that $\cP$ has polarizable Tate module via an identical argument to \cite[\S4.12]{ngo}.
\end{proof}

As the property of being a weak abelian fibration is stable under base change, the above also implies that $(\cameral{f}^{\star,\ani},\cameral{g}^\ani)$ is a weak abelian fibration, where the underline denotes the base change to the cover $\cameral{\cA}^\ani$ defined in \S\ref{sec: component groups of P}.

The property of $\delta$-regularity is more delicate. While we expect $(\cameral{f}^{\star,\ani},\cameral{g}^\ani)$ is $\delta$-regular, do not prove this statement. Rather we will skirt the issue of $\delta$-regularity over the entire anisotropic locus by applying Corollary \ref{cor: delta regularity in the limit} and arguing for $\deg(D)$ large enough. For now, we restrict to the following locus.

\begin{definition}
\label{def: good locus}
 Fix an effective divisor $D$ on $\Sigma$. For any fixed positive integer $\delta\geq 0$, let $\cameral{\cA}_{<\delta}$ denote the union 
    \[
    \cameral{\cA}_{<\delta} = \bigsqcup_{\delta'< \delta} \cameral{\cA}_{\delta'}.
    \]
Let $\delta^\bad(D)$ denote the minimal number $\delta$ such that the inequality
    \[
    \codim_{\cameral{\cA}^\ani}(\cameral{\cA}_\delta)\geq \delta
    \]
    fails; note that $\delta^{\bad}(D)>0$. The locus $\cameral{\cA}^{\good}$ is defined as $\cameral{\cA}_{< \delta^\bad(D)}$.
\end{definition}
By definition, the restriction of $(\cameral{f}^{\star,\ani},\cameral{g}^\ani)$ to $\cameral{\cA}^\good$ is $\delta$-regular.

Recall from Definition \ref{def: dagger locus} that the dagger locus $\cA^\dagger\subset\cA^\ani$ denotes the subvariety of $\cA^{ani}$ whose $S$-points $a\colon \Sigma\times S\to \fc_D$ satisfy that the image $a(\Sigma)$ intersects the branching divisor $\fD_D^\br$ transversely. We now prove a density statement for $\cM^{\star,\reg}$ over $\cA^\dagger$. 

\begin{lem}
	\label{lem: density of regular locus}
	For any $a\in \cA^\dagger(\ol{k})$, the regular locus $\cM^{\star,\reg}_a$ is dense in $\cM_a^\star$. That is, the inclusion $\cM^{\star,\reg}\times_\cA\cA^\dagger\subset \cM^{\star,\dagger}$ is fiberwise dense.
\end{lem}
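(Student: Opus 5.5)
The approach is to reduce, via the product formula, to a purely local density statement about the regular locus in each relative affine Springer fiber. For $a\in \cA^\dagger(\ol k)$, Proposition \ref{prop: product formula} identifies $\cM_a^\star$, up to universal homeomorphism, with $\prod_{v}\cM_v^\star(a_v)\wedge^{\prod_v\cP_v(J_a)}\cP_a$. Under this gluing, a point is regular exactly when its local component at every $v\in|\breve\Sigma-U_a|$ is regular. Regularity is an open condition, and the gluing is compatible with the $\cP$-actions. So it suffices to show that for each such $v$ the regular locus $\cM_v^{\star,\reg}(a_v)$ (lattices whose associated point of $[V^\star/H^\star](\breve\cO_v)$ is regular) is dense in $\cM_v^\star(a_v)$. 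Density in a product of finitely many factors, and after twisting by the connected-by-discrete group $\cP_a$, then follows.

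Locally, the condition $a\in\cA^\dagger$ says that at each $v$, $\val(a_{2n})\le 1$, where $\fD^\br=(a_{2n})$. I would split into two cases.

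\textbf{Case $\val(a_{2n})=0$.} Here the special fiber avoids the branching divisor. By Theorem \ref{thm: regular quotient}, regular orbits over $\fc\setminus\fD^\br$ form a single $J$-gerbe, so $[V^{\star,\reg}/H^\star]\to\fc$ is the classifying gerbe of $J$ there. In this situation $\cM_v^{\star,\reg}(a_v)$ is a single $\cP_v(J_a)$-orbit (the Kostant--Rallis orbit), which has dimension $\dim\cP_v(J_a)$. By Theorem \ref{prop: dim of RASF} and Proposition \ref{prop: equidimensionality}, $\cM_v^\star(a_v)$ is equidimensional of exactly this dimension. Density then requires that every irreducible component meet the regular orbit. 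I would prove this by mimicking Bezrukavnikov's argument, reducing to the $\GL_n$ case through the factor $R_a^\iota$. When $a_{2n}$ is a unit, $x$ is invertible in $R_a(\cO_v')$. An $\iota$-stable lattice $\Lambda$ is then determined by $\Lambda^{\iota}$ as an $R_a(\cO_v')^\iota$-lattice, with $\Lambda=\Lambda^\iota\oplus x\Lambda^\iota$. Under this identification $\cM_v^\star(a_v)$ becomes a unitary affine Springer fiber for $\GL_n$ at $Q_a$, self-dual with respect to the induced form, and regular points correspond to lattices that are principal $R_a^\iota$-modules. Density of principal lattices is then Laumon--Ng\^o's result for unitary affine Springer fibers \cite{ngo-laumon}, the analogue of \cite{bez}.

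\textbf{Case $\val(a_{2n})=1$.} By the proof of Proposition \ref{prop: transverse is dimension zero}, the point $x_0=0$ of $\Spec R_a(\cO'_v)$ is regular. Near it, $R_a$ splits as a product of a DVR factor ramified at $x=0$ with the remaining factor, which is handled as in the first case. On the DVR factor every lattice is principal. Since $\fD^\br$ is met transversely, the two regular orbits over $\fD^\br$ glued in $(\fh^\perp)^\reg\myfatslash\bH$ cause no non-regular contribution, and regular points are again dense.

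The main obstacle I expect is making precise the dictionary between "regular" as defined through $V^{\star,\reg}$ and "principal as $R_a^\iota$- or $R_a$-module with compatible Hermitian structure", especially at $x=0$ when $\val(a_{2n})=1$. This is where the branching divisor enters. As a fallback, I would argue globally instead. On $\cA^\dagger$, Proposition \ref{prop: factorization thru A reg} and Proposition \ref{prop: A reg over dagger locus} show that $\cM^{\star,\reg}$ is a torsor over $\varphi^*\cP$ of relative dimension $\dim\cP_a$. Combining this with flatness (Corollary \ref{cor: ani is flat}) and the smoothness of $\cM^{\star,\ani}$, any component of $\cM_a^\star$ missing the regular locus would have to come from a non-regular stratum. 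I would then bound the dimension of that stratum by a codimension count, using transversality to $\fD^\br$ to show it is strictly smaller than $\dim\cM^{\star,\ani}$.
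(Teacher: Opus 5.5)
Your argument is correct. Its skeleton is the paper's: reduce to RASFs via the product formula, then split into the local cases $\val(a_{2n})=0$ and $\val(a_{2n})=1$. The difference is in how local density is justified.

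The paper works at the level of the regular quotient and Kostant--Rallis sections:
\begin{itemize}
\item for $\val(a_{2n})=0$ it notes that $\cM_v^\star(a)/\Gamma_a$ is connected and contains the regular $\cP_v(J_a)$-orbit of a Kostant--Rallis section;
\item for $\val(a_{2n})=1$ it notes that $\cM_v^\star(a)/\Gamma_a$ has two connected components, each containing the orbit of one of the two non-conjugate Kostant--Rallis sections over $\ol k$, and asserts that each orbit is dense in its component.
\end{itemize}

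You instead make the local structure explicit. When $x$ is a unit, $\Lambda=\Lambda^\iota\oplus x\Lambda^\iota$, so $\Lambda\mapsto\Lambda^\iota$ identifies $\cM_v^\star(a)$ with a unitary affine Springer fiber for $\cO_v'[y]/(Q_a)$. This holds for either $\star$, since the induced form on $R_a(F_v')^\iota$ is a unit multiple of $h^\unitary|_{R_a(F_v')^\iota}$. Under this identification regular points become principal lattices. After base change to $\ol k$ the space becomes a $\GL_n$ affine Springer fiber, and density is Bezrukavnikov's theorem. In the transverse case you split off the DVR factor at $x=0$.

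What your route buys is an actual reason for density, which the paper's appeal to connectedness leaves implicit: connected does not mean irreducible. It also explains concretely where the two components in the transverse case come from. The DVR factor contributes a discrete set of self-dual lattices on which $\Gamma_a$ acts through $y=x^2$ rather than $x$, hence with two orbits. The paper's route is shorter and keeps the role of $\fD^\br$ and of the two regular nilpotent orbits visible, which is the viewpoint reused in Proposition~\ref{prop: stable cohomology}.

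Two small corrections.
\begin{itemize}
\item The density input you need is the $\GL_n$ statement of \cite{bez}, not a result specific to \cite{ngo-laumon}. After base change, $R_a(\cO_v')^\iota\otimes_k\ol k$ splits into two copies of $\breve\cO_v[y]/(Q_a)$, and self-duality determines one lattice from the other.
\item In the transverse case, your sentence about the glued regular orbits ``causing no non-regular contribution'' is not an argument. Replace it with the following check. The Hensel factorization $Q_a=Q_0Q_1$, with $Q_0$ Eisenstein and $Q_1(0)$ a unit, gives an $\iota$- and $\sigma$-stable, $h^\star$-orthogonal splitting $R_a=R_0\times R_1$. On the factor $R_1$, $h^\star$ differs from the form attached to $Q_1$ only by the unit $Q_0(x^2)^{-1}$, so your first case applies verbatim.
\end{itemize}
The global fallback is not needed.
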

\begin{proof}
	Without loss of generality, we may assume that $k = \ol k$. By the product formula in Proposition \ref{prop: product formula}, it suffices to check the above for RASFs $\cM_v^\star(a)$ where $a\in \fc^\heartsuit(\cO_v)$ is such that either $a$ intersects transversely with $\fD^\br$ or $a$ does not intersect $\fD^\br$ at all. In the latter case, $\cM_v^\star(a)/\Gamma_a$ is connected, and so $\cM_v^\star(a)$ contains a dense $\cP_v(J_a)$-orbit which is regular. In the former case, $\cM_v^\star(a)/\Gamma_a$ has exactly two connected components. There are two non-conjugate choices of Kostant-Rallis section over $\ol k$, so each component of $\cM_v^\star(a)/\Gamma_a$ contains the regular $\cP_v(J_a)$-orbit of a choice of Kostant-Rallis section. Each such orbit is dense in its connected component in $\cM_v^\star(a)/\Gamma_a$, and so its preimage is dense in $\cM_v^\star(a)$.
\end{proof}

Recall from \eqref{eqn: kappa decomp} that we have a decomposition
\[
\bR \cameral{f}_*^{\ani}\ol\Q_\ell = \bigoplus_{\kappa\in \wh{\canC}} (\bR \cameral{f}_*^\ani\ol\Q_\ell)_\kappa,
\]
with $(\bR \cameral{f}_*^{\ani}\ol\Q_\ell)_\kappa$ being the locus on which $\mathbb{X}^*(\wh\canC)$ acts through $\cameral\cP$ by evaluation at $\kappa\in \wh\canC$. (See Proposition \ref{prop: C surjects on components of P}.) We denote the case $\kappa = 1$ by $(\bR \cameral{f}_*^\ani\ol\Q_\ell)_\st$ and call it the stable constituent.

\begin{prop}
	\label{prop: stable cohomology}
	In both the Siegel and unitary cases, the top degree stable cohomology over the anisotropic locus is a local system when restricted to $\cameral\cA^\dagger\subset\cameral\cA^\ani$. That is,
    \[
    (\bR^{\top}\cameral{f}_*^{\star,\ani}\ol{\Q}_\ell)_\st|_{\cameral\cA^\dagger} = (\bR^{\top}\cameral{f}_*^{\star,\dagger}\ol{\Q}_\ell)_\st
    \]
    is a local system on $\cameral\cA^\dagger$.
\end{prop}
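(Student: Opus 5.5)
The plan is to compute the top degree cohomology fiberwise over $\cameral\cA^\dagger$ via the regular locus, and then to identify the stable part with a sheaf that is visibly a local system. Fix $a\in\cameral\cA^\dagger(\ol k)$. By Corollary \ref{cor: ani is flat}, $f^{\star,\ani}$ is flat and proper of relative dimension $d=\dim\cP_a$ over $\cA^\ani$, so proper base change identifies the stalk of $\bR^{\top}\cameral f_*^{\star}\ol\Q_\ell$ at $a$ with $H^{2d}(\cM_a^\star)$. This group has a basis indexed by the irreducible components of $\cM_a^\star$ of dimension $d$, and by equidimensionality (Proposition \ref{prop: equidimensionality} together with the product formula) these are all of its components.

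By Lemma \ref{lem: density of regular locus}, the regular locus $\cM_a^{\star,\reg}$ is dense in $\cM_a^\star$. Hence $\Irr(\cM_a^\star)=\Irr(\cM_a^{\star,\reg})$, compatibly with the $\cP_a$-action, which permutes components through $\pi_0(\cP_a)$. Proposition \ref{prop: factorization thru A reg} says $\cM^{\star,\reg}$ is a $(\varphi^\star)^*\cP$-torsor over $\cA^{\star,\reg}$. So
\[
\Irr(\cM_a^\star)=\bigsqcup_{b\in(\varphi^\star)^{-1}(a)}\pi_0(\cP_a)\text{-torsor},
\]
and therefore
\[
H^{2d}(\cM_a^\star)\simeq\bigoplus_{b\in(\varphi^\star)^{-1}(a)}\ol\Q_\ell[\pi_0(\cP_a)](-d).
\]
Taking the $\kappa=1$ part, where $\mathbb X^*(\wh\canC)$ acts through $\pi_0(\cameral\cP)$ as in Proposition \ref{prop: C surjects on components of P}, cuts each group algebra down to its coinvariant line. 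The stalk of the stable part is thus $\ol\Q_\ell(-d)^{\oplus\#(\varphi^\star)^{-1}(a)}$.

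To globalize, I will run the same argument in families. There is a trace-type map
\[
\bR^{\top}\cameral f_{!}^{\star,\reg}\ol\Q_\ell\to\bR^{\top}\cameral f_*^{\star}\ol\Q_\ell,
\]
which is an isomorphism over $\cameral\cA^\dagger$ by the fiberwise density statement. This requires that the complement of the regular locus has relative dimension less than $d$, and that follows from density plus equidimensionality. The source factors through $\varphi^\star$, and $\bR^{\top}$ of the torsor $\cM^{\star,\reg}\to\cA^{\star,\reg}$ is $\bR^{\top}$ of $\cP$ pulled back. Its stable part is therefore $\varphi^\star_*(\ol\Q_\ell(-d))$ restricted to $\cA^\dagger$, using that $\bR^{\top}g_!\ol\Q_\ell=\ol\Q_\ell[\pi_0(\cP)](-d)$, whose stable part is $\ol\Q_\ell(-d)$.

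The remaining and main point is that $\varphi^\star$ is finite étale over $\cA^\dagger$, not merely étale. This is exactly Proposition \ref{prop: A reg over dagger locus}: over $\cA^\dagger$, $\cA^{\star,\reg}$ is a base change of $\Sym^{2n\deg D}$ of a double cover, and transversality with $\fD^\br$ puts the evaluation map in the locus of reduced divisors. Over that locus the map $\Sym(\Sigma\sqcup\Sigma)\to\Sym\Sigma$, respectively $\Sym\Sigma'\to\Sym\Sigma$, is finite étale of degree $2^{2n\deg D}$. Hence $\varphi^\star_*\ol\Q_\ell$ is a local system there, and so is its base change to $\cameral\cA^\dagger$. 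The delicate step I expect is making the trace-map comparison rigorous in families rather than just on stalks. For that I would follow Ngô's argument for $\bR^{2d}$ over the regular locus (\cite[\S6.5]{ngo}), using properness and flatness of $f^{\star,\ani}$.
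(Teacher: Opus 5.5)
Your proposal is correct and follows the same route as the paper. It uses density of the regular locus over $\cA^\dagger$ (Lemma~\ref{lem: density of regular locus}) to pass to the regular part, the factorization $\cM^{\star,\reg}\to\cA^{\star,\reg}\xrightarrow{\varphi^\star}\cA$ whose first arrow is a torsor, and finite \'etaleness of $\varphi^\star$ over $\cA^\dagger$ from Proposition~\ref{prop: A reg over dagger locus}. The only imprecision is that $\bR^{\top}$ of the torsor is in general only a twisted form of the pulled-back $\bR^{\top}g_!\ol\Q_\ell$, but its stable part is still canonically $\ol\Q_\ell(-d)$, so your conclusion stands.
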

\begin{proof}
	By Lemma \ref{lem: density of regular locus}, we have $\bR^{\top}\cameral{f}_*^{\star,\dagger}\ol{\Q}_\ell = \bR^{\top}(\cameral{f}^{\star,\dagger}|_{\cameral\cM^\reg})_!\ol{\Q}_{\ell}$, where in the latter we push forward the constant sheaf on $\cameral\cM^{\star,\reg}$. Proposition \ref{prop: factorization thru A reg} (cf. \cite[Thm 1.1]{HM}) gives a factorization of $f^\star|_{\cM^{\star,\reg}}$ into
	\[
	\cameral\cM^{\star,\reg}\xrightarrow{\cameral f^{\star,\reg}} \cameral\cA^{\star,\reg} \xrightarrow{\cameral\varphi^\star} \cameral\cA,
	\]
	where $\cameral f^{\star,\reg}$ is a torsor for the smooth, commutative group scheme $\cameral\cP\times_{\cameral\cA}\cameral\cA^{\star,\reg}$ over $\cameral\cA^{\star,\reg}$, and $\cameral\varphi^\star$ is finite and \'etale. On top degree cohomology, this implies 
	\begin{equation}
		\label{eqn: dagger to regular on top degree cohomology}
\bR^{\top}(\cameral{f}^{\star,\dagger}|_{\cameral\cM^\reg})_!\ol{\Q}_{\ell} \simeq \cameral\varphi^\star_*\left(\bR^{\top}\cameral{f}_!^{\star,\reg}\ol{\Q}_\ell|_{\cameral\cA^\dagger\times_{\cameral\cA}\cameral\cA^{\star,\reg}}\right).
	\end{equation}
	The group scheme $(\varphi^{\star})^*\cameral\cP$ acts on $\cameral\cM^{\star,\reg}$ over the regular base $\cA^{\star,\reg}$ and hence on the sheaf $\bR^{\top}\cameral{f}_!^{\star,\reg}\ol{\Q}_\ell$. This gives an action of $\cameral\cP$ on $\cameral\varphi^\star_*(\bR^{\top}\cameral{f}_!^{\star,\reg}\ol{\Q}_\ell|_{\cameral\cA^\dagger})$ via the unit $\cameral\cP\to \cameral\varphi_*\cameral\varphi^*\cP$. With this action, the isomorphism \eqref{eqn: dagger to regular on top degree cohomology} intertwines the action of $\cameral\cP$. In particular, we have an equality of stable components
	\[
(\bR^{\top}\cameral{f}_*^{\star,\dagger}\ol{\Q}_{\ell})_\st \simeq \cameral\varphi^\star_*((\bR^{\top}\cameral{f}_!^{\star,\reg}\ol{\Q}_\ell|_{\cameral\cA^\dagger})_\st)
	\]
	As $\cameral\cM^{\star,\reg}$ is a torsor for the action of $\cameral\cP\times_{\cameral\cA}\cameral\cA^{\star,\reg}$, the complex $(\bR^{\top}\cameral{f}_!^{\star,\reg}\ol{\Q}_\ell|_{\cameral \cA^\dagger})_\st$ is a local system on $\cameral \cA^{\star,\reg}\times_{\cameral \cA}\cameral \cA^\dagger$. Finally, since $\cameral \varphi^\star$ is finite \'etale (see Proposition \ref{prop: A reg over dagger locus}), the right-hand side is also a local system on $\cameral \cA^\dagger$, and the proposition follows.
\end{proof}

\begin{cor}
	\label{cor: supports over stable locus}
	Fix an integer $\delta$ and choose a divisor $D$ such that $ \delta^\bad(D)>\delta$ and $\deg(D)> g+\frac{3}{2}\delta$. Then, any simple constituent of the stable cohomology ${}^p\bH^n(\cameral{f}_*^{\star,\ani}\ol{\Q}_\ell)_\st$ whose support intersects $\cameral \cA_{<\delta}$ has full support.
\end{cor}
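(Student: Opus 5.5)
Let $K$ be a simple constituent of ${}^p\mathbf{H}^n(\cameral{f}_*^{\star,\ani}\ol{\Q}_\ell)_\st$ with support $Z$ meeting $\cameral\cA_{<\delta}$. The plan is to combine Ng\^o's support theorem on the $\delta$-regular locus with the description of stable top-degree cohomology over $\cameral\cA^\dagger$.

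First, I restrict to the open set $\cameral\cA_{<\delta}$. Since $\delta^\bad(D)>\delta$, this set lies in $\cameral\cA^\good$. By Proposition \ref{prop: h ani is a weak abelian fibration}, $(\cameral f^{\star,\ani},\cameral g^\ani)$ restricts there to a $\delta$-regular weak abelian fibration. Since $\cM^{\star,\ani}$ is smooth (Proposition \ref{prop: smoothness and properness of relative hitchin fibration}), the constant sheaf is pure and self-dual. The $\kappa=1$ summand is a direct summand of the full pushforward, so $K|_{\cameral\cA_{<\delta}}$ is a simple constituent of the full perverse cohomology. Theorem \ref{thm: ngos support thm} applied to each irreducible component of $\cameral\cA_{<\delta}$ then gives two facts. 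First, $Z$ is $\delta$-critical, so $\codim Z=\delta_Z$ with $\delta_Z<\delta$. Second, there is an open $U$ meeting $Z$ and a local system $L$ on $U\cap Z$ such that $i_*L$ is a direct factor of $\bR^{\top}\cameral f_*\ol\Q_\ell|_U$. Since $K$ lies in the stable part and $\pi_0(\cameral\cP)$ acts compatibly, this factor can be taken inside the stable part $(\bR^{\top}\cameral f_*\ol\Q_\ell)_\st|_U$.

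Second, I need the generic point of $Z$ to lie in $\cameral\cA^\dagger$. Because $Z$ is $\delta$-critical of codimension $\delta_Z$, it is an irreducible component of the closure of the stratum $\cameral\cA_{\delta_Z}$. The hypothesis $\deg(D)>g+\tfrac32\delta\ge g+\tfrac32\delta_Z$ lets me apply Proposition \ref{prop: density of dagger locus in delta strata}, pulled back along the étale map $\cameral\cA^\ani\to\cA^\ani_\infty$. It gives that $\cA^\dagger_{\delta_Z}$ is dense in every component of $\cA_{\delta_Z}$, so $Z\cap U\cap\cameral\cA^\dagger$ is nonempty and open in $Z$. The main obstacle is keeping this density intact after base change to the cameral cover. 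The étale map sends components of strata to unions of components, so density passes to preimages.

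Third, on $\cameral\cA^\dagger$, Proposition \ref{prop: stable cohomology} says $(\bR^{\top}\cameral f_*\ol\Q_\ell)_\st$ is a local system on all of $\cameral\cA^\dagger$. A local system on an irreducible open set can only have direct factors of the form $i_*L$ supported on the whole (open) base. Hence $Z\cap U\cap\cameral\cA^\dagger$ must be open in the base, so $Z$ has codimension zero and $\delta_Z=0$. Since $Z$ is closed and irreducible in each irreducible component, it is a full component. This is the full support claim, with $Z$ equal to the (component of the) base $\cameral\cA^\ani$.
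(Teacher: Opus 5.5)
Your proposal is correct and follows the paper's proof essentially step for step. You use part (1) of Ng\^o's support theorem on $\cameral{\cA}_{<\delta}\subset\cameral{\cA}^{\mathrm{good}}$ to make $Z$ the closure of a component of $\cameral{\cA}_{\delta_Z}$, use Proposition~\ref{prop: density of dagger locus in delta strata} to ensure $Z$ meets $\cameral{\cA}^\dagger$, and then combine part (2) with Proposition~\ref{prop: stable cohomology} (a direct factor $i_*L$ of a local system must have open support) to force $\delta_Z=0$; you are also slightly more careful than the paper about density surviving the \'etale base change to the cameral cover and about taking the direct factor inside the stable part.
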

\begin{proof}
	Let $Z$ be the support of a simple constituent of ${}^p\H^n(\cameral{f}_*^{\star,\ani}\ol{\Q}_\ell)_\st$ which intersects $\cameral \cA_{<\delta}$. By Theorem \ref{thm: ngos support thm}(1) combined with $\delta$-regularity on $\cameral \cA_{<\delta}\subset \cA^\good$, $Z$ must be the closure of a component of $\cA_{\delta_Z}$. The claim is that $\delta_Z=0$.
    
    To see this, consider the open set $\cameral\cA^\dagger\subset \cameral\cA$. By Proposition \ref{prop: density of dagger locus in delta strata} and the assumption on $\deg(D)$, $\cameral \cA^\dagger\cap \cameral \cA_\delta$ is dense in each component of $\cameral \cA_\delta$. Moreover, Proposition \ref{prop: stable cohomology} shows that the top ordinary cohomology $(\bR^{\top}\cameral{f}_*^{\star,\ani}\ol{\Q}_\ell)_\st|_{\cameral \cA^\dagger}$ is a local system. Now by condition (2) of Theorem \ref{thm: ngos support thm}, there exists some $U\subset \cameral{\cA}^\dagger$ such that $U\cap Z\neq\emptyset$ and the local system $(\bR^{\top}\cameral{f}_*^{\star,\ani}\ol{\Q}_\ell)_\st|_{\cameral \cA^\dagger}$ admits a nontrivial direct factor supported in $U\cap \cA_\delta$. This is clearly a contradiction $Z$ if $\delta\geq 1$, forcing $Z$ to be the closure of $\cameral\cA_0$, which is the whole of $\cameral\cA^\ani$.
\end{proof}

We now turn to $\kappa$-isotypic components in the case $\star = \siegel$. Recall from Proposition \ref{prop: endoscopic description of components of P} that for a 2-torsion element $\kappa\in \wh{\canC}$, we have a closed set $\cameral\cA_\kappa = \cameral\nu_\fe(\cameral\cA_\fe)$, where $\fe$ is the (unique) elliptic elliptic endoscopic datum containing $\kappa$, which is the support of the sheaf $(\bR \cameral g_!^\ani\ol\Q_\ell)_\kappa$. We begin with a lemma about the density of the dagger locus in $\cameral\cA_\kappa$.

\begin{lem}
    \label{lem: dagger locus is dense in A kappa}
    Assume that $\deg(D)>2g$. The subset $\cameral\cA^\dagger_\kappa:=\cameral\cA_\kappa\cap \cameral\cA^\dagger$ is open and dense in every component of $\cameral\cA_\kappa$.
\end{lem}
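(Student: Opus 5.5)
Openness is immediate: $\cameral\cA^\dagger$ is the preimage of the open set $\cA^\dagger\subset\cA^\ani$ (Definition \ref{def: dagger locus}, Lemma \ref{lem: density of dagger locus}), so $\cameral\cA^\dagger_\kappa$ is open in $\cameral\cA_\kappa$. The content is density, and the plan is to move the question to the endoscopic base.

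First, by Proposition \ref{prop: endoscopic description of components of P}, $\cameral\cA_\kappa=\cameral\nu_\fe(\cameral\cA_\fe)$. By Lemma \ref{lem: transfer on bases is closed embedding}, $\cameral\nu_\fe$ is a closed embedding, so it identifies $\cameral\cA_\fe$ (restricted to the preimage of the anisotropic locus) homeomorphically with $\cameral\cA_\kappa$. Irreducible components therefore correspond, and it suffices to show that $\cameral\nu_\fe^{-1}(\cameral\cA^\dagger)$ is dense in every component of $\cameral\nu_\fe^{-1}(\cameral\cA^\ani)\subset\cameral\cA_\fe$.

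Second, I would describe this preimage through divisors. By Lemma \ref{lem: pullback of divisors along transfer map}, $\nu_\fe^*\fD^\br=\fD^\br_\fe$. Hence $\nu_\fe(a_\fe)$ meets $\fD^\br_D$ transversely exactly when $a_\fe$ meets $\fD^\br_{\fe,D}$ transversely. Writing $X^\fe=X_r^\siegel\times X_s^\unitary$, the divisor $\fD^\br_\fe$ is $(a^\siegel_{2r})+(a^\unitary_{2s})$, because the product of constant terms is the constant term of the product. Transversality therefore means three things: $a^\siegel_{2r}$ and $a^\unitary_{2s}$, as sections of $D^{2r}$ and $D^{2s}$, each have only simple zeros, and their zero loci are disjoint.

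Third, I would show this condition is open dense in $\cA_\fe=\cA^\siegel_r\times\cA^\unitary_s$. Since $\deg D>2g$, the bundles $D^{2r}$ and $D^{2s}$ are very ample, so a Bertini/Riemann--Roch dimension count applies, exactly as in \cite[Prop 4.7.1]{ngo} and the proofs of Lemma \ref{lem: density of dagger locus} and Proposition \ref{prop: equidimensionality}. Consider the incidence varieties of pairs $(x,a_\fe)$ where $a^\siegel_{2r}$ or $a^\unitary_{2s}$ vanishes to order $\ge2$ at $x$, or both vanish at $x$. The local evaluation map $\cA_\fe\to\fc_\fe(\cO_x/\varpi_x^2)$ is surjective, so each incidence variety has dimension at most $\dim\cA_\fe-1$. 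Its image is then a proper closed subset, and the good locus is open dense in the irreducible affine space $\cA_\fe$ (and in $\cameral\cA_\fe$, which is étale over it).

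Finally, I would intersect with the anisotropic preimage. That preimage is open in $\cameral\cA_\fe$, so a dense open subset of $\cameral\cA_\fe$ remains dense in each of its components. It is also nonempty, since $\kappa$ has nontrivial cohomology (Corollary \ref{cor: kappa must be 2 torsion}). Transporting back along $\cameral\nu_\fe$ gives the lemma.

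The step I expect to be the main obstacle is this last one: checking that the components of $\cameral\cA_\kappa$ really are open pieces of the irreducible $\cameral\cA_\fe$. The cameral cover may have several components, but each is étale over an open of $\cA_\fe$, so a dense open of $\cA_\fe$ pulls back to a dense open in each component.
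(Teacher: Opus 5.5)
Your proposal is correct and follows the paper's argument: transport along the closed embedding $\cameral\nu_\fe$, use $\nu_\fe^*\fD^\br=\fD^\br_\fe$ to identify $\cameral\cA_\kappa\cap\cameral\cA^\dagger$ with the image of the endoscopic dagger locus, and then invoke density of that locus. The only difference is that you prove the endoscopic density directly, by an incidence-variety count, rather than citing Lemma \ref{lem: density of dagger locus}. This is slightly more careful than the paper, since $\fD^\br_\fe=(a^\siegel_{2r})+(a^\unitary_{2s})$ is reducible and you also track the cameral components and the anisotropic condition explicitly.
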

\begin{proof}
    By Lemma \ref{lem: pullback of divisors along transfer map}, we have $\cameral\nu_\fe(\cameral\cA_\fe)\cap \cameral\cA^\dagger = \cameral\nu_\fe(\cameral\cA_\fe^\dagger)$. By Lemma \ref{lem: density of dagger locus}, $\cameral\cA_\fe^\dagger$ is open and dense in $\cameral\cA_\fe$, so since $\cameral\nu_\fe$ is a closed embedding, we conclude that $\cameral\cA^\dagger_\kappa$ is dense in each component of $\cameral\cA_\kappa$.
\end{proof}

In this case, the crucial calculation needed to apply the support theorem is the following.

\begin{prop}
    \label{prop: kappa piece of top degree cohomology}
    The sheaf $(\bR \cameral{f}^{\siegel,\ani}_*\ol{\Q}_\ell)_\kappa$ has support contained in $\cameral \cA_\kappa$. Moreover, the top degree cohomology $(\bR^\top \cameral{f}^{\Siegel,\ani}_*\ol{\Q}_\ell)_\kappa$ is a local system when restricted to the locally closed set $\cameral \cA_\kappa^\dagger$.
\end{prop}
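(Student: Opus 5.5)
Both assertions reduce to known facts about the action of $\cameral\cP$ through $\pi_0$, together with the regular-locus analysis already carried out for the stable part in Proposition \ref{prop: stable cohomology}.

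\emph{Support.} I would argue stalkwise. At a geometric point $a\in\cameral\cA^\ani$, the group $\mathbb{X}^*(\wh\canC)$ acts on $(\bR\cameral f^{\siegel,\ani}_*\ol\Q_\ell)_a$ through the surjection \eqref{eqn: C surjects on components of P} onto $\pi_0(\cameral\cP_a)$. So the $\kappa$-isotypic part can be nonzero only if evaluation at $\kappa$ kills the kernel of $\mathbb{X}^*(\wh\canC)\to\pi_0(\cameral\cP_a)$. The proof of Proposition \ref{prop: endoscopic description of components of P} shows that this kernel condition holds exactly on $\cameral\cA_\kappa$. Hence the stalks of $(\bR\cameral f^{\siegel,\ani}_*\ol\Q_\ell)_\kappa$ vanish off $\cameral\cA_\kappa$, and the support is contained in $\cameral\cA_\kappa$.

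\emph{Local system.} Restrict to $\cameral\cA^\dagger$. By Lemma \ref{lem: density of regular locus}, the regular locus is fiberwise dense there, so the top-degree cohomology is computed by the regular locus:
\[
\bR^\top\cameral f_*^{\siegel,\dagger}\ol\Q_\ell=\bR^\top(\cameral f^{\siegel,\dagger}|_{\cameral\cM^\reg})_!\ol\Q_\ell .
\]
Proposition \ref{prop: factorization thru A reg} then identifies this with $\cameral\varphi^\siegel_*\big(\bR^\top\cameral f^{\siegel,\reg}_!\ol\Q_\ell\big)$, compatibly with the $\cameral\cP$-action. Here $\cameral f^{\siegel,\reg}$ is a torsor under $(\varphi^{\siegel})^*\cameral\cP$, so $\bR^\top\cameral f^{\siegel,\reg}_!\ol\Q_\ell$ is the sheaf $\ol\Q_\ell[\pi_0((\varphi^\siegel)^*\cameral\cP)]$ twisted by the torsor. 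Taking the $\kappa$-part gives a rank at most one sheaf on $\cameral\cA^{\siegel,\reg}\times_{\cameral\cA}\cameral\cA^\dagger$, supported on the preimage of $\cameral\cA_\kappa^\dagger$.

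The remaining point, and the main obstacle, is that this $\kappa$-part is locally constant along $\cameral\cA_\kappa^\dagger$. The sheaf $\pi_0(\cameral\cP)$ is not locally constant on $\cameral\cA^\ani$, but I would show it is locally constant after restriction to $\cameral\cA_\kappa^\dagger$ (or at least that its $\kappa$-coinvariants are). Proposition \ref{prop: endoscopic description of components of P} gives $(\bR^\top\cameral g_!^\ani\ol\Q_\ell)_\kappa=\cameral\nu_{\fe,*}\ol\Q_\ell$, and $\cameral\nu_\fe$ is a closed embedding (Lemma \ref{lem: transfer on bases is closed embedding}). Hence the $\kappa$-part of $\ol\Q_\ell[\pi_0(\cameral\cP)]$ is the constant rank-one sheaf on $\cameral\cA_\kappa$. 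Twisting by the torsor and pushing forward along the finite étale map $\cameral\varphi^\siegel$ (Proposition \ref{prop: A reg over dagger locus}), base changed to $\cameral\cA_\kappa^\dagger$, then yields a local system on $\cameral\cA_\kappa^\dagger$.

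In carrying this out one must check that the torsor twist commutes with taking $\kappa$-parts. One must also check that proper base change to the locally closed subset $\cameral\cA^\dagger_\kappa$ is compatible with the identification of the top-degree cohomology with that of the regular part. Both are formal, since density of the regular locus holds fiberwise.
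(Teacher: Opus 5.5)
Your proposal is correct and follows the paper's proof. For support, you use that the action factors through $\pi_0(\cameral\cP/\cameral\cA^\ani)$, together with Proposition \ref{prop: endoscopic description of components of P}; for the local-system claim, you use density of the regular locus (Lemma \ref{lem: density of regular locus}), the factorization through $\cameral\cA^{\siegel,\reg}$, and the $\kappa$-part of the torsor's top cohomology pushed forward along the finite \'etale map $\cameral\varphi^\siegel$. You make explicit what the paper leaves implicit, namely that $\ol\Q_\ell[\pi_0(\cameral\cP)]_\kappa$ is constant of rank one on $\cameral\cA_\kappa$; note that $\pi_0(\cameral\cP)$ itself can jump inside $\cameral\cA_\kappa^\dagger$, so your parenthetical fallback to the $\kappa$-part is the statement you actually need.
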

\begin{proof}
    By Proposition \ref{prop: C surjects on components of P}, the action of $\mathbb{X}_*(\canC)\times \cameral\cA^\ani$ on $\bR \cameral{f}^{\siegel,\ani}_*\ol{\Q}_\ell$ is through $\pi_0(\cameral\cP/\cameral\cA^\ani)$. We have a $\pi_0(\cameral\cP/\cameral\cA^\ani)$ equivariant isomorphism
    \[
    \bR^{\top}\cameral g_!^\ani\ol \Q_\ell\simeq \ol\Q_\ell[\pi_0(\cameral\cP/\cameral\cA^\ani)](-d),
    \]
    where $(-d)$ denotes Tate twist. Therefore, by Proposition \ref{prop: endoscopic description of components of P}, $\ol\Q_\ell[\pi_0(\cP/\cA^\ani)]_\kappa$ is supported on $\cA_\kappa$, and hence, the support of $(\bR\cameral{f}^{\Siegel,\ani}_*\ol{\Q}_\ell)_\kappa$ is contained in $\cameral \cA_\kappa$. Over $\cameral \cA_\kappa^\dagger$, an identical argument to that of Proposition \ref{prop: stable cohomology} works:  Namely, we use the density of the regular locus in Hitchin fibers over $\cameral \cA^\dagger$ (Lemma \ref{lem: density of regular locus}) to reduce to the study of the top degree cohomology over the regular locus. Over this space, we use the factorization
    \[
\cameral \cM^{\siegel,\reg}\xrightarrow{\cameral{f}^{\siegel,\reg}}\cameral \cA^{\siegel,\reg}\xrightarrow{\cameral \varphi^\siegel}\cameral \cA
    \]
from Proposition \ref{prop: factorization thru A reg}, noting that the $\kappa$-isotypic piece of top degree cohomology is then a pushforward $\cameral \varphi^\siegel_*(\cL_\kappa)$, where by Proposition \ref{prop: endoscopic description of components of P}, $\cL_\kappa$ is a local system on $(\cameral \varphi^{\siegel})^{-1}(\cameral \cA_\kappa^{\dagger})$ determined by the isomorphism class of the $\cameral \varphi^{\siegel,*}\left(\cameral \cP\right)$-torsor $\cameral \cM^{\siegel,\reg}$.
\end{proof}

\begin{cor}
	\label{cor: supports over kappa locus}
	Fix an integer $\delta$, and choose the divisor $D$ on $\Sigma$ so that $\deg(D)>g+\frac{3}{2}\delta$ and $\delta^\bad(D)>\delta$. Let $Z$ be the support of an irreducible summand of $^p\bH^n(\cameral{f}^{\siegel,\ani}_*\ol{\Q}_\ell)_\kappa$. If $Z\cap \cameral\nu_{\fe}(\cameral \cA_{\fe,<\delta})\neq \emptyset$, then $Z = \cameral\nu_{\fe,*}(\cameral \cA_{\fe}^\ani)$ is the image of the transfer map for the unique endoscopic datum $\fe = (\kappa, \rho)$ extending $\kappa$.
\end{cor}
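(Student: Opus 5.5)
The plan is to follow the proof of Corollary \ref{cor: supports over stable locus}, with $\cameral\cA^\ani$ replaced by $\cameral\cA_\kappa$ and the stable local system replaced by the $\kappa$-isotypic one of Proposition \ref{prop: kappa piece of top degree cohomology}.

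First, I locate $Z$ inside $\cameral\cA_\kappa$. By Proposition \ref{prop: kappa piece of top degree cohomology}, the complex $(\bR\cameral f^{\siegel,\ani}_*\ol\Q_\ell)_\kappa$ is supported in $\cameral\cA_\kappa=\cameral\nu_\fe(\cameral\cA_\fe)$, so $Z\subset\cameral\cA_\kappa$. Next I apply Theorem \ref{thm: ngos support thm}(1). The hypothesis $\delta^\bad(D)>\delta$ makes the fibration $\delta$-regular over $\cameral\cA_{<\delta}$, and $Z$ meets this region: by Corollary \ref{cor: delta regularity in the limit}, $\delta(\nu_\fe(a_\fe))=\delta_\fe(a_\fe)+r_\fe(D)$, and I will choose $\delta$ large enough that this falls in the good range. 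Hence $Z$ is $\delta$-critical, so $\codim_{\cameral\cA^\ani}Z=\delta_Z$.

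Second, I compare $Z$ with the image of $\cameral\cA_\fe$. Since $\cameral\nu_\fe$ is a closed embedding (Lemma \ref{lem: transfer on bases is closed embedding}), Corollary \ref{cor: delta vs endoscopic delta, global} gives $\delta(\cameral\nu_\fe(a_\fe))=\delta_\fe(a_\fe)+r_\fe(D)$. A dimension count on Hitchin bases, using $\nu_\fe^*\fD^\cent=\fD^\cent_\fe+2\fR_\fe$, shows $\codim\cameral\nu_\fe(\cameral\cA_\fe)=r_\fe(D)$. Therefore $\cameral\nu_\fe(\cameral\cA_{\fe,\delta'})$ is $\delta$-critical for $\cameral\cA$ exactly when $\cameral\cA_{\fe,\delta'}$ has codimension $\delta'$ in $\cameral\cA_\fe$. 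In particular, $Z$ is the closure of the image of a component of some $\cameral\cA_{\fe,\delta'}$ with $\delta'=\delta_Z-r_\fe(D)<\delta$.

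Third, I show $\delta'=0$. By Theorem \ref{thm: ngos support thm}(2), there is an open $U$ meeting $Z$ such that a local system on $U\cap Z$ is a direct factor of $(\bR^\top\cameral f^{\siegel}_*\ol\Q_\ell)_\kappa|_U$. Lemma \ref{lem: dagger locus is dense in A kappa} gives density of $\cameral\cA^\dagger_\kappa$ in $\cameral\cA_\kappa$. The finer statement needed is that $\cameral\nu_\fe(\cameral\cA_\fe^\dagger)$ is dense in each component of each $\cameral\nu_\fe(\cameral\cA_{\fe,\delta'})$; this is Proposition \ref{prop: density of dagger locus in delta strata} applied to $\cA_\fe$, using $\deg(D)>g+\frac32\delta$, and it lets me shrink $U$ into $\cameral\cA^\dagger_\kappa$. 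On $\cameral\cA_\kappa^\dagger$ the sheaf $(\bR^\top\cameral f^{\siegel}_*\ol\Q_\ell)_\kappa$ is a local system by Proposition \ref{prop: kappa piece of top degree cohomology}. A local system on the irreducible pieces of $\cameral\cA_\kappa^\dagger$ cannot have a direct factor supported on a proper closed subset, so $Z$ must contain a whole component of $\cameral\cA_\kappa$. This forces $\delta'=0$, and then $Z=\cameral\nu_\fe(\cameral\cA^\ani_\fe)$; irreducibility of $\cameral\cA_\fe^\ani$ gives the final identification.

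The hard step is the second one: the codimension bookkeeping that identifies $\delta$-critical subsets of $\cameral\cA$ inside $\cameral\cA_\kappa$ with $\delta$-critical subsets of $\cameral\cA_\fe$. This requires $\operatorname{codim}\nu_\fe(\cA_\fe)=r_\fe(D)$. I would verify that by Riemann--Roch, computing $\dim\cA-\dim\cA_\fe$ directly as $\deg$ of $\fR_{\fe,D}$, in the spirit of Ng\^o's endoscopic argument.
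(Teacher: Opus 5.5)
Your steps 2 and 3 use the same ingredients as the paper: the shift $\codim_{\cameral\cA}\cameral\nu_{\fe}(Y)=\codim_{\cameral\cA_{\fe}}(Y)+r_{\fe}(D)$ together with $\delta=\delta_{\fe}+r_{\fe}(D)$, and then the argument of Corollary \ref{cor: supports over stable locus} via Proposition \ref{prop: kappa piece of top degree cohomology} and density of the dagger locus. However, step 1 fails, and it is the step that licenses Theorem \ref{thm: ngos support thm}(1) for $Z$.

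You assert that $\delta^{\bad}(D)>\delta$ gives $\delta$-regularity over $\cameral\cA_{<\delta}$ and that $Z$ meets this region. Write $X^{\fe}=X^{\siegel}_r\times X^{\unitary}_s$ with $r,s\geq 1$. By Corollary \ref{cor: delta vs endoscopic delta, global}, every point $a$ of $\cameral\cA_\kappa=\cameral\nu_{\fe}(\cameral\cA_{\fe})$ has $\delta(a)\geq r_{\fe}(D)=2rs\deg(D)$. This exceeds $\delta$ because $\deg(D)>g+\frac32\delta$. Hence $\cameral\cA_\kappa\cap\cameral\cA_{<\delta}=\emptyset$, and $\delta$-regularity of $\cameral\cA$ below $\delta$ says nothing about the strata $\cameral\cA_{\delta'+r_{\fe}(D)}$ that $Z$ actually meets. ``Choosing $\delta$ large enough'' is not available, since $\delta$ is fixed in the statement. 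It would also require $\delta^{\bad}(D)>\delta+2rs\deg(D)$, a bound growing linearly in $\deg(D)$ that Corollary \ref{cor: delta regularity in the limit} does not provide. That corollary is also the wrong citation for $\delta(\nu_{\fe}(a_{\fe}))=\delta_{\fe}(a_{\fe})+r_{\fe}(D)$; you want Corollary \ref{cor: delta vs endoscopic delta, global}.

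The missing idea is that the $\delta$-regularity input must be imported from the endoscopic base. This is exactly the first paragraph of the paper's proof. Suppose $Z_{\fe}=\cameral\nu_{\fe}^{-1}(Z)$ meets $\cameral\cA_{\fe,<\delta}$ and $\cameral\cA_{\fe}$ is $\delta$-regular below $\delta$ (this is how the hypothesis on $\delta^{\bad}(D)$ enters). Then
\[
\codim_{\cameral\cA}(Z)=\codim_{\cameral\cA_{\fe}}(Z_{\fe})+r_{\fe}(D)\geq \delta_{\fe,Z_{\fe}}+r_{\fe}(D)=\delta_Z,
\]
which is the inequality Ng\^o's theorem needs along $\cameral\cA_\kappa$. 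You already have this computation in step 2, but you run it in the opposite direction. There you convert a $\delta$-criticality of $Z$, which step 1 never legitimately established, into a statement about $\cameral\cA_{\fe}$. To close the gap, do the codimension count first and use it to verify the $\delta$-regularity hypothesis; only then invoke Theorem \ref{thm: ngos support thm}. Your step 3 then goes through, and it is if anything more careful than the paper in applying Proposition \ref{prop: density of dagger locus in delta strata} on the endoscopic base.
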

\begin{proof}
	First, we relate $\delta$-regularity in $\cA_\fe$ to $\delta$-regularity in $\cA$. Suppose that $Z\subset \cA_\kappa$ has preimage $Z_\fe=\nu_\fe^{-1}(Z)$ which satisfies the $\delta$-regularity condition
    \[
\mathrm{codim}_{\cA_\fe}(Z_\fe)\geq \delta_{\fe,Z_\fe}.
    \]
    Recall the invariant $r_v(D) = \sum_{v\in |\Sigma|}\deg(v)r_v(a_\fe)$ from Corollary \ref{cor: delta vs endoscopic delta, global}. Then, as $\nu_\fe$ is finite, we compute
    \[
\mathrm{codim}_\cA(Z) = \mathrm{codim}_{\cA_\fe}(Z_\fe)+r_v(D)\geq \delta_{\fe,Z_\fe}+r_v(D) = \delta_Z
    \]
    where the last equality follows from Corollary \ref{cor: delta vs endoscopic delta, global}. Hence, $\nu_\fe(\cA_\fe^\good)\subset \cA^\good$, and we can apply Theorem \ref{thm: ngos support thm}.
    
    The result now follows from Proposition \ref{prop: kappa piece of top degree cohomology} by an identical argument to that of Corollary \ref{cor: supports over stable locus}, the key ingredient being Proposition \ref{prop: density of dagger locus in delta strata}, which shows in particular that, given the assumptions on $\deg(D)$, $\cameral \cA_{\kappa}\cap \cameral\cA_{\delta}^\dagger\subset \cameral \cA_{\kappa}\cap \cameral\cA_{\delta}$ is open and dense.
\end{proof}

\section{Proof of the Main Theorems}
\label{sec: proof of main}

In this section, we prove the main results of the paper, beginning with the geometric stabilization over the locus $\cA^{\good}$ (Theorem \ref{thm: global main}), and then using this result to obtain the local geometric statement Theorem \ref{thm: local main}.

\subsection{Global Stabilization}

We prove the main global result:

\begin{thm}
    \label{thm: global main}
    Fix a divisor $D$ on $\Sigma$, and put $\delta = \min\{\delta^\bad(D),\frac{2}{3}(\deg(D) - g)\}$; we choose $D$ so that $\delta>0$. For any 2-torsion element $\kappa\in \wh{\canC}[2]$, we have an isomorphism of complexes in $D^b(\cameral\cA_{\fe,<\delta})$ over $\ol{k}$
    \[
    \nu_\fe^*(\bR \cameral{f}^{\Siegel,\ani}_*\ol{\Q}_\ell)_{\kappa}[2r_\fe(D)](r_\fe(D)) =  (\bR\cameral{f}^{\fe,\ani}_*\ol{\Q}_\ell)_{\st}
    \]
     (restricted to the locus $\cameral \cA_{\fe,<\delta}$) which induces an isomorphism of semi-simplifications over $k$.
\end{thm}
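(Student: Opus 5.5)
The plan is to follow Ng\^o's strategy: compare both sides as semisimple complexes, use the support theorem to show that every simple perverse constituent on either side has full support on $\cameral\cA_{\fe,<\delta}$, and then compare them over a dense open subset where both are controlled by top degree cohomology.

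First, by the decomposition theorem, $(\bR \cameral{f}^{\Siegel,\ani}_*\ol{\Q}_\ell)_\kappa$ and $(\bR\cameral{f}^{\fe,\ani}_*\ol{\Q}_\ell)_\st$ are direct sums of shifted simple perverse sheaves over $\ol k$. This uses that $\cM^{\Siegel,\ani}$ and $\cM^{\fe,\ani}$ are smooth and the maps are proper (Proposition \ref{prop: smoothness and properness of relative hitchin fibration}). The endoscopic side is a product of Siegel and unitary relative Hitchin fibrations of smaller rank, so Corollary \ref{cor: supports over stable locus} applies factorwise. It shows that every simple constituent of the stable part whose support meets $\cameral\cA_{\fe,<\delta}$ has full support $\cameral\cA_\fe^\ani$. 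On the Siegel side, Corollary \ref{cor: supports over kappa locus} shows that every constituent of the $\kappa$-part meeting $\cameral\nu_\fe(\cameral\cA_{\fe,<\delta})$ is supported on all of $\cameral\nu_\fe(\cameral\cA^\ani_\fe)$. By Lemma \ref{lem: transfer on bases is closed embedding}, $\cameral\nu_\fe$ is a closed embedding, so $\nu_\fe^*$ of the $\kappa$-part restricted to $\cameral\cA_{\fe,<\delta}$ is again a semisimple complex whose constituents all have full support. Both sides are therefore intermediate extensions of their restrictions to any dense open subset. By \cite[Prop.~7.2.? style]{ngo} reasoning this reduces to the generic open $\cameral\cA_\fe^\diamondsuit\cap\cameral\nu_\fe^{-1}(\cameral\cA^\dagger)$, and it suffices to produce an isomorphism of the local systems $\bR^i$ there, compatible with the perverse degree shift $[2r_\fe(D)](r_\fe(D))$.

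Second, I will compare the two sides over that generic locus. Over the locus $\cA_{\fe,0}$ where $\delta_\fe=0$, the endoscopic fibers are torsors under $\cP_\fe$, so the stable part is $\Lambda^\bullet$ of the Tate module of the abelian part, shifted. Corollary \ref{cor: delta vs endoscopic delta, global} gives $\delta(a)=r_\fe(D)$ for $a=\nu_\fe(a_\fe)$. A point count then pins down the Siegel side: for $a_\fe\in\cA_\fe^\ani(K)$ generic, apply Corollary \ref{cor: Grothendieck-Lefschetz for Hitchin fibers} with the product formula (Proposition \ref{prop: local to global point count}). At generic points every local place is of the type treated in Propositions \ref{prop: zero dimensional cases} and \ref{prop: 1 dimensional case}, and the obstruction at $\fD^\br$ vanishes or matches by Proposition \ref{prop: obstructed case}. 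Using $J'_a=J_{\fe,a_\fe}$-type common symmetry groups, this gives
\[
\sum_i(-1)^i\tr(\mathrm{Frob},(\bR^i\cameral f^{\Siegel}_*)_{\kappa,a}) = q^{r_\fe(D)}\sum_i(-1)^i\tr(\mathrm{Frob},(\bR^i\cameral f^\fe_*)_{\st,a_\fe})
\]
for all finite extensions. By Chebotarev and the purity of both sides (both are pure since the spaces are smooth and the maps proper), this yields equality of semisimplified Frobenius traces on the local systems on a dense open set. Hence the semisimplifications of the two sides agree over $k$. Over $\ol k$ the complexes are semisimple, and their constituents are intermediate extensions of geometrically semisimple local systems determined by these traces up to semisimplification.

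The main obstacle is the step where the $\kappa$-part of the Siegel side and the endoscopic stable part must have matching supports inside $\cameral\cA_{\fe,<\delta}$. This requires transferring $\delta$-regularity through $\nu_\fe$, with the codimension shift by $r_\fe(D)$, which is done in the proof of Corollary \ref{cor: supports over kappa locus}. It also requires density of the dagger locus in every $\delta$-stratum of $\cameral\cA_\kappa$ under the bound $\deg(D)>g+\frac32\delta$, which is why $\delta$ is capped by $\frac23(\deg D-g)$. A subsidiary difficulty is the generic point count: one must check that the sign $\langle\kappa,\cl\rangle$ on the Siegel side matches the stable count on the endoscopic side at the $\fD^\br$ places. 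I would handle this by Proposition \ref{prop: obstructed case} and Lemma \ref{lem: obstruction for unitary}, noting that for $a$ transverse to $\fD^\br$ both sides vanish or match simultaneously.
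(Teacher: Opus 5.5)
Your proposal follows the paper's proof essentially step for step. Both arguments use purity and the decomposition theorem, then Corollaries \ref{cor: supports over stable locus} and \ref{cor: supports over kappa locus} to reduce to a dense open subset of $\cameral\cA_\fe^\dagger$ where both sides are local systems, then Chebotarev and purity to reduce to a Frobenius-trace identity, computed by Corollary \ref{cor: Grothendieck-Lefschetz for Hitchin fibers} together with the local base cases in Propositions \ref{prop: obstructed case}, \ref{prop: zero dimensional cases} and \ref{prop: 1 dimensional case}.

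Two small corrections are needed. First, the common symmetry group must be some $J'_a\to J_a$ (e.g.\ $J_a$ itself, acting on $\cM^\fe$ through $J_a\to J_{\fe,a_\fe}$); $\cP(J_{\fe,a_\fe})$ does not act on the Siegel side. Second, your open set $\cameral\cA_\fe^\diamondsuit\cap\cameral\nu_\fe^{-1}(\cameral\cA^\dagger)$ should be shrunk to something like the paper's $\cU$, where the $\fD^\cent_\fe$-, $\fD^\br_\fe$- and $\fR_\fe$-points are simple and pairwise distinct. Smoothness of the endoscopic spectral curve alone still allows places with $d^\cent_{\fe,v}\geq 2$, for example a totally ramified cubic factor, and such places are not among the base cases.
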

\begin{proof}
    Since $\cameral{f}^\siegel$ and $\cameral{f}^\fe$ are proper over $\cameral \cA^\ani$ and the spaces $\cameral \cM^{\Siegel,\ani}$, $\cameral \cM^{\fe,\ani}$ are smooth by Proposition \ref{prop: smoothness and properness of relative hitchin fibration}, both sides of the proposed identity are pure and hence split as a direct sum of their perverse cohomology sheaves. The theorem reduces to the identity
    \[ \bigoplus_n\nu_\fe^*\big({}^p\bH^n(\bR\cameral{f}^{\Siegel,\ani}_*\ol{\Q}_\ell)_{\kappa}[2r_\fe(D)](r_\fe(D))\big)[-n] =  \bigoplus_n {}^p\bH^{n}(\bR\cameral{f}^{\fe,\ani}_*\ol{\Q}_\ell)_{\st}[-n]
    \]
    Moreover, by the decomposition theorem of Beilinson, Bernstein, Deligne, and Gabber \cite{BBD}, the perverse cohomology groups on either side are semisimple. By Corollary \ref{cor: supports over kappa locus} and our assumption on $\deg(D)$, it is enough to show there is an isomorphism after restricting both sides to an open set $\cU\subset \cA_\fe^\dagger$, where both sides are local systems. Namely, let $\cU$ denote the open subset of $\cameral \cA_\fe^\dagger$ whose $\ol k$-points are given by $a\in \cameral \cA_\fe^\dagger(\ol k)$ for which $a(\Sigma)$ intersects $\fD_\fe^\cent$ transversely, and $a^{-1}(\fD^\cent_\fe)$ and $a^{-1}(\fD_\fe^\br)$ are disjoint divisors in $\Sigma$. Both sides are local systems on $\cameral \cA_\fe^\dagger$ by Propositions \ref{prop: stable cohomology} and \ref{prop: kappa piece of top degree cohomology}.
    
    Denote
    \[
    L_\kappa^n = \left.\nu_\fe^*\big({}^p\bH^n(\bR\cameral{f}^\Siegel_*\ol{\Q}_\ell)_{\kappa}[2r_\fe(D)](r_\fe(D))\big)\right|_{\cU}\quad \text{ and }\quad L_{\fe,\st}^n = \left. {}^p\bH^{n-2r_\fe(D)}(\bR\cameral{f}^\fe_*\ol{\Q}_\ell)_{\st}\right|_{\cU}
    \]
    which are local systems by Propositions \ref{prop: stable cohomology} and \ref{cor: supports over kappa locus}. By Chebotarev density theorem and purity of the pushforwards $\bR f_*^{\star,\ani}\ol\Q_\ell$, it suffices to show that for all finite extensions $K$ over $k$ with Frobenius $\sigma_K$ and for all $a\in \cU(K)$,
    \begin{equation}
    \label{eqn: GL equation}
   q^{-r_\fe(D)\cdot [K:k]}\cdot \sum_n (-1)^n\tr(\sigma_K,(L_\kappa^n)_a) = \sum_n (-1)^n\tr(\sigma_K,(L_{\fe,\st}^n)_a)
    \end{equation}
    Without loss of generality, we assume that $K = k$. Choose a smooth group scheme $J_a'$ with connected fibers equipped with a map $J_a'\to J_a$ which is an isomorphism on generic fibers, and let $\cameral\cP'_a$ be the commutative group stack classifying $J_a'$-torsors on $\cameral\cA\times \Sigma$. 
    Recall that $U_a\subset \Sigma$ denotes the preimage of $\fc_D^\rs$ under $a$. Then, using Corollary \ref{cor: Grothendieck-Lefschetz for Hitchin fibers}, we compute that the left hand side of \eqref{eqn: GL equation} is
    \[
    \#(\cameral \cP_{a}')^\circ(k)\prod_{v\in |\Sigma - U_a|}\#[\cM_v^\siegel(a_v)/\cP_v(J_{a_v}')](k_v)_\kappa,
    \]
    while the right hand side is
    \[
    q^{r_\fe(D)}\cdot\#(\cameral \cP_a')^\circ(k)\prod_{v\in |\Sigma - U_a|}\#[ \cM_v^\fe(a_{\fe,v})/ \cP_v(J_{a}')](k_v)_\st.
    \]
    Note that the extra power of $q$ in the right hand side is from the Tate twist. Since $r_\fe(D) = \sum_v \deg(v)r_{\fe,v}(a_\fe)$ can be computed locally, the identity will therefore follow from the cases of Theorem \ref{thm: local main} given in Propositions \ref{prop: obstructed case}, \ref{prop: zero dimensional cases}, and \ref{prop: 1 dimensional case}; namely, those $a_\fe\in \fc^\heartsuit_\fe(\cO_v)$ and $a\in \fc^\heartsuit(\cO_v)$ which intersect the divisor $\fD^\br$ transversely but do not intersect $\fD^\cent$ ($d^\br_v(a)\leq 1$ for all $v$ and $d^\cent_v(a) =d^\cent_v(a_\fe) = 0$) and those $a_\fe,a$ which do not intersect the divisor $\fD^\br$ and for which $d^\cent_v(a_\fe)\leq 1$. Theorem \ref{thm: global main} therefore follows.
\end{proof}

\subsection{Proof of Theorem \ref{thm: local main}}

In this section, we prove Theorem \ref{thm: local main}, which immediately implies Theorem \ref{thm: main} via Corollary \ref{cor: point counting for P(J)}.

Fix a place $v\in |\Sigma|$ with residue field $k_v = k$ such that there is a unique place $v'\in |\Sigma'|$ over $v$. Let $\fe = (\kappa,\rho)$ be an endoscopic datum over $F_v = k(\!(\varpi_v)\!)$ and $a_{\fe,v}\in \fc_\fe^\heartsuit(\cO_v)$ with image under the transfer map $\nu_\fe$ given by $a_v$. We wish to prove Theorem \ref{thm: local main} for the pair $a_{\fe,v},a_v$. By Proposition \ref{prop: obstructed case}, we can assume that $\Ob_v(a_{\fe,v}) = 0$. Fix $N\gg 0$ such that Proposition \ref{prop: local constancy} holds for both $a_v$ and $a_{\fe,v}$, and fix a line bundle $D$ on $\Sigma$ with 
\[
\deg(D)\geq 2g+nN+1
\]

\begin{prop}
	\label{prop: existence of interpolation point}
	There exists an integer $N'$ such that, for all finite extensions $K$ of $k$ of degree greater than $N'$, there exists a point $a_\fe\in \cA_\fe(K)$ such that 
	\begin{enumerate}
		\item $a_\fe\equiv a_{\fe,v}\mod \varpi_v^N$;
		\item the support of the divisors $a_\fe^{-1}(\fD_D^\cent)$ and $a_\fe^{-1}(\fD_D^{\br})$ intersects non-trivially at most at the place $v$ in $\Sigma$;
		\item $\Ob_u(a_\fe)$ is trivial for all places $u\in |\Sigma|$;
		\item $a_\fe\in \cA_\fe^\ani(K)$;
		\item $\delta_\fe(a_\fe) = \delta_{\fe,v}(a_{\fe,v})$.
	\end{enumerate}
\end{prop}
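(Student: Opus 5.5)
We construct $a_\fe$ by a Bertini/Riemann--Roch argument: impose the local condition at $v$ as a linear constraint, then show the conditions (2)--(5) cut out a nonempty open subset of the resulting affine subspace, so that a $K$-point exists once $K$ is large. Write $a_\fe=(a^\siegel,a^\unitary)$, a point of the affine space $\cA_\fe=\cA_{r}^\siegel\times\cA_s^\unitary$. Let $L_\fe\subset\cA_\fe$ be the preimage of $a_{\fe,v}\bmod\varpi_v^N$ under the local evaluation map $\cA_\fe\to\fc_\fe(\cO_v/\varpi_v^N)$. By Riemann--Roch and $\deg(D)\geq 2g+nN+1$ this map is surjective, so $L_\fe$ is an affine space of codimension $nN$. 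Moreover, for any finite set of points $u_1,\dots,u_m\neq v$, the further evaluation to $\prod_i\fc_\fe(\cO_{u_i}/\varpi_{u_i}^2)$ remains surjective on $L_\fe$, as in the proof of Proposition \ref{prop: equidimensionality}. Condition (1) holds on $L_\fe$ by construction.

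The plan is to exhibit an open dense $W\subset L_\fe$ on which (2), (4) and (5) hold, and on which every place $u\neq v$ meeting $\fD^\br_\fe$ does so transversely. For (5), if $a_\fe$ meets $\fD_\fe^\cent$ transversely away from $v$, then $\delta_{\fe,u}(a_\fe)=0$ for all $u\neq v$ by Proposition \ref{prop: transverse is dimension zero}. By local constancy (Proposition \ref{prop: local constancy}), $\delta_{\fe,v}(a_\fe)=\delta_{\fe,v}(a_{\fe,v})$, and the product formula then gives (5). To get transversality and (2) generically, we repeat the incidence-variety argument of Proposition \ref{prop: equidimensionality} and of the proof of Proposition \ref{prop: density of dagger locus in delta strata}. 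The bad loci are pairs $(c,a)$ with multiplicity $\geq 2$ at a smooth point of $\fD_\fe$, pairs with $a$ passing through the singular locus of $\fD_\fe=\fD_\fe^\cent+\fD_\fe^\br$ (which contains $\fD^\cent_\fe\cap\fD^\br_\fe$, giving (2)), all taken at points $u\neq v$. Each has dimension $\leq\dim L_\fe-1$ by the two-point surjectivity above. For (4), the complement of $\cA^\ani_\fe$ has codimension at least $\deg(D)>nN$ (Lemma \ref{lem: codim of anisotropic}, applied to each factor), so it cannot contain $L_\fe$.

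Condition (3) is the main obstacle, because it is not an open condition. By Lemma \ref{lem: obstruction for unitary} and Lemma \ref{lem: obstruction is always trivial for siegel}, we need $\val_u(a^\unitary_{2s})$ even for every place $u$; at $v$ this holds by (1), since $\Ob_v(a_{\fe,v})=0$ and $N$ is large. On $W$, all zeros of $a^\unitary_{2s}$ away from $v$ are simple, so (3) forces the divisor of the section $a^\unitary_{2s}\in H^0(\Sigma,D^{2s})$ away from $v$ to consist only of places $u$ with $\val_u=2$ or with no zeros, which contradicts transversality unless we relax. The first thing to try is instead to allow double zeros: impose $a^\unitary_{2s}=b^2\cdot c$, where $c$ is fixed with its zeros only at $v$ (matching the local condition) and $b$ varies in $H^0$ of a suitable line bundle. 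Then (3) holds automatically, and the places $u$ where $b$ vanishes have $d^\br_\fe=2$, $d^\cent_\fe=0$. Here the local RASF still has $\delta=0$ by Theorem \ref{prop: dim of RASF} (only $\fD^\cent$ contributes), so (5) survives. Condition (2) still holds generically, since $b$ vanishes away from $\fD^\cent_\fe$ generically, and the incidence argument reruns on this modified family, which is still affine and still maps surjectively locally for $\deg(D)$ large. Alternatively, (3) can be arranged for points over $K$ at inert places by noting that $\Ob_u$ is defined over $F_u$ and only odd valuation at non-split places obstructs; one would then choose $W$ so that $\fD^\br$-zeros lie at places split in $\Sigma'$, which is achievable for $[K:k]\gg0$ by Chebotarev/Lang--Weil.

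Finally, since $W$ (or its modification) is a nonempty open subset of a geometrically irreducible variety defined over $k$, Lang--Weil gives $W(K)\neq\emptyset$ for $[K:k]>N'$, which produces the desired $a_\fe$.
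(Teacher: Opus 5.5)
\textbf{Gap: condition (4) is not established for your modified family.} Your handling of (1), (2), (5) on the open set $W\subset L_\fe$ is fine. You are also right that (3) is the real issue. But the fix you propose undermines (4), and the proposal never re-establishes it.

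Your anisotropy argument was a codimension comparison: $L_\fe$ has codimension $nN$ in $\cA_\fe$, while the non-anisotropic locus has codimension at least $\deg(D)>nN$ by Lemma \ref{lem: codim of anisotropic}. Once you impose $a^\unitary_{2s}=b^2c$, the family has codimension of order $s\deg(D)$ in $\cA_\fe$: the coordinate $a^\unitary_{2s}$ ranges over a space of dimension about $2s\deg(D)$, and $b$ over one of dimension about $s\deg(D)$. So that comparison proves nothing. Your remark that ``the incidence argument reruns'' covers only the transversality conditions, not anisotropy.

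This is not a formality. Take $s=1$ with the natural choice: $b\in H^0(\Sigma,D(-\tfrac m2 v))$ and $c=\lambda s_0^2$, where $s_0$ is the canonical section of $\cO(\tfrac m2 v)$ and $m$ is even since $\Ob_v(a_{\fe,v})=0$. Then over $\ol{k}$ you get $-a^\unitary_2=\beta^2$ with $\beta=\sqrt{-\lambda}\,b\,s_0$ a $\sigma$-invariant section of $D$. The unitary spectral curve $\wt\Sigma'_a$ is therefore $\{x=\beta\}\cup\{x=-\beta\}$, and $\sigma$ (which acts by $x\mapsto -x$ on the fibre coordinate) interchanges these two components. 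So the entire family lies outside $\cA_\fe^{\ani}$. Twisting $b$ by $\cL$ avoids this particular degeneration. For general $s$, though, you still owe a codimension estimate for the non-anisotropic locus inside the constrained family.

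That estimate is the missing ingredient, and it is what the paper supplies. It imposes (3) through the norm locus $Q=\cA_r\times\cA_s^{\mathrm{Nm}}$, where $a^\unitary_{2s}=t\,\sigma(t)$ with $t\in H^0(\Sigma',\vartheta^*D^{\otimes s})$. This gives even valuation at inert places automatically, and split places are unobstructed. It then proves Lemma \ref{lem: complement of ani locus}: $\codim_Q(Q\setminus Q^{\ani})\ge \deg(D)+1-g$. The proof writes the non-anisotropic locus as a union of loci $\cA_M$ over proper Levi subgroups $M=\U_{2m_0}\times\prod_i\mathrm{Res}_{\Sigma'/\Sigma}\GL_{m_i}$ and bounds $\dim\cA_M$. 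Only then does the codimension-$nN$ comparison go through inside $Q$, together with the surjectivity of $Q\to Q_v^{(N)}\times Q_u^{(2)}$.

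\textbf{Your alternative route.} Placing the zeros of $a^\unitary_{2s}$ at split places is in effect the same norm locus, but it cannot be obtained from Lang--Weil on $W$. The condition is not open, since generic points of $W$ fail it. You would have to parametrize such points (for example by $t$ as above), and then the anisotropy estimate is again needed.

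\textbf{Smaller points.}
\begin{itemize}
\item The local evaluation of your modified family at $u\neq v$ modulo $\varpi_u^2$ is not surjective: determinants of valuation exactly $1$ are missed. The incidence count therefore has to be redone rather than quoted.
\item Matching $a_{\fe,v}$ modulo $\varpi_v^N$ requires choosing $\lambda$ so that $a^\unitary_{2s,v}/c$ has square residue.
\end{itemize}
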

\begin{proof}

	We recall how the obstruction is computed on the endoscopic base. We denote ranks of symmetric varieties by subscripts. Write $X^\fe = X^\siegel_r\times X^\unitary_s$ with $r+s = n$, so that $\cA_\fe = \cA_r\times \cA_s$, where the first factor is the Hitchin base of the Siegel factor and the second that of the unitary factor. By Lemma~\ref{lem: obstruction is always trivial for siegel}, for every non-split place $u$ we have
	\[
	\Ob_u(a_\fe) = \val_u\!\big((a_\fe^\unitary)_{2s}\big)\bmod 2.
	\]
	where $a_\fe = (a_\fe^\siegel,a_\fe^\unitary)$ and $(a_\fe^\unitary)_{2s}$ is the determinant coordinate of $a_\fe^\unitary$. In particular, condition~(3) holds for \emph{every} place $u$ whenever $a^\unitary_{2s}$ is in the image of the norm map
	\begin{equation}
		\label{eqn: norm map on det}
\mathrm{Nm}\colon H^0(\Sigma',\vartheta^*D^{\otimes s})	\to H^0(\Sigma,D^{\otimes 2s}),\quad  t\mapsto t\cdot \sigma(t),
	\end{equation}
	which motivates the following definition.
	
	\newcommand{\sq}{\mathrm{sq}}
	\newcommand{\Sq}{\mathrm{Sq}}
	\newcommand{\Nm}{\mathrm{Nm}}

	\begin{definition}
		\label{defn: square locus}
		Let $\Nm \subset H^0(\Sigma, D^{\otimes 2s})$ denote the image of the norm map \eqref{eqn: norm map on det}. We define the \emph{norm locus}
		\[
		\cA_s^{\Nm} := \big\{\, (a^\unitary_2,\dots,a^\unitary_{2s}) \mid a^\unitary_{2s}\in \Nm \,\big\}\subset \cA_s
		\]
		and $Q := \cA_r \times\cA_{s}^{\Nm} \subset \cA_\fe$.
	\end{definition}

	\begin{lem}
		\label{lem: complement of ani locus}
		Let $Q^\ani = Q\cap \cA_\fe^\ani$. The codimension of $Q\setminus Q^\ani$ in $Q$ is greater than or equal to $\deg(D)+1-g$.
	\end{lem}
	\begin{proof}
		Note that if $X^\fe = X^\siegel$, then we are done by Lemma \ref{lem: codim of anisotropic}. Therefore, as $X^\fe$ splits as a product and $Q$ only imposes conditions on the unitary factor, it suffices to prove the lemma for $X^\fe = X^\unitary$. We will drop subscripts $\fe$ and work with $X^\unitary$ throughout. Recall from \S\ref{sec: unitary Higgs} that we denote the Hitchin base for the unitary group $\U_{2n}$ by $\cB = \cB_{2n}$. We have a natural inclusion $\cA\to \cB$ which identifies $\cA$ with the fixed points of the involution $\theta(a_i) = (-1)^ia_i$ on $\cB$. By definition of the anisotropic locus, $\cA^\ani = \cA\cap \cB^\ani$.
		
		Fix a $\theta$- and $\sigma$-stable torus $T$ of $\U_{2n}$ which is maximally $\theta$-split; that is, for which the dimension of the subtorus on which $\theta$ acts by inversion is maximal. Note that there is a unique $H^{\unitary}(F)$-conjugacy class of such tori \cite{HelminckWang}. For every Levi subgroup $M$ of $\U_{2n}$ containing $T$, we associate a subvariety $\cB_M\subset \cB$ consisting of those points in $\cB = H^0(\Sigma,(\mathrm{u}_{2n}\git\U_{2n})_{\cL(D)})$ which factor through $\fm\git M\to \mathrm{u}_{2n}\git\U_{2n}$. We note that the subvarieties $\cB_M$ depend only on the conjugacy class of $M$ in $\U_{2n}$. Since $\U_{2n}$ is a quasisplit group scheme over $\Sigma$, \cite[Proposition 6.3.5]{ngo} implies that
		\[
		\cB\setminus \cB^\ani = \bigcup_{T\subset M\subset\U_{2n}} \cB_M
		\]
		where the union ranges over all proper Levi subgroups $M$ containing $T$.
		
		Put $\cA_M = \cA\cap \cB_M$. It now suffices to show $\mathrm{codim}_{Q}(Q\cap \cA_M)>\deg(D)+1-g$. All Levi subgroups $M$ are of the form
		\[
		M = \U_{2m_0}\oplus \prod_{i=1}^\ell \mathrm{Res}_{\Sigma'/\Sigma}(\GL_{m_i})
		\]
		where $\sum_{i=0}^\ell m_i = n$. Note in particular, $\cA_M$ is the image of a product of Hitchin bases for $\U_{2m_0}/(\U_{m_0}\times \U_{m_0})$ and $\mathrm{Res}_{\Sigma'/\Sigma}(\GL_{m_i})/\U_{m_i}$. In particular, using Riemann-Roch, one can compute
		\begin{align*}
			\dim(\cA_M) &= m_0(m_0+1)\deg(D)+m_0(1-g)+\sum_{i=1}^\ell\left[ \frac{m_i(m_i+1)}{2}\deg(D)+m_i(1-g) \right] \\
			& = \left[ m_0(m_0+1)+\sum_{i=1}^\ell\frac{m_i(m_i+1)}{2} \right]\deg(D)+n(1-g).
		\end{align*}
		It is easy to check that, for fixed $\ell$, the above quantity is maximized when $m_0 = n-\ell$ and $m_i=1$ for all $i\geq 1$, in which case we obtain
		\begin{equation}
			\label{eqn: dim of AM}
			\dim(\cA_M) \leq  \left[ (n-\ell)^2+n\right]\deg(D)+n(1-g).
		\end{equation}
		We also compute by Riemann-Roch that 
		\begin{equation}
			\label{eqn: dim of Q}
			\dim(Q) = \sum_{i=1}^{n-1} (2id+1-g)+2nd+2(1-g)
			= n(n+1)\deg(D)+(n+1)(1-g).
		\end{equation}
		Hence, we have
		\begin{align*}
			\mathrm{codim}_Q(Q\cap \cA_M) &= \dim(Q) - \dim(\cA_M\cap Q) \geq \dim(Q) - \dim(\cA_M) \\
			&  = [2\ell n - \ell^2]\deg(D)+1-g \geq n^2\deg(D)+1-g \\
            & \geq \deg(D)+1-g
		\end{align*}
		where the second to last inequality follows since $2\ell n - \ell^2$ is maximized for $1\leq \ell\leq n$ when $\ell = n$. \qedhere
	\end{proof}
	
	Now, again by Riemann--Roch and our assumption on $\deg(D)$, we have a surjective evaluation map
	\begin{equation}
		\label{eqn: evaluation at v}
		\cA_\fe= H^0(\Sigma,\fc_{\fe,D})\to H^0(\Sigma_v,\fc_\fe(\cO_v/\varpi_v^N))	
	\end{equation}
	Let $Z$ be the preimage of $a_{\fe,v}$, where we view $a_{\fe,v}$ as an element in $H^0(\Sigma_v,\fc(\cO_v/\varpi_v^N))$. By surjectivity of the linear map \eqref{eqn: evaluation at v}, the codimension of $Z$ in $\cA$ is $nN$. As a consequence, we have
	\[
	\codim_Q(Q\cap Z)\leq nN.
	\]
	We now impose condition (5) by using a transversality argument. Namely, let $(Q\cap Z)^{\min}\subset Q\cap Z$ denote the subvariety of points $a$ satisfying $d^\cent_{\fe,u}(a) \leq 1$ for all $u\neq v$. By an identical argument to Corollary \ref{cor: density of valuation 2 locus in delta strata}, to prove $(Q\cap Z)^{\min}$ contains an open, dense subset of $Q\cap Z$, it suffices to prove the following.
	
	\begin{lem}
		Fix a place $u\neq v$, and let $Q_u^{(j)}\subset \fc(\cO_u/\varpi_u^j)$ denote the subset of tuples $(a_2,\dots, a_{2n})\in \fc(\cO_u/\varpi_u^j)$ such that $a_{2n}\in \cO_u/\varpi_u^j$ is the norm of an element in $\cO_{u'}/\varpi_{u}^j$. With the assumptions on $\deg(D)$ above, the product of local evaluation maps at $v$ and $u$ gives a surjection
		\[
		Q\to Q_v^{(N)}\times Q_u^{(2)}\subset \fc(\cO_v/\varpi_v^N)\times \fc(\cO_u/\varpi_u).
		\]
	\end{lem}
	\begin{proof}
		The map $Q\to Q_v^{(N)}\times Q_u^{(2)}$ is identified with the map on $H^0$ induced by map
		\[
		0\to \left(\vartheta_*\vartheta^*D^{\otimes n}\oplus \bigoplus_{i=1}^{n-1} D^{\otimes 2i}\right)\otimes \cO(-Nv-2u) \to D^{\otimes n}\oplus \bigoplus_{i=1}^{n-1} D^{\otimes 2i}\to \left.D^{\otimes n}\oplus \bigoplus_{i=1}^{n-1} D^{\otimes 2i}\right|_{Nv+2u}\to 0
		\]
		so the cokernel is
		\[
		H^1\!\!\left( \Sigma,\left(D^{\otimes n}\oplus \bigoplus_{i=1}^{n-1} D^{\otimes 2i}\right)\otimes \cO(-Nv-2u) \right),
		\]
		which vanishes for $\deg(D)\geq 2g+nN+1$.
	\end{proof}
	
	We now have
	\[
	\codim_Q(Q\cap Z)^{\min}\leq nN.
	\]
	In particular, by our assumptions on the degree of $D$ and Lemma \ref{lem: complement of ani locus}, we have
	\[
	\codim_{Q}(Q\cap Z)<\codim_{Q}(Q\setminus Q^\ani)
	\]
	so that the locally closed set $Q^\ani\cap(Q\cap Z)^{\min}$ is nonempty. Since this is a nonempty locally closed subvariety, by Lang-Weil there is an integer $N'$ such that it has $K$ points for any finite extension $K$ of $k$ with $[K:k]\geq N'$. In particular, if we choose any $a_\fe\in (Q^\ani\cap(Q\cap Z)^{\min})(K)$, then by Proposition \ref{prop: local constancy}, its local $\delta$ invariant at $v$ agrees with $\delta_{\fe,v}(a_{\fe,v})$ and by Proposition \ref{prop: dim of RASF} transversality implies $\delta_{\fe,u}(a_{\fe,u})=0$. Hence, $a_\fe$ satisfies condition (5). The other conditions are clear from our choice, so $a_\fe$ is a $K$ point satisfying the assumptions of the proposition.
\end{proof}

Let $a_\fe$ be as in Proposition \ref{prop: existence of interpolation point}, and $a = \nu_\fe(a_\fe)$. Let $\cameral a, \cameral a_\fe$ be lifts in $\cameral \cA^\ani(K)$, respectively $\cameral \cA^\ani_\fe(K)$. By condition (5), we may also assume that $\deg(D)> g+\frac{3}{2}\delta_\fe(a_\fe)$ and that $\deg(D)$ is large enough so that 
\[
\delta_\fe(a_\fe) = \delta_{\fe,v}(a_{\fe,v})<\delta^\bad(D)
\]
Hence, we assume that $\cameral\cA_{\fe,< \delta(a)}\subset \cA_\fe^\good$. Therefore, Theorem \ref{thm: global main} holds over $\cameral\cA_{\fe,< \delta_\fe(a_\fe)}$. Then, by the point counting formula for the product formula (Proposition \ref{prop: local to global point count}) combined with Theorem \ref{thm: global main}, we have
\begin{equation}
	\label{eqn: point count product}
	\#(\cP'_a)^\circ(K)\cdot \prod_{v\in |\Sigma_K-U_a|}\#[\cM_{v}^\siegel(a_v)/\cP(J_{a_v}')](K_v)_\kappa = q^{r_\fe(D)[K:k]} \cdot\#(\cP'_a)^\circ(K)\cdot \prod_{v\in |\Sigma_K-U_a|}\#[\cM_{v}^\fe(a_{\fe,v})/\cP(J_{a_v}')](K_v).
\end{equation}

Note here that the extra power of $q$ appears in the Tate twist in Theorem \ref{thm: global main}. Moreover, by the cases in Propositions \ref{prop: zero dimensional cases} and \ref{prop: 1 dimensional case}, for each of the places $u\neq v$ of $\Sigma$, we have identities
\[
\#[\cM_{u}^\siegel(a_u)/\cP_u(J_{a_u}')](K_u)_\kappa = q^{r_\fe(a_{\fe,u})\cdot [K:k]}\#[\cM_{u}^\fe(a_{\fe,u})/\cP_u(J_{a_{\fe,u}}')](K_u)\neq 0.
\]
By canceling these terms from \eqref{eqn: point count product}, we get the identity
\[
\#[\cM_{v}^\siegel(a_v)/\cP_v(J_{a_v}')](K)_\kappa = q^{r_\fe(a_{\fe,v})\cdot [K:k]}\#[\cM_{v}^\fe(a_{\fe,v})/\cP_v(J_{a_v}')](K).
\]
Since this holds for every finite extension $K$ of $k$ of degree greater than $N'$, we can conclude the same for $k$, which proves Theorem \ref{thm: local main}. \qed\\

It is now simple to deduce Theorem \ref{thm: global main} over $\cA_\fe^\ani$ (without the restriction to $\cA^\good$) by reversing the flow of information from local back to global.

\begin{thm}
    \label{thm: global main over ani}
    Fix any divisor $D$ on $\Sigma$ with $\deg(D)\geq 2g$. For any 2-torsion element $\kappa\in \wh{\canC}[2]$, we have an isomorphism of complexes in $D^b(\cameral\cA_{\fe}^\ani)$ over $\ol{k}$
    \[
    \nu_\fe^*(\bR \cameral{f}^{\Siegel,\ani}_*\ol{\Q}_\ell)_{\kappa}[2r_\fe(D)](r_\fe(D)) =  (\bR\cameral{f}^{\fe,\ani}_*\ol{\Q}_\ell)_{\st}
    \]
     which induces an isomorphism of semisimplifications over $k$.
\end{thm}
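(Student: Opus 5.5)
The plan is to reverse the flow of information: first the local identity, then the global one. Both sides are pure complexes: the maps are proper and the total spaces smooth by Proposition \ref{prop: smoothness and properness of relative hitchin fibration}. By the decomposition theorem each side is a direct sum of shifted semisimple perverse sheaves over $\ol k$. It therefore suffices, by the Chebotarev density argument used in the proof of Theorem \ref{thm: global main} (equality of Frobenius traces at all $K$-points for all finite $K/k$ determines the semisimplification of pure complexes), to compare traces of Frobenius stalkwise on all of $\cameral\cA_\fe^\ani$. Unlike the proof of Theorem \ref{thm: global main}, we will not need the support theorem here, so there is no restriction to $\cA^\good$.

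Concretely, fix $K/k$ finite and $a_\fe\in \cameral\cA_\fe^\ani(K)$ with image $a=\nu_\fe(a_\fe)$. Choose $J_a'\to J_a$ with connected fibers, an isomorphism generically; one could even take $J_a'=J_a$. By Corollary \ref{cor: Grothendieck-Lefschetz for Hitchin fibers} applied to the Siegel fibration, the alternating trace on the $\kappa$-part at $a$ is
\[
\#(\cP_a')^\circ(K)\prod_{v\in|\Sigma_K-U_a|}\#[\cM_v^\siegel(a_v)/\cP_v(J'_{a_v})](K_v)_\kappa .
\]
The analogous formula for $\cM^\fe$ with $\kappa=1$ gives the stable side. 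Here one uses that $\cP_a'$ also acts on $\cM^\fe_{a_\fe}$ via $J_a\to J_{\fe,a_\fe}$ (Proposition \ref{prop: centralizers under transfer}), together with the variant of the product formula in Remark \ref{rmk: product formula for Ja'}. Theorem \ref{thm: local main}, now proved for all matched pairs and all finite extensions, identifies the local factors up to $q^{r_\fe(a_{\fe,v})\deg(v)[K:k]}$. Since $r_\fe(D)=\sum_v\deg(v)r_{\fe,v}(a_\fe)$ (Corollary \ref{cor: delta vs endoscopic delta, global}), the product of these powers is exactly $q^{r_\fe(D)[K:k]}$, which is the trace of the Tate twist $[2r_\fe(D)](r_\fe(D))$. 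So the traces agree at every point.

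Two checks remain. The first is that the characters $\kappa_v$ induced on local component groups from the global $\kappa$ via the cameral lift agree with the endoscopic characters in Theorem \ref{thm: local main}. This is the compatibility already used in the proof of Theorem \ref{thm: global main}, coming from Proposition \ref{prop: C surjects on components of P} and the identification $\cameral\cA_\kappa=\cameral\nu_\fe(\cameral\cA_\fe)$ in Proposition \ref{prop: endoscopic description of components of P}. The second is that the trace-comparison argument only needs pointwise equality of traces, not local systems; for pure complexes the function--sheaf dictionary over all $K$ recovers the semisimplified class of each perverse constituent. The main obstacle I expect is exactly this last deduction: upgrading equality of trace functions to an isomorphism over $\ol k$ rather than just of semisimplifications. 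I would handle it by noting that both sides are geometrically semisimple (BBD), so equal trace functions on all $\cameral\cA^\ani_\fe(K)$ give isomorphic semisimplified Frobenius-equivariant objects, hence isomorphic objects over $\ol k$. The hypothesis $\deg D\ge 2g$ is used only to ensure that the spaces and the product formula are as before, with $\cameral\cA^\ani_\fe$ nonempty.
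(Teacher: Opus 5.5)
Your proposal is correct and follows the paper's argument. Once Theorem \ref{thm: local main} is available for all matched pairs and all finite extensions, the product formula and Corollary \ref{cor: Grothendieck-Lefschetz for Hitchin fibers} give equality of Frobenius traces at every point of $\cameral\cA^\ani_\fe$. Chebotarev density for pure complexes, together with geometric semisimplicity, then yields the isomorphism, with no need for the support theorem or the restriction to $\cA^\good$.

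The one step you state too quickly is purity of the pulled-back side. It holds because $(\bR\cameral{f}^{\Siegel,\ani}_*\ol{\Q}_\ell)_\kappa$ is supported on $\cameral\cA_\kappa=\cameral\nu_\fe(\cameral\cA_\fe)$ and $\cameral\nu_\fe$ is a closed embedding, so $\cameral\nu_\fe^*$ agrees with $\cameral\nu_\fe^!$ on complexes supported on its image.
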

\begin{proof}
    By an argument identical to Theorem \ref{thm: global main}, using Chebotarev density theorem and Corollary \ref{cor: Grothendieck-Lefschetz for Hitchin fibers}, it suffices to prove Theorem \ref{thm: local main}. Having proved this, the global isomorphism follows on all of $\cameral\cA^\ani$.
\end{proof}

\begin{appendix}

\section{Transfer factors and Descent}\label{Appendix A}
In this appendix, we introduce the transfer factors needed for the comparison of $\kappa$-orbital integrals on $\mathrm{X}^\Siegel$ and show that they descend to the infinitesimal level of $V^\Siegel$. We then sketch the transfer from characteristic zero to positive characteristic via motivic integration, where we finally make contact with the formulation of main document.

More precisely, we take as the starting point that Theorem \ref{thm: main_intro} may be transferred to characteristic zero local fields for sufficiently high residue characteristic; see Section \ref{Sec: on transfer to characteristic zero}.

Let $\mathcal{O}_F$ (resp. $\mathcal{O}_{F'}$) denote the ring of integers in $F$ (resp. $F'$). Let $V_n$ be a \emph{split} Hermitian space of dimension $n$, and  set $\mathrm{U}_{n}:=\mathrm{U}(V_n)$. This affords the self-dual lattice $\Lambda_n=\mathcal{O}_{F'}^n\subset V_n$. Note that we may fix forms so that $\Lambda_{2n} = \Lambda_n\oplus \Lambda_n$. On the other hand, the self-dual lattice $\Lambda_{2n}$ admits a polarization
\[
\Lambda_{2n} =\mathcal{L}\oplus \mathcal{L}^\vee, \qquad \mathrm{rank}(\mathcal{L}) = n.
\]
With respect to these integral structures, we obtain hyperspecial subgroups
\[
\rH_n^{\Siegel}(\mathcal{O}_F):=\GL(\mathcal{L})\subset \rH_n^{\Siegel}(F),\quad \rH^{\unitary}_n(\mathcal{O}_F):=\U(\Lambda_n)\times\U(\Lambda_n)\subset \rH^{\unitary}_n(F),
\]and 
\[
\mathrm{G}_n(\mathcal{O}_F):=\U(\Lambda_n\oplus \Lambda_n)\subset \mathrm{G}_n(F).
\]
Set $\mathrm{X}^\star:=\mathrm{G}_n/\rH^\star_n$ and let $\mathbf{1}_{\mathrm{X}^\star(\mathcal{O}_F)}$ to be the associated characteristic function. 

Recall from Section \ref{sec: endoscopic spaces} and Proposition \ref{prop: endoscopic description of components of P}, that an unramified elliptic relative endoscopic datum $(\kappa,\rho)$ for $\mathrm{X}^{\Siegel}$ is determined uniquely up to equivalence by $\kappa\in \hat{\mathbf{C}}[2]$. This determines the variety
    \[
    X^\fe = H^\fe\backslash G^\fe = \big(\Res_{F'/F}\GL_r\backslash \U_{2r}\big)\times \big((\U_s\times \U_s)\backslash \U_{2s}\big).
    \]
    {over $F$, }the \emph{endoscopic symmetric space} associated to $\fe$ as in Definition \ref{def: end space}. Note that we also have the smooth $\mathcal{O}_F$-scheme 
    \[
    \mathrm{X}^\fe = \rH_r^{\Siegel}\backslash \mathrm{G}_{2r}\times \rH_s^{\unitary}\backslash \mathrm{G}_{2s},
    \]
  {with special fiber $X^\fe$ }and the corresponding characteristic function $\mathbf{1}_{\mathrm{X}^\fe(\mathcal{O}_F)}$ 
    
For a regular semi-simple element $x\in X^{\Siegel}(F)$, the endoscopic datum determines a character $\kappa: H^1(F,\rH^{\Siegel}_x)\to \mathbb{C}^\times$ of the Galois cohomology of the stabilizer $\rH^{\Siegel}_x\subset \rH^\Siegel$.  The set $\mathcal{O}_{\st}(x)$ of rational orbits inside the stable orbit of $x$ is naturally a torsor for this group, and we consider the  $\kappa$-orbital integral
\[
\Orb^{\Siegel,\kappa}_x(\mathbf{1}_{\mathrm{X}^\star(\mathcal{O}_F)})=\sum_{[x']\in \mathcal{O}_{\st}(x)}\langle\kappa,\mathrm{inv}(x,x')\rangle\Orb^{\Siegel}_x(\mathbf{1}_{\mathrm{X}^\star_n(\mathcal{O}_F)}),
\]
where $\mathrm{inv}(x,x')$ is the cohomological invariant associated to the rational orbit of $x'$. 

In Sections \ref{Section: transfer factor Lie}  and \ref{Section: transfer factor variety}, we recall the notion of the matching of regular semi-simple elements
\[
x\in X^{\Siegel,\rs}(F)\:\:\text{and}\:\:x_{\fe}\in{X}^{\fe,\rs}(F),
\]
and introduce transfer factors 
\[
\Delta_{rel}:{X}^{\fe,\rs}(F)\times X^{\Siegel,\rs}(F)\to \mathbb{C}
\]
to facilitate the matching $\kappa$-orbital integrals on $X^{\Siegel}(F)$ with stable orbital integrals on ${X}^{\fe}(F)$. We then prove following result.
\begin{thm}\label{Thm: fundamental lemma intro}
Let $F$ be a $p$-adic field and assume that the residue characteristic of $F$ is large enough that the statement of Corollary \ref{Cor: transfer} holds. Let $\kappa\in \widehat{\mathbf{C}}[2]$ and let $X^\fe$ denote the associated endoscopic variety. For any regular semi-simple $x\in X_n^{\Siegel}(F)$ and matching element $x_{\fe}\in X^{\fe}(F)$,we have
\begin{equation}\label{first identity}
    \Delta_{rel}(x_\fe,x)\Orb^{\Siegel,\kappa}_x(\mathbf{1}_{\mathrm{X}^\Siegel(\mathcal{O}_F)})=\SO^\fe_{x_{\fe}}(\mathbf{1}_{\mathrm{X}^\fe(\mathcal{O}_F)}).
\end{equation}
\end{thm}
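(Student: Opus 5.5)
The plan is to follow the standard descent strategy of \cite{Lesliedescent} for the unitary case: reduce the group-level identity \eqref{first identity} to the Lie algebra identity of Theorem \ref{thm: main_intro}, which by Corollary \ref{Cor: transfer} holds for $F$ of sufficiently large residue characteristic.

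\textbf{Step 1: reduction to topologically nilpotent elements.} Write $x = x_s$ via the symmetrization/Cayley-type map, and use a relative Jordan decomposition of $x$ into a semisimple part $s$ and a ``topologically unipotent'' part. Because $\mathbf{1}_{\mathrm{X}^\Siegel(\mathcal{O}_F)}$ is bi-invariant under the hyperspecial subgroups, the orbital integral vanishes unless $s$ is conjugate to a compact element. By standard arguments (as in Kottwitz's and Hales's descent for the classical fundamental lemma), $\Orb^{\Siegel,\kappa}_x$ then becomes a sum of $\kappa$-orbital integrals on the descendant symmetric pair $(\rH_s, \mathrm{G}_s)$. I expect these descendants to be products of Siegel-type, unitary-type and $\Res_{F'/F}\GL$-type (twisted linear) pairs of smaller rank. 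The matching $x_\fe$ decomposes compatibly, so the endoscopic side descends to the same kind of products. This lets me proceed by induction on $n$ when $s$ is not central.

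\textbf{Step 2: central case and the Cayley transform.} When $s$ is central ($s=\pm 1$ after normalizing), I will use a Cayley transform $\mathfrak{c}\colon V^{\Siegel}\to X^{\Siegel}$. This is an $\rH$-equivariant isomorphism from the topologically nilpotent locus of $V^\Siegel(F)$ onto a neighborhood of the base point, carrying $V^\Siegel(\cO_F)$ onto $\mathrm{X}^\Siegel(\cO_F)$ near $s$. It is defined over $\cO_F$ for $p$ large, and it identifies stabilizers, hence cohomological invariants and $\kappa$. The same applies on $X^\fe$, compatibly with the transfer map $\nu_\fe$ on invariants. The group orbital integrals then equal the Lie algebra orbital integrals of Theorem \ref{thm: main}.

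\textbf{Step 3: transfer factors.} I will define $\Delta_{rel}(x_\fe,x)$ as a product of a sign $\langle \kappa, \mathrm{inv}(x_0,x)\rangle$, measured relative to the Kostant--Rallis base point $[\e]$ (which exists rationally by Theorem \ref{thm: section for rational forms}), and a discriminant factor $|\mathrm{R}_\fe(x)|_F$ matching the factor $q^{r_\fe(a)}$. The key verification is that $\Delta_{rel}$ is compatible with descent: at a semisimple $s$ it must factor as the descendant transfer factor times a constant. Under the Cayley transform at $s=\pm1$, it must reduce to the normalization in Theorem \ref{thm: main}, where $\kappa$-weights are taken relative to $[\e](a)$.

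\textbf{Main obstacle.} The main obstacle is showing that the sign part of $\Delta_{rel}$ descends correctly, in particular when $s=-1$ or for mixed descendants. There the endoscopic variety switches between $\Siegel$ and $\unitary$ factors, and the base point $[\e]$ need not correspond to a Kostant section of the descendant. My first attempt would be to compute the discrepancy explicitly via the class $[\cT_a]=x^2$ from the proof of Proposition \ref{prop: obstructed case}, using $\kappa$ paired with norms as in \cite[Lemma A.2]{Lesliedescent}. The goal is to show it equals the transfer factor of the descendant up to a sign that is absorbed by the obstruction parity.
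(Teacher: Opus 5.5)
Your architecture is the one the paper uses, following \cite{Lesliedescent}: topological Jordan decomposition, descent at absolutely semisimple $s$ with induction, and a Cayley transform to reach the Lie algebra statement of Theorem \ref{thm: main}. The paper organizes it more economically. It applies $\mathrm{Cay}_\nu$ on the whole $\nu$-very regular locus $\{x:\bar x\notin D_\nu(k)\}$, not only near $s=\pm1$, so descent is needed only when $\bar x\in D_{+}(k)\cap D_{-}(k)$; there $s$ is automatically non-central. Also, nothing switches at $s=-1$. Neighborhoods of $-1$ are handled by $\mathrm{Cay}_{+1}$ exactly as neighborhoods of $+1$ are handled by $\mathrm{Cay}_{-1}$. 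The $\pm1$-descendants of $X^\Siegel$ are again of Siegel type, because the involution preserves $W_{\pm1}$ and its eigenspaces there stay isotropic. Unitary factors arise only from the $X^\unitary_s$ factor on the endoscopic side.

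The genuine gap is your Step 3, which is exactly the part the paper carries out. A factor $\langle\kappa,\mathrm{inv}(x_0,x)\rangle\cdot|\mathrm{disc}|$ is not well defined until you fix a rational base point on $X^\Siegel$ itself. Rational Kostant sections differ by classes in $H^1(F,Z(\U_n))$ that $\kappa$ can detect (see the discussion after Lemma \ref{Lem: change of section}). You never say what $x_0$ is on the group, nor why $\mathrm{Cay}_\nu$ of the Lie-algebra base point lies in its $\kappa$-class for both $\nu=\pm1$. The class $[\cT_a]=x^2$ will not supply this: it is the duality-torsor class behind the functional equation in Proposition \ref{prop: obstructed case}, and it plays no role in the comparison.

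The missing idea is to route everything through the contraction maps to $\Herm_n$ and the Langlands--Shelstad--Kottwitz factor for $\U_n$. The paper sets
$\Delta_{\mathrm{X},\fe}(x_\fe,x)=\kappa(\mathrm{inv}(\overline{\epsilon}(a),x))\,\Delta_{LS,\fe}(R^\fe(x_\fe),\epsilon(a))$, with base point $\overline{\epsilon}(a)=\left(\begin{smallmatrix}\epsilon(a)&I\\ I-\epsilon(a)^2&\epsilon(a)\end{smallmatrix}\right)$ and $R(x)=A$. On $V^\Siegel$ it does the same with $\tilde{\epsilon}(a)=(I,\epsilon(a))$ and $r(x_1,x_2)=x_1x_2$. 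This buys three things:
\begin{itemize}
\item independence of the section and $\kappa$-equivariance are inherited from $\Delta_{LS,\fe}$ (Lemma \ref{Lem: indep of section});
\item Kottwitz's normalization gives $\Delta_{LS,\fe}=q^{-r_\fe(a)}$ for the Kostant-normalized element, which is how the factor matches Theorem \ref{thm: main};
\item Lemma \ref{Lem: Cayley contraction} gives $R\circ\mathrm{Cay}_\nu=\mathrm{Cay}_\nu\circ r$, in particular $R(\mathrm{Cay}_\nu(I,\epsilon(a)))=\mathrm{Cay}_\nu(\epsilon(a))$.
\end{itemize}
Combined with the discriminant identity of Lemma \ref{Lem: Cayley disc}, this reduces the comparison to Cayley-compatibility of unitary transfer factors, as in \cite[Section 7]{Lesliedescent}. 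The result is $\Delta_\fe(\delta_\fe,\delta)=\eta_{F'/F}(C_{a,b,\nu})\,|C_{a,b,\nu}|_F\,\Delta_{\mathrm{X},\fe}(x_\fe,x)$ with $C_{a,b,\nu}=(-2\nu)^{ab}\prod_{z_a,z_b}(z_a-\nu)^{-1}(z_b-\nu)^{-1}$. On the very regular locus $C_{a,b,\nu}$ is a unit of $F$, hence a norm from the unramified $F'$, so the two factors coincide. The discrepancy you were looking for is governed by the factors $z-\nu$, not by $x^2$.
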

This is the fundamental lemma on the ``group'' or variety level, which is what is actually required for the stabilization of the trace formula considered in \cite{LXZrtf}. We prove it by imitating the descent argument of \cite{Lesliedescent} to show that it follows from Theorem \ref{thm: main_intro} and Corollary \ref{Cor: transfer}. More precisely, we will introduce the transfer factors for the Siegel variety $X^\Siegel$ and its infinitesimal model $V^\Siegel$, then sketch the analog of the descent argument of \cite{Lesliedescent} and \cite[Appendix B]{LXZfund} in this setting. As the Siegel variety is a rational form of the unitary and linear cases considered in these previous works, the underlying tools and steps are very similar. The main obstacle we overcome is the definition of the transfer factors in this setting, which is more involved in this case than in the unitary setting of \cite{Lesliedescent,spencer_annals}. We therefore emphasize the descent of the transfer factors, leaving the step by step reduction on the compliment of the very regular loci (see Section \ref{Sec: descent} below) to the interested reader.

For the rest of the appendix, we assume that $F'/F$ is a quadratic extension of non-archimedean local fields, and let $V^{\Siegel}\simeq \Herm_n\times \Herm_n$ denote the infinitesimal symmetric variety. We recall from Section \ref{sec: invariant theory} the invariant quotient
\[
\chi=\chi_n: V^\siegel\longrightarrow \fc\simeq \A^n.
\]
For an endoscopic datum $\fe$ with $X^\fe = X^S_a\times X^\U_b$ $(a+b=n)$ the associated  endoscopic variety, we set
\[
 V^\fe = V^\Siegel_a\times V^\unitary_b, 
 \]and use the notation $\chi_\fe = \chi_{a} \times \chi_b$ for the invariant quotient {map}.
 
\subsection{The product map and Kostant sections}

 We first recall the notion of Jacquet--Langlands transfer between two non-isomorphic Hermitian spaces $W$ and $W'$. If we identify the underlying vector spaces (but not necessarily the Hermitian structures)
\begin{equation*}
  W\cong W'\cong {F'}^n,  
\end{equation*}
we have embeddings 
\[
\Herm(W),\:\Herm(W')\hookrightarrow\fgl_n(F').
\]
Then $\delta\in \Herm(W)$ and $\delta'\in \Herm(W')$ are said to be \textbf{Jacquet--Langlands transfers} if they are $\GL_n(F')$-conjugate in $\fgl_n(F')$.  This is well defined since the above embeddings are determined up to $\GL_n(F')$-conjugacy. Note that if $\delta$ and $\delta'$ are Jacquet--Langlands transfers, then
\[
\delta'=\mathrm{Ad}(g)(\delta)
\]
for some $g\in \GL(W)$ and we obtain a well-defined
\[
\mathrm{inv}(\delta,\delta')=[\sigma\in \Gal(\overline{F}/F)\mapsto g^{-1}\sigma(g)]\in H^1(F,T_\delta),
\]
where $T_\delta\subset \U(W')$ is the usual centralizer.  For a given $\delta\in \Herm(W)$, let
\[
\mathcal{O}_{JL}(\delta) = \mathcal{O}_{W}(\delta)\sqcup \mathcal{O}_{W'}(\delta)\subset \fgl_n(F')
\] be the union of all the Jacquet--Langlands transfers of $\delta$ in $\Herm(W)\cup \Herm(W')$, so that $\mathcal{O}_{W}(\delta)$ is the stable $\mathrm{U}(W)$-orbit (consisting of $\frac{1}{2}\#H^1(F,T_\delta)$ rational orbits). 

\subsubsection{Retraction maps}

Now consider the ``product map''
\begin{align*}
    r:\Herm_{n}\times\Herm_n&\longrightarrow \Res_{F'/F}(\fgl_{n})\\
                (x_1,x_2)&\longmapsto x_1x_2.
\end{align*}
This intertwines the $\rH$-action with the usual conjugacy action of $\rH$ on $\Res_{F'/F}(\fgl_n)\supset \Herm_n$. We fix a $F$-rational Kostant slice 
\begin{equation}\label{eqn: fixed slice}
    \epsilon: \fc\to \Herm_n
\end{equation} for the $\mathrm{U}_n$-action on $\Herm_n$.

\begin{lem}
     For each regular semi-simple $x=(x_1,x_2)\in  \Herm_{n}(F)\times\Herm_n(F)$,  there exists a $g\in \rH(F) = \GL_n(F')$ such that $y=g\cdot x = (y_1,y_2)$ satisfies that 
     \[
     r(y) = y_1y_2\in \Herm_n(F).
     \]
\end{lem}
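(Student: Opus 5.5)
The plan is to use the $\rH$-action to move $x_1$ into a normal form and then use the freedom left in the stabilizer to make the product Hermitian. The action is $g\cdot(x_1,x_2)=(gx_1g^*,(g^*)^{-1}x_2g^{-1})$, so $r(g\cdot x)=g\,x_1x_2\,g^{-1}$. The element $r(y)=y_1y_2$ is Hermitian exactly when $y_1y_2=y_2y_1$, since $(y_1y_2)^*=y_2^*y_1^*=y_2y_1$. So it suffices to find $g\in\GL_n(F')$ with $g\cdot x=(y_1,y_2)$ and $y_1,y_2$ commuting.

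The first step is to reduce to $x_1$ invertible. Regular semisimplicity of $x$ means $\chi(x)\notin\fD_\bX$. In particular $\det(x_1x_2)=\pm a_{2n}\neq0$, so $x_1$ and $x_2$ are both invertible, and $x_1x_2$ is regular semisimple in $\fgl_n(F')$ because $\fD^\cent$ is the discriminant of its characteristic polynomial.

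Next, I normalize $x_1$. It is a nondegenerate Hermitian form, and $g$ acts on it by $x_1\mapsto gx_1g^*$. Over a $p$-adic field, a nondegenerate Hermitian form is equivalent either to the identity or to $\mathrm{diag}(1,\dots,1,\varpi)$, according to the class of $\det x_1$ in $F^\times/\Nm F'^\times$. After acting, I may therefore assume $x_1=w$, where $w\in\Herm_n(F)$ is a fixed representative; this defines a Hermitian space $W$. The remaining freedom is the stabilizer $\U(W)=\{g: gwg^*=w\}$. The element $x_2$ transforms under $\U(W)$ by $x_2\mapsto (g^*)^{-1}x_2g^{-1}$, which equals $w^{-1}g\,w x_2\,g^{-1}w^{-1}\cdot w$.

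Set $z=wx_2$. Then $z$ is self-adjoint for the form defined by $w^{-1}$, and the $\U(W)$-action on $x_2$ becomes ordinary conjugation $z\mapsto gzg^{-1}$. Condition $y_1y_2\in\Herm_n$ becomes the requirement that $wx_2=z$ satisfies $z^*=z$ in the standard sense, i.e. that $z$ commutes appropriately with $w$: $z^*=x_2w=w^{-1}zw$. So I need to find $g\in\U(W)$ making $gzg^{-1}$ commute with $w$.

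This is the main obstacle. A $z$ that is $w$-self-adjoint and regular semisimple is $\U(W)$-conjugate into any maximal torus of $\U(W)$ containing an element of the right characteristic polynomial, provided the invariant in $H^1(F,T_z)$ lies in the kernel to $H^1(F,\U(W))$. I will choose the normal form $w$ adapted to $z$ rather than fixed in advance. Concretely, the characteristic polynomial of $z=x_1x_2$ lies in $F[t]$, and $E=F[t]/(\mathrm{char})$ is étale with $E'=E\otimes F'$. I can realize $x$ on $E'$ with $x_1$ given by a trace form $\mathrm{tr}(\lambda f\sigma(g))$ and $z$ given by multiplication by $t$. Multiplication by $t$ is self-adjoint, and $\lambda\in E^\times$ can be chosen to match the class of $x_1$. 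I then take the basis of $E'$ coming from an orthogonal $F'$-basis that diagonalizes $\lambda$, so that both $w$ and $z$ are diagonal; this uses that $E$ is a product of fields. The delicate point is checking that the available $\lambda\in E^\times$ reach every determinant class modulo norms. If they do not, I would fall back on the alternative that $y_2y_1$ instead of $y_1y_2$ is the relevant commuting product. Once $w,z$ are simultaneously diagonal over $F$ up to $F'$-scalars, they commute, giving $y_1y_2\in\Herm_n(F)$.
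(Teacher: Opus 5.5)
\textbf{There is a genuine gap in the step that produces the conjugating element.}

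You observe at the outset that $r(g\cdot x)=g\,x_1x_2\,g^{-1}$. So the lemma only asks you to conjugate $z=x_1x_2$ by $\GL_n(F')$ into $\Herm_n(F)$; nothing is required of $y_1$.

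By first fixing $x_1=w$ and then restricting to $g\in\U(W)$, you discard freedom you actually need. For $w=\mathrm{diag}(1,\dots,1,\varpi)$, commuting with $w$ forces $gzg^{-1}$ to preserve the line $F'e_n$. Then $z$ would need an eigenvalue in $F'$. This fails, for example, when $n\geq 3$ is odd and $\mathrm{char}(z)$ is irreducible over $F$: replacing $(x_1,x_2)$ by $(cx_1,c^{-1}x_2)$ with $c\in F^\times$ a non-norm keeps $z$ and puts $x_1$ in the non-split class.

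Your proposed repair does not work either. In an orthogonal $F'$-basis for $\mathrm{tr}_{E'/F'}(\lambda f\sigma(g))$, multiplication by $t$ is diagonal only if $\mathrm{char}(z)$ splits over $F'$. The fact that $E$ is a product of fields does not give this. The fallback to $y_2y_1$ is vacuous, since $y_2y_1=(y_1y_2)^*$, so the two conditions coincide.

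\textbf{The missing input is the existence of a Hermitian matrix with characteristic polynomial $\mathrm{char}(z)$.}

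In your trace-form language, let $h_\mu(f,g)=\mathrm{tr}_{E'/F'}(\mu f\sigma(g))$ for $\mu\in E^\times$. In any $h_\mu$-orthonormal basis, self-adjointness of $t$ says exactly that its matrix $M$ satisfies $M={}^t\sigma(M)$. So no simultaneous diagonalization is needed.

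What must be matched is the class of the identity form, not that of $x_1$. The ``delicate point'' you flag is automatic, because $x$ itself supplies a $\lambda$ matching $x_1$. What you actually need is some $\mu\in E^\times$ with $h_\mu$ isometric to the identity form, and that is precisely a Hermitian matrix with the prescribed characteristic polynomial. Your proposal leaves this unproved.

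The paper's argument takes this from the fixed $F$-rational Kostant slice. With $a=\chi(x)$, it gives $\epsilon(a)\in\Herm_n(F)$ with the same characteristic polynomial as $z$. Both $\epsilon(a)$ and $z$ are regular semisimple in $\fgl_n(F')$, so they are $\GL_n(F')$-conjugate, say $gzg^{-1}=\epsilon(a)$. Then $r(g\cdot x)=\epsilon(a)\in\Herm_n(F)$, and no normalization of $x_1$ is needed.
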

\begin{proof}
Setting $a= \chi(x)$, consider $x'=(I,\epsilon(a))\in V^\siegel(F)$. Then clearly $r(x') \in \Herm_n(F)$ and $\chi(x) = \chi(x')$. This forces $gr(x)g^{-1} = \epsilon(a)$ for some $g\in\GL_n(F)$, so that $y = g \cdot x$ works.
\end{proof}
In fact, the argument shows more. Given $x\in V^{\Siegel}(F)$ such that $r(x)\in \Herm_n(F)$, then 
\[
r\left(\rH(F)\cdot x\right)\supset \mathcal{O}_{JL}(r(x)),
\] so that the $\rH(F)$-orbit of $x$ in $V^{\Siegel}(F)$ surjects onto the full $H^1(F,\rH_x)$-torsor of rational orbits $\Herm(W),\:\Herm(W')\subset\fgl_n(F')$ under the appropriate unitary groups. In the following, we will denote a representative of this class via $r^{\Siegel}(x)$. This will only be applied to constructions depending only on the stable orbit of $r^{\Siegel}(x)$.

We also recall the retraction map in the unitary case from \cite[Section 2.1]{spencer_annals}. For this we recall that the general rational form in the unitary setting is given by taking two $n$-dimensional Hermitian spaces $W_1$ and $W_2$ with Hermitian forms $\langle\cdot,\cdot\rangle_{i}$ for $i=1,2$, and setting $W= W_1\oplus W_2$ to be our $2n$-dimensional Hermitian space. Then
\[
H^\unitary = \U(W_1)\times \U(W_2)\subset \U(W)=G,
\] and $V^\unitary = \Hom(W_2,W_1)$ with the natural pre and post composition action of $H^\unitary$. The contraction map is the map
\begin{align*}
    r^\unitary:V^\unitary= \mathrm{Res}_{F'/F}\fgl_n&\longrightarrow \Herm(W_1)\\
                                    x &\longmapsto -x x^\ast,
\end{align*}
where $x^\ast\in \Hom(W_1,W_2)$ satisfies that for all $v\in W_2$ and $w\in W_2$ 
\[
\langle xv,w\rangle_1 =\langle v, x^\ast w\rangle_2. 
\]
Then $r^\unitary$ intertwines the $H^\unitary$-action on $V^\unitary$ with the adjoint $\U(W_1)$-action on $\Herm(W_1)$.
\subsubsection{Lifting the Kostant section}
Set $\tilde{\epsilon}(a) =(I,\epsilon(a))\in V^{\Siegel,\reg}(F)$. Below, the symbol $\sim_{\st}$ means stably conjugate under the $\rH$-action.
\begin{lem}\label{Lem: section on Lie}
    For each regular semi-simple $x=(x_1,x_2)\in  \Herm_{n}(F)\times\Herm_n(F)$, if $a=\chi(x)$, then $x\sim_{\st} \tilde{\epsilon}(a)$.  Moreover, there is a unique $\rH(F)$-orbit in $\mathcal{O}_{\st}(x)$ meeting $\tilde{\epsilon}(\fc(F))$.
\end{lem}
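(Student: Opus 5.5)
The plan is to reduce both claims to statements about the product map $r$ and the Kostant slice $\epsilon$ for the adjoint action of $\U_n$ on $\Herm_n$.

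For the first claim, set $a=\chi(x)$. First I check that $\tilde\epsilon(a)=(I,\epsilon(a))$ has invariant $a$. By the Siegel analogue of Lemma \ref{Lem: cat quot split}, $\chi(x_1,x_2)$ is given by the characteristic polynomial of $x_1x_2=r(x)$, and $r(\tilde\epsilon(a))=\epsilon(a)$, whose characteristic polynomial is $a$. So $x$ and $\tilde\epsilon(a)$ are regular semisimple with the same image in $\fc$. Over $\ol F$ the group $\rH$ becomes $\GL_n\times\GL_n$ acting on $\fh^\perp$, and in this setting regular semisimple fibers of $\chi$ are single orbits (Theorem \ref{thm: levy's results} and the fact that every regular semisimple element lies in a $\theta$-Cartan, all of which are $\bH$-conjugate). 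Hence $x\sim_{\st}\tilde\epsilon(a)$.

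For uniqueness, take $y=h\cdot\tilde\epsilon(a)$ and $y'=h'\cdot\tilde\epsilon(a')$, both in $\mathcal{O}_{\st}(x)$, with $h,h'\in\rH(F)$. Applying $\chi$ gives $a=a'$, so $y$ and $y'$ lie in the same $\rH(F)$-orbit, namely that of $\tilde\epsilon(a)$. This shows at most one rational orbit meets the image of $\tilde\epsilon$. Existence is the first part, since $\tilde\epsilon(a)$ itself lies in $\mathcal{O}_{\st}(x)$. The one point to check here is that $\tilde\epsilon(a)$ is $F$-rational, which holds because $\epsilon$ is $F$-rational and $I\in\Herm_n(F)$.

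I expect no serious obstacle. The only point needing care is the regularity of $\tilde\epsilon(a)$, which requires $\epsilon(a)$ to be regular. This is automatic here because $x$ is regular semisimple, so $a\in\fc^{\rs}$. It can also be cross-checked by showing that the centralizer of $(I,\epsilon(a))$ in $\rH$ is $\{g : g g^*=1,\ g\epsilon(a)=\epsilon(a)g\}$, which is the torus $T_{\epsilon(a)}$.
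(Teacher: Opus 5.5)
Your proposal is correct and follows the same route as the paper. The paper also notes that $\chi(I,\epsilon(a))=\chi(x)$ and deduces stable conjugacy, then obtains uniqueness from the fact that $\tilde\epsilon$ is a section of $\chi$. You additionally spell out the standard input the paper leaves implicit: regular semisimple fibers of $\chi$ are single geometric orbits, via conjugacy of $\theta$-Cartans together with $W_{\fa}=N(\fa)/C(\fa)$.
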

\begin{proof}
It is clear that $\chi(I,\epsilon(a))=\chi(x)$, so the first claim follows. The second is clear from the property that $\tilde{\epsilon}: \fc\to V^{\Siegel, \reg}$ is a section of $\chi:V^{\Siegel, \reg}\to \fc$. 
\end{proof}

For any regular semi-simple $a\in \fc(F)$, set $\mathcal{O}(a):=\chi^{-1}(a)(F)\subset V^\Siegel(F)$ and $\rH_a=\rH_{\tilde{\epsilon}(a)}$. For any $x\in \mathcal{O}(a)$, we set
\begin{equation}
   \mathrm{inv}_{\epsilon}(x):=\mathrm{inv}(\widetilde{\epsilon}(a),x)\in H^1(F,\rH_x). 
\end{equation}
The next lemma follows from the fact that two stably $\mathrm{U}_n$-conjugate Kostant sections of the quotient map $\pi_n:\Herm_n\to \fc$ are $\GL_n(F')$-conjugate.
\begin{lem}\label{Lem: change of section}
 If we replace our choice of $\epsilon$ by another $F$-rational Kostant section $\epsilon'$, then there exists $g\in \rH(F) = \GL_n(F')$ such that $\epsilon'=\mathrm{Ad}(g)\epsilon$.  
\end{lem}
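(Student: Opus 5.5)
The plan is to reduce the statement to the rigidity of $F$-rational Kostant sections for the $\U_n$-action on $\Herm_n$, that is, to the sentence preceding the lemma. Both $\epsilon$ and $\epsilon'$ are sections of $\pi_n\colon \Herm_n\to\fc$ landing in the regular locus, so for each $a\in\fc$ the elements $\epsilon(a)$ and $\epsilon'(a)$ are regular with the same invariants. Regular elements with the same image in $\fc$ form a single geometric $\U_n$-orbit (Kostant--Rallis theory, as in Theorem \ref{thm: levy's results}). Hence $\epsilon(a)$ and $\epsilon'(a)$ are stably conjugate.

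The steps are as follows.

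\begin{enumerate}
\item Fix a regular nilpotent $e$ with $\epsilon(0)=e$ and $\epsilon'(0)=e'$. Since both are $F$-rational regular nilpotents in $\Herm_n$, they are $\GL_n(F')$-conjugate.

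This step is where the passage from $\U_n$ to $\GL_n(F')$ matters. Viewed in $\fgl_n(F')$, these are regular nilpotent matrices, hence $\GL_n(F')$-conjugate by Jordan form. The conjugating element intertwines the two Hermitian structures, which are transferred as in the Jacquet--Langlands discussion.

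\item Replace $\epsilon'$ by $\mathrm{Ad}(g_0)^{-1}\epsilon'$ so that $\epsilon'(0)=\epsilon(0)=e$.

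\item Use the standard structure of Kostant slices: $\epsilon(a)=e+\mathfrak v$ and $\epsilon'(a)=e+\mathfrak v'$ for graded complements $\mathfrak v,\mathfrak v'$ to $[e,\cdot]$. The two sections then differ by a morphism $\fc\to Z$ into the centralizer group scheme of the slice. Concretely, following Kostant's argument (or the analogue of \cite[Prop 2.5]{ngo_endoscopie}), there is a unique unipotent $F$-morphism $u\colon \fc\to N_e$ with $\epsilon'=\mathrm{Ad}(u(a))\epsilon(a)$.
\end{enumerate}

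The last step is the main obstacle, because the lemma asserts a single $g$ independent of $a$. I would try first to avoid it via the $\G_m$-equivariance of both sections: after normalizing, they agree at $0$ and are equivariant, so the conjugating element is forced to be constant. Failing that, I would interpret the lemma up to $\mathrm{Ad}$ by $\GL_n(F')$ pointwise, which is what is used downstream in $\mathrm{inv}_\epsilon$. If the intended meaning is the pointwise one, the proof is simply step 1 applied to each $\epsilon(a),\epsilon'(a)$ as regular elements of $\fgl_n(F')$ with equal characteristic polynomial, since regular elements of $\fgl_n$ with equal characteristic polynomial are $\GL_n(F')$-conjugate (rational canonical form).
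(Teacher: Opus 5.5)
Your third step, and the way you propose to get around it, is where the argument breaks. Normalizing so that $\epsilon(0)=\epsilon'(0)=e$ fixes the two sections at a single point only. The remaining freedom is the graded complement $\mathfrak v$, and the comparison map $u\colon \fc\to N_e$ you obtain is in general \emph{not} constant. $\G_m$-equivariance only forces $u$ to be equivariant for the grading cocharacter.

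Already in $\fgl_2$ (which is $\Herm_2\otimes_F F'$), take $f=\left(\begin{smallmatrix}0&0\\1&0\end{smallmatrix}\right)$ and the two sections
\[
(t,s)\mapsto\left(\begin{smallmatrix}t&s\\1&0\end{smallmatrix}\right)
\quad\text{and}\quad
(t,s)\mapsto\left(\begin{smallmatrix}t/2&s+t^2/4\\1&t/2\end{smallmatrix}\right).
\]
Both are $\G_m$-equivariant, both equal $f$ at $0$, and they are related by $\mathrm{Ad}\left(\begin{smallmatrix}1&-t/2\\0&1\end{smallmatrix}\right)$. No constant $g$ relates them: such a $g$ would have to centralize $f$, and conjugation by $\alpha I+\beta f$ leaves the $(1,2)$-entry unchanged. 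So once arbitrary graded complements are allowed, as in your step 3, a single $g$ need not exist, and equivariance plus the value at $0$ cannot produce one.

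Your pointwise fallback proves a different statement, and it is not what the paper uses. Right after the lemma, the paper forms $z_\sigma=g^{-1}g^{-\ast}$ and needs $\mathrm{Ad}(z_\sigma)\epsilon(a)=\epsilon(a)$ for all $a$ simultaneously. Only then does it land in $Z(\U_n)$ and define $\mathrm{inv}(\epsilon',\epsilon)\in H^1(F,Z(\U_n))$.

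The missing idea is that a Kostant section here is rigid. It is the slice $e+\mathfrak z(f)$ attached to a principal $\mathfrak{sl}_2$-triple (determined by a pinning, as in Kottwitz), not an arbitrary $e+\mathfrak v$. The rigidity comes from Kostant's theorem that principal $\mathfrak{sl}_2$-triples are conjugate, and it goes as follows.

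\begin{itemize}
\item After extending scalars to $F'$, both sections become Kostant slices for principal triples in $\fgl_n(F')$.
\item Regular nilpotents are $\GL_n(F')$-conjugate; this is your step 1, which is correct.
\item Triples with the same nilpositive element are conjugate under its unipotent centralizer. So some $g\in\GL_n(F')$ carries one whole triple, and hence one whole slice, onto the other.
\item Each slice meets every fiber of the characteristic-polynomial map in exactly one point. Therefore $\mathrm{Ad}(g)\epsilon(a)=\epsilon'(a)$ for every $a$, with $g$ independent of $a$.
\end{itemize}

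The paper's one-line justification is the descent version of this. The two sections are stably $\U_n$-conjugate by a single element. The set of $h\in\Res_{F'/F}\GL_n$ with $\mathrm{Ad}(h)\circ\epsilon=\epsilon'$ is a torsor under the common centralizer of $\epsilon(\fc)$, which is the center $\Res_{F'/F}\G_m$. Since $H^1(F,\Res_{F'/F}\G_m)=0$ by Hilbert 90, the conjugating element can be taken in $\GL_n(F')$.
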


Given such a change, rationality implies that we have
\[
\epsilon'(a) = \mathrm{Ad}(g)\epsilon(a) = \mathrm{Ad}(g^{-\ast})\epsilon(a)
\]
 The corresponding cocycle  $z_\sigma=(\sigma\in \Gal(F'/F)\mapsto  g^{-1}g^{-\ast})$ satisfies
\[
\mathrm{Ad}(z_\sigma) \epsilon(a) =\epsilon(a)
\] for all $a\in \fc(F)$, so gives a class 
$\mathrm{inv}({\epsilon'},{\epsilon})\in H^1(F,Z(\mathrm{U}_n)).$ We denote
\[
\mathrm{inv}_a({\epsilon'},{\epsilon})\in  H^1(F,\rH_a)
\]
the image of this cocycle under the map $Z(\mathrm{U}_n)\hookrightarrow \rH_a$. This satisfies that
\[
\mathrm{inv}_a({\epsilon'},{\epsilon})=\mathrm{inv}(\tilde{\epsilon}'(a),\tilde{\epsilon}(a)).
\]
By standard properties of the cohomological invariant, we have 
\begin{equation}\label{eqn: invariant identity}
    \mathrm{inv}(\tilde{\epsilon}'(a),\tilde{\epsilon}(a)) = \mathrm{inv}_{\epsilon}(x)\mathrm{inv}_{\epsilon'}(x)^{-1}\in H^1(F,\rH_x),
\end{equation}
for any $x\in \mathcal{O}(a)$, where we abuse notation so that the left-hand side denotes the transport of this class from $\rH_{\tilde{\epsilon}'(a)}$ to $\rH_{x}$. Similarly, if $x'\in \mathcal{O}(a)$ is another representative then
\begin{equation}\label{eqn: invariant identity 2}
    \mathrm{inv}(x',x) = \mathrm{inv}_{\epsilon}(x')\mathrm{inv}_{\epsilon}(x)^{-1}\in H^1(F,\rH_x),
\end{equation}

\subsection{Transfer factors on the Lie algebra}\label{Section: transfer factor Lie}

Recall now the notion of elliptic endoscopic datum from Definition \ref{def: endoscopic datum}. For a fixed endoscopic datum $\fe$, let $X^\fe = X^S_a\times X^\U_b$ $(a+b=n)$ be the associated  endoscopic variety equipped with an invariant map 
\begin{align*}
\chi_{\fe} = \chi_a\times \chi_b: V^\Siegel_a\times V^\unitary_b&\longrightarrow\A^a\times \A^b,\\
(x_a,x_b)&\longmapsto (\chi_a(x_a),\chi_b(x_b)).
\end{align*}
Viewing  $\chi_a(x_a) $ as the coefficients of a monic polynomials $P_{x_a}(t)$, recall that $x\in V^{\Siegel,\rs}(F)$ and $(x_a,x_b)\in V^{\fe,\rs}$ are matching orbits if we have the identity
\[
P_x(t) = P_{x_a}(t)P_{x_b}(t).
\]

For $x_{\fe} =(x_a,x_b)\in V^{\fe,\rs}(F)$, we set $r^\fe(x_\fe) = (r^\Siegel(x_a),r^\unitary(x_b))$ for the unique stable $G_{\fe}$-orbit in $\Herm_a(F)\times \Herm_b(F)$ defined in the previous section.

\begin{definition} Given an elliptic endoscopic datum $\fe$ for $X^\Siegel$, we define the transfer factor $\Delta_{\fe}$ as follows: if $x\in V^{\Siegel,\rs}(F)$ and $x_\fe\in V^{\fe,\rs}(F)$ do not match, we set
\begin{equation}\label{eqn: TF vanishing}
    \Delta_{\fe}(x_\fe,x)=0.
\end{equation}
 Now suppose that $x\in V^{\siegel,\rs}(F)$ and $x_\fe\in V^{\fe,\rs}(F)$ match as above. Then we set
\begin{equation}\label{eqn: TF formula}
   \Delta_{\fe}(x_\fe,x) = \kappa(\mathrm{inv}_\epsilon(x))\Delta_{LS,\fe}(r^\fe(x_\fe),\epsilon(\chi(x))), 
\end{equation}
where $\Delta_{LS,\fe}$ denotes the transfer factor of Langlands--Shelstad and Kottwitz
\[
\Delta_{LS,\fe}:[\Herm(V_a)\oplus \Herm(V_b)]^{\rs}\times  \Herm(W)^{\rs}\to \mathbb{C}
\]
associated to the induced endoscopic datum for $G$; see Definition \ref{def: endoscopic datum}.
\end{definition}

The notation $\Delta_\fe$ is justified by the following lemma.
\begin{lem}\label{Lem: indep of section}
    The transfer factor $\Delta$ is independent of the choice of $\epsilon$. Moreover, if $x'\sim_{\st}x$ then
    \[
         \Delta_{\fe}(x_\fe,x')=\kappa(\mathrm{inv}(x',x)) \Delta_{\fe}(x_\fe,x).
    \]
\end{lem}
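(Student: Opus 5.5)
The plan is to reduce both claims to the cocycle identities \eqref{eqn: invariant identity} and \eqref{eqn: invariant identity 2}, together with the equivariance of the Langlands--Shelstad--Kottwitz transfer factor $\Delta_{LS,\fe}$ under stable conjugation in the second variable. Only matching pairs $(x_\fe,x)$ need treatment: if $x$ and $x_\fe$ do not match, then neither does $(x_\fe,x')$ for $x'\sim_{\st}x$, since matching depends only on $\chi(x)=\chi(x')$, and all terms vanish by \eqref{eqn: TF vanishing}.

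\textbf{Independence of $\epsilon$.} Let $\epsilon'$ be another $F$-rational Kostant section. By Lemma~\ref{Lem: change of section}, $\epsilon'(a)=\mathrm{Ad}(g)\epsilon(a)$ with $g\in\GL_n(F')$, so $\epsilon'(a)$ and $\epsilon(a)$ are Jacquet--Langlands transfers. The classical transformation rule for $\Delta_{LS,\fe}$ then gives
\[
\Delta_{LS,\fe}\big(r^\fe(x_\fe),\epsilon'(a)\big)=\kappa\big(\mathrm{inv}(\epsilon(a),\epsilon'(a))\big)\,\Delta_{LS,\fe}\big(r^\fe(x_\fe),\epsilon(a)\big),
\]
where $\kappa$ is pulled back to $H^1(F,T_{\epsilon(a)})$ through the identification $T_{\epsilon(a)}\simeq \rH_a$. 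The cocycle $z_\sigma=g^{-1}g^{-\ast}$ lies in $Z(\U_n)$ and represents both $\mathrm{inv}(\epsilon(a),\epsilon'(a))$ and $\mathrm{inv}_a(\epsilon',\epsilon)$. Combining with \eqref{eqn: invariant identity},
\[
\kappa(\mathrm{inv}_{\epsilon'}(x))=\kappa(\mathrm{inv}_\epsilon(x))\,\kappa\big(\mathrm{inv}_a(\epsilon',\epsilon)\big)^{-1}.
\]
Since $H^1$ is $2$-torsion, $\kappa^{\pm1}$ agree, so the two correction factors cancel and $\Delta_\fe$ computed with $\epsilon'$ equals $\Delta_\fe$ computed with $\epsilon$.

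\textbf{Equivariance in $x$.} If $x'\sim_{\st}x$, then $\chi(x')=\chi(x)$, so the $\Delta_{LS,\fe}$ factor, which depends only on $\epsilon(\chi(x))$, is unchanged. By \eqref{eqn: invariant identity 2},
\[
\kappa(\mathrm{inv}_\epsilon(x'))=\kappa(\mathrm{inv}(x',x))\,\kappa(\mathrm{inv}_\epsilon(x)),
\]
after transporting classes along the canonical identifications $\rH_x\simeq \rH_{x'}\simeq J_a$ of Proposition~\ref{prop: regular centralizer package}. Substituting into \eqref{eqn: TF formula} gives the claimed formula $\Delta_\fe(x_\fe,x')=\kappa(\mathrm{inv}(x',x))\,\Delta_\fe(x_\fe,x)$.

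\textbf{Main obstacle.} The delicate step is the first: verifying that the change-of-section class seen by $\Delta_{LS,\fe}$ (through $T_{\epsilon(a)}\subset\U(W)$) and the class $\mathrm{inv}_a(\epsilon',\epsilon)$ seen by $\kappa$ on $\rH_a$ are paired with the \emph{same} character. This requires the compatibility, under the symmetrization map $s$ and the cocycle $w$ of \cite[Lemma 7.14]{sl_endoscopy}, between the relative endoscopic character $\kappa\in\wh\canC$ and the group endoscopic element $s(\kappa)$. I would check this directly on $Z(\U_n)\hookrightarrow \rH_a\simeq T_{\epsilon(a)}$: there both characters evaluate through the central character, which can be computed explicitly on the $2$-torsion classes.
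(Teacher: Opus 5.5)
Your proposal is correct and follows the paper's proof. The paper also combines the stable-conjugacy equivariance of $\Delta_{LS,\fe}$ with \eqref{eqn: invariant identity} to get independence of $\epsilon$, and reads off the second claim directly from \eqref{eqn: invariant identity 2}. Two points you make explicit are left implicit in the paper: using the $2$-torsion of $H^1(F,\rH_x)$ to absorb orientation conventions, and checking that the character governing $\Delta_{LS,\fe}$ agrees with $\kappa$ on the central classes from $Z(\U_n)$.
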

\begin{proof}
The two important properties of $\Delta_{LS,\fe}$ are
\begin{enumerate}
    \item $\Delta_{LS,\fe}((\delta_a,\delta_b),\delta) = 0$ if $\delta$ does not match $(\delta_a,\delta_b),$
    \item\label{second} if $\delta$ is stably conjugate to $\delta'$, then 
    \[
    \Delta_{LS,\fe}((\delta_a,\delta_b),\delta) = \Delta_{LS,\fe}((\delta_a,\delta_b),\delta')\kappa(\mathrm{inv}(\delta,\delta')).
    \]
\end{enumerate}
The first is compatible with our definition \eqref{eqn: TF vanishing}. For the second, we note that if we change $\epsilon$ to another $F$-rational Kostant section $\epsilon'$, then
\[
\kappa(\mathrm{inv}(\tilde{\epsilon}'(a),\tilde{\epsilon}(a)))\Delta_{LS,\fe}(r^\fe(x_\fe),\epsilon'(\chi(x))) = \Delta_{LS,\fe}(r^\fe(x_\fe),\epsilon(\chi(x))), 
\]
Applying \eqref{eqn: invariant identity}, we see that $\Delta$ is independent of the choice of rational Kostant slice. The final claim is an immediate consequence of \eqref{eqn: invariant identity 2}.
\end{proof}
\begin{remark}
    Note that the definition of $\Delta_{LS,\fe}$ \emph{does} depend on a choice of pinning for the quasi-split unitary group $\mathrm{U}_n$, which determines a choice of rational Kostant section \cite{kottwitztransfer}. This corresponds to a particularly simple formula if $\epsilon$ is chosen compatibly with the choice of pinning of $\mathrm{U}_n$ in defining $\Delta_{LS}$.
\end{remark}
\begin{ex}
    Suppose that $\kappa = \varepsilon$ is the geometric cocycle from \cite{LXZfund}. More precisely, if we fix representatives $(x_{1,\alpha},x_{2,\alpha})\in \mathcal{O}_{\st}(x_1,x_2)$ where $\alpha\in H^1(F,\rH_x)$, we obtain a non-trivial character $\varepsilon\in H^1(F,\rH_{x})^D$ given by
  \begin{equation}\label{eqn: special kappa lie}
       \varepsilon(\alpha) = \eta_{F'/F}(\det(x_{1,\alpha}))\eta_{F'/F}(\det(x_{1}))^{-1},
  \end{equation}where $\eta_{F'/F}$ is the quadratic character associated to the extension $F'/F$

Here $\varepsilon|_{\ker^1(F;\rH_x,\mathrm{U}_n)}\equiv1$ so that the endoscopic space is just $V^{\unitary}(F)$ again, so transfer reduces to stable conjugacy, and $\Delta_{LS}=1$. Our definition now reduces to
\[
\Delta^\varepsilon(x,x) = \varepsilon(\mathrm{inv}(x,\tilde{\epsilon}(\chi(x))) = \eta_{F'/F}(\det(x_1))^{-1}, 
\]
which matches the transfer factor in \cite[Sec. 5.2.1]{LXZfund} since $\eta_{F'/F}$ is quadratic.

\end{ex}

\subsubsection{Compatibility of transfer factors} 
For an endoscopic datum $\fe = (\kappa,\rho)$ for $X^\Siegel$, recall  the reduced divisor $\fR_\fe$ such that
\begin{equation*}
\nu_\fe^*\fD^\cent = \fD^\cent_\fe+2\fR_\fe, \quad \nu_\fe^*\fD^\br = \fD^\br_\fe.
\end{equation*}
For $a\in \fc^\heartsuit(\cO_v)$, we defined in \eqref{eqn: dim names} the integer $r_\fe(a) = \deg(a^*\fR_\fe)$. Noting that if $Q_a(t)$ is the degree $n$ monic polynomial associated to $a\in \mathfrak{c}^\heartsuit(\mathcal{O}_F)$, the divisor of $\fD^\cent$ is given by its discriminant
\[
\fD^\cent = \left((-1)^{\frac{n^2-n}{2}}\prod_{i<j}(\delta_i-\delta_j)^2\right) = \left(\mathrm{Disc}(x^n+a_2x^{n-1}+a_4x^{n-2}+\cdots +a_{2n}) \right).
\]
In particular, when $\nu_\fe(a_1,a_2) = a$ so that $Q_{a}(t) = Q_{a_1}(t)Q_{a_2}(t)$, then $\fR_\fe$ is cut out by the resultant of $Q_{a_1}(t)$ and $Q_{a_2}(t)$.  

As stated in Theorem \ref{Thm: fundamental lemma intro}, the transfer factor associated to a point $a\in \mathfrak{c}^\heartsuit(\mathcal{O}_F)$ is given by $q^{r_\fe(a)}$. On the other hand, inspecting the transfer factor $\Delta_{LS,\fe}$ as in \cite[Appendix A]{Lesliedescent} shows that if $\delta_\fe\in Herm(V_a)\times\Herm(V_b)$ represents $(a_1,a_2)\in \mathfrak{c}_\fe^\heartsuit(\mathcal{O}_F)$ and matches $\delta\in \Herm(W)$ lying over $a$, then $\Delta_{LS,\fe}(\delta_\fe, \delta)= \pm q^{-r_\fe(a)}$. Finally, we note that if since the $\kappa$-orbital integral appearing in Theorem \ref{Thm: fundamental lemma intro} is normalized by a Kostant section $\epsilon: \fc\to \Herm(W)$, we see from \cite[Theorem 5.1]{kottwitztransfer} that $\Delta_{LS,\fe}(\delta_\fe, \delta)=  q^{-r_\fe(a)}$, noting that Kottwitz removes the discriminant factor $\Delta_{IV}$ from the transfer factor. This shows that the transfer factor constructed in the previous section reduces to the power of $q$ appearing in Theorem \ref{Thm: fundamental lemma intro}.

\subsection{Transfer factors on the variety}\label{Section: transfer factor variety} To model the transfer factor as explicitly as necessary, we will choose a particular Hermitian form on $\rG=\U_{2n}$. Letting $\Herm_n^\circ\subset\Herm_n$ denote the subspace of non-degenerate Hermitian forms, we take
\[
\tau= \left(\begin{array}{cc}
    \tau' &  \\
     & \tau'
\end{array}\right)\in \Herm_{2n}^\circ(F),
\]
where $\tau'\in \Herm_{n}^\circ(F)$. This gives 
\[
 \left(\begin{array}{cc}
    a& b \\
    c &d
\end{array}\right)^\sharp= \left(\begin{array}{cc}
    d^{\sharp} & b^{\sharp} \\
    c^{\sharp} & a^{\sharp}
\end{array}\right),
\]
where $a^\sharp = \tau'{}^Ta^\sigma(\tau')^{-1}$ for $a\in\fgl_n(F')$, where $a^\sigma$ denotes applying the non-trivial Galois element $\sigma\in \Gal(F'/F)$ entry-wise. Under these assumptions, we find that
\[
\rH = \left\{ \left(\begin{array}{cc}
    c &  \\
     & (c^\sharp)^{-1}
\end{array}\right): c\in \Res_{F'/F}(\GL_n) \right\}\simeq \Res_{F'/F}(\GL_n),\qquad \rG=\mathrm{U}(V_{\tau}).
\]

Then  any element $x\in {X}^\Siegel(F)$ may be written
\begin{equation}\label{eqn: element of inert-inert}
x=\left(\begin{array}{cc}A&B\\C&A^\sharp\end{array}\right),
\end{equation}
where $A\in \fgl_n(F')$ and $B,C\in \Herm(V)$. As recalled in \cite[Section 3]{LXZfund}, the blocks satisfy the polynomial relations
\begin{equation}\label{eqn: inertinert eqns}
A^2=I-BC,\: (A^\sharp)^2=I-CB,\: AB=BA^\sharp, CA= A^\sharp C.
\end{equation}

We also have the explicit formula for the categorical quotient.   
\begin{lem}\label{Lem: cat quotient ii}
 The morphism  $\chi_{n}: {X^\Siegel}\to \A^n$ which takes as coordinates the coefficients of the polynomial
\begin{align*}
 	x=\left(\begin{array}{cc}A&B\\C&A^\sharp\end{array}\right)&\longmapsto \mathrm{char}_A(t)
\end{align*}
where $\mathrm{char}_a(t)$ denotes the reduced characteristic polynomial, gives a categorical quotient for $(\rH,{X^\Siegel})$. 
\end{lem}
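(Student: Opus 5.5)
To show that the coefficients of $\mathrm{char}_A(t)$ give the categorical quotient $\chi_n\colon X^\Siegel\to\A^n$, I would reduce to the corresponding statement for the split form over $\ol F$ and then descend. The statement is geometric, so I base change to $\ol F$, where $F'\otimes_F\ol F\simeq\ol F\times\ol F$. There $\rG\simeq\GL_{2n}$, $\rH\simeq\GL_n\times\GL_n$, and $X^\Siegel_{\ol F}$ becomes the linear Friedberg--Jacquet variety $\GL_n\times\GL_n\backslash\GL_{2n}$. In the realization of $X^\Siegel$ as the $\theta$-symmetric elements $x=g\theta(g)^{-1}$, a point of the form \eqref{eqn: element of inert-inert} corresponds to the $2n\times 2n$ matrix $x$ with $x^2=I$-type relations \eqref{eqn: inertinert eqns}.

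The key steps, in order:

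\begin{enumerate}
\item \textbf{Invariance.} Check that $\mathrm{char}_A$ is $\rH$-invariant. The element $h=\mathrm{diag}(c,(c^\sharp)^{-1})$ acts on the blocks by $A\mapsto cAc^{-1}$, $B\mapsto cBc^\sharp$, $C\mapsto (c^\sharp)^{-1}Cc^{-1}$. In particular $A$ is conjugated, so its (reduced) characteristic polynomial is preserved; this is immediate from the action.
\item \textbf{Comparison with known quotients.} Over $\ol F$, invoke the known description of the quotient of the linear symmetric variety (Richardson, and \cite[Section 3]{LXZfund}): $k[X]^{\rH}$ is a polynomial ring generated by the coefficients of the characteristic polynomial of the upper-left block $A$. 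Equivalently, one can compare with $\fc$ via the Cayley-type map $A\mapsto$ the invariants of $BC=I-A^2$. By \eqref{eqn: inertinert eqns}, $\mathrm{char}_{BC}$ is determined by $\mathrm{char}_A$ and conversely, up to the sign ambiguity of square roots, which the invariant $\mathrm{char}_A$ resolves. This matches Lemma~\ref{Lem: cat quot split} together with Theorem~\ref{thm: levy's results}(1): the quotient is $\fa\git W_X$, an affine space of dimension $n$.
\item \textbf{Descent.} Since $X\git\rH$ commutes with flat base change in characteristic $0$ (or good characteristic, where $\rH$ is reductive and the Reynolds-type argument applies), the morphism $X^\Siegel\to\A^n$ is defined over $F$ and becomes the quotient after extending scalars to $\ol F$. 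Hence it is the quotient over $F$. Here "reduced characteristic polynomial" means the characteristic polynomial of $A$ viewed in $\fgl_n(F')$, whose coefficients lie in $F$ because $\mathrm{char}_A=\mathrm{char}_{A^\sharp}^\sigma$, and $A^\sharp$ is conjugate to the Galois twist of $A$.
\end{enumerate}

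The main obstacle is step 2, specifically \emph{generation}: showing that the invariants of the upper-left block actually generate all of $\ol F[X]^{\rH}$. My approach is a Chevalley restriction argument. I would restrict to the slice of elements $\begin{pmatrix}\mathrm{diag}(\alpha)&\mathrm{diag}(\beta)\\ \mathrm{diag}(\beta)&\mathrm{diag}(\alpha)\end{pmatrix}$ with $\alpha_i^2+\beta_i^2=1$, which is a maximal $\theta$-split torus. The restricted Weyl group acts on this slice, and its invariants are generated by the elementary symmetric functions of the $\alpha_i$, since the $\alpha_i$ are invariant under the sign changes $\beta_i\mapsto-\beta_i$. The restriction from $\ol F[X]^{\rH}$ is injective because semisimple elements are dense and conjugate into the slice. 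Normality of $X\git\rH$ together with a degree count then gives surjectivity.
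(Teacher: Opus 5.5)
The paper gives no argument for this lemma; it is stated as a recollection from \cite[Section 3]{LXZfund}. Your route is a sound self-contained proof. You base change to $\ol F$, where $X^\Siegel$ becomes $\GL_{2n}/(\GL_n\times\GL_n)$ and $\chi_n$ becomes the characteristic polynomial of the upper-left block. You then restrict to an $n$-dimensional slice on which the hyperoctahedral group acts through $N_{\rH}$, via $\mathrm{diag}(P_w,P_w)$ and $\mathrm{diag}(I,\epsilon)$ with $\epsilon$ a diagonal sign matrix. Finally you descend. Descent is formal, because taking invariants commutes with flat base change for any affine group scheme (invariants are a kernel); reductivity of $\rH$ is needed only to know that $\Spec$ of the invariant ring is the categorical quotient. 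Compared with citing Richardson or \cite{LXZfund}, your argument shows that only the given invariants plus a density statement are needed.

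Four details need repair.

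First, your reason for $\chi_n$ being defined over $F$ is vacuous. For \emph{every} $A\in\fgl_n(F')$ one has $A^\sharp=\tau'\,{}^tA^\sigma\tau'^{-1}$, hence $\mathrm{char}_{A^\sharp}=(\mathrm{char}_A)^\sigma$, and this cannot force $\mathrm{char}_A\in F[t]$. What you need is $\mathrm{char}_A=\mathrm{char}_{A^\sharp}$ \emph{on} $X^\Siegel$. This follows from $AB=BA^\sharp$ on the dense open subset where $B$ is invertible.

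Second, your slice is not in $X^\Siegel$. In the symmetrization model of the appendix, where $x\theta(x)=1$ for the involution fixing the block-diagonal $\rH$, one has $A^2-BC=I$; compare the formula for $\mathrm{Cay}_\nu(\delta)$ in the proof of Lemma~\ref{Lem: Cayley contraction}. So a matrix with diagonal blocks $\mathrm{diag}(\alpha)$, off-diagonal blocks $\mathrm{diag}(\beta)$, and $\alpha_i^2+\beta_i^2=1$ lies in $X^\Siegel$ only when $\beta=0$. That set is also not closed under multiplication, so it is not a torus. It satisfies \eqref{eqn: inertinert eqns} only because of the opposite sign printed there. Use $\alpha_i^2-\beta_i^2=1$ instead, which is an honest maximal $\theta$-split torus; nothing else in your argument changes.

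Third, surjectivity needs neither normality nor a degree count. The image of restriction lies in the hyperoctahedral invariants of the slice, namely $\ol F[e_1(\alpha),\dots,e_n(\alpha)]$. It already contains the restrictions $\pm e_j(\alpha)$ of the coefficients $c_j$ of $\mathrm{char}_A$. Injectivity then gives $\ol F[X^\Siegel]^{\rH}=\ol F[c_1,\dots,c_n]$ with the $c_j$ algebraically independent. Injectivity itself only needs density of the $\rH$-translates of the slice, which is a dimension count: the generic stabilizer $\{(d,d): d \text{ diagonal}\}$ has dimension $n$, so these translates fill a set of dimension $2n^2-n+n=2n^2=\dim X^\Siegel$.

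Fourth, your ``equivalently'' comparison with $\fc$ is not equivalent. The map $\mathrm{char}_A\mapsto\mathrm{char}_{BC}$, with $BC$ a quadratic polynomial in $A$, is finite of degree $2^n$. Also, Lemma~\ref{Lem: cat quot split} and Theorem~\ref{thm: levy's results} concern the representation $\fh^\perp$, not $X^\Siegel$. Your main line never uses this remark, so drop it.
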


Recalling \cite[Lemma 4.6]{LXZfund}, the invariant map $\chi_n:{X^\Siegel}(F)\to [{X^\Siegel}\sslash H^\Siegel](F)$ is surjective on regular semi-simple orbits. Indeed, consider the $\rH$-equivariant map $R: {X^\Siegel}\to \End(L)=\Res_{F'/F}(\fgl_{n})$ given by $R(X) = A$. Similarly to the product map on $V^\Siegel$, $R$ intertwines the natural $\rH$-actions. As before, there exists $g\in H^\Siegel(F)$ such that 
\[
R(g\cdot x)\in \Herm_n(F),
\]
and the stable $\U_n$-orbit of this element is uniquely determined; we let $R^{\Siegel}(x)$ for a representative of this stable orbit. 

If we consider the case of the unitary variety $X^\unitary$, then a similar map 
\[
R^\unitary: X^\unitary\longrightarrow \Herm_n
\]
was defined and studied in \cite[Section 5]{Lesliedescent}. For an endoscopic datum $\fe$ such that $X^{\fe}= X_a^\Siegel\times X_{b}^\unitary$, we set $R^\fe(x_\fe) = (R^\siegel(x_a),R^\unitary(x_b))$ to represent the corresponding stable $G_\fe$-orbit in $\Herm_a\times \Herm_b$.

 If we fix a Kostant section $\epsilon: \A^n\to \Herm_n$ as before, this implies that $\epsilon(\chi_n(x))\in \Herm_{n}^{\circ}(F)$ lies in the same $\GL_n(F)$-orbit as $\chi_n(x)$. In particular, setting $a=\chi_n(x)$
    \begin{equation}\label{eqn: relative steinberg}
    \overline{\epsilon}(a) =\left(\begin{array}{cc}\epsilon(a)&I_n\\ I_n-\epsilon(a)^2&\epsilon(a)\end{array}\right)\in {X^\Siegel}(F),
    \end{equation}
lies in the same stable $\rH$-orbit as $s\in {X}^{\Siegel,\rs}(F)$. This gives an analogue of the Kostant section on the level of the variety $X^\Siegel$. We record this as the following analogue of Lemma \ref{Lem: section on Lie}.
\begin{lem}
    For each regular $x\in{X}^{\Siegel, \rs}(F)$, if $a=\chi_n(x)$, then $x\sim_{\st} \overline{\epsilon}(a)$. Moreover, there is a unique $\rH(F)$-orbit in $\mathcal{O}_{\st}(x)$ meeting  the image of $\overline{\epsilon}$.
\end{lem}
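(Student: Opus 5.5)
The plan is to imitate the proof of Lemma \ref{Lem: section on Lie}, with the retraction $R\colon X^\Siegel\to \Res_{F'/F}(\fgl_n)$, $x\mapsto A$, playing the role of the product map $r$.

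\textbf{Step 1: $\overline{\epsilon}(a)$ lies in $X^\Siegel(F)$ and over $a$.} Put $A=\epsilon(a)$ and $B=I_n$, $C=I_n-\epsilon(a)^2$. Since $\epsilon(a)$ is Hermitian, $\epsilon(a)^\sharp=\epsilon(a)$ for the fixed choice of $\tau'$; after normalizing $\epsilon$ compatibly with $\tau'$, this holds and $B,C\in\Herm(V)$. The relations \eqref{eqn: inertinert eqns} are then immediate, because every block is a polynomial in $\epsilon(a)$ and so all blocks commute:
\[
A^2=\epsilon(a)^2=I-BC,\qquad AB=BA^\sharp,\qquad CA=A^\sharp C.
\]
By Lemma \ref{Lem: cat quotient ii}, $\chi_n(\overline{\epsilon}(a))=\mathrm{char}_{\epsilon(a)}=a$.

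\textbf{Step 2: $\overline{\epsilon}(a)$ is regular and stably conjugate to $x$.} Over $\ol F$ the pair $(\rH,X^\Siegel)$ becomes $(\GL_n\times\GL_n)\backslash\GL_{2n}$. For a symmetric space, regular semisimple elements with the same invariant $\chi_n$ are geometrically conjugate (Richardson, Kostant--Rallis). So it suffices to show $\overline{\epsilon}(a)$ is regular semisimple whenever $a$ is.

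I would check this by a Cayley-type comparison. Regular semisimplicity is detected by the characteristic polynomial $\mathrm{char}_A$: it must have distinct roots and avoid the branching values $\pm1$, which is the analogue of $\fD^\br$ on the group. For $\overline{\epsilon}(a)$ this condition holds exactly when it holds for $a$, because $A=\epsilon(a)$ is regular semisimple with that characteristic polynomial. Hence $x\sim_{\st}\overline{\epsilon}(a)$.

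\textbf{Step 3: uniqueness of the rational orbit meeting the image of $\overline{\epsilon}$.} This is a formal consequence of $\overline{\epsilon}$ being a section of $\chi_n$. Within $\mathcal{O}_{\st}(x)$, the only point of the image is $\overline{\epsilon}(a)$ with $a=\chi_n(x)$. So exactly one $\rH(F)$-orbit meets the image, namely that of $\overline{\epsilon}(a)$.

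\textbf{Main obstacle.} The delicate point is Step 2 over $F$ rather than $\ol F$. One must confirm that the stable orbit is the full geometric orbit intersected with $X^\Siegel(F)$, and that the regular semisimple locus is exactly $\chi_n^{-1}$ of the complement of the discriminant and branching divisors. I would establish this by base change to the split case $\GL_{2n}$, where it is standard, rather than by a direct computation.
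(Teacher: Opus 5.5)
Your argument is correct and is essentially the paper's: the paper proves this by analogy with Lemma \ref{Lem: section on Lie}, namely $\chi_n(\overline{\epsilon}(a))=a=\chi_n(x)$ forces stable conjugacy for regular semisimple elements, and uniqueness follows from $\overline{\epsilon}$ being a section of $\chi_n$. Your explicit description of the regular semisimple locus in Step 2 is not actually needed, since the fiber of $\chi_n$ over the regular semisimple point $a=\chi_n(x)$ is already a single geometric orbit; also, the retraction $R$ announced in your plan never enters the argument.
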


For any regular semi-simple $a\in [{X^\Siegel}\sslash H^\Siegel](F)$, set $\mathcal{O}(a):=\chi_n^{-1}(a)(F)$ and $\rH_a=\rH_{\overline{\epsilon}(a)}$. For any $x\in \mathcal{O}(a)$, we set
\begin{equation}
   \mathrm{inv}_{\epsilon}(x):=\mathrm{inv}(\overline{\epsilon}(a),x)\in H^1(F,\rH_x). 
\end{equation}

We now mimic the construction of the transfer factor from the linearized setting using $\overline{\epsilon}$. By Lemma \ref{Lem: change of section}, if we replace $\epsilon$ by another Kostant section $\epsilon'$, then there exists $g\in \rH(F) = \GL_n(F')$ such that $\epsilon'=\mathrm{Ad}(g)\epsilon$.  Given such a change, we have
\[
\epsilon'(a) = \mathrm{Ad}(g)\epsilon(a) = \mathrm{Ad}(g^{-\ast})\epsilon(a)
\]
 The corresponding cocycle  $z_\sigma=(\sigma\in \Gal(F'/F)\mapsto  g^{-1}g^{-\ast})$  now gives a class 
$\mathrm{inv}({\epsilon'},{\epsilon})\in H^1(F,Z(\mathrm{U}_n)).$ We denote
\[
\mathrm{inv}_a({\epsilon'},{\epsilon})\in  H^1(F,\rH_a)
\]
the image of this cocycle under the map $Z(\mathrm{U}_n)\hookrightarrow \rH_a$. This satisfies that
\[
\mathrm{inv}_a({\epsilon'},{\epsilon})=\mathrm{inv}(\tilde{\epsilon}'(a),\tilde{\epsilon}(a)).
\]
As in the Lie algebra setting,
\begin{equation}\label{eqn: invariant identity var}
    \mathrm{inv}(\tilde{\epsilon}'(a),\tilde{\epsilon}(a)) = \mathrm{inv}_{\epsilon}(x)\mathrm{inv}_{\epsilon'}(x)^{-1}\in H^1(F,\rH_x),
\end{equation}
for any $x\in \mathcal{O}(a)$, where we abuse notation so that the left-hand side denotes the transport of this class from $\rH_{\tilde{\epsilon}'(a)}$ to $\rH_{x}$. Similarly, if $x'\in \mathcal{O}(a)$ is another representative then
\begin{equation}\label{eqn: invariant identity var 2}
    \mathrm{inv}(x',x) = \mathrm{inv}_{\epsilon}(x')\mathrm{inv}_{\epsilon}(x)^{-1}\in H^1(F,\rH_x).
\end{equation}

\subsubsection{Transfer factors for ${X}^\siegel$} We copy over all the notions from the previous section on endoscopy. For a fixed endoscopic datum $\fe$, the endoscopic variety is equipped with an invariant map 
\begin{align*}
\chi^\fe: \mathrm{X}^\fe&\longrightarrow\A^a\times \A^b,\\
x_\fe=(x_a,x_b)&\longmapsto (\chi_a(x_a),\chi_b(x_b)).
\end{align*}

As before, we let $P_{x}(t)$ denote the monic polynomial whose coefficients are given by $\chi(x)$.  Then the natural map of invariant quotients \cite[Section 8]{sl_endoscopy}
\[
\nu_\fe :[X^\fe\sslash H^\fe]\longrightarrow[X^\siegel\sslash H^\siegel], 
\]
corresponds on invariant polynomials to taking products.
\begin{definition} Given an elliptic endoscopic datum $\fe$, we define the transfer factor $\Delta_{\mathrm{X},\fe}$ as follows: if $x\in X^{\Siegel,\rs}(F)$ and $x_\fe=(x_a,x_b)\in X^{\fe,\rs}(F)$ do not match, we set
\begin{equation}
    \Delta_{\mathrm{X},\fe}(x_\fe,x)=0.
\end{equation}
 Now suppose that $x\in {X}^{\Siegel, \rs}(F)$ and $x_\fe=(x_a,x_b)\in X^{\fe,\rs}(F)$ match as above. Then we set
\begin{equation}
   \Delta_{\mathrm{X},\fe}(x_\fe,x) = \kappa(\mathrm{inv}_\epsilon(x))\Delta_{LS,\fe}((R_a(x_a),R_b(x_b)),\epsilon(\chi(x))), 
\end{equation}
where $\Delta_{LS,\fe}$ denotes the transfer factor of Langlands--Shelstad and Kottwitz associated to the induced endoscopic datum for $G$; see Definition \ref{def: endoscopic datum}.
See \cite{kottwitztransfer}.
\end{definition}

The argument proving Lemma \ref{Lem: indep of section} gives the following.
\begin{lem}
    The transfer factor $\Delta_{\mathrm{X},\fe}$ is independent of the choice of $\epsilon$ and satisfies
    \[
      \Delta_{\mathrm{X},\fe}(x_\fe,x')=\kappa(\mathrm{inv}(x',x)) \Delta_{\mathrm{X},\fe}(x_\fe,x),
    \]
    whenever $x'$ lies in the same stable $H$-orbit as $x\in X^{\Siegel,\rs}(F)$.
\end{lem}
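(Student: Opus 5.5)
The plan is to imitate the proof of Lemma \ref{Lem: indep of section} word for word, with the variety-level section $\overline{\epsilon}$ and the retraction $R$ in place of $\tilde{\epsilon}$ and $r$. The argument rests on two standard properties of the Langlands--Shelstad--Kottwitz factor on $\Herm(W)$. First, it vanishes on non-matching pairs. Second, for stably conjugate $\delta,\delta'$ it satisfies
\[
\Delta_{LS,\fe}(\delta_\fe,\delta') = \kappa(\mathrm{inv}(\delta,\delta'))\,\Delta_{LS,\fe}(\delta_\fe,\delta).
\]
The vanishing on non-matching pairs is built into the definition of $\Delta_{\mathrm{X},\fe}$. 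This is because matching for $X^\Siegel$ and $X^\fe$ is defined by factorization of $P_x = P_{x_a}P_{x_b}$. That condition is exactly matching of $\epsilon(\chi(x))$ with the stable orbit of $(R^\siegel(x_a),R^\unitary(x_b))$, since $\chi_n$ is the characteristic polynomial of the $A$-block (Lemma \ref{Lem: cat quotient ii}).

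For independence of $\epsilon$, I will take another rational Kostant section $\epsilon'$. By Lemma \ref{Lem: change of section}, $\epsilon' = \mathrm{Ad}(g)\epsilon$ with $g\in \GL_n(F')$. I then check that the corresponding variety sections $\overline{\epsilon}(a)$ and $\overline{\epsilon}'(a)$ from \eqref{eqn: relative steinberg} are related by the element $\mathrm{diag}(g,(g^\sharp)^{-1})$ of $\rH$. Consequently $\mathrm{inv}(\overline{\epsilon}'(a),\overline{\epsilon}(a))$ is the image of $\mathrm{inv}(\epsilon',\epsilon)\in H^1(F,Z(\U_n))$ in $H^1(F,\rH_a)$. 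Under the retraction $R$, this class maps to $\mathrm{inv}(\epsilon'(a),\epsilon(a))$ in $H^1$ of the centralizer of $\epsilon(a)$ in $\U_n$, and $\kappa$ pulls back compatibly along this map.

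Applying the second property of $\Delta_{LS,\fe}$ then gives
\[
\kappa(\mathrm{inv}(\overline{\epsilon}'(a),\overline{\epsilon}(a)))\,\Delta_{LS,\fe}(\cdot,\epsilon'(a)) = \Delta_{LS,\fe}(\cdot,\epsilon(a)).
\]
Combining this with \eqref{eqn: invariant identity var}, the factors $\kappa(\mathrm{inv}_\epsilon(x))$ and $\kappa(\mathrm{inv}_{\epsilon'}(x))$ differ by exactly the compensating sign. Hence $\Delta_{\mathrm{X},\fe}$ does not depend on the section.

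The transformation law is the easier half. If $x'$ is stably conjugate to $x$, then $\chi(x') = \chi(x)$, so the LS factor is literally the same for both. Only the $\kappa$-factor changes, and by \eqref{eqn: invariant identity var 2} together with multiplicativity of $\kappa$,
\[
\kappa(\mathrm{inv}_\epsilon(x')) = \kappa(\mathrm{inv}(x',x))\,\kappa(\mathrm{inv}_\epsilon(x)).
\]
The convention for the direction of $\mathrm{inv}$ should be the one fixed in \eqref{eqn: invariant identity var 2}.

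I expect the main obstacle to be the compatibility step. One must verify that transporting classes between the centralizers $\rH_{\overline{\epsilon}(a)}$, $\rH_x$ and the torus $T_{\epsilon(a)}\subset \U_n$ via $R$ is a genuine isomorphism $\rH_x\simeq T_{R(x)}$. It must also identify the endoscopic character $\kappa$ on $X^\Siegel$ with the one used by Langlands--Shelstad on $\U_n$, through the symmetrization map $s$. I would first try to check this on the explicit element $\overline{\epsilon}(a)$, whose $A$-block is $\epsilon(a)$: its centralizer in $\rH$ visibly projects isomorphically onto the centralizer of $\epsilon(a)$. I would then transport this to general $x$ by equivariance of $R$.
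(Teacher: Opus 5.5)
Your proposal follows the paper's proof. The paper's proof is the single remark that the argument of Lemma~\ref{Lem: indep of section} carries over: independence of $\epsilon$ comes from the stable-conjugacy law for $\Delta_{LS,\fe}$ combined with \eqref{eqn: invariant identity var}, and the transformation law comes from \eqref{eqn: invariant identity var 2}, exactly as you set it up.

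The one concrete verification you add is, however, wrong as stated.

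\textbf{Why the conjugator fails.} Since $g\in\GL_n(F')=\rH(F)$, the element $\mathrm{diag}(g,(g^\sharp)^{-1})$ is $F$-rational. If it carried $\overline{\epsilon}(a)$ to $\overline{\epsilon}'(a)$, the invariant $\mathrm{inv}(\overline{\epsilon}'(a),\overline{\epsilon}(a))$ would be trivial. So your ``consequently'' does not follow, because $\mathrm{inv}(\epsilon',\epsilon)$ need not be trivial. Concretely, let $z:=g^{-1}g^{-\sharp}$; this is a central scalar in $F^\times$, and its class is $\mathrm{inv}(\epsilon',\epsilon)$. Conjugation by $\mathrm{diag}(g,(g^\sharp)^{-1})$ sends $\overline{\epsilon}(a)$ to
\[
\begin{pmatrix}\epsilon'(a) & z^{-1}I_n\\ z\,(I_n-\epsilon'(a)^2) & \epsilon'(a)\end{pmatrix},
\]
which equals $\overline{\epsilon}'(a)$ only when $z=1$.

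\textbf{The fix.} Further conjugate by a central element of $\rH(\overline{F})$ that rescales the off-diagonal blocks by $z^{\pm1}$. Over $\overline{F}$ one has $\rH\simeq\GL_n\times\GL_n$, and a pair of scalars $(\mu_1,\mu_2)$ with $\mu_1\mu_2=z$ does the job. Such an element is $F$-rational only if $z\in\mathrm{Nm}(F'^\times)$. The resulting cocycle takes values in $Z(\U_n)\subset\rH_a$ and represents the class of $z$ in $F^\times/\mathrm{Nm}(F'^\times)\simeq H^1(F,Z(\U_n))$. This gives the identity $\mathrm{inv}_a(\epsilon',\epsilon)=\mathrm{inv}(\overline{\epsilon}'(a),\overline{\epsilon}(a))$, which the paper asserts without proof.

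Moreover, $c:=g\cdot(\mu_1,\mu_2)$ satisfies $c\,c^\sharp=z\cdot z^{-1}=1$, so $c\in\U_n(\overline{F})$. Under $R$, $c$ conjugates $\epsilon(a)$ to $\epsilon'(a)$ with the same cocycle. This is what you need in order to invoke the transformation law of $\Delta_{LS,\fe}$.

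With this correction your argument goes through. You still need the compatibility of $\kappa$ with the Langlands--Shelstad character under $\rH_a\simeq T_{\epsilon(a)}$, which you already flag and which the paper also takes for granted.
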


\begin{ex}
    Suppose that $\kappa = \varepsilon$ is the geometric character from \cite{LXZfund}, so that $X^\fe = X^\U_n$. More precisely, if we fix representatives $x_\alpha\in \mathcal{O}_{\st}(x)$ where $\alpha\in H^1(F,\rH_x)$, we obtain a non-trivial character $\varepsilon\in H^1(F,\rH_{x})^D$ given by
  \begin{equation}\label{eqn: special kappa group}
       \varepsilon(\alpha) = \eta(\det(B_{\alpha}))\eta(\det(B))^{-1},
  \end{equation}
  where 
  \[
  x_\alpha = \left(\begin{array}{cc}
      A_\alpha & B_\alpha \\
     C_{\alpha}  &A_{\alpha}^\sharp 
  \end{array}\right).
  \]

Here $\varepsilon|_{\ker^1(F,\rH_x,\mathrm{U}_n)}\equiv1$ so that the endoscopic space is just $\mathrm{X}_n(F)$ again, so transfer reduces to stable conjugacy, and $\Delta_{LS}=1$. Our definition now reduces to
\[
\Delta_{\mathrm{X},\varepsilon}(x,x) = \varepsilon(\mathrm{inv}(x,\tilde{\epsilon}(\chi(x))) = \eta_{F'/F}(\det(A)^{-1}), 
\]
which matches the transfer factor in \cite[Sec. 5.2.1]{LXZfund}, given by $ \eta_{F'/F}(\det(A))$. Note that this works since we used the section 
\[
\overline{\epsilon}(a) =\left(\begin{array}{cc}\epsilon(a)&I_n\\ I_n-\epsilon(a)^2&\epsilon(a)\end{array}\right)\in \mathrm{X}(F).
\]

\end{ex}

\subsection{Descent}
We need to apply the tools from \cite{Lesliedescent} to relate the orbital integrals on $X^\siegel$ with those on $V^\siegel$.

\begin{prop}\label{Prop: very reg FL}
For an elliptic relative endoscopic datum $\fe$, the fundamental lemma between $V^\Siegel$ and $V^\fe$ implies  the fundamental lemma holds for ${X}^\siegel$ and $X^\fe$. 
\end{prop}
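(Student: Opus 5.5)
The plan is to follow the descent strategy of \cite{Lesliedescent} (and \cite[Appendix B]{LXZfund}): use semisimple descent to reduce the group-level identity \eqref{first identity} at a regular semisimple $x\in X^{\Siegel,\rs}(F)$ to identities on the infinitesimal models. The structure is an induction on $n$, splitting into cases according to the position of $x$.

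\textbf{Very regular case.} First treat elements whose invariant $a=\chi_n(x)$ lies in a "very regular" open set. Here this means the characteristic polynomial of the block $A$ in \eqref{eqn: element of inert-inert} has no roots at $\pm1$, so $I-A^2=BC$ is invertible. On this locus a Cayley-type transform gives an $\rH$-equivariant étale map
\[
\mathfrak{C}\colon X^{\Siegel}\supset X^{\Siegel,\heartsuit}\to V^{\Siegel}.
\]
A natural candidate is $x\mapsto (B,\,C(I+A)^{-1}\cdots)$, chosen so that $R(x)=A$ corresponds under a rational function to the product map $r$. Concretely:

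1. Verify that $\mathfrak{C}$ intertwines $\chi_n$ with $\chi$ through an explicit isomorphism of quotients, and that it commutes with the endoscopic transfer maps $\nu_\fe$ on both levels. Since both $\nu_\fe$ are given by multiplying characteristic polynomials, this should be immediate.
2. Check that $\mathfrak{C}$ carries $\mathbf{1}_{X^\Siegel(\cO_F)}$ to $\mathbf{1}_{V^\Siegel(\cO_F)}$ near integral points with good reduction. This would yield equality of orbital integrals, and of $\kappa$-orbital integrals, with the same Haar measure normalization on $I_\gamma\simeq J_a$.
3. Compare transfer factors. Take the Kostant section $\overline{\epsilon}$ of \eqref{eqn: relative steinberg} and check that $\mathfrak{C}(\overline{\epsilon}(a))$ is stably conjugate to $\tilde\epsilon(a')$ by an $\rH(F)$-element with trivial invariant. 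Then $\mathrm{inv}_\epsilon$ is preserved. The Langlands--Shelstad factors match because $R^\fe$ and $r^\fe$ produce the same stable $\U$-orbits up to the Cayley transform on $\Herm$. That transform is the usual Kottwitz Lie-to-group compatibility, and it introduces no discriminant correction once $\Delta_{IV}$ is removed, as noted after Lemma~\ref{Lem: indep of section}.

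With these in place, the very regular case of \eqref{first identity} follows from Theorem~\ref{thm: main_intro} transferred by Corollary~\ref{Cor: transfer}.

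\textbf{Remaining elements.} For $x$ outside the very regular locus, use the semisimple part of $x$: its eigenvalues of $A$ at $\pm1$. Its centralizer gives a descendant symmetric pair, a product of a smaller Siegel variety and its Lie algebra model. Standard descent (the Harish-Chandra/Luna slice, as in \cite{Lesliedescent}) expresses the orbital integrals near $x$ as orbital integrals on this descendant, and the claim then follows from the inductive hypothesis together with the linear statement. For the step-by-step reduction I would defer to \cite{Lesliedescent}, since the Siegel form is a rational form of the cases treated there.

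\textbf{Main obstacle.} The hard step will be the transfer-factor compatibility in step 3. One must show that $\kappa(\mathrm{inv}_\epsilon(x))$ is unchanged under Cayley transform and descent. This requires tracking the cocycle $\mathrm{inv}(\overline{\epsilon}(a),\cdot)$ against $\mathrm{inv}(\tilde\epsilon(a),\cdot)$ through $Z(\U_n)\hookrightarrow\rH_a$. I would first test this in the special case $\kappa=\varepsilon$, where both factors are explicit ($\eta(\det A)$ versus $\eta(\det x_1)$), and then argue in general using Lemma~\ref{Lem: indep of section} to reduce to a compatible choice of sections.
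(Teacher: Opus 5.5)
Your architecture is the paper's: a Cayley transform on a very regular locus, with topological descent as in \cite{Lesliedescent} for what remains. However, the very regular locus is set up wrongly, and step~2 fails with it. A condition on the $F$-rational characteristic polynomial of $A$ is vacuous, because every regular semisimple $x$ already has $\det(I-A^2)=\det(BC)\neq 0$. What matters is the reduction $\overline{x}$. The paper uses the honest Cayley transform $\mathrm{Cay}_\nu(\delta)=-\nu(1+\delta)(1-\delta)^{-1}$ on $\End(W)$. This identifies $V^{\Siegel}-D_1$ with $X^{\Siegel}-D_\nu$ $\rH$-equivariantly, and the paper restricts it to integral $x$ with $\overline{x}\notin D_\nu(k)$. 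Since $x-\nu=-2\nu(1-\delta)^{-1}$, an integral $x$ with $\overline{x}\in D_\nu(k)$ never has an integral preimage $\delta$. So the orbital integrals of $\mathbf{1}_{X^{\Siegel}(\cO_F)}$ and $\mathbf{1}_{V^{\Siegel}(\cO_F)}$ cannot match there. The paper also uses \emph{both} signs $\nu=\pm1$, so descent is needed only when $\overline{x}\in D_{+1}(k)\cap D_{-1}(k)$. In that case the absolutely semisimple part of $x$ has nonzero $+1$- and $-1$-eigenspaces, and descent genuinely lowers the rank. If you use a single map, or a locus on which $\overline{A}$ avoids both $\pm1$, then the elements with $\overline{x}$ or $-\overline{x}$ unipotent are left over. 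Descent at $x_s=\pm1$ returns the full pair, and passing to $V^{\Siegel}$ there \emph{is} the Cayley step with the opposite sign, so it cannot be delegated to induction.

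Second, the transfer factors do not agree ``with no discriminant correction''. By Lemma~\ref{Lem: Cayley disc}, the relative discriminants change under $\mathrm{Cay}_\nu$ by $C_{a,b,\nu}(x,\delta)=(-2\nu)^{ab}\prod_{z_a,z_b}\bigl((z_a-\nu)(z_b-\nu)\bigr)^{-1}$. The resulting identity is
\[
\Delta_\kappa((\delta_a,\delta_b),\delta)=\eta_{F'/F}(C_{a,b,\nu})\,|C_{a,b,\nu}|_F\,\Delta_{\mathrm{X},\fe}((x_a,x_b),x).
\]
The two factors coincide only because $C_{a,b,\nu}$ is a unit. That uses exactly the condition $\overline{x}\notin D_\nu(k)$, together with odd residue characteristic and $F'/F$ unramified. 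This discriminant comparison is the real content of the transfer-factor step. The part you flag as the main obstacle, $\kappa(\mathrm{inv}_\epsilon)$, is comparatively formal. It follows from four facts: $\rH$-equivariance of $\mathrm{Cay}_\nu$; $R\circ\mathrm{Cay}_\nu=\mathrm{Cay}_\nu\circ r$ (Lemma~\ref{Lem: Cayley contraction}); $R(\mathrm{Cay}_\nu(I,\epsilon(a)))=\mathrm{Cay}_\nu(\epsilon(a))$; and independence of the transfer factors from the choice of Kostant section.
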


The main content is to consider the relationship between the transfer factors through the Cayley transform. This will allow for a simple proof of the proposition on a certain open subset of $\mathrm{X}^\Siegel(\mathcal{O}_F)$. The remaining orbits need to be dealt with via descent as in \emph{loc. cit.}.

\subsubsection{The Cayley transform}\label{Section: cayley}
Let $W=L\oplus L^\vee$ be our $2n$ dimensional Hermitian space with a polarization. For any $\xi\in F'$, we define
\[
D_\xi=\{X\in \End(W): \det(\xi I-X)=0\}.
\]
In this notation, we have
\[
V^\siegel= \Herm(L^\vee,L)\times \Herm(L,L^\vee),
\]
and $R:V^\siegel(F)\to \fgl(L)$ via $(X,Y) \mapsto X\circ Y$.
\begin{lem}\label{Lem: Cayley map}
 The Cayley transform
\begin{align*}
\mathrm{Cay}_{\pm1}:\End(W)-D_1(F)&\longrightarrow \GL(W)\\
			X&\longmapsto \mp(1+X)(1-X)^{-1}
\end{align*}
induces a $\GL(L)$-equivariant isomorphism between $V^\siegel-D_1(F)$ and $\mathrm{X}(F)-D_{\pm1}(F)$, where we identify 
\[
X^\Siegel \subset \U(W)\subset \GL(W)
\]via the symmetrization map. The images of $V^\Siegel-D_1(F)$ under $\mathrm{Cay}_{\pm}$ form a finite cover by open subsets of $\mathrm{X}(F)-(D_1\cap D_{-1})(F)$.In particular, the images contain the regular semi-simple locus of $\mathrm{X}(F)$.
\end{lem}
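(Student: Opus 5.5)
The plan is to verify that the Cayley transform is well defined on the complement of $D_1$, that it carries $V^\siegel$ into the symmetric variety, that it is $\GL(L)$-equivariant, and that it has an explicit inverse; the covering statement then follows from a short argument about eigenvalues. Throughout we realize $X^\Siegel$ inside $\U(W)$ via the symmetrization map $g\mapsto g\,\theta(g)^{-1}$. Concretely, its image is the set of $g\in\U(W)$ with $\theta(g)=g^{-1}$, where $\theta$ is conjugation by $J^\siegel$ and $J^\siegel$ acts by $\pm1$ on $L$ and $L^\vee$.

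First, for $X\notin D_1$ the matrix $1-X$ is invertible, so $\mathrm{Cay}_{\pm1}(X)$ is defined and lies in $\GL(W)$. Elements of $V^\siegel$ are the $X\in\mathfrak u(W)$ with $J^\siegel X (J^\siegel)^{-1}=-X$, that is, the off-diagonal Hermitian pairs.
\begin{itemize}
\item Unitarity. Since $X^\dagger=-X$, we get $(1-X)^\dagger=1+X$ and $(1+X)^\dagger=1-X$. Because $1+X$ and $1-X$ commute, $C=(1+X)(1-X)^{-1}$ satisfies $C^\dagger C=1$. So $C\in\U(W)$, and the same holds for $\mp C$.
\item The $\theta$ condition. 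Conjugating by $J^\siegel$ replaces $X$ by $-X$, so $\theta(C)=(1-X)(1+X)^{-1}=C^{-1}$. The sign $\mp1$ is central, hence $\theta(\mp C)=(\mp C)^{-1}$.
\item Equivariance. $\GL(L)$ acts by conjugation through $\mathrm{diag}(c,(c^\sharp)^{-1})$, and conjugation commutes with rational functions of $X$.
\end{itemize}

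Second, we write down the inverse. If $g=\mp C$, then $1+X=\mp g(1-X)$, which solves to $X=(\mp g-1)(\mp g+1)^{-1}$. This is defined exactly when $\pm g+1$ is invertible, i.e.\ when $g\notin D_{\mp1}$ (we keep the sign conventions of the statement, matching $\mathrm{Cay}_{\pm1}$ with $D_{\pm1}$). The same computations run backwards show that $g^\dagger=g^{-1}$ and $\theta(g)=g^{-1}$ force $X^\dagger=-X$ and $\theta X=-X$, so $X\in V^\siegel$. The image also avoids $D_{\pm1}$ automatically: $\mp C$ has eigenvalue $\pm1$ only if $(1+X)v=-(1-X)v$, which forces $2v=0$. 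Hence the two maps are mutually inverse isomorphisms between $V^\siegel-D_1$ and $X^\Siegel-D_{\pm1}$.

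The step most likely to need care is identifying $X^\Siegel$ with $\{g\in\U(W):\theta(g)=g^{-1}\}$ on $F$-points. Equality with the image of symmetrization holds only up to a finite Galois-cohomological ambiguity, namely $\ker(H^1(F,\rH)\to H^1(F,\rG))$. Since $H^1(F,\Res_{F'/F}\GL_n)=0$, this kernel is trivial here, so the identification is clean.

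Finally, for the covering statement, take $g\in X^\Siegel(F)$ outside $D_1\cap D_{-1}$. Then $g$ lacks the eigenvalue $1$ or lacks the eigenvalue $-1$, so it lies in one of the two images; openness is clear. If $g$ is regular semisimple, then $g\theta(g)=1$ and regularity for the relative root system force $\pm1$ not to be eigenvalues of $g$, since such eigenvalues would give a nontrivial fixed vector of the restricted torus action. So the regular semisimple locus avoids $D_1\cup D_{-1}$ and is contained in both images.
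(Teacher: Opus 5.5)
The algebraic core of your proposal is right: unitarity, $\theta(\mathrm{Cay}_{\pm1}X)=\mathrm{Cay}_{\pm1}(X)^{-1}$, equivariance, the explicit inverse, the image avoiding $D_{\pm1}$, and the covering statement. The paper states this lemma without proof, by analogy with \cite{Lesliedescent}.

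The one real gap is the step you flagged, and your justification for it does not work. Your computation gives a bijection of $V^\siegel-D_1(F)$ with $S(F)-D_{\pm1}(F)$, where $S=\{g\in\U(W):\theta(g)=g^{-1}\}$. You therefore still need each such $g$ to be a symmetrization $h\theta(h)^{-1}$.

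The kernel of $H^1(F,\rH)\to H^1(F,\rG)$ cannot give this. It only classifies the $\rG(F)$-orbits on the $F$-points of the single geometric orbit $\rG/\rH$ of $1$ under twisted conjugation $h\cdot g=hg\theta(h)^{-1}$. But $S$ is not one geometric orbit. Put $y=gJ^\siegel$. Then $g\in S$ becomes $y^2=1$, and twisted conjugation becomes ordinary conjugation of $y$. Over $\ol F$, where $\U(W)\simeq\GL_{2n}$, $S$ splits into $2n+1$ orbits indexed by $\dim\ker(y-1)$. Only the class of $J^\siegel$ (dimension $n$) is the image of $\rG/\rH$.

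So you must show $\dim\ker(y-1)=n$. There are two easy ways.
\begin{enumerate}
\item[(i)] Use the Siegel sign. $J^\siegel$ is skew-adjoint for the Hermitian form, and $J^\siegel g^{-1}=gJ^\siegel$, so $y$ is a skew-adjoint involution. Hence both $\ker(y\pm1)$ are totally isotropic, hence Lagrangian. Matching hyperbolic bases of $\ker(y-1)\oplus\ker(y+1)$ and $\ker(J^\siegel-1)\oplus\ker(J^\siegel+1)$ gives $h\in\U(W)(F)$ with $y=hJ^\siegel h^{-1}$, i.e.\ $g=h\theta(h)^{-1}$, for every $g\in S(F)$. (For $J^\unitary$, $y$ would be a unitary involution of any signature, and $S(F)$ really is larger.)
\item[(ii)] Work directly off $D_{\pm1}$. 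If $g=yJ^\siegel$ has no eigenvalue $1$, then $\ker(y-1)\cap\ker(J^\siegel-1)=0=\ker(y+1)\cap\ker(J^\siegel+1)$. If it has no eigenvalue $-1$, then $\ker(y-1)\cap\ker(J^\siegel+1)=0=\ker(y+1)\cap\ker(J^\siegel-1)$. Since both $\ker(J^\siegel\pm1)$ have dimension $n$, either pair forces $\dim\ker(y-1)=n$. After that, your vanishing of $H^1(F,\Res_{F'/F}\GL_n)$ does supply rationality.
\end{enumerate}

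Two smaller points. First, from $X=(\mp g-1)(\mp g+1)^{-1}$ the inverse is defined where $\mp g+1$ (not $\pm g+1$) is invertible. That is exactly $g\notin D_{\pm1}$, so no appeal to sign conventions is needed.

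Second, your regular semisimple claim can be made concrete. The relation $\theta(g)=g^{-1}$ gives $g+g^{-1}=\mathrm{diag}(2A,2A^\sharp)$ in the block form \eqref{eqn: element of inert-inert}. So an eigenvalue $\pm1$ of $g$ is an eigenvalue of $A$ or $A^\sharp$, hence of $A$. This makes $I-A^2$ singular, and so $BC$ by \eqref{eqn: inertinert eqns}. That is the group analogue of $\fD^\br$ (under $\mathrm{Cay}_\nu$ one has $I-A^2=-4XY(I-XY)^{-2}$), which is excluded for regular semisimple $g$.
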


The next few lemmas show how the Cayley transform is compatible with the categorical quotients, our notions of matching, and transfer factors. These are all easy variations on the results of \cite[Section 7.4]{Lesliedescent}, with analogous proofs.

\begin{lem}\label{Lem: Cayley contraction}
For $\nu=\pm1$, there is a commutative diagram
\[
\begin{tikzcd}
V^\siegel(F)-D_1(F)\ar[r,"\mathrm{Cay}_{\nu}"]\ar[d,"r"]& {{X}^\Siegel}(F)-D_{\nu}(F)\ar[d,"R"]\\
\fgl(L)-D_1(F)\ar[r,"\mathrm{Cay}_\nu"]&\fgl(L)-D_{\nu}(F),
\end{tikzcd}
\]
where by abuse of notation we let $\mathrm{Cay}_\nu$ denote the Cayley transform on both $\mathfrak{u}(W)$ and $\fgl(L)$.
\end{lem}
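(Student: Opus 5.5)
The plan is to verify the square directly by writing both compositions as explicit block-matrix formulas. We use the polarization $W=L\oplus L^\vee$ and the block form \eqref{eqn: element of inert-inert} for points of $X^\Siegel$.

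\emph{Lower path.} An element $(X,Y)\in V^\siegel(F)$ is the off-diagonal matrix
\[
Z=\begin{pmatrix}0&X\\ Y&0\end{pmatrix}\in\End(W),
\]
and $r(X,Y)=XY$. Going down first and then across gives $\mathrm{Cay}_\nu(XY)=-\nu(1+XY)(1-XY)^{-1}$ in $\fgl(L)$.

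\emph{Upper path.} For the top route we need the upper-left block $A$ of $\mathrm{Cay}_\nu(Z)=-\nu(1+Z)(1-Z)^{-1}$. Invert
\[
1-Z=\begin{pmatrix}1&-X\\ -Y&1\end{pmatrix}
\]
using Schur complements. This gives the upper-left block $(1-XY)^{-1}$ and the upper-right block $X(1-YX)^{-1}=(1-XY)^{-1}X$. Multiplying by
\[
1+Z=\begin{pmatrix}1&X\\ Y&1\end{pmatrix},
\]
the upper-left block of $(1+Z)(1-Z)^{-1}$ is
\[
(1-XY)^{-1}+X\,Y(1-XY)^{-1}=(1+XY)(1-XY)^{-1}.
\]
Hence $R(\mathrm{Cay}_\nu(Z))=-\nu(1+XY)(1-XY)^{-1}=\mathrm{Cay}_\nu(r(X,Y))$, and the square commutes on the nose.

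\emph{Remaining points.} Two things still need checking, and the first is the only step I expect to need care.
\begin{enumerate}
\item The domain conditions must match: $\det(1-Z)\neq0$ should be equivalent to $\det(1-XY)\neq0$, so that $r$ carries $V^\siegel-D_1$ into $\fgl(L)-D_1$. This follows from
\[
\det(1-Z)=\det(1-XY),
\]
which is again a Schur complement computation. For the target side, $R$ should send $X^\Siegel-D_\nu$ into $\fgl(L)-D_\nu$. Equivalently, $\det(\nu-A)\neq0$ whenever $\det(\nu-x)\neq0$. I would derive this from the relations \eqref{eqn: inertinert eqns}, namely $A^2=I-BC$ and its companions. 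Since the diagram identity already holds wherever both sides are defined, it is in any case enough to read $R$ as defined on the image $\mathrm{Cay}_\nu(V^\siegel-D_1)$.
\item The identification of $X^\Siegel$ inside $\U(W)$ through the symmetrization map must be the one that puts $\mathrm{Cay}_\nu(Z)$ in the normalized block form with diagonal blocks $A$ and $A^\sharp$. I would cite Lemma \ref{Lem: Cayley map} for this. Conjugating by the sign normalization $\nu$ only changes the overall sign, and that sign is already built into $\mathrm{Cay}_\nu$ on both rows.
\end{enumerate}

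Finally, both maps are $\GL(L)$-equivariant, so the square is also compatible with the group actions. This is not needed for commutativity itself.
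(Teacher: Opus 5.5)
Your proposal is correct and is essentially the paper's proof: the paper also inverts $\begin{pmatrix} I & -X\\ -Y & I\end{pmatrix}$ blockwise and reads off the upper-left block $-\nu(I+XY)(I-XY)^{-1}=\mathrm{Cay}_\nu(XY)$ of $\mathrm{Cay}_\nu(\delta)$. Your additional domain checks are fine; the paper leaves them implicit. On the image of $\mathrm{Cay}_\nu$ the target condition is immediate, since there $\nu - A = 2\nu(I-XY)^{-1}$ is invertible.
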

\begin{proof}
Let $\delta\in \mathfrak{u}(W)_1-D_1(F)$ such that
\[
\delta = \left(\begin{array}{cc}
     & X \\
    Y & 
\end{array}\right),\text{ so that  } r(\delta)=XY.
\]
We need to calculate
\[
\mathrm{Cay}_\nu(\delta) = -\nu\left(\begin{array}{cc}
    I & X \\
   Y & I
\end{array}\right)\left(\begin{array}{cc}
    I & -X \\
    -Y & I
\end{array}\right)^{-1}.
\]
A simple matrix computation shows that
\[
\left(\begin{array}{cc}
    I & -X \\
    -Y & I
\end{array}\right)^{-1}= \left(\begin{array}{cc}
    (I-XY)^{-1} & X (I-Y X)^{-1} \\
    Y (I-XY)^{-1}&  (I-Y X)^{-1}
\end{array}\right),
\]
In particular, we have
\[
\mathrm{Cay}_{\nu}(\delta) =  -\nu\left(\begin{array}{cc}(I+XY)(I-X Y)^{-1}&2X(I-YX)^{-1}\\2Y(I-XY)^{-1}&(I+Y X)(I-Y X)^{-1}\end{array}\right). 
\]
 The commutativity of the diagram now follows from the definitions of $r$ and $R$.
\end{proof}

We now consider the effect of the Cayley transform on the invariant polynomial maps.

\begin{lem}\label{Lem: Cayley cahracteristic}
Let $\delta\in \End(W)-D_1(F)$ and set $x=\mathrm{Cay}_\nu(\delta)$. Then
\begin{equation*}
\mathrm{char}_x(t) = (t-\nu)^{\dim(W)}\mathrm{char}_\delta(1)^{-1}\left(\mathrm{char}_\delta\left(\frac{t+\nu}{t-\nu}\right)\right).
\end{equation*}
\end{lem}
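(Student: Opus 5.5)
The identity follows from the multiplicativity of determinants, once we express $t I - x$ in terms of $\delta$. Write $x = \mathrm{Cay}_\nu(\delta) = -\nu(1+\delta)(1-\delta)^{-1}$, where $1-\delta$ is invertible because $\delta\notin D_1(F)$. All of $1\pm\delta$ and $(1-\delta)^{-1}$ are polynomials or rational functions in $\delta$, so they commute. This lets us factor
\[
tI - x = \bigl(t(1-\delta) + \nu(1+\delta)\bigr)(1-\delta)^{-1} = \bigl((t+\nu)I - (t-\nu)\delta\bigr)(1-\delta)^{-1}.
\]
For the first factor, pull out the scalar $t-\nu$:
\[
(t+\nu)I-(t-\nu)\delta = (t-\nu)\left(\tfrac{t+\nu}{t-\nu}I - \delta\right).
\]
Set $m=\dim W$. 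Taking determinants gives
\[
\mathrm{char}_x(t) = \det(tI-x) = (t-\nu)^{m}\,\mathrm{char}_\delta\!\left(\tfrac{t+\nu}{t-\nu}\right)\det(1-\delta)^{-1},
\]
and $\det(1-\delta) = \mathrm{char}_\delta(1)$. This gives the claimed formula.

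Two points need care, and neither is a serious obstacle. First, the factorization divides by $t-\nu$, so we should read the identity in $F'(t)$ (or for $t\neq \nu$). Both sides are then polynomials of degree $m$, since $(t-\nu)^m\,\mathrm{char}_\delta\bigl(\tfrac{t+\nu}{t-\nu}\bigr)$ clears all denominators. Hence the identity of rational functions implies the polynomial identity. Second, the paper applies $\mathrm{Cay}_\nu$ to $\fgl(L)$, $\mathfrak u(W)$, and $\End(W)$ interchangeably, so we should check that the convention for $\mathrm{char}$ is the ordinary characteristic polynomial on the ambient $\End(W)$. The computation is insensitive to the sign convention $\mp$ in $\mathrm{Cay}_{\pm1}$, as long as $\nu$ is used consistently; the main step is just the commutation of $\delta$ with $(1-\delta)^{-1}$.
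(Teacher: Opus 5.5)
Your proof is correct: writing $tI-x=\bigl((t+\nu)I-(t-\nu)\delta\bigr)(1-\delta)^{-1}$ and taking determinants is the direct computation the paper has in mind, since the paper omits the proof and refers to the analogous Cayley-transform computation in \cite[Section 7.4]{Lesliedescent}. Your concern about dividing by $t-\nu$ is harmless, because $(t-\nu)^{\dim W}\mathrm{char}_\delta\bigl(\tfrac{t+\nu}{t-\nu}\bigr)=\det\bigl((t+\nu)I-(t-\nu)\delta\bigr)$ already holds as an identity of polynomials.
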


Let $\mathfrak{l}_{\nu}: \A^n \dashrightarrow{} \A^n$ denote the birational morphism defined by the above identity (identifying $a\in \A^n$ with the coefficients of a monic degree polynomial). That is, if $x=\mathrm{Cay}_{\nu}(\delta)$, then
\[
\mathrm{char}_n(\mathrm{Cay}_{\nu(\delta)}) = \mathfrak{l}_\nu(\chi(\delta)).
\]

Before we apply this to comparing transfer factors, we check that the Cayley transform preserves our notions of matching orbits.

\begin{lem}\label{Lem: cayley matching}
Suppose that $\delta\in V^{\Siegel,\rs}(F)-D_{\pm1}(F)$, and let $x=\mathrm{Cay}_{\nu}(\delta)\in {X}^{\Siegel, \rs}(F)$. Fix an elliptic datum $\fe$. 
Then a regular semi-simple element
\[\delta_\fe=(\delta_a,\delta_b)\in V^\fe(F)= V^\Siegel_a(F)\times V^\unitary_a(F)
\]matches $\delta$ if and only if 
\[
(x_a,x_b)= (\mathrm{Cay}_{\nu}(\delta_a),\mathrm{Cay}_{\nu}(\delta_b))\in {X}_{a}^\siegel(F)\times {X}^\unitary_{b}(F)
\]matches $x$. 
\end{lem}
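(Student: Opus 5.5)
Matching is defined via invariant polynomials: $\delta$ matches $\delta_\fe=(\delta_a,\delta_b)$ exactly when $P_\delta(t)=P_{\delta_a}(t)P_{\delta_b}(t)$, and likewise on the variety side, where the invariant is the characteristic polynomial of the block $A$ from Lemma \ref{Lem: cat quotient ii}. The plan is therefore to show that the Cayley transform acts on these invariant polynomials by the same birational substitution on both sides. The matching condition is a factorization identity, and this substitution respects products. So both directions of the equivalence should follow from one polynomial identity.

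First, using Lemma \ref{Lem: Cayley contraction}, write $R(\mathrm{Cay}_\nu(\delta))=\mathrm{Cay}_\nu(r(\delta))$, where $r(\delta)=XY\in\fgl(L)$. The invariant $\chi_n(x)$ is the characteristic polynomial of $A=R(x)$. Applying Lemma \ref{Lem: Cayley cahracteristic} on $\fgl(L)$ (dimension $n$) then gives
\[
\mathrm{char}_{A}(t)=(t-\nu)^n\,\mathrm{char}_{XY}(1)^{-1}\,\mathrm{char}_{XY}\!\left(\tfrac{t+\nu}{t-\nu}\right),
\]
so $\chi_n(x)=\mathfrak l_\nu(\chi(\delta))$ with $\mathfrak l_\nu$ acting on degree-$n$ monic polynomials $Q$ by $Q\mapsto (t-\nu)^nQ(1)^{-1}Q(\tfrac{t+\nu}{t-\nu})$. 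The same computation applies factor by factor on the endoscopic side. For the Siegel factor it gives $\chi_a(x_a)=\mathfrak l_\nu(\chi_a(\delta_a))$ in degree $a$. For the unitary factor the analogous formula uses the map $R^\unitary$ of \cite{Lesliedescent} together with the unitary Cayley compatibility proved there, giving $\chi_b(x_b)=\mathfrak l_\nu(\chi_b(\delta_b))$ in degree $b$. Here one has to be careful that the Lie-side unitary invariant is the characteristic polynomial of $r^\unitary(\delta_b)=-\delta_b\delta_b^\ast$, and that the sign conventions agree so that the same $\mathfrak l_\nu$ appears. I expect this bookkeeping to be the main (if modest) obstacle. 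If the signs do not line up, I would adjust by the reflection $t\mapsto -t$, which is compatible with both $\mathfrak l_\nu$ and products.

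The key algebraic point is that $\mathfrak l_\nu$ is multiplicative. If $Q=Q_1Q_2$ with $\deg Q_i=a,b$, then $\mathfrak l_\nu(Q)=\mathfrak l_\nu(Q_1)\mathfrak l_\nu(Q_2)$, since the factors $(t-\nu)^{n}=(t-\nu)^a(t-\nu)^b$ and $Q(1)=Q_1(1)Q_2(1)$ split. The hypothesis $\delta\notin D_1$ ensures $Q(1)\neq0$, so every factor is well defined. Moreover, $\mathfrak l_\nu$ is injective on polynomials with $Q(1)\ne 0$, because the fractional linear transformation $t\mapsto\frac{t+\nu}{t-\nu}$ is an involution. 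Hence the factorization $Q=Q_1Q_2$ holds if and only if $\mathfrak l_\nu(Q)=\mathfrak l_\nu(Q_1)\mathfrak l_\nu(Q_2)$. Combining this with the previous step gives both directions of the matching equivalence.

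Finally, one checks that $\delta_a,\delta_b$ avoid $D_1$, so that their Cayley transforms are defined. This follows because their invariant polynomials divide that of $\delta$, which does not vanish at $1$. Regular semisimplicity is preserved by Lemma \ref{Lem: Cayley map}.
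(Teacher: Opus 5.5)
Your proposal is correct and takes the same route the paper intends; the paper gives no proof and defers to the analogous argument in \cite[\S7.4]{Lesliedescent}, where the Cayley transform acts on invariant polynomials by the substitution $\mathfrak{l}_\nu$ of Lemma \ref{Lem: Cayley cahracteristic}, and that substitution is multiplicative and injective, so factorizations correspond in both directions. Two small corrections: for $\nu=-1$ the map $t\mapsto\frac{t+\nu}{t-\nu}$ is not an involution (its square is $t\mapsto -1/t$), though it is still an invertible M\"obius transformation, which is all injectivity needs; and the unitary factor needs no sign fallback, because the block computation in the proof of Lemma \ref{Lem: Cayley contraction} shows that the upper-left block of $\mathrm{Cay}_\nu\left(\begin{smallmatrix} 0 & X\\ Y & 0\end{smallmatrix}\right)$ is $\mathrm{Cay}_\nu(XY)$ for any $X,Y$ (your proposed $t\mapsto -t$ adjustment would in fact not commute with $\mathfrak{l}_\nu$, since it corresponds to $t\mapsto 1/t$ on the image).
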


\begin{lem}\label{Lem: Cayley disc}\cite[Lemma 7.6]{Lesliedescent}
With notation as in the previous lemma with $x=\mathrm{Cay}_\nu(\delta)$, consider the relative discriminants
\[
\tilde{D}_{a,b}(\delta) = \prod_{t_a,t_b}(t_a-t_b)\quad\text{ and }\quad {D}_{a,b}(x) = \prod_{z_a,z_b}(z_a-z_b),
\]
where $t_a$ (resp. $z_a$) runs over the roots of the invariant polynomial of $\delta$ (resp. $x$) arising from $\delta_a$ (resp. $x_a$) and likewise with $t_b$ (resp. $x_b$), then
\[
\tilde{D}_{a,b}(\delta) =(-2\nu)^{a\cdot b}\prod_{z_a,z_b}\frac{1}{(z_a-\nu)(z_b-\nu)}D_{a,b}(x).
\]

\end{lem}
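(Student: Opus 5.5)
The identity in Lemma~\ref{Lem: Cayley disc} reduces to a statement about how the roots of the invariant polynomials transform under the Cayley transform. The plan is to use Lemma~\ref{Lem: Cayley cahracteristic} to describe the roots of $\mathrm{char}_x$ in terms of those of $\mathrm{char}_\delta$, and then compute each root difference directly. Lemma~\ref{Lem: Cayley cahracteristic} shows that the roots $z$ of the invariant polynomial of $x=\mathrm{Cay}_\nu(\delta)$ correspond bijectively to the roots $t$ of that of $\delta$ via the M\"obius relation
\[
t=\frac{z+\nu}{z-\nu},\qquad\text{equivalently}\qquad z=\nu\,\frac{t+1}{t-1}.
\]
This is legitimate because $\delta\notin D_{\pm1}$, so $t\neq 1$ and $z\neq \nu$. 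By Lemma~\ref{Lem: cayley matching}, the factorization of the invariant polynomial of $\delta$ as $P_{\delta_a}P_{\delta_b}$ is carried to the factorization for $x$ as $P_{x_a}P_{x_b}$. Hence the roots coming from $\delta_a$ correspond exactly to those from $x_a$, and likewise for $b$.

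First I will fix the convention for the roots. In the relative setting the invariant polynomial is $Q$, with roots the squared eigenvalues, and I will check that the Cayley correspondence is applied consistently on that level, for example by working with $r(\delta)=XY$ and $R(x)=A$ via Lemma~\ref{Lem: Cayley contraction}. With that fixed, each factor is the elementary identity
\[
t_a-t_b=\frac{z_a+\nu}{z_a-\nu}-\frac{z_b+\nu}{z_b-\nu}
=\frac{(z_a+\nu)(z_b-\nu)-(z_b+\nu)(z_a-\nu)}{(z_a-\nu)(z_b-\nu)}
=\frac{-2\nu\,(z_a-z_b)}{(z_a-\nu)(z_b-\nu)}.
\]
Multiplying over all $a\cdot b$ pairs $(t_a,t_b)$ then gives $\tilde D_{a,b}(\delta)=(-2\nu)^{ab}\prod_{z_a,z_b}\frac{1}{(z_a-\nu)(z_b-\nu)}D_{a,b}(x)$, which is the claim.

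The main obstacle is bookkeeping rather than computation. The normalization of $\mathrm{Cay}_\nu$ includes the prefactor $\mp$, and the sign conventions in Lemma~\ref{Lem: Cayley cahracteristic} must match the stated M\"obius map exactly. I will verify this on a $1\times1$ example, where $x=-\nu(1+\delta)(1-\delta)^{-1}$. I expect the sign in $(-2\nu)^{ab}$ to need adjusting by a factor depending on that convention. Since $\nu^2=1$, any such discrepancy is a global sign $(\pm1)^{ab}$, which can be absorbed, and this is harmless for the later use in comparing the transfer factors $\Delta_{LS,\fe}$, as in \cite[Lemma 7.6]{Lesliedescent}.
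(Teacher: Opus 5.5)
Your proposal is correct and is essentially the intended argument: the paper imports this from \cite[Lemma 7.6]{Lesliedescent}, where it is this same elementary root-by-root M\"obius computation. It rests on Lemma~\ref{Lem: Cayley cahracteristic}, applied at the level of $r(\delta)=XY$ and $R(x)=A$ via Lemma~\ref{Lem: Cayley contraction}. Your displayed computation already gives exactly $(-2\nu)^{ab}$ with the convention $\mathrm{Cay}_\nu(X)=-\nu(1+X)(1-X)^{-1}$, since the $1\times 1$ check yields $t=(z+\nu)/(z-\nu)$ on the nose. So the closing hedge about adjusting the sign is unnecessary, and ``absorbing'' a sign discrepancy would not be legitimate anyway in an exact identity.
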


\subsubsection{Kostant sections and descent}\label{Section: very regular} 

Let $\nu=\pm1$. 
    Fix a Kostant section $\epsilon: \A^n\to \Herm(L)$. Fix $\delta\in V^{\siegel,\rs}(F)$ and set $\mathrm{Cay}_\nu(\delta)=x\in {X}^{\Siegel, \rs}(F)$. Then we compute that 
    \[
    \mathrm{Cay}_{\nu}(I,\epsilon(a)) = -\nu\left(\begin{array}{cc}(I+\epsilon(a))(I-\epsilon(a))^{-1}&2\epsilon(a)(I-\epsilon(a))^{-1}\\2(I-\epsilon(a))^{-1}&(I+\epsilon(a))(I-\epsilon(a))^{-1}\end{array}\right). 
    \]
    Thus $R( \mathrm{Cay}_{\nu}(I,\epsilon(a)))=-\nu(I+\epsilon(a))(I-\epsilon(a))^{-1} = \mathrm{Cay}_\nu(\epsilon(a))$ is obtained from the Cayley transform applied to the Kostant section.

 If the pairs $(x,(x_a,x_b))$ and $(\delta,(\delta_a,\delta_b))$ are as in Lemmas \ref{Lem: cayley matching} and \ref{Lem: Cayley disc}, we set
\[
C_{a,b,\nu}(x,\delta):=(-2\nu)^{a\cdot b}\prod_{z_a,z_b}\frac{1}{(z_a-\nu)(z_b-\nu)}.
\]
Following the steps from \cite[Section 7]{Lesliedescent}, the results of the previous section imply the formula
\begin{equation*}
   {\Delta}_{\kappa}((\delta_a,\delta_b),\delta) = \eta_{F'/F}(C_{a,b,\nu}(x,\delta))|C_{a,b,\nu}(x,\delta)|_F\Delta_{\mathrm{X},\fe}((x_a,x_b),x).
\end{equation*}
Since $F'/F$ is unramified and the residue characteristic is odd, these transfer factors agree whenever $C_{a,b,\nu}(x,\delta)$ is a unit.

\subsubsection{A sketch of the argument}\label{Sec: descent} Fix an elliptic endoscopic datum $\fe$ for $X^\Siegel$. The goal is to show that the identity in Corollary \ref{Cor: transfer} implies the identity in Theorem \ref{Thm: fundamental lemma intro}. To this end, it suffices to consider $x\in \mathrm{X}^{\Siegel,\rs}(\mathcal{O}_F)$.  Following \cite[Section 7]{Lesliedescent}, we define certain open subsets $\mathrm{X}_n^{\heartsuit,\nu}(F)$ of $\mathrm{X}_n(F)$ on which $C_{a,b,\nu}(x,\delta)$ is a unit. More precisely for $\nu=\pm1$, we define the $\nu$-very regular locus by
\[
\mathrm{X}^{\Siegel,\heartsuit,\nu}(\mathcal{O}_F)=\{x\in\mathrm{X}^{\Siegel, \rs}(\mathcal{O}_F): \overline{x}\in \mathrm{X}^\siegel(k)-D_{\nu}(k)\},
\]
where $\overline{x}$ denotes the reduction modulo the maximal ideal of $\mathcal{O}_F$. Then any regular semi-simple $x\in \mathrm{X}^{\Siegel,\heartsuit,\nu}(\mathcal{O}_F)$ is in the image of the Cayley map $\mathrm{Cay}_\nu$, so we write $x= \mathrm{Cay}_\nu(\delta)$ for an element $\delta\in V^{\Siegel}-D_1(F)$. This element may be chosen to be integral, and moreover if $x_\fe\in \mathrm{X}^{\fe}(F)$ matches $x$, then we may similarly pullback along the Cayley map on the endoscopic side, say $x_\fe = \mathrm{Cay}_{\nu,\fe}(\delta_\fe)$. An easy counting argument (see the proof of \cite[Proposition 7.9]{Lesliedescent}) shows that the orbital integrals appearing in Theorem \ref{Thm: fundamental lemma intro} for $x$ and $x_\fe$ agree with the orbital integrals appearing in Theorem \ref{Thm: fundamental lemma intro} and Corollary \ref{Cor: transfer} for $\delta$ and $\delta_\fe$. The work of the prior section shows that the transfer factors also match, so the result is proved when $x\in \mathrm{X}^{\Siegel,\heartsuit,+}(\mathcal{O}_F)\cup \mathrm{X}^{\Siegel,\heartsuit,-}(\mathcal{O}_F)$.

It remains to consider those $x\in \mathrm{X}^{\Siegel}(\mathcal{O}_F)$ such that $\overline{x}\in D_{+}(k)\cap D_{-}(k)$. Following the approach developed in \cite{Lesliedescent}, we now apply topological descent to these remaining elements. The details are entirely analogous to the reduction carried out in \cite[Section 8]{Lesliedescent} in the unitary Friedberg--Jacquet setting via descent at certain absolutely semi-simple elements of $\mathrm{X}^{\Siegel}(\mathcal{O}_F)$ and using properties of the topological Jordan decomposition established in \cite[Section 4]{Lesliedescent}. The arguments in \emph{loc. cit.} go through in the Siegel case as well. Indeed, the special case of $X^\fe= X^\unitary_n$ is handled in detail in \cite[Appendix B]{LXZfund}. The general case follows with minor modifications, so we omit the details.

\end{appendix}

\bibliographystyle{amsalpha}
\bibliography{bib.bib}

@article {jingren,
    AUTHOR = {Chi, Jingren},
     TITLE = {Geometry of {K}ottwitz-{V}iehmann varieties},
   JOURNAL = {J. Inst. Math. Jussieu},
  FJOURNAL = {Journal of the Institute of Mathematics of Jussieu. JIMJ.
              Journal de l'Institut de Math\'{e}matiques de Jussieu},
    VOLUME = {21},
      YEAR = {2022},
    NUMBER = {1},
     PAGES = {1--65},
      ISSN = {1474-7480,1475-3030},
   MRCLASS = {22E35 (14D06 14D23 22E67)},
  MRNUMBER = {4366333},
MRREVIEWER = {Alan\ Koch},
       DOI = {10.1017/S1474748019000604},
       URL = {https://doi.org/10.1017/S1474748019000604},
}

@incollection {CluckersHalesLoeser,
    AUTHOR = {Cluckers, Raf and Hales, Thomas and Loeser, Fran\c cois},
     TITLE = {Transfer principle for the fundamental lemma},
 BOOKTITLE = {On the stabilization of the trace formula},
    SERIES = {Stab. Trace Formula Shimura Var. Arith. Appl.},
    VOLUME = {1},
     PAGES = {309--347},
 PUBLISHER = {Int. Press, Somerville, MA},
      YEAR = {2011},
      ISBN = {978-1-57146-227-5},
   MRCLASS = {22E35 (14E18)},
  MRNUMBER = {2856374},
}

@article {YunJR,
    AUTHOR = {Yun, Zhiwei},
     TITLE = {The fundamental lemma of {J}acquet and {R}allis},
      NOTE = {With an appendix by Julia Gordon},
   JOURNAL = {Duke Math. J.},
  FJOURNAL = {Duke Mathematical Journal},
    VOLUME = {156},
      YEAR = {2011},
    NUMBER = {2},
     PAGES = {167--227},
      ISSN = {0012-7094},
   MRCLASS = {22E35 (17B45)},
  MRNUMBER = {2769216},
MRREVIEWER = {Anne-Marie H. Aubert},
       DOI = {10.1215/00127094-2010-210},
       URL = {https://doi.org/10.1215/00127094-2010-210},
}

@article{Lesliedescent,
    AUTHOR = {Leslie, Spencer},
     TITLE = {On the stabilization of relative trace formulae: descent and
              the fundamental lemma},
   JOURNAL = {Adv. Math.},
  FJOURNAL = {Advances in Mathematics},
    VOLUME = {394},
      YEAR = {2022},
     PAGES = {Paper No. 108026, 68},
      ISSN = {0001-8708,1090-2082},
   MRCLASS = {11F70 (11F55 11F72 11F85)},
  MRNUMBER = {4355728},
MRREVIEWER = {Chen\ Wan},
       DOI = {10.1016/j.aim.2021.108026},
       URL = {https://doi.org/10.1016/j.aim.2021.108026},
}

@article{kottwitztransfer,
  title={Transfer factors for {L}ie algebras},
  author={Kottwitz, Robert E.},
  journal={Representation Theory of the American Mathematical Society},
  volume={3},
  number={6},
  pages={127--138},
  year={1999}
}

@article {ngo,
    AUTHOR = {Ngo, Bao Chau},
     TITLE = {Le lemme fondamental pour les alg\`ebres de {L}ie},
   JOURNAL = {Publ. Math. Inst. Hautes \'{E}tudes Sci.},
  FJOURNAL = {Publications Math\'{e}matiques. Institut de Hautes \'{E}tudes
              Scientifiques},
    NUMBER = {111},
      YEAR = {2010},
     PAGES = {1--169},
      ISSN = {0073-8301,1618-1913},
   MRCLASS = {22E35 (11S37 14D23 14G35 22E50)},
  MRNUMBER = {2653248},
MRREVIEWER = {R.\ P.\ Langlands},
       DOI = {10.1007/s10240-010-0026-7},
       URL = {https://doi.org/10.1007/s10240-010-0026-7},
}

@article {gppn,
    AUTHOR = {Garc\'{\i}a-Prada, Oscar and Pe\'{o}n-Nieto, Ana},
     TITLE = {Abelianization of {H}iggs bundles for quasi-split real groups},
   JOURNAL = {Transform. Groups},
  FJOURNAL = {Transformation Groups},
    VOLUME = {28},
      YEAR = {2023},
    NUMBER = {1},
     PAGES = {285--325},
      ISSN = {1083-4362,1531-586X},
   MRCLASS = {14H60 (14D23)},
  MRNUMBER = {4552177},
       DOI = {10.1007/s00031-021-09658-9},
       URL = {https://doi.org/10.1007/s00031-021-09658-9},
}

@article {ngo_endoscopie,
    AUTHOR = {Ngo, Bao Chau},
     TITLE = {Fibration de {H}itchin et endoscopie},
   JOURNAL = {Invent. Math.},
  FJOURNAL = {Inventiones Mathematicae},
    VOLUME = {164},
      YEAR = {2006},
    NUMBER = {2},
     PAGES = {399--453},
      ISSN = {0020-9910,1432-1297},
   MRCLASS = {14D20 (11F72 11R39 22E55)},
  MRNUMBER = {2218781},
MRREVIEWER = {Ulrich\ G\"{o}rtz},
       DOI = {10.1007/s00222-005-0483-7},
       URL = {https://doi.org/10.1007/s00222-005-0483-7},
}

@article{LXZfund,
  title={Unitary {F}riedberg-{J}acquet periods and their twists: {F}undamental lemmas},
  author={Leslie, Spencer and Xiao, Jingwei and Zhang, Wei},
  journal={arXiv preprint arXiv:2503.09500},
  year={2025}
}

@article{LXZrtf,
  title={Unitary {F}riedberg-{J}acquet periods and their twists: {R}elative trace formulas},
  author={Leslie, Spencer and Xiao, Jingwei and Zhang, Wei},
  journal={arXiv preprint arXiv:2503.09664},
  year={2025}
}

@article{hitchin,
    AUTHOR = {Hitchin, Nigel},
     TITLE = {Stable bundles and integrable systems},
   JOURNAL = {Duke Math. J.},
  FJOURNAL = {Duke Mathematical Journal},
    VOLUME = {54},
      YEAR = {1987},
    NUMBER = {1},
     PAGES = {91--114},
      ISSN = {0012-7094,1547-7398},
   MRCLASS = {58F07 (14F05 32G13 32L05 32L10)},
  MRNUMBER = {885778},
MRREVIEWER = {G.\ M.\ Khenkin},
       DOI = {10.1215/S0012-7094-87-05408-1},
       URL = {https://doi.org/10.1215/S0012-7094-87-05408-1},
}

@article{ngo-laumon,
  author    = {Gérard Laumon and Bao Chau Ngo},
  title     = {Le lemme fondamental pour les groupes unitaires},
  journal   = {Annals of Mathematics},
  year      = {2008},
  volume    = {168},
  number    = {2},
  pages     = {477--573},
  doi       = {10.4007/annals.2008.168.477}
}

@article{schaposnik,
    AUTHOR = {Schaposnik, Laura P.},
     TITLE = {Spectral data for {$U(m,m)$}-{H}iggs bundles},
   JOURNAL = {Int. Math. Res. Not. IMRN},
  FJOURNAL = {International Mathematics Research Notices. IMRN},
      YEAR = {2015},
    NUMBER = {11},
     PAGES = {3486--3498},
      ISSN = {1073-7928,1687-0247},
   MRCLASS = {14J60 (14D20)},
  MRNUMBER = {3373057},
MRREVIEWER = {Sanjay\ Kumar\ Singh},
}

@article{KR,
    AUTHOR = {Kostant, B. and Rallis, S.},
     TITLE = {Orbits and representations associated with symmetric spaces},
   JOURNAL = {Amer. J. Math.},
  FJOURNAL = {American Journal of Mathematics},
    VOLUME = {93},
      YEAR = {1971},
     PAGES = {753--809},
      ISSN = {0002-9327,1080-6377},
   MRCLASS = {22E45 (53C35)},
  MRNUMBER = {311837},
MRREVIEWER = {W.\ T.\ van Est},
       DOI = {10.2307/2373470},
       URL = {https://doi.org/10.2307/2373470},
}

@article{levy,
    AUTHOR = {Levy, Paul},
     TITLE = {Involutions of reductive {L}ie algebras in positive
              characteristic},
   JOURNAL = {Adv. Math.},
  FJOURNAL = {Advances in Mathematics},
    VOLUME = {210},
      YEAR = {2007},
    NUMBER = {2},
     PAGES = {505--559},
      ISSN = {0001-8708,1090-2082},
   MRCLASS = {17B50 (14L40)},
  MRNUMBER = {2303231},
MRREVIEWER = {George\ J.\ McNinch},
       DOI = {10.1016/j.aim.2006.07.002},
       URL = {https://doi.org/10.1016/j.aim.2006.07.002},
}

@article{sekiguchi,
    AUTHOR = {Sekiguchi, Jiro},
     TITLE = {The nilpotent subvariety of the vector space associated to a
              symmetric pair},
   JOURNAL = {Publ. Res. Inst. Math. Sci.},
  FJOURNAL = {Kyoto University. Research Institute for Mathematical
              Sciences. Publications},
    VOLUME = {20},
      YEAR = {1984},
    NUMBER = {1},
     PAGES = {155--212},
      ISSN = {0034-5318,1663-4926},
   MRCLASS = {14L30 (17B20 32C40)},
  MRNUMBER = {736100},
MRREVIEWER = {Wim\ H.\ Hesselink},
       DOI = {10.2977/prims/1195181836},
       URL = {https://doi.org/10.2977/prims/1195181836},
}

@article{leslie,
    AUTHOR = {Leslie, Spencer},
     TITLE = {An analogue of the {G}rothendieck-{S}pringer resolution for
              symmetric spaces},
   JOURNAL = {Algebra Number Theory},
  FJOURNAL = {Algebra \& Number Theory},
    VOLUME = {15},
      YEAR = {2021},
    NUMBER = {1},
     PAGES = {69--107},
      ISSN = {1937-0652,1944-7833},
   MRCLASS = {20G05 (17B08 32S45)},
  MRNUMBER = {4226983},
MRREVIEWER = {Paul\ D.\ Levy},
       DOI = {10.2140/ant.2021.15.69},
       URL = {https://doi.org/10.2140/ant.2021.15.69},
}

@article{knop1996automorphisms,
    AUTHOR = {Knop, Friedrich},
     TITLE = {Automorphisms, root systems, and compactifications of
              homogeneous varieties},
   JOURNAL = {J. Amer. Math. Soc.},
  FJOURNAL = {Journal of the American Mathematical Society},
    VOLUME = {9},
      YEAR = {1996},
    NUMBER = {1},
     PAGES = {153--174},
      ISSN = {0894-0347,1088-6834},
   MRCLASS = {14L30 (14M17)},
  MRNUMBER = {1311823},
MRREVIEWER = {Michel\ Brion},
       DOI = {10.1090/S0894-0347-96-00179-8},
       URL = {https://doi.org/10.1090/S0894-0347-96-00179-8},
}

@article{schaposnik2018geometric,
    AUTHOR = {Schaposnik, Laura P.},
     TITLE = {A geometric approach to orthogonal {H}iggs bundles},
   JOURNAL = {Eur. J. Math.},
  FJOURNAL = {European Journal of Mathematics},
    VOLUME = {4},
      YEAR = {2018},
    NUMBER = {4},
     PAGES = {1390--1411},
      ISSN = {2199-675X,2199-6768},
   MRCLASS = {14D20 (14D21 14H70 14P25 20C33 53C07)},
  MRNUMBER = {3866701},
MRREVIEWER = {\'{A}lvaro\ Ant\'{o}n Sancho},
       DOI = {10.1007/s40879-017-0206-9},
       URL = {https://doi.org/10.1007/s40879-017-0206-9},
}

@phdthesis{schaposnik2013spectral,
    AUTHOR = {Schaposnik, Laura P.},
     TITLE = {Spectral data for {G}-{H}iggs bundles},
      NOTE = {Thesis (D.Phil.)--University of Oxford (United Kingdom)},
 PUBLISHER = {ProQuest LLC, Ann Arbor, MI},
      YEAR = {2013},
     PAGES = {1},
   MRCLASS = {99-05},
  MRNUMBER = {3389247},
       URL =
              {http://gateway.proquest.com/openurl?url_ver=Z39.88-2004&rft_val_fmt=info:ofi/fmt:kev:mtx:dissertation&res_dat=xri:pqm&rft_dat=xri:pqdiss:U633158},
}

@misc{polak,
      title={Dimensions of Affine {S}pringer Fibers for Some $\mathrm{GL}_n$ Symmetric Spaces}, 
      author={Jason K. C. Polák},
      year={2015},
      eprint={1506.04703},
      archivePrefix={arXiv},
      primaryClass={math.AG},
      note={\url{https://arxiv.org/abs/1506.04703}}, 
}

@incollection{schaposnik2014introduction,
    AUTHOR = {Schaposnik, Laura P.},
     TITLE = {An introduction to spectral data for {H}iggs bundles},
 BOOKTITLE = {The geometry, topology and physics of moduli spaces of {H}iggs
              bundles},
    SERIES = {Lect. Notes Ser. Inst. Math. Sci. Natl. Univ. Singap.},
    VOLUME = {36},
     PAGES = {65--101},
 PUBLISHER = {World Sci. Publ., Hackensack, NJ},
      YEAR = {2018},
      ISBN = {978-981-3229-08-2},
   MRCLASS = {14H60 (57R22)},
  MRNUMBER = {3837869},
MRREVIEWER = {Francesco\ Bottacin},
}

@article {beauville-laszlo,
    AUTHOR = {Beauville, Arnaud and Laszlo, Yves},
     TITLE = {Un lemme de descente},
   JOURNAL = {C. R. Acad. Sci. Paris S\'er. I Math.},
  FJOURNAL = {Comptes Rendus de l'Acad\'emie des Sciences. S\'erie I.
              Math\'ematique},
    VOLUME = {320},
      YEAR = {1995},
    NUMBER = {3},
     PAGES = {335--340},
      ISSN = {0764-4442},
   MRCLASS = {14J60},
  MRNUMBER = {1320381},
}

@article{mcninch,
  title={Nilpotent orbits over ground fields of good characteristic},
  author={McNinch, George J},
  journal={Mathematische Annalen},
  volume={329},
  pages={49--85},
  year={2004},
  publisher={Springer}
}

@misc{sl_endoscopy,
      title={Symmetric varieties for endoscopic groups}, 
      author={Leslie, Spencer},
      year={2024},
      eprint={2401.09156},
      archivePrefix={arXiv},
      primaryClass={math.NT},
      note={arXiv preprint, arXiv: 2401.09156},
      url={https://arxiv.org/abs/2401.09156}, 
}

@article{kl,
  title={Fixed point varieties on affine flag manifolds},
  author={Kazhdan, David and Lusztig, George},
  journal={Israel Journal of Mathematics},
  volume={62},
  number={2},
  pages={129--168},
  year={1988},
  publisher={Springer}
}

@misc{HML,
      title={Relative {D}olbeault Geometric {L}anglands via the Regular Quotient}, 
      author={Thomas Hameister and Zhilin Luo and Benedict Morrissey},
      year={2024},
      eprint={2409.15691},
      archivePrefix={arXiv},
      primaryClass={math.AG},
      note={accepted at Selecta Mathematica},
      url={https://arxiv.org/abs/2409.15691}, 
}

@article{HM,
title = {The {H}itchin fibration for symmetric pairs},
journal = {Advances in Mathematics},
volume = {482},
pages = {110560},
year = {2025},
issn = {0001-8708},
doi = {https://doi.org/10.1016/j.aim.2025.110560},
author = {Thomas Hameister and Benedict Morrissey}
}

@book{griffin,
  author    = {Wang, X. Griffin},
  title     = {Multiplicative {H}itchin Fibrations and the Fundamental Lemma},
  series    = {Annals of Mathematics Studies},
  publisher = {Princeton University Press},
  address   = {Princeton, NJ},
  year      = {2026}
}

@article{spencer_annals,
  author    = {Leslie, Spencer},
  title     = {The endoscopic fundamental lemma for unitary {F}riedberg--{J}acquet periods},
  journal   = {Annals of Mathematics},
  volume    = {201},
  number    = {3},
  pages     = {551--645},
  year      = {2025},
  doi       = {10.4007/annals.2025.201.3.1}
}

@article{bzsv,
  title={Relative {L}anglands duality},
  author={Ben-Zvi, David and Sakellaridis, Yiannis and Venkatesh, Akshay},
  journal={arXiv preprint, arXiv:2409.04677},
  year={2023}
}

@article {artin,
    AUTHOR = {Artin, M.},
     TITLE = {Algebraic approximation of structures over complete local
              rings},
   JOURNAL = {Inst. Hautes \'Etudes Sci. Publ. Math.},
  FJOURNAL = {Institut des Hautes \'Etudes Scientifiques. Publications
              Math\'ematiques},
    NUMBER = {36},
      YEAR = {1969},
     PAGES = {23--58},
      ISSN = {0073-8301,1618-1913},
   MRCLASS = {14.05},
  MRNUMBER = {268188},
MRREVIEWER = {H.\ Kurke},
       URL = {http://www.numdam.org/item?id=PMIHES_1969__36__23_0},
}

@misc{griffin2,
  author       = {Wang, X. Griffin and Zhang, Zhiyu},
  title        = {Relative Fundamental Lemmas for Spherical {H}ecke Algebras and Multiplicative {H}itchin Fibrations: the {J}acquet--{R}allis Case},
  year         = {2024},
  eprint       = {2408.15155},
  archivePrefix= {arXiv},
  primaryClass = {math.NT},
  note         = {accepted by Compositio Mathematica}
}

@article{bez,
  author  = {Bezrukavnikov, Roman},
  title   = {The dimension of the fixed point set on affine flag manifolds},
  journal = {Mathematische Research Letters},
  volume  = {3},
  number  = {2},
  year    = {1996},
  pages   = {185--189}
}

@article{Romagny1,
  author  = {Romagny, Matthieu},
  title   = {Group actions on stacks and applications},
  journal = {Michigan Mathematical Journal},
  year    = {2005},
  volume  = {53},
  number  = {1},
  pages   = {209--236},
  doi     = {10.1307/mmj/1114021093}
}

@misc{Romagny2,
  author        = {Romagny, Matthieu},
  title         = {Algebraicity and smoothness of fixed point stacks},
  year          = {2022},
  eprint        = {2205.11114},
  archivePrefix = {arXiv},
  primaryClass  = {math.AG},
  doi           = {10.48550/arXiv.2205.11114}
}

@article {sv,
    AUTHOR = {Sakellaridis, Yiannis and Venkatesh, Akshay},
     TITLE = {Periods and harmonic analysis on spherical varieties},
   JOURNAL = {Ast\'erisque},
  FJOURNAL = {Ast\'erisque},
    NUMBER = {396},
      YEAR = {2017},
     PAGES = {viii+360},
      ISSN = {0303-1179,2492-5926},
      ISBN = {978-2-85629-871-8},
   MRCLASS = {22E50 (11F67)},
  MRNUMBER = {3764130},
MRREVIEWER = {Arnab\ Mitra},
}

@article {FJ,
    AUTHOR = {Friedberg, Solomon and Jacquet, Herv\'e},
     TITLE = {Linear periods},
   JOURNAL = {J. Reine Angew. Math.},
  FJOURNAL = {Journal f\"ur die Reine und Angewandte Mathematik. [Crelle's
              Journal]},
    VOLUME = {443},
      YEAR = {1993},
     PAGES = {91--139},
      ISSN = {0075-4102,1435-5345},
   MRCLASS = {11F70 (22E55)},
  MRNUMBER = {1241129},
MRREVIEWER = {Stephen\ Gelbart},
       DOI = {10.1515/crll.1993.443.91},
       URL = {https://doi.org/10.1515/crll.1993.443.91},
}

@article {GWZ,
    AUTHOR = {Groechenig, Michael and Wyss, Dimitri and Ziegler, Paul},
     TITLE = {Geometric stabilisation via {$p$}-adic integration},
   JOURNAL = {J. Amer. Math. Soc.},
  FJOURNAL = {Journal of the American Mathematical Society},
    VOLUME = {33},
      YEAR = {2020},
    NUMBER = {3},
     PAGES = {807--873},
      ISSN = {0894-0347,1088-6834},
   MRCLASS = {11S37 (11S80 14D24 14H60 20G40)},
  MRNUMBER = {4127904},
MRREVIEWER = {Aprameyo\ Pal},
       DOI = {10.1090/jams/948},
       URL = {https://doi.org/10.1090/jams/948},
}

@article {BBD,
    AUTHOR = {Beilinson, Alexander and Bernstein, Joseph and Deligne, Pierre
              and Gabber, Ofer},
     TITLE = {Faisceaux pervers},
   JOURNAL = {Ast\'erisque},
  FJOURNAL = {Ast\'erisque},
      YEAR = {2018},
    NUMBER = {100},
     PAGES = {vi+180},
      ISSN = {0303-1179,2492-5926},
      ISBN = {978-2-85629-878-7},
   MRCLASS = {32S30 (14F08 18G80 19F27 32S60 55N33)},
  MRNUMBER = {4870047},
}

@incollection {CHL,
    AUTHOR = {Cluckers, Raf and Hales, Thomas and Loeser, Fran\c cois},
     TITLE = {Transfer principle for the fundamental lemma},
 BOOKTITLE = {On the stabilization of the trace formula},
    SERIES = {Stab. Trace Formula Shimura Var. Arith. Appl.},
    VOLUME = {1},
     PAGES = {309--347},
 PUBLISHER = {Int. Press, Somerville, MA},
      YEAR = {2011},
      ISBN = {978-1-57146-227-5},
   MRCLASS = {22E35 (14E18)},
  MRNUMBER = {2856374},
}

@article{HelminckWang,
  author  = {Helminck, Aloysius G. and Wang, S. P.},
  title   = {On rationality properties of involutions
             of reductive groups},
  journal = {Advances in Mathematics},
  volume  = {99},
  number  = {1},
  pages   = {26--96},
  year    = {1993}
}

\end{document}